%%%% Modèle proposé par kira.ribeiro@universite-paris-saclay.fr  %%%%
%%%% màj : 27/01/2023 %%%%

\documentclass[french,12pt,a4paper]{book}
\usepackage{gensymb}
\usepackage[utf8]{inputenc}
\usepackage[T1]{fontenc}

%% HUGE
\usepackage{moresize}

%% Index
\usepackage{imakeidx}
\makeindex[columns=2, title=Index, options= -s includes/new_style.ist, intoc]
\usepackage[nottoc,notlof,notlot]{tocbibind}
\makeatletter
  \renewcommand{\@pnumwidth}{2em}
  \renewcommand{\@tocrmarg}{5em}
\makeatother

% removes the underfull in biblio
\usepackage{etoolbox}
\apptocmd{\sloppy}{\hbadness10000\relax}{}{}

% Langues
\usepackage[english,french]{babel}

% Policies
% \usepackage[default,oldstyle, scale=.95]{opensans} % police Open Sans
\usepackage{lmodern}

% Enumerate better
\usepackage{enumerate}

% Math
\usepackage{amsmath, amsfonts, amssymb, amsthm}
\usepackage{ dsfont }
\usepackage{caption}
\usepackage{subcaption}
\usepackage{mleftright}
%\captionsetup[figure]{font=small,labelfont=small}

%Algorithms
\usepackage[linesnumbered,ruled,lined]{algorithm2e}

% Couleurs. UPSaclay
\usepackage{xcolor} % où color selon l'installation
\definecolor{Prune}{RGB}{99,0,60} % l14-33 : couleurs de la charte graphique upsaclay

% Colors for daltonic people
\usepackage{color}
\definecolor{corange}{rgb}{0.898, 0.621, 0.0}
\definecolor{cskyblue}{rgb}{0.336, 0.703, 0.910}
\definecolor{cbluishgreen}{rgb}{0, 0.617, 0.449}
\definecolor{cyellow}{rgb}{0.937, 0.890, 0.258}
\definecolor{cblue}{rgb}{0, 0.445, 0.695}
\definecolor{cred}{rgb}{1, 0, 0}
\definecolor{cpurple}{rgb}{0.797, 0.473, 0.652}

% Framming
\usepackage{mdframed}
\usepackage{multirow} %% Pour mettre un texte sur plusieurs rangées
\usepackage{multicol} %% Pour mettre un texte sur plusieurs colonnes
\usepackage{scrextend} %Forcer la 4ème  de couverture en page pair

% Images
\usepackage{tikz}
\usepackage{graphicx}
\usepackage[absolute]{textpos} 
\usepackage{colortbl}
\usepackage{array}
\usepackage[margin=2cm]{geometry}
\usepackage{titlesec}
\usepackage[colorlinks=true,
linkcolor=cbluishgreen,
urlcolor=cpurple,
citecolor=cbluishgreen,
backref=page,
linktocpage]{hyperref}

% Numbering in the middle. UPSaclay
\usepackage{fancyhdr}
\fancyhf{}
\fancyhf[EHC]{\nouppercase{\hfill\leftmark\hfill}}
\fancyhead[OHC]{\nouppercase{\hfill\rightmark\hfill}}
% \fancyhead[LE]{\nouppercase{\leftmark\hfill}}
% \fancyhead[RO]{\nouppercase{\hfill\rightmark}}
\fancyfoot[LE,RO]{\hfill\thepage\hfill}

% Comments
\usepackage{todonotes}

% THEOREMS

\theoremstyle{definition}
\newtheorem{theorem}{Theorem}[section] 
\newtheorem{proposition}[theorem]{Proposition} 
\newtheorem{lemma}[theorem]{Lemma} 
\newtheorem{corollary}[theorem]{Corollary}
\newtheorem{conjecture}[theorem]{Conjecture}

\newtheorem{definition}[theorem]{Definition}

\newtheorem{perspective}[theorem]{Perspective}
\newtheorem{example}[theorem]{Example}
\newtheorem{remark}[theorem]{Remark}

% Killing chktex
% chktex-file 3
% chktex-file 9
% chktex-file 12
% chktex-file 17
% chktex-file 24
% chktex-file 25
% chktex-file 36
% chktex-file 40

%MATHS

\def\AA{\mathbb{A}}
\def\DD{\mathbb{D}}
\def\NN{\mathbb{N}}
\def\PP{\mathbb{P}}
\def\RR{\mathbb{R}}
\def\TT{\mathbb{T}}
\def\UU{\mathbb{U}}
\def\ZZ{\mathbb{Z}}

\def\bfa{\mathbf{a}}
\def\bfd{\mathbf{d}}
\def\bfi{\mathbf{i}}

\def\cA{\mathcal{A}}
\def\cB{\mathcal{B}}
\def\cBT{\mathcal{BT}}
\def\cC{\mathcal{C}}
\def\cD{\mathcal{D}}
\def\cF{\mathcal{F}}
\def\cH{\mathcal{H}}
\def\cI{\mathcal{I}}
\def\cL{\mathcal{L}}
\def\cN{\mathcal{N}}
\def\cO{\mathcal{O}}
\def\cP{\mathcal{P}}
\def\cPT{\mathcal{PT}}
\def\cT{\mathcal{T}}
\def\cR{\mathcal{R}}
\def\cS{\mathcal{S}}

\def\cV{\mathcal{V}}
\def\cW{\mathcal{W}}

\def\fS{\mathfrak{S}}

\DeclareMathOperator{\supp}{supp}
\DeclareMathOperator{\inv}{inv}
\DeclareMathOperator{\ver}{ver}
\DeclareMathOperator{\moveU}{moveU}
\DeclareMathOperator{\moveD}{moveD}
\DeclareMathOperator{\rank}{rank}
\DeclareMathOperator{\conv}{conv}
\DeclareMathOperator{\aff}{aff}
\DeclareMathOperator{\cone}{cone}
\DeclareMathOperator{\dimension}{dim}
\DeclareMathOperator{\vol}{vol}
\DeclareMathOperator{\indeg}{indeg}
\DeclareMathOperator{\outdeg}{outdeg}
\DeclareMathOperator{\inEdge}{in}
\DeclareMathOperator{\outEdge}{out}
\DeclareMathOperator{\car}{Car}
\DeclareMathOperator{\oru}{Oru}
\DeclareMathOperator{\mar}{Mar}
\DeclareMathOperator{\bic}{Bic}
% polytopes
\DeclareMathOperator{\PPerm}{Perm}
\DeclareMathOperator{\PAsoc}{Assoc}
\DeclareMathOperator{\PCube}{Cube}
\DeclareMathOperator{\PPT}{PT}

\newcommand{\PSPerm}[1][s]{\text{Perm}_{#1}}
\newcommand{\fpol}[1][G]{\cF_{#1}} %Flow polytope of graph G
\newcommand{\oruga}[1][n]{\oru_{#1}} %Oruga polytope of size n
\DeclareMathOperator{\height}{h}
\newcommand{\rpre}[1]{R[#1]} % route corresponding to a prefix
\newcommand{\prefix}[2]{{#1}_{[{#2}]}} %prefix of w of length i
\newcommand{\triangDKK}[1][G]{\text{Triang}_{DKK}({#1}, \preceq\nolinebreak)} % DKK triangulation of graph G with respect to framing \preceq
\newcommand{\cliques}[1][G]{\text{Cliques}(#1, \preceq)} % Set of all cliques of G
\newcommand{\maxcliques}[1][G]{\text{MaxCliques}(#1, \preceq)} % Set of maximal cliques of G
\newcommand{\subdivCay}[1][$s$] {\text{Subdiv}{\square}(#1)} %realization of the s-permutahedron as a fine mixed subdivision of a sum of hypercubes
\newcommand{\gbinom}[2]{{\mleft(\genfrac..{0pt}{}{#1}{#2}\mright)}}
% Double binomial
\newcommand\bbinom[2]%
{\mathchoice{\left(\kern-0.48em{\binom{#1}{#2}}\kern-0.48em\right)}
            {\bigl(\kern-0.3em{\binom{#1}{#2}}\kern-0.3em\bigr)}
            {\bigl(\kern-0.3em{\binom{#1}{#2}}\kern-0.3em\bigr)}
            {\bigl(\kern-0.3em{\binom{#1}{#2}}\kern-0.3em\bigr)}}

% Permutree symbols
%permutree symbols
\newlength\myheight{}
\newlength\mydepth{}
\settototalheight\myheight{Xygp}
\settodepth\mydepth{Xygp}
\setlength\fboxsep{0pt}

\DeclareRobustCommand*\nonee{\raisebox{-\mydepth}{\includegraphics[height=\myheight]{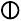}}}
\DeclareRobustCommand*\upp{\raisebox{-\mydepth}{\includegraphics[height=\myheight]{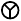}}}
\DeclareRobustCommand*\downn{\raisebox{-\mydepth}{\includegraphics[height=\myheight]{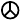}}}
\DeclareRobustCommand*\uppdownn{\raisebox{-\mydepth}{\includegraphics[height=\myheight]{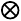}}}

%%%%%%%%%%%%%%%%%%%%%%%%%%%%%%%%%%%%%%
%FORMAT
\newcommand{\defn}[1]{\emph{\color{cblue} #1}} % emphasis of a definition

\includeonly{
    includes/Cover,
    includes/Abstracts,
    includes/Quotes,
    includes/Acknowledgements,
    includes/IntroductionEN,
    includes/IntroductionFR,
    includes/contenu/chap_preliminaires_structures,
    includes/contenu/chap_preliminaires_weak_order,
    includes/contenu/chap_permutrees_vectors,
    includes/contenu/chap_permutrees_sorting,
    includes/contenu/chap_sorder_flows,
    includes/contenu/chap_sorder_realizations,
    includes/contenu/chap_sorder_quotients
}

\begin{document}

\pagenumbering{gobble}

\pagestyle{fancy}

% Cover 
% Author : Daniel Tamayo
% Created : 02/03/2023
% Modifications : 

% chktex-file 1
% chktex-file 44

\begin{titlepage}

	\newgeometry{left=6cm,bottom=2cm, top=1cm, right=1cm}

	\tikz[remember picture,overlay] \node[opacity=1,inner sep=0pt] at (-13mm,-135mm){\includegraphics{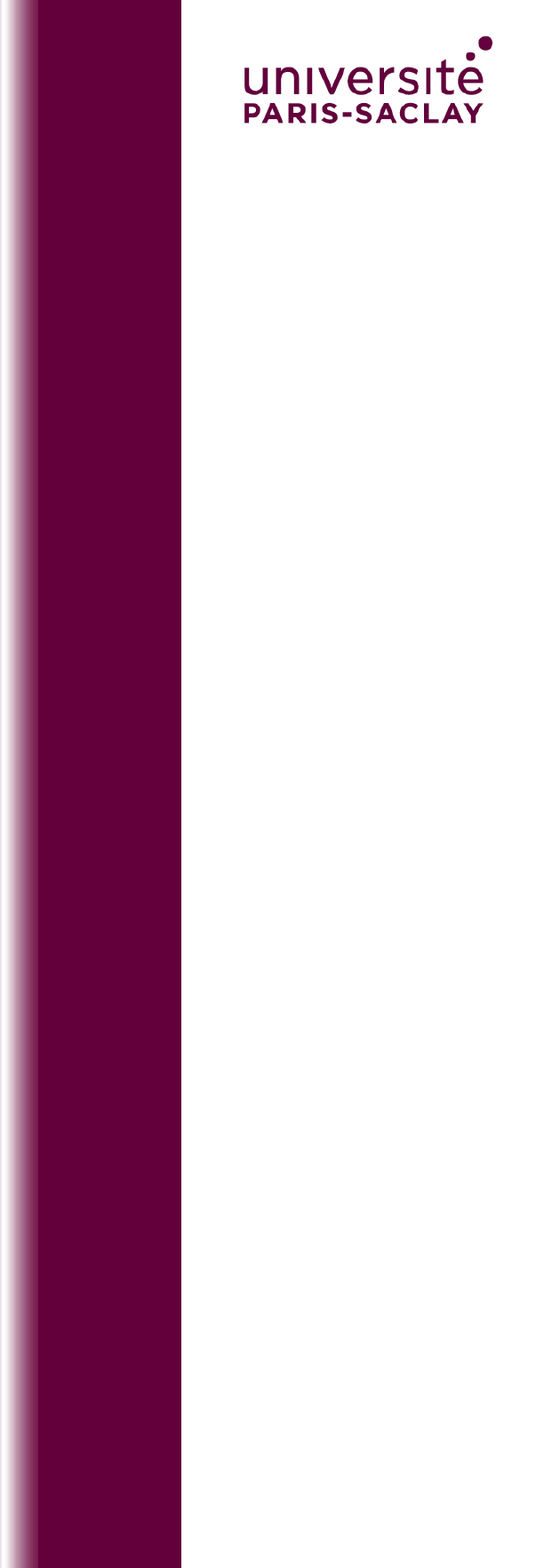}};

	%*****************************************************
	%******** NUMÉRO D'ORDRE DE LA THÈSE À COMPLÉTER *****
	%******** POUR LE SECOND DÉPOT                   *****
	%*****************************************************

	\color{white}

	\begin{picture}(0,0)
		\put(-152,-743){\rotatebox{90}{\Large \textsc{THESE DE DOCTORAT}}} \\
		\put(-120,-743){\rotatebox{90}{NNT : 2023UPASG066}}
	\end{picture}

	%*****************************************************
	%******************** TITRE **************************
	%*****************************************************

	\flushright{}
	\vspace{10mm} % à régler éventuellement
	\color{Prune}

	\fontsize{26}{26}\selectfont
	\mbox{Combinatorics of Permutreehedra} \mbox{and Geometry of $s$-Permutahedra}\\

	\enskip

	\normalsize
	\color{black}
	\Large{\textit{Combinatoire des permusylv\`{e}dres \\et g\'{e}om\'{e}trie des $s$-permuta\`{e}dres}} \\
	%*****************************************************

	\fontsize{8}{12}\selectfont

	\vspace{1.5cm}

	\normalsize
	\textbf{Thèse de doctorat de l'université Paris-Saclay} \\

	\vspace{6mm}

	\small École doctorale n°580:\\
	\small Sciences et technologies de l’information et de la communication (STIC)\\
	\small Spécialité de doctorat: Informatique mathématique\\
	\small Graduate School: Informatique et sciences du numérique\\
	\small Référent: Faculté des sciences d’Orsay\\
	\vspace{6mm}

	\footnotesize Thèse préparée dans l'unité de recherche \textbf{Laboratoire interdisciplinaire des sciences du numérique (CNRS, Université Paris-Saclay)}, sous la direction de \\ \textbf{Viviane PONS}, Maîtresse de conférences et le coencadrement de \\ \textbf{Vincent PILAUD}, Directeur de recherche\\
	\vspace{15mm}

	\textbf{Thèse soutenue à Paris-Saclay, le 17 Octobre 2023, par}\\
	\bigskip
	\Large {\color{Prune} \textbf{Daniel TAMAYO JIMÉNEZ}} % Changer le Prénom et le NOM

	%************************************
	\vspace{\fill} % ALIGNER LE TABLEAU EN BAS DE PAGE
	%************************************

	\bigskip

	\flushleft{}
	\small {\color{Prune} \textbf{Composition du jury}}\\
	{\color{Prune} \scriptsize {Membres du jury avec voix délibérative}} \\
	\vspace{2mm}
	\scriptsize
	\begin{tabular}{|p{8.25cm}l}
		\arrayrulecolor{Prune}
		\textbf{Jean-Christophe NOVELLI}                      & Président               \\
		Professeur, Université Paris-Est Marne-la-Vallée (IGM)       &                           \\

		\textbf{Samuele GIRAUDO}                              & Rapporteur \& Examinateur \\
		Professeur,  Université du Québec à Montréal (LACIM) &                           \\

		\textbf{Torsten MÜTZE}                                & Rapporteur \& Examinateur \\
		Professeur assistant, University of Warwick                                    &                           \\

		\textbf{Nathalie AUBRUN}                              & Examinatrice              \\
		Chargée de recherche, CNRS, Université Paris-Saclay (LISN)        &                           \\

		\textbf{Mathilde BOUVEL}                              & Examinatrice               \\
		Chargée de recherche, CNRS,Université de Lorraine (LORIA)                                 &                           \\

		\textbf{Francisco SANTOS LEAL}                               & Examinateur      \\
		Professeur, Universidad de Cantabria                    &                           \\

		% \textbf{Florent HIVERT}                               & Examinateur Invité      \\
		% Professeur, Université Paris-Saclay (LISN)                    &                           \\
	\end{tabular}

\end{titlepage}

% Abstracts
% Author : Daniel Tamayo
% Created : 02/03/2023
% Modifications : 

% page des résumés à garder en 2ème page. Si les résumés sont trop longs pour tenir sur une seule et même page, on peut mettre un résumé par page
\newgeometry{top=1.5cm, bottom=1.25cm, left=2cm, right=2cm}

\noindent 
%*****************************************************
%***** LOGO DE L'ED À CHANGER IMPÉRATIVEMENT *********
%*****************************************************
\includegraphics[height=2.45cm]{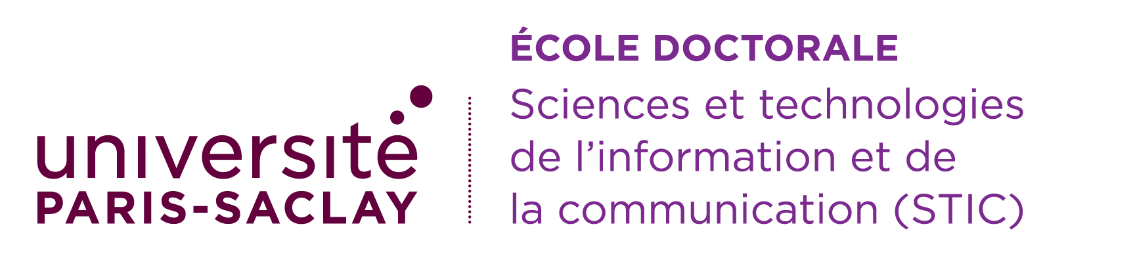}
\vspace{1cm}
%*****************************************************

\thispagestyle{empty}

\hspace{0pt}
\vfill
\begin{mdframed}[linecolor=Prune,linewidth=1]

    \textbf{Titre:} Combinatoire des permusylv\`{e}dres et g\'{e}om\'{e}trie des~$s$-permuta\`{e}dres
    
    \noindent \textbf{Mots cl\'{e}s:} ordres partiels, combinatoire alg\'{e}brique, g\'{e}om\'{e}trie discr\`{e}te, polytopes, quotients de treillis, permutations.
    
    \vspace{-.5cm}
    \begin{multicols}{2}
    \noindent \textbf{R\'{e}sum\'{e}:} En combinatoire alg\'{e}brique, les treillis sont des ensembles partiellement ordonn\'{e}s qui poss\`{e}dent \`{a} la fois des op\'{e}rations inf et sup. L'ordre faible sur les permutations est un exemple classique d'un treillis qui poss\`{e}de une riche structure combinatoire. Cela en a fait un point de d\'{e}part \`{a} partir duquel d'autres objets combinatoires ont \'{e}t\'{e} d\'{e}finis. Pour cette th\`{e}se, nous nous concentrons sur l'\'{e}tude de deux familles diff\'{e}rentes de treillis en relation avec l'ordre faible: les treillis des permutarbres et le~$s$-ordre faible.
    
    La premi\`{e}re partie de la th\`{e}se concerne la th\'{e}orie des quotients de treillis de l'ordre faible en s'appuyant sur le travail de N. Reading. On se concentre sp\'{e}cifiquement sur la famille des quotients des permutarbres de l'ordre faible. En les consid\'{e}rant comme des permutarbres, comme dans le travail de V. Pilaud et V. Pons, nous \'{e}tendons la technologie des vecteurs de crochet des arbres binaires en d\'{e}finissant les vecteurs d'inversion et les vecteurs cubiques. Le vecteur d'inversion capture l'op\'{e}ration de meet de ces treillis tandis que le vecteur cubique permet de les r\'{e}aliser g\'{e}om\'{e}triquement via un plongement cubique. En changeant de point de vue et en \'{e}tudiant ces quotients \`{a} travers les \'{e}l\'{e}ments minimaux de leurs classes de congruence, nous utilisons la description de Coxeter de type A des permutations pour caract\'{e}riser les permutarbres avec l'aide d'automates. Ces automates capturent l'\'{e}vitement de motifs~$ijk$ et/ou~$kij$ impliqu\'{e}s par ces quotients et nous permettent de d\'{e}finir des algorithmes qui g\'{e}n\'{e}ralisent le tri par pile. Dans le cas o\`{u} le quotient correspond \`{a} un treillis cambrien, nous relions nos automates au tri de Coxeter. Nous donnons quelques indications sur le m\^{e}me ph\'{e}nom\`{e}ne pour les groupes de Coxeter de types B et D.
    
    La deuxi\`{e}me partie de cette th\`{e}se d\'{e}coule du travail de V. Pons et C. Ceballos qui ont d\'{e}fini le~$s$-ordre faible sur les arbres~$s$-d\'{e}croissants o\`{u}~$s$ est une s\'{e}quence d'entiers positifs. Dans le cas de~$s=(1,\ldots,1)$, cette d\'{e}finition r\'{e}cup\`{e}re l'ordre faible. Dans leur premier article, les auteurs ont conjectur\'{e} que le~$s$-permuta\`{e}dre pouvait \^{e}tre r\'{e}alis\'{e} dans l'espace comme une subdivision poly\'{e}drale d'un zonotope. Nous donnons une r\'{e}ponse positive \`{a} leur conjecture lorsque~$s$ est une s\'{e}quence d'entiers positifs en d\'{e}finissant un graphe dont leur polytope de flot nous permettent de r\'{e}cup\'{e}rer le~$s$-ordre faible. Nous utilisons des techniques de flots sur les graphes, de g\'{e}om\'{e}trie discr\`{e}te et de g\'{e}om\'{e}trie tropicale pour obtenir des r\'{e}alisations du~$s$-permuta\`{e}dre avec diff\'{e}rentes propri\'{e}t\'{e}s. Finalement, nous introduisons une op\'{e}ration sur les graphes pour d\'{e}crire les permutarbres et leurs treillis \`{a} travers les polytopes de flot.
    
    \end{multicols}
\end{mdframed}
\vfill
\hspace{0pt}
\pagebreak

\newpage

\thispagestyle{empty}

\small

\hspace{0pt}
\vfill
\begin{mdframed}[linecolor=Prune,linewidth=1]

    \textbf{Title:} Combinatorics of Permutreehedra and Geometry of~$s$-Permutahedra

\noindent \textbf{Keywords:} partial orders, algebraic combinatorics, discrete geometry, polytopes, lattice quotients, permutations.

\vspace{-.5cm}
\begin{multicols}{2}
\noindent \textbf{Abstract:} In algebraic combinatorics, lattices are partially ordered sets which possess both meet and join operations. The weak order on permutations is a classical example of a lattice that has a rich combinatorial structure. This has made it a starting point from which other combinatorial objects have been defined. For this thesis, we focus on studying two different families of lattices in relation to the weak order: the permutree lattices and the~$s$-weak order.

The first part of the thesis involves the theory of lattice quotients of the weak order building upon the work of N. Reading, specifically focusing on the family of permutree quotients of the weak order. Considering them as permutrees, as done by V. Pilaud and V. Pons, we extend the technology of bracket vectors from binary trees by defining inversion and cubic vectors. The inversion vector captures the meet operation of these lattices while the cubic vector helps realize them geometrically via a cubical embedding. Changing our point of view and studying these quotients through the minimal elements of their congruence classes, we use the Coxeter Type A description of permutations to characterize permutrees using automata. These automata capture the pattern avoidance of the patterns~$ijk$ and/or~$kij$ implied by these quotients and allow us to define algorithms which generalize stack sorting. In the case where the quotient corresponds to a Cambrian lattice we relate our automata with Coxeter sorting. We give some insight about the same phenomenon for Coxeter groups of types B and D.

The second part of this thesis stems from the work of V. Pons and C. Ceballos who defined the~$s$-weak order on~$s$-decreasing trees where~$s$ is a sequence of non-negative integers. In the case of~$s=(1,\ldots,1)$ this definition recovers the weak order. In their first article, the authors conjectured that the~$s$-permutahedron could be realized in space as a polyhedral subdivision of a zonotope. We give a positive answer to their conjecture when~$s$ is a sequence of positive integers by defining a graph whose flow polytope allows us to recover the~$s$-weak order. We use techniques from flows on graphs, discrete geometry, and tropical geometry to obtain realizations of the~$s$-permutahedron with different properties. Finally, we introduce a graph operation to describe permutrees and their lattices through flow polytopes.
\end{multicols}
    
    \end{mdframed}
\vfill
\hspace{0pt}
\pagebreak

% \titleformat{\chapter}[display]
% {\fontfamily{pag}\Large\bfseries}
% {\titlerule[4pt]\chaptertitlename\ \thechapter}
% {5pt}
% {\large}[{\titlerule[2pt]}]

\titleformat{\chapter}[display]
{\normalfont\HUGE\color{black}}
{\chaptertitlename\,\thechapter\enskip\titlerule[2pt]}
{1.8pt}
{\centering\color{Prune}\huge\textbf\sffamily}[{\vspace{5pt}\color{black}\titlerule[2pt]}]

\titleformat{\section}[hang]
{\bfseries\LARGE\color{Prune}}
{\thesection}
{15pt}
{\vspace{0.1ex}}

\titleformat{\subsection}[hang]
{\Large\color{Prune}}
{\thesubsection}
{15pt}
{\vspace{0.1ex}}

\newgeometry{top=4cm, bottom=4cm, left=3cm, right=3cm}

% Quotes
\cleardoublepage{}
% Author : Daniel Tamayo
% Created : 07/03/2023
% Modifications : 

\thispagestyle{empty}

\enskip{}

\enskip{}

\begin{center}
- ¿Sabe?, soy un extranjero.
    
- Esta ciudad está llena de nosotros, ¿no? Yo mismo soy uno.

- Buscando algo que falta. Echando de menos algo dejado atrás.
    
- Quizá, con buena suerte, encontraremos lo que nos eludió en los lugares que una vez llamamos hogar.
\end{center}
\vspace{-0.7cm}
\hrulefill{}
\vspace{-0.4cm}
\begin{flushright}
    The French Dispatch. Wes Anderson, 2021.
\end{flushright}

\enskip{}

\enskip{}

\enskip{}

\enskip{}

\enskip{}

\enskip{}

\enskip{}

\enskip{}

\begin{center}
- I’m a foreigner, you know.

- This city is full of us, isn’t it? I’m one, myself.

- Seeking something missing. Missing something left behind.

- Maybe, with good luck, we’ll find what eluded us in the places we once called home.
\end{center}
\vspace{-0.7cm}
\hrulefill{}
\vspace{-0.4cm}
\begin{flushright}
    The French Dispatch. Wes Anderson, 2021.
\end{flushright}

\enskip{}

\enskip{}

\enskip{}

\enskip{}

\enskip{}

\enskip{}

\enskip{}

\enskip{}

\begin{center}
- Vous savez, je suis un étranger.

- Cette ville est pleine de nous, n’est-ce pas? Je suis un moi-même.

- Cherchant quelque chose qui manque. Manquant quelque chose laissé derrière.

- Peut-être, avec de la chance, nous trouverons ce que nous a échappé dans les endroits que nous avons appelés autrefois chez nous.
\end{center}
\vspace{-0.7cm}
\hrulefill{}
\vspace{-0.4cm}
\begin{flushright}
    The French Dispatch. Wes Anderson, 2021.
\end{flushright}
\cleardoublepage{}

\newgeometry{top=4cm, bottom=4cm, left=2cm, right=2cm}

% Acknowledgements
% Autor : Daniel Tamayo
% Created : 02/03/20
% Modified : 
\begin{small}
\chapter*{Acknowledgements}
\markboth{Acknowledgements}{Acknowledgements}

\pagestyle{fancy}

Hi there! It has been quite a ride in which 4 years have passed since I left Colombia and started living on the other side of the Atlantic. It is incredible that this journey is coming to an end, time went too slow and too fast somehow. I have had the immense fortune of sharing these years with plenty of amazing people that gave this adventure much of its meaning  and plenty of experiences that allowed me to grow and enjoy life more than I ever imagined. I dedicate this thesis to you all, for all your laughs, smiles, support, and hugs during these times. As tradition dictates, I'll try to find the impossible words to give my gratitude to you all as concisely as possible. To those that I forget: I'm terribly sorry, my spirit is willing, but my memory is weak! To those that think we shared more than what I will write: you are probably right, and we have then to repeat those times!

Firstly, I am most grateful to my advisors Viviane Pons and Vincent Pilaud for betting on me, helping me hop to a new continent, and providing all their support throughout all our mathematical discussions, administrative complications, and the professional and French culture teachings you gave me. Their dedication and passion to mathematics together with their humane spirit motivated me plenty during my time as a PhD student. It was an honor and I only wish I had done more. Thank you as well for your guidance and patience following all my questions, failures, and successes during our research. 

Special thanks Samuele Giraudo and Torsten Mütze for having accepted taking on the immense task of being my thesis reviewers. It is thanks to them and their pinpoint accurate comments that my thesis is as you see it right now. I extend these thanks to Nathalie Aubrun, Mathilde Bouvel, Jean-Christophe Novelli, Francisco Santos Leal, and Florent Hivert for being part of my thesis jury and traveling all the way to far away lands of Gif-sur-Yvette. I am also incredibly grateful to Eva Philippe and Loïc Le Mogne that accepted to proofread previous versions of my thesis.

An infinite amount of thanks go to the power couple Balthazar Charles and Monica Garcia for all the friendship, crazy adventures, support, and fun of these 4 years. You made me feel welcome and understood throughout all this time in this unknown land. The PhD was certainly brighter thanks to you guys. In the same way, eternal thanks to the best flatmate Agustin Borgna, who shared a good portion of this journey with me. The colocation with you was something that transcended a shared space, it felt like home. Thank you for all the crazy and amazing times we had together and teaching me about cooking, music, myself (although I still think you are a lake), programming, bikes, and many other things. I'll remember the times with you and our friendship wherever I go. A colossal bunch of thanks to Katherine Morales who was by my side during all the process of preparing and writing this thesis. Your support, companionship, and love made these months a complete joy. I'm glad to have you by my side, to what we have shared, and for what is to come. Also, for the opacity trick.

From the bottom of my heart I thank my mother Luz Marina Jiménez that has been there since the beginning of this journey and has always supported me with the same love no matter how far away I was from Bogotá. I always felt at home whenever we talked no matter the country I was in. Thank you for your wisdom, enthusiasm, and love. To my father Gerardo Tamayo whose memory guided and helped me in more than one occasion. Thank you both for everything. To my family and in particular to my brother Gerardo E. Tamayo and my aunts and uncles Myriam, Nina, and José Vicente Jiménez and Maria Ester, Chepe, and Javier Tamayo whose support, messages, and meetups in Colombia helped me not forget the warmness of a Colombian family. To Ricardo Ojeda for his delicious meals that remind me of Colombia and his joyful and wise personality.

Thanks to the many friends and colleagues I had the fortune to share with around the world. To Eva Phillipe for our friendship together with this last year of conferences, papers, talks, dancing, and all else, I'll miss those Tuesdays/Thursdays! To Doriann Albertin for his crazy and happy attitude and showing me all the fun to be had in France and even Colombia. To my little academic brother Loïc Le Mogne for the laughs and fun we had in all the countries we visited. To Hugo Mlodecki for all his invitations to plans, parties, and fun discussions about french life. To Justine Falque for her comradeship and friendship throughout our PhD, unemployed, and reconversion lives. To Chiara Mantovani for her joyous smiling personality and making our trip to Bertinoro and Forlimpimpompoli incredible! To Germain Poullot for his wise words, camaraderie, and patience with my stupid ideas. To Noémie Cartier for all our discussions and all the enriching stories/experiences you always shared with us. To Clément Cheneviere for being amazing all around and sharing his happy and curious spirit whenever present, it was a joy seeing you in all the events we crossed each other! To Nicolás Bitar, for bringing more latin life to our lab and his friendship. 

To Jonathan Niño, Santiago Estupiñan, and Catalina Largacha for all your support from across the ocean and the video calls full of laughter, jokes, and care, it was a blast. Also thanks Jonathan for having shown me the joys of dancing and climbing 4 years ago, they were crucial for me during these years. To Carolina Benedetti for her support and advice while I prepared to leave Colombia for my internship and then my PhD in France. To David Jaramillo with whom we grew so much during our masters' years and prepared me for this experience. To Sylvi Hassan and Pierre Jeandenand for an incredible Colombian friendship in Paris and all the support and help you've given me these years, Paris would not have been the same without you. To David Cardozo for a friendship across time and space, I'm really happy to have found you again in Montreal. To Jose Bastidas for the full Montreal experience that made me fell in love with the city, the amazing parties we had together, and the critical eye he had when we discussed mathematics. To Carlo Buccisano and Paulo Soto for all the experiences we have shared living together this last year. Also thanks Carlo, Sofía Lopez, David Llerena, and Micaela Manosalvas for all the climbing, boardgames, and parties we had. To Juan Carlos Restrepo for all we recovered in Colombia. To Jerónimo Valencia, Danai Deligeorgaki, and Sergio Fernandez for all the crazy and stupid adventures we had during our conferences! I extend these thanks to Nicolás Cuervo, Sergio Cristancho, Jorge Olarte, Hans Schmedling, Simón Bolivar (Moncho), and all of my colleagues at I saw in ECCO and other conferences for making Colombia and Colombians still feel like home. I extend this extension to Ángela Mesa, Gloria Buriticá, Julián Chitiva, Juan Numpaque, and Maria Camila Archila for their Colombian friendship in Paris. To Elías Masquil and Nicolas Violante for reminding me how cool Uruguayan people are! To Alex De la Concha, Any Escobar, Majo Castellano, Quentin Desgrousilliers, Eli Zuñiga and François Rudelle for all the laughs, support, parties, and meals we have shared this last year.

To Rafael Gonzáles D'León for his friendship, incredible mathematical optimism, and plenty of wisdom ever since that conference in Uruguay. To Martha Yip for her incredible drawings and what was possibly the best research trip I'll do in my life. To Alejandro Morales for his heartwarming way of doing mathematics, support, and showing me the wonderful program of ipe allowing me to make this thesis' images! To Yannic Vargas and his amazing bibliographic memory, every single chat with you was enriching be it mathematical or not. To Cesar Ceballos and Jean-Philippe Labbé for their warm welcome and help in Germany and Austria at the start of my thesis. I'll never forget your generosity and support. To Cesar also thanks for his professional and personal wisdom and the Weissensee workshop. It was the most beautiful academic trip I could have imagined.

Special thanks to Taha Halal and Anis Ghaoui, I had a great time hanging out with you crazy people be it during CARaDOC or anytime after it! I'm grateful also to the rest of the CARaDOC crew and to the JDD 2020 crew, I'm glad I was part of the organization of these events with you all. Thanks as well to Julian Ritter, Jonas Sénizergues, Pierre Béaur and Valentin Dardilhac for all the laughs and good times in our lab. Also thanks to Philippe Houdbine, Victor, Aleix, Anna, Agustín, Caroline Lucas, Magin Ferrer, Dongho Lee, Dora Novak, Carlo and David together with the people of ScienceAccueil, those french classes were always a blast.

I want to thank Florent Hivert, Nicolas Thierry, Benjamin Hellouin, Nathalie Aubrun, François Pirot, Johanne Cohen, Yannis Manoussakis, and the rest of the GALaC team for making me feel welcome at the LRI/LISN and all the french training I had listening to and interacting with you all! I extend these thanks to the LIX team for all the seminars and cafés and the LIGM team for their seminars and FPSAC revisions which were always plenty of fun. Thanks as well to Christophe Hohlweg, François Bergeron, and the LACIM team for welcoming me during my stay in Montreal and many interesting discussions. I am grateful also to Florent, Nicolas, François, Adeline Pierrot, Joffroy Beauquier, and Nicole Bidoit under which I gave courses during my PhD and demi-ATER. I basically did a minor area in informatics during my time here thanks to them. 

Thank you all. Although in the future I might not remember all of what I lived during these 4 years, I will remember I was happy.

\end{small}

\begin{small}

\chapter*{Agradecimientos}

\markboth{Agradecimientos}{Agradecimientos}

\thispagestyle{plain}

Qué tal! Ha sido un viaje bien largo en el cual 4 años han pasado desde que salí de Colombia y empecé a vivir al otro lado del Atlántico. Me parece increíble que esta aventura esté llegando a su fin y que el tiempo haya pasado a la vez tan lento y tan rápido. Afortunadamente he tenido la inmensa oportunidad de compartir estos años con un montón de gente increíble que le dio a esta odisea mucho de su significado y a la vez, un montón de experiencias que me permitieron crecer y disfrutar la vida como nunca hubiera imaginado. Les dedico esta tesis a todos ustedes, por todas sus carcajadas, sonrisas, apoyo y abrazos durante estos tiempos. Como dicta la tradición, trataré de hacer lo imposible y encontrar las palabras para darles mi gratitud de la manera más concisa. A aquellos que olvido: ¡lo siento mucho, el espíritu está dispuesto, pero la memoria es débil! A aquellos que piensan que compartimos y nos debemos más de lo que escribiré: les doy toda la razón, ¡y tendremos que repetir esos momentos entonces!

En primer lugar, estoy sumamente agradecido con mis asesores Viviane Pons y Vincent Pilaud por apostar por mí ayudándome a saltar a un nuevo continente y brindándome todo su apoyo a lo largo de nuestras discusiones matemáticas, complicaciones administrativas y enseñanzas profesionales y culturales sobre Francia. Su dedicación y pasión por las matemáticas junto con su espíritu humano me motivaron mucho durante mi tiempo como su estudiante. Fue un honor y solo desearía haber hecho más. Gracias también por su orientación y paciencia con todas mis preguntas, fracasos y éxitos durante nuestra investigación.

Estoy especialmente agradecido con Samuele Giraudo y Torsten Mütze por haber aceptado asumir la inmensa tarea de ser los revisores de mi tesis. Es gracias a ellos y sus comentarios precisos que pueden ver mi tesis tal cual como está ahora. Extiendo estos agradecimientos a Nathalie Aubrun, Mathilde Bouvel, Jean-Christophe Novelli, Francisco Santos Leal y Florent Hivert por haber formado parte de mi jurado de tesis y haber viajado hasta los confines lejanos de Gif-sur-Yvette. Estoy inmensamente agradecido con Eva Philippe y Loïc Le Mogne por haber leido y revisado versiones anteriores de mi tesis.

¡Un agradecimiento infinito a la poderosa pareja Balthazar Charles y Monica Garcia por toda la amistad, aventuras, apoyo y locuras que compartimos durante estos 4 años. Ustedes me hicieron sentir bienvenido y comprendido a lo largo de todo este tiempo en esta tierra desconocida. Sin lugar a dudas el doctorado fue más alegre gracias a ustedes. De la misma forma le day gracias infinitas al mejor compañero de piso Agustín Borgna que compartió una buena parte de este camino conmigo. La colocación contigo fue algo que trascendió compartir un espacio, se sintió como un hogar. Gracias por todo el tiempo increíble lleno de experiencias que pasamos juntos y por enseñarme sobre cocina, música, yo mismo (aunque todavía creo que eres un lago), programación, bicicletas y muchas otras cosas. Recordaré los tiempos contigo y llevaré nuestra amistad a donde quiera que vaya. Le doy una cantidad colosal de gracias a Katherine Morales que estuvo a mi lado durante todo el proceso de preparación y escritura de esta tesis. Su apoyo, alegría y amor hicieron que estos meses fueran una dicha completa. Me alegro de tenerte a mi lado, por lo compartido y por lo que vendrá! También, por el truco de la opacidad.

Del fondo de mi corazón le doy gracias a mi madre Luz Marina Jiménez que ha estado ahí desde el comienzo de este viaje y siempre me ha apoyado con el mismo amor sin importar cuán lejos yo estuviese de Bogotá. Siempre me sentí en casa cada vez que hablábamos sin importar el país en el que estuviera. Gracias por tu sabiduría, entusiasmo y amor. A mi padre Gerardo Tamayo cuya memoria me guió y ayudó en más de una ocasión. Gracias a ambos por todo. A mi familia y en particular a mi hermano Gerardo E. Tamayo y mis tías y tíos Myriam, Nina y José Vicente Jiménez y María Ester, Chepe y Javier Tamayo cuyo apoyo, mensajes y reuniones en Colombia me ayudaron a no olvidar el calor de una familia colombiana. A Ricardo Ojeda por sus comidas deliciosas que me recuerdan a Colombia y a su personalidad alegre y llena de sabiduria.

Gracias a los muchos amigos y colegas con los que tuve la fortuna de compartir alrededor del mundo! A Eva Phillipe por nuestra amistad y en particular este último año de conferencias, artículos, charlas, bailes y todo lo demás, ¡extrañaré esos martes/jueves! A Doriann Albertin por su personalidad loca y alegre y por mostrarme toda la diversión que se puede tener en Francia e incluso en Colombia. A mi hermano menor académico Loïc Le Mogne por las risas y la diversión que tuvimos en todos los países que visitamos. A Hugo Mlodecki por todas sus invitaciones a planes, fiestas y discusiones divertidas sobre la vida francesa. A Justine Falque por su camaradería y amistad a lo largo de nuestro doctorado, desempleo y ahora reconversión. A Chiara Mantovani por su personalidad alegre y sonriente y por hacer nuestro viaje a Bertinoro y Forlimpimpompoli increíble! A Germain Poullot por sus sabias palabras, camaradería y paciencia frente a mis ideas estúpidas. A Noémie Cartier por todas nuestras discusiones y todas las historias y experiencias enriquecedoras que siempre compartía con nosotros. A Clément Cheneviere por ser increíble en todo sentido y compartir su espíritu jovial y curioso siempre que estaba presente! Fue una alegría verte en todos los eventos en los que nos cruzamos. A Nicolás Bitar, por traer más vida latina a nuestro laboratorio y su amistad!

A Jonathan Niño, Santiago Estupiñan y Catalina Largacha por todo su apoyo desde el otro lado del océano y las videollamadas llenas de risas, chistes y cariño, fue increíble. También gracias Jonathan por haberme mostrado las alegrías del baile y la escalada hace 4 años, fueron cruciales para mí en esta odisea. A Carolina Benedetti por su apoyo y consejos mientras me preparaba para salir de Colombia para mi pasantía y luego mi doctorado en Francia. A David Jaramillo con a quien durante nuestro años de maestría crecimos tanto y me preparó para esta experiencia. A Sylvi Hassan y Pierre Jeandenand por una increíble amistad colombiana en París y todo el apoyo y ayuda que me han brindado estos años. París no hubiera sido lo mismo sin ustedes. A David Cardozo por una amistad a través del tiempo y el espacio, estoy muy feliz de haberte reencontrado en Montreal. A Jose Bastidas por la experiencia completa de Montreal que me hizo enamorarme de la ciudad, las increíbles fiestas que tuvimos juntos y el ojo crítico que tenía cuando discutíamos matemáticas! A Carlo Buccisano y Paulo Soto por todas las experiencias que compartimos viviendo juntos este último año. También gracias a Carlo, Sofía López, David Llerena y Micaela Manosalvas por toda la escalada, juegos de mesa y fiestas que compartimos. A Juan Carlos Restrepo por todo lo que recuperamos en Colombia. A Jerónimo Valencia, Danai Deligeorgaki y Sergio Fernández por todas las aventuras locas y estúpidas que tuvimos durante nuestras conferencias. Extiendo estos agradecimientos a Nicolás Cuervo, Sergio Cristancho, Jorge Olarte, Hans Schmedling, Simón Bolivar (Moncho) y todos mis colegas en ECCO y otras conferencias por hacer que Colombia y los colombianos todavía se sientan como un hogar. Extiendo esta extensión a Ángela Mesa, Gloria Buriticá, Julián Chitiva, Juan Numpaque y María Camila Archila por su amistad colombiana en París. A Elías Masquil y Nicolás Violante por recordarme lo genial que es la gente Uruguaya! A Alex De la Concha, Any Escobar, Majo Castellano, Quentin Desgrousilliers, Eli Zuñiga y François Rudelle por todas las risas, apoyo, fiestas y comidas de este ultimo año.

A Rafael Gonzáles D'León por su amistad, su increíble optimismo matemático y tanta sabiduría que me impartió desde aquella conferencia en Uruguay. A Martha Yip por sus increíbles dibujos y lo que posiblemente sea el mejor viaje de investigación que haré en mi vida. A Alejandro Morales por su manera tan sensata y pura de hacer matemáticas, su apoyo y haberme mostrado el increible programa ipe que me permitió hacer las imagenes de esta tesis! A Yannic Vargas y su increíble memoria bibliográfica, cada charla contigo matemática o no fue muy enriquecedora. A Cesar Ceballos y Jean-Philippe Labbé por su cálida bienvenida y ayuda en Alemania y Austria al comienzo de mi tesis. Nunca olvidaré su generosidad y apoyo. A Cesar gracias también por su sabiduría profesional y personal y el taller de Weissensee, fue el viaje académico más hermoso que pude haber imaginado.

Un agradecimiento especial a Taha Halal y Anis Ghaoui, ¡fue una locura parchar con ustedes ya sea durante CARaDOC o en cualquier momento después! También estoy agradecido con el resto del equipo de CARaDOC y con el equipo de JDD 2020, me alegra haber sido parte de la organización de estos eventos con todos ustedes. Gracias también a Julian Ritter, Jonas Sénizergues, Pierre Béaur y Valentin Dardilhac por todas las risas y buenos momentos en nuestro laboratorio. De igual forma gracias a Philippe Houdbine, Victor, Aleix, Anna, Agustín, Caroline Lucas, Magin Ferrer, Dongho Lee, Dora Novak, Carlo y David junto con la gente de ScienceAccueil, esas clases de francés siempre fueron espectaculares.

Quiero agradecerle a Florent Hivert, Nicolas Thierry, Benjamin Hellouin, Nathalie Aubrun, François Pirot, Johanne Cohen, Yannis Manoussakis y el resto del equipo GALaC por hacerme sentir bienvenido en el LRI/LISN y todo el entrenamiento en francés que tuve escuchando e interactuando con todos ustedes! Extiendo estos agradecimientos al equipo del LIX por todos los seminarios y cafés y al equipo del LIGM por sus seminarios y revisiones para FPSAC que siempre fueron muy entretenidas. Gracias también a Christophe Hohlweg, François Bergeron y el equipo de LACIM por recibirme durante mi estadía en Montreal y muchas discusiones interesantes. También estoy agradecido con Florent, Nicolas, François, Adeline Pierrot, Joﬀroy Beauquier y Nicole Bidoit bajo quienes di cursos durante mi doctorado y demi-ATER. Básicamente hice un área menor en informática durante mi tiempo aquí gracias a ellos.

Gracias a todos. Aunque en el futuro puede que no recuerde todo lo que viví durante estos 4 años, recordaré que fui feliz.

\end{small}

\cleardoublepage{}

\tableofcontents

\listoffigures

\listoftables

% Introductions.
% Author : Daniel Tamayo
% Created : 18/03/2023

% Killing chktex
% chktex-file 3
% chktex-file 9
% chktex-file 12
% chktex-file 17
% chktex-file 24
% chktex-file 25
% chktex-file 36
% chktex-file 40

\pagenumbering{roman}

\chapter*{Introduction}\label{part:IntroductionEN}
\addcontentsline{toc}{part}{Introduction (English)}
\addcontentsline{lof}{part}{Introduction (English)}
\markboth{Introduction}{Introduction}

\pagestyle{fancy}

\section*{Context}

This thesis finds its place in the domain of combinatorics, somewhere in the interplay between algebraic combinatorics and geometric combinatorics. That is, it relies on connections between families endowed with algebraic operations such as finite groups or partially ordered sets and structures from discrete geometry such as polytopes.

Combinatorics in it of itself is interested in taking discrete objects and studying patterns within them. Most classically, it can be seen as an area interested in counting objects and phenomena or conversely, finding an object or property that describes a sequence of numbers. As such, it pops up in many distinct areas of mathematics either as a principal or secondary actor. Appearances of combinatorics can be found in algebraic topology~\cite{MK46}~\cite{K08}, number theory~\cite{TV06}, and even theoretical and statistical physics~\cite{T20}~\cite{R69} just to mention a few. Due to its nature of studying discrete structures, combinatorics has found a strong connection with computer science. For example, the study on algorithms and their complexities or  optimizations is inherently combinatorial. Reciprocally, many combinatorial results come from algorithmic analysis and computer exploration. More concretely, areas like integer programming and optimization, graph theory, and sorting algorithms find themselves using ideas from combinatorics and informatics symbiotically. Our work is no stranger to this, as much if not all of it, has been influenced on experiments implemented in the open source software SageMath~\cite{SAGE}.

\subsection*{Weak Order}

The combinatorial family at the very core of our work is that of \defn{permutations}. These structures make part of some of the most simple objects in combinatorics. Namely, a permutation of size~$n$ is a way of taking~$n$ objects in order and rearranging them in a new order. As such, they can also be thought as bijections from~$[n]:=\{1,2,\ldots,n\}$ to itself. This point of view endows permutations with an algebraic structure where the multiplication of permutations is simply the composition of their corresponding bijections. This forms an algebraic structure called the \defn{symmetric group} where each permutation is presented by a rearrangement of the numbers~$1$ through~$n$. A quick way of distinguishing permutations is by verifying which pairs~$(i,j)$ have been inversed. Any pair that finds itself in this situation is said to be an \defn{inversion}. The set of such pairs is called the \defn{inversion set} of a permutation and defines a way to give a partial order on permutations through the containment of their inversion sets. This is called the \defn{weak order} on permutations~\cite{B22}.

Adjacent permutations under the weak order can be expressed through the inversion of a single pair~$(i,i+1)$. We say that the permutations that correspond to these rearrangements are the \defn{adjacent transpositions} of the weak order. This is a crucial step in showing that given two permutations within the weak order, it is always possible to find a unique maximal (resp.\ minimal) permutation that is smaller (resp.\ larger) than both of them. These operations turn the weak order from being a partially ordered set (i.e.\ a \defn{poset}) into an algebraic structure called a \defn{lattice}~\cite{GR63}.

Each permutation~$\sigma$ of~$[n]$ can be associated with the point~$(\sigma(1),\ldots,\sigma(n))$ in the space~$\RR^n$. If one takes the convex hull of this configuration of points, the resulting polytope is called the \defn{permutahedron} whose properties reflect phenomena of the weak order~\cite{S911}~\cite{GG77}. For example, orienting the permutahedron in a particular direction, the directed graph consisting on its~\defn{$1$-skeleton} (i.e.\ vertices and edges) corresponds precisely with the order described by the weak order. Moreover, this process of obtaining the permutahedron shows that its faces are indexed by the ordered partitions of~$[n]$. Another geometrical way of presenting the permutahedron and the weak order structure is by partitioning the inversion sets of permutations via the smallest element that was inversed and taking the cardinalities of these sets. The resulting structure is an \defn{embedding} of the permutahedron onto a cube~\cite{BF71}~\cite{RR02}. 

\subsection*{Tamari Lattice}

Another family of combinatorial objects from which much of our work is based on are \defn{binary trees}. These consist of rooted trees where each node can be said to have one parent and two children. Binary trees are counted by one, if not the most prolific sequence of numbers, called the \defn{Catalan numbers}~\cite[A000108]{OEIS}, and thus are in bijection with a myriad of combinatorial objects~\cite{S15}.

Given a binary tree with~$n$ nodes, we can label its vertices via an anti-clockwise walk on the graph by labeling with~$i$ the~$i$-th vertex that we visit for a second time in our traversal. This is called the \defn{inorder} of binary trees and the resulting labeling has the characteristic that for any vertex, its label is greater than the labels in its left subtree and smaller than the labels in its right subtree. This allows us to define the \defn{rotation} of an edge~$i\to j$ to an edge~$j\to i$ where the structure of the subtrees below~$i$ and~$j$ is maintained. Rotations are a classical operation used for balancing binary search trees (i.e.\ making their height as small as possible) to obtain efficient sorting algorithms~\cite{AL62}. These rotations define an order on all binary trees called the \defn{Tamari lattice}~\cite{T62}.

To get a geometrical structure out of binary trees, one needs to consider vectors where each coordinate~$i$ is the product of the number of leaves in the left subtree times the leaves in the right subtree for a vertex labeled~$i$. The resulting convex hull of these vertices is called the \defn{associahedron}~\cite{L04} and its faces are indexed by Schr\"oder trees~\cite{S11}. As before, orienting the associahedron in a particular direction lets us find the Tamari lattice through its~$1$-skeleton. At this point a relation between binary trees and permutations starts to appear as the associahedron is a \defn{removahedron}. That is, the associahedron can be obtained simply by removing certain facets of the permutahedron~\cite{SS93}. As with permutations, changing the coordinates corresponding to binary trees to quantities derived from which and how many rotations it has endured, one obtains a new set of vectors called \defn{bracket vectors}. These vectors allow for a constructive proof for the lattice property of the Tamari lattice~\cite{HT72} and also a cubical embedding of the associahedron~\cite{K93}. These techniques have seen use in generalizations and structures related to the Tamari lattice~\cite{CPS20}~\cite{C21}~\cite{FMN21}~\cite{C22}~\cite{CG22}~\cite{PP23}.

Still, a more direct relation between binary trees and permutations can be found through the insertion algorithm of~\cite{T97}~\cite{HNT05}. As such, the fibers of binary trees under this algorithm are intervals of permutations whose minimal elements avoids the pattern~$312$. These fibers coincide with the fibers of the Stack-sorting algorithm~\cite{K73} and form a congruence relation in the weak order that respects the operations of meets and joins. Such congruences receive the name of \defn{lattice congruences} and in this case lead to the Tamari lattice being found as the order induced by the minimal permutations in these fibers. This lattice congruence is known as the \defn{sylvester congruence}. 

\subsection*{Permutrees}

The combinatorial family that motivates this thesis and that we study from different perspectives is that of \defn{permutrees}~\cite{PP18}. This family is general enough to encode permutations, binary trees, \defn{Cambrian trees}~\cite{LP13}~\cite{CP17}, and binary sequences while still defining new types of trees. A permutree consists of an unrooted directed tree with labeled nodes in~$[n]$ where each node can be said to have one or two parents and one or two children while each label of a vertex satisfies a relation similar to the one of binary trees via the inorder. As such, permutrees are characterized by how many parents and children each node has which is called the \defn{decoration} of the node. As the vertices are labeled, this groups permutrees into~$\delta$-permutrees where~$\delta$ is a vector of decorations. 

In a similar vein to~\cite{HNT05}, the insertion algorithm of~\cite{PP18} gives a surjection from permutations onto~$\delta$-permutrees for each possible decoration. These fibers describe a lattice congruence called the~\defn{$\delta$-permutree congruence}. The fibers of~$\delta$-permutrees under this algorithm are intervals of permutations whose minimal element avoids the pattern~$kij$ and/or~$jki$ for each~$j\in\{2,\ldots,n-1\}$ and $1\leq i<j<k\leq n$ depending on the decoration~$\delta_j$.

Like for binary trees,~$\delta$-permutrees have rotations that change their local structure at the level of a single edge while maintaining the rest of the tree intact. These rotations define the~$\delta$-permutree rotation posets. These posets are actually shown to be lattices in~\cite{PP18}, but the proof uses lattice quotients to show that this poset is a sublattice of the weak order. In general, they are called~\defn{$\delta$-permutree lattices} and generalize the type~$A$ Cambrian lattices of~\cite{R06}.

No matter the decoration, permutrees can be assigned to a vector whose coordinates correspond to a manipulation of the number of nodes at their right and left subtrees. This gives rise to a polytope called the~\defn{$\delta$-permutreehedron}. Orienting these polytopes in a particular direction recovers their corresponding~$\delta$-permutree lattice. Although~$\delta$-permutrees and their permutree congruences may be different, for certain subsets of decorations their lattices are isomorphic and for others their permutreehedra are combinatorially equivalent.

\subsection*{Coxeter Groups}

Our ideas of permutations and the weak order are only but a part of a bigger scheme which is described by \defn{Coxeter groups}. Introduced in~\cite{C34} and later completely classified for the finite case in~\cite{C35}, Coxeter groups describe groups generated by \defn{simple reflections} coming from hyperplane arrangements~\cite{H90}. As such, elements are sequences of simple reflections called \defn{words} and reflections correspond to \defn{inversions}. Said inversions form inversion sets which in turn define the \defn{weak order} of a Coxeter group. Thus, the permutations are just a particular case of a Coxeter group called Coxeter groups of type~$A$. Similar combinatorial families can be found for other types such as \defn{signed permutations} and \defn{even signed permutations} for Coxeter groups of types~$B$ and~$D$ respectively~\cite{BB06}. Other groups do not have such combinatorial descriptions to our knowledge. Still, the nature of Coxeter groups allows for them to be studied via algebraic, geometric, combinatorial or language theoretic means. This has lead for properties of the language of reduced words in Coxeter groups being described via automata~\cite{BH93}~\cite{HNW16}.

Going further than generalizing permutations and their properties, the context of Coxeter groups gives a broader space in which lattice congruences can be defined. This was done considering Coxeter groups as the poset of regions of hyperplane arrangements in~\cite{R04}. Afterwards, the concept of~\defn{$c$-sorting} (also known as \defn{Coxeter sorting}) was introduced in~\cite{R07a} giving~\defn{$c$-sortable elements} which were shown in~\cite{R07b} to be the minimal elements of \defn{Cambrian congruences}. These results were unified for all finite of Coxeter groups regardless of type in~\cite{RS11} and then compended in~\cite{R12}. In this situation the sylvester congruence is a Cambrian congruence in type~$A$ and its~$c$-sortable elements coincide with the stack-sortable permutations. Moreover, all Cambrian congruences of type~$A$ are permutree congruences.

\subsection*{\texorpdfstring{$s$}{}-Weak Order}

Other possible generalizations of the weak order come from taking multipermutations instead of permutations. That is, given a sequence of positive integers~$r=(r_1,\ldots,r_n)$ an~\defn{$r$-permutation} is a rearrangement of the word~$1^{r_1}\cdots n^{r_n}$ where~$i^{r_i}$ is the repetition of the letter~$i$ a total of~$r_i$ times. Coming from a geometrical setting in~\cite{RR02}, they were used to describe embeddings of the~\defn{combinohedron} which was known to come from a lattice structure called the \defn{multinomial lattice}~\cite{BB94}. Independently in a more algebraic setting, the case where all~$r_i=m$ for an~$m\geq 1$ was introduced in~\cite{NT20} to study the~\defn{$m$-sylvester congruence}. Renaming~$r$ as~$k$,~$k$-permutations that avoid the pattern~$121$ are called~\defn{$k$-Stirling permutations}. These permutations have seen ample interest and many of their properties have been determined through their statistics and bijections with other combinatorial families~\cite{P94a}~\cite{P94b}~\cite{P94c}~\cite{KP11}~\cite{JKP11}~\cite{RW15}~\cite{G19}.

Still, apart from all these constructions there is a more recent one linked to them that we are interested in, that is, the~\defn{$s$-weak order}~\cite{CP19}~\cite{CP22}. Taking a weak composition~$s=(s_1,\ldots,s_n)$ (i.e.~$s_i\in\ZZ_{\geq 0}$),~\defn{$s$-decreasing trees} consist of labeled rooted trees with~$n$ nodes such that each node has exactly one parent and~$s_i+1$ children and all descendants have smaller labels. These trees have \defn{inversions} defined from the relative position between nodes. As before, these inversions define an order on~$s$-decreasing trees called the~\defn{$s$-weak order} which is shown constructively to be a lattice and have a geometrical structure called the~\defn{$s$-permutahedron}. Moreover, there is an underlying order called the~\defn{$s$-Tamari lattice} with a geometrical counterpart called the~\defn{$s$-associahedron}. These correspond to the~\defn{$\nu$-Tamari lattice} of~\cite{PV17} and~\defn{$\nu$-associahedron} of~\cite{CPS19} for certain values of~$\nu$. Whenever~$s$ is a composition,~$s$-decreasing trees are in bijection with~\defn{$s$-Stirling permutations}, that is,~$s$-permutations avoiding the pattern~$121$. The~$s$-weak order coincides with the weak order of permutations when~$s_i=1$ for all coordinates of~$s$ and with the \defn{metasylvester lattice} of~\cite{P15} when~$s_i=m$ with~$m\geq 1$ for all coordinates of~$s$. 

\subsection*{Flow Polytopes}

A family of polytopes that we consider for a good part of our work because of their versatility is that of flow polytopes. They come from a loopless directed graph where each vertex is equipped with an integer that denotes the \defn{netflow} passing through it. That is, the difference between the incoming and the outgoing flow determined respectively by the incoming and outgoing edges of each vertex must equal this netflow. This limits the possible \defn{flows} that can be assigned to the edges of the graph while being coherent with the netflow. Taking flows as points in the space of edges of the graph forms a polytope called the \defn{flow polytope}. As flows and netflows are models of networks, flow polytopes have seen ample use in optimization problems. As such, there has been much research on the combinatorics of these polytopes~\cite{RH70}~\cite{FRD71}~\cite{CG78}~\cite{H03}~\cite{MM19}~\cite{GHMY21}. In particular, the \defn{normalized volume} of the flow polytope decomposes nicely using the \defn{Kostant partition formula} via the \defn{Baldoni–Vergne–Lidskii formulas}~\cite{BV08}. This has given new identities showing of many known numbers via product decompositions as exemplified in~\cite{BGHHKMY19}.

We are interested in flow polytopes specially because of their possible subdivisions. In particular, endowing each vertex of a graph with independent total orders for their incoming edges and outgoing edges (i.e.\ a \defn{framing}), the cliques of coherent routes of the graph give us a triangulation of the flow polytope when the netflow is~$\bfi:=(1,0,\ldots,0,-1)$. This triangulation is called the \defn{DKK triangulation} and comes equipped with a height function that makes it regular~\cite{DKK12}. Another possible subdivision consists on taking a series of \defn{reductions} on the graph which translate as cutting the flow polytope into smaller pieces that are combinatorially equivalent to other flow polytopes. This process gives the \defn{Postnikov–Stanley subdivision}~\cite{P1014}~\cite{S00}. The usefulness of these two subdivisions comes from the fact that when the graph is framed, \defn{framed Postnikov–Stanley subdivisions} become DKK triangulations~\cite{MMS19}. This gives a bijection between simplices of the DKK triangulation of netflow~$\bfi$ and integer points in the flow polytope of netflow~$\bfd$ where~$\bfd_i$ is the shifted indegree of the~$i$-th vertex. This bijection between two different flow polytopes has allowed for the recovering of certain posets as the dual of the interior faces of the DKK triangulation such the~$\nu$-Tamari lattice and principal order ideals of Young's lattice~\cite{BGMY23}.

\subsection*{Tropical Geometry}

\defn{Tropical geometry} comes from changing the usual operations of sums and multiplications respectively for minima and sums and the inclusion of infinity as an element~\cite{J21}. Although it has interesting links with economics~\cite{S15E} and mechanism design~\cite{CT18}, we are more interested in how it relates with classical discrete geometry. The change of operations and base set allows for new definitions of geometrical structures such as \defn{tropical polynomials}, \defn{tropical hypersurfaces}, and \defn{tropical varieties}. These objects have been studied in their own right and have also shown to have strong connections with convex geometry~\cite{J17}. For example, all \defn{point configurations} are in correspondence with tropical polynomials. Also, classical techniques from discrete geometry like the Cayley trick~\cite{S94}~\cite{HRS00}~\cite{DRS10} have seen repeated uses in the tropical context~\cite{DS04}~\cite{FR15}~\cite{J16}~\cite{JL16}~\cite{MS21}.

\section*{Contributions}

With the given context this thesis finds itself as a part of a bigger project that aims to answer the following question. Can we study permutree congruences in all Coxeter types? Although we do not give a complete answer to this question, we give three new points of view from which type~$A$ permutrees can be studied together with  other linked results we achieved using some of these tools for the~$s$-weak order. These contributions are contained in the following articles. \begin{itemize}
    \itemsep0em
    \item D. Tamayo Jiménez. Inversion and Cubic Vectors for Permutrees, 2023. arXiv:2308.05099,
    
    where we gave a direct constructive proof of the lattice property of permutree rotation lattices and a cubical embedding of permutreehedra. 
    \item V. Pilaud, V. Pons, and D. Tamayo Jiménez. Permutree Sorting. Algebraic Combinatorics,
    6(1):53–74, 2023,
    
    where we characterized minimal elements of the permutree classes in type~$A$ through their reduced words using automata and found leads for other Coxeter types ($B$ and~$D$).
    \item R.S. González D’León, A.H. Morales, E. Philippe, D. Tamayo Jiménez, and M. Yip.
    Realizing the~$s$-Permutahedron via Flow Polytopes, 2023. arXiv:2307.03474,
    
    where we gave a positive answer to a conjecture of Ceballos and Pons on the geometric realization of the s-weak order (when~$s$ has no zeros) using polytopal subdivisions of flow polytopes, sums of hypercubes, and tropical geometry.
    \item R.S. González D’León, A.H. Morales, E. Philippe, D. Tamayo Jiménez, and M. Yip. Flow
    Polytopes and Permutree Lattice Quotients of the~$s$-Weak Order. In preparation, 2023+,
    
    where we found a description of permutrees and their rotation lattices using subdivisions of flow polytopes.
\end{itemize}  The first three works have been presented in several international workshops, seminars, and conferences either as posters or presentations. The work in these articles uses ideas from the bracket vectors of binary trees, the language theoretic approach to Coxeter groups together with automata, and the combinatorics of flow polytopes supported by tropical geometry.

\section*{Thesis Outline}

The work presented in this thesis is divided into three parts. Part~\ref{part:prelim} describes the main combinatorial actors and the permutree problematic at the core of our work. Afterwards, Part~\ref{part:permutrees} presents two of our answers to this problematic in Chapters~\ref{chap:permutree_vectors} and~\ref{chap:permutree_sorting}. Finally, Part~\ref{part:sorder} deals with the use of Flow Polytopes in our context. Our contributions in this part are contained in Chapters~\ref{chap:sorder_realizations} and~\ref{chap:sorder_quotients}. Figure~\ref{fig:thesis_outline_EN} shows the dependencies between the contents of the thesis and describes the recommended reading order. The reader is invited to skip Chapters~\ref{chap:prelim_structures},~\ref{chap:prelim_weak_order}, and/or~\ref{chap:sorder_Flows}, if they are already familiar with the corresponding material. We have tried to make this thesis as self-contained as possible and thus the only real requirement to read it is knowledge of linear algebra. Experience with discrete geometry is not required but strongly recommended for intuition purposes. Still, throughout our work we have been invested on producing useful figures to better transmit our ideas, results, and in general, points of view from which we have approached the problems we have studied. We hope they aid the reader in times of need.

\begin{figure}[h!]
    \centering
    \includegraphics[scale=0.882]{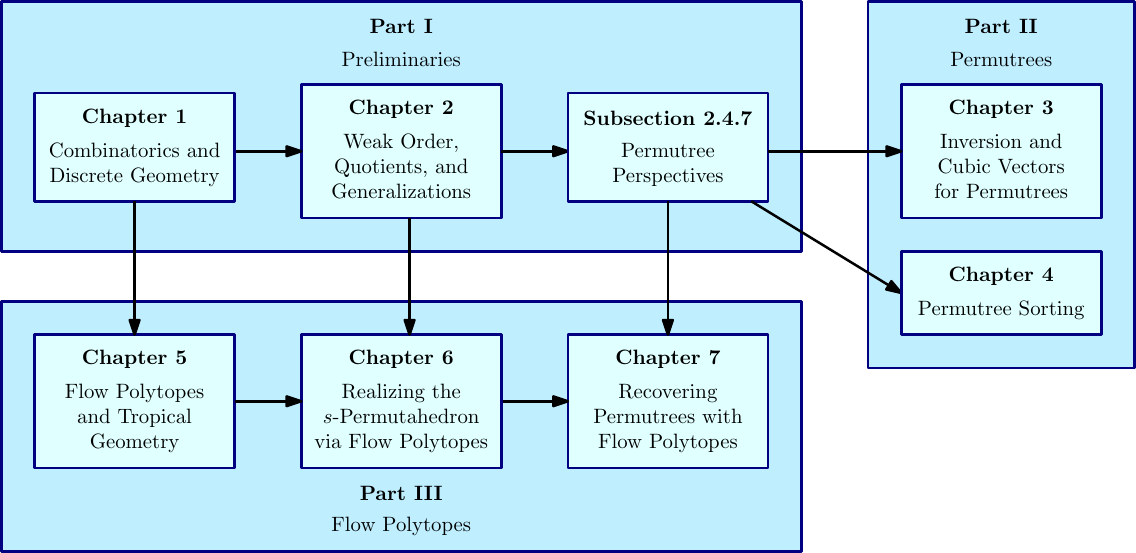}
    \caption{Thesis outline.}
    \label{fig:thesis_outline_EN}
\end{figure}

\subsection*{Preliminaries}

Part~\ref{part:prelim} is formed by two chapters introducing in different generality the main combinatorial concepts we use. Chapter~\ref{chap:prelim_structures} introduces first the concepts of \defn{partial orders}, \defn{lattices}, and \defn{lattice congruences} which are at the core of all parts of our work. Afterwards, we describe the bases of convex geometry including \defn{polytopes}, \defn{fans}, \defn{complexes}, and several classical operations and techniques involving them. Lastly we recall the bases of \defn{automata theory} and its capability of recognizing patterns in words. The convex geometry and automata theory setting gives us an advantageous point of view for the problems we tackle in the thesis. In Chapter~\ref{chap:prelim_weak_order} we present first the \defn{weak order on permutations} together with \defn{binary trees} and \defn{permutrees} while describing their similarities in terms of combinatorics, orders, polytopes, and how they relate to each other through lattice congruences. Afterwards, we recall the generalizations of the weak order onto the \defn{weak order of Coxeter groups} and the~\defn{$s$-weak order}. The generalization to Coxeter groups helps us describe the core problematic of this thesis which revolves around finding new combinatorial families through which we can study permutrees and their lattices (Perspective~\ref{pers:Coxeter_permutrees}).

\subsection*{Permutrees}

In Part~\ref{part:permutrees} we provide two answers to Perspective~\ref{pers:Coxeter_permutrees}. The first, presented in Chapter~\ref{chap:permutree_vectors} is based on the article~\cite{T23} and consists of two generalizations of the bracket vectors of binary trees to permutrees. We begin by defining \defn{inversion sets} for a permutree together with how they can be partitioned to give \defn{inversion vectors}. Like inversion sets for permutations, we characterize these sets in terms of transitivity, cotransitivity, and two additional conditions depending on the permutree decoration at hand (Lemma~\ref{lem:permutree_inversion_sets}). Following the steps of~\cite{HT72} with binary trees, we show that the permutree rotation order can be interpreted via the inclusion of inversion sets (Lemma~\ref{lem:inversion_set_inclusion}) and that the intersection of these sets together with a special condition defines a meet operation on the poset of permutrees (Theorem~\ref{thm:permutree_meet}). This allows us to recover in a constructive manner the result of~\cite{PP18} about the rotation order on permutrees having a lattice structure (Corollary~\ref{cor:permutrees_are_lattices}). Afterwards, we define \defn{cubic sets} and \defn{cubic vectors} for permutrees. This allows us to generalize the cubic embeddings of the permutahedron given in~\cite{BF71} and~\cite{RR02} and the associahedron as in~\cite{K93} for any permutreehedron (Theorem~\ref{thm:cubic_property_embedding}).

The second answer given in Chapter~\ref{chap:permutree_sorting} comes from the article~\cite{PPT23} where we characterize minimal elements of the permutree congruence classes using finite state automata that read reduced words. We start considering the case where the permutree decoration is given by a single oriented edge of the Coxeter diagram of type A. In such setting we define the automata~\defn{$\UU(j)$} and~\defn{$\DD(j)$} that read reduced words corresponding to permutations. These automata are relevant to our permutree setting because the property of a permutation having a reduced word accepted by them is shown to be equivalent to the pattern avoidance characterized by permutrees (Theorem~\ref{thm:pattern_avoidance_single}). After having this result, we devise an algorithm such that given a permutation, it returns a reduced word  accepted by the automaton at play (Algorithm~\ref{algo:permutree_sorting_simple}). We show that the property of this reduced word being a reduced word of the original permutation is equivalent to the permutation being minimal in its permutree congruence class (Corollary~\ref{coro:algorithm_single}). We also present how the set of accepted reduced words generates a tree structure within the Hasse diagram of the weak order (Theorem~\ref{thm:automata_generating_trees_simple}). 

Passing from the single orientations, we move on to the case where any edge of the Coxeter diagram can have at most one orientation.  In this situation we devise the automaton~\defn{$\PP(U,D)$} as the intersection of our previous automata and through it, we extend our previous results to this larger case of permutree congruences. That is, a reduced word being accepted by this automaton is shown to be equivalent to the corresponding permutation avoiding the patterns dictated by the permutree congruence (Corollary~\ref{coro:pattern_avoidance_product}). We define an algorithm (Algorithm~\ref{algo:permutree_sorting_multiple}) transforming a permutation to a reduced word with the property that the fact of the reduced word describes the permutation if and only if the permutation is minimal in its permutree congruence class (Theorem~\ref{coro:algorithm_multiple}). It is also shown that the set of accepted reduced words generates a tree structure within the Hasse diagram of the weak order (Theorem~\ref{thm:automata_generating_trees_multiple}).

With these results at hand, we study the maximal case where all the orientations of the Coxeter diagram have exactly one orientation. As this case coincides with the Cambrian congruences, we show that the event of a permutation being minimal in its permutree congruence class (or any of its equivalent events with pattern avoidance or acceptance in~$\PP(U,D)$) is equivalent to that permutation being Coxeter sortable and that the corresponding~$c$-sorting word is accepted by~$\PP(U,D)$ (Theorem~\ref{thm:coxter_sorting_permutrees}).

Finally, we propose a set of automata that have been checked computationally to characterize permutree minimal elements for Coxeter groups of types~$B$ and~$D$. In type~$B$ our proposal covers all possible ranks while in type~$D$ is covers only up to certain cases of~$n=5$. Based on the definition of~$c$-singletons from~\cite{HLT11}, we support our proposal by defining~\defn{$\delta$-singletons},~\defn{$\delta$-accepters},~\defn{$\delta$-maccrs} (minimal accepters), and~\defn{$\delta$-smaccrs} (shortest minimal accepters) which we conjecture allow with slight modifications for the definition of automata in any finite Coxeter group (Conjecture~\ref{conj:smaccr_automata}). We also present Conjectures~\ref{conj:unique_smaccr} and~\ref{conj:maccrs_from_smaccr} about the relation between maccrs and smaccrs together with computational evidence and examples in types~$D$ and~$H$.

\subsection*{Flow Polytopes}

Part~\ref{part:sorder} is dedicated to describe how triangulations of flow polytopes can realize distinct families of posets as~$1$-skeleta of simplicial complexes and how this can be used in our context together with other polytopal techniques. Chapter~\ref{chap:sorder_Flows} gives the necessary background on flow polytopes including the definitions of \defn{flows}, \defn{flow polytopes}, the \defn{Baldoni–Vergne–Lidskii formulas} for their volume, and \defn{Kostant partition function}. We recall the constructions of the \defn{Danilov–Karzanov–Koshevoy triangulations} and \defn{Postnikov–Stanley subdivisions} together with their relations when the PS subdivisions are framed. For DKK triangulations we refine Proposition~\ref{prop:DKKlem2_original} from~\cite{DKK12} that states a sufficient condition on routes for a height function to be admissible for the triangulation. We define the \defn{resolvent} for a conflict between two routes and introduce the concept of \defn{minimal conflicts}. This allows us to turn Proposition~\ref{prop:DKKlem2_original} into a necessary and sufficient condition (Lemma~\ref{lem:DKKlem2_us}) and then relax the necessary condition while keeping the if and only if (Lemma~\ref{lem:DKKlem2_us_pro}). Afterwards, we present the basic definitions of tropical geometry we require by concentrating ourselves in the geometrical constructions of \defn{tropical surfaces}, \defn{domes}, \defn{Newton polytopes}, and \defn{dual subdivisions} on point configurations. This chapter finishes with a note about how taking several point configurations, doing their \defn{Cayley embedding}, and applying \defn{the Cayley trick} relates their corresponding tropical constructions (Proposition~\ref{prop:arrangement_tropical_hypersurfaces}). While most of this background comes from~\cite{J21}, we refine Proposition~\ref{prop:bijections_dome_newton_polytope} that relates these definitions and defines a dimension reversing bijection between cells of tropical hypersurfaces and cells of dual subdivisions. We do this by showing that this bijection restricts to the bounded cells of the tropical hypersurface and the interior cells of the dual subdivision (Lemma~\ref{lem:tropical_dual_interior}).

Chapter~\ref{chap:sorder_realizations} is based completely on the article~\cite{GMPTY23} and focuses on answering Conjecture~\ref{conj:s-permutahedron} of realizing the~$s$-permutahedron as a polyhedral subdivision of a polytope combinatorially isomorphic to a zonotope for the case when~$s$ has no zeros.  For this we define two framed graphs called the \defn{oruga graph~$\oru_n$} and the~\defn{$s$-oruga graph~$\oru(s)$}. We prove that for the shifted indegree netflow~$\bfd$, the integer~$d$-flows in~$\oru(s)$ are in bijection with~$s$-Stirling permutations (Theorem~\ref{thm:bij_simplices_permutations}) and even in the case when~$s$ contains zeros, in bijection with~$s$-decreasing trees (Remark~\ref{rem:bij_simplices_trees}). Continuing with~$s$ being a composition, after using the bijection between integer~$\bfd$-flows of~$\oru(s)$ and maximal simplices in the DKK triangulation of the flow polytope of~$\fpol[\oru(s)](\bfi)$ with the basic netflow~$\bfi:=(1,0,\ldots,0,-1)$ from~\cite{MMS19}, we show that these maximal simplices are in bijection with Stirling~$s$-permutations (Lemma~\ref{lem:max_clique}) and that their adjacency encodes the~$1$-skeleton of the~$s$-weak order (Theorem~\ref{thm:cover_relations}). From here, we define a simplex associated to a face of the~$s$-permutahedron and show that this mapping defines an inclusion reversing isomorphism between the faces of the~$s$-permutahedron and simplices of the DKK triangulation of the flow polytope~$\fpol[\oru(s)](\bfi)$ (Corollary~\ref{cor:maximal_faces_perm}). This realizes the~$s$-permutahedron as a high dimensional polytope so to reduce its dimension we use \defn{the Cayley trick} as described in~\cite{S05} to obtain an inclusion reversing bijection between the~$s$-permutahedron and the interior cells of a mixed subdivision of a sum of hypercubes of varying dimension (Theorem~\ref{thm:bij_mixed_subdiv}). This realization has the desired dimension of the conjecture (Remark~\ref{rem:dim_n_minus_1}) but lacks explicit coordinates. To mend this we use the DKK formula to obtain an explicit a height function on the routes on~$\oru(s)$ (Lemma~\ref{lem:epsilonheight}). Consequently, we prove that there is an arrangement of tropical hypersurfaces whose tropical dual is the mixed subdivision of hypercubes we obtain from applying the Cayley trick to~$\oru(s)$ (Theorem~\ref{thm:arr_trop_hypersurfaces_s-perm}). This allows us to obtain the~$s$-permutahedron through the polyhedral complex of bounded cells of this tropical arrangement (Theorem~\ref{thm:bij_trop_arr}). As immediate consequences we describe several properties of this realization including its vertices (Theorem~\ref{thm:vertices}), containing hyperplane (Corollary~\ref{cor:s_containging_hyperplane}), edge directions showing that it is a generalized permutahedron (Theorem~\ref{thm:edges}), supporting vertices and hyperplanes (Lemma~\ref{lem:support}), and the fact that it is indeed the translation of a zonotope that is isomorphic to a permutahedron (Theorem~\ref{thm:s_realization_zonotope}). The chapter finishes with certain results and remarks on how the enumeration techniques in flow polytopes decompose the number of~$s$-combinatorial objects (Corollary~\ref{cor:identitise s-trees}).

Finally, in Chapter~\ref{chap:sorder_quotients} we use the flow polytope techniques to give a third answer to Perspective~\ref{pers:Coxeter_permutrees} for type~$A$ permutrees. This answer consists on the description of permutree rotation lattices through DKK triangulations of flow polytopes. We begin by defining \defn{M-moves} on~$\oru_n$ which create new framed graphs called the~\defn{$\delta$-bicho graphs~$\bic_\delta$} according to the permutree decoration at play. Among them, we have our \defn{oruga graph~$\oru_n$}, the \defn{caracol graph~$\car_n$} of~\cite{BGHHKMY19}, and a new \defn{mariposa graph~$\mar_n$} (Remark~\ref{rem:all_bichos_graphs}). We show that integer~$\bfd$-flows in~$\bic_\delta$ and~$\delta$-permutrees have the same cardinalities (Theorem~\ref{cor:vol_bic_bfi_permutrees}) and that the refinement order on permutree decorations determines the structure of maximal cliques of coherent routes between distinct~$\delta$-bicho graphs (Lemma~\ref{lem:cliques_through_M_moves}). With this in hand we prove that the simplices of the DKK triangulation of~$\bic_\delta$ are in bijection with~$\delta$-permutrees (Theorem~\ref{thm:permutree_to_clique}) and that the permutree rotation lattice is encoded by the adjacencies of the simplices in the DKK triangulation (Corollary~\ref{cor:rotation_to_adjacency}).

\subsection*{Conjectures and Perspectives}

Throughout the work of this thesis we made extensive use of the open source software SageMath~\cite{SAGE} for implementations and computations on the combinatorial objects we studied. This allowed us to obtain an intuition around the problems we worked on and define concretely our results. In particular, we found computational evidence for several  combinatorial phenomena that we leave here as Conjectures~\ref{conj:unique_smaccr},~\ref{conj:smaccr_automata},~\ref{conj:maccrs_from_smaccr},~\ref{conj:nonee_downn_flows_2}, and~\ref{conj:bicho_recursion}.

In a more general way, we also present several directions for future work that we could take following the problematics treated in this thesis. For some of them we have partial results or a strong intuition while others just state the natural next step to follow. We give them in Perspectives~\ref{pers:Coxeter_permutrees},~\ref{pers:sorting_network},~\ref{pers:s_associahedra},~\ref{pers:other_framings},~\ref{pers:graphs_for_zeroes},~\ref{pers:other_realizations},~\ref{pers:s_m_moves},~\ref{pers:s_permutrees_join_irreds},~\ref{pers:s_permutrees_combinatorics}, and~\ref{pers:other_types}.
% Author : Daniel Tamayo
% Created : 18/03/2023

% Killing chktex
% chktex-file 3
% chktex-file 9
% chktex-file 12
% chktex-file 17
% chktex-file 24
% chktex-file 25
% chktex-file 36
% chktex-file 40

% \pagenumbering{roman}

\chapter*{Introduction}\label{part:IntroductionFR}
\addcontentsline{toc}{part}{Introduction (Fran\c{c}ais)}
\addcontentsline{lof}{part}{Introduction (Fran\c{c}ais)}
\markboth{Introduction}{Introduction}

\pagestyle{fancy}

\section*{Contexte}

Ce mémoire se trouve dans le domaine de la combinatoire, à l'intersection de la combinatoire algébrique et la combinatoire géométrique. Autrement dit, il repose sur les connexions entre familles dotées d'opérations algébriques comme les groupes finis et les ensembles partiellement ordonnés, et des structures de la géométrie discrète comme les polytopes.

La combinatoire s'intéresse à prendre des objets discrets et à étudier leurs motifs. De manière classique, on peut la considérer comme un domaine qui s'intéresse au comptage d'objets et de phénomènes, ou inversement, à la recherche d'un objet ou d'une propriété qui décrit une séquence de nombres. Par nature, la combinatoire apparaît dans nombreuses branches des mathématiques, que ce soit en tant qu'acteur principal ou secondaire. Des applications de la combinatoire peuvent être trouvées en topologie algébrique~\cite{MK46}~\cite{K08}, en théorie des nombres~\cite{TV06} et même en physique théorique et statistique~\cite{T20}~\cite{R69}. Comme elle étudie des structures discrètes, la combinatoire a établi une forte connexion avec l'informatique. Par exemple, l'étude des algorithmes, leurs complexités et optimisations est intrinsèquement combinatoire. Réciproquement, de nombreux résultats combinatoires proviennent de l'analyse algorithmique et de l'exploration informatique. Plus concrètement, des domaines tels que l'optimisation linéaire en nombres entiers, la théorie des graphes et les algorithmes de tri font appel de manière symbiotique aux idées provenant de la combinatoire et de l'informatique. Notre travail n'échappe pas à cette réalité, car la majeure partie, voire la totalité de nos idées, ont été influencée par des expériences faites dans le logiciel libre SageMath~\cite{SAGE}.

\subsection*{Ordre faible}

La famille combinatoire au cœur de notre travail est celle des \defn{permutations}. Ces objets font partie des plus simples de la combinatoire. Plus précisément, une permutation de taille~$n$ consiste à prendre~$n$ objets dans un certain ordre et à les réarranger dans un nouvel ordre. Dit autrement, on peut les considérer comme les bijections de~$[n]:=\{1,2,\ldots,n\}$ sur lui-même. Ce point de vue confère aux permutations une structure algébrique où la multiplication de permutations correspond simplement à la composition de leurs bijections correspondantes. Cela forme une structure algébrique appelée le \defn{groupe symétrique}, où chaque permutation est présentée par une réarrangement des nombres~$1$ à~$n$. Une façon rapide de distinguer les permutations est de vérifier quelles paires~$(i,j)$ ont été inversées. Toute paire qui se trouve dans cette situation est appelée une \defn{inversion}. L'ensemble de telles paires est appelé l'\defn{ensemble d'inversions} d'une permutation et définit une relation d'ordre partiel sur les permutations en fonction de l'inclusion de leurs ensembles d'inversions. Cela s'appelle l'\defn{ordre faible} sur les permutations~\cite{B22}.

Les permutations adjacentes dans l'ordre faible peuvent être exprimées par l'inversion d'une seule paire~$(i,i+1)$. Nous disons que les permutations correspondant à ces réarrangements sont les \defn{transpositions simples} de l'ordre faible. Avec ça on peut montrer que, étant donné deux permutations dans l'ordre faible, il est toujours possible de trouver une permutation maximale (resp. minimale) unique qui est plus petite (resp. plus grande) que les deux. Ces opérations transforment l'ordre faible en un ensemble partiellement ordonné (i.e.\ un \defn{poset}) avec une structure algébrique appelée \defn{treillis}~\cite{GR63}.

Chaque permutation~$\sigma$ de~$[n]$ peut être associée au point~$(\sigma(1),\ldots,\sigma(n))$ dans l'espace~$\RR^n$. Si on prend l'enveloppe convexe de cette configuration de points, le polytope résultant est appelé le \defn{permutaèdre}, dont les propriétés reflètent les phénomènes de l'ordre faible~\cite{S911}~\cite{GG77}. Par exemple, en orientant le permutaèdre dans une direction particulière, le graphe orienté constitué par son~\defn{$1$-squelette} (i.e.\ sommets et arêtes) correspond précisément à l'ordre faible. Par ailleurs, ce processus pour obtenir le permutaèdre montre que ses faces sont indexées par les partitions ordonnées de~$[n]$. Une autre façon géométrique de présenter le permutaèdre et la structure de l'ordre faible consiste à partitionner les ensembles d'inversions des permutations en fonction du plus petit élément dans une inversion et prendre les cardinalités de ces ensembles. La structure résultante est un \defn{plongement} du permutaèdre dans un cube~\cite{BF71}~\cite{RR02}.

\subsection*{Treillis de Tamari}

Une autre famille d'objets combinatoires sur laquelle repose une grande partie de notre travail est celle des \defn{arbres binaires}. Il s'agit d'arbres enracinés où chaque nœud a un parent et deux enfants. Les arbres binaires sont dénombrés par une des séquences de nombres les plus prolifiques, appelée les \defn{nombres de Catalan}~\cite[A000108]{OEIS}, et sont donc en bijection avec une myriade d'objets combinatoires~\cite{S15}.

Étant donné un arbre binaire avec~$n$ nœuds, nous pouvons étiqueter ses sommets en effectuant une marche dans le sens contraire des aiguilles d'une montre sur le graphe, en étiquetant le~$i$-ème sommet que nous visitons une deuxième fois lors de notre parcours. Cela s'appelle l'\defn{ordre infixe} des arbres binaires, et l'étiquetage résultant a la propriété que pour chaque sommet, son étiquette est supérieure aux étiquettes de son sous-arbre gauche et inférieure aux étiquettes de son sous-arbre droit. Cela nous permet de définir la \defn{rotation} d'une arête~$i\to j$ en une arête~$j\to i$, tout en maintenant la structure des sous-arbres de~$i$ et~$j$. Les rotations sont une opération classique utilisée pour équilibrer les arbres binaires de recherche (c'est-à-dire réduire leur hauteur au minimum) afin d'obtenir des algorithmes de tri efficaces~\cite{AL62}. Ces rotations définissent un ordre sur tous les arbres binaires appelé le \defn{treillis de Tamari}~\cite{T62}.

Pour obtenir une structure géométrique à partir des arbres binaires, il faut considérer des vecteurs où chaque coordonnée~$i$ est le produit des feuilles dans le sous-arbre gauche et les feuilles dans le sous-arbre droit du nœud étiqueté~$i$. L'enveloppe convexe de ces sommets est appelée l'\defn{associaèdre}~\cite{L04} et ses faces sont indexées par les arbres de Schröder~\cite{S11}. Comme précédemment, en orientant l'associaèdre dans une direction particulière, nous pouvons trouver le treillis de Tamari à travers son~$1$-squelette. À ce stade, une relation entre les arbres binaires et les permutations commence à apparaître, car l'associaèdre est un \defn{enlevoèdre}. Autrement dit, l'associaèdre peut être obtenu en enlevant certaines facettes du permutaèdre~\cite{SS93}. Comme pour les permutations, en changeant les coordonnées correspondant aux arbres binaires par des quantités dérivées des rotations subies et de leur nombre, on obtient un nouvel ensemble de vecteurs appelés \defn{vecteurs de crochet}. Ces vecteurs permettent une preuve constructive de la propriété du treillis de l'ordre de Tamari~\cite{HT72} et également un plongement cubique de l'associaèdre~\cite{K93}. Ces techniques ont été utilisées avec des généralisations et des structures liées au treillis de Tamari~\cite{CPS20}~\cite{C21}~\cite{FMN21}~\cite{C22}~\cite{CG22}~\cite{PP23}.

Une relation plus directe entre les arbres binaires et les permutations peut être trouvée grâce à l'algorithme d'insertion de~\cite{T97}~\cite{HNT05}. Ainsi, les fibres des arbres binaires sous cet algorithme sont des intervalles de permutations dont les élément minimaux évitent le motif~$312$. Ces fibres correspondent aux les fibres de l'algorithme du tri par pile~\cite{K73} et forment une relation de congruence dans l'ordre faible qui respecte les opérations de infimum et de supremum. On appelle ces congruences des \defn{congruences de treillis} et dans ce cas, elles conduisent au treillis de Tamari qui est l'ordre induit par les permutations minimales dans ces fibres. Cette congruence de treillis est connue sous le nom de \defn{congruence congruence sylvestre}.

\subsection*{Permutarbres}

La famille combinatoire qui motive cette thèse et que nous étudions sous différents angles est celle des \defn{permutarbres}~\cite{PP18}. Cette famille est suffisamment générale pour encoder les permutations, les arbres binaires, les \defn{arbres cambriens}~\cite{LP13}~\cite{CP17}, et les séquences binaires, tout en permettant de définir de nouveaux types d'arbres. Un permutarbre est constitué d'un arbre dirigé non enraciné avec des nœuds étiquetés par~$[n]$, où chaque nœud peut avoir un ou deux parents et un ou deux enfants, tandis que chaque étiquette d'un sommet satisfait une relation similaire à celle des arbres binaires via l'ordre infixe. Ainsi, les permutarbres sont caractérisés par le nombre de parents et d'enfants de chaque nœud, ce qui est appelé la \defn{décoration} du nœud. Comme les sommets sont étiquetés, cela regroupe les permutarbres en~$\delta$-permutarbres où~$\delta$ est un vecteur de décorations.

Dans la même lignée que~\cite{HNT05}, l'algorithme d'insertion de~\cite{PP18} établit une surjection des permutations sur les~$\delta$-permutarbres pour chaque décoration possible. Ses fibres décrivent une congruence de treillis appelée \defn{congruence des~$\delta$-permutarbres}. Les fibres des~$\delta$-permutarbres par cet algorithme sont des intervalles de permutations dont l'élément minimal évite le motif~$kij$ et/ou~$jki$ pour chaque~$j\in\{2,\ldots,n-1\}$ et $1\leq i<j<k\leq n$ en fonction de la décoration~$\delta_j$.

Comme pour les arbres binaires, les~$\delta$-permutarbres ont des rotations qui modifient leur structure locale au niveau d'une seule arête tout en maintenant le reste de l'arbre intact. Ces rotations définissent les posets de rotations des~$\delta$-permutarbres. Dans~\cite{PP18}, il est démontré démontré que ces posets sont des treillis mais la démonstration utilise des quotients de treillis pour montrer que ce poset est un sous-treillis de l'ordre faible. En général, ils sont appelés \defn{treillis de $\delta$-permutarbres} et généralisent les treillis cambriens en type~$A$ de~\cite{R06}.

Peu importe la décoration, les permutarbres peuvent être associés à un vecteur dont les coordonnées correspondent à une manipulation du nombre de nœuds dans leurs sous-arbres droit et gauche. Cela donne naissance à un polytope appelé le \defn{$\delta$-permusylvèdre}. En orientant ce polytope dans une direction particulière, on retrouve leur treillis correspondant. Bien que les~$\delta$-permutarbres et leurs congruences de permutarbres puissent être différents, pour certains sous-ensembles de décorations, leurs treillis sont isomorphes et pour d'autres, leurs permusylvèdres sont combinatoirement équivalents.

\subsection*{Groupes de Coxeter}

Nos idées sur les permutations et l'ordre faible ne constituent qu'une partie d'un schéma plus vaste décrit par les \defn{groupes de Coxeter}. Introduits dans~\cite{C34} et ensuite entièrement classifiés pour le cas fini dans~\cite{C35}, les groupes de Coxeter décrivent des groupes engendrés par des \defn{réflexions simples} provenant d'arrangements d'hyperplans~\cite{H90}. Ainsi, les éléments sont des séquences de réflexions simples appelées \defn{mots}, et les réflexions correspondent aux \defn{inversions}. Ces inversions définissent des ensembles d'inversions qui, à leur tour, définissent l'\defn{ordre faible} d'un groupe de Coxeter. Ainsi, les permutations ne sont qu'un cas particulier des groupes de Coxeter appelé les groupes de Coxeter de type~$A$. Il y a des familles combinatoires similaires qui peuvent être trouvées pour d'autres types tels que les \defn{permutations signées} et les \defn{permutations signées paires} pour les groupes de Coxeter de types~$B$ et~$D$ respectivement~\cite{BB06}. À notre connaissance, les autres groupes n'ont pas de descriptions combinatoires similaires. Néanmoins, la nature des groupes de Coxeter permet de les étudier par des moyens algébriques, géométriques, combinatoires ou par la théorie des langages. Cela a conduit à décrire les propriétés du langage des mots réduits dans les groupes de Coxeter à l'aide d'automates~\cite{BH93}~\cite{HNW16}.

En allant plus loin que la généralisation des permutations et de leurs propriétés, le contexte des groupes de Coxeter offre un espace plus large dans lequel les congruences de treillis peuvent être définies. Cela a été réalisé en considérant les groupes de Coxeter comme l'ensemble des régions d'arrangements d'hyperplans dans~\cite{R04}. Ensuite, le concept de~\defn{$c$-triage} (également connu sous le nom de \defn{triage de Coxeter}) a été introduit dans~\cite{R07a}. Cela a donné les \defn{éléments~$c$-triables} dont Reading a démontré plus tard qu'ils étaient les éléments minimaux des \defn{congruences cambriennes}~\cite{R07b}. Ces résultats ont été unifiés pour tous les groupes de Coxeter finis, indépendamment de leur type, dans~\cite{RS11}, puis résumés dans~\cite{R12}. Dans ce contexte, la congruence sylvestre est une congruence cambrienne de type~$A$, et ses éléments~$c$-triables coïncident avec les permutations triables par pile. De plus, toutes les congruences cambriennes de type~$A$ sont des congruences de permutarbres.

\subsection*{\texorpdfstring{$s$}{}-ordre faible}

D'autres généralisations possibles de l'ordre faible proviennent des multipermutations plutôt que des permutations. Cela signifie qu'étant donné une séquence d'entiers positifs~$r=(r_1,\ldots,r_n)$, une~\defn{$r$-permutation} est un réarrangement du mot~$1^{r_1}\cdots n^{r_n}$ où~$i^{r_i}$ représente la répétition de la lettre~$i$ un total de~$r_i$ fois. Venant d'un cadre géométrique dans~\cite{RR02}, elles ont été utilisées pour décrire quelques plongements du~\defn{combinohèdre}, qui était connu pour provenir d'une structure de treillis appelée le \defn{treillis multinomial}~\cite{BB94}. Indépendamment, dans un cadre plus algébrique, le cas où tous les~$r_i=m$ pour un~$m\geq 1$ a été introduit dans~\cite{NT20} pour étudier la \defn{congruence $m$-sylvestre}. En renommant~$r$ en~$k$, les~$k$-permutations évitant le motif~$121$ sont appelées \defn{$k$-permutations de Stirling}. Ces permutations ont suscité un vif intérêt et de nombreuses de leurs propriétés ont été déterminées grâce à leurs statistiques et leurs bijections avec d'autres familles combinatoires~\cite{P94a}~\cite{P94b}~\cite{P94c}~\cite{KP11}~\cite{JKP11}~\cite{RW15}~\cite{G19}.

Cependant, en dehors de toutes ces constructions, il en existe une plus récente liée à ces idées et qui nous intéresse appelée le~\defn{$s$-ordre faible}~\cite{CP19}~\cite{CP22}. En prenant une composition faible~$s=(s_1,\ldots,s_n)$ (c'est-à-dire~$s_i\in\ZZ_{\geq 0}$), les~\defn{$s$-arbres décroissants} sont des arbres enracinés étiquetés avec~$n$ nœuds tels que chaque nœud a exactement un parent et~$s_i+1$ enfants, et tous les descendants ont des étiquettes plus petites. Ces arbres ont des \defn{inversions} définies à partir de la position relative entre les nœuds. Comme précédemment, ces inversions définissent un ordre sur les~$s$-arbres décroissants appelé le~\defn{$s$-ordre faible}. Pour cet ordre il a été montré de manière constructive qu'il a une structure de treillis et a une structure géométrique appelée le \defn{$s$-permutaèdre}. De plus, il existe un ordre sous-jacent appelé le~\defn{$s$-treillis de Tamari} avec une contrepartie géométrique appelée le~\defn{$s$-associaèdre}. Ils correspondent au~\defn{$\nu$-treillis de Tamari} de~\cite{PV17} et au~\defn{$\nu$-associaèdre} de~\cite{CPS19} pour certaines valeurs de~$\nu$. Chaque fois que~$s$ est une composition, les~$s$-arbres décroissants sont en bijection avec les~\defn{$s$-permutations de Stirling}, c'est-à-dire les $s$-permutations évitant le motif~$121$. Le~$s$-ordre faible coïncide avec l'ordre faible des permutations lorsque~$s_i=1$ pour toutes les coordonnées de~$s$, et avec le \defn{treillis métasylvestre} de~\cite{P15} lorsque~$s_i=m$ avec~$m\geq 1$ pour toutes les coordonnées de~$s$.

\subsection*{Polytopes de flot}

Une famille de polytopes que nous considérons pour une bonne partie de notre travail en raison de leur polyvalence est celle des polytopes de flot. Ils proviennent d'un graphe orienté sans boucles où chaque sommet est équipé d'un entier qui représente le \defn{flot net} traversant. Autrement dit, la différence entre le flot entrant et le flot sortant déterminée respectivement par les arêtes entrantes et sortantes de chaque sommet doit être égale à ce flot net. Cela limite les \defn{flots} possibles qu'on peut assigner aux arêtes du graphe tout en étant cohérents avec le flot net. En considérant les flots comme des points dans l'espace des arêtes du graphe, on obtain un polytope appelé le \defn{polytope de flot}. Étant donné que les flots et les flots nets sont des modèles de réseaux, les polytopes de flot ont été largement utilisés dans les problèmes d'optimisation. Par conséquent, de nombreuses recherches ont été faites sur la combinatoire de ces polytopes~\cite{RH70}~\cite{FRD71}~\cite{CG78}~\cite{H03}~\cite{MM19}~\cite{GHMY21}. En particulier, le \defn{volume normalisé} du polytope de flot se décompose de manière agréable avec la \defn{formule de partition de Kostant} à travers des \defn{formules de Baldoni-Vergne-Lidskii}~\cite{BV08}. Cela a permis de mettre en évidence de nouvelles identités démontrant de nombreux nombres connus via des décompositions en produits, comme cela est illustré dans~\cite{BGHHKMY19}.

Nous nous intéressons aux polytopes de flot en particulier en raison de leurs possibles subdivisions. En particulier, en dotant chaque sommet d'un graphe de relations d'ordre totales indépendantes pour ses arêtes entrantes et sortantes (c'est-à-dire un \defn{cadre}), les cliques de routes cohérentes du graphe nous donnent une triangulation du polytope de flot lorsque le flux net est~$\bfi:=(1,0,\ldots,0,-1)$. Cette triangulation est appelée la \defn{triangulation DKK} et est équipée d'une fonction de hauteur qui la rend régulière~\cite{DKK12}. Une autre subdivision possible consiste à effectuer une série de \defn{réductions} sur le graphe, ce qui se traduit par la découpe du polytope de flot en plusieurs morceaux qui sont combinatoirement équivalents à d'autres polytopes de flot. Ce processus donne la \defn{subdivision de Postnikov-Stanley}~\cite{P1014}~\cite{S00}. L'utilité de ces deux subdivisions réside dans le fait que lorsque le graphe est encadré, les \defn{subdivisions encadrées de Postnikov-Stanley} deviennent des triangulations DKK~\cite{MMS19}. Cela établit une bijection entre les simplexes de la triangulation DKK du flot net~$\bfi$ et les points entiers du polytope de flot du flot net~$\bfd$, où~$\bfd_i$ est le degré d'entrée décalé du~$i$-ème sommet. Cette bijection entre deux polytopes de flot différents a permis de retrouver certains treillis comme le dual des faces intérieures de la triangulation DKK, tels que le treillis de Tamari~$\nu$ et les idéaux d'ordre principal du treillis de Young~\cite{BGMY23}.

\subsection*{Géométrie tropicale}

La \defn{géométrie tropicale} provient du remplacement des opérations habituelles de l'addition et de la multiplication respectivement par les opérations du minimum et de la somme, et l'inclusion de l'infini comme élément~\cite{J21}. Bien qu'elle ait des liens intéressants avec l'économie~\cite{S15E} et la conception de mécanismes~\cite{CT18}, nous sommes intéressés par sa relation avec la géométrie discrète classique. Le changement des opérations et de l'ensemble de base permet de définir de nouvelles structures géométriques telles que les \defn{polynômes tropicaux}, les \defn{hypersurfaces tropicales} et les \defn{variétés tropicales}. Ces objets ont été étudiés en tant que tels et ont également montré des liens avec la géométrie convexe~\cite{J17}. Par exemple, toutes les \defn{configurations de points} sont en correspondance avec des polynômes tropicaux. De plus, les techniques classiques de la géométrie discrète telles que la méthode de Cayley~\cite{S94}~\cite{HRS00}~\cite{DRS10} ont été utilisées à plusieurs reprises dans le contexte tropical~\cite{DS04}~\cite{FR15}~\cite{J16}~\cite{JL16}~\cite{MS21}.

\section*{Contributions}

Avec ce contexte donné, cette thèse s'inscrit dans le cadre d'un projet plus vaste visant à répondre à la question suivante: pouvons-nous étudier les congruences des permutarbres dans tous les types de Coxeter? Bien que nous ne donnions pas une réponse complète à cette question, nous proposons trois nouveaux points de vue à partir desquels les permutarbres de type A peuvent être étudiés, ainsi que d'autres résultats liés que nous avons obtenus en utilisant certains de ces outils pour le~$s$-ordre faible. Ces contributions sont contenues dans les articles suivants: \begin{itemize}
    \itemsep0em
    \item D. Tamayo Jiménez. Inversion and Cubic Vectors for Permutrees, 2023. arXiv:2308.05099,
    
    où nous avons donné une preuve constructive de la propriété de treillis pour les treillis des rotations des permutarbres et une incorporation cubique des permusylvèdres.
    \item V. Pilaud, V. Pons, et D. Tamayo Jiménez. Permutree Sorting. Algebraic Combinatorics, 6(1):53-74, 2023, 
    
    où nous avons caractérisé les éléments minimaux des classes des permutarbres de type~$A$ à l'aide de leurs mots réduits en utilisant des automates, et trouvé des pistes pour d'autres types de Coxeter ($B$ et~$D$).
    \item R.S. González D’León, A.H. Morales, E. Philippe, D. Tamayo Jiménez, and M. Yip.
    Realizing the~$s$-Permutahedron via Flow Polytopes, 2023. arXiv:2307.03474,

    où nous avons donné une réponse positive à une conjecture de Ceballos et Pons sur la réalisation géométrique de l'ordre faible s (lorsque~$s$ ne contient pas de zéros) en utilisant des subdivisions polytopales de flow polytopes, des sommes d'hypercubes et de la géométrie tropicale.
    \item R.S. González D’León, A.H. Morales, E. Philippe, D. Tamayo Jiménez, and M. Yip. Flow
    Polytopes and Permutree Lattice Quotients of the~$s$-Weak Order. In preparation, 2023+,

    où nous avons trouvé une description des permutarbres et de leurs treillis de rotations en utilisant les subdivisions de flow polytopes. 
\end{itemize}  Les trois premiers travaux ont été présentés dans plusieurs ateliers, séminaires et conférences internationales, sous forme de posters ou de présentations. Les travaux contenus dans ces articles s'appuient sur des idées telles que les vecteurs de parenthèses des arbres binaires, la théorie du langage dans les groupes de Coxeter avec automates, et la combinatoire des flow polytopes soutenue par la géométrie tropicale.

\section*{Plan de la thèse}

Le travail présenté dans cette thèse est divisé en trois parties. La Partie~\ref{part:prelim} décrit les principaux outils combinatoires et la problématique des permutarbres qui est au cœur de notre travail. Ensuite, la Partie~\ref{part:permutrees} présente deux de nos réponses à cette problématique dans les chapitres~\ref{chap:permutree_vectors} et~\ref{chap:permutree_sorting}. Enfin, la Partie~\ref{part:sorder} traite sur l'utilisation des polytopes de flot dans notre contexte. Nos contributions dans cette partie sont contenues dans les chapitres~\ref{chap:sorder_realizations} et~\ref{chap:sorder_quotients}. La figure~\ref{fig:thesis_outline_FR} montre les dépendances entre les contenus de la thèse et décrit l'ordre de lecture recommandé. Le lecteur est invité à sauter les chapitres~\ref{chap:prelim_structures},~\ref{chap:prelim_weak_order} et/ou~\ref{chap:sorder_Flows} s'il est déjà familier avec le matériel correspondant. Nous avons essayé de rendre cette thèse aussi autonome que possible, de sorte que la seule exigence réelle pour la lire est la connaissance de l'algèbre linéaire. De l'expérience en géométrie discrète n'est pas nécessaire, mais fortement recommandée pour avoir une meilleure intuition de notre contexte. Cependant, tout au long de notre travail, nous avons nous avons pris soin de proposer des figures utiles afin de mieux communiquer nos idées, nos résultats et, en général, les points de vue à partir desquels nous avons abordé les problèmes que nous avons étudiés. Nous espérons que le lecteur les trouvera utiles.

\begin{figure}[h!]
    \centering
    \includegraphics[scale=0.882]{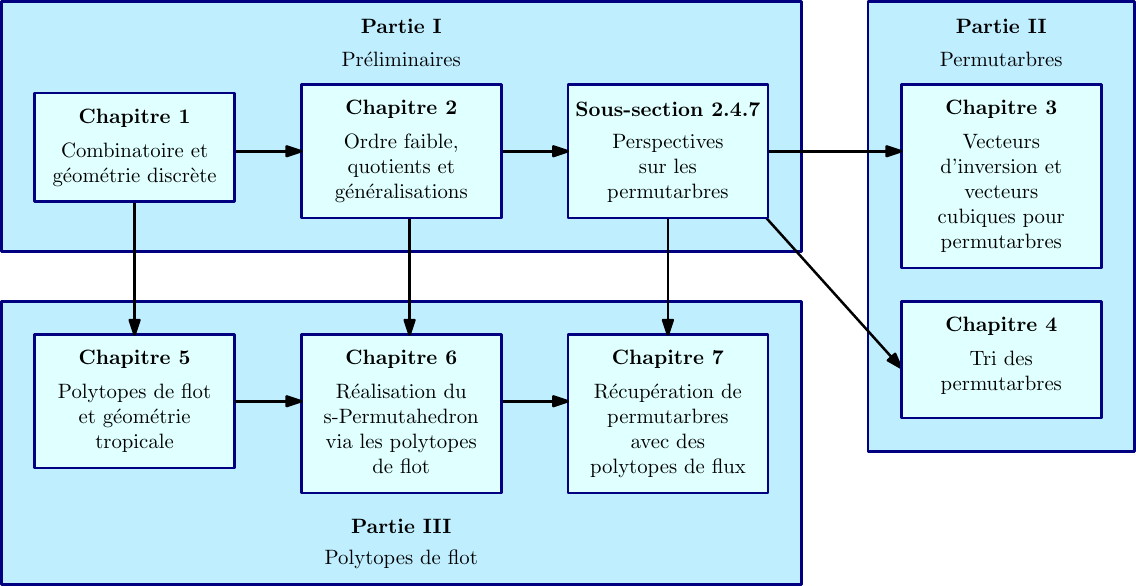}
    \caption{Plan de la thèse.}
    \label{fig:thesis_outline_FR}
\end{figure}

\subsection*{Préliminaires}

La Partie~\ref{part:prelim} se compose de deux chapitres qui introduisent, à différents niveaux de généralité, les principaux concepts combinatoires que nous utilisons. Le chapitre~\ref{chap:prelim_structures} commence par introduire les concepts d'\defn{ordres partiels}, de \defn{treillis} et de \defn{congruences de treillis} qui sont au cœur de toutes les parties de notre travail. Ensuite, nous décrivons les bases de la géométrie convexe, y compris les \defn{polytopes}, les \defn{cônes}, les \defn{complexes}, ainsi que plusieurs opérations et techniques classiques qui les concernent. Enfin, nous rappelons les bases de la \defn{théorie des automates} et sa capacité à reconnaître des motifs dans les mots. Le cadre de la géométrie convexe et de la théorie des automates nous offre un point de vue avantageux pour aborder les problèmes traités dans cette thèse.

Dans le chapitre~\ref{chap:prelim_weak_order}, nous présentons d'abord l'\defn{ordre faible sur les permutations}, ainsi que les \defn{arbres binaires} et les \defn{permutarbres}, en décrivant leurs similarités en termes de combinatoire, d'ordres et de polytopes, ainsi que leurs relations mutuelles à travers des congruences de treillis. Ensuite, nous rappelons les généralisations de l'ordre faible à l'\defn{ordre faible des groupes de Coxeter} et à le~\defn{$s$-ordre faible}. La généralisation aux groupes de Coxeter nous aide à décrire le problème central de cette thèse, qui consiste de trouver nouvelles familles combinatoires à travers lesquelles nous pouvons étudier les permutarbres et leurs treillis (Perspective~\ref{pers:Coxeter_permutrees}).

\subsection*{Permutarbres}

Dans la Partie~\ref{part:permutrees}, nous fournissons deux réponses à la Perspective~\ref{pers:Coxeter_permutrees}. La première, présentée dans le chapitre~\ref{chap:permutree_vectors}, est basée sur l'article~\cite{T23} et consiste en deux généralisations des vecteurs de parenthèses des arbres binaires aux permutarbres. Nous commençons par définir les \defn{ensembles d'inversions} pour un permutarbre, ainsi que la façon dont ils peuvent être partitionnés pour donner des \defn{vecteurs d'inversions}. Comme les ensembles d'inversions pour les permutations, nous caractérisons ces ensembles en termes de transitivité, de cotransitivité et de deux conditions supplémentaires en fonction de la décoration du permutarbre étudié (Lemme~\ref{lem:permutree_inversion_sets}). En suivant les étapes de~\cite{HT72} avec les arbres binaires, nous montrons que l'ordre de rotation des permutarbres peut être interprété via l'inclusion des ensembles d'inversions (Lemme~\ref{lem:inversion_set_inclusion}) et que l'intersection de ces ensembles, accompagnée d'une condition spéciale, définit une opération d'infimum sur le poset des permutarbres (Théorème~\ref{thm:permutree_meet}). Cela nous permet de retrouver de manière constructive le résultat de~\cite{PP18} selon lequel l'ordre de rotation sur les permutarbres a une structure de treillis (Corollaire~\ref{cor:permutrees_are_lattices}). Ensuite, nous définissons les \defn{ensembles cubiques} et les \defn{vecteurs cubiques} pour les permutarbres. Cela nous permet de généraliser les plongements cubiques du permutaèdre donnés dans~\cite{BF71} et~\cite{RR02}, ainsi que l'associaèdre comme dans~\cite{K93}, pour n'importe quel permusylvèdre (Théorème~\ref{thm:cubic_property_embedding}).

La deuxième réponse, présentée dans le chapitre~\ref{chap:permutree_sorting}, provient de l'article~\cite{PPT23} où nous caractérisons les éléments minimaux des classes de congruence des permutarbres à l'aide d'automates finis qui lisent des mots réduits. Nous commençons par considérer le cas où la décoration des permutarbres est donnée par une seule arête orientée du diagramme de Coxeter de type A. Dans ce cadre, nous définissons les automates \defn{$\UU(j)$} et \defn{$\DD(j)$} qui lisent des mots réduits correspondant à des permutations. Ces automates sont pertinents dans notre cadre de permutarbres car il est démontré que la propriété d'une permutation ayant un mot réduit accepté par ces automates est équivalente à l'évitement de motifs caractérisé par les permutarbres (Théorème~\ref{thm:pattern_avoidance_single}). Après avoir obtenu ce résultat, nous concevons un algorithme qui, étant donnée une permutation, renvoie un mot réduit accepté par l'automate en jeu (Algorithme~\ref{algo:permutree_sorting_simple}). Nous montrons que la propriété selon laquelle ce mot réduit est un mot réduit de la permutation d'origine est équivalente à ce que la permutation soit minimale dans sa classe de congruence des permutarbres (Corollaire~\ref{coro:algorithm_single}). Nous montrons également comment l'ensemble des mots réduits acceptés génère une structure d'arbre dans le diagramme de Hasse de l'ordre faible (Théorème~\ref{thm:automata_generating_trees_simple}).

Nous passons ensuite au cas où chaque arête du diagramme de Coxeter peut avoir au plus une orientation. Dans cette situation, nous concevons l'automate \defn{$\PP(U,D)$} comme l'intersection de nos automates précédents, et à travers celui-ci, nous étendons nos résultats précédents à ce cas plus large des congruences des permutarbres. Autrement dit, il est montré que le fait qu'un mot réduit soit accepté par cet automate est équivalent à la propriété que la permutation correspondante évite les motifs dictés par la congruence des permutarbres (Corollaire~\ref{coro:pattern_avoidance_product}). Nous définissons un algorithme (Algorithme~\ref{algo:permutree_sorting_multiple}) qui transforme une permutation en un mot réduit ayant la propriété que ce mot réduit décrit la permutation si et seulement si la permutation est minimale dans sa classe de congruence des permutarbres (Théorème~\ref{coro:algorithm_multiple}). Il est également montré que l'ensemble des mots réduits acceptés génère une structure d'arbre dans le diagramme de Hasse de l'ordre faible (Théorème~\ref{thm:automata_generating_trees_multiple}).

Avec ces résultats en main, nous étudions le cas maximal où toutes les orientations du diagramme de Coxeter ont exactement une orientation. Comme ce cas coïncide avec les congruences Cambriennes, nous montrons que le fait qu'une permutation soit minimale dans sa classe de congruence des permutarbres (ou tout autre événement équivalent avec l'évitement de motifs ou l'acceptation dans~$\PP(U,D)$) est équivalent à ce que cette permutation soit Coxeter triable et que le~$c$-sorting mot correspondant est accepté par~$\PP(U,D)$ (Théorème~\ref{thm:coxter_sorting_permutrees}).

Enfin, nous proposons un ensemble d'automates dont on a vérifié de manière calculatoire qu'ils caractérisent les éléments minimaux des permutarbres pour des groupes de Coxeter de types~$B$ et~$D$. Dans le cas de type~$B$, notre proposition couvre tous les rangs possibles, tandis que dans le cas de type~$D$, elle ne couvre que certains cas jusqu'à~$n=5$. En nous basant sur la définition des~$c$-singletons de~\cite{HLT11}, nous définissons par ailleurs les~\defn{$\delta$-singletons}, les~\defn{$\delta$-accepteurs}, les~\defn{$\delta$-maccrs} (accepteurs minimaux) et les~\defn{$\delta$-smaccrs} (accepteurs minimaux les plus courts) et nous conjecturons qu'ils permettent, avec de légères modifications, la définition d'automates dans n'importe quel groupe de Coxeter fini (Conjecture~\ref{conj:smaccr_automata}). Nous présentons également les conjectures~\ref{conj:unique_smaccr} et~\ref{conj:maccrs_from_smaccr} sur la relation entre les maccrs et les smaccrs, accompagnées d'explorations informatiques et d'exemples dans les types~$D$ et~$H$.

\subsection*{Polytopes de flot}

La partie~\ref{part:sorder} est consacrée à décrire comment les triangulations des polytopes de flot peuvent réaliser des familles distinctes de posets en tant que~$1$-squelettes de complexes simpliciaux, et comment cela peut être utilisé dans notre contexte en combinaison avec d'autres techniques polytopales. Le Chapitre~\ref{chap:sorder_Flows} donne les bases nécessaires sur les polytopes de flot, y compris les définitions de \defn{flots}, de \defn{polytopes de flot}, des \defn{formules de Baldoni–Vergne–Lidskii} pour leur volume et de la \defn{fonction de partition de Kostant}. Nous rappelons les constructions des \defn{triangulations de Danilov–Karzanov–Koshevoy} et des \defn{sous-divisions de Postnikov–Stanley} ainsi que leurs relations lorsque les sous-divisions PS sont encadrées. Pour les triangulations DKK, nous affinons la Proposition~\ref{prop:DKKlem2_original} de~\cite{DKK12} qui énonce une condition suffisante sur les routes pour qu'une fonction de hauteur soit admissible pour la triangulation. Nous définissons la \defn{résolvante} pour un conflit entre deux routes et introduisons le concept de \defn{conflits minimaux}. Cela nous permet de transformer la Proposition~\ref{prop:DKKlem2_original} en une condition nécessaire et suffisante (Lemma~\ref{lem:DKKlem2_us}) et ensuite de relâcher la condition nécessaire tout en conservant le si et seulement si (Lemma~\ref{lem:DKKlem2_us_pro}). Ensuite, nous présentons les définitions de base de la géométrie tropicale dont nous avons besoin en nous concentrant sur les constructions géométriques de \defn{surfaces tropicales}, de \defn{domes}, de \defn{polytopes de Newton} et de \defn{sous-divisions duales} sur des configurations de points. Ce chapitre se termine par une note sur la manière de prendre plusieurs configurations de points, de faire leur \defn{plongement de Cayley} et d'appliquer \defn{la méthode de Cayley} pour relier leurs constructions tropicales correspondantes (Proposition~\ref{prop:arrangement_tropical_hypersurfaces}). Bien que la plupart de ces éléments de base proviennent de~\cite{J21}, nous affinons la Proposition~\ref{prop:bijections_dome_newton_polytope} qui relie ces définitions et définit une bijection qui inverse les dimensions entre les cellules des hypersurfaces tropicales et les cellules des sous-divisions duales. Nous le faisons en montrant que cette bijection se restreint aux cellules bornées de l'hypersurface tropicale et aux cellules intérieures de la sous-division duale (Lemme~\ref{lem:tropical_dual_interior}).

Le chapitre~\ref{chap:sorder_realizations} est entièrement basé sur l'article~\cite{GMPTY23} et se concentre sur la réponse à la Conjecture~\ref{conj:s-permutahedron} concernant la réalisation du~$s$-permutahèdre en tant que subdivision polyédrale d'un polytope qui est combinatorialement isomorphe à un zonotope, dans le cas où~$s$ ne contient pas de zéros. Pour cela, nous définissons deux graphes encadrés appelés le \defn{graphe oruga~$\oru_n$} et le \defn{graphe~$\oru(s)$} pour le cas général. Nous démontrons que pour le flot net de degré d décalé~$\bfd$, les~$\bfd$-flots entiers dans~$\oru(s)$ sont en bijection avec les~$s$-permutations de Stirling (Théorème~\ref{thm:bij_simplices_permutations}), et même dans le cas où~$s$ contient des zéros, en bijection avec les~$s$-arbres décroissants (Remarque~\ref{rem:bij_simplices_trees}). En poursuivant avec~$s$ étant une composition, après avoir utilisé la bijection entre les~$\bfd$-flots entiers de~$\oru(s)$ et les simplexes maximaux dans la triangulation DKK du polytope de flot~$\fpol[\oru(s)](\bfi)$ avec le flot net de base~$\bfi:=(1,0,\ldots,0,-1)$ provenant de~\cite{MMS19}, nous montrons que ces simplexes maximaux sont en bijection avec les~$s$-permutations de Stirling (Lemme~\ref{lem:max_clique}) et que leur adjacence encode le 1-squelette de l'ordre~$s$-faible (Théorème~\ref{thm:cover_relations}). À partir de là, nous définissons un simplexe associé à une face du~$s$-permutaèdre et montrons que cette correspondance définit un isomorphisme inversant l'inclusion entre les faces du~$s$-permutaèdre et les simplexes de la triangulation DKK du polytope de flot~$\fpol[\oru(s)](\bfi)$ (Corollaire~\ref{cor:maximal_faces_perm}). Cela réalise le~$s$-permutaèdre en tant que polytope de grande dimension, donc pour réduire sa dimension, nous utilisons \defn{la méthode de Cayley} telle que décrite dans~\cite{S05} pour obtenir une bijection inversant l'inclusion entre le~$s$-permutaèdre et les cellules intérieures d'une subdivision mixte d'une somme d'hypercubes de dimensions variables (Théorème~\ref{thm:bij_mixed_subdiv}). Cette réalisation a la dimension souhaitée de la conjecture (Remarque~\ref{rem:dim_n_minus_1}), mais elle manque de coordonnées explicites. Pour remédier à ça, nous utilisons le résultat de DKK pour obtenir une fonction de hauteur explicite sur les routes de~$\oru(s)$ (Lemma~\ref{lem:epsilonheight}). En consequence, nous prouvons qu'il existe un arrangement d'hypersurfaces tropicales dont le dual tropical est la subdivision mixte d'hypercubes que nous obtenons en appliquant la méthode de Cayley à~$\oru(s)$ (Théorème~\ref{thm:arr_trop_hypersurfaces_s-perm}). Cela nous permet d'obtenir le~$s$-permutaèdre à travers du complexe polyédrale des cellules bornées de cet arrangement tropical (Théorème~\ref{thm:bij_trop_arr}). Comme conséquences immédiates, nous décrivons plusieurs propriétés de cette réalisation telles que ses sommets (Théorème~\ref{thm:vertices}), son hyperplan contenant (Corollaire~\ref{cor:s_containging_hyperplane}), les directions de ses arêtes montrant qu'il s'agit d'un permutaèdre généralisé (Théorème~\ref{thm:edges}), ses sommets et ses hyperplans de support (Lemme~\ref{lem:support}), et le fait qu'il est effectivement la translation d'un zonotope qui est isomorphe à un permutaèdre (Théorème~\ref{thm:s_realization_zonotope}). Le chapitre finit avec certains résultats et remarques sur la façon dont les techniques d'énumération des polytopes de flot décomposent le nombre de~$s$-objets combinatoires (Corollaire~\ref{cor:identitise s-trees}).

Enfin, dans le chapitre~\ref{chap:sorder_quotients}, nous utilisons les techniques des polytopes de flot pour donner une troisième réponse à la Perspective~\ref{pers:Coxeter_permutrees} pour les permutarbres de type~$A$. Cette réponse consiste en la description des treillis de rotation de permutarbres à travers des triangulations DKK des polytopes de flot. Nous commençons par définir les \defn{M-mouvements} sur~$\oru_n$ qui créent de nouveaux graphes encadrés appelés les \defn{~$\delta$-bicho graphes~$\bic_\delta$} en fonction de la décoration des permutarbres en jeu. Parmi eux, nous avons notre \defn{oruga graphe~$\oru_n$}, le \defn{caracol graphe~$\car_n$} de~\cite{BGHHKMY19}, et le nouveau \defn{mariposa graphe~$\mar_n$} (Remarque~\ref{rem:all_bichos_graphs}). Nous montrons que les~$\bfd$-flots entiers de~$\bic_\delta$ et les~$\delta$-permutarbres ont les mêmes cardinalités (Théorème~\ref{cor:vol_bic_bfi_permutrees}) et que l'ordre de raffinement sur les décorations des permutarbres détermine la structure des cliques maximales des routes cohérents entre les distincts~$\delta$-bicho graphes (Lemme~\ref{lem:cliques_through_M_moves}). Avec cela en main, nous prouvons que les simplexes de la triangulation DKK de~$\bic_\delta$ sont en bijection avec les~$\delta$-permutarbres (Théorème~\ref{thm:permutree_to_clique}) et que le treillis de rotation des permutarbres est encodé par les adjacences des simplexes dans la triangulation DKK (Corollaire~\ref{cor:rotation_to_adjacency}).

\subsection*{Conjectures et perspectives}

Tout au long de cette thèse, nous avons fait un usage intensif du logiciel libre SageMath~\cite{SAGE} pour les implémentations et les calculs sur les objets combinatoires que nous avons étudiés. Cela nous a permis d'obtenir une intuition sur les problèmes sur lesquels nous avons travaillé et de définir concrètement nos résultats. En particulier, nous avons obtenu des vérifications calculatoires pour plusieurs phénomènes combinatoires que nous laissons comme les Conjectures~\ref{conj:unique_smaccr},~\ref{conj:smaccr_automata},~\ref{conj:maccrs_from_smaccr},~\ref{conj:nonee_downn_flows_2} et~\ref{conj:bicho_recursion}.

De manière plus générale, nous présentons également plusieurs pistes de recherche pour des travaux futurs que nous pourrions entreprendre en suivant les problématiques traitées dans cette thèse. Pour certains d'entre eux, nous avons des résultats partiels ou une forte intuition, tandis que d'autres indiquent simplement la prochaine étape naturelle à suivre. Nous les présentons dans les Perspectives~\ref{pers:Coxeter_permutrees},~\ref{pers:sorting_network},~\ref{pers:s_associahedra},~\ref{pers:other_framings},~\ref{pers:graphs_for_zeroes},~\ref{pers:other_realizations},~\ref{pers:s_m_moves},~\ref{pers:s_permutrees_join_irreds},~\ref{pers:s_permutrees_combinatorics} et~\ref{pers:other_types}.
\cleardoublepage{}

\pagestyle{fancy}
\pagenumbering{arabic}

% Preliminaries

\part{Preliminaries}\label{part:prelim}
% Killing chktex
% chktex-file 3
% chktex-file 9
% chktex-file 12
% chktex-file 17
% chktex-file 24
% chktex-file 25
% chktex-file 36
% chktex-file 40

\chapter{Combinatorics and Discrete Geometry}\label{chap:prelim_structures}

\addcontentsline{lof}{part}{\protect\numberline{\thepart}Preliminaries}
\addcontentsline{lot}{part}{\protect\numberline{\thepart}Preliminaries}

\addcontentsline{lof}{chapter}{\protect\numberline{\thechapter}Combinatorics and Discrete Geometry}

For our work we denote by~$\RR^n$ the standard Euclidean space,~$\ZZ^n$ the point lattice of vectors of~$\RR^n$ with all entries integers. The standard basis of~$\RR^n$ is denoted by~$\mathbf{e_1},\ldots,\mathbf{e_n}$ and the vectors of all~$0$'s as~$\mathbf{0}$ and all~$1$'s as~$\mathbf{1}$. In general all vectors are bolded like~$\mathbf{x},\mathbf{y},\mathbf{z}$.

As notation, the sets of consecutive numbers are denoted~$[n]:=\{1,\ldots,n\}$ and in more generality,~$[i,j]:=\{i,i+1,\ldots,j-1,j\}$. For a finite set~$X$ we denote by~$|X|$ its cardinality and~$X^c$ the complement of~$X$ in its appropriate context. For subsets,~$\gbinom{[n]}{k}:=\{A\subseteq [n]\,:\, |A|=k\}$.

\begin{definition}\label{def:relations}
	Let~$R\subseteq{[n]}^2$ be a relation on~$[n]$. We say that~$R$ is \begin{itemize}
		\itemsep0em
		\item \defn{reflexive}\index{relation!reflexive} if~$(x,x)\in R$ for all~$x\in[n]$,
		\item \defn{antisymmetric}\index{relation!antisymmetric} if~$(x,y)\in R$ and~$(y,x)\in R$ implies~$x=y$, for all~$x,y\in[n]$,
		\item \defn{transitive}\index{relation!transitive} if~$(x,y)\in R$ and~$(y,z)\in R$ implies~$(x,z)\in R$, for all~$x,y,z\in[n]$.
		\item \defn{cotransitive}\index{relation!cotransitive} if~$(x,y)\notin R$ and~$(y,z)\notin R$ implies~$(x,z)\notin R$, for all~$x,y,z\in[n]$.
	\end{itemize}
	We denote by~$R^{tc}$ the \defn{transitive closure}\index{relation!transitive closure} of~$R$ (i.e.\ the smallest transitive relation containing~$R$).

\end{definition}

\section{Partial Orders and Lattices}\label{sec:orders}

This thesis revolves around objects with an associated notion of order. Therefore, we start by introducing several definitions and constructions on sets and orders on them. Most on this section is based on~\cite{S11} and~\cite{R16}.

\subsection{Partially Ordered Sets}\label{subsec:posets}

\begin{definition}\label{def:posets}
	A partially ordered set (\defn{poset}\index{poset}) consists of a discrete set~$X$ with a binary relation~$\leq$ that is reflexive, antisymmetric, and transitive. If for a pair of elements~$x,y\in X$ we have that~$x\leq y$ or~$x\geq y$ we say that they are \defn{comparable}\index{poset!comparable elements}, otherwise they are \defn{incomparable}\index{poset!incomparable elements}. Whenever all elements of~$(X,\leq)$ are comparable, we say that~$\leq$ is a \defn{total order}\index{poset!total order} on~$X$.
\end{definition}

\begin{example}Some examples of posets include:
	\begin{itemize}
		\itemsep0em
		\item Any subset of integers with the usual order.
		\item The partitions of the set~$[n]$ ordered by refinement.
		\item The set of subsets of a finite set ordered by inclusion.
		\item The divisors of an integer~$n$ ordered by divisibility.
	\end{itemize}
	Figure~\ref{fig:posets} shows examples of some of these posets.
	\begin{figure}[h!]
		\centering
		\includegraphics[scale=1.2]{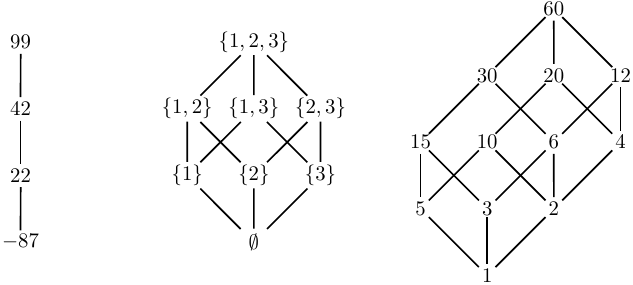}
		\caption{A poset of integers as a chain, the poset of subsets of~$[3]$, and the divisors of~$60$.}
		\label{fig:posets}
	\end{figure}
\end{example}

The theory of posets comes with a plethora of useful concepts. We only introduce a ``few'' of them here. For a more complete treatment we refer the reader to~\cite{S11}.

\begin{definition}\label{def:poset_concepts}
	Let~$(X,\leq)$ be a poset and consider elements~$x,y\in X$. The set~$[x,y]:=\{z\in X \, :\, x\leq z\leq y\}$ is called the \defn{interval}\index{poset!interval} of~$x$ and~$y$. For an interval~$[x,y]$ of cardinality 2, the pair~$(x,y)$ is called a \defn{covering relation}\index{poset!covering relation}, and we say that~$y$ \defn{covers}~$x$ denoted by \defn{$x\lessdot y$}. An element is \defn{minimal}\index{poset!minimal/maximal element} (resp.\ \defn{maximal}) if it does not cover (resp.\ is not covered by) any other element. A poset is said to be \defn{bounded}\index{poset!bounded} if it has a unique minimal element denoted by \defn{$\hat{0}$} and a unique maximal element denoted by \defn{$\hat{1}$}. The \defn{atoms}\index{poset!atom/coatom} (resp.\ \defn{coatoms}) of~$P$ are the elements that cover~$\hat{0}$ (resp.\ covered by~$\hat{1}$). The \defn{Hasse diagram}\index{Hasse diagram} of~$(X,\leq)$ is the directed graph on~$X$ where~$x\rightarrow y$ if and only if~$x\lessdot y$. Unless stated otherwise our figures of posets consist of their Hasse diagrams drawn with minimal elements at the bottom and maximal elements at the top.
\end{definition}

As is the case of many combinatorial structures, it is possible to construct other posets from an initial one. We describe several such constructions now.

\begin{definition}\label{def:poset_constructions}
	Given a poset~$(X,\leq)$ and~$Y\subseteq X$, we say that~$(Y,\leq)$ is a(n) \defn{(induced) subposet}\index{poset!subposet} of~$X$ if for~$a,b\in Y$ we have that~$a\leq b$ in~$Y$ if and only if~$a\leq b$ in~$X$. Likewise in this case~$(X,\leq)$ is a \defn{supposet}\index{poset!supposet} of~$Y$. If~$\leq'$ is a partial order over~$X$ such that~$x\leq y$ implies~$x\leq'y$ we say that \defn{${\leq} \subseteq {\leq'}$}.

	A \defn{linear extension}\index{poset!linear extension} of~$(X,\leq)$ is a poset~$(X,\leq_{tot})$ where~$\leq_{tot}$ is a total order and~${\leq} \subseteq {\leq_{tot}}$. Figure~\ref{fig:linearExtensions} shows a poset together with its linear extensions. The \defn{dual}\index{poset!dual} of~$(X,\leq)$ is the poset~$(X,\leq^*)$ where~$y\leq^*x$ if and only if~$x\leq y$.
\end{definition}

\begin{figure}[h!]
	\centering
	\includegraphics[scale=1.2]{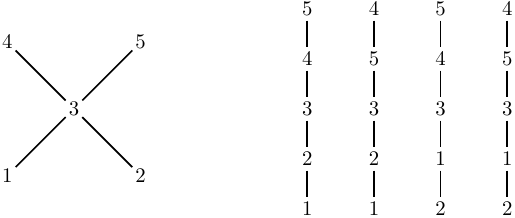}
	\caption{A poset on~$[5]$ and its linear extensions.}
	\label{fig:linearExtensions}
\end{figure}

\begin{definition}\label{def:poset_antichain_ideal_filter}
	Given a poset~$(X,\leq)$, a \defn{chain}\index{poset!chain} (resp.\ \defn{antichain}\index{poset!antichain}) is a subposet~$Y\subseteq X$ where all (resp.\ none) elements are pairwise comparable. An \defn{ideal}\index{poset!ideal} (resp.\ \defn{filter}\index{poset!filter}) of~$(X,\leq)$ is a subposet~$Y\subseteq X$ such that if~$y\in Y$ (resp.~$x\in Y$) and~$x\leq y$ then~$x\in Y$ (resp.~$y\in Y$).
\end{definition}

\begin{remark}\label{rem:bij_antichain_ideal_filter}
	Notice that whenever~$X$ is finite, antichains, ideals, and filters are all in bijection. The antichain corresponding to an ideal (resp.\ filter) is the set of its maximal (resp.\ minimal) elements in the induced subposet. Conversely, the ideal (resp.\ filter) corresponding to an antichain is the set of all elements under (resp.\ above) an element of the antichain. Ideals and filters correspond by being complements of each other.
\end{remark}

\begin{definition}\label{def:more_poset_concepts}
	A poset is \defn{finite}\index{poset!finite} if it has a finite number of elements. For a finite chain of~$X$, its \defn{length}\index{poset!length of a chain} is its number of elements minus one. The \defn{length}\index{poset!length} of a poset is the length of its longest chain. If~$X$ is bounded and all of its \defn{maximal chains}\index{poset!maximal chain} (i.e.\ chains between~$\hat{0}$ and~$\hat{1}$) have the same length, then~$X$ is said to be \defn{graded}\index{poset!graded}. The \defn{rank}\index{poset!ranked} of a graded poset is the length of its maximal chains and the \defn{rank}\index{poset!rank} of an element~$x$ is the length of the chains in~$[\hat{0},x]$.
\end{definition}

To distinguish posets from each other we consider them up to isomorphism as follows.

\begin{definition}\label{def:def:poset_isomorphism_dual}
	Let~$(X,\leq)$ and~$(Y,\leq')$ be posets. A \defn{poset isomorphism}\index{poset!isomorphism} from~$(X,\leq)$ to~$(Y,\leq')$ is a bijection~$\phi:X\rightarrow Y$ such that~$\phi(x)\leq'\phi(y)$ if and only if~$x\leq y$. That is,~$\phi$ and its inverse are both order preserving.

	If~$(X,\leq)$ and~$(X,\leq^*)$ are isomorphic then~$X$ is said to be \defn{self-dual}\index{poset!self-dual}.
\end{definition}

\subsection{Lattices}\label{subsec:lattices}

The posets we work with possess a pair of operations where given a family of elements, one can find a unique element that is minimal (resp.\ maximal) and above (resp.\ below) all the family. To define these operations we need to consider the relation of bounds within posets.

\begin{definition}\label{def:bound_meet_join}
	Let~$(X,\leq)$ be a poset with~$x,y\in X$. An \defn{upper bound}\index{poset!upper/lower bound} (resp.\ \defn{lower bound}) of~$x$ and~$y$ is an element~$z\in X$ such that~$x\leq z$ and~$y\leq z$ (resp.~$z\leq x$ and~$z\leq y$).

	The minimal upper bound (resp.\ maximal lower bound) of~$x$ and~$y$ is the least (resp.\ greatest) element in the set of upper bounds (resp.\ lower bounds) of~$x$ and~$y$, and we call it the \defn{join}\index{poset!join/meet} (resp.\ \defn{meet}) of~$x$ and~$y$ if it exists. We write \defn{$x\vee y$} for the join of~$x$ and~$y$ and \defn{$x\wedge y$} for the meet of~$x$ and~$y$. For a nonempty subset~$S\subset X$ of a lattice, we can denote by \defn{$\bigwedge S$} (resp.\ \defn{$\bigvee S$}) the meet (resp.\ join) of all elements of~$S$. The meet (resp.\ join) of a single element is the element itself.
\end{definition}

Given these operations one can think about expressing elements of a lattice as meets (resp.\ joins) of other elements. Of particular interest are those that cannot be expressed in such a way.

\begin{definition}\label{def:joinmeet_irreducibles}
	Given a lattice~$(L,\leq)$, an element~$x\in L$ is said to be \defn{meet-irreducible}\index{lattice!meet-irreducible} (resp.\ \defn{join-irreducible}\index{lattice!join-irreducible}) if it covers (resp.\ is covered by) exactly one element. That is, an element~$w$ is a meet-irreducible (resp.\ join-irreducible) if there does not exist a set~$S\subset X$ of elements such that~$w=\bigwedge S$ (resp.~$w=\bigvee S$). 
\end{definition}

\begin{definition}\label{def:lattice}
	A \defn{lattice}\index{lattice} is a poset~$(X,\leq)$ such that for every subset~$S\subseteq X$ the elements~$\bigvee S$ and~$\bigwedge S$ exist. If~$(X,\leq)$ only has a meet (resp.\ join) operation it is called a \defn{meet-semilattice}\index{lattice!meet-semilattice} (resp.\ \defn{join-semilattice}\index{lattice!join-semilattice}).
\end{definition}

\begin{figure}[h!]
	\centering
	\includegraphics[scale=1.2]{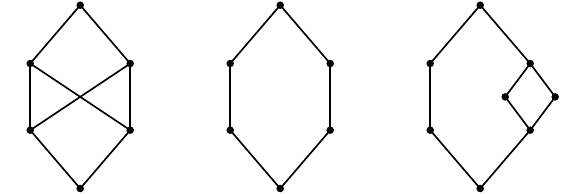}
	\caption[A poset, a polygonal lattice, and a non-polygonal lattice.]{ A poset that is not a lattice (left), a lattice that is polygonal (middle), and a lattice that is not polygonal (right).}
	\label{fig:latticeNonLattice}
\end{figure}

Notice that all finite lattices have~$\hat{0}$ and~$\hat{1}$ since we can calculate (very inefficiently) the meet or join of all elements of the lattice. Figure~\ref{fig:latticeNonLattice} presents a poset that is not a lattice and two lattices. In the case of the converse direction we have the following theorem.

\begin{proposition}[{\cite[Prop.3.3.1]{S11}},{\cite[Lem.9-2.1]{R16}}]\label{prop:semilattice_to_lattice}
	If~$X$ is a finite meet-semilattice with~$\hat{1}$ then~$X$ is a lattice. Dually, a finite join-semilattice~$X$ with~$\hat{0}$ is a lattice.
\end{proposition}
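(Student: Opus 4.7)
The plan is to prove the first statement directly and then obtain the second by duality. Given a finite meet-semilattice $X$ with maximal element $\hat{1}$, I will show that every pair $x,y \in X$ admits a join; together with $\hat{0}$ (which exists as the meet $\bigwedge X$ of the whole semilattice), this will yield all finite joins and hence the lattice structure.

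First I would introduce the set of upper bounds $U(x,y) := \{z \in X : x \leq z \text{ and } y \leq z\}$. This set is non-empty, since $\hat{1} \in U(x,y)$, and it is finite because $X$ is. Using the meet-semilattice structure, I can form
\[
w := \bigwedge U(x,y),
\]
recalling that the meet of a finite non-empty subset of a meet-semilattice is its greatest lower bound. The critical step is to show $w \in U(x,y)$, i.e.\ that $w$ is itself an upper bound of $x$ and $y$. For this, observe that $x$ is a lower bound of $U(x,y)$ (since $x \leq z$ for every $z \in U(x,y)$ by definition), and similarly $y$ is a lower bound of $U(x,y)$. Because $w$ is the \emph{greatest} lower bound of $U(x,y)$, we get $x \leq w$ and $y \leq w$, so $w \in U(x,y)$. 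On the other hand, $w$ is a lower bound of $U(x,y)$, so $w \leq z$ for every $z \in U(x,y)$. Hence $w$ is the minimum element of $U(x,y)$, which is precisely $x \vee y$.

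Once pairwise joins exist, finite joins of arbitrary non-empty subsets $S \subseteq X$ follow by induction on $|S|$: if $S = S' \sqcup \{s\}$, then $\bigvee S = (\bigvee S') \vee s$ using the inductive hypothesis and the pairwise join just constructed. Combined with the existence of $\hat{0} = \bigwedge X$ (taking the meet of the entire finite semilattice), every subset of $X$ admits both a meet and a join, so $X$ is a lattice in the sense of Definition~\ref{def:lattice}. The dual statement, that a finite join-semilattice with $\hat{0}$ is a lattice, follows immediately by applying the above argument to the dual poset $(X, \leq^*)$, which is itself a finite meet-semilattice with $\hat{1}$.

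The main obstacle, though minor, is the conceptual point that the meet of the upper bounds of $\{x,y\}$ must itself dominate $x$ and $y$; this is the only nontrivial verification and rests on the characterization of meet as greatest lower bound rather than merely as a binary operation. Everything else is bookkeeping via finiteness and induction.
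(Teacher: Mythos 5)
Your proof is correct and is essentially the standard argument given in the cited references (Stanley's EC1 and Reading's chapter): the join of $x$ and $y$ is constructed as the meet of the set of their common upper bounds, which is non-empty (it contains $\hat{1}$) and finite, and the characterization of meet as greatest lower bound guarantees this element is itself an upper bound, hence the least one. The paper does not reproduce a proof but defers to these sources, and your argument matches theirs.
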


\begin{definition}\label{def:polygons}
	A \defn{polygon}\index{lattice!polygon} inside a lattice is an interval~$[x,y]$ that is the union of two finite maximal chains between~$x$ and~$y$ whose intersection is only~$x$ and~$y$. We say that a lattice is \defn{polygonal}\index{lattice!polygonal} if the following occurs:
	\begin{enumerate}
		\itemsep0em
		\item if~$y_1$ and~$y_2$ are different elements that cover~$x$, then~$[x,y_1\vee y_2]$ is a polygon,
		\item if~$x_1$ and~$x_2$ are different elements that are covered by~$y$, then~$[x_1\wedge x_2,y]$ is a polygon.
	\end{enumerate}

	For a polygon~$[x,y]$, the incident edges to~$x$ (resp.~$y$) are called the \defn{bottom edges} (resp.\ \defn{top edges}) of~$[x,y]$ and the others are called \defn{side edges}. See Figure~\ref{fig:latticeNonLattice} for examples of when a lattice is polygonal and when it is not.
\end{definition}

\subsection{Lattice Congruences}\label{subsec:congruences}

Just as with normal sets, we can define equivalence relations over posets. Of special interest for us are equivalence relations on lattices that are compatible with the meet and join operations.

\begin{definition}\label{def:Congruences}
	Given a lattice~$(L,\leq)$, a \defn{congruence}\index{lattice!congruence} on~$L$ is an equivalence relation~$\equiv$ on~$L$ such that for all~$x_1,x_2,y_1,y_2\in L$ if~$x_1\equiv x_2$ and~$y_1\equiv y_2$ then~$x_1\vee y_1\equiv x_2\vee y_2$ and~$x_1\wedge y_1\equiv x_2\wedge y_2$.

	If~$x\lessdot y$ and~$x\equiv y$ we say that~$\equiv$ \defn{contracts}\index{lattice!contraction} the edge~$x\lessdot y$.
\end{definition}

The following characterization of when an equivalence relation is a lattice congruence is a key element for our work.

\begin{proposition}[{\cite[Prop.9-5.2]{R16}}]\label{prop:latti_cong_minimal_elements_equivalence}
	Let~$\equiv$ be an equivalence relation over a lattice~$L$. Then~$\equiv$ is a lattice congruence if and only if all the following conditions hold:
	\begin{enumerate}
		\itemsep0em
		\item each equivalence class of~$\equiv$ is an interval of~$L$,
		\item the mappings~$\pi_\uparrow^\equiv$ and~$\pi_\downarrow^\equiv$ that respectively send an element to the maximal and minimal representative of its class are order preserving.
	\end{enumerate}
\end{proposition}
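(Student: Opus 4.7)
The plan is to argue the two directions separately, using the interval structure of classes as the bridge between the congruence condition and the order-preserving projections.

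For the forward direction, assume $\equiv$ is a lattice congruence and fix a class $C$. First I would show $C$ is a sublattice: if $x,y\in C$ then $x\equiv y$ and $x\equiv x$, so the congruence property gives $x\vee y\equiv x\vee x=x$, hence $x\vee y\in C$, and dually for $x\wedge y$. Since $L$ is finite, $C$ has a maximum $b$ and a minimum $a$. To get $C=[a,b]$, take any $z$ with $a\leq z\leq b$; then $z=z\wedge b\equiv z\wedge a=a$ (using $a\equiv b$), so $z\in C$. This establishes condition (1). For condition (2), if $x\leq y$, set $x^{*}=\pi_\uparrow^\equiv(x)$ and $y^{*}=\pi_\uparrow^\equiv(y)$. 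Then $x^{*}\vee y^{*}\equiv x\vee y=y$, so $x^{*}\vee y^{*}$ belongs to the class of $y$; since it is $\geq y^{*}$ and $y^{*}$ is maximal in that class, it equals $y^{*}$, giving $x^{*}\leq y^{*}$. The argument for $\pi_\downarrow^\equiv$ is symmetric.

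For the reverse direction, assume (1) and (2). Condition (1) gives the crucial reformulation $x\equiv y \iff \pi_\uparrow^\equiv(x)=\pi_\uparrow^\equiv(y)$ (and analogously for $\pi_\downarrow^\equiv$), since two elements share a class exactly when they share the unique max of that interval. To check compatibility with $\vee$, by symmetry and transitivity of $\equiv$ it suffices to prove: if $x_1\leq x_2$ with $x_1\equiv x_2$, then $x_1\vee y\equiv x_2\vee y$ for every $y\in L$. I will show $\pi_\uparrow^\equiv(x_1\vee y)=\pi_\uparrow^\equiv(x_2\vee y)$. The inequality $\pi_\uparrow^\equiv(x_1\vee y)\leq\pi_\uparrow^\equiv(x_2\vee y)$ is immediate from $x_1\leq x_2$ and monotonicity of $\pi_\uparrow^\equiv$. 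For the reverse, note that $x_1\leq x_1\vee y$ and $y\leq x_1\vee y$ imply, by (2), that $\pi_\uparrow^\equiv(x_1)\leq\pi_\uparrow^\equiv(x_1\vee y)$ and $\pi_\uparrow^\equiv(y)\leq\pi_\uparrow^\equiv(x_1\vee y)$. Since $\pi_\uparrow^\equiv(x_1)=\pi_\uparrow^\equiv(x_2)\geq x_2$ and $\pi_\uparrow^\equiv(y)\geq y$, this yields $x_2\vee y\leq\pi_\uparrow^\equiv(x_1\vee y)$. Applying $\pi_\uparrow^\equiv$ once more and using its idempotence, $\pi_\uparrow^\equiv(x_2\vee y)\leq\pi_\uparrow^\equiv(x_1\vee y)$. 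The compatibility with $\wedge$ follows by the dual argument using $\pi_\downarrow^\equiv$.

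The main obstacle is the reverse direction, specifically the step extracting congruence compatibility from just the order-preserving property of the two projections. The trick is to reduce to comparable pairs $x_1\leq x_2$ in the same class and then play off the two inequalities $\pi_\uparrow^\equiv(x_1\vee y)\leq\pi_\uparrow^\equiv(x_2\vee y)$ (from monotonicity of $\vee$) against $x_2\vee y\leq\pi_\uparrow^\equiv(x_1\vee y)$ (obtained by pushing $x_2$ up to $\pi_\uparrow^\equiv(x_1)$ through the equivalence). Everything else is routine manipulation of intervals and projections, but this crossed pair of inequalities is where the hypotheses combine non-trivially.
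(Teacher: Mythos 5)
Your proof is correct. The paper itself gives no argument here: it cites Reading's Proposition~9-5.2 from~\cite{R16} and relies on it as a black box, so there is no in-paper proof to compare against; the argument you give is essentially the standard one from that reference. Both directions check out. In the forward direction, showing each class is a sublattice and then an interval via $z=z\wedge b\equiv z\wedge a=a$ is exactly right, as is the computation $\pi_\uparrow^\equiv(x)\vee\pi_\uparrow^\equiv(y)\equiv y$ forcing $\pi_\uparrow^\equiv(x)\leq\pi_\uparrow^\equiv(y)$. In the reverse direction, the reduction to comparable pairs $x_1\leq x_2$ is legitimate (for incomparable $x_1\equiv x_2$ pass through $x_1\vee x_2$, which lies in the same interval class, then chain through $x_2\vee y_1$ for the general two-variable compatibility), and your crossed pair of inequalities $\pi_\uparrow^\equiv(x_1\vee y)\leq\pi_\uparrow^\equiv(x_2\vee y)$ and $x_2\vee y\leq\pi_\uparrow^\equiv(x_1\vee y)$, combined with idempotence of $\pi_\uparrow^\equiv$, is precisely the mechanism that makes the converse work. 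One small point worth flagging: you invoke ``since $L$ is finite'' to extract a maximum and minimum of each class, but the proposition as stated does not say finite. This is harmless here because the entire source (Reading's chapter, and this thesis) operates over finite lattices, and indeed the statement only makes sense once $\pi_\uparrow^\equiv$ and $\pi_\downarrow^\equiv$ are well-defined, which already presupposes each class has a unique top and bottom; still, it would be cleaner to state that hypothesis explicitly rather than smuggling it in mid-proof.
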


\begin{definition}\label{def:lattice_quotient}
	Given a lattice congruence~$\equiv$ on a lattice~$(L,\leq)$, the \defn{lattice quotient}\index{lattice!quotient} of~$L$ by~$\equiv$ is the lattice~$L / {\equiv}$ whose elements are equivalence classes of~$\equiv$ with relations~$A\preceq B$ if and only if there exists~$x\in A$ and~$y\in B$ such that~$x<y$. The meet~$A\wedge B$ (resp.\ join~$A\vee B$) consists of the only equivalence class that contains all~$x\wedge x'$ (resp.~$x\vee x'$) for~$x\in A$ and~$x'\in B$.
\end{definition}

It is possible to give a second nice characterization of lattice quotients using lattice homomorphisms in the following way.

\begin{definition}\label{def:lattice_homomorphism}
	Given lattices~$L$ and~$L'$, a map~$f:L\to L'$ is a \defn{lattice homomorphism} if for all~$x,y\in L$ we have that~$f(x\wedge_{L} y)= f(x)\wedge_{L'} f(y)$ and~$f(x\vee_{L} y)= f(x)\vee_{L'} f(y)$. 
\end{definition}

\begin{proposition}\label{prop:lattice_homomorphism_interval_characterization}
	A surjective map~$f:L\to L'$ is a lattice homomorphism if and only if \begin{enumerate}
		\itemsep0em
		\item $f$ is order preserving,
		\item for every interval~$[x,y]$ of~$L'$ the fiber~$f^{-1}([x,y])$ is an interval.
	\end{enumerate} 
\end{proposition}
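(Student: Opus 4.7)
My plan is to treat the two implications independently.

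For the forward direction, I would first observe that order preservation is immediate from the fact that $a \le b \iff a \wedge b = a$ together with $f$ preserving~$\wedge$. Next, I would note that the kernel relation $a \equiv b \iff f(a) = f(b)$ is a lattice congruence on~$L$, since compatibility with~$\wedge$ and~$\vee$ is precisely what being a homomorphism gives. Proposition~\ref{prop:latti_cong_minimal_elements_equivalence} then ensures that each fiber $f^{-1}(z)$ is an interval, say with minimum $m_z$ and maximum $M_z$, and that these endpoint maps are order preserving. Given any $[x,y] \subseteq L'$, I would claim $f^{-1}([x,y]) = [m_x, M_y]$: one containment is obvious by order preservation of~$f$, while for the other, if $f(c) \in [x,y]$ then $f(c \vee M_y) = f(c) \vee y = y$, so $c \vee M_y \in f^{-1}(y)$ and hence $c \vee M_y \le M_y$, forcing $c \le M_y$; the symmetric argument with $c \wedge m_x$ gives $c \ge m_x$.

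For the backward direction, the key idea is to use a single interval of $L'$ that captures both the meet and the join simultaneously. Given $a, b \in L$ with $x = f(a)$ and $y = f(b)$, I would consider $I := [x \wedge y, x \vee y] \subseteq L'$. By hypothesis $f^{-1}(I) = [u,v]$ for some $u, v \in L$, and since $x, y \in I$, both $a$ and $b$ lie in $[u,v]$. In particular $u \le a \wedge b$ and $a \vee b \le v$, and order preservation of~$f$ yields $f(u) \le f(a \wedge b)$ and $f(a \vee b) \le f(v)$.

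To finish, I must identify $f(u) = x \wedge y$ and $f(v) = x \vee y$. Using surjectivity, pick $w \in f^{-1}(x \wedge y) \subseteq [u,v]$; then $u \le w$ gives $f(u) \le f(w) = x \wedge y$, while $f(u) \in I$ already forces $f(u) \ge x \wedge y$, so $f(u) = x \wedge y$. The symmetric argument with an element of $f^{-1}(x \vee y)$ yields $f(v) = x \vee y$. Combined with the trivial inequalities $f(a \wedge b) \le x \wedge y$ and $f(a \vee b) \ge x \vee y$ coming from order preservation, this gives $f(a \wedge b) = f(a) \wedge f(b)$ and $f(a \vee b) = f(a) \vee f(b)$, so $f$ is a lattice homomorphism.

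The main obstacle is choosing the right interval in the backward direction: neither a fiber nor an interval of the form $[x,y]$ with $x \le y$ suffices on its own to control both operations, but the interval $[x \wedge y, x \vee y]$ neatly packages them and lets surjectivity force its preimage endpoints to map exactly to the meet and the join in~$L'$.
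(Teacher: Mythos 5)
Your proof is correct. The paper states Proposition~\ref{prop:lattice_homomorphism_interval_characterization} without proof in its preliminaries, so there is no proof to compare against; but both directions of your argument check out. The forward direction correctly reduces to Proposition~\ref{prop:latti_cong_minimal_elements_equivalence} via the kernel congruence of~$f$ and then identifies~$f^{-1}([x,y])$ as~$[m_x, M_y]$ by the $c \vee M_y$ and $c \wedge m_x$ computations, and the backward direction's choice of the single interval~$[x \wedge y,\, x \vee y]$ is exactly the right device: it forces, by surjectivity, $f(u) = x \wedge y$ and $f(v) = x \vee y$ for the preimage endpoints, after which order preservation squeezes $f(a\wedge b)$ and $f(a\vee b)$ to the desired values.
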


\begin{definition}\label{def:down_up_operators}
	For a lattice~$L$ with a lattice congruence~$\equiv$, the set~\defn{$\pi_\downarrow^\equiv L$}~$:=\{\pi_\downarrow^\equiv (x)\,:\,x\in L\}$ is a poset as an induced poset of~$L$. Similar for~\defn{$\pi_\uparrow^\equiv L$}.
\end{definition}

\begin{proposition}[{\cite[Prop.9-5.5]{R16}}]\label{prop:down_up_operators_give_lattices}
	Let~$L$ be a lattice and~$\equiv$ a lattice congruence on~$L$. Then~$\pi_\downarrow^\equiv L$ (resp.~$\pi_\uparrow^\equiv L$) is a lattice and it is isomorphic to the quotient lattice~${L}/{\equiv}$. Moreover,~$\pi_\downarrow^\equiv$ (resp.~$\pi_\uparrow^\equiv$) is a lattice homomorphism. 
\end{proposition}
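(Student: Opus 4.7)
The plan is to prove everything by leveraging the bijection between $\pi_\downarrow^\equiv L$ and the quotient lattice $L/{\equiv}$, and transferring the lattice structure through that bijection. I will treat the $\pi_\downarrow^\equiv$ case; the $\pi_\uparrow^\equiv$ case is dual.

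First I would define $\phi: L/{\equiv} \to \pi_\downarrow^\equiv L$ by sending a class $C$ to $\pi_\downarrow^\equiv(x)$ for any $x \in C$; this is well-defined and bijective because, by Proposition~\ref{prop:latti_cong_minimal_elements_equivalence}(1), each equivalence class is an interval, so it contains a unique minimum, and the minima of distinct classes are distinct. Next I would show $\phi$ is an order isomorphism. If $C \preceq D$ in $L/{\equiv}$, pick $x \in C$ and $y \in D$ with $x \le y$; by Proposition~\ref{prop:latti_cong_minimal_elements_equivalence}(2), $\pi_\downarrow^\equiv$ is order preserving, so $\phi(C) = \pi_\downarrow^\equiv(x) \le \pi_\downarrow^\equiv(y) = \phi(D)$. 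Conversely, if $\phi(C) \le \phi(D)$ in $L$, then these minima are themselves witnesses with $\phi(C) \in C$ and $\phi(D) \in D$, giving $C \preceq D$ directly from Definition~\ref{def:lattice_quotient}.

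Since $L/{\equiv}$ is a lattice by Definition~\ref{def:lattice_quotient} (the congruence property ensures that the class of $x \wedge x'$ and the class of $x \vee x'$ depend only on the classes of $x$ and $x'$), transporting the meet and join along the order-isomorphism $\phi$ equips $\pi_\downarrow^\equiv L$ with a lattice structure isomorphic to $L/{\equiv}$. Concretely, for $a, b \in \pi_\downarrow^\equiv L$, the join in $\pi_\downarrow^\equiv L$ must be $a \vee_\pi b = \pi_\downarrow^\equiv(a \vee_L b)$, and dually for meets, since this is the image under $\phi$ of $[a] \vee [b] = [a \vee_L b]$.

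It then remains to check that $\pi_\downarrow^\equiv : L \to \pi_\downarrow^\equiv L$ is a lattice homomorphism. For arbitrary $x, y \in L$, using the congruence property we have $x \vee_L y \equiv \pi_\downarrow^\equiv(x) \vee_L \pi_\downarrow^\equiv(y)$, so $\pi_\downarrow^\equiv(x \vee_L y) = \pi_\downarrow^\equiv(\pi_\downarrow^\equiv(x) \vee_L \pi_\downarrow^\equiv(y)) = \pi_\downarrow^\equiv(x) \vee_\pi \pi_\downarrow^\equiv(y)$ by the formula above, and symmetrically for meets. The only real subtlety — and the step most likely to cause slips — is keeping careful track of the fact that $\vee_\pi$ on $\pi_\downarrow^\equiv L$ is not in general the restriction of $\vee_L$, even though $\pi_\downarrow^\equiv L$ sits inside $L$ as an induced subposet; the correct description is precisely $a \vee_\pi b = \pi_\downarrow^\equiv(a \vee_L b)$, which is what makes $\pi_\downarrow^\equiv$ homomorphic rather than merely order preserving.
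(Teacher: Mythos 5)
Your argument is correct and complete. Note that the paper itself does not prove this proposition — it simply cites it to Reading's survey~\cite[Prop.9-5.5]{R16} — so there is no paper proof to compare against; your proof is a reasonable self-contained reconstruction of the standard argument. The two points you flag as subtleties are exactly the ones that matter: that $\phi$ transports the join/meet of $L/{\equiv}$ to a join/meet on the set $\pi_\downarrow^\equiv L$ which is generally \emph{not} the restriction of $\vee_L$, and that the homomorphism property for $\pi_\downarrow^\equiv$ comes from combining the congruence property ($x \vee_L y \equiv \pi_\downarrow^\equiv(x) \vee_L \pi_\downarrow^\equiv(y)$) with the transported formula $a \vee_\pi b = \pi_\downarrow^\equiv(a \vee_L b)$. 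The only minor thing you take on faith is that $L/{\equiv}$ itself is a lattice with the stated meet and join, but that is exactly what Definition~\ref{def:lattice_quotient} in the paper asserts, so you are entitled to use it.
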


Out of the characteristics of lattice quotients, we are interested in how they preserve the property of polygonality.

\begin{proposition}[{\cite[Prop.9-6.2]{R16}}]\label{prop:quotients_of_polygons}
	Let~$L$ be a finite polygonal lattice and~$\equiv$ a congruence on~$L$. Then the lattice quotient~${L}/{\equiv}$ is polygonal.
\end{proposition}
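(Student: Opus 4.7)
The plan is to prove only the first polygonality condition (two covers of a bottom element) and invoke duality for the second. Throughout I will use the characterization from Proposition~\ref{prop:latti_cong_minimal_elements_equivalence} that each congruence class $\bar{u}$ is an interval $[\pi_\downarrow^\equiv(u),\pi_\uparrow^\equiv(u)]$ of $L$, and the fact from Proposition~\ref{prop:down_up_operators_give_lattices} that the projection $\pi:L\to L/{\equiv}$ is a lattice homomorphism.

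A key preliminary lemma I would establish first is: if $c\lessdot c'$ in $L$, then either $\pi(c)=\pi(c')$ or $\pi(c)\lessdot\pi(c')$ in $L/{\equiv}$. To prove it, suppose $\pi(c)<\bar{w}<\pi(c')$ for some $w\in L$, and consider $v:=c'\wedge(c\vee w)$. By the homomorphism property, $\pi(v)=\bar{w}$, but $c\leq v\leq c'$ together with $c\lessdot c'$ forces $v\in\{c,c'\}$, contradicting $\bar{w}\notin\{\pi(c),\pi(c')\}$.

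Now given covers $\bar{x}\lessdot\bar{y_1}$ and $\bar{x}\lessdot\bar{y_2}$ in $L/{\equiv}$, set $x:=\pi_\uparrow^\equiv(\bar{x})$, the maximum of the class $\bar{x}$. For each $i\in\{1,2\}$, pick any $y\in\bar{y_i}$ with $y\geq x$ (e.g., $\pi_\uparrow^\equiv(\bar{y_i})$). Any element in the interval $[x,y]$ of $L$ projects into $[\bar{x},\bar{y_i}]=\{\bar{x},\bar{y_i}\}$, and anything strictly above $x$ cannot lie in $\bar{x}$ since $x$ is the maximum of $\bar{x}$. So a cover $x\lessdot y_i$ in $L$ with $y_i\in\bar{y_i}$ exists. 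Since $L$ is polygonal, the interval $P:=[x,y_1\vee y_2]$ in $L$ is a polygon; let $C_1,C_2$ be its two maximal chains, meeting only at $x$ and $z:=y_1\vee y_2$. Applying the preliminary lemma edge by edge, the image $\pi(C_j)$, after removing repetitions, is a chain of covers in $L/{\equiv}$ from $\bar{x}$ to $\bar{z}=\bar{y_1}\vee\bar{y_2}$, hence a maximal chain in $[\bar{x},\bar{z}]$.

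Two things remain to verify. First, $\pi(C_1)\cap\pi(C_2)=\{\bar{x},\bar{z}\}$: if $u_1\in C_1\setminus\{x,z\}$ and $u_2\in C_2\setminus\{x,z\}$ satisfied $u_1\equiv u_2$, then by the definition of polygon, $u_1\wedge u_2=x$ and $u_1\vee u_2=z$; the congruence property yields $x\equiv u_1\equiv z$, contradicting $\bar{x}\lessdot\bar{z}$. Second, $[\bar{x},\bar{z}]=\pi(C_1)\cup\pi(C_2)$: for any $\bar{u}\in[\bar{x},\bar{z}]$ with $u\in\bar{u}$, the element $u':=(u\vee x)\wedge z$ lies in $P$ and satisfies $\pi(u')=(\bar{u}\vee\bar{x})\wedge\bar{z}=\bar{u}$, so $u'\in C_1\cup C_2$ gives $\bar{u}\in\pi(C_1)\cup\pi(C_2)$. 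This proves $[\bar{x},\bar{z}]$ is a polygon in $L/{\equiv}$. The dual condition follows by the symmetric argument using $\pi_\downarrow^\equiv$. I expect the main obstacle to be the lifting step (finding a common $x$ with two cover-lifts into $\bar{y_1}$ and $\bar{y_2}$), which is exactly where the interval-structure of congruence classes is essential.
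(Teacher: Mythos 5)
The paper does not prove this statement; it only cites \cite[Prop.9-6.2]{R16}, so there is nothing to compare against. Your argument is essentially correct and self-contained, and the overall strategy (lift the two covers above $x:=\pi_\uparrow^\equiv(\bar{x})$ using the interval structure of congruence classes, project the resulting polygon, and check that the projection remains a polygon) is the standard one. Two points deserve tightening, though. First, the contradiction at the end of the intersection step should be with $\bar{x}<\bar{z}$ (equivalently $\bar{x}\ne\bar{z}$), not with $\bar{x}\lessdot\bar{z}$ --- indeed $\bar{x}\lessdot\bar{z}$ is false here, since $\bar{y_1}$ lies strictly between them. Second, the assertion that $u_1\wedge u_2=x$ and $u_1\vee u_2=z$ is not ``by the definition of polygon''; it needs a short argument: because $[x,z]=C_1\cup C_2$, the only elements of $[x,z]$ covering $x$ are $y_1$ and $y_2$, so every $u_1\in C_1\setminus\{x,z\}$ satisfies $u_1\geq y_1$ and every $u_2\in C_2\setminus\{x,z\}$ satisfies $u_2\geq y_2$, whence $u_1\vee u_2\geq y_1\vee y_2=z$; and if $u_1\wedge u_2>x$ then by finiteness it dominates some cover of $x$ in $[x,z]$, i.e.\ $y_1$ or $y_2$, which would force $u_2\geq z$ or $u_1\geq z$, a contradiction. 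With those two fixes the proof is complete (and, as you say, the dual polygonality condition follows by the symmetric argument with $\pi_\downarrow^\equiv$).
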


\begin{remark}\label{rem:lattice_quotients_via_polygons}
	In the case where a lattice is polygonal, lattice congruences may be described in terms of merging/contracting edges. Here we provide the superficial details for this idea together with Figure~\ref{fig:polygonality_quotients}. See~\cite{R16} for the explicit construction. Let~$(x,y_1)$ be an edge of a polygonal lattice~$L$ contracted by an equivalence relation~$\equiv$. As~$L$ is polygonal,~$(x,y_1)$ is in a family of polygons. If~$x$ is not the minimal element and~$y_1$ is not the maximal element a polygon, one can easily check that no calculations of meets or joins have been affected. In the case that~$x$ is minimal, our polygon is~$[x,y_1\vee y_2]$, and we have that~$y_1\vee y_2=y_1\vee z\equiv x\vee z=z$ for any~$z\in[y_2,y_1\vee y_2]$. Thus, all elements of~$[y_2,y_1\vee y_2]$ form an equivalence class. Repeating this with the meet operation results on the polygon collapsing onto only two equivalence classes with minimal elements~$x$ and~$y_2$. This happens in each polygon every time a new edge is contracted. We also refer to this phenomenon as the \defn{forcing}\index{lattice!congruence forcing} of the congruence~$\equiv$\footnote{Forcing is a term most known from its use in set theory. In our case of the study of congruences in lattice theory one can find a vague use of this term in~\cite{W03}. For a concrete use of the term in our context we refer the reader to~\cite{R16}. In recent times one can see that the term has become standard through its appearance in works such as~\cite{PS17},~\cite{PPR22}, and~\cite{BM21}.}.

	Informally speaking, a contraction either contracts no other edge of a polygon if it is on a side of the polygon or contracts both sides together with a minimal relation and its maximal opposite. Figure~\ref{fig:polygonality_quotients} shows examples of this where the original contracted edge is in red and the resulting equivalence classes are in blue. The minimal elements of the equivalence classes are in black.
\end{remark}
\vspace{-0.4cm}
\begin{figure}[h!]
	\centering
	\includegraphics[scale=2.2]{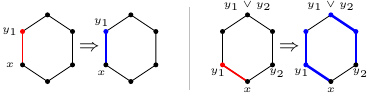}
	\caption[Examples of congruences on polygonal lattices generated by an edge contraction.]{ Examples of congruences (in heavily bolded blue) on polygonal lattices generated by an edge contraction (in bolded red).}
	\label{fig:polygonality_quotients}
\end{figure}

Figure~\ref{fig:quotient_propagation} shows an example of a lattice congruence forcing through a bigger lattice following the cases that can happen in each polygon as in Figure~\ref{fig:polygonality_quotients}. At each step the new contractions are in red and the old ones in blue. Again, the minimal elements are those colored with black. Only the last poset is a lattice.

\begin{figure}[h!]
	\centering
	\includegraphics[scale=0.8]{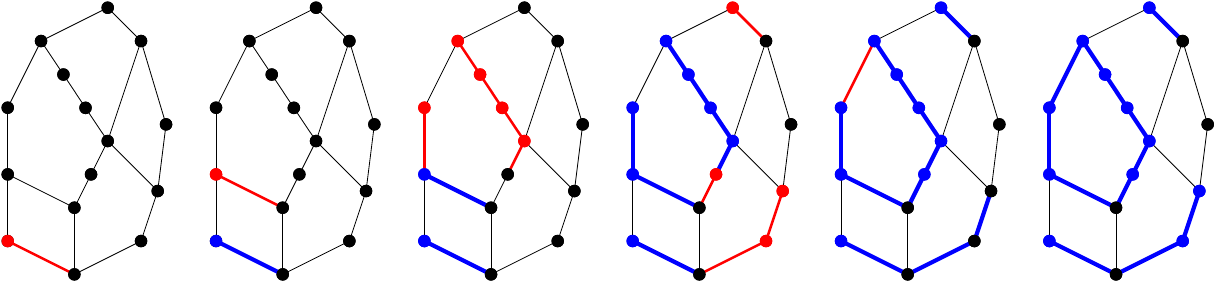}
	\caption[A lattice with a congruence relation forcing through it.]{ A lattice with a congruence relation forcing through it following the polygon cases of Figure~\ref{fig:polygonality_quotients}. The equivalence classes at each step are heavily bolded in blue while the new edges to be contracted are bolded in red.}
	\label{fig:quotient_propagation}
\end{figure}

\section{Convex Geometry}\label{sec:convex_geometry}

In this section we present a selection of basic definitions and results on polytopes based mostly on~\cite{GKZ94},~\cite{Z95}, and~\cite{DRS10}. For a more in depth introduction to polytopes and finer details we refer the reader to~\cite{Z95} and~\cite{DRS10}.

\subsection{Polytopes}\label{subsec:polytopes}

\begin{definition}\label{def:convexity}
	Let~$\cA\subseteq\RR^n$ be a set of~$k\in\ZZ_{\geq0}$ points.

	The \defn{affine hull}\index{hull!affine} of~$\cA$ is \begin{equation*}
		\aff(\cA)=\left\{\sum_{i=1}^k\lambda_i a_i \,:\, a_i\in\cA,\,\lambda_i\in\RR,\,\sum_{i=1}^k\lambda_i=1\right\}.
	\end{equation*} The \defn{cone}\index{cone} or \defn{conic hull}\index{hull!conic} of~$\cA$ denoted~$\cone(\cA)$ is the set of all conic combinations of~$\cA$, that is \begin{equation*}
		\cone(\cA)=\left\{\sum_{i=1}^k\lambda_i a_i \,:\, a_i\in\cA,\,\lambda_i\in\RR_{\geq 0}\right\}.
	\end{equation*}
	Finally, their \defn{convex hull}\index{hull!convex}~$\conv(A)$ is the set of all convex combinations of points in~$\cA$. That is, \begin{equation*}
		\conv(\cA)=\left\{\sum_{i=1}^k\lambda_i a_i \,:\, a_i\in\cA,\,\lambda_i\in\RR_{\geq 0},\,\sum_{i=1}^k\lambda_i=1\right\}.
	\end{equation*}
	Although the definitions are slightly different, the sets change quickly. Figure~\ref{fig:hulls} shows an example of how much these hulls can differ for the same point configuration.

	We say that a set~$\cA$ is \defn{convex}\index{convex set} if~$\conv(\cA)=\cA$. We denote the segment~$\conv(\{\mathbf{x},\mathbf{y}\})$ by \defn{$[{\mathbf{x},\mathbf{y}}]$}.
\end{definition}

\begin{figure}[h!]
	\centering
	\includegraphics[scale=0.9]{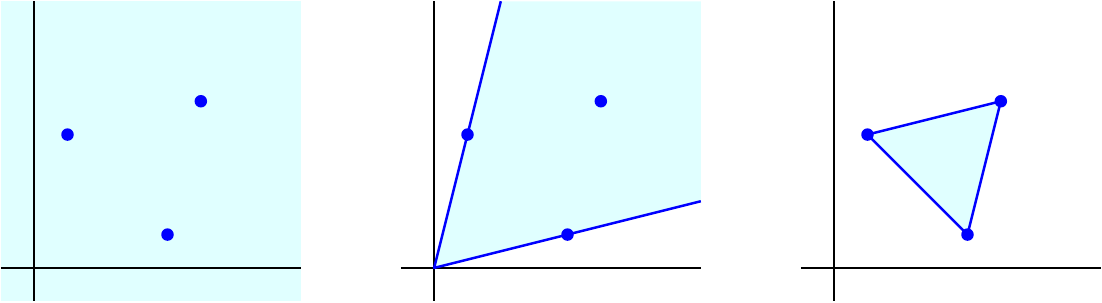}
	\caption[The affine, conic, and convex hulls of a set of points.]{ The affine (left), conic (middle), and convex (right) hulls of a set of points.}
	\label{fig:hulls}
\end{figure}

\begin{definition}\label{def:hyperplane}
	A \defn{hyperplane}\index{hyperplane} in~$\RR^n$ is a set of the form~$\{\mathbf{x}\in\RR^n\, :\,\langle \mathbf{x},\mathbf{v}\rangle=c\}$ obtained by fixing a vector~$\mathbf{v}\in\RR^n\setminus\{\mathbf{0}\}$ and a scalar~$c\in\RR$ where~$\langle \mathbf{x},\mathbf{v}\rangle$ denotes the inner product $x_1v_1+\cdots+x_nv_n$. That is, a subspace of codimension 1 with \defn{normal vector}\index{hyperplane!normal vector}~$v$. Visually, the vector~$\mathbf{v}$ give the orthogonal direction the subspace while~$c$ dictates how shifted it is the origin. In particular,~$\mathbf{0}$ is a point of the subspace if and only if~$c=0$. Every hyperplane separates~$\RR^n$ into two halfspaces. We say that a hyperplane~$H$ \defn{supports} a set~$\cA$ (is a \defn{supporting hyperplane} of~$\cA$) if \begin{itemize}
		\itemsep0em
		\item~$\cA$ is contained completely in one of its halfspaces (i.e.~$\langle \mathbf{x},\mathbf{v}\rangle\geq c$ or~$\langle \mathbf{x},\mathbf{v}\rangle\leq c$ for all~$x\in \cA$),
		\item~$H\cap \cA$ is not empty.
	\end{itemize}
\end{definition}

\begin{definition}\label{def:polytope}{\cite[Thm.1.1]{Z95}}
	A \defn{polytope}\index{polytope}~$P$ is equivalently one of the following:
	\begin{itemize}
		\itemsep0em
		\item \defn{$\cV$-description}\index{polytope!$\cV$-description}: the convex hull of a finite number of points,
		\item \defn{$\cH$-description}\index{polytope!$\cH$-description} the bounded intersection of a finite number of halfspaces.
	\end{itemize}
	Figure~\ref{fig:VHDescriptions} shows an example of a polytope via both descriptions.
	\begin{figure}[h!]
		\centering
		\includegraphics[scale=0.7]{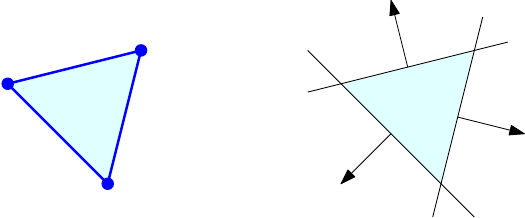}
		\caption[A polytope in~$\RR^2$ with its~$\cV$-description and~$\cH$-description.]{ A polytope in~$\RR^2$ with its~$\cV$-description (left) and~$\cH$-description (right).}
		\label{fig:VHDescriptions}
	\end{figure}

	The \defn{dimension}\index{polytope!dimension} of a polytope~$P$ is the dimension of~$\aff(P)$. The \defn{faces}\index{polytope!face} of~$P$ are the intersections~$P\cap H$ where~$H$ is a supporting hyperplane of~$P$. It follows that the faces of~$P$ are also polytopes themselves. Notice that the empty set~$\emptyset$ and the whole polytope~$P$ are faces of~$P$. Certain faces of a polytope of dimension~$d$ have their own names: \begin{itemize}
		\itemsep0em
		\item A \defn{vertex}\index{polytope!vertex} of~$P$ is a face of dimension 0.
		\item An \defn{edge}\index{polytope!edge} of~$P$ is a face of dimension 1.
		\item A \defn{facet}\index{polytope!facet} of~$P$ is a facet of dimension~$d-1$ (codimension 1 with respect to~$P$).
	\end{itemize}
\end{definition}

The~$\cV$ and~$\cH$ descriptions of polytopes are both useful, so we use them interchangeably. Their equivalence is not trivial by any means~\cite{Z95}, in particular for computational issues.

\begin{definition}\label{def:polyhedron}
	An unbounded intersection of a finite amount of halfspaces is called a~\defn{polyhedron}\index{polyhedron}.
\end{definition}

\begin{remark}\label{rem:relative_dimension}
	Notice that polytopes do not need to have the maximal dimension possible. Whenever this happens we say that the polytope is \defn{full dimensional}\index{polytope!full-dimensional}. Most of our polytopes are not full dimensional but do live in nice subspaces of~$\RR^n$.
\end{remark}

\begin{definition}\label{def:simple_simplicial}
	Let~$P$ be a~$d$-dimensional polytope in~$\RR^n$. We say that~$P$ is \begin{itemize}
		\itemsep0em
		\item \defn{simplicial}\index{polytope!simplicial} if every facet of~$P$ has exactly~$d$ vertices,
		\item \defn{simple}\index{polytope!simple} if every vertex of~$P$ is contained in exactly~$d$ facets of~$P$.
	\end{itemize}
\end{definition}

\begin{example}\label{ex:simplex}
	A polytope~$P$ in~$\RR^n$ that is the convex hull of~$d\leq n+1$ affinely independent points is called a \defn{simplex}\index{polytope!simplex}. Notice that such a simplex is a polytope of dimension~$d-1\leq n$ and that its faces are parametrized by subsets of~$[d]$. The \defn{standard simplex}~$\Delta_{n-1}$ is defined as~$\conv\big(\mathbf{e}_i \,:\, i\in[n]\big)$. Notice that~$\Delta_{n-1}$ is both simple and simplicial and its faces are parametrized via all subsets of~$[n]$. Its~$\cH$-description consists of the inequalities~$x_i\geq 0$ for~$i\in[n]$ together with~$\sum_{i\in [n]}x_i=1$.
\end{example}

\begin{example}\label{ex:cube}
	The \defn{standard cube}~$\PCube_n$ is the~$n$-dimensional polytope~$\conv\big((x_1,\ldots,x_n)\,:\, x_i\in\{0,1\} \big)$. Notice that~$\PCube_n$ is simple but not simplicial. Its~$\cH$-description consists of the inequalities~$0\leq x_i$ and~$x_i\leq 1$ for all~$i\in[n]$. Any other polytope that is combinatorially equivalent (see Definition~\ref{def:realization}) to~$\PCube_n$ is called a \defn{cube}\index{polytope!cube}.
\end{example}

There are diverse ways to obtain new polytopes from old ones. We are particularly interested in the following two.

\begin{definition}\label{def:Minkowski_sum}
	Given polytopes~$P,Q$ in~$\RR^n$ their \defn{Minkowski sum}\index{polytope!Minkowski sum} is \begin{equation*} P+Q=\conv\big(\mathbf{p}+\mathbf{q} \, :\, \mathbf{p}\in P,\, \mathbf{q}\in Q\big).\end{equation*} We denote the sum of~$k$ copies of~$P$ by~$kP$. Figure~\ref{fig:minkowski_sum} shows an example of a Minkowski sum.
\end{definition}
\vspace{-0.4cm}
\begin{figure}[h!]
	\centering
	\includegraphics[scale=1.2]{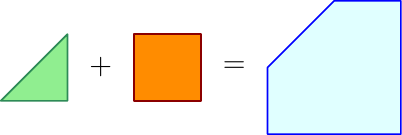}
	\caption[The Minkowski sum of two polytopes.]{ The Minkowski sum of a triangle and a square.}
	\label{fig:minkowski_sum}
\end{figure}
\vspace{-0.4cm}
\begin{example}\label{ex:zonotope}
	A \defn{zonotope}\index{polytope!zonotope} is a polytope of the form~$Z=\mathbf{z}+\sum_{i=1}^k [\mathbf{0},\mathbf{x_i}]$ for a collection of vectors~$\mathbf{z},\mathbf{x_1},\ldots,\mathbf{x_k}\in\RR^n$. Equivalently said, a zonotope is a Minkowski sum of line segments or the affine projection of a cube~\cite{Z95}.
\end{example}

\begin{definition}\label{def:removing_hyperplanes}
	Let~$P$ be a polytope defined by a family of halfspaces~$\cH$. If for a subset~$\cH'\subset \cH$ the intersection of the corresponding halfspaces is still bounded, then the resulting polytope~$Q$ is said to be obtained by \defn{removing facets}\index{polytope!removing facets} of~$P$. An example of this process is shown in Figure~\ref{fig:removahedron}.
\end{definition}
\vspace{-0.25cm}
\begin{figure}[h!]
	\centering
	\includegraphics[scale=0.65]{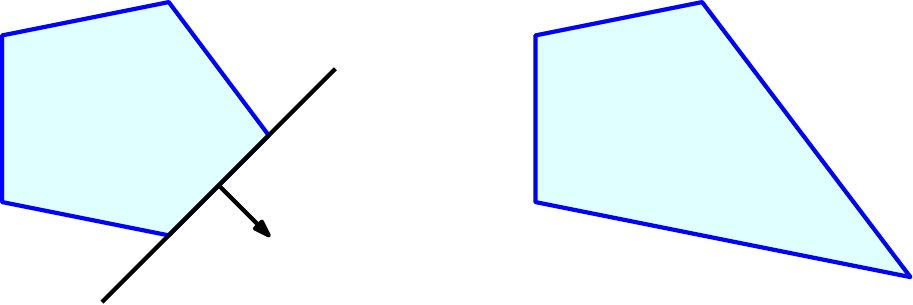}
	\caption{A polytope obtained by removing facets.}
	\label{fig:removahedron}
\end{figure}

\subsection{Fans}\label{ssec:fans}

Cones are useful to us as they share an intimate relation with polytopes. We begin by describing how they share some properties of polytopes.

\begin{definition}\label{def:cones}
	Given a cone~$C$, the \defn{dimension}\index{cone!dimension} of~$C$ is the dimension of its affine hull~$\aff(C)$. The faces of dimension~$1$ of~$C$ are called \defn{rays}\index{cone!ray} and faces of dimension~$d-1$ are called \defn{facets}\index{cone!facet}. A cone of~$\cF$ of codimension~$0$ is called a \defn{chamber}\index{cone!chamber}.
\end{definition}

\begin{definition}\label{def:fans}
	A collection of cones~$\cF$ in~$\RR^n$ is a \defn{fan}\index{fan} if it satisfies: \begin{itemize}
		\itemsep0em
		\item if~$F$ is a face of a cone~$C\in\cF$, then~$F\in\cF$,
		\item if~$C_1,C_2\in\cF$, then the cone~$C_1\cap C_2$ is a common face of~$C_1$ and~$C_2$.
	\end{itemize}
	We say that~$\cF$ is \defn{complete}\index{fan!complete} if~$\bigcup_{C\in\cF}C=\RR^n$, and that~$\cF$ is \defn{pointed}\index{fan!pointed} if~$\{\mathbf{0}\in\cF\}$. The \defn{dimension}\index{fan!dimension} of~$\cF$ is the dimension of the affine hull of the union of its cones.
\end{definition}

In our case all fans are assumed to be complete and pointed. Moreover, we can construct a fan from a polytope in the following way.

\begin{definition}\label{def:normal_fan}
	Let~$P$ be a polytope. A \defn{normal vector}\index{polytope!normal vector} of a facet~$F$ of~$P$ is the normal vector of a hyperplane that supports~$F$. The \defn{normal cone}\index{polytope!normal cone} of a non-empty face~$F$ is the cone formed from all normal vectors of~$F$. The collection of all normal cones of the faces of~$P$ is called the \defn{normal fan}\index{polytope!normal fan} of~$P$ and denoted~$\cN(P)$. Figure~\ref{fig:normalFan} shows an example of a polytope together with its normal fan.
\end{definition}

\begin{figure}[h!]
	\centering
	\includegraphics[scale=0.6]{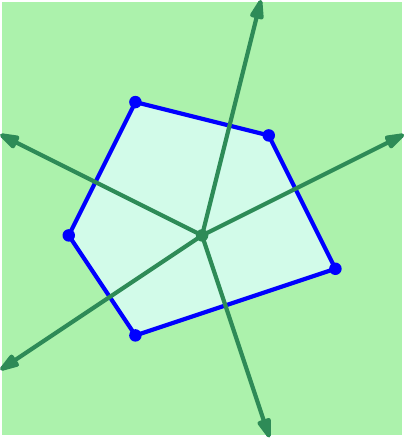}
	\caption[A polytope together with its normal fan.]{ A polytope (blue) together with its normal fan (green).}
	\label{fig:normalFan}
\end{figure}

Since normal cones come from polytopes, their faces have a dual-like description based on their original polytopes.

\begin{remark}\label{rem:bijection_normal_cone_polytope}
	Let~$P$ be a polytope and~$\cN(P)$ its normal cone. The following are in correspondence:
	\begin{itemize}
		\itemsep0em
		\item vertices of~$P$ with chambers of~$\cN(P)$,
		\item edges of~$P$ with facets of~$\cN(P)$,
		\item facets of~$P$ with rays of~$\cN(P)$.
	\end{itemize}
	In general, if~$P$ is full dimensional, the~$k$-dimensional faces of~$P$ correspond to~$(n-k)$-dimensional faces of~$\cN(P)$.
\end{remark}

\begin{definition}\label{def:fan_coarseing_refinement}
	Given two fans~$\cF_1,\cF_2$ of~$\RR^n$, we say that~$\cF_1$ \defn{refines}\index{fan!refinement}~$\cF_2$ if every cone of~$\cF_2$ is a union of cones of~$\cF_1$. Conversely, we say that~$\cF_2$ \defn{coarsens}\index{fan!coarsening}~$\cF_1$.
\end{definition}

\begin{example}\label{ex:fan_examples} The following are some examples of fans:
	\begin{itemize}
		\itemsep0em
		\item For the simplex~$\conv(\{\mathbf{0},\mathbf{e_1},\ldots,\mathbf{e_n}\})$ in~$\RR^n$, its fan is the fan generated by the collection of vectors~$-\mathbf{e_1},\ldots,-\mathbf{e_n}$ and~$\mathbf{1}$.
		\item For a zonotope of the form~$Z=\mathbf{z}+\sum_{i=1}^k [\mathbf{0},\mathbf{x_i}]$ the normal fan is the fan given by the arrangement of the respective hyperplanes normal to~$\mathbf{x_1},\ldots,\mathbf{x_k}$.
		\item For a Minkowski sum~$P+Q$, the normal fan~$\cN(P+Q)$ is the common refinement of~$\cN(P)$ and~$\cN(Q)$.
	\end{itemize}
\end{example}

\subsection{Realizations}\label{sec:realizations}

Let~$P$ be a polytope in~$\RR^n$ with vertices~$\{\mathbf{p_1},\ldots,\mathbf{p_r}\}$. The search of which sets of vertices can play the role of defining a particular polytope is not easy. Moreover, knowing when two sets of vertices give the same polytope is complicated as well. Similarly, if we are given a set of vertices and its convex hull, its is not immediate to know how much one can budge a vertex and still preserve the combinatorics of the original polytope. This begs the question, is there a `good' set of vertices that can be constructed algorithmically/combinatorially to define a particular polytope? This can be called the realization problem over polytopes. Linked to this problematic, another classical problem of polytopes is to understand the combinatorial structure of its~$1$-skeleton.

\begin{definition}\label{def:1-skeleton}
	The \defn{$1$-skeleton} of~$P$ is the graph on the vertices and edges of~$P$. In the case that there is another polytope~$Q$ such that the~$1$-skeleton of~$P$ is isomorphic to a subgraph of the~$1$-skeleton of~$Q$, we say that~$P$ is \defn{embeddable}\index{polytope!embeddable} in~$Q$.
\end{definition}

The~$1$-skeleton of a polytope sometimes corresponds to the Hasse diagram of a poset by orienting the polytope in a generic direction. A non-example of this is the simplex~$\Delta_n$ as it does not have a corresponding poset since a Hasse diagram cannot have triangles. In Chapter~\ref{chap:sorder_realizations} we study this particular problem for a particular polytope. First we need the following notions to get there.

\begin{definition}\label{def:face_lattice}
	The \defn{face lattice}\index{polytope!face lattice}~$Fl(P)$ of a polytope~$P$ is the poset of all faces of~$P$ ordered by inclusion. The meet of two faces is their intersection and the join is the smallest face containing them.
\end{definition}

\begin{definition}\label{def:realization}
	A polytope~$Q={\conv(\{\mathbf{q}_i\}_{i=1}^{r})}$ is \defn{combinatorially equivalent}\label{polytope!equivalence!integral} to~$P$ if the face lattices~$Fl(P)$ and~$Fl(Q)$ are isomorphic. Given a poset~$(F,\leq)$, we say that~$P$ \defn{realizes}\label{poset!realization} the order~$(F,\leq)$ if there is an order-preserving bijection between an orientation of the~$1$-skeleton of~$P$ and the elements of~$F$.
\end{definition}

It is possible to study in more generality the realizations of polytopes via their realization space, but such topic falls outside the scope of this thesis. We refer the reader to~\cite{RG06}.

\begin{example}\label{ex:boolean_realization_cube}
	The boolean lattice on~$[n]$ is realized by the~$\PCube_n$, and it is the face lattice of the simplex~$\Delta_{n-1}$. In this case each vertex of the cube corresponds to the characteristic vectors of a subset of~$[n]$. The case~$n=3$ is shown in Figure~\ref{fig:realizationCube}.
\end{example}

\begin{figure}[h!]
	\centering
	\includegraphics[scale=1]{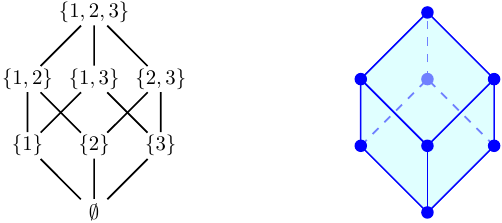}
	\caption{The boolean lattice of~$[3]$ and one of its realizations.}
	\label{fig:realizationCube}
\end{figure}

We now give other notions under which polytopes can be considered equivalent which are exemplified in Figure~\ref{fig:polytopeEquivalences}

\begin{definition}\label{def:combinatorial_equivalence}
	Let~$P\subset \RR^n$ and~$Q\subset\RR^m$ be two polytopes. We say that~$P$ and~$Q$ are \begin{itemize}
		\itemsep0em
		\item \defn{normal equivalent}\index{polytope!equivalence!normal} if we have~$\cN(P) = \cN(Q)$ for their normal fans,
		\item \defn{integrally equivalent}\index{polytope!equivalence!integral} if there exists an affine function~$f:\RR^n\to \RR^m$ whose restriction~$f:P\to Q$ is a bijection and~$f(P\cap\ZZ^n)=Q\cap\ZZ^m$, that is,~$f$ preserves the lattice structure.
	\end{itemize}
\end{definition}

\begin{figure}[h!]
	\centering
	\includegraphics[scale=0.8]{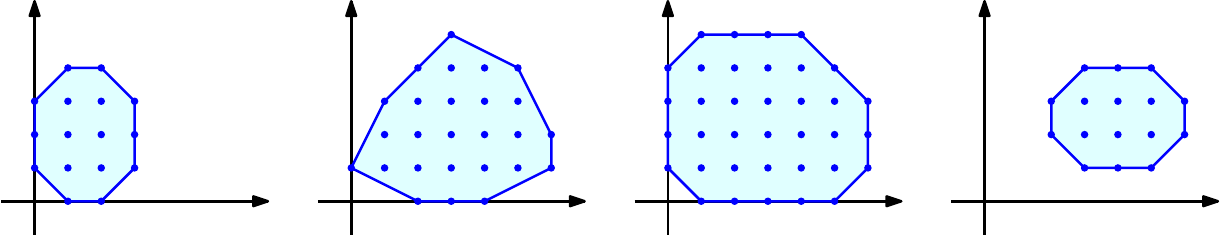}
	\caption[A collection of combinatorially equivalent polytopes.]{ A collection of combinatorially equivalent polytopes. All except the second one are normal equivalent and only the first and last are integrally equivalent.}
	\label{fig:polytopeEquivalences}
\end{figure}
\vspace{-0.2cm}

\subsection{Complexes and Subdivisions}\label{sec:subdivisions}

Apart from studying polytopes by themselves, we can study the structures they make gluing them together into complexes or dividing them into smaller polytopes giving subdivisions.

\begin{definition}\label{def:polytope_complex}
	A \defn{polyhedral complex}\index{complex!polyhedral}~$\cC$ is a collection of polyhedra such that \begin{enumerate}
		\itemsep0em
		\item if~$P\in\cC$ and~$F$ is a face of~$P$, then~$F\in\cC$,
		\item if~$P_1,P_2\in\cC$, then~$P_1\cap P_2$ is a face of both~$P_1$ and~$P_2$.
	\end{enumerate}
	The polyhedra forming a complex~$\cC$ are called the \defn{cells}\index{complex!cells} of the complex. A cell is a \defn{boundary cell} if it has a supporting hyperplane that also supports~$\cC$. Otherwise, it is an \defn{interior cell}.

	The polyhedral subcomplex of cells of dimension~$k$ is the \defn{$k$-dimensional skeleton} of~$\cC$. If all maximal cells under inclusion have the same dimension, then~$\cC$ is called a \defn{pure complex}\index{complex!pure}.

	A polyhedral complex is a \defn{polytopal complex}\index{complex!polytopal} if all its polyhedra are bounded (that is, polytopes). The \defn{face poset}\index{complex!face poset} of a polyhedral complex~$\cC$ is the poset~$(\cC,\subseteq)$ and the dimension of~$\cC$ is the maximal dimension of its polyhedra.
\end{definition}

See Figure~\ref{fig:polytopal_complex} for an example of a polytopal complex and a polyhedral complex.

\begin{figure}
	\centering
	\includegraphics[scale=1.5]{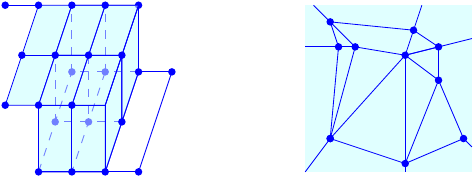}
	\caption[A non-pure polytopal complex and a pure polyhedral complex.]{ A non-pure polytopal complex of dimension~$3$ and a pure polyhedral complex of dimension~$2$.}
	\label{fig:polytopal_complex}
\end{figure}

\begin{example}\label{ex:face_posets}
	Let~$H$ be a polyhedron.
	\begin{itemize}
		\item The complex~$\cC(H)$ is the polytopal complex formed by all of its faces. If~$H$ is a polytope, the face poset is exactly the face lattice~$Fl(H)$.
		\item Let~$H$ be a polyhedron. The boundary complex~$\cC(\partial H)$ is the subcomplex of~$\cC(H)$ given by all proper faces of~$H$.
	\end{itemize}
\end{example}

We are interested in a particular complex of polytopes that is obtained from a polytope by dividing it geometrically.

\begin{definition}\label{def:subdivision}
	Consider a point configuration~$\cA$ with polytope~$P=\conv(\cA)$ and~$\dimension(P)=n$. A \defn{polytopal subdivision}\index{polytope!polytopal subdivision} of~$\cA$ is a collection of polytopes~$\cS=\{S_i:=\conv(\cA_i)$\} such that \begin{enumerate}
		\itemsep0em
		\item~$\cA_i\subset \cA$ and~$\dimension(S_i)=n$ for all~$i$,
		\item any intersection~$S_i\cap S_j$ is a face of~$S_i$ and~$S_j$ and~$\cA_i\cap S_i\cap S_j = \cA_j\cap S_i\cap S_j$  for all pairs~$i,j$,
		\item~$\bigcup_i S_i=P$.
	\end{enumerate}
	When all points in~$\cA$ are vertices of~$P$ we say that~$\cS$ is a \defn{polytopal subdivision} of~$P$.
\end{definition}

\begin{definition}\label{def:subdivision_refinement}
	Let~$\cS=\{S_i\}$,~$\cT=\{T_j\}$ be two subdivisions of a point configuration~$\cA$. We say that~$\cS$ \defn{refines}\label{polytope!subdivision refinement}~$\cT$ if for every~$T_j$, the collection of~$S_i$ such that~$S_i\subseteq T_j$ is a subdivision of~$T_j$. Moreover, this implies that the set of subdivisions of~$\cA$ is a partially ordered set with respect to refinement. The minimal elements of such a poset are the \defn{triangulations}\index{polytope!triangulation} of~$\cA$ and the maximal element is~$P$. Figure~\ref{fig:subdivision_triangulation} shows an example of a general subdivision and a triangulation of a point configuration.
\end{definition}

\begin{figure}[h!]
	\centering
	\includegraphics[scale=1]{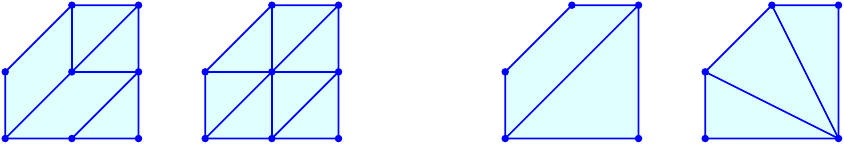}
	\caption[Two subdivisions of a point configuration and two subdivisions of a polytope.]{ Two subdivisions of a point configuration and two subdivisions of a polytope. Only the second and fourth subdivisions are triangulations in their respective cases.}
	\label{fig:subdivision_triangulation}
\end{figure}
\begin{definition}
	Let~$\cA$ be a point configuration in~$\RR^n$. A \defn{height function}\label{height function} is a function~$h:\cA\to \RR_{\geq 0}$.
\end{definition}

We now show a way to use height functions to obtain subdivisions of polytopes.

\begin{definition}\label{def:subdivison_from_height_function}
	Let~$h$ be a height function of a point configuration~$\cA$ in~$\RR^n$ such that~$P=\conv(\cA)$ and consider the~$n+1$-dimensional polyhedron \begin{equation*}
		P_h=\conv\big((\mathbf{x},y)\, :\,y\leq h(\mathbf{x}), \mathbf{x}\in\cA,\, y\in\RR\big).
	\end{equation*}

	The upper faces of~$P_h$ can be seen to be defined by the piecewise-linear function~$f_h:P\to\RR$ with~$f_h(\mathbf{x})=\max(y\,:\,(\mathbf{x},y)\in P_h)$. Since the upper facets are~$n$-dimensional the projection over the last coordinate of them gives polytopes~$S_i$. Then~$\cS=\{S_i\}$ is a subdivision of~$P$. If~$h$ is generic then the subdivision is a triangulation of~$P$.

	A subdivision~$\cS$ obtained in this way using a height function is a \defn{regular subdivision}\index{polytope!regular subdivision}. Such a height function is said to be \defn{admissible}\index{height function!admissible} with respect to~$\cS$.
\end{definition}

\begin{definition}\cite{S05}\label{def:mixed_subdivision}
	Consider~$n$-dimensional polytopes~$P_1,\ldots,P_k$ in~$\RR^n$ with~$P_i=\conv(\cA_i)$. A \defn{Minkowski cell}\index{polytope!Minkowski cell} of the Minkowski sum~$\sum P_i$ is a full dimensional polytope~$B$ of the form~$\sum B_i$ where~$B_i=\conv(\cA'_i)$ and~$\cA'_i\subseteq \cA_i$.

	A \defn{mixed subdivision}\label{polytope!mixed subdivision} of said Minkowski sum is a collection of Minkowski cells~$\cB$ such that \begin{itemize}
		\itemsep0em
		\item~$\bigcup_{B\in \cB} B=\sum P_i$,
		\item for any Minkowski cells~$B=\sum B_i$ and~$B'=\sum B'_i$, then for~$i\in [k]$ the intersection~$B_i\cap B'_i$ is a common face.
	\end{itemize}

	A \defn{fine mixed subdivision} is a minimal mixed subdivision via containment of its summands.
\end{definition}

In Figure~\ref{fig:minkowski_subdivision} we show an example of a Minkowski cell inside a mixed subdivision. Notice that the Minkowski cells are not necessarily always simplices. Still, they are Minkowski sums of simplices if the mixed subdivision is fine.

\begin{figure}[h!]
	\centering
	\includegraphics[scale=1.4]{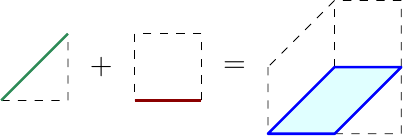}
	\caption{The Minkowski cell in a mixed subdivision of a Minkowski sum.}
	\label{fig:minkowski_subdivision}
\end{figure}

Sometimes, instead of studying a certain characteristic of a polytope, one can study another structure on a related polytope. Fortunately for us this is the case between certain mixed subdivisions and polytopal subdivisions of certain polytopes via a theorem called the \defn{Cayley trick}. The Cayley trick is useful to us as it not only permits us to relate the combinatorics of distinct polytopes but also reduces greatly the ambient dimension where we work. For an extensive view of applications of the Cayley trick we refer the reader to~\cite{S94},~\cite{HRS00},~\cite{S05}, and~\cite{DRS10}.

\begin{definition}\label{def:Cayley_embedding}
	Given polytopes~$P_1,\ldots,P_k$ in~$\RR^n$ we define the \defn{Cayley embedding}\index{polytope!Cayley embedding} as \begin{equation*}
		\cC(P_1,\ldots,P_k):=\conv\big(\{\mathbf{e_1}\}\times P_1, \ldots, \{\mathbf{e_k}\}\times P_k\big)\subset \RR^{k}\times \RR^n. \end{equation*}
\end{definition}

\begin{proposition}[\defn{The Cayley trick }\index{polytope!Cayley trick}{\cite[Thm. 1.4]{S05}}]\label{prop:cayley_trick}
	Let~$P_1,\ldots,P_k$ be polytopes in~$\RR^n$. The regular polytopal subdivisions (resp.\ triangulations) of the Cayley embedding~$\cC(P_1,\ldots,P_k)$ are in bijection with the regular mixed subdivisions (resp.\ fine mixed subdivisions) of~$P_1+\cdots+P_k$.
\end{proposition}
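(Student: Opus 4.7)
The plan is to construct the bijection explicitly by slicing. For a generic convex combination $\lambda=(\lambda_1,\ldots,\lambda_k)$ with $\lambda_i>0$ and $\sum\lambda_i=1$, consider the affine hyperplane $H_\lambda=\{(\mathbf{x},\mathbf{y})\in\RR^k\times\RR^n\,:\,x_i=\lambda_i\}$. A direct computation shows $\cC(P_1,\ldots,P_k)\cap H_\lambda$ is an affine copy of the weighted Minkowski sum $\lambda_1 P_1+\cdots+\lambda_k P_k$, and in particular for $\lambda=(1/k,\ldots,1/k)$ it is an affine copy of $\frac{1}{k}(P_1+\cdots+P_k)$. The bijection will send a polytopal subdivision $\cS$ of the Cayley embedding to the collection of slices $\{S\cap H_\lambda\,:\,S\in\cS\}$ (rescaled by $k$), and conversely a mixed subdivision $\cB$ will be lifted cell by cell back to the Cayley embedding.

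The first step is to describe the cells of any polytopal subdivision of $\cC(P_1,\ldots,P_k)$. Since the vertices of $\cC(P_1,\ldots,P_k)$ lie in the disjoint affine copies $\{\mathbf{e}_i\}\times P_i$, every cell $S$ of a subdivision must be of the form $\conv\bigl(\{\mathbf{e}_1\}\times A_1\,\cup\,\cdots\,\cup\,\{\mathbf{e}_k\}\times A_k\bigr)$ where $A_i\subseteq \cA_i$ (possibly empty for some $i$). The key computation is that slicing such a cell by $H_{(1/k,\ldots,1/k)}$ and projecting onto the last $n$ coordinates (then rescaling by $k$) yields exactly the Minkowski cell $\conv(A_1)+\cdots+\conv(A_k)$, where an empty $A_i$ contributes nothing to the sum. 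This is because a point in $S$ with all Cayley coordinates equal to $1/k$ must be a convex combination using exactly $1/k$ total weight from each $\{\mathbf{e}_i\}\times A_i$, and this is precisely a scaled point of the Minkowski sum.

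Next I would verify that the collection of slices is indeed a mixed subdivision: coverage follows since $\sum P_i\cong \cC\cap H_\lambda$ (up to scaling) is covered by the slices; and the face-intersection property descends from the corresponding property in $\cS$ by noting that $(S\cap S')\cap H_\lambda=(S\cap H_\lambda)\cap(S'\cap H_\lambda)$ and using the product structure of cells to see that the intersection of two Minkowski cells $\sum B_i$ and $\sum B'_i$ decomposes as $\sum(B_i\cap B'_i)$. For the inverse direction, given a mixed subdivision $\cB$, lift each Minkowski cell $\sum B_i$ to the Cayley cell $\conv\bigl(\{\mathbf{e}_i\}\times B_i\bigr)_{i\in[k]}$; a routine check shows this defines a polytopal subdivision of $\cC(P_1,\ldots,P_k)$ and that this operation inverts the slicing map.

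The main obstacle, and the most delicate part, will be preserving regularity and matching triangulations with fine mixed subdivisions. For regularity, a height function $h$ on the vertices of $\cC(P_1,\ldots,P_k)$ decomposes naturally as a family of height functions $h_i$ on each $\cA_i$, and the lower envelope of the lifted Cayley embedding slices to the lower envelope of the mixed subdivision induced by $(h_1,\ldots,h_k)$; this is essentially because linearity of the upper faces is compatible with affine slicing. For the triangulation statement, a Cayley cell $\conv\bigl(\{\mathbf{e}_i\}\times A_i\bigr)_{i\in[k]}$ is a simplex if and only if $\sum(|A_i|-1)+(k-1)=\dimension(\cC)-\text{codim}$ holds with equality at each step, which forces every $\conv(A_i)$ to be a simplex with dimensions summing minimally—this is exactly the condition for $\sum\conv(A_i)$ to be a fine Minkowski cell. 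Putting these observations together yields the desired bijections.
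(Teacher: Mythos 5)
The paper does not actually prove this statement; it cites Santos~\cite[Thm.~1.4]{S05} and gives only Remark~\ref{rem:cayley_trick_proof}, which describes the slicing bijection without verification. Your proposal follows the same slicing construction and attempts to supply the missing details, so you and the paper share the same approach at the level of the bijection itself.

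There is, however, a genuine gap in your verification that the slices form a mixed subdivision. You justify the face-compatibility condition by claiming that the intersection of two Minkowski cells decomposes as $\bigl(\sum_i B_i\bigr)\cap\bigl(\sum_i B'_i\bigr)=\sum_i(B_i\cap B'_i)$. This identity is false for general polytopes: with $B_1=\{0\}$, $B_2=[0,1]$, $B'_1=[0,1]$, $B'_2=\{0\}$ in $\RR$ one has $(B_1+B_2)\cap(B'_1+B'_2)=[0,1]$ whereas $(B_1\cap B'_1)+(B_2\cap B'_2)=\{0\}$. Moreover this is aiming at the wrong target: Definition~\ref{def:mixed_subdivision} does not ask you to control the intersection of the total Minkowski sums, but rather to show that each coordinate intersection $B_i\cap B'_i$ is a common face of $B_i$ and $B'_i$. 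The correct route is to observe that $\{\mathbf{e}_i\}\times\conv(A_i)$ is a face of the Cayley cell $S$ (cut out by the supporting halfspace $x_i\geq 1$), likewise $\{\mathbf{e}_i\}\times\conv(A'_i)$ is a face of $S'$, and since $S\cap S'$ is a common face, intersecting everything with the hyperplane $x_i=1$ immediately shows $\conv(A_i)\cap\conv(A'_i)$ is a common face of $\conv(A_i)$ and $\conv(A'_i)$. Your sketches of regularity preservation and the fineness/triangulation correspondence have the right ingredients (in particular the dimension count $\sum_i(|A_i|-1)=n$ for a maximal Cayley simplex is correct) but would need similar tightening to constitute a proof.
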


\begin{remark}\label{rem:cayley_trick_proof}
	The bijection consists of taking a polytopal subdivision of~$\cC(P_1, \ldots, P_k)$ and intersecting it with the subspace~${(\sum \frac{1}{k}\cdot\mathbf{e}_i)}_{i=1}^k\times \RR^n$. The result of this operation is a mixed subdivision of~$P_1+\cdots +P_k$ up to a scaling of~$\frac{1}{k}$. Figure~\ref{fig:Cayley_trick} exemplifies this process.

\begin{figure}[h!]
	\centering
	\includegraphics[scale=1.6]{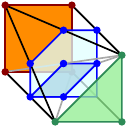}
	\caption[The Cayley trick illustrated.]{ The Cayley trick illustrated following Figure~\ref{fig:minkowski_subdivision}. Figure inspired from~\cite{DRS10}.}
	\label{fig:Cayley_trick}
\end{figure}
\end{remark}

\section{Automata Theory}\label{sec:automata_theory}

We now move on to describe the bases of automata following notation from~\cite{FS09}. For a wider introduction to automata see~\cite{E74}.

\begin{definition}\label{def:automaton}
	An \defn{alphabet}\index{automaton!alphabet} is a set such that its elements are called \defn{letters}\index{automaton!letter}~$\cA$. A \defn{word}\index{automaton!word} on this alphabet is a finite sequence of letters. A collection of words is called a \defn{language}\index{automaton!language}.

	A finite \defn{automaton}\index{automaton}~$\AA$ is a directed multigraph with edges labeled by letters from a finite alphabet~$\cA$. The vertices of~$\AA$ are called \defn{states}\index{automaton!state} and denoted by~$Q$. The edges of~$\AA$ are said to be the \defn{transitions}\index{automaton!transition} of the automaton. Unless stated otherwise, our automata have only 1 starting state denoted by~$q_0$. The set of final states is denoted by~$Q_f\subseteq Q$. Graphically we  represent a final state as a double circle and a non-final state as a simple circle.

	$\AA$ is \defn{deterministic}\index{automaton!deterministic} if for every state~$q\in Q$ and letter~$a\in \cA$ there is at most one transition labeled by~$a$ leaving~$q$. Otherwise,~$\AA$ is \defn{non-deterministic}\index{automaton!non-deterministic}. If there is exactly one transition labeled by~$a$ leaving~$q$ for all~$a\in\cA$ and~$q\in Q$, we say that~$\AA$ is \defn{complete}\index{automaton!complete}.
\end{definition}

Automata are useful as they process words in the alphabet~$\cA$ and determine a language out of said words.

\begin{definition}\label{def:aut_accepting_rejecting}
	Let~$\AA$ be an automaton with alphabet~$\cA$ and~$w=w_i\ldots w_n$ a word of~$\cA$. We say that~$\AA$ \defn{accepts}\index{automaton!accepts/rejects}~$w$ if there is a sequence of edges in~$\AA$ starting from~$q_0$ and ending in a final state~$q_f$ such that the concatenation of the labels of the edges is precisely~$w$. Otherwise, we say that~$\AA$ \defn{rejects}~$w$.

	Informally in a deterministic finite automaton (\defn{DFA}\index{automaton!DFA}), this amounts to reading the word~$w$ from left to right and following the transitions of~$\AA$ accordingly. If the word is accepted, we also say that~$\AA$ \defn{recognizes}\index{automaton!recognize}~$w$. See Figure~\ref{fig:automataFlajoulet} for an example.
\end{definition}

DFA are of particular interest to us since they can be used to characterize patterns in words.

\begin{figure}[h!]
	\centering
	\includegraphics[scale=1]{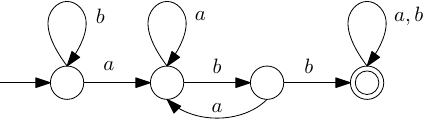}
	\caption[An automaton that recognizes words containing the pattern~$abb$.]{ An automaton that recognizes words containing the pattern~$abb$. Figure based on~\cite{FS09}.}
	\label{fig:automataFlajoulet}
\end{figure}

\vspace{-0.5cm}

\begin{definition}\label{def:language}
	Let~$\AA$ be an automaton with alphabet~$\cA$. The \defn{language}\index{automaton!language} of~$\AA$ is the set of words accepted by~$\AA$. We denote it by~$\cL(\AA)$. In particular, a language of a DFA or a language recognized by a DFA is called a \defn{regular language}\index{automaton!regular language}.
\end{definition}

\begin{definition}\label{def:aut_product}
	Let~$\AA_1$,~$\AA_2$ be automata over an alphabet~$\cA$. The \defn{product automaton}\index{automaton!product}~$\AA_1\times \AA_2$ is the automaton over~$\cA$ with state set~$Q_1\times Q_2$. The initial state is~$(q_{01},q_{02})$ and the set of final states is~$Q_{f1}\times Q_{f2}$. There is a transition between~$(q_1,q_2)$ and~$(q'_1,q'_2)$ labeled~$a\in\cA$ if and only if there exists transitions between~$q_1$ and~$q'_1$ and~$q_2$ and~$q'_2$ both with label~$a$.
\end{definition}

We provide the product of two automata in Figure~\ref{fig:automata_product}.

\begin{figure}[h!]
	\centering
	\includegraphics[scale=0.9]{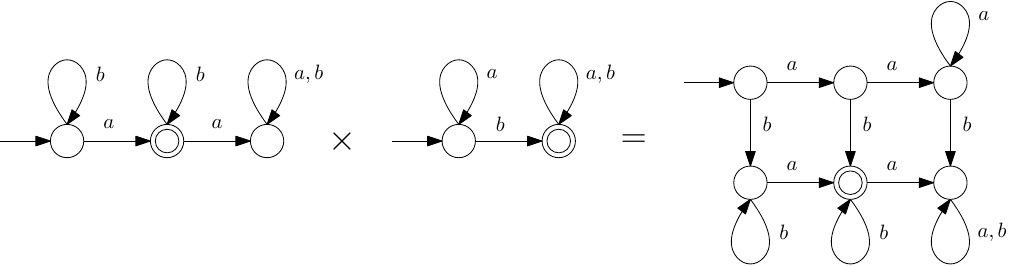}
	\caption{Two automata and their product.}
	\label{fig:automata_product}
\end{figure}
% Killing chktex
% chktex-file 3
% chktex-file 12
% chktex-file 24
% chktex-file 25
% chktex-file 36
% chktex-file 40

\chapter{Weak Order, Quotients, and Generalizations}\label{chap:prelim_weak_order}

\addcontentsline{lof}{chapter}{\protect\numberline{\thechapter} {Weak Order, Quotients, and Generalizations }}
\addcontentsline{lot}{chapter}{\protect\numberline{\thechapter} {Weak Order, Quotients, and Generalizations }}

This chapter lays out our main objects of study: the weak order, permutrees, other Coxeter groups, and the~$s$-weak order. Since they are all objects that have as a common point permutations, we bundle them together in what follows.

\section{Permutations}\label{sec:weak_order}

We begin stating some properties of permutations and the weak order that are at the core of our thesis.

\subsection{Combinatorics}\label{ssec:weak_order_permutations}

We present the main results on permutations and the weak order that we need. For a more in depth view of this subject we recommend~\cite{CSW16},~\cite{BB06}, and~\cite{S11}.

\begin{definition}\label{def:permutations_presentations}
	Given~$n\in\NN$, the set of \defn{permutations}\index{permutation} of size~$n$ denoted by \defn{$\fS_n$} is the set of bijections from~$[n]$ to~$[n]$. For us permutations are denoted in \defn{1-line notation}\index{permutation!1-line notation}, that is, as a sequence of numbers~$\pi=\pi_1\pi_2\ldots\pi_n$ where~$\pi_i:=\pi(i)$.

	Another way to represent permutations is via its factorization into cycles. An \defn{l-cycle}\index{permutation!cycle} is a sequence~$(i_1\,\, i_2\,\,\ldots\,\, i_l)$ such that~$\pi_{i_k}=i_{k+1}$ and~$\pi_{i_l}=i_1$. A permutation is then a product of disjoint cycles which is called its \defn{cycle decomposition}\index{permutation!cycle decomposition}.
\end{definition}

The following is a visual way to describe a permutation useful to us.

\begin{definition}\label{def:permutation_table}
	The \defn{table}\index{permutation!table} of a permutation~$\pi$ is the~$n\times n$ grid with points in positions~$(\pi(i),i)$. The positions equivalently are given by~$(i,\pi^{-1}(i))$.
\end{definition}

\begin{example}\label{ex:permutation_cycle_table}
	The permutation~$\pi=41325$ can be represented as~$(1\,4\,2)(3)(5)$ or~$(3)(4\,2\,1)(5)$. Its table is given in Figure~\ref{fig:table41325}.

	\begin{figure}[h!]
		\centering
		\includegraphics[scale=1.25]{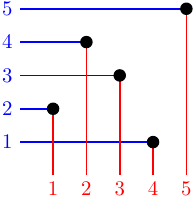}
		\caption{The table of~$\pi=41325$.}\label{fig:table41325}
	\end{figure}
\end{example}

\begin{definition}\label{def:permutation_linear_extension}
	Given a permutation~$\pi\in\fS_n$ in 1-line notation, its corresponding \defn{total order}\index{permutation!total order} is the order given by~$\pi_1<\pi_2<\cdots<\pi_n$. Following this, we say that given a poset~$(P,\leq_P)$, the set of its linear extensions\index{poset!linear extension} is a subset of~$\fS_n$.
\end{definition}

\begin{definition}\label{def:permutations_transpositions}
	Permutations whose cycle decomposition is formed only by~$1$-cycles and a~$2$-cycle~$(i\,\, j)$ with~$i<j$ are called \defn{transpositions}\index{permutation!transposition} and are denoted by \defn{$\tau_{i,j}$}. If~$j=i+1$ we call them \defn{adjacent transpositions}\index{permutation!transposition!adjacent} and denote them by \defn{$s_i$}.

	As we are working with permutations, we also consider their actions on permutations themselves. We say that transposition~$\tau_{i,j}$ acts on a permutation~$ \pi$ \begin{itemize}
		\itemsep0em
		\item on its \defn{right}\index{permutation!right inversion} if it interchanges the values of~$\pi_i$ and~$\pi_j$ and leaves the rest of the elements the same (i.e.\ interchanges positions~$i$ and~$j$),
		\item on its \defn{left}\index{permutation!left inversion} if it interchanges the values of~$i$ and~$j$ and leaves the rest of the elements the same (i.e.\ interchanges positions~$\pi^{-1}_i$ and~$\pi^{-1}_j$).
	\end{itemize} We denote these actions as~$\pi\circ\tau_{i,j}$ and~$\tau_{i,j}\circ\pi$ respectively.
\end{definition}

\begin{example}\label{ex:left_right_transpositions}
	Consider the permutation~$\pi=41325$. Then~$\pi\circ\tau_{2,4}=42315$ via the right action, and~$\tau_{2,4}\circ\pi=21345$ via the left.
\end{example}

\begin{definition}\label{def:pattern_avoidance_containment}
	Given words~$\pi,\sigma$ on~$[n]$ of respective lengths~$r>s$, we say that~$\pi$ \defn{contains} the pattern~$\sigma$ if there is a subsequence of~$\pi$ whose relative order is isomorphic to the relative order of~$\sigma$. If no such subsequence exists, we say that~$\pi$ \defn{avoids}\index{pattern!contains/avoids} the pattern~$\sigma$.
\end{definition}

\begin{example}
	Let~$\pi=1352645$. Then~$\pi$ contains the patterns~$231$ and~$121$ as it respectively contains the subsequences~$352$ and~$565$. It avoids the pattern~$321$ as no subsequence of~$\pi$ has relative order isomorphic to~$321$.
\end{example}

\begin{definition}
	Let~$\pi\in\fS_n$ be a permutation and~$(i,j)\in{[n]}^2$ such that~$i<j$. If~$\pi_i>\pi_{i+1}$ (resp.~$\pi_i<\pi_{i+1}$) then~$i$ is said to be a \defn{descent}\index{permutation!descent} (resp.\ \defn{ascent}\index{permutation!ascent}) of~$\pi$.
	We say that~$(i,j)$ is an \defn{inversion}\index{permutation!inversion} of~$\pi$ if~$\pi^{-1}_i>\pi^{-1}_j$ and denote by \defn{$\inv(\pi)$} the set of inversions of~$\pi$. Otherwise,~$(i,j)$ is a \defn{version}\index{permutation!version} and the set of versions is denoted \defn{$\ver(\pi)$}. Letting~$a_i:=|\{j\in[i+1,n]\,:\,\pi^{-1}_{i}>\pi^{-1}_{j}\}|$ (i.e.\ the amount of elements transposed relative to~$i$), the sequence~$(a_1,\ldots,a_{n-1})$ is called the \defn{Lehmer code}\index{permutation!Lehmer code} or \defn{inversion vector}\index{permutation!inversion vector} of~$\pi$. Note that we omit~$a_{n}$ since it is always 0.
\end{definition}

\begin{remark}\label{rem:inversions_are_coinversions}
	The reader may notice that our definition of inversions can also be found in the literature as that of coinversions. Here we have opted for the definition using the inverse permutation in aims to be consistent with~\cite{PP18} and how to relate tables of permutations with binary trees, permutrees, and decreasing trees. 
\end{remark}

\begin{proposition}[{\cite[Prop.1.3.12]{S11}}]\label{prop:lehmer_code_bijection}
	Let~$\cB_n:=[0,n-1]\times[0,n-2]\times\cdots\times [0,1]$. The map~$\cL:\fS_n\to\cB_n$ sending each permutation to its Lehmer code is a bijection.
\end{proposition}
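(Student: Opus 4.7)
The plan is to show that $\cL$ is well-defined into $\cB_n$ and then either exhibit an explicit inverse or invoke a cardinality argument. First I would verify well-definedness: by its definition, $a_i$ counts a subset of $[i+1,n]$, which has exactly $n-i$ elements, so $a_i \in [0, n-i]$ and the tuple $(a_1,\ldots,a_{n-1})$ lies in $\cB_n$. A quick count gives $|\cB_n| = n(n-1)\cdots 2 = n! = |\fS_n|$, so it suffices to prove either injectivity or surjectivity.

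My preferred route is to construct an explicit inverse $\Phi : \cB_n \to \fS_n$ by an iterative insertion procedure, which then gives both directions at once. The key observation is that $a_i$ counts the number of values $j > i$ that appear before $i$ in the one-line notation of $\pi$ (since $\pi^{-1}_i > \pi^{-1}_j$ means $j$ sits to the left of $i$). So, given $(a_1,\ldots,a_{n-1}) \in \cB_n$, I would build $\pi$ by inserting the values $n, n-1, \ldots, 1$ one at a time into an initially empty list. At the step where we insert $i$, the current list contains exactly the values $\{i+1,\ldots,n\}$, and we place $i$ into the unique position that leaves precisely $a_i$ entries to its left. Since the list has $n-i$ elements at that moment, offering $n-i+1$ possible insertion gaps, the constraint $a_i \in [0,n-i]$ is exactly what makes this insertion well-defined and unique; the final list read left to right is $\Phi(a_1,\ldots,a_{n-1})$.

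The remaining step is verification that $\cL \circ \Phi = \mathrm{id}_{\cB_n}$. This is essentially built into the construction: at the moment $i$ is inserted, it has $a_i$ entries greater than it to its left, and no subsequent insertion (which places a value smaller than $i$) can change that count. Hence reading off the Lehmer code of $\Phi(a_1,\ldots,a_{n-1})$ returns $(a_1,\ldots,a_{n-1})$. Combined with $|\cB_n| = |\fS_n|$, this forces $\Phi$ and $\cL$ to be mutually inverse bijections.

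I do not expect a serious obstacle; the main care-point is to respect the convention used in the definition (the use of $\pi^{-1}$ in the statement means $a_i$ counts \emph{larger values preceding $i$ in one-line notation}, not positions preceding the position of $i$). Once this convention is pinned down, the insertion algorithm produces the inverse transparently, and no delicate case analysis is required.
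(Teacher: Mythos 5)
The paper does not give a proof of this proposition; it is cited directly from Stanley (\cite[Prop.1.3.12]{S11}) as background. Your reconstruction is correct and is the standard argument: you correctly identify that the use of $\pi^{-1}$ in the definition means $a_i$ counts the larger values preceding $i$ in one-line notation, the bound $a_i\in[0,n-i]$ gives well-definedness into $\cB_n$, and the decreasing-value insertion algorithm (place $i$ into the list of $\{i+1,\dots,n\}$ so that exactly $a_i$ entries sit to its left) is an explicit inverse since later insertions of smaller values cannot disturb the count of larger values left of $i$. Combined with $|\cB_n|=n!$ this settles bijectivity, exactly as you say.
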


\begin{example}
	Consider the permutation~$\pi=41325$. Then \begin{equation*}
		\begin{split}
			\inv(\pi)&=\{(1,4),(2,3),(2,4),(3,4)\},\\
			\ver(\pi)&=\{(1,2),(1,3),(1,5),(2,5),(3,5)\},
		\end{split}
	\end{equation*}

	and its Lehmer code is~$(1,2,1,0)$.
\end{example}

\begin{proposition}[{\cite[Thm.2]{GR63}},{\cite[Lem.7-2.4]{CSW16}}]\label{prop:inv_sets_trans_cotrans}
	A subset of~$\NN^2$ of the form~$E=\{(i,j)\,:\, i<j \}$ is the inversion set of a permutation in~$\fS_n$ if and only if~$E$ is transitive and cotransitive.
\end{proposition}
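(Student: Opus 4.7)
The plan is to prove both directions directly from the definitions, using the inverse permutation description of the inversion set.

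For the forward direction, I would fix a permutation $\pi\in\fS_n$ and let $E:=\inv(\pi)$. Transitivity is immediate: if $(i,j),(j,k)\in E$ with $i<j<k$, then $\pi^{-1}_i>\pi^{-1}_j>\pi^{-1}_k$, so $\pi^{-1}_i>\pi^{-1}_k$ and $(i,k)\in E$. Cotransitivity follows by the same chain of inequalities reversed: if $(i,j),(j,k)\notin E$ with $i<j<k$, then $\pi^{-1}_i<\pi^{-1}_j<\pi^{-1}_k$, so $(i,k)\notin E$.

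For the converse, I would reconstruct the permutation from $E$ by defining a total order $\prec$ on $[n]$ that will encode $\pi^{-1}$. Declare $i\prec j$ if either (a) $i<j$ and $(i,j)\notin E$, or (b) $i>j$ and $(j,i)\in E$. The relation is trivially irreflexive and total, so the only thing to check is transitivity. Assuming $i\prec j$ and $j\prec k$, I would split into the six cases according to the relative order of $i,j,k$ in $\NN$. Two cases are direct applications of the assumed transitivity/cotransitivity of $E$ (the cases $i<j<k$ and $k<j<i$). The remaining four cases require a short contrapositive argument: one assumes the desired conclusion fails and uses transitivity or cotransitivity of $E$ to derive a contradiction with one of the hypotheses. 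For instance, if $i<k<j$, we have $(i,j)\notin E$ and $(k,j)\in E$; if $(i,k)\in E$ as well, transitivity of $E$ would force $(i,j)\in E$, a contradiction.

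Once $\prec$ is shown to be a total order, it determines a unique permutation $\pi\in\fS_n$ via $\pi^{-1}_i<\pi^{-1}_j\iff i\prec j$. Then for $i<j$, we have $(i,j)\in\inv(\pi)\iff \pi^{-1}_i>\pi^{-1}_j\iff j\prec i\iff (i,j)\in E$ by definition of $\prec$. Hence $\inv(\pi)=E$, completing the proof.

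The main obstacle is the case analysis in the converse: it is entirely mechanical but requires treating all six orderings of $\{i,j,k\}$ and systematically invoking transitivity or cotransitivity in the contrapositive. Everything else is a bookkeeping exercise with the inverse permutation convention used in the definition of $\inv(\pi)$.
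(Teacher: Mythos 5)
Your proof is correct, and the argument is complete. Note that the paper itself does not include a proof of this proposition: it is stated as a cited result (Guilbaud--Rosenstiehl and the Coxeter--Sortable-Elements chapter of~\cite{CSW16}), so there is no paper-internal argument to compare against. Your approach of building a strict total order~$\prec$ on~$[n]$ directly from~$E$ and then reading off~$\pi^{-1}$ as the ranking induced by~$\prec$ is the standard one and matches the cited sources in spirit. The forward direction is a two-line check, and the six-case analysis you sketch for transitivity of~$\prec$ does close: cases~$i<j<k$ and~$k<j<i$ are immediate from cotransitivity and transitivity of~$E$ respectively, and the remaining four follow by exactly the contrapositive scheme you illustrate for~$i<k<j$. (For the record, in the two mixed cases ending with~$j<k<i$ and~$k<i<j$ you invoke cotransitivity rather than transitivity in the contrapositive, but this is the same mechanism.) The final verification that~$\inv(\pi)=E$ is correct with the paper's inverse-permutation convention for inversions, so nothing is missing.
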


\subsection{Weak Order}\label{ssec:weak_order}

Permutations allow themselves to be partially ordered in several ways. We focus on the orders that come from the containment of inversion sets. Enter the main object of this thesis, the weak order.

\begin{definition}\label{def:weak_order_perms}
	The \defn{(right) weak order}\index{permutation!right weak order} on~$\fS_n$ is the partial order~$\leq$ such that for~$\pi,\sigma\in\fS_n$ we have that~$\pi\leq\sigma \iff \inv(\pi)\subseteq\inv(\sigma).$

	The minimal (resp.\ maximal) element of~$\fS_n$ under this definition is the \defn{identity permutation}\index{permutation!identity}~$\defn{e}=123\ldots n$ (resp.\ \defn{longest element}\index{permutation!longest element}~$w_0=n\ldots 321$).
\end{definition}

\begin{figure}[h!]
	\centering
	\includegraphics[scale=0.9]{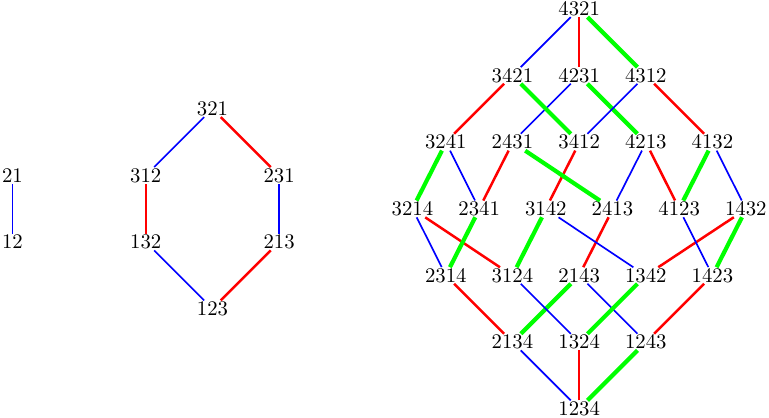}
	\caption[The weak orders~$\fS_2$,~$\fS_3$, and~$\fS_4$ with edges~${\color{blue}s_1=(1\,\,2)}$,~${\color{red}s_1=(2\,\,3)}$, and~${\color{green}s_3=(3\,\,4)}$.]{ The weak orders for~$\fS_2$,~$\fS_3$, and~$\fS_4$ with edges given by the adjacent transpositions~${\color{blue}s_1=(1\,\,2)}$,~${\color{red}s_1=(2\,\,3)}$ (bolded), and~${\color{green}s_3=(3\,\,4)}$ (heavily bolded).}\label{fig:weak_order_3}
\end{figure}

\begin{proposition}[\cite{CSW16}]\label{prop:perm_cover_relations}
	The cover relations of~$(\fS_n,\leq)$ are given by~$\pi\lessdot\sigma$ if and only if~$\sigma=\pi\circ s_{i}$ for some~$i\in[n-1]$. Moreover,~$\inv(\sigma) = \inv(\pi)\cup \{(\pi_i,\pi_{i+1})\}$.
\end{proposition}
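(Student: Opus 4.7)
The plan is to treat the statement in two directions, preceded by a direct inversion-set calculation for the adjacent transposition action.

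First, I would establish the ``moreover'' formula. Fixing $i$ with $\pi_i < \pi_{i+1}$ and letting $\sigma = \pi \circ s_i$, I would compute $\inv(\sigma)$ pair by pair. For a pair $\{a,b\}$ disjoint from $\{\pi_i,\pi_{i+1}\}$, the positions in $\pi$ and $\sigma$ coincide; for a pair $\{c,\pi_i\}$ or $\{c,\pi_{i+1}\}$ with $c \notin \{\pi_i,\pi_{i+1}\}$, the position of $\pi_i$ or $\pi_{i+1}$ shifts by $1$ while staying on the same side of $\pi^{-1}(c)$, so the inversion status is unchanged; finally, the pair $(\pi_i,\pi_{i+1})$ itself is absent from $\inv(\pi)$ because $\pi_i < \pi_{i+1}$ occupy positions $i < i+1$, but present in $\inv(\sigma)$ because the swap puts $\pi_i$ after $\pi_{i+1}$. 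Hence $\inv(\sigma) = \inv(\pi) \cup \{(\pi_i,\pi_{i+1})\}$ with exactly one new element. This immediately yields the ``if'' half of the cover relation: any $\tau$ with $\pi < \tau < \sigma$ would force an inversion set strictly between two sets differing by one element, which is impossible.

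For the converse, I would assume $\pi \lessdot \sigma$ and aim to produce an index $i$ with $\pi_i < \pi_{i+1}$ and $(\pi_i,\pi_{i+1}) \in \inv(\sigma) \setminus \inv(\pi)$; the first half then gives $\pi < \pi \circ s_i \leq \sigma$, and the covering hypothesis forces $\sigma = \pi \circ s_i$. To find such an $i$, I would choose $(a,b) \in \inv(\sigma) \setminus \inv(\pi)$ minimizing the positional gap $\pi^{-1}(b) - \pi^{-1}(a)$ (which is positive because $(a,b) \notin \inv(\pi)$). Setting $i = \pi^{-1}(a)$, $j = \pi^{-1}(b)$, and assuming for contradiction that $j - i \geq 2$, I would let $c = \pi_{i+1}$ and split according to whether $c < a$, $a < c < b$, or $c > b$.

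The technical heart is this case analysis, which I expect to be the main obstacle. In each case, transitivity and cotransitivity of $\inv(\sigma)$ from Proposition~\ref{prop:inv_sets_trans_cotrans} combine with the directly readable inversion status of the pairs involving $c$ in $\pi$ to produce another pair in $\inv(\sigma) \setminus \inv(\pi)$ of strictly smaller gap. For instance, if $a < c < b$ then cotransitivity of $\inv(\sigma)$ applied to $(a,b)$ puts $(a,c)$ or $(c,b)$ in $\inv(\sigma)$, and neither lies in $\inv(\pi)$ since $a$ precedes $c$ which precedes $b$ in $\pi$; if $a < b < c$, then $(b,c) \in \inv(\pi) \subseteq \inv(\sigma)$ because $c$ sits before $b$ in $\pi$, and transitivity applied to $(a,b)$ and $(b,c)$ yields $(a,c) \in \inv(\sigma) \setminus \inv(\pi)$ with gap $1$; if $c < a$, then $(c,a) \in \inv(\pi) \subseteq \inv(\sigma)$, and transitivity applied to $(c,a)$ and $(a,b)$ yields $(c,b) \in \inv(\sigma) \setminus \inv(\pi)$ with gap $j - i - 1$. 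All three outcomes contradict the minimality, so $j - i = 1$ and the desired index is found.
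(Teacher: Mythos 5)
Your proof is correct, and since the thesis only cites this proposition from \cite{CSW16} without supplying an argument, there is no in-paper proof to compare against. The inversion-set calculation for the adjacent swap, the covering via the one-element containment, and the minimal-gap argument for the converse — with transitivity and cotransitivity of $\inv(\sigma)$ from Proposition~\ref{prop:inv_sets_trans_cotrans} handling each case cleanly — are all sound, and this is the standard proof one finds in the literature.
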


This is a powerful proposition as it allows us to think of all edges of the weak order as adjacent transpositions. That is, to find what order of adjacent transpositions gives a permutation, it is enough to follow a route on the poset. Figure~\ref{fig:weak_order_3} shows how this is the case for~$n=2,3,4$. Furthermore, we get the following result.

\begin{corollary}\label{cor:permutaiton:ranked}
	The weak order~$(\fS_n,\leq)$ is ranked by the amount of adjacent transpositions needed to form a permutation starting from the identity permutation. That is,~$\rank(\pi)=|\inv(\pi)|$ with minimal and maximal values~$\rank(e)=0$ and~$\rank(w_0)=\gbinom{n}{2}$.
\end{corollary}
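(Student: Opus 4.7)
The plan is to derive the corollary directly from Proposition~\ref{prop:perm_cover_relations}. The key point is that each cover relation in the weak order adds exactly one inversion, so inversion-set cardinality already \emph{is} the rank function once we verify the grading axiom.

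First I would argue that the poset is graded with rank function $\pi \mapsto |\inv(\pi)|$. Proposition~\ref{prop:perm_cover_relations} tells us that whenever $\pi \lessdot \sigma$, we have $\inv(\sigma) = \inv(\pi) \cup \{(\pi_i, \pi_{i+1})\}$, and the pair $(\pi_i, \pi_{i+1})$ is not in $\inv(\pi)$ precisely because $i$ is an ascent of $\pi$ (otherwise $\sigma = \pi \circ s_i \leq \pi$ in the weak order). Consequently $|\inv(\sigma)| = |\inv(\pi)| + 1$, so along any saturated chain the cardinality of the inversion set strictly increases by one at each step.

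Next, given any $\pi \in \fS_n$, I would build an explicit saturated chain from $e$ to $\pi$ by iteratively sorting: at each step, if the current permutation $\tau$ satisfies $\tau \leq \pi$ but $\tau \neq \pi$, then $\inv(\tau) \subsetneq \inv(\pi)$, and one can pick an ascent $i$ of $\tau$ such that $(\tau_i, \tau_{i+1}) \in \inv(\pi) \setminus \inv(\tau)$; multiplying on the right by $s_i$ yields a cover $\tau \lessdot \tau \circ s_i$ still below $\pi$. Iterating terminates precisely at $\pi$ after exactly $|\inv(\pi)|$ steps. Combined with the previous paragraph, this shows every maximal chain in $[e,\pi]$ has length $|\inv(\pi)|$, establishing that $(\fS_n, \leq)$ is graded with $\rank(\pi) = |\inv(\pi)|$.

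The extremal values are immediate: $\inv(e) = \emptyset$ gives $\rank(e) = 0$, and $w_0 = n(n-1)\cdots 21$ inverts every pair $(i,j)$ with $i < j$, so $\rank(w_0) = |\inv(w_0)| = \binom{n}{2}$. No genuine obstacle arises here; the only subtlety worth spelling out is the existence of the sorting step above, which ensures that the rank function is well defined and not merely an upper bound on chain lengths.
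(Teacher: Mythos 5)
Your proof is correct and takes the approach the paper itself (implicitly) relies on: the corollary is stated without a separate proof as a direct consequence of Proposition~\ref{prop:perm_cover_relations}, precisely because covers add one inversion at a time, which is the observation you spell out. You also correctly flag the one genuinely nontrivial ingredient, namely the existence, for $\tau < \pi$, of an ascent $i$ of $\tau$ with $(\tau_i,\tau_{i+1}) \in \inv(\pi) \setminus \inv(\tau)$ (take a minimal-gap pair in $\inv(\pi)\setminus\inv(\tau)$ and check it must be adjacent); that exchange-type lemma is standard and the paper treats it as such.
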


Both inversions and the weak order can be defined for the left action of transpositions on permutations. This gives the \defn{left weak order}\index{permutation!left weak order} which we state here for completeness.

\begin{definition}
	The cover relations of~$(\fS_n,\leq_L)$ are given by~$\pi\lessdot\sigma$ if and only if~$\sigma=s_{i}\circ\pi$ for some~$i\in[n-1]$.
\end{definition}

The left and right weak order are not the same. However, they are isomorphic and thus the results we write for the right weak order are equally true for the left weak order. The following proposition gives the isomorphism and Figure~\ref{fig:leftRightWeakOrder4} shows the left and right weak orders for~$n=4$.

\begin{proposition}\label{prop:left_right_weak_order_isomorphic}
	The map~$\pi\mapsto \pi^{-1}$ is an isomorphism between the left and right weak orders.
\end{proposition}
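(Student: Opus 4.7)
The plan is to prove the isomorphism by working at the level of cover relations, since both orders are explicitly described through them. First I would verify that $\pi \mapsto \pi^{-1}$ is a bijection on $\fS_n$: this is immediate because inversion is an involution on the symmetric group.

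Next, I would show the map preserves cover relations in both directions. If $\pi \lessdot \sigma$ in the right weak order, then by Proposition~\ref{prop:perm_cover_relations} there exists $i \in [n-1]$ with $\sigma = \pi \circ s_i$. Taking inverses and using that each $s_i$ is an involution (so $s_i^{-1} = s_i$), we get $\sigma^{-1} = s_i \circ \pi^{-1}$, which is exactly the defining form of a cover relation $\pi^{-1} \lessdot_L \sigma^{-1}$ in the left weak order. The converse is the same calculation run backwards, so covers are reflected as well.

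Finally, I would conclude by invoking the general fact that a bijection between two finite posets that both preserves and reflects cover relations is automatically an order isomorphism, since in a finite poset the order relation is the transitive closure of the covering relation, and the inverse-map argument above sends covering chains to covering chains in both directions. It may be worth noting as a sanity check that $e \mapsto e$ and $w_0 \mapsto w_0$, consistent with $\hat 0$ and $\hat 1$ being preserved, and that $\rank(\pi) = \rank(\pi^{-1}) = |\inv(\pi)| = |\inv(\pi^{-1})|$ (this last equality corresponding to the well-known fact that the number of value-based inversions of $\pi$ equals the number of position-based inversions, which are precisely the value-based inversions of $\pi^{-1}$).

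Honestly, this proof has no serious obstacle: the entire content is the identity $(\pi \circ s_i)^{-1} = s_i \circ \pi^{-1}$ together with $s_i$ being an involution. The only point where one must be careful is the direction of composition — confusing left and right multiplication would give the wrong map — but once the conventions are fixed, the argument is essentially a one-line calculation upgraded to an isomorphism statement through the standard finite-poset principle above.
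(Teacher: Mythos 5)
Your proof is correct, and since the paper states this proposition without giving an argument there is nothing in the source to compare against; your approach — the identity $(\pi \circ s_i)^{-1} = s_i \circ \pi^{-1}$ translating right cover relations into left ones, followed by the finite-poset principle that a bijection preserving and reflecting covers is an order isomorphism — is the standard one. The only refinement I would suggest is to move the observation $\rank(\pi) = \rank(\pi^{-1}) = |\inv(\pi)| = |\inv(\pi^{-1})|$ out of the sanity-check postscript and into the main argument: the paper's phrasing of the cover relations (``$\sigma = \pi \circ s_i$'', resp.\ ``$\sigma = s_i \circ \pi$'') leaves the length-increase condition implicit, and the equation $\sigma^{-1} = s_i \circ \pi^{-1}$ by itself does not distinguish $\pi^{-1} \lessdot_L \sigma^{-1}$ from $\sigma^{-1} \lessdot_L \pi^{-1}$. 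It is precisely $\ell(\sigma^{-1}) = \ell(\sigma) = \ell(\pi) + 1 = \ell(\pi^{-1}) + 1$ that settles the direction, so that fact is load-bearing rather than a side remark.
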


\begin{figure}[h!]
	\centering
	\includegraphics[scale=0.9]{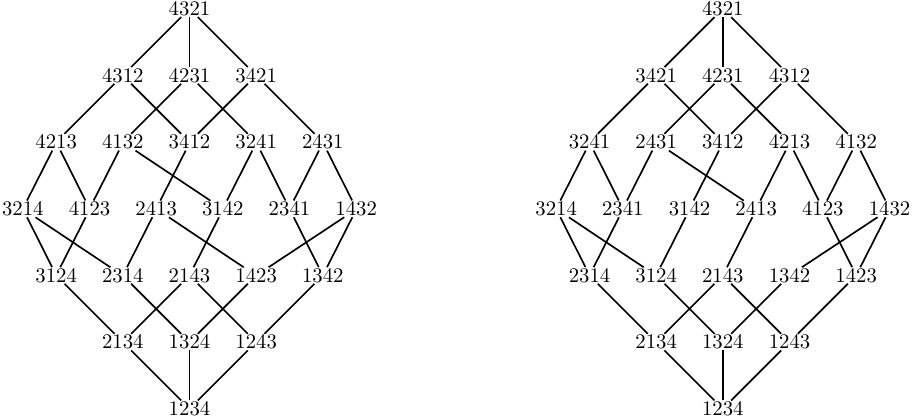}
	\caption[The left and right weak orders on~$\fS_4$.]{ The left (left) and right (right) weak orders on~$\fS_4$.}
	\label{fig:leftRightWeakOrder4}
\end{figure}

\begin{proposition}[\cite{GR63},\cite{HP07}]\label{prop:permutation_lattice}
	The weak order~$(\fS_n,\leq)$ is a lattice. Moreover, the meet and join of~$\pi,\sigma\in\fS_n$ are given by the permutations~$\pi\wedge\sigma$ and~$\pi\vee\sigma$ such that \begin{equation*}
		\begin{split}
			\inv(\pi\wedge\sigma) &= {\{\inv(\pi)\cup\inv(\sigma)\}}^{tc},\\
			\ver(\pi\vee\sigma) &= {\{\ver(\pi)\cup\ver(\sigma)\}}^{tc}.
		\end{split}
	\end{equation*}
\end{proposition}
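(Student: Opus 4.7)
The plan is to establish the lattice structure by exhibiting an explicit construction of the meet (resp.\ join) via the stated inversion (resp.\ version) formula, and then invoke Proposition~\ref{prop:semilattice_to_lattice} to upgrade semilattice to lattice. The core of the argument is that the ``union followed by transitive closure'' operation preserves the property of being an inversion set, characterized by Proposition~\ref{prop:inv_sets_trans_cotrans} as being transitive and cotransitive.

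First I would fix $\pi,\sigma\in\fS_n$ and set $E := \{\inv(\pi)\cup\inv(\sigma)\}^{tc}\subseteq\{(i,j) : i<j\}$. Transitivity of $E$ is automatic from the definition of transitive closure, so the work is showing cotransitivity. Given $i<j<k$ with $(i,k)\in E$, I would unfold the closure into a chain $i=i_0<i_1<\cdots<i_r=k$ whose consecutive pairs all belong to $\inv(\pi)\cup\inv(\sigma)$. Either $j$ coincides with some $i_\ell$, in which case $(i,j)$ and $(j,k)$ both lie in $E$ by transitivity; or $j$ lies strictly between two consecutive nodes $i_\ell<j<i_{\ell+1}$. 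In the latter case the pair $(i_\ell,i_{\ell+1})$ is an inversion of $\pi$ or of $\sigma$, and cotransitivity of that single inversion set (guaranteed by Proposition~\ref{prop:inv_sets_trans_cotrans}) forces either $(i_\ell,j)$ or $(j,i_{\ell+1})$ to lie in $\inv(\pi)\cup\inv(\sigma)\subseteq E$; concatenating with the remaining initial or terminal segment of the chain and using transitivity of $E$ yields $(i,j)\in E$ or $(j,k)\in E$. Hence $E$ is cotransitive, and by Proposition~\ref{prop:inv_sets_trans_cotrans} there is a unique permutation $\tau\in\fS_n$ with $\inv(\tau)=E$.

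Next I would verify that $\tau$ satisfies the appropriate universal property. By construction $\inv(\tau)\supseteq\inv(\pi)\cup\inv(\sigma)$, so $\tau$ is a common upper bound of $\pi$ and $\sigma$; conversely any common upper bound $\tau'$ has $\inv(\tau')\supseteq\inv(\pi)\cup\inv(\sigma)$ and, being itself transitive, must contain $E=\inv(\tau)$, so $\tau\leq\tau'$. This identifies $\tau$ as the required extremal element in the weak order. The formula involving versions is obtained by dualization: since $\ver(\pi)$ is the complement of $\inv(\pi)$ in $\binom{[n]}{2}$, an analogous argument (versions are also transitive and cotransitive, so the same closure trick applies) produces the dual operation.

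Having shown that the constructed element exists for every pair, $(\fS_n,\leq)$ is a finite semilattice with top element $w_0$ and bottom element $e$ already identified in Definition~\ref{def:weak_order_perms}, so Proposition~\ref{prop:semilattice_to_lattice} concludes that it is a lattice. The hard part I expect is the cotransitivity step: one must carefully intertwine cotransitivity of each individual $\inv(\pi),\inv(\sigma)$ with the transitivity introduced by the closure, handling the case analysis on the position of $j$ relative to the chain witnessing $(i,k)\in E$.
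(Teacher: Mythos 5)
Your argument is mathematically correct, and since the paper does not reprove this fact (it cites \cite{GR63} and \cite{HP07}), there is no in-text proof to compare against; what follows is a review of the argument itself.

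One substantive point worth flagging: the set $E=\{\inv(\pi)\cup\inv(\sigma)\}^{tc}$ \emph{contains} both $\inv(\pi)$ and $\inv(\sigma)$, so the permutation $\tau$ it defines lies \emph{above} both $\pi$ and $\sigma$ in the weak order. You correctly identify $\tau$ as a common upper bound and show it is least, i.e.\ $\tau=\pi\vee\sigma$. But the proposition's display labels this formula as the \emph{meet}, $\inv(\pi\wedge\sigma)=\{\inv(\pi)\cup\inv(\sigma)\}^{tc}$, which cannot be right (the meet must have an inversion set contained in $\inv(\pi)\cap\inv(\sigma)$). The $\wedge$ and $\vee$ in the stated formulas are swapped; the correct reading is $\inv(\pi\vee\sigma)=\{\inv(\pi)\cup\inv(\sigma)\}^{tc}$ and $\ver(\pi\wedge\sigma)=\{\ver(\pi)\cup\ver(\sigma)\}^{tc}$, and your proof is the proof of that corrected statement. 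You implicitly corrected the typo but did not call it out; when proving a stated result, a mismatch between what your construction yields (an upper bound) and what the label claims (the meet) is exactly the kind of thing to make explicit.

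The cotransitivity verification is the genuine content and you handle it correctly: a chain witnessing $(i,k)\in E$ either passes through $j$ (done by transitivity of $E$) or skips over it at a single link $(i_\ell,i_{\ell+1})$ belonging to one of $\inv(\pi)$, $\inv(\sigma)$, whose own cotransitivity plants $(i_\ell,j)$ or $(j,i_{\ell+1})$ back into $\inv(\pi)\cup\inv(\sigma)\subseteq E$, after which splicing into the chain gives $(i,j)\in E$ or $(j,k)\in E$. The appeal to Proposition~\ref{prop:inv_sets_trans_cotrans} to recover a permutation from a transitive and cotransitive set is the right move, as is its reuse for version sets (which are complements of inversion sets and hence also transitive and cotransitive). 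One small redundancy: once you construct \emph{both} operations explicitly (join via inversions, meet via versions), the semilattice-to-lattice upgrade from Proposition~\ref{prop:semilattice_to_lattice} is no longer needed; you would only need it if you produced one of the two and wanted to infer the other for free.
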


\begin{proposition}[{\cite[Thm.7-4.3]{CSW16}}]\label{prop:permutation_autoduality}
	The weak order~$(\fS_b,\leq)$ is an autodual lattice via the involutive dual automorphism~$f$ such that~$\inv(f(\pi))=\inv(w_0)\setminus\inv(\pi)$ for~$\pi\in\fS_n$. In particular, it satisfies~$\pi\wedge f(\pi)=e$ and~$\pi\vee f(\pi)=w_0$ for all~$\pi$.
\end{proposition}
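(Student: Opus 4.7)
My plan is to verify the four claims in order: well-definedness of~$f$, that it is an involution, that it reverses the order, and that it satisfies the meet/join identities. The key observation unlocking everything is that since~$\inv(w_0) = \{(i,j) : 1 \leq i < j \leq n\}$ is the set of \emph{all} ordered pairs, the formula~$\inv(f(\pi)) = \inv(w_0)\setminus\inv(\pi)$ is just saying~$\inv(f(\pi)) = \ver(\pi)$. (One may equivalently think of~$f$ concretely as~$\pi \mapsto \pi \circ w_0$, i.e.\ reversing the one-line notation, and verify that this has inversion set~$\ver(\pi)$ by a direct calculation using~${(\pi\circ w_0)}^{-1} = w_0\circ \pi^{-1}$.)

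For well-definedness, I would invoke Proposition~\ref{prop:inv_sets_trans_cotrans} and check that~$\ver(\pi)$ is both transitive and cotransitive. This is immediate from the observation that set-theoretic complementation within~$\inv(w_0)$ swaps these two properties: if~$(i,j),(j,k)\in\ver(\pi)$, then~$(i,j),(j,k)\notin\inv(\pi)$, so by cotransitivity of~$\inv(\pi)$ we get~$(i,k)\notin\inv(\pi)$, i.e.\ $(i,k)\in\ver(\pi)$, giving transitivity; the argument for cotransitivity is symmetric. Hence~$\ver(\pi)$ is indeed the inversion set of a unique permutation, which we take as~$f(\pi)$. Involutivity is then direct:~$\inv(f(f(\pi))) = \inv(w_0)\setminus\inv(f(\pi)) = \inv(w_0)\setminus\ver(\pi) = \inv(\pi)$, so~$f\circ f = \mathrm{id}$.

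For the order-reversing property, I would use the chain of equivalences
\begin{equation*}
\pi \leq \sigma \iff \inv(\pi)\subseteq\inv(\sigma) \iff \inv(w_0)\setminus\inv(\sigma) \subseteq \inv(w_0)\setminus\inv(\pi) \iff f(\sigma)\leq f(\pi),
\end{equation*}
which together with the bijectivity of~$f$ (from being an involution) shows~$f$ is an involutive dual automorphism.

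For the meet/join identities, I would apply the formulas of Proposition~\ref{prop:permutation_lattice}. Note that~$\inv(\pi)\cup\inv(f(\pi)) = \inv(\pi)\cup\ver(\pi) = \inv(w_0)$, which is already transitive, so its transitive closure is itself; hence~$\inv(\pi\vee f(\pi)) = \inv(w_0)$, i.e.\ $\pi\vee f(\pi) = w_0$. Symmetrically,~$\ver(\pi)\cup\ver(f(\pi)) = \ver(\pi)\cup\inv(\pi) = \inv(w_0)$, so~$\ver(\pi\wedge f(\pi)) = \inv(w_0)$, forcing~$\inv(\pi\wedge f(\pi)) = \emptyset$ and thus~$\pi\wedge f(\pi) = e$. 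No step is really an obstacle here; the only conceptual content is the transitivity/cotransitivity swap under complementation, which is the reason this complement-based recipe yields a valid inversion set at all.
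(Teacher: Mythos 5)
Your proof is correct, and it is a clean argument. Note that the paper itself does not prove this statement — it is cited from~\cite[Thm.7-4.3]{CSW16} without proof — so there is no internal proof to compare against; your write-up is a perfectly reasonable supply of the missing argument.

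One small point worth flagging: Proposition~\ref{prop:permutation_lattice} as printed in the paper appears to have the meet and join swapped in its displayed formulas. As written it asserts~$\inv(\pi\wedge\sigma)=(\inv(\pi)\cup\inv(\sigma))^{tc}$, but since~$\pi\leq\sigma \iff \inv(\pi)\subseteq\inv(\sigma)$, the greatest lower bound must have the \emph{smaller} inversion set, so that formula should read~$\inv(\pi\vee\sigma)=(\inv(\pi)\cup\inv(\sigma))^{tc}$ (and dually~$\ver(\pi\wedge\sigma)=(\ver(\pi)\cup\ver(\sigma))^{tc}$). Your argument tacitly uses the corrected version — you write~$\inv(\pi\vee f(\pi))=\inv(w_0)$ and~$\ver(\pi\wedge f(\pi))=\inv(w_0)$ — which is mathematically the right thing to do; just be aware you are not literally quoting the paper's displayed formulas. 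Everything else (well-definedness via the transitivity/cotransitivity swap under complementation in~$\inv(w_0)$, involutivity, order-reversal by complementation, the concrete description~$f(\pi)=\pi\circ w_0$) checks out.
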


\subsection{Permutahedra}\label{ssec:weak_order_permutahedron}

In addition of having a nice combinatorial structure, the weak order enjoys several geometric interpretations which also have rich combinatorial properties.

\begin{definition}\label{def:permutahedron}
	The \defn{permutahedron}\index{polytope!permutahedron} is the polytope~$\PPerm_n$ defined equivalently as:
	\begin{itemize}
		\itemsep0em
		\item the convex hull of the coordinates~$(\sigma_i,\ldots,\sigma_n)$ for~$\sigma\in\fS_n$,
		\item the intersection of the following hyperplane and half-spaces \begin{equation*}
			      \left\{\mathbf{x}\in\RR^n \,:\,\sum_{i\in [n]}x_i=\gbinom{n+1}{2}\right\} \cap \bigcap_{\emptyset\subsetneq I \subsetneq[n]} \left\{\mathbf{x}\in\RR^n \,:\, \sum_{i\in I}x_i\geq\gbinom{|I|+1}{2}\right\},
		      \end{equation*}
		\item the shifted zonotope~$\sum_{1\leq i<j \leq n}[\mathbf{e}_i,\mathbf{e}_j]$.
	\end{itemize}
\end{definition}

See Figure~\ref{fig:perm4} for an example of~$\PPerm_4$.

\begin{proposition}\label{prop:orientation_permutahedron}
	Let~$\mathbf{v}=(\mathbf{w_0})-(\mathbf{e})=(n-1,n-3,\ldots,-n+3,-n+1)={(2i-n-1)}_{i\in[n]}$. The Hasse diagram of the weak order~$(\fS_n,\leq)$ is isomorphic to the~$1$-skeleton of the permutahedron~$\PPerm_n$ oriented with the vector~$\mathbf{v}$.
\end{proposition}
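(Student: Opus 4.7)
The plan is to establish the claim in three steps that mirror the three descriptions of $\PPerm_n$ given in Definition~\ref{def:permutahedron}. The bijection between vertices and permutations is essentially tautological: the $\cV$-description exhibits $\PPerm_n$ as $\conv\{(\sigma_1,\ldots,\sigma_n) : \sigma\in\fS_n\}$, and since all these $n!$ points lie on the sphere of radius $\|e\|$ centered at the origin (they are permutations of the same coordinates), none of them can be a proper convex combination of the others, so each one is a vertex.

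The harder step is identifying the edges. My preferred route is via the zonotope description $\PPerm_n = \sum_{1\le i<j\le n}[\mathbf{e}_i,\mathbf{e}_j]$. By Example~\ref{ex:fan_examples}, the normal fan of this zonotope is the arrangement of hyperplanes $\{x_i = x_j : 1\le i<j\le n\}$, that is, the braid arrangement. The chambers of the braid arrangement are labeled by the total order induced on the coordinates of a generic linear functional $c$, which recovers the bijection with $\fS_n$: the chamber corresponding to $\sigma$ is $\{c : c_{\sigma_1}<c_{\sigma_2}<\cdots<c_{\sigma_n}\}$, the normal cone of the vertex $(\sigma_1,\ldots,\sigma_n)$ (when properly interpreted). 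Two chambers are adjacent (share a facet lying on a single hyperplane $x_i=x_j$) if and only if the corresponding permutations differ by swapping two values that are consecutive in their linear order, which is exactly the right action of an adjacent transposition $s_k$. By Remark~\ref{rem:bijection_normal_cone_polytope}, edges of $\PPerm_n$ correspond bijectively to facets of $\cN(\PPerm_n)$, so the edges of $\PPerm_n$ are exactly the pairs $\{(\pi),(\pi\circ s_k)\}$ for $\pi\in\fS_n$ and $k\in[n-1]$. By Proposition~\ref{prop:perm_cover_relations}, these are precisely the cover relations of the weak order.

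Finally, for the orientation, I would check that whenever $\pi\lessdot \sigma=\pi\circ s_k$ in the weak order, the displacement vector $(\sigma)-(\pi)$ pairs positively with $\mathbf{v}$. Writing $(\sigma)-(\pi) = (\pi_{k+1}-\pi_k)(\mathbf{e}_k-\mathbf{e}_{k+1})$ and using that consecutive entries of $\mathbf{v} = (n-1,n-3,\ldots,-n+1)$ differ by $v_k-v_{k+1}=2$, I obtain
\begin{equation*}
\langle (\sigma)-(\pi),\mathbf{v}\rangle = 2(\pi_{k+1}-\pi_k) > 0,
\end{equation*}
since by Proposition~\ref{prop:perm_cover_relations} the newly created inversion $(\pi_k,\pi_{k+1})$ forces $\pi_k<\pi_{k+1}$. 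Combined with steps one and two, this yields the desired isomorphism between the Hasse diagram of $(\fS_n,\leq)$ and the $1$-skeleton of $\PPerm_n$ oriented by $\mathbf{v}$.

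The principal obstacle, conceptually, is the edge identification in step two: a careful justification requires knowing that in the braid arrangement, adjacent chambers correspond to permutations differing by a \emph{single} simple transposition (rather than some more exotic swap). One could sidestep this by instead exhibiting an explicit linear functional uniquely maximized on each candidate edge (using a weight vector $c$ that ties $c_{\pi_k}=c_{\pi_{k+1}}$ and separates the other values), then counting edges to conclude that no others exist via the simplicity of $\PPerm_n$, but the normal-fan argument above is cleaner and directly invokes the preliminaries already in hand.
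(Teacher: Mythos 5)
The paper itself does not prove this proposition; it is stated as a classical fact in the preliminaries. Notably, the paragraph immediately after it flags exactly the subtlety your proof stumbles over: ``Our weak order $(\fS_n,\leq)$ is the right weak order whereas the poset obtained from the oriented $1$-skeleton of $\PPerm_n$ gives the left weak order.''

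Steps one and three of your plan are sound, but step two contains a real error. You identify the chamber $\{c:c_{\sigma_1}<c_{\sigma_2}<\cdots<c_{\sigma_n}\}$ with the normal cone of the vertex $(\sigma_1,\ldots,\sigma_n)$ and deduce that adjacent vertices are related by right multiplication $\pi\mapsto\pi\circ s_k$. This is off by an inverse. By the rearrangement inequality, a linear functional $c$ with distinct coordinates is maximized over $\PPerm_n$ at the vertex $(\sigma_1,\ldots,\sigma_n)$ precisely when $c$ and $(\sigma_1,\ldots,\sigma_n)$ induce the same order on $[n]$, i.e.\ when $c_{\sigma^{-1}(1)}<c_{\sigma^{-1}(2)}<\cdots<c_{\sigma^{-1}(n)}$. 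So the chamber $\{c:c_{\tau_1}<\cdots<c_{\tau_n}\}$ is the normal cone of $(\tau^{-1}(1),\ldots,\tau^{-1}(n))$, not of $(\tau_1,\ldots,\tau_n)$. Crossing the wall $c_{\tau_k}=c_{\tau_{k+1}}$ replaces $\tau$ by $\tau\circ s_k$ and hence the vertex by $((\tau\circ s_k)^{-1})=(s_k\circ\tau^{-1})$. The edges of $\PPerm_n$ are therefore the pairs $\{(\pi),(s_k\circ\pi)\}$: adjacent vertices differ by \emph{left} multiplication, a swap of consecutive values rather than consecutive positions.

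A small case makes this concrete. Take $n=3$, $\pi=132$, $\sigma=\pi\circ s_1 = 312$. Then $\pi\lessdot\sigma$ in the right weak order, but $(\sigma)-(\pi)=(2,-2,0)=2(\mathbf{e}_1-\mathbf{e}_2)$ is not a primitive root direction; the vertices $(1,3,2)$ and $(3,1,2)$ are in fact antipodal on the hexagon $\PPerm_3$, not adjacent. Your inner-product formula $\langle(\sigma)-(\pi),\mathbf{v}\rangle = 2(\pi_{k+1}-\pi_k)$ gives a positive number, but it is computed along a segment that is usually not an edge of the polytope, so it does not establish the orientation of the $1$-skeleton.

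The repair is straightforward: on the genuine edge $\{(\pi),(s_k\circ\pi)\}$ the displacement is $\mathbf{e}_{\pi^{-1}(k)}-\mathbf{e}_{\pi^{-1}(k+1)}$, and with $v_i=n+1-2i$ one gets $\langle\mathbf v,\,\cdot\,\rangle = 2\bigl(\pi^{-1}(k+1)-\pi^{-1}(k)\bigr)$, which is positive exactly when $s_k\circ\pi$ covers $\pi$ in the left weak order. This realizes the left weak order as the oriented $1$-skeleton; to obtain the right weak order stated in the proposition one must post-compose with the anti-isomorphism $\pi\mapsto\pi^{-1}$ of Proposition~\ref{prop:left_right_weak_order_isomorphic}. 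Your parenthetical ``(when properly interpreted)'' gestures at this missing step but does not carry it out.
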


\begin{figure}[h!]
	\centering
	\includegraphics[scale=0.6]{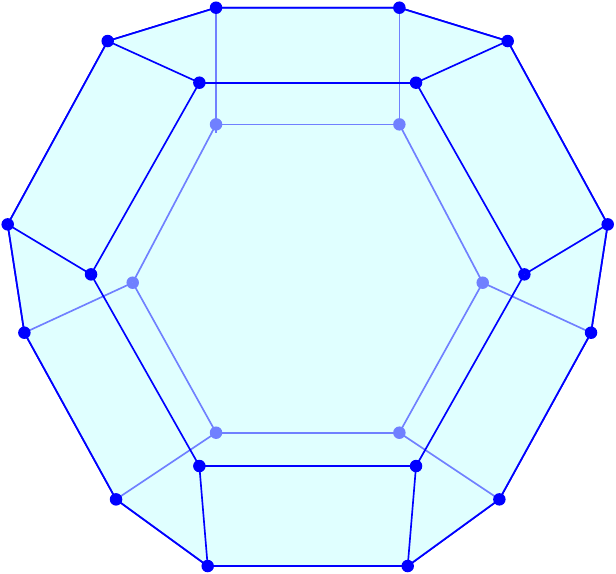}
	\caption{The permutahedron~$\PPerm_4$.}
	\label{fig:perm4}
\end{figure}

There is a detail we must take care of when talking about Proposition~\ref{prop:orientation_permutahedron}. Our weak order~$(\fS_n,\leq)$ is the right weak order whereas the poset obtained from the oriented~$1$-skeleton of~$\PPerm_n$ gives the left weak order. We know from Proposition~\ref{prop:left_right_weak_order_isomorphic} that both orders are isomorphic already. We go one step further and present a way to see both of this orders by describing the faces of the permutahedron combinatorially through different combinatorial objects. See Figure~\ref{fig:permutations_3_packed_words_ordered_paritions}. 

\begin{definition}[{\cite{NT06},\cite{HNT08}}]\label{def:packed_words}
	A finite word over the alphabet~$\NN_{>0}$ is \defn{packed}\index{packed word} if all the letters between~$1$ and its maximum~$m$ appear at least once.
\end{definition}

\begin{definition}\label{def:ordered_partitions}
	An \defn{ordered partition}\index{ordered partition}~$\lambda$ of~$[n]$ into~$k$ parts is a sequence~$\tau=(\tau_1,\ldots,\tau_k)$ such that the disjoint union of the parts gives~$\bigsqcup_{i\in[k]}\tau_i=[n]$.
\end{definition}

The following propositions describing the faces of the permutahedron in terms of packed words and ordered partitions are well known and thus are adapted to our context. We refer the reader to~\cite{NT06},~\cite{HNT08},~\cite{S97}, and~\cite{M03} for further details and connections of this phenomenon.

\begin{proposition}\label{prop:permutahedron_facets_packed_words}
	The faces of the permutahedron~$\PPerm_n$ are in bijection with the packed words of length~$n$. The face corresponding to a packed word~$w$ with maximum~$k$ is the~$(n-k)$-dimensional face given by the convex hull of the~$\sigma\in\fS_n$ such that if~$w_i<w_j$ then~$\sigma^{-1}_i<\sigma^{-1}_{j}$.
\end{proposition}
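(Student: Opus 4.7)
The strategy is to go through the correspondence between linear functionals on $\PPerm_n$ and ordered set partitions of $[n]$, then rephrase it in terms of packed words. Recall that every face $F$ of a polytope is of the form $F = \{\mathbf{x} \in \PPerm_n : \mathbf{c}\cdot \mathbf{x} = \max_{\mathbf{y}\in\PPerm_n}\mathbf{c}\cdot\mathbf{y}\}$ for some linear functional $\mathbf{c}\in\RR^n$ (using the $\cV$-description and the supporting hyperplane characterisation from Definition~\ref{def:polytope}). So I would classify faces by classifying the maximizers of $\mathbf{c}\cdot(\sigma_1,\ldots,\sigma_n)$ over $\sigma\in\fS_n$.

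First, by the classical rearrangement inequality, a permutation $\sigma$ maximises $\sum_i c_i\sigma_i$ if and only if it \emph{cosorts} $\mathbf{c}$ with its values, i.e.\ $c_i < c_j$ forces $\sigma_i < \sigma_j$ (and when $c_i=c_j$ the values $\sigma_i,\sigma_j$ can be any pair, respecting that positions with equal $c$-value receive a consecutive block of $\sigma$-values). Consequently, the only information of $\mathbf{c}$ that determines the face is the ordered partition $\tau = (\tau_1,\ldots,\tau_k)$ of $[n]$ whose blocks are the level sets of $\mathbf{c}$ ordered by increasing value. Define $\Phi:\tau\mapsto w$ where $w_i := j$ whenever $i\in\tau_j$: this is precisely a packed word of length $n$ with maximum $k$, and the construction is reversible, giving a bijection between ordered partitions of $[n]$ and packed words of length $n$. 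Under this dictionary, the condition ``$c_i<c_j$'' becomes ``$w_i<w_j$'', so the face $F_\mathbf{c}$ is exactly
\begin{equation*}
  \conv\bigl(\sigma\in\fS_n : w_i<w_j \Rightarrow \sigma^{-1}_i<\sigma^{-1}_j\bigr),
\end{equation*}
after replacing $\sigma$ by $\sigma^{-1}$, which is a bijection of $\fS_n$ that respects convex hulls of permutation coordinates (here I would check that the condition stated in terms of $\sigma^{-1}$ matches the rearrangement condition on $\sigma$, just by switching positions and values).

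Next, for the dimension count: given a packed word $w$ with maximum $k$ and block sizes $r_1,\ldots,r_k$ ($\sum r_j = n$), the permutations $\sigma$ appearing in $F_w$ are exactly those that distribute the values $\{1,\ldots,r_1\}$ over positions in $\tau_1$ in any order, then $\{r_1+1,\ldots,r_1+r_2\}$ over $\tau_2$, and so on. This realises $F_w$ as an affine image of $\PPerm_{r_1}\times\cdots\times\PPerm_{r_k}$, of dimension $\sum_{j=1}^k(r_j-1) = n-k$.

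The main obstacle is bookkeeping the bijection in both directions. I still need to verify that distinct packed words give distinct faces (which follows because their associated ordered partitions select different normal cones in the braid arrangement, cf.\ the normal fan description in Example~\ref{ex:fan_examples} for zonotopes combined with the third description in Definition~\ref{def:permutahedron}), and that every face is captured (every chamber or lower-dimensional cone of the braid fan is the normal cone of some face, and encodes an ordered partition, hence a packed word). Put together, the map $w\mapsto F_w$ is an inclusion-reversing bijection from packed words of length $n$ to faces of $\PPerm_n$, with $\dim F_w = n - \max(w)$, which is the claim.
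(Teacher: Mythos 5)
The paper itself does not supply a proof of this proposition; it states it as classical and cites external sources, so there is no internal argument to compare yours against. Your plan, via the linear-functional classification of faces, the rearrangement inequality, the ordered-partition/packed-word dictionary, and the dimension count as a product of smaller permutahedra, is the right approach and each of those four pieces is correct.

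The problem is in the step ``so the face $F_\mathbf{c}$ is exactly $\conv(\sigma : w_i<w_j \Rightarrow \sigma^{-1}_i<\sigma^{-1}_j)$, after replacing $\sigma$ by $\sigma^{-1}$, which is a bijection of $\fS_n$ that respects convex hulls of permutation coordinates.'' The rearrangement inequality gives you the face $\conv\{(\sigma_1,\dots,\sigma_n) : w_i<w_j \Rightarrow \sigma_i<\sigma_j\}$, and the replacement you invoke does \emph{not} convert this into the set appearing in the statement. The inversion map $\sigma\mapsto\sigma^{-1}$ is a bijection of $\fS_n$ as an abstract set, but it is not an affine or even a combinatorial automorphism of $\PPerm_n$: already for $n=3$ it fixes the four involutions, swaps $231$ and $312$, and therefore sends the edge from $(2,1,3)$ to $(3,1,2)$ to the diagonal from $(2,1,3)$ to $(2,3,1)$. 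Concretely, take $w=121$ in $\PPerm_3$. Then $\{\sigma : w_i<w_j\Rightarrow \sigma_i<\sigma_j\}=\{132,\,231\}$, and the corresponding vertices $(1,3,2)$ and $(2,3,1)$ span a genuine edge (their difference is $\mathbf{e}_1-\mathbf{e}_3$). By contrast $\{\sigma : w_i<w_j\Rightarrow \sigma^{-1}_i<\sigma^{-1}_j\}=\{132,\,312\}$, and the vertices $(1,3,2)$ and $(3,1,2)$ are antipodal in the hexagon; their convex hull is a long diagonal, not a face. So the two convex hulls are different sets, and the second is not even a face of $\PPerm_n$. Your parenthetical ``I would check that the condition in terms of $\sigma^{-1}$ matches\dots by switching positions and values'' is exactly where the argument dies: swapping positions and values changes which set of vertices of $\PPerm_n$ you are taking the hull of, and the hull is not invariant under that swap.

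The clean way out is to keep the condition $w_i<w_j \Rightarrow \sigma_i<\sigma_j$ (which your dimension argument is implicitly already using when you distribute the smallest values over positions in $\tau_1$), or, equivalently, to keep $\sigma^{-1}$ in the inequality but take the vertex associated to $\sigma$ to be $(\sigma^{-1}_1,\dots,\sigma^{-1}_n)$; either choice makes the statement correct and your rearrangement argument prove it. If instead you insist on the vertex convention $(\sigma_1,\dots,\sigma_n)$ fixed by Definition~\ref{def:permutahedron}, then the face corresponding to $w$ must be stated with $\sigma$ rather than $\sigma^{-1}$. You should make the convention explicit and resolve it rather than sweeping it under a ``respects convex hulls'' claim that is false.
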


\begin{proposition}\label{prop:permutahedron_facets_ordered_partitions}
	The faces of the permutahedron~$\PPerm_n$ are in bijection with the ordered partitions of~$[n]$. The face corresponding to the ordered partition~$\tau=(\tau_1,\ldots,\tau_k)$ is the~$(n-k)$-dimensional face given by the convex hull of the~$\sigma\in\fS_n$ such that~$\sigma\prec\tau$ where~$\prec$ is the refinement order for partitions.
\end{proposition}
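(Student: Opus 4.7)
The plan is to leverage Proposition~\ref{prop:permutahedron_facets_packed_words} and establish the claim by exhibiting an explicit bijection between packed words of length~$n$ with maximum~$k$ and ordered partitions of~$[n]$ into~$k$ parts, then verifying that under this correspondence the two face descriptions coincide.

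First, I would define the bijection. Given a packed word~$w=w_1\ldots w_n$ with maximum~$k$, set $\tau_i:=\{j\in[n] \,:\, w_j=i\}$ for $i\in[k]$. Because~$w$ is packed each block~$\tau_i$ is nonempty, and because every position $j$ has a value $w_j\in[k]$ the family $(\tau_1,\ldots,\tau_k)$ is an ordered partition of~$[n]$ into~$k$ parts. Conversely, an ordered partition $\tau=(\tau_1,\ldots,\tau_k)$ yields a packed word via $w_j:=i$ whenever $j\in\tau_i$. These two constructions are manifestly inverse to each other, and the statistic ``maximum of the packed word'' matches ``number of parts of the ordered partition,'' so both parameterizations yield faces of dimension $n-k$.

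Next, I would check compatibility of the vertex sets. View a permutation~$\sigma\in\fS_n$ as the finest ordered partition $(\{\sigma_1\},\{\sigma_2\},\ldots,\{\sigma_n\})$. Under the refinement order of ordered partitions, $\sigma\prec\tau$ means exactly that reading $\sigma$ from left to right lists the elements of $\tau_1$ first (in some order), then the elements of $\tau_2$, and so on; equivalently, $\sigma^{-1}$ maps $\tau_1$ to the positions $\{1,\ldots,|\tau_1|\}$, then $\tau_2$ to the next block of positions, etc. This translates into the pointwise condition: for $i\in\tau_a$ and $j\in\tau_b$ with $a<b$, we have $\sigma^{-1}_i<\sigma^{-1}_j$. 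But with $w$ the packed word associated to $\tau$, the condition ``$a<b$'' is precisely ``$w_i<w_j$,'' so the inequality $\sigma^{-1}_i<\sigma^{-1}_j$ is exactly the defining condition for the face attached to $w$ in Proposition~\ref{prop:permutahedron_facets_packed_words}.

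Combining these two steps, the face indexed by the packed word $w$ in Proposition~\ref{prop:permutahedron_facets_packed_words} coincides with the convex hull of the $\sigma\in\fS_n$ such that $\sigma\prec\tau$, where $\tau$ is the ordered partition associated to $w$, and the dimensions agree. This establishes the desired bijection between faces of $\PPerm_n$ and ordered partitions of $[n]$. I do not anticipate a serious obstacle here; the only delicate point is being careful in the identification of the refinement $\sigma\prec\tau$ with the inverse-positional condition, which requires distinguishing the roles of $\sigma$ and $\sigma^{-1}$ in keeping with the conventions already fixed in Section~\ref{ssec:weak_order_permutations}.
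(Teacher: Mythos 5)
The paper does not actually supply a proof of this proposition: immediately before the two propositions on packed words and ordered partitions, it states that both are ``well known and thus are adapted to our context'' and simply refers to~\cite{NT06},~\cite{HNT08},~\cite{S97},~\cite{M03}. So there is no paper proof to compare against; your task is effectively to fill a gap.

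Your argument is correct. The map $w\mapsto\tau$ with $\tau_i=\{j:w_j=i\}$ is a bijection between packed words of length~$n$ with maximum~$k$ and ordered partitions of~$[n]$ into~$k$ parts, and the unwinding of $\sigma\prec\tau$ (each part $\tau_a$ occupying a contiguous block of positions in $\sigma$, blocks ordered by $a$) into the pointwise condition ``$a<b$, $i\in\tau_a$, $j\in\tau_b$ $\Rightarrow$ $\sigma^{-1}_i<\sigma^{-1}_j$'' is exactly the defining condition of the face attached to $w$ in Proposition~\ref{prop:permutahedron_facets_packed_words}, since $w_i<w_j$ is literally $a<b$. One small point worth flagging for the record: the paper's own discussion after Figure~\ref{fig:permutations_3_packed_words_ordered_paritions} suggests that the \emph{canonical} labelings by packed words and by ordered partitions are chosen differently in that figure (one tracks the left weak order, the other the right), so the specific bijection you wrote down is one of two natural choices, and it may not be the one drawn. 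That does not affect the validity of the proposition or of your proof — any bijection that matches the face descriptions suffices — but it is worth being explicit about which convention you are using if this were to be incorporated.
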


\begin{figure}[h!]
	\centering
	\includegraphics[scale=1.6]{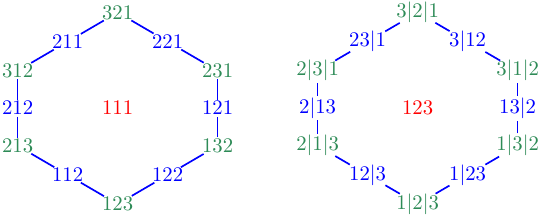}
	\caption[$\PPerm_3$ with faces labeled with packed words and ordered partitions.]{ $\PPerm_3$ with faces labeled with packed words (left) and ordered partitions (right).}\label{fig:permutations_3_packed_words_ordered_paritions}
\end{figure}

Following Figure~\ref{fig:permutations_3_packed_words_ordered_paritions} one might sense that ordered partitions correspond to the right weak order and packed words to the left weak order. That is part of a more general story that lands outside the scope of this thesis. We refer the reader for more details about this respectively to~\cite{S97},~\cite{M03}, and~\cite{P20} for the ordered partitions side and~\cite{NT06},~\cite{T97}, and~\cite{H22} for the packed words side.

Having talked about the permutahedron~$\PPerm_n$, we now move to talk about its normal fan. Via the zonotope description of Definition~\ref{def:permutahedron} we can describe it easily as follows.

\begin{proposition}\label{prop:permutahedron_normal_fan}
	The normal fan~$\cN(\PPerm_n)$ is the fan formed by the collection of hyperplanes of the form~$x_i=x_j$ for all~$1\leq i<j\leq n$. This fan is known as the \defn{braid fan}\index{polytope!permutahedron!braid fan}.
\end{proposition}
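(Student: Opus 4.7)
The plan is to read off the normal fan directly from the zonotope presentation of $\PPerm_n$ given in Definition~\ref{def:permutahedron}. Rewriting each segment as $[\mathbf{e}_i,\mathbf{e}_j] = \mathbf{e}_i + [\mathbf{0}, \mathbf{e}_j - \mathbf{e}_i]$, we obtain
\[
\PPerm_n \;=\; \mathbf{z} \;+\; \sum_{1 \leq i < j \leq n} [\mathbf{0},\,\mathbf{e}_j - \mathbf{e}_i]
\]
for a suitable translation vector $\mathbf{z}$. Translation does not affect the normal fan, so by the zonotope item in Example~\ref{ex:fan_examples}, $\cN(\PPerm_n)$ is the fan of the hyperplane arrangement whose hyperplanes are those normal to the generators $\mathbf{e}_j - \mathbf{e}_i$. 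Each such hyperplane is $\{\mathbf{x}\in\RR^n : \langle \mathbf{x},\mathbf{e}_j - \mathbf{e}_i\rangle = 0\} = \{\mathbf{x} : x_i = x_j\}$, which is exactly the braid arrangement. This gives the result immediately.

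As a sanity check using the $\cV$-description, I would match the $n!$ chambers of the braid arrangement bijectively with the vertices. For $\sigma \in \fS_n$ let $\mathbf{v}_\sigma := (\sigma_1,\ldots,\sigma_n)$. The normal cone at $\mathbf{v}_\sigma$ is
\[
\{\mathbf{c}\in\RR^n \,:\, \langle \mathbf{c},\mathbf{v}_\sigma\rangle \geq \langle \mathbf{c},\mathbf{v}_\tau\rangle \text{ for all } \tau \in \fS_n\}.
\]
By the rearrangement inequality, $\sum c_i \sigma_i$ is maximized over $\fS_n$ exactly when the sequences $(\sigma_i)$ and $(c_i)$ are sorted in the same order, i.e.\ when $c_{\sigma^{-1}(1)} \leq c_{\sigma^{-1}(2)} \leq \cdots \leq c_{\sigma^{-1}(n)}$. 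Thus the normal cone at $\mathbf{v}_\sigma$ is exactly the closed chamber of the braid arrangement indexed by $\sigma$, and as $\sigma$ ranges over $\fS_n$ these chambers tile $\RR^n$ (modulo the lineality direction $\mathbf{1}$, since $\PPerm_n$ lies in an affine hyperplane). Taking faces of these chambers recovers the whole braid fan, confirming the conclusion obtained from the zonotope argument.

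The only real obstacle is bookkeeping: one must be careful with the convention $\mathbf{v}_\sigma = (\sigma_1,\ldots,\sigma_n)$ versus its inverse when identifying the chamber of $\sigma$ in the braid arrangement, and one must note that the fan lives in $\RR^n$ with common lineality $\RR\cdot\mathbf{1}$ so that "chamber" means full-dimensional cone modulo this lineality. Neither point affects the combinatorial statement that the hyperplanes defining $\cN(\PPerm_n)$ are exactly $\{x_i = x_j\}_{1 \leq i < j \leq n}$.
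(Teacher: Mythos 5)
Your proposal is correct and matches the paper's intended approach: the paper gives no explicit proof but explicitly signals it with the phrase ``Via the zonotope description of Definition~\ref{def:permutahedron} we can describe it easily as follows,'' which is precisely your first paragraph (rewrite each segment as a translate of $[\mathbf{0},\mathbf{e}_j-\mathbf{e}_i]$, then invoke the zonotope item in Example~\ref{ex:fan_examples} to read off the braid arrangement). Your second paragraph, identifying the normal cone at $\mathbf{v}_\sigma$ via the rearrangement inequality and noting the lineality along $\mathbf{1}$, is a correct and sensible cross-check of the same fact from the $\cV$-description, but it is not needed once the zonotope argument is in place.
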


\begin{definition}\label{def:generalized_permutahedra}
	A polytope~$P$ with~$n:=\dim(P)$ is a \defn{generalized permutahedron}\index{generalized permuthaedron} if its normal fan~$\cN(P)$ coarsens the braid fan~$\cN(\PPerm_n)$. Generalized permutahedra are also equivalently defined as any polytope~$P\in\RR^{n}$ such that any edge~$(\mathbf{v},\mathbf{u})$ satisfies~$\mathbf{v}-\mathbf{u}=\lambda(\mathbf{e_i}-\mathbf{e_j})$ for some~$\lambda\in\RR$ and~$1\leq i<j\leq n$.  A polytope obtained by removing facets (i.e.\ deleting inequalities in the~$\cH$-description) from~$\PPerm_n$ is a called a \defn{removahedron}\index{polytope!removahedron}.
\end{definition}

\begin{example}\label{ex:cube_gen_permutahedron}
	The zonotope~$\conv((0,0,0),(1,-1,0),(0,1,-1),(1,0,-1))$ is a generalized permutahedron. Notice that this polytope is combinatorially equivalent to~$\PCube_2$. In general all polytopes that are combinatorially equivalent to~$\PCube_n$ are generalized permutahedra in this sense.
\end{example}

\begin{example}
	Generalized permutahedra do not need to strictly coarsen the braid fan. Figure~\ref{fig:generalizedpermutahedra} shows three examples of generalized permutahedra. The first one is the permutahedron~$\PPerm_4$ itself while the second is a polytope combinatorially equivalent to~$\PPerm_4$. The third one is a generalized permutahedron with a coarser fan.
	\begin{figure}[h!]
		\centering
		\includegraphics[scale=0.8]{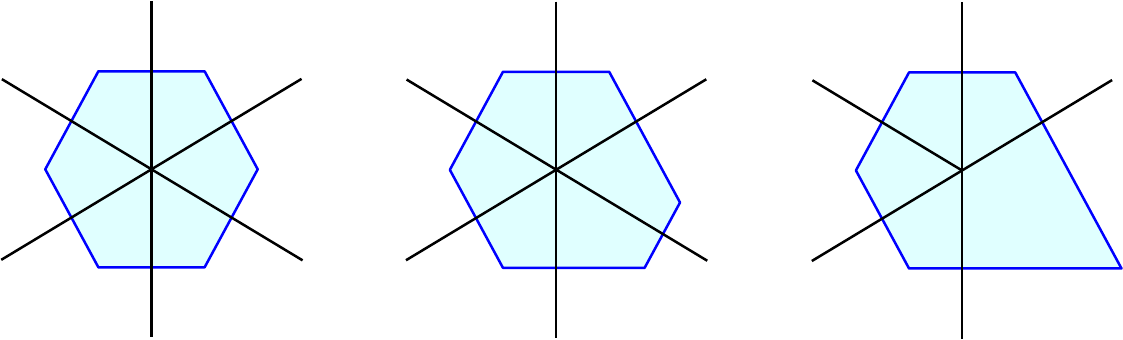}
		\caption{3 examples of Generalized permutahedra together with their normal fans.}\label{fig:generalizedpermutahedra}
	\end{figure}
\end{example}

\subsection{Cubical Embeddability}\label{ssec:cubical permutahedron}

Apart from the standard permutahedron, another possible combinatorial realization of~$\PPerm_n$ is its embedding into a dilatation of the~$n-1$-cube~$\PCube_{n-1}$ (\cite{BF71},~\cite{RR02}). This embedding is shown in Figure~\ref{fig:permutahedron_cubic} for~$n=4$. This structure appears also as a particular case of~$\delta$-cliff posets (see~\cite[Prop.1.2.1.]{CG22})

\begin{proposition}[{\cite[Thm.3.1]{RR02}}]\label{prop:cubical_permutahedron}
	Let~$Q_{n-1}=[0,n-1]\times\cdots\times[0,1]$. The permutahedron~$\PPerm_n$ is embeddable in the cube~$Q_{n-1}$ via the function that sends a permutation to its Lehmer code (inversion vector).
\end{proposition}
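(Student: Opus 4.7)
The plan is to verify two things about the Lehmer code map $\cL : \fS_n \to \cB_n$: first, that it sends the vertices of $\PPerm_n$ injectively to integer lattice points of $Q_{n-1}$ (i.e.\ to vertices of the cubical subdivision of $Q_{n-1}$), and second, that it carries each edge of the $1$-skeleton of $\PPerm_n$ to a unit edge of that cubical subdivision. The first half is already essentially established: from the defining formula $a_i = |\{j \in [i+1,n] : \pi^{-1}_i > \pi^{-1}_j\}|$ we immediately get $0 \leq a_i \leq n-i$, so $\cL(\pi) \in \cB_n$, and Proposition~\ref{prop:lehmer_code_bijection} provides that $\cL : \fS_n \to \cB_n$ is a bijection. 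Thus the vertex sets match bijectively.

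The core of the argument is the edge-preservation. By Proposition~\ref{prop:orientation_permutahedron}, the $1$-skeleton of $\PPerm_n$ (suitably oriented) is the Hasse diagram of the weak order, and by Proposition~\ref{prop:perm_cover_relations} a cover relation $\pi \lessdot \sigma$ has the form $\sigma = \pi \circ s_i$ with
\begin{equation*}
\inv(\sigma) = \inv(\pi) \cup \{(\pi_i, \pi_{i+1})\}, \qquad \pi_i < \pi_{i+1}.
\end{equation*}
Setting $k := \pi_i$, the single new inversion $(\pi_i, \pi_{i+1})$ has smaller entry exactly $k$, so only the $k$-th entry of the Lehmer code is affected, and it goes up by exactly one. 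Hence $\cL(\sigma) = \cL(\pi) + \mathbf{e}_k$, so any edge of the $1$-skeleton of $\PPerm_n$ maps to a unit-length edge of the integer grid inside $Q_{n-1}$. Combining both parts, the $1$-skeleton of $\PPerm_n$ is realized as a subgraph of the $1$-skeleton of the cubical subdivision of $Q_{n-1}$, which is the desired embedding.

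The delicate step is the bookkeeping in the edge-preservation argument: one must check that it is the smaller of the two swapped values, $\pi_i$, that indexes the coordinate of the Lehmer code which is perturbed, and that this perturbation is precisely $+1$ with no side-effects on the other $a_\ell$. This follows from the fact that the $\ell$-th coordinate $a_\ell$ only counts inversions whose smaller entry is $\ell$, and the only inversion added or removed between $\pi$ and $\sigma = \pi \circ s_i$ is $(\pi_i, \pi_{i+1})$. Everything else is bookkeeping, and there is no further obstacle; in particular the embedding is typically a proper subgraph of the grid, because many grid edges join integer points whose preimages under $\cL$ are not comparable in the weak order.
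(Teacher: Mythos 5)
Your proof is correct. Note first that the paper does not actually prove this proposition: it is cited from~\cite{RR02} and stated as background, so there is no ``paper proof'' to compare against line by line. The relevant comparison is to the machinery the paper develops in Chapter~\ref{chap:permutree_vectors} for the permutree generalization, where the cubic embedding of the permutreehedron (Theorem~\ref{thm:cubic_property_embedding}) is assembled from exactly the three ingredients you isolate: injectivity of the vector map (Theorem~\ref{thm:cubic_property_injective}), the convex hull being the full box (Theorem~\ref{thm:cubic_property_convec_cube}), and cover edges being single-coordinate unit steps (Theorem~\ref{thm:cubic_property_edge_direction}). When~$\delta=\nonee^n$ the cubic vector reduces to the Lehmer code (Example~\ref{ex:cubic_realization_tamari_others}), and your two-part argument is precisely that specialization: Proposition~\ref{prop:lehmer_code_bijection} replaces the injectivity and surjectivity-onto-the-box arguments, and the cover-relation computation you carry out is the type-$A$ instance of the edge-direction lemma. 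Your careful check that only the coordinate indexed by the smaller swapped value~$\pi_i$ changes, and by exactly~$+1$, is the genuine content and is done correctly.

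One small remark on framing rather than on correctness: the paper's Definition~\ref{def:1-skeleton} defines ``embeddable'' in terms of the~$1$-skeleton of a polytope, and the~$1$-skeleton of the box~$Q_{n-1}$ viewed as a polytope has only~$2^{n-1}$ vertices (its corners), not~$n!$ of them. You implicitly resolve this by targeting the~$1$-skeleton of the \emph{unit-cube subdivision} of~$Q_{n-1}$ (the integer grid graph), which is the intended meaning throughout the paper's cubical-embedding discussion and which your argument fits cleanly. The paper also proves a stronger face-level statement for the permutree case (maximal cells versus facets) which you do not address, but that goes beyond what the word ``embeddable'' asks for here, so no gap.
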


\begin{figure}[h!]
	\centering
	\includegraphics[scale=0.6]{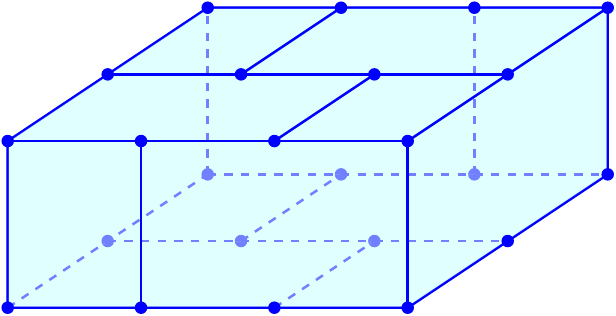}
	\caption{The cubical embedding of~$\PPerm_4$.}\label{fig:permutahedron_cubic}
\end{figure}

\section{Binary Trees}\label{sec:binary_trees}

Permutations share connections with many combinatorial families. One of the most classical ones come from binary trees which we now proceed to describe.

\subsection{Combinatorics}\label{ssec:binary_combinatorics}

\begin{definition}\label{def:binary_tree}
	A \defn{binary tree}\index{binary tree} is a rooted plane tree (i.e. with a concrete embedding) on the nodes~$\{v_1,\ldots,v_n\}$ where each node has exactly two subtrees. In other words, each node has exactly two children and one parent. We denote the collection of binary trees with~$n$ nodes by~$\cBT_n$. The left (resp.\ right) subtree of a node~$v_i$ is denoted by~$L_i$ (resp.~$R_i$). We say that \defn{$i\to j$} if either~$v_i\in L_j$ or~$v_i\in R_j$. Given a binary tree~$T$, its \defn{partial order}\index{binary tree! partial order} on~$[n]$ is defined by~$i<j$ if~$i\to j$.
\end{definition}

Binary trees are an interesting family of combinatorial objects. In particular, they are counted by the \defn{Catalan numbers}\index{Catalan numbers}~$C_n=\frac{1}{n+1}\gbinom{2n}{n}$. Catalan numbers appear in a wide variety of combinatorial problems as they count a plethora of combinatorial families. So many in fact (200+) that we avoid going in detail about them. We refer the dauntless reader to~\cite{S15} for a more in depth view of this subject. Some other objects counted by Catalan numbers include: \begin{itemize}
	\itemsep0em
	\item $312$-avoiding permutations,
	\item triangulations of a convex~$(n+2)$-gon,
	\item Dyck paths of length~$2n$,
	\item non-crossing matchings of~$[2n]$,
	\item non-crossing partitions of~$[n]$,
	\item ballot sequences of length~$2n$.
\end{itemize}

On top of being interesting between themselves, Catalan families~$(C_n)$ also have connections with other combinatorial families by how they are labeled. For example, binary trees with increasing labeling on both left and right subtrees give factorial families~$(n!)$. If the labeling is increasing in one side and free in the other one obtains families counted by parking functions~$((n+1)^{n-1})$. A free labeling in both subtrees gives families counted by~$n!C_n$. See~\cite{CG19} for more information on this idea.

\begin{definition}\label{def:binary_tree_inordering}
	Taking an anticlockwise walk, we label the nodes of a binary tree with the set~$[n]$ whenever we visit a node for the second time. In other words, all labels of vertices in~$L_i$ (resp.~$R_i$) are smaller (resp. larger) than~$i$. This is called the \defn{in-order labeling}\index{binary tree!in-order labeling} of the binary tree. Figure~\ref{fig:binary_inordering} shows an example of a binary tree with its in-order labeling.
\end{definition}

\begin{figure}[h!]
	\centering
	\includegraphics[scale=1.1]{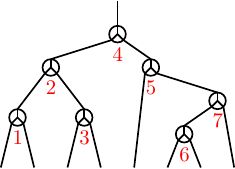}
	\caption{A binary tree with its in-order labeling.}\label{fig:binary_inordering}
\end{figure}

\subsection{Tamari Lattice}\label{ssec:tamari_order}

We now move on to describe the Tamari lattice which shares a key relation with the weak order classically studied through the Stack-Sort algorithm as in~\cite{K73}, and~\cite{K97}. Historically, the Tamari lattice was first described using parenthisations in~\cite{HT72}. Here we proceed instead using binary trees and basing our statements on~\cite{L04} and~\cite{P86}.

\begin{definition}\label{def:edge_rotation}
	Let~$T\in\cBT_n$ be an in-ordered binary tree with an edge~$(v_i,v_j)$ where~$1\leq i<j\leq n$ such that~$i \to j$. An \defn{$ij$-edge rotation}\index{binary tree!edge rotation} is the operation of replacing the right subtree of~$v_i$ by the subtree with root~$v_j$ and the left subtree of~$v_j$ by~$R_i$. Figure~\ref{fig:binary_tree_rotation} shows an example of an edge rotation.

	Given two binary trees~$T_1,T_2$ we say that~$T_1\lessdot T_2$ if and only if~$T_2$ can be obtained from~$T_1$ by a single \defn{edge rotation}\index{binary tree!edge rotation}.

	\begin{figure}[h!]
		\centering
		\includegraphics[scale=1.5]{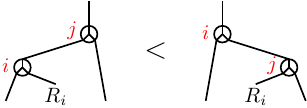}
		\caption{Rotation on binary trees.}\label{fig:binary_tree_rotation}
	\end{figure}
\end{definition}

The poset obtained by edge rotations is called the \defn{Tamari lattice}\index{Tamari lattice}. To prove the lattice property, Huang and Tamari~\cite{HT72} relied on a bijection between parenthisations on~$n$ objects and bracketing functions on~$n$ objects. We forgo this definition and instead use the following equivalent formulation given directly in terms of binary trees.

\begin{definition}[{\cite{K93},~\cite{P86},~\cite{BW96}}]\label{def:bracket_vector}
	Let~$T$ be a binary tree with vertex set~$[n]$ labeled in in-order. Its \defn{bracket set}\index{binary tree!bracket!set} is~$B(T):=\{(i,j)\,:\,j\in R_i\}$ and its \defn{bracket components}\index{binary tree!bracket!components} are~${B(T)}_i=\{j\in [n] \,:\, (i,j)\in B(T)\}$.
	To a bracket set we associate a \defn{bracket vector}\index{binary tree!bracket!vector}~$\vec{b}(T)=(b_1,\ldots,b_{n-1})$ such that~$b_i=|{B(T)}_i|$.
\end{definition}

Notice that we do not consider~${B(T)}_n$ as~$R_n=\emptyset$. Figure~\ref{fig:tamari_3_bracket_sets} presents all bracket sets for~$\cBT_3$. We now characterize which vectors are bracket vectors of binary trees with our terminology.

\begin{figure}[h!]
	\centering
	\includegraphics[scale=1]{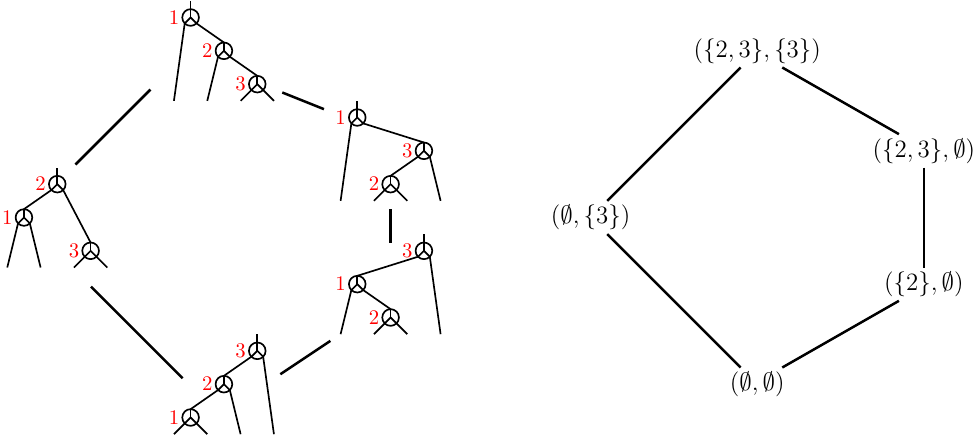}
	\caption[The Tamari lattice for~$n=3$ and its corresponding bracket sets.]{ The Tamari lattice for~$n=3$ and its corresponding bracket sets represented by their components.}\label{fig:tamari_3_bracket_sets}
\end{figure}

\begin{proposition}[{\cite{HT72}}]\label{prop:bracket_vector_characterization}
	The bracket set map is a bijection from binary trees to the sets~$B\in 2^{[n]}$ such that their components satisfy
	\begin{enumerate}
		\itemsep0em
		\item~$B_i=\emptyset$ or~$B_i=\{i+1,i+2,\ldots,i+l\}$ for some~$l>0$,

		\item if~$j\in B_i$, then~$B_j\subseteq B_i$.
	\end{enumerate}
\end{proposition}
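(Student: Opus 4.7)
The plan is to prove the forward direction directly from the in-order labeling, and then construct the inverse map by a recursive root-finding procedure. For the forward direction, given $T \in \cBT_n$, condition (1) follows because any $k > i$ lying in the subtree of $i$ forces every label in $[i+1, k]$ into the subtree of $i$ as well by the in-order traversal, hence into $R_i$; so $R_i = \{i+1, \ldots, i+l\}$ for some $l \geq 0$. Condition (2) is because $j \in R_i$ forces the whole subtree rooted at $j$ to lie inside $R_i$, and in particular $R_j \subseteq R_i$, i.e.\ $B_j \subseteq B_i$.

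For the reverse direction I would build a tree from any admissible $B$ by recursion on consecutive ranges $[a, b]$, with the core lemma being that on any such range whose restricted data $\{B_i\}_{i\in[a,b]}$ satisfies (1), (2), and $B_i \subseteq [i+1, b]$, there is a unique index $r \in [a, b]$ with $B_r = [r+1, b]$ and $B_i \subseteq [i+1, r-1]$ for every $i \in [a, r-1]$. I would define $r := \min\{i \in [a,b] : B_i = [i+1, b]\}$, which is well-defined because $b$ already belongs to this set ($B_b \subseteq [b+1, b] = \emptyset$). If some $i < r$ had $B_i$ extending past $r-1$, then $r$ would lie in $B_i$, so (2) would give $B_r \subseteq B_i$, and combining with (1) this would force $B_i = [i+1, b]$, contradicting minimality of $r$. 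Uniqueness follows because any candidate $r' > r$ would fail its own left-side constraint at $i = r$, since $B_r$ already reaches up to $b > r' - 1$.

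Once the lemma is in hand, the inverse map sets the root on $[a, b]$ to be $r$, attaches the left subtree built recursively on $[a, r-1]$, and the right subtree on $[r+1, b] = B_r$. A routine check shows both restricted families still satisfy (1) and (2) (the right one using (2) applied inside $B_r$, the left one using the inclusion from the lemma), so the recursion is legitimate. The composition $T \mapsto B(T) \mapsto T(B(T))$ returns $T$ by induction on $n$: for a tree $T$ with root $r$, one has $B_r = R_r = [r+1, n]$, and every $i \in L_r$ satisfies $R_i \subseteq L_r = [1, r-1]$, hence $B_i \subseteq [i+1, r-1]$, so the lemma picks out precisely $r$ and one recurses on the two halves.

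The main obstacle lies in the root identification itself, because the equation $B_r = [r+1, b]$ is typically satisfied by several indices — for instance in a right-comb every index satisfies it. The essential combinatorial content of the proof is the observation that among all such candidates the minimum one is forced to be the root, an argument that genuinely combines the consecutive-range shape coming from (1) with the nested inclusion coming from (2). After that, the bijection is mechanical.
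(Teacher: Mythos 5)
Your proof is correct. The paper does not actually supply its own argument for this proposition --- it merely cites \cite{HT72}, which works with parenthesizations and bracketing functions rather than binary trees, so your direct in-order-labeling argument is genuinely filling a gap in the text rather than redoing an existing one.

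Both directions of your argument are sound. The forward direction correctly exploits the fact that the in-order labels in any subtree form a consecutive interval, which gives both the consecutive shape of $B_i$ and the nesting $B_j \subseteq B_i$ whenever $j \in B_i$. For the inverse, your key lemma --- that $r := \min\{i \in [a,b] : B_i = [i+1,b]\}$ is the unique index satisfying both the right-equality and the left-boundedness constraint --- is the right identification of the root, and your minimality argument does correctly combine condition (1) (consecutivity) with condition (2) (nesting) to rule out any smaller candidate having $B_i$ reach past $r-1$. The recursive splitting into $[a,r-1]$ and $[r+1,b]$ with the inherited constraints is verified appropriately.

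The only small gap worth flagging: you establish that $T \mapsto B(T) \mapsto T$ is the identity, which gives injectivity of the bracket-set map and shows your construction is a left inverse. For the full bijection you also need $B(T(B)) = B$ for every admissible $B$, i.e.\ surjectivity. This does follow from your construction --- at each recursive level you set $R_r = B_r = [r+1,b]$ by fiat, and inductively the recursive subtrees reproduce the restricted components --- but you never state it, leaving only one of the two composites checked. It is a one-line fix, but as written the argument establishes an injection rather than a bijection.
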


The bracket set has similarities to inversions of permutations and the notation is reminiscent of that of Proposition~\ref{prop:cubical_permutahedron}. This is not a coincidence, and we study it in Chapter~\ref{chap:permutree_vectors}.

\begin{proposition}[{\cite{HT72}}]\label{prop:tamari_lattice_meet}
	Given two binary trees~$T,T'$ with~$n$ vertices, there exists the binary tree~$T\wedge T'$ under the binary tree rotation order. Moreover, it satisfies \begin{equation}\label{eq:binary_meet}{B(T\wedge T')}_i={B(T)}_i\cap {B(T')}_i.\end{equation}
\end{proposition}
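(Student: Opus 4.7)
The plan is to prove this via three steps that closely parallel the proof of the weak order lattice property for permutations (Proposition~\ref{prop:permutation_lattice}), with bracket sets playing the role of inversion sets. First, define the candidate meet by setting $C_i := B(T)_i \cap B(T')_i$ and verify that the family $\{C_i\}_{i \in [n]}$ satisfies the two characterizing conditions of Proposition~\ref{prop:bracket_vector_characterization}. For condition (1), each of $B(T)_i$ and $B(T')_i$ is either empty or a consecutive interval of the form $\{i+1, \ldots, i+l\}$; their intersection is therefore again empty or such an interval starting at $i+1$. For condition (2), $j \in C_i = B(T)_i \cap B(T')_i$ forces $B(T)_j \subseteq B(T)_i$ and $B(T')_j \subseteq B(T')_i$, hence $C_j = B(T)_j \cap B(T')_j \subseteq C_i$. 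By Proposition~\ref{prop:bracket_vector_characterization}, there is a unique binary tree $S$ with $B(S)_i = C_i$ for all $i$, and this $S$ will be our candidate for $T \wedge T'$.

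Next, I would establish the key lemma that $T_1 \leq T_2$ in the Tamari order if and only if $B(T_1)_i \subseteq B(T_2)_i$ for every $i$. The forward direction proceeds by induction on the length of a chain of rotations between $T_1$ and $T_2$: a direct check using Definition~\ref{def:edge_rotation} shows that a single $ij$-edge rotation carrying a tree $T$ to a tree $T'$ modifies only the $i$-th bracket component, adjoining to $B(T)_i$ the singleton $\{j\}$ together with the labels of $R_j$ and leaving every other component unchanged, so that $B(T)_i \subsetneq B(T')_i$. For the converse direction, assume $T_1 \ne T_2$ with componentwise containment, pick the smallest $i$ with $B(T_1)_i \subsetneq B(T_2)_i$, and use condition (2) of Proposition~\ref{prop:bracket_vector_characterization} applied to $T_1$ to locate a rotatable edge at $v_i$ whose rotation enlarges $B(T_1)_i$ while preserving containment in $B(T_2)_i$. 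Induction on $\sum_i \bigl(|B(T_2)_i| - |B(T_1)_i|\bigr)$ then closes the argument.

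Once this characterization is in hand, the meet property is immediate: $B(S)_i = C_i \subseteq B(T)_i$ and $C_i \subseteq B(T')_i$ give $S \leq T$ and $S \leq T'$, so $S$ is a lower bound; and for any other lower bound $U$ we have $B(U)_i \subseteq B(T)_i \cap B(T')_i = C_i = B(S)_i$, whence $U \leq S$. Thus $S = T \wedge T'$ and the stated formula $B(T \wedge T')_i = B(T)_i \cap B(T')_i$ holds. The main obstacle I anticipate is the converse direction in the characterization lemma: unlike an adjacent transposition in $\fS_n$, which appends a single pair to the inversion set, an edge rotation changes an entire interval within one bracket component at once. Showing that a rotation of $T_1$ can always be chosen to both strictly enlarge one of its bracket components and remain below $T_2$ componentwise requires a careful analysis of the local structure of $T_1$ at the first discrepancy index, in particular identifying the vertex that realizes the minimal element of $B(T_2)_i \setminus B(T_1)_i$ inside $T_1$.
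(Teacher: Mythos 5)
The paper itself does not supply a proof of this proposition --- it is cited directly from~\cite{HT72} --- but Chapter~\ref{chap:permutree_vectors} proves the generalization to permutrees (Lemma~\ref{lem:inversion_set_inclusion} and Theorem~\ref{thm:permutree_meet}), and Remark~\ref{rem:recovering_binary_meet_from_permutrees} observes that the binary-tree case specializes back to the stated intersection formula. Your three-step plan --- characterize bracket sets, characterize the Tamari order by componentwise containment, then use intersection as the candidate meet --- is sound and is in fact structurally identical to the paper's permutree argument. Step~1 (verifying that $\{C_i\}$ satisfies the two conditions of Proposition~\ref{prop:bracket_vector_characterization}) is correct as written, and the forward direction of your containment lemma is also correct: a single rotation changes only the $i$-th bracket component by adjoining $\{j\}\cup B(T)_j$.

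Where you differ from the paper is the converse direction of the containment lemma: the paper (Lemma~\ref{lem:inversion_set_inclusion}) uses a chain-from-$\hat{0}$ argument, whereas you propose a direct single-rotation descent. Your version is more explicit and arguably more illuminating, but one detail needs tightening. You write that you would use condition~(2) of Proposition~\ref{prop:bracket_vector_characterization} applied to $T_1$; in fact the crucial applications are to $T_2$. Concretely, take the smallest $i$ with $B(T_1)_i\subsetneq B(T_2)_i$ and let $m=i+|B(T_1)_i|+1$, the smallest missing element. If $v_i$ were a right child of $v_p$ in $T_1$ (so $p<i$), then $i\in B(T_1)_p=B(T_2)_p$ (by minimality of $i$), and condition~(2) applied to $T_2$ forces $B(T_2)_i\subseteq B(T_2)_p=\{p+1,\ldots,i+|B(T_1)_i|\}$, hence $B(T_2)_i\subseteq B(T_1)_i$, a contradiction; likewise $v_i$ cannot be the root. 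So $v_i$ is the left child of $v_m$, the rotation $i\to m$ is available, and condition~(2) applied to $T_2$ (giving $B(T_2)_m\subseteq B(T_2)_i$) together with the inductive containment $B(T_1)_m\subseteq B(T_2)_m$ shows the rotated tree stays componentwise below $T_2$. With that correction the induction on $\sum_i(|B(T_2)_i|-|B(T_1)_i|)$ closes, and the rest of your argument is correct.
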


Figure~\ref{fig:tamari_meet} illustrates the meet operation between two binary trees in~$\cBT_5$.

\begin{figure}[h]
	\centering
	\includegraphics[scale=1]{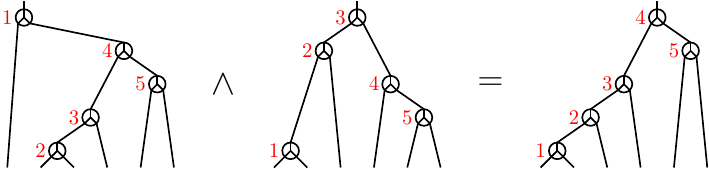}
	\caption[The meet of two binary trees.]{ The meet of two binary trees. Calculating their bracket sets via Equation~\ref{eq:binary_meet} yields~$(\{2,3,4,5\},\emptyset,\emptyset,\{5\})\cap(\emptyset,\emptyset,\{4,5\},\{5\})=(\emptyset,\emptyset,\emptyset,\{5\})$.}\label{fig:tamari_meet}
\end{figure}

\begin{corollary}[{\cite{HT72}}]\label{thm:tamari_lattice_proof}
	The poset of binary trees~$(\cBT_n,\leq)$ is a lattice.
\end{corollary}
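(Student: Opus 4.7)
The plan is to combine Proposition~\ref{prop:tamari_lattice_meet} with the semilattice-to-lattice criterion of Proposition~\ref{prop:semilattice_to_lattice}, which asserts that a finite meet-semilattice with a maximum element is automatically a lattice. So the whole corollary is really a packaging step once meets of pairs are in hand.

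First I would upgrade Proposition~\ref{prop:tamari_lattice_meet} from pairs to arbitrary finite families. Since componentwise set intersection is associative and, by Proposition~\ref{prop:tamari_lattice_meet}, yields the bracket vector of a well-defined binary tree at each step, any nonempty finite subfamily $\{T_1,\ldots,T_k\} \subset \cBT_n$ admits a meet whose bracket components are $\bigcap_{\ell=1}^k B(T_\ell)_i$. This shows that $(\cBT_n,\leq)$ is a meet-semilattice.

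Next I would exhibit the maximum element $\hat{1}$ explicitly as the right-comb tree $T^*$, whose bracket components are $B(T^*)_i = \{i+1,i+2,\ldots,n\}$ for every $i$. These are componentwise the largest sets allowed by Proposition~\ref{prop:bracket_vector_characterization}, so for any $T\in\cBT_n$ one has $B(T \wedge T^*)_i = B(T)_i \cap B(T^*)_i = B(T)_i$ by the formula of Proposition~\ref{prop:tamari_lattice_meet}. Invoking the bijection of Proposition~\ref{prop:bracket_vector_characterization} between binary trees and admissible bracket sets, this forces $T \wedge T^* = T$, that is, $T \leq T^*$.

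Finally, $\cBT_n$ is finite (its cardinality is the Catalan number $C_n$), and we have just shown it is a meet-semilattice with maximum $T^*$. Proposition~\ref{prop:semilattice_to_lattice} then concludes that $(\cBT_n,\leq)$ is a lattice. The main obstacle in the whole argument is invisible here because it has already been absorbed into Proposition~\ref{prop:tamari_lattice_meet}: the genuinely combinatorial work is to check that the componentwise intersection of two bracket sets still satisfies the interval-shape condition $B_i = \{i+1,\ldots,i+l\}$ and the nesting condition $j\in B_i \Rightarrow B_j \subseteq B_i$ of Proposition~\ref{prop:bracket_vector_characterization}, so that the candidate meet is actually realized by a binary tree; once that is granted, the corollary is immediate.
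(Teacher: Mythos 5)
Your proposal is correct and follows the exact route the paper intends: the corollary is a direct packaging of Proposition~\ref{prop:tamari_lattice_meet} (pairwise meets via bracket-set intersection) and Proposition~\ref{prop:semilattice_to_lattice} (finite meet-semilattice with $\hat{1}$ is a lattice), and the paper gives no further proof precisely because this step is immediate. Your extra care in exhibiting $\hat{1}$ explicitly as the right comb with $B(T^*)_i=\{i+1,\ldots,n\}$ and verifying $T\leq T^*$ via the meet formula is a nice touch but not a departure from the paper's argument.
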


\subsection{Sylvester Congruence}\label{ssec:tamari_congruence}

The Tamari lattice can be obtained as a lattice quotient of the weak order via the sylvester congruence in the following way.

\begin{definition}[{\cite{HNT05}}]\label{def:cong_sylvster}
	The \defn{sylvester congruence}\index{lattice!congruence!sylvester} is a lattice congruence over~$\fS_n$ given as the transitive closure on relations of the form \begin{equation*}
		UikVjW \equiv_{sylv} UkiVjW
	\end{equation*} where~$i<j<k$ and~$U,V,W$ are words in the over~$[n]$.
\end{definition}

\begin{proposition}\label{prop:tamari_lattice_congruence}
	The Tamari lattice is isomorphic to~$\fS_n/{\equiv_{sylv}}$.
\end{proposition}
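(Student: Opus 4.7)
The plan is to go through the classical binary search tree insertion and verify that it realizes the sylvester congruence. I would define the insertion map $\mathsf{BST}\colon \fS_n \to \cBT_n$ that reads $\sigma$ from right to left and inserts each letter into a binary search tree (so smaller values go to the left subtree of each already-placed node, larger to the right). The target of the proof is the chain of three facts: (a) the fibers of $\mathsf{BST}$ are exactly the $\equiv_{sylv}$-classes; (b) these classes are intervals of the weak order, so by Proposition~\ref{prop:latti_cong_minimal_elements_equivalence} we get a bona fide lattice congruence; (c) the induced quotient order on $\cBT_n$ is the edge-rotation order of Definition~\ref{def:edge_rotation}.

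For (a), the key observation is that the defining move $UikVjW \leftrightarrow UkiVjW$ preserves $\mathsf{BST}$: reading right-to-left the letter $j$ is inserted before $i$ and $k$; since $i<j<k$, after $j$ is placed the values $i$ and $k$ descend into disjoint subtrees of $j$, so the order in which we then insert $i$ and $k$ is irrelevant. This gives $\equiv_{sylv}\subseteq \ker \mathsf{BST}$. For the converse, I would argue by induction on $n$: if $\mathsf{BST}(\sigma)=\mathsf{BST}(\sigma')=T$ with root labeled $r$ (which is the last letter of both $\sigma$ and $\sigma'$ after a sylvester-sort), one uses the defining moves to normalize both $\sigma$ and $\sigma'$ so that all values less than $r$ precede all values greater than $r$ — this is always possible because the needed witness $j=r$ lies to the right — and then apply the inductive hypothesis to each of the two subwords.

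For (b), I would show that each fiber $\mathsf{BST}^{-1}(T)$ is a weak order interval whose minimum is the unique $312$-avoiding permutation in the fiber (obtained by the in-order reading of $T$ with a suitable bracketing) and whose maximum is the unique $132$-avoiding one. Along the way, compatibility of $\mathsf{BST}$ with the covering relations $\pi \lessdot \pi\circ s_i$ of Proposition~\ref{prop:perm_cover_relations} must be verified: such a covering either leaves $\mathsf{BST}$ unchanged (precisely when the swap is a sylvester move, i.e. some letter $j$ with $\pi_i<j<\pi_{i+1}$ or $\pi_{i+1}<j<\pi_i$ lies to the right) or it performs a single edge rotation on the tree. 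This makes $\pi_\downarrow^{\equiv_{sylv}}$ and $\pi_\uparrow^{\equiv_{sylv}}$ order-preserving, so Proposition~\ref{prop:latti_cong_minimal_elements_equivalence} confirms that $\equiv_{sylv}$ is a lattice congruence.

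For (c), the previous analysis of covering relations is exactly what is needed: the uncontracted covers of the weak order descend to edge rotations on binary trees, matching the cover relations of $\cBT_n$ in Definition~\ref{def:edge_rotation}. Combined with (a), this gives an isomorphism of lattices $\fS_n/\equiv_{sylv} \;\xrightarrow{\sim}\; (\cBT_n, \leq)$. The main obstacle I expect is point (a) in the converse direction, i.e.\ showing that any two permutations with the same binary search tree can be connected by sylvester moves; the induction sketched above is clean, but the bookkeeping on the subwords lying on each side of the root has to be set up carefully so that the moves used to normalize a word are all genuine sylvester moves within the fiber (rather than moves that leave the fiber temporarily).
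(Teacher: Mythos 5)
The paper does not give its own proof of this proposition; it is stated as a classical result and attributed to the literature on the sylvester congruence (\cite{HNT05}, echoed in Proposition~\ref{prop:sylvester_congruence_characterization}). Your sketch reconstructs precisely that classical argument via right-to-left binary-search-tree insertion, and it is correct. A few small things you should tighten if you flesh it out. In (a), the fact that the root $r$ is the last letter of $\sigma$ and $\sigma'$ is automatic from right-to-left insertion, not something you have to arrange ``after a sylvester-sort.'' Your worry at the end about moves ``leaving the fiber temporarily'' is unfounded: the forward inclusion $\equiv_{sylv}\subseteq\ker\mathsf{BST}$ already shows every sylvester move preserves the fiber, so any normalization built from sylvester moves stays inside it. The genuinely delicate point in the induction is the one you half-flagged: when you apply the hypothesis to the subword $\sigma_L$ (letters $< r$) and lift its sylvester moves back to the full word, you must check the witness $j$ is still to the right --- but this is automatic because all of $i,j,k$ in a move on $\sigma_L$ are values $< r$, hence all live in the left block, and the block order is preserved by the normalization. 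In (b), your characterization of fiber minima as the $312$-avoiding permutations matches the paper's Proposition~\ref{prop:sylvester_congruence_characterization}; the interval claim and the cover-compatibility of $\mathsf{BST}$ (uncontracted weak-order covers project to single edge rotations, contracted ones to the identity) are exactly what feed Proposition~\ref{prop:latti_cong_minimal_elements_equivalence} and then give the isomorphism in (c). So: correct, and it is the standard route; the paper simply declines to reprove it.
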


Figure~\ref{fig:sylvester_congruence} shows the sylvester congruence and the resulting Tamari lattice for~$n=4$.

\begin{figure}[h!]
	\centering
	\includegraphics[scale=0.75]{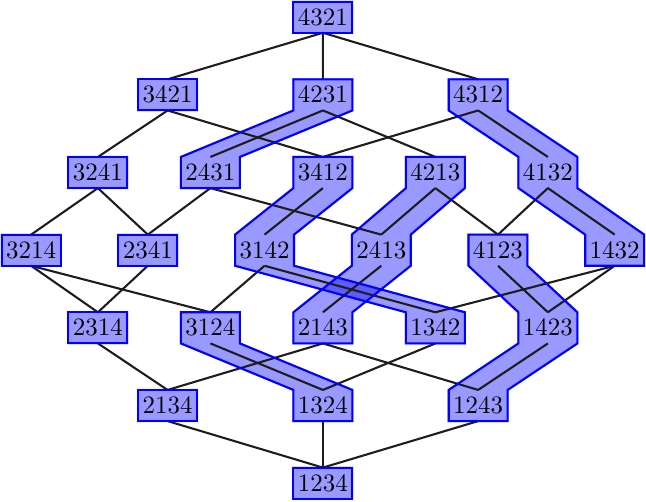}\hspace{2cm}
	\includegraphics[scale=0.6]{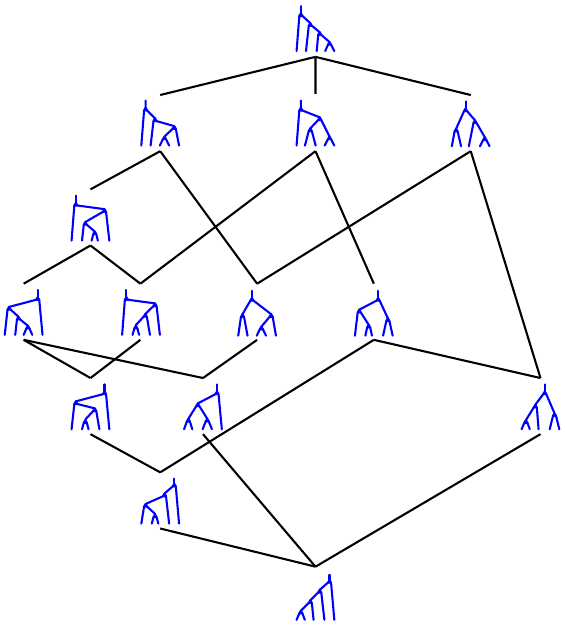}
	\caption[The sylvester congruence and the Tamari lattice for~$n=4$.]{ The sylvester congruence for~$n=4$ (left) and the Tamari lattice (right). Figure from~\cite{PS17}.}\label{fig:sylvester_congruence}
\end{figure}

This congruence can also be characterized as follows.

\begin{proposition}[{\cite{HNT05},~\cite{K73},~\cite{PP18}}]\label{prop:sylvester_congruence_characterization}
	The sylvester congruence~$\equiv_{sylv}$ on~$\fS_n$ can be defined equivalently as the equivalence relation whose classes are:
	\begin{itemize}
		\itemsep0em
		\item the linear extensions of binary trees,
		\item the fibers of the Stack-sorting algorithm.
	\end{itemize}
	Furthermore, the following objects are in bijection: \begin{itemize}
		\itemsep0em
		\item binary trees on~$n$ vertices,
		\item sylvester congruence classes,
		\item permutations that avoid the pattern~$312$.
	\end{itemize} See Figure~\ref{fig:binary_tree_to_permutations} for an example.
\end{proposition}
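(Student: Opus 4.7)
The plan is to prove three mutually reinforcing equivalences. Each part of the statement is a classical result, but they all hinge on a single algorithmic device: the binary search tree (BST) insertion. I would therefore begin by fixing an insertion rule — say, reading the permutation~$\sigma=\sigma_1\cdots\sigma_n$ from right to left and inserting each letter into the BST associated to an in-order labeling — and let~$\mathsf{P}:\fS_n\to\cBT_n$ denote the resulting map. The first task is to show that~$\mathsf{P}$ is constant on sylvester classes and that its fibers are exactly the sylvester classes. For constancy it suffices to check the single generating move~$UikVjW\equiv_{sylv}UkiVjW$ with~$i<j<k$: since~$j$ appears to the right of both~$i$ and~$k$ in both words, inserting~$j$ first places~$i$ and~$k$ into the same subtree of~$j$, and within that subtree they occupy the same relative positions regardless of their order. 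For the converse (fibers are single classes) I would argue that any two permutations with the same image under~$\mathsf{P}$ are linear extensions of the binary tree, and then sort one to the other using only sylvester moves by an induction on inversions.

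The second step identifies the fibers with linear extensions of binary trees. By construction of~$\mathsf{P}$, if~$\mathsf{P}(\sigma)=T$ then~$\sigma$ is a linear extension of~$T$ (viewed with its partial order of Definition~\ref{def:binary_tree}). Conversely, a direct induction on~$n$ shows every linear extension of~$T$ is recovered by~$\mathsf{P}^{-1}(T)$: the root~$r$ must appear as the leftmost letter compatible with the subtree partition, and the remaining letters split into linear extensions of~$L_r$ and~$R_r$ which can be freely shuffled. Shuffling corresponds precisely to sylvester moves, which ties back to step one.

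For the stack-sorting description I would recall the algorithm and observe, by a case analysis on the top of the stack when reading~$\sigma_i$, that~$UikVjW$ and~$UkiVjW$ (with~$i<j<k$) produce the same output: the element~$j$ forces any larger~$k$ still on the stack to be popped before~$j$ is pushed, so the order of~$i$ and~$k$ in the input before~$j$ does not affect the sorted result. Hence the sylvester classes refine the fibers. For the other inclusion, one shows that permutations with the same stack-sort output differ by sylvester moves, again by induction on the number of inversions or on~$n$ after peeling off the first pop. Finally, the fixed points of stack sorting (more precisely, the images) are classically the~$312$-avoiding permutations, and these coincide with the minimal element in each sylvester class in the weak order; this yields the bijection binary trees~$\leftrightarrow$ sylvester classes~$\leftrightarrow$ $\mathfrak{S}_n(312)$, by composing~$\mathsf{P}$ with the canonical section given by~$\pi_\downarrow^{\equiv_{sylv}}$.

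The main obstacle is the converse direction in step one: showing that any two linear extensions of the same binary tree are connected by a sequence of sylvester moves. A clean way to carry this out is to prove that within a fiber of~$\mathsf{P}$ one can always reduce the number of inversions (when moving toward the minimum) by a single sylvester swap — equivalently, that the minimum of the fiber under the weak order is exactly the unique~$312$-avoiding element, and that it is reachable from any other element of the fiber through local moves. Once this is established, the three characterizations (sylvester classes, fibers of~$\mathsf{P}$, fibers of stack-sort) collapse into one, and the three bijections follow by noting that the minimum of each class is a canonical~$312$-avoiding representative.
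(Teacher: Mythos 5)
Your proposal follows the standard route via the binary-search-tree insertion map $\mathsf{P}$ (reading the word right to left), which is the right idea and matches the cited literature~\cite{HNT05}. The overall architecture is sound, but two specific claims would not survive scrutiny, and one of them sits in the load-bearing step.

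First, the constancy argument is stated backwards. You claim that for $i<j<k$, ``inserting $j$ first places $i$ and $k$ into the same subtree of $j$.'' Since $i<j<k$ and $j$ is already in the tree, $i$ is routed to the \emph{left} subtree of $j$ while $k$ is routed to the \emph{right} subtree --- they land in disjoint subtrees, not the same one. This is not cosmetic: the reason $\mathsf{P}$ is constant on a sylvester generating pair $UikVjW \equiv UkiVjW$ is precisely that $i$ and $k$ are never compared to each other during insertion, so their relative order cannot affect the result; the letters of $V$, all inserted after $j$ but before $i$ and $k$, are likewise separated by $j$ and never cause the paths of $i$ and $k$ to reconverge. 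If $i$ and $k$ \emph{did} land in the same subtree, nothing would force the two trees to agree, and the argument would collapse.

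Second, the closing claim that ``the fixed points of stack sorting (more precisely, the images) are classically the $312$-avoiding permutations'' is not correct for the one-pass stack-sort $s$ of~\cite{K73}. Since $s$ strictly destroys inversions on any non-sorted input, its only fixed point is the identity; and $\mathrm{Im}(s)$ is not a single pattern class (already for $n=3$ one computes $\mathrm{Im}(s)=\{123,213\}$, while there are five $312$-avoiders). What you want to say is that the \emph{minimum} of each sylvester class in the weak order is $312$-avoiding --- the move $ki\to ik$ destroys an inversion, so the minimum avoids the local configuration $kij$ with $i<j<k$, and a standard ``global pattern implies local pattern'' argument upgrades this to full $312$-avoidance. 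This minimum is the canonical section $\pi_\downarrow^{\equiv_{sylv}}$, which gives the bijection with binary trees. The connection to stack-sorting should instead be phrased in terms of the push/pop pattern of the stack being determined by $\mathsf{P}(\pi)$, so that the fibers of the stack-run coincide with the fibers of $\mathsf{P}$; the ``image/fixed point'' framing is the wrong hook. The rest of your outline (linear extensions shuffle under sylvester moves; induction on inversions reaches the minimum) is correct as far as it goes.
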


\begin{figure}[h!]
	\centering
	\includegraphics[scale=1.75]{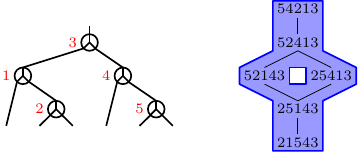}
	\caption[A binary tree on 5 nodes and its corresponding~$3ki$-avoiding permutations as a congruence class.]{ A binary tree on 5 nodes (left) and its corresponding~$3ki$-avoiding permutations as a congruence class (right).}\label{fig:binary_tree_to_permutations}
\end{figure}

\subsection{Associahedra}\label{ssec:tamari_assoc}

The Tamari lattice has several geometric realizations associated to the diverse lenses from which it can be studied (see~\cite{HL07}). Here we concentrate on the one described in~\cite{L04}.

\begin{definition}\label{def:points_associahedron}
	Any polytope whose~$1$-skeleton realizes the Tamari lattice is called the \defn{Associahedron}\index{polytope!associahedron}~$\PAsoc_n$ or \defn{Stasheff polytope}\index{polytope!Stasheff polytope}.
\end{definition}

See Figure~\ref{fig:asoc4} for an example of~$\PAsoc_4$.

\begin{proposition}\label{prop:associahedron}
	The \defn{associahedron} is the polytope~$\PAsoc_n$ defined equivalently as:
	\begin{itemize}
		\itemsep0em
		\item~\cite{L04} the convex hull of the coordinates~$(l_1r_1,\ldots,l_nr_n)$ for~$T\in{\cBT}_n$ where~$l_i$ (resp.~$r_i$) is the number of leaves of~$L_i$ (resp.~$R_i$),
		\item~\cite{SS93} the intersection of the following hyperplane and half-spaces \begin{equation*}
			      \left\{\mathbf{x}\in\RR^n \,:\,\sum_{i\in [n]}x_i=\gbinom{n+1}{2}\right\} \cap \bigcap_{1\leq i \leq j\leq n} \left\{\mathbf{x}\in\RR^n \,:\, \sum_{i\leq \ell\leq j}x_\ell\geq\gbinom{j-i+2}{2}\right\},
		      \end{equation*}
		\item~\cite{P09} the shifted Minkowski sum of the faces~$\Delta_{[i,j]}$ of~$\Delta_{n-1}$ for all~$1\leq i\leq j \leq n$ where $\Delta_X:=\conv(\mathbf{e_x}\, :\, x\in X)$.
	\end{itemize}
\end{proposition}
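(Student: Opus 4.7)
The plan is to prove this as a chain of equivalences, first showing that Loday's vertex description $(1)$ agrees with the half-space description $(2)$, and then that $(2)$ and $(3)$ cut out the same polytope using support functions.

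For $(1) \Leftrightarrow (2)$, I would follow Loday's original argument. Let $T \in \cBT_n$ and $\mathbf{v}(T) = (l_1 r_1, \ldots, l_n r_n)$. The core combinatorial lemma to establish is:
\begin{equation*}
\sum_{\ell = i}^{j} l_\ell r_\ell \geq \gbinom{j-i+2}{2},
\end{equation*}
with equality if and only if $\{i, i+1, \ldots, j\}$ is exactly the set of labels of some subtree of $T$. This is proven by induction on $j - i$: the base case $i = j$ gives $l_i r_i \geq 1$ since both subtrees have at least one leaf, with equality iff $v_i$ is a leaf (which is a singleton subtree). For the inductive step, one locates the root $v_k$ of the minimal subtree containing all labels in $[i,j]$, splits $[i,j]$ across $k$, and applies the inductive hypothesis on the left and right pieces. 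Taking $[i,j] = [1,n]$ gives the total equality $\sum l_\ell r_\ell = \gbinom{n+1}{2}$ (every tree has the root spanning all labels). This shows every $\mathbf{v}(T)$ satisfies the $\mathcal{H}$-description of $(2)$. The reverse direction — that these are precisely the vertices — follows by noting that the number of binary trees $C_n$ matches the number of vertices of the polytope cut out by $(2)$, together with the observation that each $\mathbf{v}(T)$ saturates the facet inequalities indexed by pairs $(i,j)$ corresponding to the subtrees of $T$, giving the right number of tight facets at each vertex.

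For $(2) \Leftrightarrow (3)$, I would exploit the support function of a Minkowski sum. For a simplex $\Delta_I = \conv(\mathbf{e}_i : i \in I)$ and any subset $J \subseteq [n]$, the minimum of the linear functional $\mathbf{x} \mapsto \sum_{\ell \in J} x_\ell$ over $\Delta_I$ equals $1$ if $I \subseteq J$ and $0$ otherwise (since one can concentrate all weight on $I \setminus J$ when $I \not\subseteq J$). By additivity of support functions under Minkowski sums, the minimum of $\sum_{\ell \in J} x_\ell$ over $\sum_{1 \leq i \leq j \leq n} \Delta_{[i,j]}$ equals $|\{(a,b) : [a,b] \subseteq J\}|$. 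Specializing $J = [i,j]$ gives
\begin{equation*}
\min \sum_{\ell = i}^{j} x_\ell = |\{(a,b) : i \leq a \leq b \leq j\}| = \gbinom{j-i+2}{2},
\end{equation*}
which matches the inequalities of $(2)$. Taking $J = [n]$ returns $\sum_\ell x_\ell = \gbinom{n+1}{2}$, matching the equality constraint. Finally, I would need to argue that these interval inequalities actually define the Minkowski sum (no other facets intervene); this follows from the general theory of polymatroids / generalized permutahedra: a Minkowski sum of simplices indexed by a family $\cI$ is a generalized permutahedron whose facet-defining inequalities are indexed by subsets $J$ such that $\cI|_{J}$ and $\cI|_{J^c}$ together generate — in the interval case this restricts the facets to interval $J$'s.

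The main obstacle will be the combinatorial lemma in Part~1, which requires a careful inductive bookkeeping: one must track how $l_\ell$ and $r_\ell$ decompose when a subtree spanning exactly $[i,j]$ either does or does not exist inside $T$, and conclude strictness of the inequality precisely when the interval is not the node set of any subtree. A secondary subtlety is showing that the facet count in $(2)$ matches the vertex degree $C_{n-1}$ implied by Loday's vertices, so that no spurious vertices appear; this can be handled by verifying that the inequalities indexed by the proper sub-intervals defining a given tree $T$ are linearly independent, which reduces to a triangular structure on the $l_\ell r_\ell$ coordinates.
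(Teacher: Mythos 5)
The paper itself offers no proof of this proposition, simply citing Loday, Shnider--Sternberg, and Postnikov; your proposal reconstructs the standard arguments from those references, and the overall architecture is sound. The core combinatorial lemma for $(1)\Leftrightarrow(2)$ and the support-function computation for $(2)\Leftrightarrow(3)$ are both correctly identified and correctly executed in outline. However, there are two issues worth flagging.

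First, a small but real error: you write that the ``vertex degree'' implied by Loday's vertices is $C_{n-1}$. The associahedron $\PAsoc_n$ is a simple polytope of dimension $n-1$, so each vertex is incident to exactly $n-1$ facets; the relevant count is $n-1$, not a Catalan number. This matters because the correct count is what makes the saturated-inequalities argument work at all: a tree $T$ on $n$ nodes has exactly $n-1$ proper subtrees (excluding the full tree, which gives the defining hyperplane), and these index the $n-1$ facet inequalities tight at $\mathbf{v}(T)$.

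Second, and more substantively, your argument that the polytope in $(2)$ has no vertices beyond the $\mathbf{v}(T)$ is circular as stated: you invoke the fact that the polytope has $C_n$ vertices, but that is precisely what needs to be established. The linear-independence argument you sketch shows that each $\mathbf{v}(T)$ \emph{is} a vertex, but does nothing to rule out extra vertices of the half-space intersection. To close the gap you would need one of the standard remedies: show that from each $\mathbf{v}(T)$ the $n-1$ edges of the polytope all lead to points of the form $\mathbf{v}(T')$ where $T'$ is an edge rotation of $T$, then appeal to connectedness of the polytope's $1$-skeleton (Balinski); or derive the vertex set directly from your Minkowski-sum description in $(3)$, since the vertices of a Minkowski sum of simplices $\Delta_{[i,j]}$ are parametrized by the chambers of the braid fan, i.e.\ by permutations, and the fibers of the resulting surjection are exactly the sylvester classes. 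The latter route is cleaner and dovetails with the generalized-permutahedron machinery you already invoke for the $(2)\Leftrightarrow(3)$ step, so restructuring the proof as $(3)\Rightarrow(1)$, $(3)\Rightarrow(2)$, $(1)\Rightarrow(2)$ would both fix the gap and eliminate the need for the linear-independence bookkeeping.
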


For more constructions of the associahedron we refer the reader to~\cite{HL07},~\cite{CSZ15}, and~\cite{PSZ23}.

\begin{figure}[h!]
	\centering
	\includegraphics[scale=0.7]{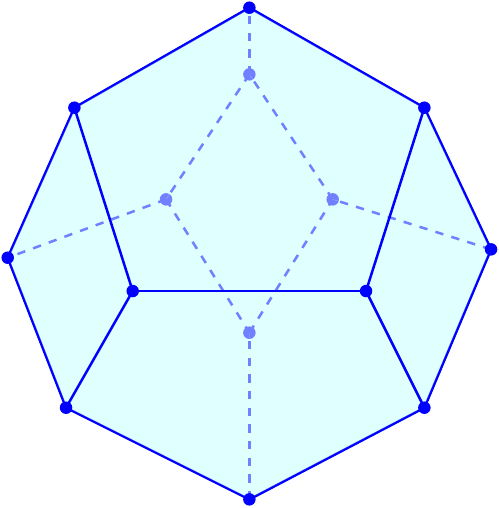}
	\caption{The associahedron~$\PAsoc_4$.}
	\label{fig:asoc4}
\end{figure}

\begin{proposition}\label{prop:orientation_associahedron}
	Let~$\mathbf{v}=(\mathbf{w_0})-(\mathbf{e})=(n-1,n-3,\ldots,-n+3,-n+1)={(2i-n-1)}_{i\in[n]}$. The~$1$-skeleton of the associahedron~$\PAsoc_n$ oriented with the vector~$\mathbf{v}$ is isomorphic to the Hasse diagram of the Tamari lattice.
\end{proposition}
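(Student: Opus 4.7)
The strategy is to decompose the claim into three parts: (i) the map $T \mapsto \mathbf{q}_T := (l_1 r_1, \ldots, l_n r_n)$ is a bijection from $\cBT_n$ onto the vertex set of $\PAsoc_n$; (ii) two vertices $\mathbf{q}_{T_1}, \mathbf{q}_{T_2}$ are joined by an edge of $\PAsoc_n$ if and only if $T_1$ and $T_2$ differ by a single edge rotation; and (iii) along every such edge, the vector $\mathbf{v}$ points from the Tamari-smaller tree to the Tamari-larger one. Together these give the sought isomorphism between the oriented $1$-skeleton of $\PAsoc_n$ and the Hasse diagram of the Tamari lattice.

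The main content is (iii). Consider a Tamari cover $T_1 \lessdot T_2$ arising from an $ij$-rotation with $i<j$. The in-order labelling forces $v_i$ to be the left child of $v_j$ in $T_1$. Set $A := L_i^{T_1}$, $B := R_i^{T_1}$, $C := R_j^{T_1}$, with leaf counts $a, b, c$ respectively. The rotation of Definition~\ref{def:edge_rotation} produces $T_2$ with $L_i^{T_2} = A$, $L_j^{T_2} = B$, $R_j^{T_2} = C$, and $v_j$ the right child of $v_i$. Only the coordinates at indices $i$ and $j$ change, and
\begin{equation*}
	(l_i r_i)_{T_1} = ab,\quad (l_j r_j)_{T_1} = (a+b)c,\quad (l_i r_i)_{T_2} = a(b+c),\quad (l_j r_j)_{T_2} = bc,
\end{equation*}
so $\mathbf{q}_{T_2} - \mathbf{q}_{T_1} = ac\,(\mathbf{e}_i - \mathbf{e}_j)$. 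Since $a, c \geq 1$ and the coordinates of $\mathbf{v} = \mathbf{w_0} - \mathbf{e}$ satisfy $v_k = n+1-2k$, one computes $\langle \mathbf{v},\, \mathbf{q}_{T_2} - \mathbf{q}_{T_1}\rangle = 2ac(j-i) > 0$, showing the edge is oriented by $\mathbf{v}$ from $T_1$ to $T_2$, as desired.

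For parts (i) and (ii), I would appeal to the content of the Loday construction encoded in Proposition~\ref{prop:associahedron}. Injectivity of $T \mapsto \mathbf{q}_T$ can be shown by induction on $n$: the root of $T$ is recognizable from the coordinate vector, after which one recurses on the left and right subtrees. The edge set can then be extracted from the Minkowski-sum presentation $\PAsoc_n = \sum_{1 \leq i \leq j \leq n} \Delta_{[i,j]}$, since edges of a generalized permutahedron lie in the directions $\mathbf{e}_i - \mathbf{e}_j$ and the computation above shows each rotation direction is realized. A numerical match closes (ii): each binary tree with $n$ nodes has exactly $n-1$ rotatable internal edges, which coincides with both $\dimension \PAsoc_n = n-1$ and the per-vertex degree of the simple polytope $\PAsoc_n$, so every rotation-neighbor is realized and no spurious edges appear. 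The hard part is this step (ii), namely verifying that the Minkowski summands assemble into the polytope with exactly the rotation graph as its $1$-skeleton; the orientation check in the previous paragraph is then routine.
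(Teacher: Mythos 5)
The paper states Proposition~\ref{prop:orientation_associahedron} without proof, treating it as a classical fact about Loday's associahedron, so there is no argument in the text to compare against. Your step~(iii) is correct: the coordinate bookkeeping $\mathbf{q}_{T_2}-\mathbf{q}_{T_1}=ac\,(\mathbf{e}_i-\mathbf{e}_j)$ for an $ij$-rotation is right, and with $v_k=n+1-2k$ the inner product $\langle\mathbf{v},\mathbf{q}_{T_2}-\mathbf{q}_{T_1}\rangle=2ac(j-i)>0$ does orient the edge from the Tamari-smaller to the Tamari-larger tree. (You have read off the coordinates of $\mathbf{v}$ from the displayed sequence $(n-1,n-3,\ldots,-n+1)$; the compact formula $(2i-n-1)_{i\in[n]}$ in the statement is its negation and appears to be a typo.)

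The gap is in step~(ii). Knowing that $T$ has $n-1$ rotation-neighbors, that each rotation segment from $\mathbf{q}_T$ lies in a distinct direction $\pm(\mathbf{e}_i-\mathbf{e}_j)$, and that the simple $(n-1)$-dimensional polytope $\PAsoc_n$ has degree $n-1$ at each vertex does not establish that those rotation segments \emph{are} the polytope edges. A generalized permutahedron can have many pairs of vertices separated along a direction $\mathbf{e}_i-\mathbf{e}_j$ without being adjacent, and a priori some of the $n-1$ edges at $\mathbf{q}_T$ could lead to non-rotation-neighbors; matching cardinalities does not match the sets. The argument that actually closes~(ii) is Loday's, via the $\cH$-description you already quote: one shows the inequality $\sum_{i\leq\ell\leq j}x_\ell\geq\gbinom{j-i+2}{2}$ is tight at $\mathbf{q}_T$ precisely when $\{v_i,\ldots,v_j\}$ is the node set of a subtree of $T$. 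Then $\mathbf{q}_T$ lies on exactly $n-1$ facets, one per internal edge of~$T$, and two vertices are adjacent iff they share $n-2$ facets iff the two trees share $n-2$ proper subtrees iff they differ by a single rotation. With that lemma in place your orientation computation finishes the argument; without it, the sentence ``so every rotation-neighbor is realized and no spurious edges appear'' is asserted rather than proved.
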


\subsection{Cubical Embeddability}\label{ssec:tamari_cubical}

Like in the case of the permutahedron, the associahedron also possesses a cubical embedding. Although this way of representing the associahedron via coordinates in the cube~$Q_n=[0,n-1]\times\cdots\times[0,1]$ has been known since the 80s (see~\cite{P86} and~\cite{BW96}), the first explicit illustration of this embedding as an actual cube seems to date from a video lecture of Knuth in 1993~\cite{K93}.  We invite (in genuine interest) the archaeological reader to find an older illustration of this embedding into~$Q_{n-1}$. Thanks to Francisco Santos Leal we have learned of a first appearance of this phenomenon due to Stasheff in 1963~\cite{S06}. Still, Stasheff's embedding comes not from polytopes but from a complex~$K_i$ given by a cell decomposition of the boundary of a convex body. Taking logarithms of the defining inequalities of~$K_i$ seems to yield our desired structure. As such, our archaeological invitation still stands to find who studied these complexes visually in a cubical manner before Knuth's video!

The polytopal cubic phenomenon that we are interested on has appeared in recent works related with Tamari intervals~\cite{C22}, parabolic Tamari lattices in Coxeter groups of type~$B$~\cite{FMN21}, Fuss-Catalan posets~\cite{CG22}, and Hochschild lattices~\cite{C21}~\cite{PP23}.

In Figure~\ref{fig:associahedron_cubic} we show the cubical embedding of~$\PAsoc_4$ following~\cite{K93}.

\begin{proposition}[{\cite{K93}}]\label{:cubic_associahedron}
	The associahedron~$\PAsoc_n$ is embeddable in the cube~$Q_{n-1}=[0,n-1]\times\cdots\times[0,1]$ via the function that sends a binary tree to its bracket vector.
\end{proposition}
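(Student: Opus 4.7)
I would deduce the result by verifying three properties of the map $\Phi : T \mapsto \vec{b}(T)$ from $\cBT_n$ to the lattice points of $Q_{n-1}$: that $\Phi$ is well-defined with image in $Q_{n-1}$, that it is injective, and that it sends each covering relation of the Tamari lattice to an axis-aligned segment of the integer grid, identifying the $1$-skeleton of $\PAsoc_n$ with a subgraph of the $1$-skeleton of the cubical complex refining $Q_{n-1}$ (which is exactly the notion used for the analogous statement about the permutahedron in Proposition~\ref{prop:cubical_permutahedron}).

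The first two points follow directly from Proposition~\ref{prop:bracket_vector_characterization}.  Since $B(T)_i \subseteq \{i+1,\dots,n\}$ we have $b_i = |B(T)_i| \in \{0,\dots,n-i\}$, so $\Phi(T) \in [0,n-1]\times\cdots\times[0,1] = Q_{n-1}$.  For injectivity, condition (1) of that characterization says that each component $B(T)_i$ is either empty or an initial interval $\{i+1, i+2, \dots, i+l\}$, so it is completely determined by its cardinality $b_i$.  Hence $\Phi(T)$ recovers $B(T)$, and Proposition~\ref{prop:bracket_vector_characterization} in turn recovers $T$.

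The main work is the third point.  I would fix a covering relation $T_1 \lessdot T_2$ given by a rotation of the edge incident to $v_i$ and $v_j$, with $v_j$ the root of $R_i$ in $T_1$.  The local nature of the rotation means that the only subtree whose contents are altered is $R_i$: in $T_1$ it is the subtree rooted at $v_j$, whose labels form the initial interval $\{i+1, \dots, m\}$ by the in-order labeling, whereas in $T_2$ it is the old $L_j$, whose labels form the initial interval $\{i+1, \dots, j-1\}$.  For every other node $v_k$ (splitting cases according to whether $k \in L_i$, $k \in L_j$, $k \in R_j$, or $k = j$), the right subtree of $v_k$ is unaffected, so $B(T_1)_k = B(T_2)_k$.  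Consequently $\vec{b}(T_1)$ and $\vec{b}(T_2)$ differ only in coordinate $i$, which changes monotonically from $m-i$ to $j-i-1$, so the edge $T_1 \lessdot T_2$ is sent to an axis-parallel segment of the integer grid inside $Q_{n-1}$.

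The delicate step will be the case analysis for the rotation: one has to verify carefully that after the local surgery the in-order labelings of $T_1$ and $T_2$ coincide on every unchanged piece, so that for each $k$ the labels appearing in the (possibly modified) right subtree still form an initial interval, allowing the bijection of Proposition~\ref{prop:bracket_vector_characterization} to be applied on both sides.  Once this is granted, combining it with the injectivity of $\Phi$ shows that the image of the $1$-skeleton of $\PAsoc_n$ lies in the $1$-skeleton of the unit-grid subdivision of $Q_{n-1}$, yielding the desired cubical embedding.
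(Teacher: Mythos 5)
Your proof is essentially correct and takes a direct route.  What you should know is that the paper does not prove Proposition~\ref{:cubic_associahedron} at all: it is stated with a citation to Knuth's video~\cite{K93} and left without argument, and it is only recovered much later as the special case $\delta=\downn^n$ of the permutree generalization in Chapter~\ref{chap:permutree_vectors} (combine Example~\ref{ex:cubic_realization_tamari_others} with Theorem~\ref{thm:cubic_property_embedding}).  Your three ingredients — image lies in $Q_{n-1}$, injectivity, edges map to axis-parallel segments — parallel the paper's three lemmas for the cubic vector (Theorems~\ref{thm:cubic_property_convec_cube},~\ref{thm:cubic_property_injective}, and~\ref{thm:cubic_property_edge_direction}) restricted to binary trees, but your argument is more elementary in two ways: you get injectivity for free from Proposition~\ref{prop:bracket_vector_characterization}(1) (the component $B(T)_i$ is a contiguous block determined by its cardinality), whereas the paper's Theorem~\ref{thm:cubic_property_injective} argues via a maximal differing vertex; and you only prove what the embeddability statement needs rather than the full fact (proved by the paper's Theorem~\ref{thm:cubic_property_convec_cube}) that the convex hull of the image actually \emph{is} the cube.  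Your approach is therefore a clean, self-contained version that doesn't lean on the permutree machinery.

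One small convention error worth flagging: you state that for $T_1\lessdot T_2$, $v_j$ is the root of $R_i$ \emph{in $T_1$}, and you deduce that the $i$-th coordinate decreases from $m-i$ to $j-i-1$.  Per Definition~\ref{def:edge_rotation} the rotation starts from an edge $i\to j$ (so $v_j$ is the \emph{parent} in $T_1$ and becomes the root of $R_i$ only in $T_2$), and by Lemma~\ref{lem:inversion_set_inclusion} applied to $\delta=\downn^n$ (equivalently Proposition~\ref{prop:tamari_lattice_meet}) the bracket set must grow going up the Tamari lattice, so the $i$-th coordinate \emph{increases} along a covering relation $T_1\lessdot T_2$.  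Since the edge is mapped to an axis-parallel segment either way, this does not affect the embeddability conclusion, but it would matter if you wanted to also recover the orientation of the Tamari lattice from the cube.
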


\begin{figure}[h!]
	\centering
	\includegraphics[scale=0.7]{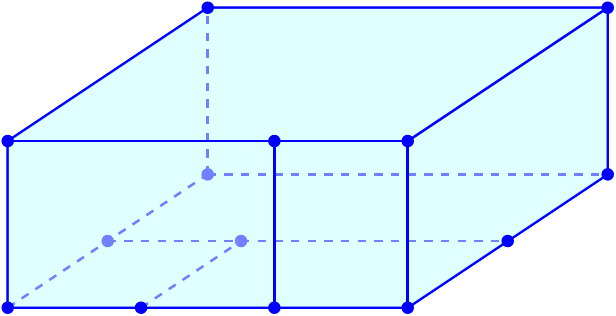}
	\caption[The cubical embedding of~$\PAsoc_4$.]{ The cubical embedding of~$\PAsoc_4$. Figure based on the video~\cite{K93}.}\label{fig:associahedron_cubic}
\end{figure}

\section{Permutrees}

Binary trees are part of a more general family of  combinatorial objects called permutrees. Defined by Pons and Pilaud in~\cite{PP18}, they generalize permutations and binary trees in such a way that they also capture binary sequences and Cambrian trees (see~\cite{LP13} and~\cite{CP17}) that were motivated by the Cambrian lattices of~\cite{R06}.  In this section we give the basic definitions and facts about permutrees from two different perspectives: trees and congruences of the weak order. We follow~\cite{PP18} throughout most of this section.

\subsection{Combinatorics}\label{ssec:permutree_combinatorics}

\begin{definition}\label{def:permutrees}
	A \defn{permutree}\index{permutree} is a directed unrooted tree~$T$ with vertex set~$\{v_1,\ldots,v_n\}$ such that for each vertex~$v_i$:
	\begin{enumerate}
		\itemsep0em
		\item $v_i$ has exactly one or two parents (outward neighbors) and one or two children (inward neighbors). We denote respectively~$LA_i$,~$RA_i$, (resp.~$LD_i$,~$RD_i$) the left and right ancestor (resp.\ descendant) subtree of~$v_i$. In the case that a vertex has only one ancestor (resp.\ descendant) subtree we denote it~$A_i$ (resp.~$D_i$),

		\item if~$v_i$ has two parents (resp.\ children), then all vertices~$v_j\in LA_i$ (resp.~$v_j\in LD_i$) satisfy~$j<i$ and all vertices~$v_k\in RA_i$ (resp.~$v_k\in RD_i$) satisfy~$i<k$.
	\end{enumerate}
	If~$v_j$ is a descendant of~$v_i$ we say that \defn{$j\to i$}. Given a permutree~$T$, its \defn{partial order}\index{permutree!partial order} on~$[n]$ is given by~$j<i$ if and only if~$j\to i$.

	The \defn{decoration}\index{permutree!decoration} of a permutree~$T$ is the vector~$\delta(T)\in{\{\nonee,\downn,\upp,\uppdownn\}}^n$ with entries defined as
	\begin{equation*}
		{\delta(T)}_i=\left\{\begin{array}{ccl}
			\nonee    & \text{ if } & v_i \text{ has one parent and one child,}     \\
			\downn    & \text{ if } & v_i \text{ has one parent and two children,}  \\
			\upp      & \text{ if } & v_i \text{ has two parents and one child,}    \\
			\uppdownn & \text{ if } & v_i \text{ has two parents and two children.}
		\end{array}\right.
	\end{equation*} Letting~$\delta:=\delta(T)$ we say that~$T$ is a~$\delta$-permutree and denote by \defn{$\cPT_n(\delta)$} the collection of all~$\delta$-permutrees on~$n$ vertices.
\end{definition}

\begin{example}\label{ex:permutree_examples}
	Permutrees~$\cPT_n(\delta)$ correspond to: \begin{itemize}
		\itemsep0em
		\item permutations when~$\delta=\nonee^n$ following Definition~\ref{def:decorated_permutation},
		\item binary trees when~$\delta=\downn^n$,
		\item Cambrian trees when~$\delta\in{\{\downn,\upp\}}^n$,
		\item binary sequences of length~$n-1$ when~$\delta=\uppdownn^n$ via the correspondence that the coordinates of the binary sequence are~$s_i=0$ (resp.~$s_i=1$) if the vertex~$v_i$ is a child (resp.\ parent) of~$v_{i+1}$.
	\end{itemize}

	Figure~\ref{fig:permutree-examples} contains several examples of permutrees with distinct decorations.
\end{example}

\begin{figure}[h!]
	\centering
	\includegraphics[scale=1]{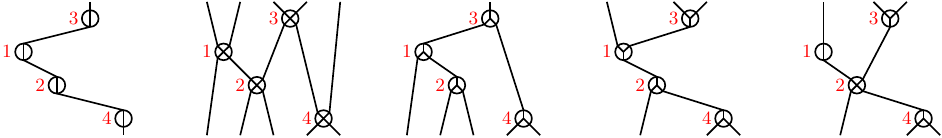}
	\caption[5 examples of~$\delta$-permutrees on 4 vertices.]{ 5 examples of~$\delta$-permutrees on 4 vertices. These permutrees respectively correspond to the permutation~$4213$, the binary sequence~$101$, a rooted binary tree, a Cambrian tree and a generic permutree.}\label{fig:permutree-examples}
\end{figure}

\begin{remark}\label{rem:permutree_leftmost_rightmost_labels}
	Notice that the decorations~$\delta_1$ and~$\delta_n$ do not actually affect the structure of the~$\delta$-permutree since the subtrees~$LA_1$,~$LD_1$,~$RA_n$, and~$RD_n$ are always empty. We never make use of these subtrees, so we always take~$\delta_1=\delta_n=\nonee$ for simplicity.
\end{remark}

All of our drawings of~$\delta$-permutrees have their edges directed upwards and thus, they are presented unoriented. As well, the vertices~$v_i$ appear from left to right in ascending order. We make this more precise now with an explicit algorithm that is our main tool for constructing and drawing permutrees.

\begin{figure}[h!]
	\centering
	\includegraphics[scale=0.8]{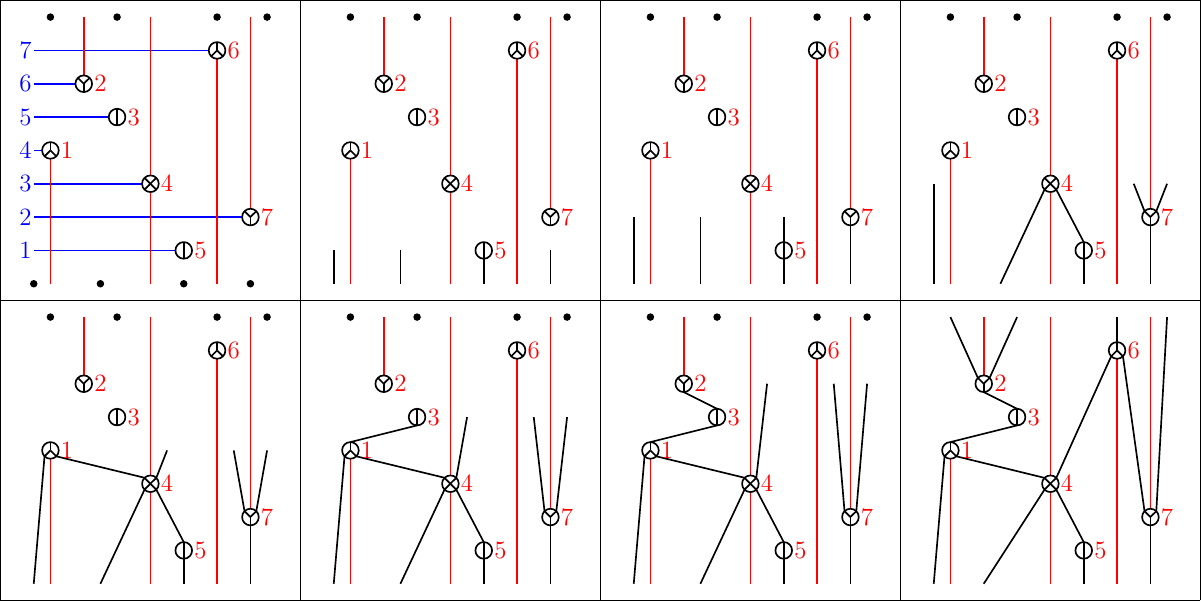}
	\caption[The insertion algorithm of~$\downn\upp\nonee\uppdownn\nonee\downn\downn$-permutrees applied to the table of the permutation~$5741326$.]{ The insertion algorithm of~$\downn\upp\nonee\uppdownn\nonee\downn\downn$-permutrees applied to the table of the permutation~$5741326$. Figure inspired from~\cite{PP18}.}\label{fig:permutree_insertion_example}
\end{figure}

\begin{definition}\label{def:decorated_permutation}
	A \defn{$\delta$-decorated permutation}\index{permutree!decorated permutation} is a permutation table with each point~$(\pi(i),i)$ decorated by~$\delta_i$. The \defn{insertion algorithm}\index{permutree!insertion algorithm} is the following procedure. Let~$\pi$ be a~$\delta$-decorated permutation. We draw a red wall under (resp.\ above) all~$v_i$ such that~$\delta_i\in\{\downn,\uppdownn\}$ (resp.~$\delta_i\in\{\upp,\uppdownn\}$). The red walls partition the table in zones where each vertex can only see his next immediate children (resp.\ parents) looking downwards (resp.\ upwards). Start the algorithm by generating a string from the bottom of these areas. Now extend these strings by units of height 1 along the table from bottom to top as if reading the permutation~$\pi$. At each step, if the height of a string is the same of a vertex~$v_i$ of the table, we say that if \begin{itemize}
		\itemsep0em
		\item~$\delta_i\in\{\upp,\nonee\}$, the vertex catches the only string it can see,
		\item~$\delta_i\in\{\downn,\uppdownn\}$, the vertex catches both strings it can see and merges them,
	\end{itemize}
	and then if \begin{itemize}
		\itemsep0em
		\item~$\delta_i\in\{\downn,\nonee\}$, the vertex releases a single string,
		\item~$\delta_i\in\{\upp,\uppdownn\}$, the vertex releases two strings around the red wall above it.
	\end{itemize}
	The algorithm ends when the strings have lengths~$n+1$. The resulting~$\delta$-permutree is denoted~$\Theta(\pi)$. Figure~\ref{fig:permutree_insertion_example} has a complete example of this algorithm.
\end{definition}

The insertion algorithm gives us all possible~$\delta$-permutrees via the following propositions.

\begin{proposition}\label{prop:permutree_insertion_bijection}
	The insertion algorithm is a surjection between~$\delta$-decorated permutations and~$\delta$-permutrees.
\end{proposition}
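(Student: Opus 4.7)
The plan is to establish two complementary statements: first, that for any $\delta$-decorated permutation $\pi$, the output $\Theta(\pi)$ is a genuine $\delta$-permutree, and second, that every $\delta$-permutree arises in this way.

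For well-definedness, I would argue by induction on the current height $i$ processed by the algorithm, maintaining the invariant that immediately before $v_i$ is inserted at height $i - \frac{1}{2}$, there is exactly one extending string inside each zone cut out by the red walls. The base case follows because the algorithm starts with exactly one string per zone at the bottom. For the inductive step, the vertex $v_i$ catches exactly one or two strings depending on whether $\delta_i \in \{\nonee, \upp\}$ or $\delta_i \in \{\downn, \uppdownn\}$, matching the required number of children, and then releases one or two strings depending on whether $\delta_i \in \{\nonee, \downn\}$ or $\delta_i \in \{\upp, \uppdownn\}$, matching the required number of parents. Combined with the walls placed above $v_i$, this restores the one-string-per-zone invariant at height $i + \frac{1}{2}$. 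The ordering condition on labels then follows because any string caught by $v_i$ was produced by some earlier vertex $v_k$ with $k < i$, and any string released by $v_i$ is eventually caught by some later vertex $v_k$ with $k > i$. Finally, when the decoration of $v_i$ splits its children or parents into left and right subtrees, the side of the zone relative to the red wall at $v_i$ induces the required partition into $LD_i, RD_i, LA_i, RA_i$.

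For surjectivity, I would fix a target $\delta$-permutree $T$ and produce an explicit preimage. Choose any permutation $\pi$ such that, in the table of $\pi$, the column $\pi^{-1}(i)$ of vertex $v_i$ satisfies: for every vertex with two children, descendants in $LD_i$ sit in columns strictly left of $\pi^{-1}(i)$ and descendants in $RD_i$ sit in columns strictly right of $\pi^{-1}(i)$, and similarly for ancestors. Such a $\pi$ exists because the recursive structure of $T$ lets one assign column positions consistently with the left/right partitions of subtrees. Running the algorithm on this $\pi$ and tracking the strings at each height, a straightforward induction on $i$ shows that the edges produced at step $i$ are exactly those incident to $v_i$ in $T$, giving $\Theta(\pi) = T$. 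The main obstacle throughout the argument is precisely the left/right bookkeeping, since the algorithm distinguishes left and right neighbors through the position of strings relative to the wall at $v_i$; once this interaction between column positions, wall positions, and string positions is tracked carefully, everything else reduces to the deterministic behavior of the algorithm.
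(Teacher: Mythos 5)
Your plan has the right global structure (well-definedness first, then surjectivity via an explicit preimage), but the surjectivity construction confuses rows and columns in the permutation table, and the constraint you describe is either vacuous or wrong, depending on how one reads it.

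In the table used by the insertion algorithm, the point corresponding to vertex~$v_i$ sits at column~$i$ and height~$\pi^{-1}(i)$: columns are indexed by values, which is exactly why the resulting $\delta$-permutree has its vertices in ascending label order from left to right. Because the definition of a $\delta$-permutree already forces $j<i$ for every $v_j\in LD_i$ and $j>i$ for every $v_j\in RD_i$, the constraint you write down (``descendants in $LD_i$ sit in columns strictly left of $v_i$, those in $RD_i$ strictly right'') holds for \emph{every} $\delta$-decorated permutation; it places no restriction on $\pi$ and so cannot single out a preimage of $T$.

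The constraint you actually need is on the heights: every descendant $v_j$ of $v_i$ in $T$, whether it lies in $LD_i$ or in $RD_i$, must satisfy $\pi^{-1}(j)<\pi^{-1}(i)$, so that the string emitted by $v_j$ has already propagated up by the time the algorithm reaches the row of $v_i$. That is precisely the statement that $\pi$ is a linear extension of the poset of $T$, which is the content of Proposition~\ref{prop:permutree_insertion_linear_extension}. Note this is genuinely different from what you wrote: if one instead read your left/right condition as a height condition, it would place the $RD_i$-descendants \emph{above} $v_i$; the algorithm would then never attach them to $v_i$, and $\Theta(\pi)\ne T$. For a concrete failure, take $\delta=\nonee\downn\nonee$ and the binary tree on $\{v_1,v_2,v_3\}$ with root $v_2$, $v_1\in LD_2$, $v_3\in RD_2$: its preimages are $132$ and $312$, but the condition ``$v_3$ after $v_2$'' would force one of $123$, $213$, which map to different trees. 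Once the constraint is replaced by ``$\pi$ is a linear extension of the poset of $T$'' (which always exists since $T$ is a finite poset), the induction you sketch for surjectivity goes through as intended.

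One smaller correction on the well-definedness half: the one-string-per-zone invariant is the right idea, but the height at which $v_i$ is processed is $\pi^{-1}(i)$, not $i$. The induction should run over the rows $k=1,\dots,n$ of the table, processing $v_{\pi_k}$ at row $k$, rather than over the labels.
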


\begin{proposition}\label{prop:permutree_insertion_linear_extension}
	Let~$T\in\cPT_n(\delta)$. The permutations~$\pi$ such that~$\Theta(\pi)=T$, are exactly the linear extensions of the poset of the~$\delta$-permutree~$T$.
\end{proposition}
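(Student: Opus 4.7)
The statement has two inclusions. For $\Theta^{-1}(T) \subseteq L(T)$ (where $L(T)$ denotes the set of linear extensions of the poset of the $\delta$-permutree $T$), I observe that the insertion algorithm processes the vertex $v_{\pi_k}$ at step $k$: indeed, $v_{\pi_k}$ lies at row $k$ of the table of $\pi$, and strings rise one unit of height per step. Since strings only travel upward, whenever $v_j$ ends up as a descendant of $v_i$ in $T = \Theta(\pi)$, a string emitted (possibly after several intermediate catches and releases) by $v_j$ must be caught by $v_i$, which forces $v_i$ to be processed strictly later than $v_j$, i.e.\ $\pi^{-1}(j) < \pi^{-1}(i)$. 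This is precisely the condition for $\pi$ to be a linear extension of the poset of $T$.

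For the converse inclusion $L(T) \subseteq \Theta^{-1}(T)$, I would proceed by induction on $n$. The base case $n = 1$ is trivial. For the inductive step, let $\pi \in L(T)$. Since $\pi_1$ occupies position $1$, the vertex $v_{\pi_1}$ must be a minimal element of the poset of $T$; otherwise a strict descendant of $v_{\pi_1}$ would have to appear before position $1$ in $\pi$, which is impossible. Perform a local contraction of $v_{\pi_1}$ in $T$ (rewiring its parent edges according to the four cases $\delta_{\pi_1} \in \{\nonee, \downn, \upp, \uppdownn\}$) to obtain a $\delta'$-permutree $T' \in \cPT_{n-1}(\delta')$ on $[n] \setminus \{\pi_1\}$, where $\delta'$ is the restriction of $\delta$. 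The truncated word $\pi' := \pi_2 \cdots \pi_n$, viewed as a permutation of $[n] \setminus \{\pi_1\}$, inherits the property of being a linear extension of $T'$, so the inductive hypothesis yields $\Theta(\pi') = T'$. Comparing the insertion algorithms on $\pi$ and on $\pi'$ shows that processing $v_{\pi_1}$ as the very first step merely generates and immediately consumes the strings local to $v_{\pi_1}$, which exactly reverses the contraction; hence $\Theta(\pi) = T$.

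The main technical obstacle is making the contraction $T \leadsto T'$ fully rigorous: one must verify that removing the minimal vertex $v_{\pi_1}$ and rewiring its incident edges indeed produces a valid $\delta'$-permutree (in particular that the left/right descendant conditions on the remaining vertices are preserved), and that the initial step of the insertion algorithm truly inverts this rewiring. The verification is a case analysis on $\delta_{\pi_1}$, but remains tractable because the contraction is entirely local to $v_{\pi_1}$ and its immediate neighborhood. An alternative, more combinatorial route avoids the induction by aiming for the cardinality equality $|L(T)| = |\Theta^{-1}(T)|$: combine the first inclusion with the surjectivity of $\Theta$ (Proposition~\ref{prop:permutree_insertion_bijection}) and show separately that the sets $L(T)$ for distinct $T \in \cPT_n(\delta)$ are pairwise disjoint, which can be done by exhibiting, for any two distinct $\delta$-permutrees, a pair of vertices whose forced relative order in any linear extension distinguishes them.
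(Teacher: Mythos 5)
Your first inclusion ($\Theta^{-1}(T) \subseteq L(T)$) is correct: since the insertion algorithm processes $v_{\pi_k}$ at height $k$ and strings only travel upward, an ancestor is necessarily processed after all of its descendants.

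The inductive argument for the converse has a genuine gap, and it is more than the "technical obstacle" you flag at the end. When $\delta_{\pi_1} \in \{\upp, \uppdownn\}$ and both parents of $v_{\pi_1}$ are internal vertices, removing $v_{\pi_1}$ \emph{disconnects} the tree, so there is no $\delta'$-permutree $T'$ to contract to. Concretely, take $n=3$, $\delta = \nonee\upp\nonee$, and $T$ the permutree with $v_2 \to v_1$ and $v_2 \to v_3$ (both $v_1$ and $v_3$ are parents of $v_2$, which is the unique minimal element). For $\pi = 213 \in L(T)$ we would remove $v_2$, but the remaining graph on $\{v_1,v_3\}$ has no edge between them: $v_1$ and $v_3$ are incomparable in $T$, and no $\nonee\nonee$-permutree on two vertices has incomparable labels. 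There simply is no "rewiring of its parent edges" that produces a valid permutree while keeping $\delta'$ the restriction of $\delta$; the induction cannot start. (Note also that processing $v_{\pi_1}$ in this case \emph{creates} a red wall and splits one string into two, which is not simply "consumed" locally; the split propagates to different vertices $v_a$ and $v_b$ upstream.)

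The alternative route you sketch is also unfinished: disjointness of the sets $L(T)$ over distinct $T$ does give what you want in combination with the surjectivity of $\Theta$ and your first inclusion, but it does not follow from "exhibiting a pair of vertices whose forced relative order distinguishes" $T$ from $T'$. Two distinct $\delta$-permutrees can differ only by turning an incomparable pair into a comparable one, and a linear extension of the coarser poset may accidentally order that pair the right way. What one actually needs is either (i) that any two distinct $\delta$-permutrees have a genuinely \emph{reversed} pair (which is true, but requires an argument via the inversion-set characterization), or (ii) a direct determinism argument: given a linear extension $\pi$ of $T$, show by induction on the step $k$ that the strings available to $v_{\pi_k}$ in the insertion algorithm are forced by the zone structure to be exactly the $T$-children of $v_{\pi_k}$, so the algorithm reconstructs $T$ step by step. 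Approach (ii) avoids contraction entirely and is the one that scales to the two-parent case.
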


In Figure~\ref{fig:permutree_to_permutations} we show an example of a~$\delta$-permutree and its corresponding linear extensions.

\begin{figure}[h!]
	\centering
	\includegraphics[scale=1.75]{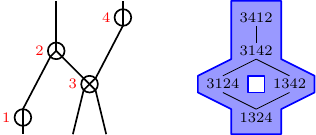}
	\caption[A~$\nonee\downn\uppdownn\nonee$-permutree and its corresponding linear extensions.]{ A~$\nonee\downn\uppdownn\nonee$-permutree (left) and its corresponding linear extensions corresponding to permutations simultaneously avoiding the patterns~$2ki$,~$3ki$, and~$ki3$ as a congruence class (right).}
	\label{fig:permutree_to_permutations}
\end{figure}

As well, the insertion algorithm leads to the following enumeration result.

\begin{lemma}\label{lem:permutree_cardinality_edges}
	The number of edges of a~$\delta$-permutree~$T$ is given by \begin{equation*}
		\Big|E\Big(T\Big)\Big|=n+1+\Big|\Big\{i\in[n]:\delta_i\in\{\downn,\upp\}\Big\}\Big|+2\Big|\Big\{i\in[n]:\delta_i\in\{\uppdownn\}\Big\}\Big|
	\end{equation*}
\end{lemma}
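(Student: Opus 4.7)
The plan is to apply the handshake lemma to $T$, viewing it as a tree that includes both its $n$ labeled internal vertices $v_1,\dots,v_n$ and the external leaves attached to every unused parent/child slot (these are exactly the strings that never get merged in the insertion algorithm of Definition~\ref{def:decorated_permutation}). First I would compute the total degree (number of incident edges) of each internal vertex directly from its decoration. By Definition~\ref{def:permutrees}, $v_i$ has one or two parents and one or two children, so
\begin{equation*}
\deg(v_i)=\begin{cases} 2 & \text{if }\delta_i=\nonee,\\ 3 & \text{if }\delta_i\in\{\downn,\upp\},\\ 4 & \text{if }\delta_i=\uppdownn.\end{cases}
\end{equation*}

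Next, set $a=|\{i:\delta_i=\nonee\}|$, $b=|\{i:\delta_i\in\{\downn,\upp\}\}|$, $c=|\{i:\delta_i=\uppdownn\}|$, so that $a+b+c=n$ and the sum of the internal degrees equals $2a+3b+4c$. Let $\ell$ denote the number of external leaves; each has degree $1$. Since $T$ is a tree on $n+\ell$ vertices, $|E(T)|=n+\ell-1$, and the handshake lemma yields
\begin{equation*}
2a+3b+4c+\ell \;=\; 2|E(T)| \;=\; 2(n+\ell-1).
\end{equation*}
Solving for $\ell$ and using $a=n-b-c$ gives $\ell=2a+3b+4c-2n+2=b+2c+2$, whence
\begin{equation*}
|E(T)|=n+\ell-1 = n+1+b+2c,
\end{equation*}
which is exactly the claimed formula.

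The only point requiring care is the justification that $T$ together with its external leaves really is a tree, so that $|E(T)|=n+\ell-1$ is available. This follows from the insertion algorithm: the algorithm starts and ends with a finite collection of strings and never creates cycles (each merge reduces the number of strings by one and each split increases it by one, always preserving connectedness), so the resulting object is a tree in which every parent/child slot at an internal vertex is either paired with another internal vertex or terminates at an external leaf. Once this is granted, the degree count above is routine.
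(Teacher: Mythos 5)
Your proof is correct, and it takes a genuinely different route from the paper's. The paper reads off the edge count constructively from the insertion algorithm: there are $1+|\{i:\delta_i\in\{\downn,\uppdownn\}\}|$ bottom hanging edges (one per zone created by the red walls placed underneath), $1+|\{i:\delta_i\in\{\upp,\uppdownn\}\}|$ top hanging edges, and $n-1$ internal vertex-to-vertex edges, which sums to the claimed total after regrouping the decoration counts. Your argument is instead a double-counting via the handshake lemma, needing only the local degree of each internal vertex (immediately determined by its decoration) and the global fact that the object, once the hanging half-edges are completed into leaves, is a tree; the auxiliary unknown $\ell$ is a small overhead, but it keeps the proof self-contained and essentially purely graph-theoretic. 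One minor remark: that the permutree together with its hanging leaves is a tree is already built into Definition~\ref{def:permutrees} (the object is by definition an unrooted tree with prescribed local arity), so your appeal to the insertion algorithm for that point, while harmless, is not strictly necessary. Both arguments are valid; yours transfers to any family of trees whose vertex degrees are determined by a decoration, while the paper's is shorter once one is already visualizing the zones of the insertion algorithm.
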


\begin{proof}
	There are~$1+|\{i\in[n]:\delta_i\in\{\downn,\uppdownn\}\}|$ (resp.~$1+|\{i\in[n]:\delta_i\in\{\upp,\uppdownn\}\}|$) corresponding to the bottom and top of the zones created by the red walls at the start of the insertion algorithm. The remaining~$n-1$ come from the vertices being connected upwards through the insertion algorithm.
\end{proof}

We can count the number of~$\delta$-permutrees of size~$n$ by recursion in the following way.

\begin{proposition}[{\cite[Cor.2.26]{PP18}}]\label{prop:permutree_recursion}
	For any permutree decoration~$\delta$, the number of~$\delta$-permutrees follows the recursive formula \begin{equation*}
		\big|\cPT(\delta)\big|=\prod_{k\in[\ell]}\sum_{\substack{i\in[b_{k-1},b_k]\cap\delta^{-1}\big(\downn\big)\\J\subseteq[b_{k-1},b_k]\cap\delta^{-1}\big(\nonee\big)}}\big|\cPT(\delta_{[b_{k-1},i-1]\setminus{J}})\big|\big|\cPT(\delta_{[i+1,b_k]\setminus{J}})\big|\big|J\big|!
	\end{equation*} where $\{b_0<\cdots<b_\ell\}=\{0,n\}\cup\delta^{-1}(\uppdownn)$.
\end{proposition}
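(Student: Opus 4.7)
The plan is to prove the formula by establishing a bijection between $\delta$-permutrees and the tuples encoded by the right-hand side. This is done in two steps, the first producing the product over $k\in[\ell]$ and the second producing the sum inside each factor.

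First, the product over $k\in[\ell]$ comes from the observation that every $\uppdownn$-vertex acts as an articulation point of the permutree: if $v_a$ has decoration $\uppdownn$, then by Definition~\ref{def:permutrees} both descendant subtrees $LD_a$ and $RD_a$ exist and lie in $[1,a-1]$ and $[a+1,n]$ respectively, and both ancestor subtrees $LA_a$ and $RA_a$ satisfy the same property. Consequently the substructure induced on the labels $[1,a]$ and that induced on $[a,n]$ share only the vertex $v_a$ and are otherwise independent: each one admits the structure of a ``partial'' $\delta$-permutree on its label set, with $v_a$ playing the role of a boundary vertex. Iterating this cut over all vertices of decoration $\uppdownn$ produces one independent piece for each slot $[b_{k-1},b_k]$, so that $|\cPT(\delta)|$ factors as a product over $k\in[\ell]$.

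Second, within a single slot $[b_{k-1},b_k]$, whose internal decorations lie in $\{\nonee,\downn,\upp\}$, I would establish the inner sum by identifying in each partial permutree a canonical ``top branching vertex''. Starting from the top end of the slot (the edge connecting to $v_{b_k}$, or to the top of the tree when $b_k=n$), I follow the unique downward path through the slot. Since a $\nonee$-vertex has exactly one parent and one child, this path is forced until it reaches the first $\downn$-vertex $v_i$, which is where the structure branches. The $\nonee$-vertices traversed along the way form the subset $J$. At $v_i$ the slot splits into a left subtree on the labels $[b_{k-1},i-1]\setminus J$ and a right subtree on the labels $[i+1,b_k]\setminus J$, each a $\delta$-permutree in its own right on the corresponding restricted decoration. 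The factor $|J|!$ counts the number of linear orderings of the $\nonee$-vertices of $J$ along the upward chain above $v_i$: any such ordering, together with the choice of the left and right sub-permutrees, reconstructs the slot uniquely. The $\upp$-vertices of the slot never appear in $J$ because, having two parents, they cannot sit on a simple upward chain; they are forced to lie strictly inside one of the two subtrees of $v_i$, and thus are accounted for by the two recursive factors.

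The main obstacle will be the careful verification that this reconstruction is a genuine bijection. Two delicate points arise. The first is the existence and uniqueness of the top branching vertex $v_i$ together with its chain $J$: one has to show that the upward path out of $v_i$ inside the slot cannot hit an $\upp$-vertex before exiting the slot, which should follow from the labeling constraints of Definition~\ref{def:permutrees} combined with the observation that $\upp$-vertices are bottom endpoints of two top-going edges. The second is the treatment of base and boundary cases, especially when the slot contains no internal $\downn$-vertex; there an appropriate convention (a virtual boundary root, or equivalently a base-case contribution $|J|!$ for the all-$\nonee$ slot) must be adopted for the formula to remain valid. Once these two points are settled, induction on $n$ applied to the strictly smaller decorations $\delta_{[b_{k-1},i-1]\setminus J}$ and $\delta_{[i+1,b_k]\setminus J}$ closes the argument.
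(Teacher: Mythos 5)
The paper does not prove this proposition; it is stated as a citation to \cite[Cor.~2.26]{PP18}, so there is no internal proof to compare against. Your product decomposition over $\uppdownn$-vertices is sound. The trouble lies entirely in the second step, which rests on a false structural claim.

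You assert that $\upp$-vertices never sit on the path above the first $\downn$-vertex because, having two parents, they ``cannot sit on a simple upward chain.'' This is wrong. For an $\upp$-vertex $v_a$, one of its two parent subtrees $LA_a$ or $RA_a$ may be empty (a dangling leaf) while the other continues the path upward, so $v_a$ may perfectly well be the maximal vertex of a slot or sit anywhere along the path above $v_i$. And in a slot with $\upp$- but no $\downn$-decorations there is no candidate $v_i$ at all; nor is ``the unique downward path from the top'' well-defined, since $\upp$-vertices can produce several leaves at the top of the slot. Concretely: take $\delta = \nonee\upp\downn\nonee$, so $\ell=1$, $\delta^{-1}(\downn)=\{3\}$, $\delta^{-1}(\nonee)=\{1,4\}$. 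The inner sum, computed as you describe, gives $2+1+2+2=7$, while $|\cPT(\nonee\upp\downn\nonee)| = C_4 = 14$ (free endpoints and Cambrian interior); the missing $7$ are exactly the permutrees in which $v_2$ sits above $v_3$. For $\delta = \nonee^n$ or $\delta = \nonee\upp\nonee$ the inner sum as written has no admissible $i$ and collapses to $0$, whereas the left side is $n!$ or $5$. So either the statement as transcribed in the thesis differs from \cite[Cor.~2.26]{PP18} --- the original almost surely treats $\downn$- and $\upp$-vertices symmetrically and/or roots the slot at a different vertex (for instance the one with maximum label) --- or there is a convention being elided; in either case the ``chain of $\nonee$-vertices ending at a $\downn$-vertex'' picture cannot carry the argument. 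The unresolved base case you flagged is therefore not a technicality but a symptom of a wrong decomposition. You should go back to the source before attempting to repair this step: $\upp$-vertices branch upward in exactly the way $\downn$-vertices branch downward, and a one-sided argument will not account for them.
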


\subsection{Permutree Lattices}\label{ssec:permutree_lattice}

Like for binary trees, for any fixed decoration~$\delta$ one can define edge rotations on~$\delta$-permutrees.

\begin{definition}\label{def:permutree_rotations}
	Let~$T\in\cPT_n(\delta)$ be a~$\delta$-permutree with an edge~$i\to j$ where~$1\leq i<j\leq n$. An \defn{ij-edge rotation}\index{permutree!edge rotation} is the operation of replacing the (right) subtree of~$v_i$ by~the (left) subtree of~$v_j$ and the (left) subtree by the tree with root~$v_i$, maintaining rest of~$T$ intact. Figure~\ref{fig:permutree_rotations} shows all possible~$ij$-edge rotations.

	The \defn{edge cut} in~$T$ defined by~$i\to j$ is the ordered partition~$(I\,\|\, [n]\setminus{I})$ of the vertex set of~$T$ where~$I$ are the vertices whose undirected paths to~$v_i$ do not visit~$v_j$.
\end{definition}

\begin{example}\label{ex:permutree_edge_cut}
	Consider the~$\nonee\downn\uppdownn\nonee$-permutree given in Figure~\ref{fig:permutree_to_permutations}. The respective edge cuts of the directed edges~$1\to 2$,~$3\to 2$, and~$3\to 4$, are~$(\{1\}\,\|\, \{2,3,4\})$,~$(\{3,4\}\,\|\, \{1,2\})$, and~$(\{1,2,3\}\,\|\, \{4\})$.
\end{example}

\begin{proposition}\label{rotation_edge_cuts}
	The~$ij$-rotation of a~$\delta$-permutree~$T$ is a~$\delta$-permutree~$T'$ whose edge cuts are precisely those of~$T$ except the edge cut defined by~$i\to j$.
\end{proposition}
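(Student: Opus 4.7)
The plan is to prove both claims simultaneously by carefully unpacking what an $ij$-rotation does at the level of subtrees, then reading off the edge cut structure.

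First I would fix notation for the local picture. Around the edge $i\to j$ in $T$, name the relevant subtrees: let $A$ be the descendant subtree of $v_i$ on the side \emph{opposite} to $v_j$ (so $v_j\notin A$), let $B$ be the descendant subtree of $v_j$ on the side \emph{containing} the path to $v_i$ below $v_j$ (the subtree at $v_j$ that holds everything between $i$ and $j$ in label), and let $C$ be the remaining descendant subtree of $v_j$. Do the analogous thing for the ancestor subtrees of $v_i$ and $v_j$. By the labelling condition in Definition~\ref{def:permutrees}, $A$'s labels lie strictly on one side of $i$, $B$'s labels lie strictly between $i$ and $j$, and $C$'s labels lie strictly on the far side of $j$. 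The rotation rule of Figure~\ref{fig:permutree_rotations} can then be described uniformly: it reattaches $B$ to $v_i$ (in the slot compatible with labels in $(i,j)$), reattaches $C$ to $v_i$ or $v_j$ as dictated by $\delta_i,\delta_j$, and keeps $A$ attached to $v_i$, producing an edge $j\to i$ in $T'$.

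The first thing to verify is that $T'\in\cPT_n(\delta)$. Connectivity and acyclicity are immediate since we only modify edges incident to $v_i$ and $v_j$ while preserving their incidences to the subtrees $A,B,C$ (and the ancestor counterparts). For every vertex $v_k$ with $k\neq i,j$, its local neighborhood in $T'$ is identical to that in $T$, so $\delta_k$ is unchanged. For $v_i$ and $v_j$, the explicit rule of Figure~\ref{fig:permutree_rotations} is designed exactly so that each of them keeps its original number of parents and children; therefore $\delta_i$ and $\delta_j$ are preserved. Finally, the label-side condition is preserved because $B$'s labels lie in $(i,j)$ (hence $B$ can serve as a right descendant of $v_i$ or a left descendant of $v_j$), while $A$ and $C$ are reattached on sides consistent with their label ranges.

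For the edge cut statement, the key observation is that the rotation changes neither the vertex set of $A$, $B$, $C$, nor those of the ancestor subtrees; it only reorganizes how they hang off $v_i$ and $v_j$. Pick any edge $e$ of $T$ distinct from $i\to j$. Then $e$ is an edge of one of these subtrees, or is an edge joining one of them to $v_i$ or $v_j$ at a slot unaffected by the rotation. In either case, $e$ still appears in $T'$, and removing $e$ from $T'$ disconnects it into the same two vertex sets as in $T$: indeed, one of the two connected components of $T\setminus e$ is a subtree fully contained inside $A$, $B$, $C$ or one of the ancestor subtrees, and that subtree is unchanged by the rotation, so its vertex set is preserved; the other component is its complement in $[n]$, and is therefore also preserved. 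Thus the edge cut associated with $e$ in $T'$ equals that in $T$. Conversely, the new edge $j\to i$ of $T'$ yields a new edge cut, and by comparison of edge counts (both $T$ and $T'$ have the same number of edges by Lemma~\ref{lem:permutree_cardinality_edges} applied to the common decoration $\delta$) this is the only new cut, replacing the cut of $i\to j$.

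The main obstacle is the case analysis on $(\delta_i,\delta_j)\in\{\nonee,\downn,\upp,\uppdownn\}^2$: there are many combinations, and the exact destination of $B$ and $C$ (as descendant subtrees of $v_i$ versus $v_j$) depends on them. I would handle this uniformly by reading the rule directly off the subtree labels: the subtree whose labels lie in $(i,j)$ must be attached on the right of $v_i$ (respectively the left of $v_j$) in whichever slot $\delta_i$ (respectively $\delta_j$) provides, and the remaining subtrees have forced destinations by their label ranges. Phrasing the argument in this label-range language collapses the case analysis to a single verification, and the edge cut claim then follows from locality as above.
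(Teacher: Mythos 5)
The paper states this proposition without proof, deferring to the explicit rotation pictures referenced in its figure of $ij$-rotations, so your argument must stand on its own. The overall strategy is right: unpack the local picture around the rotated edge, check that $v_i$ and $v_j$ keep their number of parents and children (so the decoration is preserved), and exploit the fact that an edge cut is determined purely by the vertex set of the subtree it disconnects. Two points in the write-up need repair, though. First, your definition of $B$ is internally inconsistent: you introduce $B$ as ``the descendant subtree of $v_j$ on the side containing the path to $v_i$,'' but for the edge $i\to j$ that subtree is rooted at $v_i$ itself and therefore contains label $i$ and the labels below $i$; you then claim $B$'s labels lie strictly in the open interval $(i,j)$, which is the label range of the \emph{right descendant subtree of $v_i$} (the subtree that migrates from $v_i$ to $v_j$ under the rotation), not of a subtree of $v_j$. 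The local picture should be drawn once with subtrees named by label range, after which the case analysis you worry about in the last paragraph really does collapse, but as written the two descriptions point to two different trees.

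Second, the assertion that for any $e\neq i\to j$ ``$e$ still appears in $T'$'' is false for exactly the two edges at slots the rotation touches: the edge from $v_j$ up to its ancestor subtree becomes an edge from $v_i$ up to that same subtree in $T'$, and the edge from $v_i$ down to the migrating subtree with labels in $(i,j)$ becomes an edge from $v_j$ down to it. Those edges do not reappear — they change an endpoint. What is true, and what your next clause actually argues, is that the corresponding \emph{edge cuts} survive: one side of each cut is a subtree whose vertex set is untouched by the rotation, and that subtree is still attached to $T'$ by a single edge, so the bipartition is still a cut of $T'$. Your case split should therefore be phrased over cuts from the outset rather than over edges, at which point the counting argument at the end (same number of edges by Lemma~\ref{lem:permutree_cardinality_edges}, hence same number of cuts, hence exactly one replaced) cleanly finishes, provided you also note that the $j\to i$ cut of $T'$ really is new, i.e.\ differs from the $i\to j$ cut of $T$, which follows since the two cuts place $v_i$ and $v_j$ on opposite sides relative to each other.
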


The resulting poset is called the \defn{rotation poset of~$\delta$-permutrees}\index{permutree!rotation poset} and its covering relations are characterized by edge rotations. Figure~\ref{fig:permutree_rotations} shows rotations between all possible adjacent vertices and Figure~\ref{fig:permutree_lattice_IXYI} presents an example of such a rotation poset where~$\delta=\nonee\uppdownn\upp\nonee$.

\begin{figure}[h]
	\centering
	\includegraphics[scale=0.905]{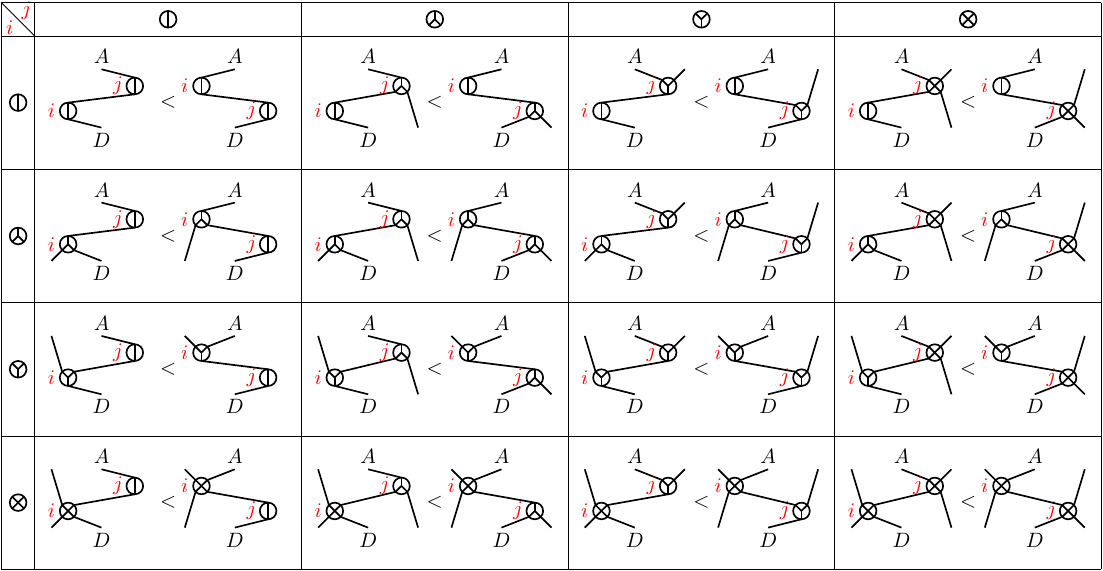}
	\caption[The~$ij$-rotations of~$\delta$-permutrees.]{ All possible~$ij$-rotations of~$\delta$-permutrees. Figure based from~\cite{PP18}.}\label{fig:permutree_rotations}
\end{figure}

\begin{figure}[h]
	\centering
	\includegraphics[scale=0.7]{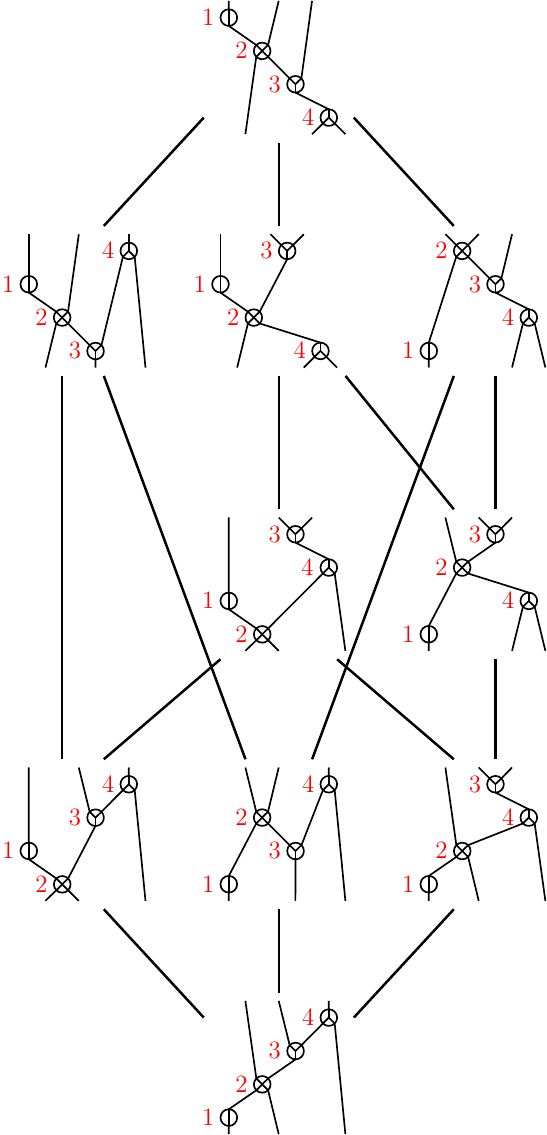}
	\caption{The poset of rotations of~$\nonee\uppdownn\upp\downn$-permutrees.}\label{fig:permutree_lattice_IXYI}
\end{figure}

\begin{remark}\label{rem:permutree_rotations_bounded}
	Notice that~$\delta$-permutree posets are always bounded. The minimal element~$\hat{0}_\delta$ (resp.\ maximal element~$\hat{1}_\delta$) is the~$\delta$-permutree such that~$i\to i+1$ (resp.~$i+1\to i$) for all~$i\in[n-1]$.
\end{remark}

As for binary trees, the rotation poset of permutrees is a lattice.

\begin{proposition}[{\cite[Prop.2.32]{PP18}}]\label{prop:permutree_lattice_property}
	The poset of~$\delta$-permutrees~$(\cPT_n(\delta),\leq)$ is a lattice.

	Moreover, the~$\delta$-permutree lattice is isomorphic to \begin{itemize}
		\itemsep0em
		\item the weak order of~$\fS_n$ if~$\delta=\nonee^n$,
		\item the Tamari lattice if~$\delta=\downn^n$,
		\item the (Type~$A$) Cambrian lattices if~$\delta\in\{\downn,\upp\}^n$,
		\item the boolean lattice if~$\delta=\uppdownn^n$.
	\end{itemize}
\end{proposition}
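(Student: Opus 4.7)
The strategy is to realize $(\cPT_n(\delta),\leq)$ as a lattice quotient of the weak order on $\fS_n$ via the insertion algorithm $\Theta$ of Definition~\ref{def:decorated_permutation}. Define the \emph{$\delta$-permutree congruence} $\equiv_\delta$ on $\fS_n$ by setting $\pi \equiv_\delta \sigma$ if and only if $\Theta(\pi) = \Theta(\sigma)$. By Proposition~\ref{prop:permutree_insertion_linear_extension}, the class of $\pi$ under $\equiv_\delta$ is exactly the set of linear extensions of the $\delta$-permutree $\Theta(\pi)$. If one can show that $\equiv_\delta$ is a lattice congruence on $(\fS_n,\leq)$, then Proposition~\ref{prop:down_up_operators_give_lattices} gives a lattice structure on $\fS_n/{\equiv_\delta}$, and the remaining task is to match this quotient with the rotation poset on $\delta$-permutrees.

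\textbf{Verifying the congruence conditions.} I would apply Proposition~\ref{prop:latti_cong_minimal_elements_equivalence}, which reduces the problem to two checks. First, one must show each $\equiv_\delta$-class is an interval of the weak order. The natural candidates for the extremes are the permutations obtained from $T \in \cPT_n(\delta)$ by the greedy linear extensions that always pick the smallest (resp.\ largest) available source; call these $\mu(T)$ and $M(T)$. One then shows that any $\pi$ with $\mu(T) \leq \pi \leq M(T)$ satisfies $\Theta(\pi) = T$, typically by translating inclusion of inversion sets into the step-by-step behaviour of the insertion algorithm, and conversely that any linear extension of $T$ sits between $\mu(T)$ and $M(T)$. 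Second, one must check that the projection maps $\pi_\downarrow^{\equiv_\delta}$ and $\pi_\uparrow^{\equiv_\delta}$ are order-preserving; by induction on the weak order it suffices to consider a single cover $\pi \lessdot \pi \circ s_i$, and show that $\Theta(\pi)$ and $\Theta(\pi \circ s_i)$ are either equal or differ by a single $ij$-rotation that goes ``upward'' in the rotation order (cf.\ Definition~\ref{def:permutree_rotations}).

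\textbf{Identifying the quotient with the rotation poset.} Proposition~\ref{prop:down_up_operators_give_lattices} then yields a lattice structure on $\pi_\downarrow^{\equiv_\delta}\fS_n \cong \fS_n/{\equiv_\delta}$, in bijection with $\cPT_n(\delta)$ via $\Theta$. It remains to show that this quotient order coincides with the rotation order on $\delta$-permutrees. One direction is immediate from the previous step: covers in the quotient correspond to $ij$-rotations via the cover-relation analysis. The converse direction, namely that every $ij$-rotation arises from a weak order cover, follows by exhibiting a minimal (or maximal) representative $\pi$ of $T$ for which applying some adjacent transposition $s_k$ lands in the class of the rotated tree — concretely, picking $s_k$ as the transposition that realizes the edge cut computed in Proposition~\ref{rotation_edge_cuts}. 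I expect the main obstacle to be the bookkeeping required to trace the insertion algorithm through a single adjacent transposition: one must verify, case by case on $\delta_i, \delta_{i+1} \in \{\nonee,\downn,\upp,\uppdownn\}$ and on whether the strings associated to $i$ and $i{+}1$ interact, that the trees $\Theta(\pi)$ and $\Theta(\pi \circ s_i)$ either coincide or are related by exactly one of the rotations depicted in Figure~\ref{fig:permutree_rotations}.

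\textbf{Special cases.} Each isomorphism claim follows from the preceding analysis by inspection of the insertion algorithm on the relevant decoration. For $\delta = \nonee^n$, the algorithm produces trees whose linear extensions are singletons, so $\equiv_\delta$ is trivial and the quotient is $(\fS_n,\leq)$. For $\delta = \downn^n$, the algorithm reduces to the classical binary search tree insertion of~\cite{HNT05}, so $\equiv_\delta$ is the sylvester congruence and Proposition~\ref{prop:tamari_lattice_congruence} identifies the quotient with the Tamari lattice. For $\delta \in \{\downn,\upp\}^n$, the same analysis recovers the Cambrian congruences in type $A$. Finally, for $\delta = \uppdownn^n$, every vertex is its own ``block'' in the insertion and the resulting equivalence classes are descent classes of $\fS_n$, whose quotient is the boolean lattice on $[n-1]$ via the correspondence of Example~\ref{ex:permutree_examples}.
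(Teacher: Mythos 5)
Your proposal is correct, and it reconstructs the lattice-quotient argument that \cite{PP18} uses, which is exactly the source the paper cites for this proposition. The paper itself does not reprove the statement at this point — it explicitly notes that ``the proof of \cite{PP18} of the lattice property uses the theory of lattice quotients'' and defers an alternative to Chapter~\ref{chap:permutree_vectors}. So what you wrote is not the thesis's own proof; it is the proof the thesis is reacting to.

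The thesis's alternative (Theorem~\ref{thm:permutree_meet} and Corollary~\ref{cor:permutrees_are_lattices}) is a direct, constructive argument: it defines inversion sets for permutrees, shows the rotation order is inclusion of inversion sets (Lemma~\ref{lem:inversion_set_inclusion}), and exhibits an explicit meet formula
\begin{equation*}
	B(T\wedge T')=B(T)\cap B(T')\cap \{(i,j)\,:\,\forall\,i<l<j,\, (i,l) \text{ or } (l,j)\in B(T)\cap B(T')\}
\end{equation*}
after which boundedness plus Proposition~\ref{prop:semilattice_to_lattice} closes the argument. This buys you a concrete, computable $\wedge$ on $\cPT_n(\delta)$ without invoking any congruence machinery, and it works purely at the level of trees rather than passing through $\fS_n$. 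What your route buys instead is precisely the quotient structure: the congruence $\equiv_\delta$, the order-preserving projections $\pi_\downarrow^{\equiv_\delta}$ and $\pi_\uparrow^{\equiv_\delta}$, and the interval description of fibers — all of which the thesis uses heavily elsewhere (Propositions~\ref{prop:permutree_as_quotient} and \ref{prop:permutree_quotients}, and all of Chapter~\ref{chap:permutree_sorting}). Your route also handles the four isomorphism clauses of the proposition more organically, since those identifications are statements about quotients of the weak order; the constructive argument in Chapter~\ref{chap:permutree_vectors} establishes only the lattice property and is silent on the isomorphisms. The two main technical burdens you correctly flag — verifying that fibers are intervals $[\mu(T),M(T)]$, and the case analysis showing $\Theta(\pi)$ and $\Theta(\pi\circ s_i)$ coincide or differ by one rotation — are exactly where \cite{PP18} spends its effort, so your outline is faithful; just be aware that the first step already presupposes Proposition~\ref{prop:permutree_insertion_linear_extension}, which is itself a nontrivial ingredient from the same source.
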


The proof of~\cite{PP18} of the lattice property uses the theory of lattice quotients. A constructive proof of this fact using similar ideas as Proposition~\ref{prop:tamari_lattice_meet} is possible and is presented in Chapter~\ref{chap:permutree_vectors}.

\subsection{Permutree Congruences}\label{ssec:permutree_congruence}

The~$\delta$-permutree lattice can also be constructed via an equivalence relation on the weak order of~$\fS_n$.

\begin{proposition}[{\cite[Prop.32]{PP18}}]\label{prop:permutree_as_quotient}
	The~$\delta$-permutree lattices are lattice quotients of the weak order.
\end{proposition}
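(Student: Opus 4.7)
My plan is to verify the three conditions of Proposition~\ref{prop:latti_cong_minimal_elements_equivalence} for the equivalence relation $\equiv_\delta$ on $\fS_n$ defined by $\pi \equiv_\delta \sigma$ if and only if $\Theta(\pi) = \Theta(\sigma)$, and then identify the quotient lattice with the rotation poset of $\delta$-permutrees. The first task is to check that each equivalence class $[T] := \Theta^{-1}(T)$ is an interval of the weak order. By Proposition~\ref{prop:permutree_insertion_linear_extension}, $[T]$ is exactly the set of linear extensions of the partial order on $[n]$ carried by the $\delta$-permutree $T$. I would then invoke the classical fact that the set of linear extensions of any poset on $[n]$ forms an interval in the weak order on $\fS_n$, with minimum (resp.\ maximum) element the lexicographically smallest (resp.\ largest) linear extension. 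This can be shown by checking that an upper bound for two linear extensions is again a linear extension, so that the set is closed under meets and joins.

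Next, I would verify that the projection maps $\pi_\downarrow^{\equiv_\delta}$ and $\pi_\uparrow^{\equiv_\delta}$ are order preserving. I would reduce this to a local check on cover relations: it suffices to show that whenever $\pi \lessdot \pi \circ s_i$ is contracted by $\equiv_\delta$ and $\pi \lessdot \pi \circ s_j$ is another cover of $\pi$ (resp.\ $\pi$ covers $\pi \circ s_j'$), the corresponding cover starting from $\pi \circ s_i$ leads to a congruence class sitting above (resp.\ below) the class of $\pi \circ s_j$. Concretely, this comes down to analyzing the insertion algorithm of Definition~\ref{def:decorated_permutation}: swapping two adjacent values in the $1$-line notation corresponds locally to either altering or preserving the attachment of two strings in the table. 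I would decompose by cases according to whether the swapped pair forms a forbidden pattern ($kij$ and/or $jki$) with respect to the decoration $\delta_j$, which is exactly the condition controlled by the walls and the merging/splitting rules of the insertion. This case analysis, though somewhat tedious, is directly readable from the drawing conventions of the insertion algorithm.

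Having established the two conditions, Proposition~\ref{prop:latti_cong_minimal_elements_equivalence} yields that $\equiv_\delta$ is a lattice congruence, and Proposition~\ref{prop:down_up_operators_give_lattices} then provides the quotient lattice $\fS_n / {\equiv_\delta}$. To finish, I would show that this quotient is isomorphic to the rotation poset $(\cPT_n(\delta), \leq)$ of Section~\ref{ssec:permutree_lattice}. The bijection on elements is given by $[T] \mapsto T$, well-defined by surjectivity of $\Theta$ (Proposition~\ref{prop:permutree_insertion_bijection}). For the cover relations, I would pick a representative $\pi$ of a class $[T]$ such that $\pi \lessdot \pi \circ s_i$ lies across classes, and trace through $\Theta$ how transposing two consecutive entries of $\pi$ detaches and reattaches exactly one string around a single vertex. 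Comparing this with the local picture of an edge rotation in Figure~\ref{fig:permutree_rotations} shows that $\Theta(\pi \circ s_i)$ is obtained from $T$ by a single $ij$-rotation, so cover relations match.

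The main obstacle will be the case analysis for the order-preserving projections, because one must handle all four values of each decoration $\delta_j \in \{\nonee, \downn, \upp, \uppdownn\}$ and both directions (up and down projection). The cleanest way I see to organize this is to encode, for each local configuration, the criterion for when two permutations in the same $\equiv_\delta$-class admit parallel covers in the weak order, using the pattern-avoidance reformulation (the minimal element of $[T]$ avoids $kij$ when $\delta_j \in \{\downn, \uppdownn\}$ and $jki$ when $\delta_j \in \{\upp, \uppdownn\}$). Once this dictionary between the insertion picture and pattern avoidance is in place, the rest of the argument becomes bookkeeping.
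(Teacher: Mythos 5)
The central claim in your first step is false: the set of linear extensions of an arbitrary poset on $[n]$ does \emph{not} form an interval of the weak order. Take $n=3$ and the poset with the single relation $1<3$. Its linear extensions are $\{123,\,132,\,213\}$, but $132 \vee 213 = 321$ (since $\inv(132)\cup\inv(213)=\{(1,2),(2,3)\}$ forces $(1,3)$ by transitivity), and $321$ is not a linear extension. So the set is neither closed under joins nor an interval. What you actually need is that the \emph{permutree} posets specifically have this interval property, which is a genuine theorem about the structure imposed by Definition~\ref{def:permutrees} — in particular the inorder constraint that for a node $v_i$ with two parents (resp.\ children) the labels in $LA_i$ and $RA_i$ (resp.\ $LD_i$, $RD_i$) split at $i$. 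This is precisely the ``PIP'' characterization alluded to in Remark~\ref{rem:recovering_CPP_1} and is the nontrivial heart of the fiber-interval part of the cited~\cite[Prop.~32]{PP18}; it does not come for free from a general fact about posets. You must either prove the WOIP property for permutree posets directly, or instead derive the interval structure from the rewriting-rule description of $\equiv_\delta$ (the second bullet of Proposition~\ref{prop:permutree_quotients}) as Pilaud--Pons do.

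A secondary concern: your reduction of the order-preservation of $\pi_\uparrow^{\equiv_\delta}$ and $\pi_\downarrow^{\equiv_\delta}$ to a ``local check on cover relations'' is stated informally and the precise local condition you quote is not quite the one that is needed. The cleanest route in the weak-order setting is to note that the weak order is polygonal (Definition~\ref{def:polygons}) and use the forcing mechanism of Remark~\ref{rem:lattice_quotients_via_polygons}: since $\equiv_\delta$ is generated by contracting rank-$2$ edges (as noted after Proposition~\ref{prop:permutree_matrioshcka}), the congruence property follows once you check the polygon propagation for each contracted edge. Your final step identifying the quotient with the rotation poset via $\Theta$ and the local picture of Figure~\ref{fig:permutree_rotations} is reasonable and matches the spirit of~\cite{PP18}.
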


Going further, these lattices can be characterized in the following ways.

\begin{proposition}[{\cite{PP18}}]\label{prop:permutree_quotients}
	Let~$\delta\in\{\nonee,\downn,\upp,\uppdownn\}$. The permutree equivalence relation~$\equiv_\delta$ can be defined equivalently as the equivalence relation whose classes are
	\begin{itemize}
		\itemsep0em
		\item the linear extensions of~$\delta$-permutrees,
		\item the transitive closure on the relations of the form \begin{equation*}
			      \begin{split}
				      UikVjW \equiv_{\delta} UkiVjW & \text{ if } \delta\in\{\downn,\uppdownn\},\\
				      UjVikW \equiv_{\delta} UjVkiW & \text{ if } \delta\in\{\upp,\uppdownn\},
			      \end{split}
		      \end{equation*} where~$i<j<k$ are positive integers and~$U,V,W$ are words in the over~$[n]$,
		\item the fibers of the insertion algorithm (Definition~\ref{def:decorated_permutation}).
	\end{itemize} Furthermore, the following objects are in bijection: \begin{itemize}
		\itemsep0em
		\item permutrees with decoration~$\delta$,
		\item~$\delta$-permutree congruence classes,
		\item permutations that avoid the patterns~$kij$ if~$\delta_j\in\{\downn,\uppdownn\}$ and~$jki$ if~$\delta_j\in\{\upp,\uppdownn\}$ (minima of~$\delta$-permutree congruence classes).
	\end{itemize}
\end{proposition}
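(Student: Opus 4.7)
The plan is to establish the three characterizations of $\equiv_\delta$ by a small cycle of implications, and then derive the bijection with pattern-avoiding permutations as a consequence. I would first identify descriptions (1) (linear extensions of $\delta$-permutrees) and (3) (fibers of the insertion algorithm $\Theta$) using results already recorded in the excerpt: Proposition~\ref{prop:permutree_insertion_bijection} gives that $\Theta$ is a surjection, so its fibers partition $\fS_n$ and are indexed by $\delta$-permutrees, while Proposition~\ref{prop:permutree_insertion_linear_extension} identifies each fiber with the set of linear extensions of the corresponding tree poset. This matches (1) with (3) and simultaneously supplies the bijection between $\delta$-permutrees and congruence classes.

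To match (2) with (3), I would argue both inclusions. For the forward direction, consider a single rewriting $\pi = U i k V j W \to \pi' = U k i V j W$ with $i<j<k$ and $\delta_j \in \{\downn, \uppdownn\}$, and show that $\Theta(\pi) = \Theta(\pi')$. This is a local analysis of the insertion algorithm of Definition~\ref{def:decorated_permutation}: because $\delta_j$ carries a downward red wall below $v_j$, both $i$ and $k$ are inserted on strings that will later be merged by $v_j$ (when $j$ is reached), and the table manipulation is unchanged up to the swap of two entries that end up on the same merged string. The key point is that swapping two entries smaller than the relevant $j$ that appear before $j$ in positions, with nothing strictly between them in value that would affect the string structure in between, results in the same tree. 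The symmetric statement handles $\delta_j \in \{\upp, \uppdownn\}$ and the rule $UjVikW \equiv UjVkiW$.

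For the reverse direction, I would use the fact that any two linear extensions of a finite poset differ by a sequence of swaps of positionally adjacent values $a < b$ that are incomparable in the poset. It remains to certify that each such adjacent swap is an instance of one of the two rewriting rules, i.e., produces a witness $j$ with the required decoration. The structural analysis of $\delta$-permutrees (incomparable nodes in the tree poset must lie on opposite sides of some branching node) gives such a $j$ directly: if two incomparable values $i < k$ become positionally adjacent in a linear extension, the lowest common ancestor of $v_i$ and $v_k$ is some $v_j$ which must have two children or two parents, placing $\delta_j$ in the appropriate set; the relative position of $j$ in the permutation (before or after the block $ik$) dictates which of the two rules applies. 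This is the step that I expect to require the most care, because one must argue that such a $j$ can always be chosen with the correct positional relationship to the block being swapped.

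Finally, to obtain the bijection with pattern-avoiding permutations as minimal elements of the classes, I would use Proposition~\ref{prop:latti_cong_minimal_elements_equivalence}, which says each class is an interval, together with Proposition~\ref{prop:perm_cover_relations} characterizing weak-order covers by adjacent transpositions. A permutation $\pi$ is minimal in its class precisely when no single rewriting in (2) decreases its inversion count. Going from $UkiVjW$ to $UikVjW$ removes the inversion $(i,k)$, so $\pi$ is minimal iff no factorization $UkiVjW$ (with $i<j<k$ and the appropriate $\delta_j$) exists, and analogously for the dual rule. A standard pattern-extraction argument then promotes this local avoidance of adjacent $ki$ followed by a later $j$ to avoidance of the full patterns $kij$ and $jki$, by choosing, in any occurrence of the pattern, an innermost adjacent witness. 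This yields the third bulleted characterization and completes the proof.
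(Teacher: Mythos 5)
The paper states this proposition with a citation to \cite{PP18} and gives no proof of its own, so there is no in-paper argument to compare against; what you have written is an independent reconstruction. Its strategy is correct and follows what is essentially the original route: match (1) with (3) through the insertion algorithm, match (2) with (1) by showing that one rewrite preserves the inserted tree and, conversely, that the linear-extension graph under adjacent swaps of incomparable values is connected with each swap witnessed by a rewrite, and then read off the pattern characterization from minimality in the interval-shaped congruence classes.

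Two places want sharper statements. In the reverse direction of step~2, ``lowest common ancestor'' is not the right notion for an unrooted permutree. What you actually want is the first local extremum $v_j$ on the unique tree path from $v_a$ to $v_b$. If $v_j$ is a peak (both incident path-edges descend into children), then by Definition~\ref{def:permutrees} the two descendant subtrees of $v_j$ carry labels $<j$ and $>j$ respectively, which forces $a<j<b$ as integers and $\delta_j\in\{\downn,\uppdownn\}$; moreover the $v_a$-to-$v_j$ sub-path is a monotone ascent, so $a<j$ in the tree poset and $j$ appears after the adjacent block $ab$ in every linear extension, which is exactly what the first rewriting rule needs. The dual argument with a valley yields the second rule. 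Second, in the pattern-extraction step you should make the witness explicit: given an occurrence at positions $p<q<s$ with $(\pi_p,\pi_q,\pi_s)=(k,i,j)$ and $i<j<k$, the value $j$ is crossed strictly between positions $p$ and $q$ (it is not among $\pi_p,\ldots,\pi_q$ and $\pi_p>j>\pi_q$), so there is some $r\in[p,q-1]$ with $\pi_r>j>\pi_{r+1}$; the pair $(r,r+1)$ together with the later occurrence of $j$ at position $s>q>r+1$ is a direct instance of the rewriting rule. With these two precisions your argument is complete and agrees with the proof in \cite{PP18}.
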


\begin{definition}\label{def:decoration_order}
	Consider the order on permutree decorations~$\nonee\prec\{\downn,\upp\}\prec \uppdownn$. Given~$\delta,\delta'\in\{\nonee,\upp,\downn,\uppdownn\}^n$, we say that~$\delta$ \defn{refines}\index{permutree!refinement}~$\delta'$ (resp.~$\delta'$ \defn{coarsens}\index{permutree!coarsening}~$\delta$) denoted~$\delta\preceq\delta'$ if~$\delta_i\preceq\delta'_i$ for all~$i\in[n]$.
\end{definition}

\begin{proposition}\label{prop:permutree_matrioshcka}
	Let~$\delta,\delta'\in\{\nonee,\upp,\downn,\uppdownn\}^n$ two permutree decorations. If~$\delta\preceq\delta'$ then as congruences~$\equiv_\delta$ refines~$\equiv_{\delta'}$.
\end{proposition}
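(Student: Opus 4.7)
The plan is to work directly from the characterization of the permutree congruence given in Proposition~\ref{prop:permutree_quotients}, using its description as the transitive closure of a set of elementary rewriting rules. Concretely, $\equiv_\delta$ is generated by the relations
\begin{align*}
UikVjW &\equiv_\delta UkiVjW \quad\text{whenever } \delta_j\in\{\downn,\uppdownn\},\\
UjVikW &\equiv_\delta UjVkiW \quad\text{whenever } \delta_j\in\{\upp,\uppdownn\},
\end{align*}
for $i<j<k$. So to show that $\equiv_\delta$ refines $\equiv_{\delta'}$ (i.e.\ $\equiv_\delta\,\subseteq\,\equiv_{\delta'}$ as relations on $\fS_n$), it suffices to verify that every such generating relation of $\equiv_\delta$ is also a generating relation of $\equiv_{\delta'}$.

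The first step is a direct inspection of the decoration order $\nonee\prec\{\downn,\upp\}\prec\uppdownn$ from Definition~\ref{def:decoration_order}. Assuming $\delta\preceq\delta'$, i.e.\ $\delta_j\preceq\delta'_j$ coordinate-wise, we check that $\{\downn,\uppdownn\}$ and $\{\upp,\uppdownn\}$ are upward-closed subsets of the decoration poset. Indeed, if $\delta_j=\downn$ then $\delta'_j\in\{\downn,\uppdownn\}$, and if $\delta_j=\uppdownn$ then $\delta'_j=\uppdownn$; so $\delta_j\in\{\downn,\uppdownn\}$ implies $\delta'_j\in\{\downn,\uppdownn\}$. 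A symmetric argument gives $\delta_j\in\{\upp,\uppdownn\}\,\Rightarrow\,\delta'_j\in\{\upp,\uppdownn\}$.

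Combining the two steps, every elementary relation $UikVjW\equiv_\delta UkiVjW$ is also among the generators of $\equiv_{\delta'}$, and similarly for the relations of the form $UjVikW\equiv_{\delta}UjVkiW$. Taking transitive closures (and using that $\equiv_{\delta'}$ is already an equivalence relation containing all the generators of $\equiv_\delta$) yields $\equiv_\delta\,\subseteq\,\equiv_{\delta'}$, which is exactly the claim that $\equiv_\delta$ refines $\equiv_{\delta'}$.

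There is no real obstacle here: once the elementary-relation description is available, the proof reduces to checking an upward-closure property of the two subsets $\{\downn,\uppdownn\}$ and $\{\upp,\uppdownn\}$ in the decoration poset. One could alternatively phrase the argument via the linear-extension description of the classes, by exhibiting, for each $\delta$-permutree $T$, a $\delta'$-permutree $T'$ whose poset is a weakening of the poset of $T$ (so that its linear extensions include those of $T$); but the generating-relation route is the cleanest, which is why I would adopt it.
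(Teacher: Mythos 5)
Your proof is correct, and it takes the cleanest route available. The paper states this proposition without proof (it is a fact imported from the Pilaud--Pons permutree paper), so there is no internal argument to compare against, but the approach you chose is exactly the one the surrounding characterizations suggest: using the generating-relation description of~$\equiv_\delta$ from Proposition~\ref{prop:permutree_quotients}, the claim reduces to checking that~$\{\downn,\uppdownn\}$ and~$\{\upp,\uppdownn\}$ are upward-closed in the decoration poset~$\nonee\prec\{\downn,\upp\}\prec\uppdownn$, which you verify directly. The final step --- that an equivalence relation containing the generators of~$\equiv_\delta$ must contain its transitive closure --- is handled correctly. Your remark about the alternative linear-extension route is also sound, and it is worth noting that that alternative would require a little more care (one would pass through the insertion maps~$\Theta$ for the two decorations and compare the fibers), which is why the generating-relation argument is indeed preferable here.
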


Figure~\ref{fig:fibersPermutreeCongruences} shows all~$\delta$-permutree congruences for~$\delta\in{\{\nonee,\downn,\upp,\uppdownn\}}^4$ ordered by refinement and with initial and final decorations~$\nonee$ following Remark~\ref{rem:permutree_leftmost_rightmost_labels}. See Figure~\ref{fig:permutree_to_permutations} for an example of the bijection in Proposition~\ref{prop:permutree_quotients}.

\begin{figure}
	\centering
	\includegraphics[scale=0.3,angle=90]{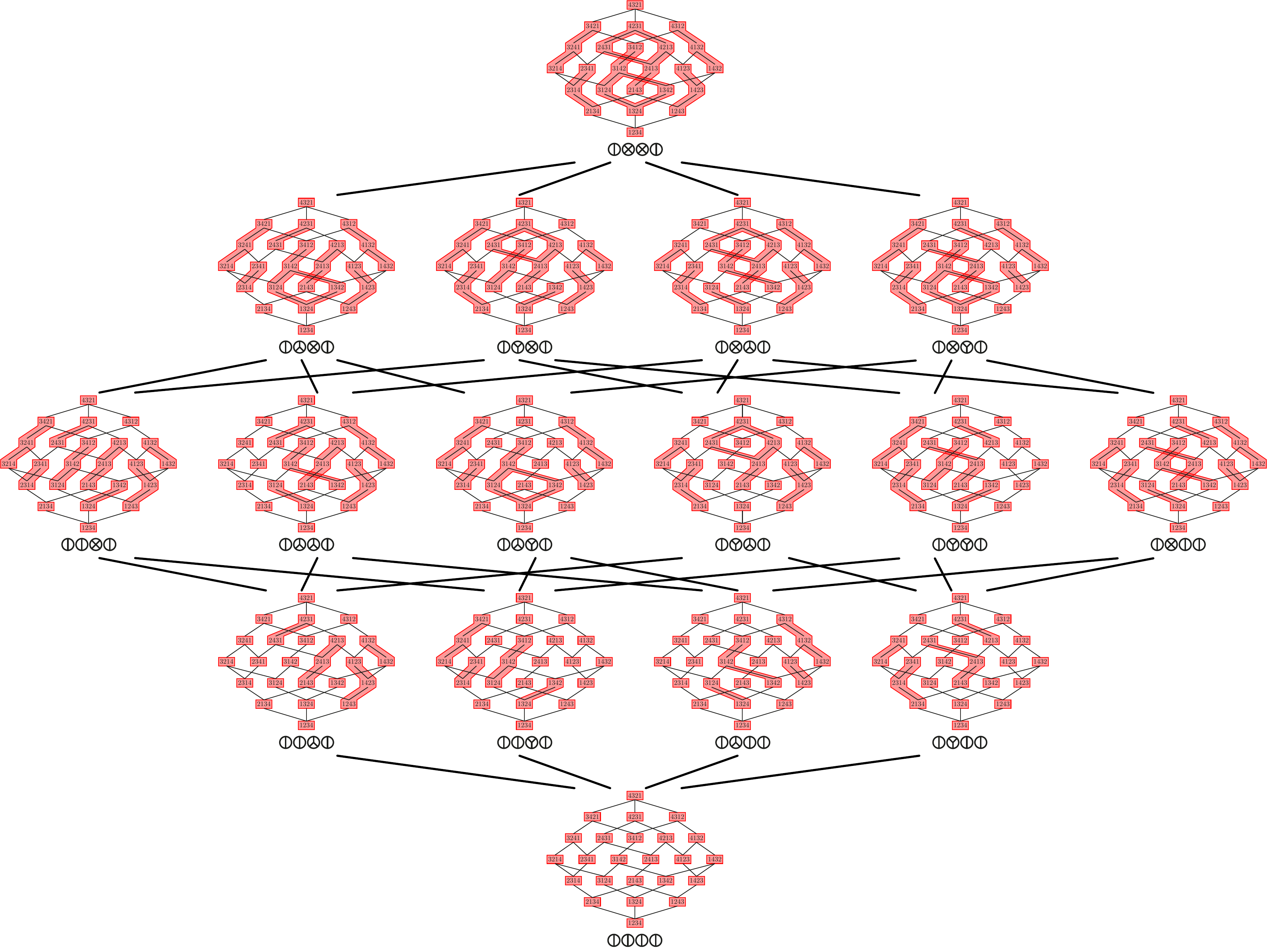}
	\caption[The fibers of all~${\nonee\cdot\{\nonee,\downn,\upp,\uppdownn\}}^2\cdot\nonee$-permutree congruences.]{ The fibers of all~${\nonee\cdot\{\nonee,\downn,\upp,\uppdownn\}}^2\cdot\nonee$-permutree congruences. Figure from~\cite{PP18}.}\label{fig:fibersPermutreeCongruences}
\end{figure}

\begin{remark}
	Following Remark~\ref{rem:lattice_quotients_via_polygons}, the~$\delta$-permutree congruences~$\equiv_\delta$ are also referred to as the rank~$2$ lattice congruences of the weak order. This is due to the fact that they are generated by the forcing of the congruence relations~$12\cdots (j-1)(j+1)j\cdots (n-1)n\equiv 12\cdots (j+1)(j-1)j\cdots (n-1)n$ if~$\delta_j\in\{\downn,\uppdownn\}$ and~$12\cdots j(j+1)(j-1)\cdots (n-1)n\equiv 12\cdots j(j+1)(j-1)\cdots (n-1)n$ if~$\delta_j\in\{\upp,\uppdownn\}$. We explore this idea in a wider context in Subsection~\ref{ssec:Coxeter_sorting}.
\end{remark}

\subsection{Permutreehedra}\label{ssec:permutree_geometry}

\begin{proposition}[{\cite[Thm.3.4]{PP18}}]\label{prop:permutreehedron}
	The~$\delta$-permutree rotation lattice~$(\cPT_n(\delta))$ is realized by the~\defn{$\delta$-permutreehedron}\index{polytope!permutreehedron}~$\PPT(\delta)$ defined equivalently as:
	\begin{itemize}
		\itemsep0em
		\item the convex hull of points of the form \begin{equation*}
			      \mathbf{a}(T)_i =
			      \begin{cases}
				      1+d                           & \text{ if } \delta_i=\nonee,   \\
				      1+d+|LD_i||RD_i|              & \text{ if } \delta_i=\downn,   \\
				      1+d-|LA_i||RA_i|              & \text{ if } \delta_i=\upp,     \\
				      1+d+|LD_i||RD_i|-|LA_i||RA_i| & \text{ if }\delta_i=\uppdownn, \\
			      \end{cases}
		      \end{equation*} where~$d$ is the number of descendants of~$v_i$, and~$T$ is a~$\delta$-permutree,
		\item the intersection of the following hyperplane and half-spaces \begin{equation*}
			      \left\{\mathbf{x}\in\RR^n \,:\,\sum_{i\in [n]}x_i=\gbinom{n+1}{2}\right\} \cap \bigcap_{I\in\mathcal{I}} \left\{\mathbf{x}\in\RR^n \,:\, \sum_{i\in I}x_i\geq\gbinom{|I|+1}{2}\right\},
		      \end{equation*} where~$\mathcal{I}=\{I\subsetneq[n]\,:\,\exists\text{ a~$\delta$-permutree with edge cut }(I\,\|\,[n]\setminus I)\}$.
	\end{itemize}
\end{proposition}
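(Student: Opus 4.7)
The plan is to (i) verify that each candidate point $\mathbf{a}(T)$ satisfies the defining hyperplane equation and all half-space inequalities, with equality exactly on the facets corresponding to edge cuts of $T$, (ii) conclude that these points are all the vertices of the H-polytope, and (iii) match edges of $\PPT(\delta)$ with edge rotations.

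First I would check the hyperplane $\sum_i x_i = \binom{n+1}{2}$: the base contribution $\sum_i (1 + d_i)$, where $d_i$ is the descendant count of $v_i$ in $T$, equals $n + \binom{n}{2}$ because $\sum_i d_i$ counts ordered comparable pairs in the permutree poset (for any two comparable vertices $j \to i$, the pair contributes exactly once to some $d_i$, and the total poset has $\binom{n}{2}$ comparable pairs when $T$ is a permutation path, with the same count for any decoration because comparability is governed by the underlying tree structure). The quadratic corrections $+|LD_i||RD_i|$ at $\downn/\uppdownn$ vertices and $-|LA_i||RA_i|$ at $\upp/\uppdownn$ vertices cancel globally by a pairing argument: any incomparable pair $(a,b)$ creates at most one positive and one negative contribution along the unique path in $T$ between $a$ and $b$, and these match up in the aggregate sum.

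Next, for each edge cut $(I \,\|\, [n]\setminus I)$ of $T$, I would prove $\sum_{i \in I} \mathbf{a}(T)_i = \binom{|I|+1}{2}$ by induction on $|I|$: restricting to the subtree rooted at the cut edge yields a smaller $\delta'$-permutree on $|I|$ vertices whose sum is determined by the same rule. For subsets $I$ not arising as edge cuts of $T$, a local exchange argument on boundary elements that lie on non-cut edges produces a strict excess contribution (either from the $1 + d$ base terms or from the uncancelled quadratic corrections on the boundary) that yields strict inequality. Together with the hyperplane check, this shows simultaneously that $\mathbf{a}(T)$ lies on exactly the $n-1$ nontrivial facets indexed by the edge cuts of $T$, which by dimension count makes it a vertex; distinct permutrees give distinct vertices since a $\delta$-permutree is determined by its edge cuts (Proposition~\ref{rotation_edge_cuts}). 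Two such vertices $\mathbf{a}(T)$ and $\mathbf{a}(T')$ are then adjacent in $\PPT(\delta)$ if and only if $T$ and $T'$ share all but one edge cut, which by Proposition~\ref{rotation_edge_cuts} means exactly one $ij$-rotation relates them; orienting by $\mathbf{v} = (2i - n - 1)_{i \in [n]}$ as in Propositions~\ref{prop:orientation_permutahedron} and~\ref{prop:orientation_associahedron} recovers the Hasse diagram of the rotation lattice.

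The main obstacle will be the sharp inequality analysis on arbitrary subsets $I$: the interplay between the descendant/ancestor subtree sizes and the four possible decorations makes the exchange argument delicate, particularly when $I$ straddles a $\uppdownn$ vertex where both a $+|LD||RD|$ and a $-|LA||RA|$ correction are attached. A cleaner alternative would be to exhibit $\PPT(\delta)$ as a Minkowski sum of dilated faces of $\Delta_{n-1}$ in the style of Postnikov's decomposition of the associahedron~\cite{P09}, reducing both the vertex enumeration and the edge identification to purely combinatorial data on the summands and thereby bypassing the direct inequality bookkeeping.
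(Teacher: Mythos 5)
This proposition has no proof in the paper; it is imported from~\cite[Thm.~3.4]{PP18}, so there is no in-text argument to compare against. Assessing your proposal on its own merits, the hyperplane check fails at the first step. You assert that $\sum_i d_i = \binom{n}{2}$ for every $\delta$-permutree and that the quadratic corrections cancel globally; neither holds. Take $n=3$, $\delta = (\nonee,\downn,\nonee)$, and the permutree $T$ with $v_2$ as root and $v_1, v_3$ as its two children: here $d_1 = d_3 = 0$, $d_2 = 2$, so $\sum_i d_i = 2 \neq 3$ because the incomparability of $v_1$ and $v_3$ has removed a pair. The point $\mathbf{a}(T) = (1,4,1)$ still lands on $\sum x_i = 6$, but only because the lone correction $+|LD_2||RD_2| = 1$ fills the deficit — it cancels nothing. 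Symmetrically, for $\delta = (\nonee,\upp,\nonee)$ and the permutree with $v_2$ at the bottom having $v_1, v_3$ as its two parents, one finds $d_1 = d_3 = 2$, $d_2 = 0$ (here $d$ counts the entire subtree hanging off $v_i$'s child edge, so for $v_1$ it includes the \emph{incomparable} vertex $v_3$; only this reading makes the $\cV$-description agree with the $\cH$-polytope, whose fifth vertex is $(3,0,3)$), hence $\sum_i d_i = 4 > 3$ and the correction $-|LA_2||RA_2| = -1$ trims the surplus. The invariant you actually need is that every unordered pair $\{j,k\}$ contributes exactly~$1$ to $\sum_i d_i + \sum_i |LD_i||RD_i| - \sum_i |LA_i||RA_i|$. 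One proves it by walking the unique tree path from $v_j$ to $v_k$, recording whether each endpoint exits through its descendant or ancestor edge (contributing $0$, $1$, or $2$ to the first sum) and the alternating sequence of down-bends at $\downn/\uppdownn$ vertices and up-bends at $\upp/\uppdownn$ vertices (feeding the other two sums); a short parity analysis on the number of bends yields a constant total of~$1$.

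Your remaining steps are the right shape but underdeveloped. The induction on $|I|$ for $\sum_{i\in I}\mathbf{a}(T)_i = \binom{|I|+1}{2}$ is not a bare restriction to $I$: for $v_i\in I$ with $\delta_i\in\{\upp,\uppdownn\}$ the factors $|LA_i|, |RA_i|$ may count vertices on the $J$-side of the cut, so the formula does not descend to the sub-permutree on $I$ without adjustment. The ``local exchange argument'' for strict inequality on non-cut subsets is, as you concede, where the real work lies, and you leave it entirely unworked. Your closing suggestion of a Minkowski-sum realization in the spirit of Postnikov (cf.~Proposition~\ref{prop:associahedron}) is indeed the more robust route — vertices, facets, and edges then come from the general theory of generalized permutahedra rather than ad hoc inequality bookkeeping — and I would steer you that way.
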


See Figure~\ref{fig:PPT4} for some examples of~$\delta$-permutreehedra.

\begin{proposition}\label{prop:permutreehedron_from_quotients}
	Let~$\mathbf{v}=(\mathbf{w_0})-(\mathbf{e})=(n-1,n-3,\ldots,-n+3,-n+1)={(2i-n-1)}_{i\in[n]}$. The~$1$-skeleton of the~$\PPT(\delta)$ oriented with the vector~$\mathbf{v}$ is isomorphic to the Hasse diagram of the~$\delta$-permutree rotation lattice.
\end{proposition}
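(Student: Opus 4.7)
The plan is to combine three ingredients: (i) the realization of the weak order on the permutahedron with the orientation vector $\mathbf{v}$ (Proposition \ref{prop:orientation_permutahedron}), (ii) the fact that the $\delta$-permutree lattice is a lattice quotient of the weak order (Proposition \ref{prop:permutree_as_quotient}), and (iii) the explicit vertex/facet description of $\PPT(\delta)$ from Proposition \ref{prop:permutreehedron}. The rough strategy is to first identify $\PPT(\delta)$ as a removahedron of $\PPerm_n$ whose vertices are in bijection with $\delta$-permutrees, then transport the orientation of $\PPerm_n$ by $\mathbf{v}$ to $\PPT(\delta)$ through this bijection.

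First, I would show that the vertices of $\PPT(\delta)$ are exactly the points $\mathbf{a}(T)$ for $T \in \cPT_n(\delta)$, and that the facets removed from $\PPerm_n$ to obtain $\PPT(\delta)$ correspond precisely to ordered partitions $(I \,\|\, [n]\setminus I)$ that are not realized as an edge cut of any $\delta$-permutree. Since facets of $\PPerm_n$ are indexed by proper subsets $\emptyset \subsetneq I \subsetneq [n]$, and the facets of $\PPT(\delta)$ are indexed by the set $\mathcal{I}$ of Proposition \ref{prop:permutreehedron}, the ``merged'' vertices of $\PPerm_n$ correspond exactly to the congruence classes of $\equiv_\delta$: each vertex $\mathbf{a}(T)$ lies in the normal cone of the face of $\PPerm_n$ whose vertices are the permutations in the class. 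This uses Proposition \ref{prop:permutree_quotients} identifying $\equiv_\delta$-classes with linear extensions of $\delta$-permutrees.

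Next, I would prove that the edges of $\PPT(\delta)$ correspond to covering relations in the rotation lattice. An edge of $\PPT(\delta)$ between $\mathbf{a}(T_1)$ and $\mathbf{a}(T_2)$ arises when $T_1$ and $T_2$ share all edge cuts except one, which is exactly the combinatorial characterization of an $ij$-rotation given in Proposition \ref{rotation_edge_cuts}. For the orientation, I would compute $\langle \mathbf{v}, \mathbf{a}(T_2) - \mathbf{a}(T_1) \rangle$ for a rotation $T_1 \lessdot T_2$ exchanging the edge $i \to j$ with $i < j$: only the $i$-th and $j$-th coordinates of $\mathbf{a}$ change (since all other subtrees are preserved by Proposition \ref{rotation_edge_cuts}), and by direct inspection of the formulas in Proposition \ref{prop:permutreehedron} applied to each of the local decoration configurations of Figure \ref{fig:permutree_rotations}, the $j$-th coordinate strictly increases while the $i$-th strictly decreases (with the total sum preserved). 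Since $\mathbf{v}_j > \mathbf{v}_i$, the inner product is positive, so the edge is oriented from $T_1$ to $T_2$ as required.

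The main obstacle will be the case analysis in the last step: verifying the monotonicity of the coordinate change under rotation for each pair of local decorations at $v_i$ and $v_j$. A cleaner way to dispatch this is to reduce it to the known case of the permutahedron: for any linear extension $\pi$ of $T_1$, one can produce a linear extension $\sigma$ of $T_2$ with $\pi \lessdot \sigma$ in the weak order, and since $\mathbf{a}(T)$ can be expressed (by the quotient structure) as a positive combination of the permutation vertices of $\PPerm_n$ indexed by the linear extensions of $T$, the inequality $\langle \mathbf{v}, \mathbf{a}(T_2) - \mathbf{a}(T_1) \rangle > 0$ reduces to the analogous inequality for permutations, which is immediate from Proposition \ref{prop:orientation_permutahedron}. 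This bypasses the sixteen-case verification and makes the result a direct consequence of the realization of the weak order by the permutahedron combined with the lattice quotient structure.
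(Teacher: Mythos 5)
Your high-level plan — identify vertices with $\mathbf{a}(T)$, edges with rotations, and check the sign of $\langle \mathbf{v}, \mathbf{a}(T_2)-\mathbf{a}(T_1)\rangle$ — is sound, but the central computation has the sign backwards. Under an $ij$-rotation $T_1 \lessdot T_2$ exchanging the edge $i\to j$ for $j\to i$ (with $i<j$), it is the $i$-th coordinate of $\mathbf{a}$ that strictly \emph{increases} and the $j$-th that strictly \emph{decreases}: in $T_1$, $v_j$ is the local parent and has $v_i$'s entire subtree among its descendants, and after the rotation that larger descendant count transfers to $v_i$. Already for $\delta = \nonee^n$, where $\mathbf{a}(T_\pi)_k = \pi^{-1}(k)$, the cover $\pi \lessdot \pi\circ s_m$ with $\pi(m)=i<j=\pi(m+1)$ sends $\mathbf{a}_i$ from $m$ to $m{+}1$ and $\mathbf{a}_j$ from $m{+}1$ to $m$. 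You also use $\mathbf{v}_j > \mathbf{v}_i$ for $i<j$; the displayed tuple $(n-1,n-3,\ldots,1-n)$ gives $\mathbf{v}_i > \mathbf{v}_j$ instead (the closed form $(2i-n-1)_{i\in[n]}$ in the statement is a sign typo — it disagrees with the tuple and would orient $\PPerm_n$ from $w_0$ down to $e$). Your two sign errors happen to cancel, so the stated conclusion is right, but the argument as written is not.

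The proposed ``cleaner way'' at the end also does not hold. $\PPT(\delta)$ is obtained from $\PPerm_n$ by deleting halfspaces, so it \emph{contains} $\PPerm_n$; whenever the class $[T]$ has more than one element, $\mathbf{a}(T)$ lies strictly outside $\PPerm_n$ and is not a convex combination of the permutation vertices of its class. It can fail even to be an affine combination of them: for $n=3$ and $\delta = \nonee\downn\nonee$, the permutree with class $\{132, 312\}$ has $\mathbf{a}(T) = (1,4,1)$, which is not on the line through $(1,3,2)$ and $(3,1,2)$. So the orientation inequality cannot be deduced by substituting a positive combination into Proposition~\ref{prop:orientation_permutahedron}, and the verification over the local configurations of Figure~\ref{fig:permutree_rotations}, with the coordinate-change direction corrected, cannot be bypassed in this manner.
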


\begin{figure}[h!]
	\centering
	\includegraphics[scale=1.2]{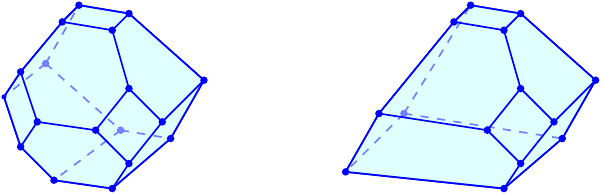}
	\caption[The permutreehedra~$\PPT(\nonee\upp\nonee\nonee)$ and~$\PPT(\nonee\upp\downn\nonee)$.]{ The permutreehedra~$\PPT(\nonee\upp\nonee\nonee)$ (left) and~$\PPT(\nonee\upp\downn\nonee)$ (right).}
	\label{fig:PPT4}
\end{figure}

\begin{remark}\label{rem:permutreehedra_invariance}
	Consider~$\delta,\delta'\in\{\downn,\upp\}$. Although the~$\delta$-permutree lattice might not be isomorphic to the~$\delta'$-permutree lattice, the~$\delta$-permutreehedron is isomorphic to the~$\delta'$-permutreehedron. That is, all these permutreehedrons are associahedrons.
\end{remark}

\section{Finite Coxeter Groups}

In this section we present the basic theory of finite Coxeter groups basing ourselves on~\cite{H90} and~\cite{BB06}.

\begin{definition}\label{def:Coxeter_system_graph}
	Let~$S$ be any set together with integers~$m_{s,t}\in\NN_{>0}$ for all pairs~$(s,t)\in S\times S$ such that \begin{itemize}
		\itemsep0em
		\item~$m_{s,t} = 1$ if and only if~$s=t$,
		\item~$m_{s,t}=m_{t,s}$ for all~$s,t\in S$.
	\end{itemize}
	The \defn{Coxeter graph}\index{Coxeter!graph} is the graph with vertex set~$S$ and edges~$(s,t)$ where~$m_{s,t}\geq 3$. If~$m_{s,t}\geq 4$ the edges are labeled with~$m_{s,t}$, otherwise they are unlabeled. The corresponding \defn{Coxeter group}\index{Coxeter!group} is the group generated as~$W=\langle S \,|\, (st)^{m_{s,t}}=e\rangle$ where~$e$ is the identity element. If~$m_{s,t}\geq 3$ we call~$(st)^{m_{s,t}}=e$ a \defn{braid relation}\index{Coxeter!braid relation}.

	The pair~$(W,S)$ is called \defn{Coxeter system}\index{Coxeter!system},~$S$ is the set of \defn{Coxeter generators}\index{Coxeter!generators} and~$|S|$ is the \defn{rank}\index{Coxeter!rank} of~$W$. The system~$(W,S)$ is said to be \defn{irreducible}\index{Coxeter!irreducible} if its Coxeter graph is connected. Figure~\ref{fig:CoxeterGroups} contains the Coxeter graphs of all irreducible finite Coxeter groups.
\end{definition}

\begin{proposition}[{\cite{C35}}]\label{prop:Coxeter_classification}
	An irreducible Coxeter Group is finite if and only if its Coxeter graph appears in  Figure~\ref{fig:CoxeterGroups}.
	\begin{figure}[h!]
		\centering
		\includegraphics[scale=1.1]{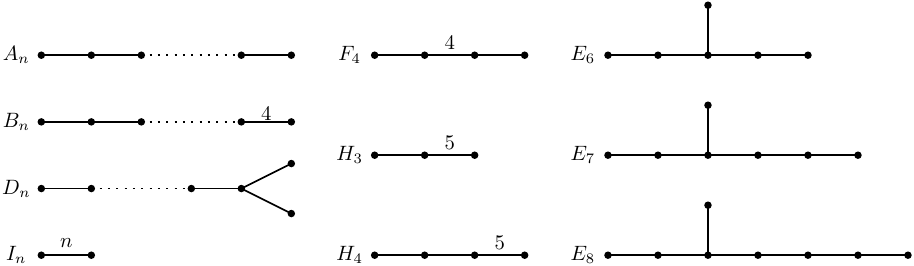}
		\caption[The Coxeter graphs of all irreducible finite Coxeter groups.]{ The Coxeter graphs of all irreducible finite Coxeter groups. The infinite family~$A_n$ is defined for~$n\geq 1$,~$B_n$ for~$n\geq 2$,~$D_n$ for~$n\geq 4$, and~$I_n$ for~$n\geq 5$.}
		\label{fig:CoxeterGroups}
	\end{figure}
\end{proposition}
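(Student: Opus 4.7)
The plan is to establish the equivalence in two directions: the easier construction of each listed group as an honest finite group, and the harder restriction showing that no other Coxeter graph yields a finite group. The standard machinery I would use is the \emph{geometric representation} of a Coxeter system: associate to $(W,S)$ the real vector space $V = \bigoplus_{s\in S} \RR \alpha_s$ equipped with the symmetric bilinear form $B(\alpha_s,\alpha_t) = -\cos(\pi/m_{s,t})$ (with $B(\alpha_s,\alpha_s) = 1$), and let each generator $s\in S$ act on $V$ as the reflection $v \mapsto v - 2B(\alpha_s,v)\alpha_s$. The crucial theorem, proved by Tits, is that this representation is faithful, and that $W$ is finite if and only if the bilinear form $B$ is positive definite. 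Granting this theorem, the proposition reduces to a purely combinatorial classification of the connected Coxeter graphs whose associated matrix $(B(\alpha_s,\alpha_t))_{s,t\in S}$ is positive definite.

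For the forward (constructive) direction, I would exhibit each group in the list as a concrete finite reflection group: $A_n$ as the symmetric group $\fS_{n+1}$ acting by coordinate permutations; $B_n$ as the group of signed permutations acting on $\RR^n$; $D_n$ as the index-two subgroup of even signed permutations; $I_2(m)$ as the dihedral group of order $2m$ acting on $\RR^2$; and $H_3$, $H_4$, $F_4$, $E_6$, $E_7$, $E_8$ as the symmetry groups of the icosahedron, $600$-cell, $24$-cell, and the classical exceptional root systems respectively. In each case a direct verification of the braid relations $(st)^{m_{s,t}} = e$ on the generating reflections, together with finiteness of the ambient root system, gives the result.

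For the backward (exclusion) direction, I would classify connected positive-definite Coxeter graphs. The key monotonicity observation is that any induced subgraph of a positive-definite Coxeter graph (obtained by deleting a vertex, or by decreasing an edge label) is again positive definite; contrapositively, any graph containing a positive-semidefinite but not positive-definite subgraph is excluded. I would then enumerate the minimal ``forbidden'' graphs: the affine Coxeter diagrams $\widetilde A_n, \widetilde B_n, \widetilde C_n, \widetilde D_n, \widetilde E_6, \widetilde E_7, \widetilde E_8, \widetilde F_4, \widetilde G_2$, together with a short explicit list of graphs with one edge label $\geq 6$ or with certain branching patterns, each of which has determinant zero or negative. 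A direct determinant computation (e.g.\ by expanding along a pendant vertex, which yields the recurrence $\det(M_n) = 2\cos(\pi/m)\det(M_{n-1}) - \det(M_{n-2})$ on a chain) handles all cases. Ruling out these forbidden subgraphs on a connected Coxeter graph forces it into one of the types $A_n, B_n, D_n, E_{6,7,8}, F_4, H_{3,4}, I_2(m)$ in Figure~\ref{fig:CoxeterGroups}.

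The main obstacle is the backward direction, and specifically the bookkeeping of the combinatorial case analysis: one must show that any connected graph avoiding the forbidden affine list is either a path (leading to $A_n, B_n, F_4, H_3, H_4, I_2(m)$) or has exactly one branch vertex with three legs of lengths $(1,p,q)$ satisfying $1/2 + 1/(p+1) + 1/(q+1) > 1$ (leading to $D_n, E_6, E_7, E_8$). The auxiliary Tits' faithfulness/finiteness criterion, which I would cite rather than prove, is conceptually the deepest ingredient; given it, the remaining work is the determinant/graph enumeration above.
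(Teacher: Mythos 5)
The paper offers no proof of this proposition; it is stated as Coxeter's 1935 classification theorem with a citation to \cite{C35}, and the thesis treats it as background. So there is no ``paper's own proof'' to compare against. Your outline is, however, the standard modern treatment (Tits' geometric representation, positive-definiteness criterion, and classification of positive-definite connected Coxeter graphs by exclusion of affine subgraphs), as found in Humphreys or Bourbaki, and the high-level architecture is sound: reducing to a Gram-matrix condition, citing Tits' faithfulness and finiteness theorem, and then running a combinatorial case analysis with forbidden affine subgraphs as obstructions.

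One technical slip: the determinant recurrence you quote for a chain is not correct. With the form $B$ normalized so that $B(\alpha_s,\alpha_s)=1$ and $B(\alpha_s,\alpha_t)=-\cos(\pi/m_{s,t})$, expanding the Gram determinant of a path along a pendant vertex whose incident edge has label $m$ gives
\[
\det(M_n)=\det(M_{n-1})-\cos^2(\pi/m)\,\det(M_{n-2}),
\]
or equivalently $\det(2M_n)=2\det(2M_{n-1})-4\cos^2(\pi/m)\det(2M_{n-2})$ if one works with the rescaled matrix having $2$'s on the diagonal. The recurrence you wrote, $\det(M_n)=2\cos(\pi/m)\det(M_{n-1})-\det(M_{n-2})$, is the three-term Chebyshev recurrence and does not compute these determinants: for instance with all labels equal to $3$ it gives an eventually negative sequence, whereas the type-$A_n$ Cartan determinant is $n+1>0$. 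The branch-vertex inequality $\tfrac12+\tfrac1{p+1}+\tfrac1{q+1}>1$ you state is correct (with $p,q$ the edge-lengths of the two longer legs, and the shortest leg forced to have length $1$ to avoid $\widetilde E_6$). Apart from the recurrence, the sketch is a faithful account of the classical argument at the level of detail appropriate to a proposition the thesis merely cites.
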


\begin{remark}\label{rem:braid_relations}
	Since~$m_{s,s}=1$, we have that~$s^2=e$ for all generators. Thus, the relation~$(st)^{m_{s,t}}=e$ implies \begin{equation*}\underbrace{ststst\cdots st}_\text{$m_{s,t}$ times}=\underbrace{tststs\cdots ts}_\text{$m_{s,t}$ times}.\end{equation*}
\end{remark}

\begin{remark}\label{rem:reducibles}
	If the Coxeter graph of a Coxeter system~$(W,S)$ consists of the connected components~$G_1,\ldots,G_m$, then~$W$ is the direct product of the Coxeter groups~$W_1\times\cdots\times W_m$ of each connected component.
\end{remark}

\begin{example}\label{ex:Coxeter_examples}
	Consider a graph~$G$ of~$n$ isolated vertices. This signifies that~$m_{s,t}=2$ for all~$s,t\in S$ and the corresponding Coxeter group~$W=A_1\times\cdots\times A_1$ is the group~$\ZZ_2\times\cdots\times\ZZ_2$.
\end{example}

\begin{example}
	Consider the Coxeter system of type~$(I_n,\{s,t\})$ where~$m_{s,t}=n$. Any word in this group is expressed via these generators and thus only a product of~$s$ and~$t$. Sending~$s$ and~$t$ to the appropriate reflections of an~$n$-gon, one can see that the Coxeter group~$I_n=\langle s,t\,|\,s^2=t^2=(st)^n=e\rangle$ is isomorphic to~$D_n=\langle s,r\,|\,s^2=(sr)^2=(r)^n=e\rangle$.
\end{example}

\begin{definition}\label{def:Coxeter_reflections}
	Given a Coxeter system~$(W,S)$, its \defn{reflections}\index{Coxeter!reflection} are $T=\{wsw^{-1} \,:\, s\in S,\,w\in W\}$. In this way the generators in~$S$ are also called the \defn{simple reflections} of~$W$  and the \defn{right and left inversion sets}\index{Coxeter!inversions sets} of~$w\in W$ are the sets \begin{equation*}
		\begin{split}
			I_R(w)=& \{t\in T\,:\,l(wt)<l(w)\},\\
			I_L(w)=& \{t\in T\,:\,l(tw)<l(w)\}.
		\end{split}
	\end{equation*}
\end{definition}

\begin{definition}\label{def:Coxeter_length}
	Let~$(W,S)$ be a Coxeter system. Considering the elements~$w\in W$ written as products of generators~$w=s_1\cdots s_k$ for~$s_i\in S$, the \defn{length}\index{Coxeter!length} of~$w$ is the minimum~$k$ such that~$w=s_1\cdots s_k$, is denoted as \defn{l(w)}, and~$s_1\cdots s_k$ is said to be a \defn{reduced word} or \defn{reduced expression}\index{Coxeter!reduced word} of~$w$.
\end{definition}

\begin{remark}\label{rem:reduced_word_braid_moves}
	Due to Definition~\ref{def:Coxeter_system_graph} of Coxeter groups, an element~$w\in W$ has a set of reduced words all related by a sequence of braid moves. On one hand, this signifies that the length is well-defined. On the other, this proves that if a transposition~$s_i$ is in a reduced word of~$w$, then~$s_i$ is in every reduced word of~$w$.
\end{remark}

\begin{proposition}
	Some useful properties of the length function include: \begin{itemize}
		\itemsep0em
		\item~$l(ws)=l(w)\pm 1$,
		\item~$l(w)=|I(w)|$,
		\item~$l(w)=l(w^{-1})$,
		\item~$|l(u)-l(w)|\leq l(uw)\leq l(u)+l(w)$.
	\end{itemize}
\end{proposition}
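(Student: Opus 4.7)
The plan is to prove the four properties in sequence, using each to bootstrap the next where possible, since all four are essentially consequences of the basic structure of reduced words and the bijection between reflections and inversions introduced just above.

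First, for $l(ws) = l(w) \pm 1$, the approach is to observe that if $w = s_1 \cdots s_k$ is reduced, then $ws$ admits the expression $s_1 \cdots s_k s$ of length $k+1$, so $l(ws) \leq l(w) + 1$. Applying the same argument to $ws$ gives $l(w) = l((ws)s) \leq l(ws) + 1$, hence $|l(ws) - l(w)| \leq 1$. To rule out equality $l(ws) = l(w)$, I would invoke a parity argument: one can define the sign homomorphism $\varepsilon : W \to \{\pm 1\}$ sending each simple reflection to $-1$, so that $\varepsilon(w) = (-1)^{l(w)}$ for any expression, in particular any reduced one. Since $\varepsilon(ws) = -\varepsilon(w)$, the lengths must differ in parity, so $l(ws) = l(w) \pm 1$. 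The main subtlety here is constructing $\varepsilon$ rigorously, which requires checking that the defining braid relations $(st)^{m_{s,t}} = e$ are compatible with the assignment (they are, since the relators have even length).

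For $l(w) = |I_R(w)|$, I would proceed by induction on $l(w)$. The base case $w = e$ is trivial. For the induction step, pick $s \in S$ with $l(ws) = l(w) - 1$ (which exists by the previous property whenever $w \neq e$), set $w' = ws$, and show that $I_R(w) = s \cdot I_R(w') \cdot s \;\sqcup\; \{s\}$ as a disjoint union. The conjugation map $t \mapsto sts$ on reflections is a bijection of $T$ preserving length changes appropriately, so $|I_R(w)| = |I_R(w')| + 1 = l(w') + 1 = l(w)$. The main obstacle is the disjointness and exhaustiveness claim; this really relies on the strong exchange property, which I would cite from the standard references $\cite{H90}$ and $\cite{BB06}$ given in the text.

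For $l(w) = l(w^{-1})$, the argument is immediate: if $w = s_1 \cdots s_k$ is a reduced expression, then since each simple reflection is an involution, $w^{-1} = s_k \cdots s_1$ is an expression of the same length. This shows $l(w^{-1}) \leq l(w)$, and symmetry (replacing $w$ by $w^{-1}$) gives the reverse inequality.

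Finally, for the triangle inequality $|l(u) - l(w)| \leq l(uw) \leq l(u) + l(w)$: the upper bound follows by concatenating reduced expressions of $u$ and $w$ to get an expression (not necessarily reduced) of $uw$ of length $l(u) + l(w)$. For the lower bound, apply the upper bound to the pair $(uw, w^{-1})$ to get $l(u) = l((uw)w^{-1}) \leq l(uw) + l(w^{-1}) = l(uw) + l(w)$, so $l(u) - l(w) \leq l(uw)$, and symmetrically $l(w) - l(u) \leq l(uw)$. I do not expect any significant obstacle here once the previous three properties are established.
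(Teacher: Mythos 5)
The paper states this proposition without proof; it is presented as a standard collection of facts about Coxeter groups, implicitly deferred to the cited references \cite{H90} and \cite{BB06}. Your proof fills this in correctly and is the standard textbook argument: the sign character $\varepsilon\colon W\to\{\pm 1\}$ for the first item (with the necessary check that the relators $(st)^{m_{s,t}}$ have even length), induction via the strong exchange property and the identity $I_R(ws)=sI_R(w)s\sqcup\{s\}$ for the second, reversal of a reduced word for the third, and concatenation plus a self-application trick for the triangle inequality. One very small notational point: the proposition writes $I(w)$ where the paper's Definition~\ref{def:Coxeter_reflections} defines $I_R(w)$ and $I_L(w)$ separately; your proof works with $I_R$, which is fine since $|I_R(w)|=|I_L(w)|$ (they are related by $w\mapsto w^{-1}$), but it would be worth a sentence noting that the two inversion sets have equal cardinality so the statement is unambiguous.
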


\begin{definition}\label{def:Coxeter_descents}
	Given an element~$w\in W$ of a Coxeter system~$(W,S)$, the \defn{(right) descent set} and \defn{(left) descent set}\index{Coxeter!descents} are \begin{equation*}
		\begin{split}
			D_R(w)=& \{s\in S\,:\,l(ws)<l(w)\},\\
			D_L(w)=& \{s\in S\,:\,l(sw)<l(w)\}.
		\end{split}
	\end{equation*} We call their elements the right (resp. left) \defn{descents} of~$w$.
\end{definition}

\begin{proposition}\label{Coxeter_starting}
	Let~$s\in S$ and~$w\in W$. Then \begin{itemize}
		\itemsep0em
		\item~$s\in D_R(W)$ if and only if there exists a reduced word of~$w$ ending with the letter~$s$.
		\item~$s\in D_L(W)$ if and only if there exists a reduced word of~$w$ starting with the letter~$s$.
	\end{itemize}
\end{proposition}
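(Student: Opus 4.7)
The plan is to reduce both statements to the elementary length identity $l(ws) = l(w) \pm 1$ listed just above the proposition, together with the observation that concatenating a reduced expression with an additional generator gives an upper bound on the length of the product.

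For the right descent equivalence, I would first observe that $s \in D_R(w)$ means exactly $l(ws) < l(w)$, and by the length property $l(ws) = l(w)\pm 1$, this is equivalent to $l(ws) = l(w) - 1$. For the forward direction, assuming $s \in D_R(w)$, pick any reduced word $s_1 s_2 \cdots s_{l(w)-1}$ for $ws$; then $w = (ws) \cdot s = s_1 s_2 \cdots s_{l(w)-1} s$ is an expression of length $l(w)$, so it is necessarily reduced and ends with $s$. For the backward direction, suppose $w = s_1 s_2 \cdots s_{k-1} s$ is a reduced expression of length $k = l(w)$. Then $ws = s_1 s_2 \cdots s_{k-1}$ is an expression of length $k-1$, which gives $l(ws) \leq k - 1 < l(w)$, hence $s \in D_R(w)$.

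The left descent equivalence follows by the symmetric argument, replacing right multiplication by left multiplication throughout; alternatively, one can deduce it from the right case applied to $w^{-1}$ by using $l(w) = l(w^{-1})$ and the fact that reversing a reduced word for $w$ yields a reduced word for $w^{-1}$, so a reduced word of $w$ starting with $s$ corresponds exactly to a reduced word of $w^{-1}$ ending with $s$.

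There is no real obstacle here: the argument is essentially a one-line manipulation in each direction once the key fact $l(ws) \in \{l(w) - 1, l(w) + 1\}$ is invoked. The only subtlety worth flagging is that reducedness of the constructed word $s_1 \cdots s_{l(w)-1} s$ in the forward direction requires the length to match $l(w)$, which is automatic since any expression of length $l(w)$ for $w$ is reduced by definition.
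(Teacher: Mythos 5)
The paper records this as a standard background fact on Coxeter groups (drawn from the references such as~\cite{H90} and~\cite{BB06}) and gives no proof of its own, so there is no argument to compare against directly. Your proof is correct and is the standard one: it reduces the descent condition $l(ws)<l(w)$ to $l(ws)=l(w)-1$ via the dichotomy $l(ws)=l(w)\pm 1$, then builds the reduced word ending in~$s$ by appending $s$ to a reduced word of $ws$ (reducedness following automatically because the resulting word has length exactly $l(w)$), and in the converse direction observes that deleting a terminal $s$ yields a shorter expression for $ws$. Your handling of the left case — either by the mirror-image argument or by passing to $w^{-1}$ and reversing reduced words — is also correct, and the appeal to $l(w)=l(w^{-1})$ together with the reversal bijection on reduced words is exactly the identity the paper lists for this purpose. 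No gaps.
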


\begin{definition}\label{def:Coxeter_parabolic}
	Let~$J\subseteq S$ of a Coxeter system~$(W,S)$. The subgroup of~$W$ generated by~$J$ is a \defn{parabolic subgroup}\index{Coxeter!parabolic subgroup} of~$W$ and is denoted by \defn{$W_J$}. The \defn{quotient}\index{Coxeter!parabolic quotient} corresponding to~$J$ is \defn{$W^J$}$=\{w\in W\,:\,l(ws)>l(w) \text{ for all } s\in J\}$ The Coxeter graph of~$W_J$ is the subgraph of the Coxeter graph induced by~$J$. In this context we note by \defn{$W_{\langle s_i\rangle}$} the parabolic group generated by all adjacent transpositions except~$s_i$.
\end{definition}

\begin{proposition}\label{prop:Coxeter_parabolic_quotient}
	For an element~$w\in W$,~$w\in W^{J}$ if and only if no reduced expression of~$w$ ends with a generator in~$J$.
\end{proposition}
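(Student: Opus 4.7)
The plan is to show that this proposition is essentially a direct reformulation of Proposition~\ref{Coxeter_starting} combined with the definition of $W^J$ and the right descent set $D_R(w)$. The main chain of equivalences I would establish is:
\[ w \in W^J \iff D_R(w) \cap J = \emptyset \iff \text{no reduced expression of } w \text{ ends with an element of } J. \]

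First, I would unwind the definition of $W^J$: membership means $l(ws) > l(w)$ for every $s \in J$. Using the property $l(ws) = l(w) \pm 1$ listed after Definition~\ref{def:Coxeter_length}, the condition $l(ws) > l(w)$ is equivalent to $l(ws) \not< l(w)$, i.e.\ $s \notin D_R(w)$ by the definition of the right descent set. Quantifying over all $s \in J$, this is precisely the statement $D_R(w) \cap J = \emptyset$.

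Second, I would apply Proposition~\ref{Coxeter_starting} (which was stated just above) to convert the condition on descents into a condition on reduced expressions: $s \in D_R(w)$ if and only if some reduced expression of $w$ ends with $s$. Therefore $D_R(w) \cap J = \emptyset$ if and only if, for every $s \in J$, no reduced expression of $w$ ends with $s$, which is exactly the claim that no reduced expression of $w$ ends with a generator in $J$.

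There is no real obstacle here; the statement is a bookkeeping corollary of Proposition~\ref{Coxeter_starting}. The only point that might be worth emphasizing in the write-up is the use of $l(ws) = l(w) \pm 1$ to pass from the strict inequality in the definition of $W^J$ to the descent-free characterization, since without it one might worry about a third possibility (equality) that in fact never occurs in a Coxeter group.
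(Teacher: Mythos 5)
Your proof is correct and uses exactly the ingredients the paper provides: the definition of $W^J$, the definition of $D_R(w)$, the fact that $l(ws)=l(w)\pm 1$, and Proposition~\ref{Coxeter_starting}. The paper itself states this proposition without proof (it is standard background material in a preliminaries chapter), but your chain of equivalences
\[
w\in W^J \iff D_R(w)\cap J=\emptyset \iff \text{no reduced word of } w \text{ ends in a letter of } J
\]
is the intended derivation, and your remark about invoking $l(ws)=l(w)\pm 1$ to rule out the third alternative $l(ws)=l(w)$ is precisely the small point worth making explicit.
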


\begin{proposition}\label{prop:Coxeter_parabolic_decomposition}
	Given~$w\in W$, there is a unique~$w^J\in W^J$ and~$w_J\in W_J$ such that~$w=w^Jw_J$. Moreover,~$l(w)=l(w^J)+l(w_J)$ and~$w^J$ is the unique element of minimal length in the coset~$wW_J$.
\end{proposition}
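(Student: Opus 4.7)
The plan is to construct $w^J$ as the unique representative of minimal length in the coset $wW_J$, and then to define $w_J := (w^J)^{-1}w$, verifying all the required properties from that starting point.

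First I would establish existence of a minimum length representative: the coset $wW_J$ is finite since $W$ is, so let $w^J$ be any element of minimal length in $wW_J$. I would show $w^J \in W^J$ by contradiction: if some $s \in J$ satisfied $l(w^J s) < l(w^J)$, then $w^J s$ would be a shorter element of the same coset (since $s \in W_J$), contradicting minimality. So by Proposition~\ref{prop:Coxeter_parabolic_quotient}, no reduced expression of $w^J$ ends in a generator of $J$, i.e.\ $w^J \in W^J$. Setting $w_J := (w^J)^{-1}w$, we have $w_J \in W_J$ (as cosets of $W_J$ partition $W$ and $w,w^J$ are in the same coset), giving the factorization $w = w^J w_J$.

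Next comes length additivity, which I expect to be the main technical obstacle. The inequality $l(w) \leq l(w^J) + l(w_J)$ is immediate by concatenating reduced expressions. For the reverse inequality $l(w) \geq l(w^J) + l(w_J)$, I would proceed by induction on $l(w_J)$. If $l(w_J) = 0$ then $w = w^J$ and the equality is trivial. Otherwise, write $w_J = s u$ with $s \in J$ a left descent of $w_J$ and $l(u) = l(w_J) - 1$. Then $w = w^J s u$, and the key claim is that $l(w^J s) = l(w^J) + 1$: indeed if instead $l(w^J s) < l(w^J)$, then by the Exchange Condition some generator could be deleted from a reduced expression of $w^J$, producing a shorter representative of $wW_J$ (since $s \in W_J$), contradicting minimality of $w^J$. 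Applying the inductive hypothesis to $w^J s \in W^J\cdot W_J$ with $W_J$-part $u$ would then yield $l(w) = l(w^J s) + l(u) = l(w^J) + 1 + l(w_J) - 1 = l(w^J) + l(w_J)$.

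Finally I would address uniqueness and the minimality statement together. Suppose $w = x y$ with $x \in W^J$ and $y \in W_J$. Then $x \in wW_J$, so $l(x) \geq l(w^J)$. Moreover by the length additivity established above (applied to $x$, which lies in $W^J$ trivially factored as $x \cdot e$, and comparing with $w = x y$), one obtains $l(w) = l(x) + l(y)$, forcing $l(x) + l(y) = l(w^J) + l(w_J)$. Combined with $l(x) \geq l(w^J)$, any element $xy'$ of $wW_J$ with $x \in W^J$ satisfies $l(xy') \geq l(x) \geq l(w^J)$ with equality only when $x = w^J$ (using that cosets are disjoint and the $W^J$-representative is unique once we know $W = W^J \cdot W_J$ uniquely). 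This shows $w^J$ is the unique minimal length element of $wW_J$, and consequently $w_J = (w^J)^{-1}w$ is uniquely determined, completing the proof.
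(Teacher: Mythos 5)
Your strategy---take $w^J$ minimal in $wW_J$, set $w_J := (w^J)^{-1}w$, prove length additivity by induction, then uniqueness---is the right one (the paper states this result without proof), but the inductive step for length additivity has a genuine gap. You write $w_J = su$ with $s\in J$ a \emph{left} descent of $w_J$, observe $l(w^Js)=l(w^J)+1$, and then apply the inductive hypothesis to ``$w^Js$ with $W_J$-part $u$.'' This is not a valid application: $w^Js\notin W^J$, since $s$ is a right descent of $w^Js$ (indeed $l(w^Jss)=l(w^J)<l(w^Js)$). Your inductive claim concerns pairs $(v,z)$ with $v\in W^J$ and $z\in W_J$, and $(w^Js,u)$ does not satisfy that hypothesis; the chain of equalities you write therefore does not follow from the IH.

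The fix is to peel $s$ off on the \emph{right}: take $s\in J$ a right descent of $w_J$, so $w_J=us$ with $l(u)=l(w_J)-1$, and apply the inductive hypothesis to the genuine pair $(w^J,u)$, getting $l(w^Ju)=l(w^J)+l(u)$. It then remains to show $s\notin D_R(w^Ju)$, and this is where the Exchange Condition actually earns its keep, applied to the (now known to be reduced) concatenation of reduced words for $w^J$ and $u$: if $s$ were a right descent of $w^Ju$, deleting a letter either lands in the $w^J$ block (producing a strictly shorter element of $w^JW_J=wW_J$, contradicting minimality of $w^J$) or in the $u$ block (producing a word of length at most $l(u)-1$ for $us=w_J$, contradicting $l(w_J)=l(u)+1$). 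Incidentally, your invocation of Exchange to prove $l(w^Js)=l(w^J)+1$ is redundant, since that inequality is the definition of $W^J$. Finally, your uniqueness step appeals to ``the $W^J$-representative is unique once we know $W=W^J\cdot W_J$ uniquely,'' which is circular; a clean finish is: if $w=xy$ with $x\in W^J$, $y\in W_J$, then $x^{-1}w^J\in W_J$, and length additivity applied to $(x,x^{-1}w^J)$ gives $l(w^J)=l(x)+l(x^{-1}w^J)\ge l(x)\ge l(w^J)$ (the last since $x\in wW_J$), forcing $x^{-1}w^J=e$ and hence $x=w^J$.
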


\subsection{Weak Order}\label{ssec:Coxeter_weak_order}

For any Coxeter system we can define a myriad of orders. Here we are interested only in the right and left weak orders.

\begin{definition}
	Let~$(W,S)$ be a Coxeter system,~$s\in S$, and~$w\in W$. The \defn{right (resp.\ left) weak order}\index{Coxeter!right weak order}~$\leq$ is the transitive closure of the cover relations~$w\lessdot ws$ (resp.~$w\lessdot sw$) if and only if~$l(w)<l(ws)$ (resp.~$l(w)<l(sw)$).
\end{definition}

\begin{figure}
	\centering
	\includegraphics[scale=1.2]{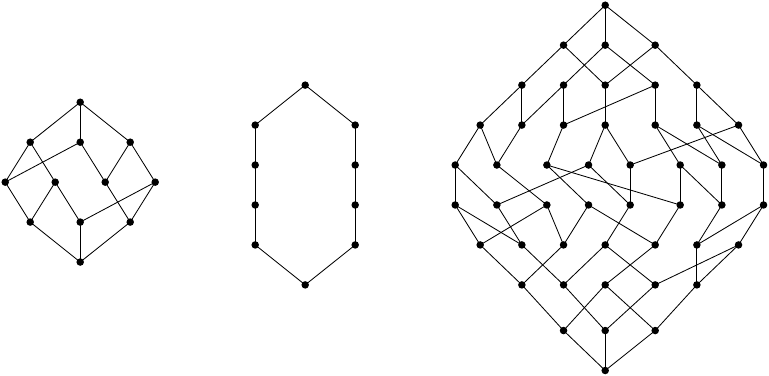}
	\caption{The right weak order of the Coxeter groups~$A_1\times A_2$,~$I_5$, and~$B_3$.}
	\label{fig:Coxeter_weak_orders}
\end{figure}

Figure~\ref{fig:Coxeter_weak_orders} shows some examples of the weak order. In our case of finite Coxeter groups, we obtain the following result.

\begin{proposition}\label{prop:Coxeter_longest_element}
	Let~$W$ be a finite group. There exists an element~$w_0\in W$ such that~$w<w_0$ for all~$w\in W$. We call~$w_0$ the \defn{longest element}\index{Coxeter!longest element} of~$W$.
\end{proposition}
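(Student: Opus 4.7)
The plan is to leverage finiteness of $W$ together with the interplay between the length function, the simple reflections, and the weak order covering relations. First, since $W$ is finite and $\ell: W \to \mathbb{Z}_{\geq 0}$ takes finitely many values, there exists at least one element of maximal length; pick any such element and call it $w_0$. The immediate observation is that for every simple reflection $s \in S$, we have $\ell(w_0 s) \leq \ell(w_0)$ by maximality, and since $\ell(w_0 s) = \ell(w_0) \pm 1$, this forces $\ell(w_0 s) = \ell(w_0) - 1$. In other words, $D_R(w_0) = S$, so every simple reflection is a right descent of $w_0$.

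Next, I would show that $w \leq w_0$ for every $w \in W$ in the right weak order by constructing an explicit chain from $w$ up to $w_0$. Given $w \neq w_0$, the goal is to produce a simple reflection $s \in S$ with $\ell(ws) > \ell(w)$ such that $ws$ still lies below $w_0$; iterating this procedure yields a saturated chain $w \lessdot ws \lessdot \cdots \lessdot w_0$. The existence of such an $s$ amounts to showing $D_R(w) \subsetneq S$ whenever $w \neq w_0$, which in turn reduces to proving that $w_0$ is the \emph{unique} element whose right descent set is all of $S$.

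For uniqueness, the cleanest route goes through inversion sets. Using the standard fact that $\ell(w) = |I_R(w)|$ and that an element of a Coxeter group is determined by its (right) inversion set, I would argue that any element of maximal length has $I_R = T$ (the full set of reflections): indeed if $t \notin I_R(w_0)$, then $\ell(w_0 \cdot t) > \ell(w_0)$, contradicting maximality. This forces $I_R(w_0) = T$ and hence uniqueness of $w_0$. Combined with the characterization $u \leq v \iff I_R(u) \subseteq I_R(v)$ of the weak order, one concludes immediately that $I_R(w) \subseteq T = I_R(w_0)$ for every $w \in W$, so $w \leq w_0$.

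The main obstacle is that the excerpt defines the weak order only by transitive closure of cover relations and does not explicitly record the characterization of the weak order through inclusion of inversion sets, nor the fact that an element is determined by its inversion set. To circumvent invoking these as black boxes, an alternative direct argument uses the exchange condition to build, starting from any reduced word $s_{i_1} \cdots s_{i_k}$ of $w$, a chain in the weak order from $w$ to $w_0$ by appending simple reflections one at a time on the right, each time choosing an $s$ not in $D_R$ of the current element (which exists by the uniqueness argument above). This inductive construction is the technically delicate step and is where essentially all the content of the proposition is concentrated.
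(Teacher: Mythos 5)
The paper does not actually provide a proof of this proposition: it is stated as a standard fact (following the cited references \cite{H90} and \cite{BB06}), so I am evaluating your argument on its own merits rather than against a proof in the text.

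Your main line of reasoning is correct and is essentially the standard one: a maximal-length element $w_0$ exists by finiteness; for any reflection $t$ (not just a simple one), $\ell(w_0 t) \ne \ell(w_0)$ by parity, so $t \notin I(w_0)$ would give $\ell(w_0 t) > \ell(w_0)$, a contradiction; hence the inversion set of $w_0$ is all of $T$, and by the inclusion characterization of the weak order every $w$ satisfies $w \leq w_0$, with uniqueness following from antisymmetry of the partial order (you do not even need the separate black-box fact that an element is determined by its inversion set). One small imprecision: the paper pairs the \emph{right} weak order with \emph{left} inversion sets (Proposition~\ref{prop:weak_order_properties}, item~5: $u \leq_R w$ iff $I_L(u) \subseteq I_L(w)$), whereas you consistently write $I_R$; this is a left/right convention mismatch, not a mathematical error, since the same argument with $I_L$ in place of $I_R$ shows $I_L(w_0) = T$ and hence $w \leq_R w_0$ for all $w$.

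Your closing worry, that the inversion-set characterization of the weak order ``is not explicitly recorded,'' is unfounded: Proposition~\ref{prop:weak_order_properties}(5) records exactly that. Consequently the ``alternative direct argument'' you sketch as a fallback is unnecessary --- and as written it is also circular, since the step ``there exists $s \notin D_R(w)$ whenever $w \ne w_0$'' is justified by the uniqueness of the element with full descent set, which you derived from the inversion-set argument you were trying to avoid. If one genuinely wanted to avoid inversion sets, one would instead prove directly (e.g.\ by induction using the exchange condition) that any $v$ with $D_R(v) = S$ satisfies $\ell(w) + \ell(w^{-1}v) = \ell(v)$ for all $w$; but there is no reason to do so here.
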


The longest element is crucial for the finite case of Coxeter groups. We list some of its properties.

\begin{proposition}\label{prop:Coxeter_longest_element_properties}
	Let~$w_0$ be the longest element of a Coxeter system~$(W,S)$. Then \begin{itemize}
		\itemsep0em
		\item~$w_0^2=e$,
		\item~$l(w_0)=|T|$.
	\end{itemize}
\end{proposition}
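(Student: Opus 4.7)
The plan is to reduce both claims to a single key observation: the longest element $w_0$ is the unique element of $W$ of maximal length. This uniqueness follows from the cover relations of the right weak order changing the length by exactly one (so the weak order is ranked by $l$), combined with the fact that $w_0$ is the unique maximum. Indeed, if $u \neq w_0$ satisfied $l(u) = l(w_0)$, then by iterating cover relations upward from $u$ we would obtain a strictly longer element, contradicting maximality of $w_0$.

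Granting this, the identity $w_0^2 = e$ is nearly immediate. We use that $l(w^{-1}) = l(w)$ for every element of $W$ (listed earlier). Applying this to $w_0$ gives $l(w_0^{-1}) = l(w_0)$, so $w_0^{-1}$ is also of maximal length. By the uniqueness observation, $w_0^{-1} = w_0$, whence $w_0^2 = e$.

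For the identity $l(w_0) = |T|$, I would first recall that $l(w) = |I_R(w)|$ for every $w \in W$ (also listed earlier), so $l(w_0) = |I_R(w_0)| \leq |T|$ because inversion sets are subsets of $T$. For the reverse inequality, I would show $I_R(w_0) = T$: fix any reflection $t \in T$ and consider $w_0 t$. Since $t \neq e$, we have $w_0 t \neq w_0$, and by maximality $l(w_0 t) \leq l(w_0)$. The uniqueness of the longest element then forces $l(w_0 t) < l(w_0)$, meaning $t \in I_R(w_0)$. Hence $T \subseteq I_R(w_0)$, and combining the two inclusions yields $I_R(w_0) = T$ and $l(w_0) = |T|$.

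The main conceptual point, and the only step that requires care, is the uniqueness of the element of maximal length. Once established, both statements follow by one-line arguments as above. The potential subtlety is confirming that one can always strictly increase length from a non-maximum: here it suffices to note that $w \ne w_0$ implies $w < w_0$ in the right weak order, and any saturated chain from $w$ to $w_0$ in the Hasse diagram has length $l(w_0) - l(w)$, which is positive.
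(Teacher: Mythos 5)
The paper states this proposition without proof, as standard Coxeter background, so there is no ``paper approach'' to compare against. Your argument is correct and uses only facts the paper records: the existence of the unique maximum $w_0$ in the weak order, that the weak order is graded by length (each cover raises $l$ by $1$), that $l(w^{-1})=l(w)$, that $l(w)=|I_R(w)|$, and that $I_R(w)\subseteq T$ by definition. The one step worth highlighting as carrying the weight is exactly the one you flag: uniqueness of the element of maximal length, which follows immediately from $w\neq w_0\Rightarrow w<w_0\Rightarrow l(w)<l(w_0)$ by gradedness. With that in hand, $w_0^{-1}$ has maximal length so must equal $w_0$, and for each $t\in T$ the element $w_0t\neq w_0$ has strictly smaller length, so $T\subseteq I_R(w_0)\subseteq T$. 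Both bullets follow. No gaps.
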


In full generality, the weak order is only a complete semi-lattice. However, in our finite case Proposition~\ref{prop:Coxeter_longest_element} together with Proposition~\ref{prop:semilattice_to_lattice} gives us the lattice property.

\begin{proposition}\label{prop:Coxeter_lattice_proeperty}
	The poset~$(W,\leq)$ is a lattice.
\end{proposition}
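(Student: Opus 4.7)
The plan is to invoke Proposition~\ref{prop:semilattice_to_lattice}, which reduces the lattice property to the conjunction of two facts: that $(W,\leq)$ has a greatest element $\hat{1}$, and that $(W,\leq)$ is a meet-semilattice. The first fact is immediate from Proposition~\ref{prop:Coxeter_longest_element}, which furnishes the longest element $w_0$ as $\hat{1}$. So the entire content of the proof concentrates on establishing that any two elements $u,v\in W$ admit a meet $u\wedge v$.

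To produce the meet, I would mimic the strategy used in the type~$A$ case of Proposition~\ref{prop:permutation_lattice}, but stated through inversion sets of reflections rather than of pairs. Concretely, the first step is to recall (or prove) the characterization $u\leq v \iff I_L(u)\subseteq I_L(v)$ for the right weak order, which follows from Definition~\ref{def:Coxeter_reflections} together with the fact that length equals the cardinality of the inversion set. The second step is to characterize which subsets $A\subseteq T$ of reflections arise as inversion sets of elements of $W$: they are precisely the \emph{biclosed} subsets, i.e.\ those stable under the appropriate closure operation on pairs of reflections (this is the Coxeter-group analogue of transitivity and cotransitivity from Proposition~\ref{prop:inv_sets_trans_cotrans}). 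The third step is then to observe that the intersection of two biclosed sets is biclosed, so that $I_L(u)\cap I_L(v)$ is the inversion set of some element of $W$, which must be $u\wedge v$ by the characterization of the order.

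With the meet in hand, Proposition~\ref{prop:semilattice_to_lattice} applies since $W$ is finite and bounded above by $w_0$, so $(W,\leq)$ is a lattice. The join $u\vee v$ can either be recovered formally from the meet via $u\vee v = \bigwedge\{w\in W : u\leq w \text{ and } v\leq w\}$ (a nonempty set since it contains $w_0$), or constructed dually using the involution $w\mapsto w_0 w$ which exchanges meets and joins in the weak order.

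The main obstacle is the biclosedness step: writing down the exact closure condition that characterizes inversion sets uniformly across all finite Coxeter types, and verifying that it is preserved under intersection. In type~$A$ this reduces to transitivity plus cotransitivity, which is elementary; in general types one must argue with rank-two parabolic subsystems, showing that for every such subsystem the restriction of an inversion set is one of a prescribed list of ``intervals'' of reflections, and that this local property survives intersection. Everything else in the argument is formal, once that combinatorial lemma on biclosed sets is available.
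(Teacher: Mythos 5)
Your overall scaffolding — reduce to Proposition~\ref{prop:semilattice_to_lattice} by exhibiting $w_0$ as $\hat{1}$ and then constructing meets — is exactly what the paper gestures at; the paper itself does not actually prove the meet-semilattice property, but simply cites the standard fact that the weak order of any Coxeter group is a complete meet-semilattice before invoking Propositions~\ref{prop:Coxeter_longest_element} and~\ref{prop:semilattice_to_lattice}. So you are trying to fill in the step the paper omits, which is laudable, but the way you fill it in is wrong.

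The gap is in your third step: it is \emph{false} that the intersection of two biclosed sets is biclosed, and consequently $I_L(u)\cap I_L(v)$ is in general not the inversion set of any element of $W$. Biclosedness is a conjunction of a closure condition and a co-closure condition; intersections preserve the former but not the latter (just as unions preserve co-closure but not closure). A minimal counterexample already lives in $\fS_3$: take $\pi=231$ and $\sigma=312$, whose inversion sets (in the paper's convention) are $\inv(\pi)=\{(1,2),(1,3)\}$ and $\inv(\sigma)=\{(1,3),(2,3)\}$. Both are transitive and cotransitive, hence are valid inversion sets by Proposition~\ref{prop:inv_sets_trans_cotrans}. Their intersection is $\{(1,3)\}$, which fails cotransitivity (since $(1,2)$ and $(2,3)$ are both absent but $(1,3)$ is present) and so is not the inversion set of any permutation. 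Indeed the true meet is $231\wedge 312 = 123$ with empty inversion set, strictly smaller than the intersection. The correct construction of the meet in the weak order is more delicate: one must pass to the largest biclosed set \emph{contained in} $I_L(u)\cap I_L(v)$, and the nontrivial content of the meet-semilattice theorem is precisely that such a largest biclosed subset exists. Your plan as stated skips this and would produce a "meet" that is not even an element of $W$.
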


Unless stated otherwise,~$\leq$ refers to the right weak order.

\begin{proposition}\label{prop:weak_order_properties} The weak order has the following properties.
	\begin{enumerate}
		\itemsep0em
		\item The reduced words of an element~$w\in W$ are in bijection with maximal chains of the interval~$[e,w]$.
		\item~$u\leq w$ if and only if~$l(u)+l(u^{-1}w)=l(w)$.
		\item~$u\leq w$ if and only if any reduced word of~$u$ is a prefix of a reduced word of~$w$.
		\item If~$s\in D_L(u)\cap D_L(w)$, then~$u\leq w$ if and only if~$su \leq sw$.
		\item~$u\leq_R w$ if and only if~$I_L(u) \subseteq I_L(w)$.
	\end{enumerate}
\end{proposition}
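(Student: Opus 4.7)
The plan is to treat item (1) as an immediate consequence of how covers in the right weak order are defined, derive item (2) as the key length identity, read off items (3) and (4) as formal corollaries of (2), and handle item (5) separately through a step-lemma on left inversion sets. For (1), a maximal chain $e = w_0 \lessdot w_1 \lessdot \cdots \lessdot w_k = w$ in $[e,w]$ is, by the very definition of $\leq$, a sequence with $w_i = w_{i-1} s_i$ and $l(w_i) = l(w_{i-1}) + 1$; reading off $s_1 \cdots s_k$ returns a reduced word for $w$, and conversely any reduced expression yields such a chain via its prefixes, so the correspondence is bijective.

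For (2), I would induct on $l(w) - l(u)$. The base $u = w$ is immediate. For the step, pick $u \lessdot us \leq w$ with $l(us) = l(u) + 1$; induction gives $l(us) + l((us)^{-1} w) = l(w)$, i.e.\ $l(s u^{-1} w) = l(w) - l(u) - 1$, so $l(u^{-1}w) \in \{l(w)-l(u),\, l(w)-l(u)-2\}$. The triangle inequality $l(w) \leq l(u) + l(u^{-1}w)$ rules out the smaller value, yielding the identity. For the converse, concatenating any reduced word of $u$ with any reduced word of $u^{-1}w$ produces a word for $w$ of length $l(u) + l(u^{-1}w) = l(w)$, hence reduced, whose $u$-prefix places $u$ on a maximal chain of $[e,w]$ by (1), so $u \leq w$.

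Items (3) and (4) then drop out. For (3), the concatenation argument above gives the extension in one direction, while a prefix relation immediately forces $l(u^{-1}w) = l(w) - l(u)$ in the other, and both are equivalent to $u \leq w$ by (2). For (4), the descent hypotheses give $l(su) = l(u) - 1$ and $l(sw) = l(w) - 1$, together with $(su)^{-1}(sw) = u^{-1}w$, so both $u \leq w$ and $su \leq sw$ reduce via (2) to the single identity $l(u) - 1 + l(u^{-1}w) = l(w) - 1$.

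For (5), the forward direction rests on the step lemma that each cover $u \lessdot us$ of the right weak order satisfies $I_L(us) = I_L(u) \sqcup \{u s u^{-1}\}$: the identities $(usu^{-1})(us) = u$ and $(usu^{-1})u = us$ show $u s u^{-1} \in I_L(us) \setminus I_L(u)$, and the inclusion $I_L(u) \subseteq I_L(us)$ (since $l(t' u) < l(u)$ implies $l(t' us) \leq l(t' u) + 1 < l(us)$) together with $|I_L(w)| = l(w)$ forces the disjoint decomposition. Iterating along any chain from $u$ to $w$ then yields $I_L(u) \subseteq I_L(w)$. The converse, which is the main obstacle, proceeds by induction on $|I_L(w) \setminus I_L(u)|$: whenever this difference is nonempty one must produce $s \in S$ with $l(us) = l(u)+1$ and $usu^{-1} \in I_L(w)$, so that $I_L(us) \subseteq I_L(w)$ and the inductive hypothesis applies to $us \leq w$. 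I would obtain such an $s$ either via the Exchange Condition applied to a reduced expression of $u^{-1}w$, or equivalently by passing to the root-system description in which $I_L(w)$ corresponds to the set of positive roots sent to negative by $w^{-1}$; in either framework the step lemma shows that the containment of left inversion sets is monotone along right-weak-order covers and injectively determines the element, closing the equivalence.
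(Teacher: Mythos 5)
The paper states this proposition without proof — it is cited as standard background on Coxeter groups (of the kind found in Björner--Brenti or Humphreys) — so there is no in-paper argument to compare against. Evaluated on its own terms, your proof is correct and follows the classical development.

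Items (1)--(4) are fully justified: (1) is the prefix/chain dictionary; (2) uses a tight induction on $l(w)-l(u)$ plus the triangle inequality to rule out the lower option for $l(u^{-1}w)$, and the converse by concatenation of reduced words, which is clean; (3) and (4) then fall out of (2) exactly as you say, with $(su)^{-1}(sw)=u^{-1}w$ doing the work in (4). The step lemma for (5) is correctly verified: the two identities $(usu^{-1})(us)=u$ and $(usu^{-1})u=us$ show $usu^{-1}\in I_L(us)\setminus I_L(u)$, the inclusion $I_L(u)\subseteq I_L(us)$ follows from the length bound, and cardinality closes the disjoint union.

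The one place you leave work to the reader is the converse of (5): you need both (a) existence of $s\in S$ with $l(us)=l(u)+1$ and $usu^{-1}\in I_L(w)$ when $I_L(u)\subsetneq I_L(w)$, and (b) injectivity of $u\mapsto I_L(u)$ to dispose of the base case $I_L(u)=I_L(w)$. Both are true, and the root-system framing you gesture at does deliver them: take any $s\in D_L(u^{-1}w)$; if $l(us)<l(u)$ then the positive root $-u\alpha_s$ lies in $N(u)\subseteq N(w)$, which contradicts $s\in D_L(u^{-1}w)$, so $l(us)=l(u)+1$; then $u\alpha_s$ is a positive root with $w^{-1}u\alpha_s<0$, i.e.\ $usu^{-1}\in I_L(w)$. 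The same computation applied when $I_L(u)=I_L(w)$ yields a contradiction unless $u^{-1}w=e$, giving injectivity. You cite exactly the right machinery, so this is not a mathematical gap, only an unexpanded step; spelling it out as above would make the argument self-contained.
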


Certain morphisms of Coxeter systems are of particular usefulness to us.

\begin{proposition}\label{prop:Coxeter_automorphisms}
	Consider a Coxeter system~$(W,S)$. \begin{itemize}
		\itemsep0em
		\item The maps~$w\to ww_0$ and~$w\to w_0w$ are antiautomorphisms of~$W$.
		\item The map~$w\to w_0ww_0$ is an automorphism.
		\item The map~$w\to w^{-1}$ is an automorphism sending the right weak order to the left weak order.
	\end{itemize}
\end{proposition}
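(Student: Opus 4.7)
The three statements all rest on two standard facts: that $w_0^2 = e$ (so multiplication by $w_0$ is involutive) and that $l(ww_0) = l(w_0) - l(w)$ for every $w \in W$ (so multiplication by $w_0$ rank-reverses the weak order). I would invoke these throughout. Additionally, I would use the characterization from Proposition~\ref{prop:weak_order_properties}(2): $u \leq w$ if and only if $w = uv$ with $l(w) = l(u) + l(v)$.

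For the first bullet, I would treat $\phi\colon w \mapsto w w_0$ as an antiautomorphism of the right weak order. It is a bijection because $\phi^2 = \mathrm{id}$. To show it reverses cover relations, suppose $u \leq w$ and write $w = uv$ with $l(w) = l(u) + l(v)$. Then
\begin{equation*}
u w_0 = w w_0 \cdot (w_0 v^{-1} w_0),
\end{equation*}
and I would invoke the standard fact that conjugation by $w_0$ permutes $S$ (and hence preserves length) to conclude $l(w_0 v^{-1} w_0) = l(v)$. Combining with $l(uw_0) = l(w_0) - l(u)$ and $l(ww_0) = l(w_0) - l(u) - l(v)$ gives $l(uw_0) = l(ww_0) + l(w_0 v^{-1} w_0)$, which by Proposition~\ref{prop:weak_order_properties}(2) yields $w w_0 \leq u w_0$. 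The map $w \mapsto w_0 w$ is handled symmetrically for the left weak order (or, equivalently, via the right weak order after composing with the inversion automorphism of the third bullet).

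For the second bullet, I would simply observe that $w \mapsto w_0 w w_0$ is the composition of the two antiautomorphisms from the first bullet, so it is an order-preserving bijection. Alternatively, since $w_0 S w_0 = S$, conjugation by $w_0$ is a group automorphism that permutes $S$ and therefore preserves both length and inversion sets up to this diagram automorphism, directly yielding an automorphism of the weak order.

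For the third bullet, $w \mapsto w^{-1}$ is a group involution and hence a bijection. It suffices to check cover relations: if $u \lessdot u s$ in the right weak order, i.e.\ $l(us) = l(u) + 1$, then $(us)^{-1} = s u^{-1}$, and since $l(u^{-1}) = l(u)$ and $l((us)^{-1}) = l(us) = l(u) + 1$, we have $u^{-1} \lessdot s u^{-1} = (us)^{-1}$ in the left weak order. Hence the map carries right weak order covers bijectively onto left weak order covers. The main conceptual obstacle in the whole proposition is the fact that $w_0 S w_0 = S$, which holds because the conjugation action of $w_0$ on $S$ is a diagram automorphism of the finite Coxeter graph; this is what makes the length computation in the first bullet work cleanly.
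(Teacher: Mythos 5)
The paper states this proposition in its preliminaries without proof, treating it as a standard fact from Coxeter theory, so there is no paper argument to compare against; I am therefore only assessing your proof on its own merits. Your argument is correct. The one remark worth making is that there is a somewhat more direct route for the first two bullets, via inversion sets rather than lengths: from $l(w_0 x) = l(w_0) - l(x)$ one checks immediately that $I_L(ww_0) = T \setminus I_L(w)$, so $u \leq_R w \iff I_L(u) \subseteq I_L(w) \iff I_L(ww_0) \supseteq I_L(uw_0) \iff ww_0 \geq_R uw_0$, with no need to track the intermediate element $v$ or to manipulate $w_0 v^{-1} w_0$. That shortcut sidesteps the fact $w_0 S w_0 = S$ for the first bullet; you still need it (or its consequence that conjugation by $w_0$ permutes $T$ and hence acts on inversion sets) for the second bullet, and you are right to flag it as the central lemma. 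I would only add that $w_0 S w_0 = S$ itself falls out of the facts the paper does state: for $s \in S$, two applications of $l(w_0 x) = l(w_0) - l(x)$ give $l(w_0 s w_0) = 1$, so $w_0 s w_0 \in S$. Finally, your assertion that $w \mapsto w_0 w$ is ``handled symmetrically for the left weak order'' slightly undersells it: the same length computation you ran for $w \mapsto ww_0$ applies verbatim to $w \mapsto w_0 w$ on the right weak order (one gets $l(w_0 w) + l(w^{-1}u) = l(w_0 u)$ from $l(u^{-1}w) = l(w) - l(u)$), so both maps are antiautomorphisms of both orders, which is what makes the composition argument for the second bullet clean.
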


\subsection{Type~\texorpdfstring{$A$}{}}\label{ssec:type_A}

Coxeter groups of type~$A$ are of particular interest for us since they are precisely the symmetric groups.

Consider~$S=\{s_1,\ldots,s_{n-1}\}$ where identifying generators with adjacent transpositions as~$s_i=(i\,\, i+1)$. Given an adjacent transposition~$s$ and a reduced word~$x$ of a permutation~$\pi\in\fS_n$, the word~$xs$ (resp.~$sx$) corresponds to the permutation~$\pi\cdot s_i$ (resp.~$s_i\cdot\pi$). Whenever we consider there is no room for confusion we use interchangeably a permutation~$\pi$ instead of a reduced word~$x$.

Moreover, a quick calculation shows that the generators~$S$ satisfy the relations~$(s_is_{i+1})^3=e$ and~$(s_is_j)^2=e$ if~$|j-i|>1$. This gives the intuition for the following result.

\begin{proposition}\label{prop:permutations_as_Coxeter}
	$(\fS_n,S)$ is a Coxeter system of type~$A_{n-1}$.
\end{proposition}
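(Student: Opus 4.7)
The plan is to establish the three pieces required for a Coxeter presentation: that $S$ generates $\fS_n$, that the relations implicit in the Coxeter graph of type $A_{n-1}$ hold in $\fS_n$, and that no further relations are needed (equivalently, that the abstract group defined by these generators and relations has order $n!$).

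First I would show generation. Given any $\pi \in \fS_n$, one may write it as a product of adjacent transpositions by a sorting argument: proceed by induction on $n$, using adjacent transpositions to move the entry $n$ into position $n$ (shifting it rightward via a product $s_{\pi^{-1}(n)} s_{\pi^{-1}(n)+1} \cdots s_{n-1}$), then apply the induction hypothesis to the resulting permutation of $[n-1]$. This simultaneously shows that every permutation has a word in the $s_i$ of length at most $\binom{n}{2}$, matching $|\inv(w_0)|$.

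Second, I would verify the defining relations by direct computation on cycles. The identity $s_i^2 = e$ is immediate since each $s_i$ is a transposition. For $|i-j|>1$ the supports $\{i,i+1\}$ and $\{j,j+1\}$ are disjoint, so $s_is_j = s_js_i$, giving $(s_is_j)^2 = e$. For consecutive generators, a short computation of $s_i s_{i+1}$ shows it is the $3$-cycle $(i\ i+2\ i+1)$, so $(s_is_{i+1})^3 = e$. These are exactly the relations encoded by the Coxeter graph of type $A_{n-1}$, where $m_{s_i,s_{i+1}} = 3$ and $m_{s_i,s_j} = 2$ otherwise.

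Third, and this is where the real content lies, I must show that the surjection from the abstract Coxeter group $W(A_{n-1}) := \langle S \mid (s_is_j)^{m_{ij}} = e \rangle$ onto $\fS_n$ is actually an isomorphism. The cleanest route is a cardinality argument: the above generation shows that the surjection $W(A_{n-1}) \twoheadrightarrow \fS_n$ exists and is onto, so it suffices to prove $|W(A_{n-1})| \leq n!$. I would argue this inductively, using the parabolic decomposition of Proposition~\ref{prop:Coxeter_parabolic_decomposition}. Take $J = \{s_1, \ldots, s_{n-2}\}$; the parabolic subgroup $W_J$ is a Coxeter system of type $A_{n-2}$ and has order at most $(n-1)!$ by induction. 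It remains to show that the quotient $W(A_{n-1})/W_J$ has at most $n$ cosets, which can be done by exhibiting $n$ explicit coset representatives $\{e, s_{n-1}, s_{n-1}s_{n-2}, \ldots, s_{n-1}s_{n-2}\cdots s_1\}$ and checking, using only the braid and commutation relations, that right multiplication by any $s_i$ permutes these representatives modulo $W_J$. This last bookkeeping is the main obstacle, since it requires carefully using the three types of relations to rewrite products of the form $(s_{n-1}\cdots s_k) s_i$ back into the prescribed list times an element of $W_J$. Once this is done, $|W(A_{n-1})| \leq n \cdot (n-1)! = n!$, matching $|\fS_n|$, which forces the surjection to be an isomorphism and completes the proof.
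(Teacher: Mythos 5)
The paper gives no proof of this proposition: it only observes that the type-$A_{n-1}$ Coxeter relations $s_i^2 = e$, $(s_is_{i+1})^3=e$, and $(s_is_j)^2=e$ for $|i-j|>1$ hold in $\fS_n$ — which yields a surjection from the abstract Coxeter group $W(A_{n-1})$ onto $\fS_n$ — and leaves the rest as classical background. Your proposal is correct and supplies precisely the missing direction, that no further relations hold, via the standard cardinality argument: $S$ generates $\fS_n$, the relations hold, and $|W(A_{n-1})| \leq n!$ by parabolic induction, which together with surjectivity forces the isomorphism. One convention point worth flagging: the representatives $e,\ s_{n-1},\ s_{n-1}s_{n-2},\ \ldots,\ s_{n-1}\cdots s_1$ are the minimal \emph{right}-coset representatives for $W_J\backslash W$, which is what makes your right-multiplication closure argument well posed; the minimal left-coset representatives for $W/W_J$ in the sense of Proposition~\ref{prop:Coxeter_parabolic_decomposition} would instead be their inverses $s_k\cdots s_{n-1}$, compatible with a left-multiplication orbit argument. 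Either side works cleanly, but one must not mix them. The bookkeeping you defer does go through: writing $r_k := s_{n-1}\cdots s_{n-k}$ (with $r_0 = e$), right multiplication $r_k s_i$ gives $r_{k\pm 1}$ directly when $i \in \{n-k-1, n-k\}$, lands in $W_J r_k$ by pure commutations when $i < n-k-1$, and in the remaining range uses a chain of commutations together with a single application of the braid relation to pull a generator of $W_J$ out to the left. This verifies the set of $n$ right cosets is closed under right multiplication by each $s_i$, hence exhausts $W(A_{n-1})$, completing the bound and the proof.
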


This construction leads to a reframing of certain characteristics of permutations in terms of Coxeter groups.

\begin{proposition}
	Let~$\pi\in\fS_n$. Then, \begin{itemize}
		\itemsep0em
		\item~$l(\pi)=|\inv(\pi)|$,
		\item~$D_R(\pi)=\{s_i\in S\,:\,\pi_i>\pi_{i+1}\}$.
	\end{itemize}
\end{proposition}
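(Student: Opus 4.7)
The plan is to derive both statements as direct consequences of the results already established for the weak order on $\fS_n$, using the fact that $\fS_n$ is realized as the Coxeter group of type $A_{n-1}$ with adjacent transpositions as Coxeter generators (Proposition~\ref{prop:permutations_as_Coxeter}).

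For the first identity $l(\pi) = |\inv(\pi)|$, I would observe that by definition, $l(\pi)$ is the minimal integer $k$ such that $\pi = s_{i_1} \cdots s_{i_k}$ for some adjacent transpositions $s_{i_1}, \ldots, s_{i_k} \in S$. Any such factorization corresponds to a saturated chain $e = \pi^{(0)} \lessdot \pi^{(1)} \lessdot \cdots \lessdot \pi^{(k)} = \pi$ in the right weak order, since by Proposition~\ref{prop:perm_cover_relations} each cover relation is given precisely by right multiplication by some $s_i$. Thus the minimum $k$ is exactly the length of the shortest saturated chain from $e$ to $\pi$, which by Corollary~\ref{cor:permutaiton:ranked} equals the rank of $\pi$, namely $|\inv(\pi)|$. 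Conversely, any saturated chain from $e$ to $\pi$ gives a factorization of $\pi$ into exactly $|\inv(\pi)|$ generators, showing that this lower bound is attained.

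For the second identity $D_R(\pi) = \{s_i \in S : \pi_i > \pi_{i+1}\}$, I would use the characterization just proved. By definition $s_i \in D_R(\pi)$ if and only if $l(\pi s_i) < l(\pi)$, which by the first part is equivalent to $|\inv(\pi s_i)| < |\inv(\pi)|$. Now recall that the right action $\pi \mapsto \pi \circ s_i$ swaps the values at positions $i$ and $i+1$ (Definition~\ref{def:permutations_transpositions}). Using Proposition~\ref{prop:perm_cover_relations} in the direction that detects cover relations: if $\pi_i < \pi_{i+1}$ then $\pi \lessdot \pi s_i$ and $\inv(\pi s_i) = \inv(\pi) \cup \{(\pi_i, \pi_{i+1})\}$, so $|\inv(\pi s_i)| = |\inv(\pi)| + 1$; if instead $\pi_i > \pi_{i+1}$ then applying the same proposition to the pair $\pi s_i \lessdot \pi$ yields $|\inv(\pi s_i)| = |\inv(\pi)| - 1$. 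Combining, $l(\pi s_i) < l(\pi)$ exactly when $\pi_i > \pi_{i+1}$, which is the desired equality.

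Neither step presents a real obstacle; both are essentially translations between the Coxeter-theoretic definitions (length, descents) and the combinatorial definitions on $\fS_n$ (inversions, one-line descents) already catalogued earlier in Chapter~\ref{chap:prelim_weak_order}. The only point requiring a small amount of care is keeping track of the convention that inversions are defined via the inverse permutation (Remark~\ref{rem:inversions_are_coinversions}) and that right multiplication by $s_i$ acts on values at positions $i, i+1$, so that the pair added to $\inv$ is indeed $(\pi_i, \pi_{i+1})$ rather than $(i, i+1)$. Once that bookkeeping is in place the proof is immediate from the cited statements.
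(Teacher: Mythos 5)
The paper states this proposition without proof (it is treated as a standard type-$A$ fact), so the question is whether your argument stands on its own. Your overall strategy—establish $l(\pi)=|\inv(\pi)|$ first and then read off the descent set—is the right one, and the second bullet is handled correctly. However, the argument for the first bullet has a gap in the lower-bound direction.

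You claim that ``any such factorization corresponds to a saturated chain,'' citing Proposition~\ref{prop:perm_cover_relations}. That proposition only gives the implication that a cover relation in the weak order is a right multiplication by some $s_i$; it does not say that every $s_i$ occurring in a factorization of $\pi$ gives a cover relation. A factorization $\pi = s_{i_1}\cdots s_{i_k}$ is a walk in the Cayley graph, and such a walk can step down in the weak order as well as up, so it need not be a saturated chain. What you actually need to conclude $k\geq |\inv(\pi)|$ is the observation that right multiplication by an adjacent transposition changes $|\inv|$ by exactly $\pm 1$ (the inversion set either gains $(\pi_i,\pi_{i+1})$ when $\pi_i<\pi_{i+1}$ or loses $(\pi_{i+1},\pi_i)$ when $\pi_i>\pi_{i+1}$); then any length-$k$ factorization can move the inversion count by at most $k$, giving the lower bound, and your ``conversely'' half shows the bound is attained. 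The irony is that you establish precisely this $\pm 1$ dichotomy in your proof of the second bullet—the split into the cases $\pi_i<\pi_{i+1}$ and $\pi_i>\pi_{i+1}$—but do not invoke it where it is needed, in the lower bound for the first bullet. Stating that fact up front and citing it for the inequality $l(\pi)\geq|\inv(\pi)|$ closes the gap cleanly.
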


We finish with a useful lemma given by Proposition~\ref{prop:weak_order_properties} in the case of permutations.

\begin{lemma}\label{lem:perm_starting_with_sj}
	A permutation~$\pi\in\fS_n$ permutes the values~$j$ and~$j+1$ if and only if it has a reduced word starting with~$s_j$.
\end{lemma}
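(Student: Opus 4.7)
The plan is to chain together several equivalences that connect starting letters of reduced words to inversions via the left-right isomorphism of the weak order. First, I would invoke Proposition~\ref{Coxeter_starting} to rephrase the hypothesis: $\pi$ admits a reduced word starting with $s_j$ exactly when $s_j \in D_L(\pi)$, i.e.\ when $l(s_j \pi) < l(\pi)$.

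Next, I would transfer this condition to the right side. Since $(s_j \pi)^{-1} = \pi^{-1} s_j$, and the length function satisfies $l(w) = l(w^{-1})$, we have
\begin{equation*}
l(s_j\pi) < l(\pi) \iff l(\pi^{-1} s_j) < l(\pi^{-1}),
\end{equation*}
so that $s_j \in D_L(\pi)$ if and only if $s_j \in D_R(\pi^{-1})$. This is exactly the content of the isomorphism between the left and right weak orders of Proposition~\ref{prop:left_right_weak_order_isomorphic} specialised to single-generator descents.

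Then, by the characterization of right descents in type~$A$ recalled just before the lemma (namely $D_R(\sigma) = \{s_i : \sigma_i > \sigma_{i+1}\}$ applied to $\sigma = \pi^{-1}$), the condition $s_j \in D_R(\pi^{-1})$ is equivalent to $\pi^{-1}(j) > \pi^{-1}(j+1)$. By the paper's definition of inversions (where $(i,k) \in \inv(\pi)$ means $i < k$ and $\pi^{-1}(i) > \pi^{-1}(k)$), this is in turn equivalent to $(j, j+1) \in \inv(\pi)$, i.e.\ the value $j$ appears to the right of the value $j+1$ in the one-line notation of $\pi$, which is exactly what it means for $\pi$ to permute the values $j$ and $j+1$.

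The main subtlety, and the only potential pitfall, lies in keeping straight the left-vs-right conventions together with the value-vs-position convention for inversions used in this paper; both are reconciled cleanly through the involution $\pi \mapsto \pi^{-1}$, so no genuine obstacle arises.
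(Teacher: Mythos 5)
Your proof is correct.  It does, however, navigate the equivalence along a different path from the paper's.  The paper starts from the permutation side: it records that $(j,j+1)\in\inv(\pi)$, then applies Proposition~\ref{prop:weak_order_properties}(5) to conclude $s_j\leq\pi$ in the right weak order (since $\inv=I_L$ in this paper's conventions and $I_L(s_j)=\{(j,j+1)\}$), and finishes with Proposition~\ref{prop:weak_order_properties}(3) to read this weak-order relation as $s_j$ being a prefix of a reduced word of $\pi$.  You instead start from the reduced-word side, use Proposition~\ref{Coxeter_starting} to rephrase that as $s_j\in D_L(\pi)$, pass to $D_R(\pi^{-1})$ through the antiautomorphism $w\mapsto w^{-1}$ and $l(w)=l(w^{-1})$, and then invoke the explicit type-$A$ description of right descents $D_R(\sigma)=\{s_i:\sigma_i>\sigma_{i+1}\}$ to land on $\pi^{-1}(j)>\pi^{-1}(j+1)$, which is literally the defining condition for $(j,j+1)\in\inv(\pi)$.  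The paper's version is uniform in the Coxeter framework (it never touches the inverse or the one-line descent formula), while yours is more hands-on for type $A$ and makes the left/right bookkeeping explicit through $\pi^{-1}$; both reduce ultimately to the single-descent condition $s_j\in D_L(\pi)$, so the distance between them is small but real.  You are also right that the only delicate point is tracking the left-right and value-position conventions; your treatment of it is correct given that the paper's $\inv(\pi)$ agrees with the Coxeter $I_L(\pi)$.
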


\begin{proof}
	Recall that~$\pi$ permuting~$j$ and~$j+1$ is equivalent to~$\pi$ containing the inversion~$(j\,\, j+1)$. Proposition~\ref{prop:weak_order_properties}\,(5) gives us that this is equivalent to~$s_j\cdot e<\pi$ where~$e$ is the identity permutation. Proposition~\ref{prop:weak_order_properties}\,(3) tells us that this occurs if and only if~$s_j$ is a prefix of a reduced word of~$\pi$ as desired.
\end{proof}

\subsection{Type~\texorpdfstring{$B$}{}}\label{ssec:type_B}

For an easier reading we will use the notation~\defn{$\overline{i}$} for the negative integer~$-i$.

\begin{definition}\label{def:type_B_permutations}
	Let~\defn{$\fS_n^B$} be the group of permutations on~$[\pm n]:=\{\overline{n},\ldots,\overline{1},1,\ldots,n\}$ satisfying~$\pi(\overline{i})=\overline{\pi(i)}$ for all~$i\in[n]$. This group is called the group of all \defn{signed permutations}\index{permutation!signed} on~$[n]$. Given~$w\in\fS_n^B$ we write~$w=w(\overline{n})\cdots w(\overline{1})w(1)\cdots w(n)$ in the usual~$1$-line permutation notation or~$w=[w(1)\cdots w(n)]$ in what is called the \defn{window notation}\index{permutation!signed!window} of~$w$.
\end{definition}

\begin{remark}\label{rem:recovering_A_from_B}
	Notice that~$\fS_n\subset\fS_n^B$ naturally by identifying~$\fS_n$ with the signed permutations~$w$ such that~$w([n])=w([n])$.
\end{remark}

Similar to the definitions for permutations given in Definition~\ref{prop:cubical_permutahedron}, signed permutations have a cycle decomposition as now exemplified.

\begin{example}\label{ex:window_notation_cycle}
	If~$w=5\,\overline{3}\,\overline{2}\,\overline{1}\,4\,\overline{4}\,1\,3\,2\,\overline{5}$, then~$w=[\overline{4}\,1\,3\,2\,\overline{5}]$ and one of its cycle decompositions is~$(1\,\overline{4}\, \overline{2}\, \overline{1}\, 4\, 2)(3)(\overline{3})(5\, \overline{5})$.
\end{example}

Similar to Definition~\ref{def:permutations_transpositions}, cycle decompositions give us adjacent transpositions.

\begin{definition}\label{def:B_simple_transpositions}
	In the context of signed permutations the \defn{adjacent transpositions}\index{permutation!transposition} are~$s_i:=(i\,\, i+1)(\overline{i}\,\, \overline{i+1})$ for~$i\in[n-1]$ and~$s_n:=(\overline{n}\,\, n)$.
\end{definition}

\begin{proposition}\label{prop:B_generators}
	The group of signed permutations~$\fS_n^B$ is generated by the adjacent transpositions~$S^B=\{s_1,\ldots,s_{n-1},s_n\}$.
\end{proposition}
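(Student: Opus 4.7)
The plan is to argue by induction on $n$. For the base case $n=1$, the group $\fS_1^B = \{e, s_1\}$ has only two elements (the identity and the map $1\mapsto \overline{1}$, which is exactly $s_1=(\overline{1}\,\,1)$), so $S^B=\{s_1\}$ trivially generates it.

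For the inductive step, given $w\in \fS_n^B$, I would first reduce $w$ to a signed permutation fixing $n$ and $\overline{n}$ pointwise, using only elements of $S^B_n$ applied on the right. The two observations I need are: \textbf{(a)} for $i\in[n-1]$, right-multiplication by $s_i$ swaps the values in positions $i$ and $i+1$ of the window notation, leaving signs unchanged (because $(w\cdot s_i)(k)=w(s_i(k))$ and $s_i$ fixes all elements of $[\pm n]$ outside $\{\pm i,\pm(i+1)\}$); and \textbf{(b)} right-multiplication by $s_n$ flips the sign of the value at position $n$ (since $s_n=(n\,\,\overline{n})$). With these two facts, I would locate the unique index $j\in[n]$ such that $|w(j)|=n$, and then right-multiply $w$ successively by $s_j,s_{j+1},\ldots,s_{n-1}$ to transport $\pm n$ from position $j$ to position $n$. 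If the value at position $n$ is still $\overline{n}$, one further right-multiplication by $s_n$ produces an element $w''$ with $w''(n)=n$ and (automatically) $w''(\overline{n})=\overline{n}$.

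Now $w''$ lies in the stabilizer of $\{n,\overline{n}\}$, which is naturally isomorphic to $\fS_{n-1}^B$ via restriction. By the inductive hypothesis, $w''$ expressed as an element of $\fS_{n-1}^B$ is a product of the generators $S^B_{n-1}=\{s_1,\ldots,s_{n-2},\,(\overline{n-1}\,\,n-1)\}$. Under the inclusion $\fS_{n-1}^B\hookrightarrow \fS_n^B$ (Remark~\ref{rem:recovering_A_from_B}), the first $n-2$ generators coincide with the corresponding $s_1,\ldots,s_{n-2}\in S^B_n$, while the last generator $(\overline{n-1}\,\,n-1)$ must be rewritten in terms of $S^B_n$. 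A direct computation on $[\pm n]$ gives the identity
\[
(\overline{n-1}\,\,n-1)=s_{n-1}\,s_n\,s_{n-1}
\]
in $\fS_n^B$, so this last generator also lies in $\langle S^B_n\rangle$. Since each $s_i$ is an involution, reversing the sequence of right-multiplications that produced $w''$ expresses $w$ itself as a product of elements of $S^B_n$, completing the induction.

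The main obstacle, and the point at which I would take care, is the mismatch of generating sets under the inclusion $\fS_{n-1}^B\hookrightarrow \fS_n^B$: the ``top'' generator of $S^B_{n-1}$ is the transposition $(\overline{n-1}\,\,n-1)$, which is not an element of $S^B_n$, and so the inductive hypothesis cannot be invoked verbatim. The identity $(\overline{n-1}\,\,n-1)=s_{n-1}\,s_n\,s_{n-1}$ resolves this, but one must verify it carefully by tracing the action on $\pm(n-1),\pm n$. Everything else (the sorting procedure and the base case) is routine.
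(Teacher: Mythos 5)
The paper itself states this proposition without proof (it is a standard fact about the Coxeter group of type $B$, implicitly referenced to the surrounding material and \cite{BB06}), so there is no in-text argument to compare against. On its own merits your proof is correct. The two reductions (swap positions via $s_i$ for $i\in[n-1]$, flip the sign at position $n$ via $s_n$) are exactly what the paper's conventions give, since $(w\cdot s_i)(k)=w(s_i(k))$, and the bubble-sort induction is the standard way to establish such generation results.

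You were right to flag the only nontrivial point: under the pointwise stabilizer of $\{n,\overline{n}\}$ (which is what you mean, as your equations make clear, though ``stabilizer of the set'' is slightly loose phrasing) the special generator of $S^B_{n-1}$ is $(\overline{n-1}\,\,n-1)$, which is \emph{not} one of the $s_i\in S^B_n$. Your identity $(\overline{n-1}\,\,n-1)=s_{n-1}\,s_n\,s_{n-1}$ checks out: since $s_{n-1}$ is an involution, the left-hand side is the conjugate $s_{n-1}s_ns_{n-1}^{-1}$, and conjugating the $2$-cycle $(\overline{n}\,\,n)$ by $s_{n-1}=(n-1\,\,n)(\overline{n-1}\,\,\overline{n})$ simply replaces its entries to give $(\overline{n-1}\,\,n-1)$, independently of the composition convention. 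One trivial bookkeeping slip: the inclusion $\fS_{n-1}^B\hookrightarrow\fS_n^B$ you invoke (extend by fixing $\pm n$) is not literally Remark~\ref{rem:recovering_A_from_B}, which embeds $\fS_n$ rather than $\fS_{n-1}^B$; this does not affect the argument.
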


\begin{definition}\label{def:B_inversions}
	We say that~$(i,j)\in[n]^2$ is a \defn{$B$-inversion}\index{permutation!inversion} of~$\pi$ if~$i<j$ and~$\pi^{-1}_i>\pi^{-1}_j$ or~$i\leq j$ and~$\pi^{-1}_{\overline{i}}>\pi^{-1}_j$ and denote by \defn{$\inv_B(\pi)$} the set of~$B$-inversions of~$\pi$. We define the length of~$\pi\in\fS_n^B$ by~$\ell_B(w):=|\inv_B(w)|$.
\end{definition}

\begin{proposition}\label{prop:B_Coxeter}
	$(\fS_n^B,S^B)$ is a Coxeter group of type~$B$.
\end{proposition}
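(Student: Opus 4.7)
The plan is to verify directly that $(\fS_n^B, S^B)$ realizes the abstract Coxeter presentation associated with the type $B_n$ diagram, namely the relations $s_i^2 = e$ for all $i$, $(s_is_{i+1})^3 = e$ for $i \in [n-2]$, $(s_{n-1}s_n)^4 = e$, and $(s_is_j)^2 = e$ whenever $|i-j| \geq 2$. Together with Proposition~\ref{prop:B_generators}, these relations would yield a surjective homomorphism from the abstract Coxeter group $W(B_n)$ onto $\fS_n^B$, and then I would establish injectivity by a cardinality argument.

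First I would check each family of relations by direct computation on the cycle decompositions from Definition~\ref{def:B_simple_transpositions}. The involutivity $s_i^2 = e$ is immediate since each $s_i$ is a product of disjoint transpositions. For $|i-j| \geq 2$ with $i, j \leq n-1$, the generators $s_i$ and $s_j$ act on disjoint subsets of $[\pm n]$, so they commute. The relation $(s_is_n)^2=e$ for $i \leq n-2$ is analogous: $s_i$ fixes $\overline{n}$ and $n$ setwise while $s_n$ only swaps them. For $i \leq n-2$, the braid relation $(s_is_{i+1})^3 = e$ is inherited from type $A$ since $s_i$ and $s_{i+1}$ act simultaneously on $\{i, i+1, i+2\}$ and on $\{\overline{i}, \overline{i+1}, \overline{i+2}\}$ as ordinary adjacent transpositions. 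The only relation requiring genuine computation is $(s_{n-1}s_n)^4 = e$: writing $s_{n-1} = (n-1\ n)(\overline{n-1}\ \overline{n})$ and $s_n = (\overline{n}\ n)$, one finds $s_{n-1}s_n = (\overline{n-1}\ \overline{n}\ n-1\ n)$, a $4$-cycle, whose fourth power is $e$.

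Next, these relations produce a surjective group homomorphism $\varphi : W(B_n) \twoheadrightarrow \fS_n^B$ by the universal property of the abstract Coxeter group, together with Proposition~\ref{prop:B_generators}. To turn this into an isomorphism, I would compare orders. On one side, $|\fS_n^B| = 2^n\, n!$ since a signed permutation is determined by a choice of an underlying permutation of $[n]$ and a sign for each value. On the other side, the standard realization of $W(B_n)$ as the group of signed permutation matrices (or equivalently the hyperoctahedral group, the symmetry group of the $n$-cube) has the same order $2^n\, n!$, which follows from the classification in Proposition~\ref{prop:Coxeter_classification} together with the standard computation of the type $B_n$ root system.

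The main obstacle is precisely this last step: proving $|W(B_n)| \leq 2^n\, n!$ without assuming it, since the upper bound on the cardinality of an abstract Coxeter group is not automatic from its presentation. The cleanest way around this would be to establish the exchange condition (or equivalently the strong exchange property) directly in $\fS_n^B$ using the combinatorial length function $\ell_B$ from Definition~\ref{def:B_inversions}: show that $\ell_B(ws_i) = \ell_B(w) \pm 1$ by analyzing the effect of each $s_i$ on the $B$-inversion set, then deduce the exchange condition on reduced expressions. This characterizes Coxeter systems intrinsically and bypasses the need to separately bound $|W(B_n)|$. Alternatively, one can invoke the faithful geometric representation of $W(B_n)$ on the type $B_n$ root system in $\RR^n$, realized precisely as signed permutations of the standard basis, which gives both the correct order and the desired isomorphism in one stroke.
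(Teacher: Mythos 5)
The paper does not prove this proposition; it is stated as standard background cited from~\cite{BB06}, so there is no paper argument to compare against. On its own merits, your plan is sound and the relation checks are correct --- in particular the computation that $s_{n-1}s_n$ is a $4$-cycle, hence $(s_{n-1}s_n)^4 = e$, and the disjoint-support arguments for the commuting pairs. You also correctly identify that the genuine content lies in promoting the surjective homomorphism from the abstract group $W(B_n)$ onto $\fS_n^B$ to an isomorphism; that is exactly where the difficulty sits.

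Of your two proposed fixes, the geometric one is the more economical for this finite type: the Tits form of $B_n$ is positive definite, so the geometric representation is faithful, and its image is precisely the group of signed permutation matrices acting on $\RR^n$, which is visibly $\fS_n^B$. The exchange-condition route is also valid and is closer to what~\cite{BB06} actually does, but as you sketch it there is a slippage you should be aware of: establishing $\ell_B(ws_i) = \ell_B(w) \pm 1$ is only the parity property, which is strictly weaker than the exchange condition. To get the exchange condition itself you must analyze, for each $s\in S^B$ with $\ell_B(ws) < \ell_B(w)$ and each reduced word $s_{i_1}\cdots s_{i_k}$ of $w$, which letter of the word creates the $B$-inversion that $s$ removes, and show it can be deleted. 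That bookkeeping over the signed alphabet (including the inversions coming from the sign changes, not just the type-$A$ pairs) is genuinely more work than the $\pm 1$ observation, and your outline glosses over it.
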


\subsection{Type~\texorpdfstring{$D$}{}}\label{ssec:type_D}

\begin{definition}\label{def:type_D_permutations}
	Let~\defn{$\fS_n^D$} be the subgroup of~$\fS_n^B$ of signed permutations having an even number of negative values in their window notation. This group is called  the group of all \defn{even signed permutations}\index{permutation!signed} of~$[n]$.
\end{definition}

As a subgroup of~$\fS_n^B$, we can describe a generating set of~$\fS_n^D$ in the following way.

\begin{proposition}\label{prop:D_generators}
	The group of signed permutations~$\fS_n^D$ is generated by the adjacent transpositions~$S^D=\{s_1,\ldots,s_{n-1},s_n\}$ where~$s_i:=(i\,\, i+1)(\overline{i}\,\, \overline{i+1})$ for~$i\in[n-1]$ and~$s_n:=(n\,\, \overline{n-1})(n-1\,\, \overline{n})$.
\end{proposition}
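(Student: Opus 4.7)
The plan is to proceed in two stages: first verify that every element of $S^D$ actually lies in $\fS_n^D$, and second show that any $w \in \fS_n^D$ can be written as a product of elements of $S^D$ by leveraging the known generating set for $\fS_n^B$ from Proposition~\ref{prop:B_generators}.

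For the first stage, the check is routine. For $i\in[n-1]$, the generator $s_i=(i\,\,i+1)(\overline{i}\,\,\overline{i+1})$ has window notation $[1,2,\ldots,i+1,i,\ldots,n]$ with no negative entries, hence zero (and even) negatives. For $s_n=(n\,\,\overline{n-1})(n-1\,\,\overline{n})$, the window notation is $[1,2,\ldots,n-2,\overline{n},\overline{n-1}]$, which contains exactly two negative entries. So $S^D\subseteq\fS_n^D$, and thus $\langle S^D\rangle\subseteq\fS_n^D$.

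For the reverse inclusion, denote temporarily by $s_n^B:=(\overline{n}\,\,n)$ the last generator of $S^B$ so as to avoid clashing with the $D$-generator $s_n$. By Proposition~\ref{prop:B_generators}, any $w\in\fS_n^D\subset\fS_n^B$ admits an expression $w=g_1g_2\cdots g_k$ with each $g_j\in S^B$. Among the generators of $S^B$, only $s_n^B$ changes the parity of the number of negative entries in the window notation (it toggles the sign of the rightmost letter when applied on the right); the generators $s_1,\ldots,s_{n-1}$ preserve this parity. Since $w$ has an even number of negatives, the number of occurrences of $s_n^B$ in the product must be even. The key algebraic identities are then the following direct conjugation computations:
\begin{equation*}
s_n^B\cdot s_{n-1}\cdot s_n^B = s_n \quad\text{and}\quad s_n^B\cdot s_i\cdot s_n^B=s_i \text{ for } i<n-1.
\end{equation*}
The first holds because conjugation by $s_n^B$ swaps $n\leftrightarrow \overline{n}$ and fixes $n-1,\overline{n-1}$, sending the cycle $(n-1\,\,n)(\overline{n-1}\,\,\overline{n})$ to $(n-1\,\,\overline{n})(\overline{n-1}\,\,n)=s_n$; the second is immediate commutation.

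With these identities in hand, one removes pairs of occurrences of $s_n^B$ inductively. Write $w=w_0\,s_n^B\,w_1\,s_n^B\,w_2\,s_n^B\cdots s_n^B\,w_{2m}$ where each $w_j$ is a word in $\{s_1,\ldots,s_{n-1}\}$ and there are $2m$ copies of $s_n^B$. Processing the innermost pair, $s_n^B\,w_{2m-1}\,s_n^B$ equals the word obtained from $w_{2m-1}$ by replacing every letter $s_{n-1}$ with $s_n$ and leaving the other letters unchanged (and this new word lies in $\langle S^D\rangle$). Repeating this elimination from inside out removes all $s_n^B$'s one pair at a time and exhibits $w$ as a product of elements of $S^D$. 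Hence $\fS_n^D\subseteq\langle S^D\rangle$, completing the proof.

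The only subtle point is the bookkeeping for the iterative elimination; the main obstacle is simply verifying the key identity $s_n^B s_{n-1} s_n^B=s_n$ cleanly, after which the generation follows by a straightforward induction on the number of occurrences of $s_n^B$.
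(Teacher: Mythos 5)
The paper states Proposition~\ref{prop:D_generators} without proof, treating it as standard background material (the chapter points to~\cite{BB06} for the combinatorics of Coxeter groups), so there is no in-text proof to compare against. Your argument is correct and self-contained: the parity computation is right (right-multiplication by $s_n^B=(\overline{n}\,\,n)$ negates the last window entry and hence flips the parity of the number of negatives, while $s_1,\ldots,s_{n-1}$ merely permute window entries), the conjugation identities $s_n^B s_{n-1} s_n^B=s_n$ and $s_n^B s_i s_n^B=s_i$ for $i<n-1$ check out, and the pair-elimination terminates because each step reduces the count of $s_n^B$'s by two. One cosmetic remark: the phrase ``innermost pair'' is a slight misnomer since the occurrences of $s_n^B$ are sequential rather than nested; what you mean is to collapse the rightmost consecutive pair, whose enclosed block is always one of the original $w_j\in\langle s_1,\ldots,s_{n-1}\rangle$, so the identities apply as stated. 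An equivalent, slightly cleaner bookkeeping is to observe that conjugation by $s_n^B$ is an involutive automorphism of $\fS_n^B$ sending the alphabet $\{s_1,\ldots,s_n\}=S^D$ to itself (swapping $s_{n-1}\leftrightarrow s_n$), push every $s_n^B$ to the right one block at a time, and cancel the resulting even string $(s_n^B)^{2m}=e$; this yields $w=w_0\,\phi(w_1)\,w_2\,\phi(w_3)\cdots\phi(w_{2m-1})\,w_{2m}$ with $\phi=\operatorname{conj}_{s_n^B}$, all letters now in $S^D$.
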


\begin{definition}\label{def:D_inversions}
	We say that~$(i,j)\in[n]^2$ is a \defn{$D$-inversion}\index{permutation!inversion} of~$\pi$ if~$i<j$ and~$\pi^{-1}_i>\pi^{-1}_j$ or~$i< j$ and~$\pi^{-1}_{\overline{i}}>\pi^{-1}_j$ and denote by \defn{$\inv_D(\pi)$} the set of~$D$-inversions of~$\pi$. We define the length of~$\pi\in\fS_n^D$ by~$\ell_D(w):=|\inv_D(w)|$.
\end{definition}

\begin{proposition}\label{prop:D_Coxeter}
	$(\fS_n^D,S^D)$ is a Coxeter group of type~$D$.
\end{proposition}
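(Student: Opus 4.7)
The plan is to verify two things: (i) the generators $S^D = \{s_1,\ldots,s_n\}$ satisfy exactly the defining relations of the Coxeter group of type $D_n$, whose diagram has a chain $s_1\!-\!s_2\!-\!\cdots\!-\!s_{n-1}$ with an extra branch $s_{n-2}\!-\!s_n$ (all labels equal to $3$); and (ii) the resulting surjection from the abstract Coxeter group $W(D_n)$ onto the subgroup $\langle S^D\rangle \subseteq \fS_n^D$ is an isomorphism onto $\fS_n^D$.

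First I would verify the Coxeter relations by direct computation on cycle decompositions. Each $s_i$ is a product of two disjoint transpositions, so $s_i^2 = e$. For the braid relations among $s_1,\ldots,s_{n-1}$, note that these generators act as the standard simple transpositions on $[n]$ (and mirror that action on $[-n,-1]$); the subgroup they generate is therefore isomorphic to $\fS_n$ via Proposition~\ref{prop:permutations_as_Coxeter} and Remark~\ref{rem:recovering_A_from_B}, so the type $A_{n-1}$ relations $(s_is_{i+1})^3 = e$ and $(s_is_j)^2=e$ for $|i-j|\geq 2$ transfer verbatim. It remains to handle pairs involving $s_n$. For $i \in [n-3]$, the supports of $s_i$ and $s_n$ are disjoint, so $(s_is_n)^2=e$. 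For $i = n-1$, a short calculation gives $s_{n-1}s_n = (n-1\,\,\overline{n-1})(n\,\,\bar n)$, a product of disjoint transpositions, hence $(s_{n-1}s_n)^2=e$. For $i=n-2$, computing the composition on each element of the support yields
\[
s_{n-2}s_n = (n-2,\ n-1,\ \bar n)\,(\overline{n-2},\ \overline{n-1},\ n),
\]
a product of two disjoint $3$-cycles, whence $(s_{n-2}s_n)^3 = e$.

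Second, I would show that $\langle S^D\rangle = \fS_n^D$. The subgroup generated by $s_1,\ldots,s_{n-1}$ acts as $\fS_n$ on the positive letters and fixes all signs, so it accounts for the sign-preserving part of $\fS_n^D$. To reach elements with negative entries, I would use $s_n$ to produce a canonical family of sign-changing elements: for instance, the conjugate $t_{i,j} := (s_{j-1}\cdots s_{i+1})(s_i\cdots s_{n-1})\,s_n\,(s_{n-1}\cdots s_i)(s_{i+1}\cdots s_{j-1})$ realizes $(i\,\bar{\jmath})(j\,\bar\imath)$ for $i<j$, and products of two such elements exchange exactly two pairs of signs; combined with $\fS_n$, these suffice to reach every even signed permutation. (Concretely, given $w \in \fS_n^D$, one uses the $\fS_n$-action to sort window entries by absolute value, then uses the sign-changing conjugates of $s_n$ in pairs to flip the required even number of signs.)

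Finally, having verified that the defining relations of $W(D_n)$ hold in $\fS_n^D$, the universal property of Coxeter groups gives a surjective homomorphism $\varphi : W(D_n) \twoheadrightarrow \fS_n^D$. Since both groups have order $2^{n-1}n!$ (the left-hand side by the classification of finite Coxeter groups, the right-hand side by Definition~\ref{def:type_D_permutations} as an index-$2$ subgroup of $\fS_n^B$), the map $\varphi$ is an isomorphism, establishing that $(\fS_n^D, S^D)$ is a Coxeter system of type $D_n$. The main technical obstacle is the last step: either one must invoke the known order of $W(D_n)$, or one must verify independently that no extra relations hold in $\fS_n^D$, for instance by checking that reduced-word lengths in $\fS_n^D$ coincide with the $D$-inversion count of Definition~\ref{def:D_inversions}, matching the standard length function on $W(D_n)$.
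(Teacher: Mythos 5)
The paper states Proposition~\ref{prop:D_Coxeter} without proof, as it is a standard background fact about Coxeter groups of type~$D$ (see~\cite{BB06}). Your argument is correct and follows the standard route: verify the $D_n$ relations on the generators, check that $\langle S^D\rangle = \fS_n^D$, and then compare orders to rule out extra relations; the local computations of $s_{n-1}s_n$ and $s_{n-2}s_n$ as products of disjoint cycles are accurate, and the order count $2^{n-1}n!$ on both sides closes the argument.
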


\subsection{Automata}\label{ssec:Coxeter_automata}

As the theory of Coxeter groups is based on words, it is natural that the theory of automata has seen applications on it (see~\cite{BH93} and~\cite{HNW16}). Here we concentrate ourselves on the automaticity of Coxeter groups of~\cite{BH93}.

The automaticity was given in general for all finitely generated Coxeter groups using small roots. Since we do not delve into the geometric aspect of Coxeter groups, we rephrase their results using inversion sets. As a disclaimer, we can only do this since we are working with finite Coxeter groups.

\begin{proposition}\label{Coxeter_automat}
	Let~$(W,S)$ be the Coxeter system of a finite Coxeter group~$W$. The language of reduced words of~$W$ is regular. Moreover, it is recognized by the DFA with states~$Q=\{I(w)\,:\,w\in W\}$ and transitions~$I(w)\to I(ws_i)$ labeled by~$s_i$ if and only if~$s_i\notin D_R(w)$. The starting set is the empty set and all states are final states.
\end{proposition}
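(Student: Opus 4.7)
The plan is to prove this by first verifying the DFA is well-defined, then showing that the two properties "word is reduced" and "word is accepted by the DFA" coincide through a direct length-tracking argument.

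First, I would check that the construction yields a bona fide (deterministic, finite) automaton. The state set $Q$ is finite because $W$ is finite, and the map $w\mapsto I(w)$ is a bijection between $W$ and $Q$: indeed, by Proposition~\ref{prop:weak_order_properties}(5) the right weak order is characterized by inversion set containment, so $I(u)=I(w)$ forces $u\leq w$ and $w\leq u$, hence $u=w$ by antisymmetry. Therefore we may freely identify states with elements of $W$. Determinism is automatic: for each state $I(w)$ and each $s_i\in S$, there is at most one transition labeled $s_i$, namely the one to $I(ws_i)$ (which is defined only when $s_i\notin D_R(w)$).

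Next I would prove that every reduced word is accepted. Let $s_{i_1}\cdots s_{i_k}$ be a reduced expression of some $w\in W$, and set $w_j:=s_{i_1}\cdots s_{i_j}$ with $w_0:=e$. The standard fact that every prefix of a reduced word is itself reduced (which follows from the subadditivity $\ell(w)\leq \ell(w_j)+\ell(s_{i_{j+1}}\cdots s_{i_k})\leq \ell(w_j)+(k-j)$ forcing $\ell(w_j)=j$) gives $\ell(w_{j+1})=\ell(w_j)+1$ for each $j$, which is equivalent to $s_{i_{j+1}}\notin D_R(w_j)$. Hence each required transition exists. Starting from $I(e)=\emptyset$, reading the word leads to the state $I(w)$, which is final since all states are final.

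Finally I would show, by induction on length, that any word $s_{i_1}\cdots s_{i_k}$ accepted by the DFA is reduced. The existence of the transition $I(w_j)\xrightarrow{s_{i_{j+1}}} I(w_{j+1})$ forces $s_{i_{j+1}}\notin D_R(w_j)$, i.e.\ $\ell(w_{j+1})=\ell(w_j)+1$. A straightforward induction then yields $\ell(w_k)=k$, so the word has no redundancy and is a reduced expression of $w_k$. Combining both directions gives $\cL(\AA)=\{\text{reduced words of }W\}$, and regularity follows immediately from Definition~\ref{def:language}.

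The argument is essentially bookkeeping; the only substantive ingredient is the bijection $w\mapsto I(w)$ together with the standard equivalence "$s\notin D_R(w) \iff \ell(ws)=\ell(w)+1$". The mildest obstacle is ensuring one invokes the appropriate conventions for inversion sets (left versus right) consistently, but this is harmless since the same argument works verbatim for the dual (left) weak order.
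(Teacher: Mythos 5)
Your proof is correct, and it is essentially a self-contained, elementary argument. The paper itself does not supply a proof of this proposition: it states it as a rephrasing of the Coxeter-group automaticity results of Brink--Howlett and of Hohlweg--Nadeau--Williams (cited just above the statement), which are formulated in terms of small roots rather than inversion sets, and the paper explicitly notes it can reduce to inversion sets only because the group is finite. So there is no in-paper proof to compare against, only a citation.

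Your route buys something the cited approach does not need but a reader of this thesis will appreciate: in the finite case, one can bypass small roots entirely by observing that $w\mapsto I(w)$ is injective (the inversion-set characterization of the weak order plus antisymmetry), and then the equivalence $s\notin D_R(w)\iff \ell(ws)=\ell(w)+1$ does all the work in both directions. The only point worth tightening is the left/right convention you flag at the end: since the transitions read $I(w)\to I(ws_i)$ with right multiplication and the descent condition is on $D_R(w)$, the inversion sets $I(w)$ should be the \emph{left} inversion sets $I_L(w)$, for which Proposition~\ref{prop:weak_order_properties}(5) gives $u\leq_R w\iff I_L(u)\subseteq I_L(w)$ and each transition adds exactly one left inversion. (The right inversion sets would also give an injection, via $I_R(w)=I_L(w^{-1})$, but then the single-inversion increment along right-multiplication transitions is less transparent, so stating $I=I_L$ explicitly removes the only ambiguity in your write-up.) With that clarification the argument is complete.
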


\begin{example}\label{ex:Coxeter_language_automata}
	The language of reduced words of~$(\fS_3,\{s_1,s_2\})$ is recognized by the automaton in Figure~\ref{fig:Coxeter_language_automata}.
	\begin{figure}[h!]
		\centering
		\includegraphics[scale=1]{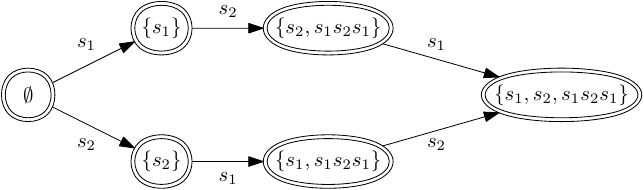}
		\caption{A DFA recognizing the language of reduced words of~$(\fS_3,\{s_1,s_2\})$.}\label{fig:Coxeter_language_automata}
	\end{figure}
\end{example}

\subsection{Coxeter Sorting}\label{ssec:Coxeter_sorting}

In this section we present the theory of~$c$-sorting following~\cite{R07b}. As a starting point we define Cambrian congruences for any Coxeter group.

\begin{definition}\label{def:Coxeter_Cambrian_congruence}
	Let~$W$ be a Coxeter group and~$\overrightarrow{G}$ be a complete orientation of the Coxeter diagram~$G$. The \defn{Cambrian congruence}\index{Cambrian congruence} associated to~$\overrightarrow{G}$ is the smallest lattice congruence that identifies the elements of the interval~$[t,tsts\cdots]$ ($m_{s,t}-1$ letters) for all~$s\to t$.
\end{definition}

Figure~\ref{fig:Coxeter_sorting_orientation} shows an example of a Coxeter graph and the corresponding Cambrian congruence.

\begin{figure}[h!]
	\centering
	\includegraphics[scale=1.5]{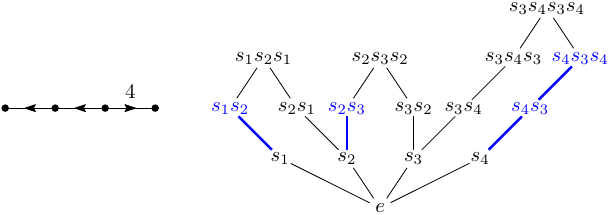}
	\caption[A complete orientation of the Coxeter graph of~$B_4$ and the lattice congruence it specifies.]{ A complete orientation of the Coxeter graph of~$B_4$ and the lattice congruence it specifies. The minimal elements of the lattice congruence are in black and the congruence classes are shown in bolded blue. Notice that we do not draw the squares given by the commuting relations between~$\{s_1,s_3\},\,\{s_1,s_4\}$, or~$\{s_2,s_4\}$ as any congruence relation of rank~$2$ in these cases would force a congruence relation of rank~$1$ following Remark~\ref{rem:lattice_quotients_via_polygons}.}\label{fig:Coxeter_sorting_orientation}
\end{figure}

\begin{remark}\label{rem:cambrian_type_A_are_permutrees}
	Notice that in type~$A$ the Cambrian congruences correspond to the~$\delta$-permutree congruences~$\equiv_\delta$ where~$\delta\in\{\downn,\upp\}^n$ as in Proposition~\ref{prop:permutree_quotients}.
\end{remark}

\begin{definition}\label{def:Coxeter_element}
	Let~$(W,S)$ be a Coxeter system. A \defn{Coxeter element}\index{Coxeter!element} is a word of length~$n=|S|$ where each generator appears exactly once.
\end{definition}

\begin{remark}\label{rem:coxeter_elements_cambrian_congruences}
	Notice that Coxeter elements codify complete orientations of the Coxeter graph and thus Cambrian congruences. Indeed, given a Coxeter element~$c=s_1\cdots s_n$ we orient~$j\to j-1$ (resp.~$j-1\to j$) if~$j$ appears before (resp.\ after)~$j-1$ in~$c$.
\end{remark}

\begin{definition}\label{def:c-sorting}
	Given a Coxeter element~$c$ together with a fixed reduced word~$c=s_1\cdots s_n$, its \defn{infinite word} is~$c^\infty=s_1\cdots s_n|s_1\cdots s_n|s_1\cdots s_n|\cdots$ where ``$|$'' are dividers that mark the separation of each instance of~$c$.

	The \defn{$c$-sorting word} of an element~$w\in W$ is the lexicographically first subword of~$c^\infty$ that is a reduced word of~$w$. We denote it as \defn{$w(c)$}. The~$c$-sorting word~$w(c)$ can be divided into finite factors~$L_1,L_2,\ldots,L_n$ where each factor~$L_i$ is a set of letters of~$w(c)$ appearing between the dividers~$i$ and~$i+1$. We say that an element~$w\in W$ is \defn{$c$-sortable} or \defn{Coxeter-sortable}\index{Coxeter!sortable} if the factors satisfy~$L_1\supseteq L_2\supseteq\ldots\supseteq L_n$.
\end{definition}

\begin{example}\label{ex:Coxeter_sorting}
	Let~$\pi:=3421$ and take the Coxeter element~$c:=s_2\cdot s_1\cdot s_3$. Consider the infinite word~$c^\infty={\color{brown}c}\cdot {\color{blue}c}\cdot {\color{red}c}\cdots={\color{brown}s_2\cdot s_1\cdot s_3}\cdot {\color{blue} s_2\cdot s_1\cdot s_3}\cdot{\color{red} s_2\cdot s_1\cdot s_3}\cdots.$ Then the~$c$-sorting word of~$\pi$ is~$\pi(c)={\color{brown}s_2\cdot s_1\cdot s_3}\cdot{\color{blue} s_2\cdot s_3}$. Since~${\color{brown}L_1}\supset{\color{blue} L_2}\supset{\color{red} L_3}=\emptyset$,~$\pi$ is~$c$-sortable.
\end{example}

\begin{remark}\label{rem:c_infinite_does_not_have_to_be_infinite}
	Notice that~$c^\infty$ does not actually need to be infinite. Indeed, the choosing of a Coxeter element implies the choosing of a reduced word of the largest element~$w_0$.
\end{remark}

\begin{remark}\label{rem:Coxeter_sortability_only_dependent_on_c}
	Although the~$c$-sorting word depends heavily on the choosing of the reduced word of~$c$, the property of being~$c$-sortable does not. Thus, we refer to~$c$-sortable elements without specifying the reduced word of~$c$.
\end{remark}

Coxeter sorting is of interest to us as it recovers the classical notion of stack-sorting~\cite{K73} via another lens.

\begin{proposition}\label{prop:stack_sortable_permutations}
	For the Coxeter word~$c=s_1\cdots s_{n-1}$, the~$c$-sortable permutations are precisely the stack-sortable permutations of~\cite{K73}.
\end{proposition}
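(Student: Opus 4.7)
The plan is to identify the Coxeter element $c = s_1 s_2 \cdots s_{n-1}$ in type $A$ with the Cambrian congruence that recovers the sylvester congruence, and then invoke the general characterization of $c$-sortable elements as the minima of their associated Cambrian congruence classes.

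First, I would translate the data of $c$ into a permutree decoration. By Remark~\ref{rem:coxeter_elements_cambrian_congruences}, each choice of Coxeter element encodes a complete orientation of the Coxeter graph, which in type $A$ corresponds by Remark~\ref{rem:cambrian_type_A_are_permutrees} to a Cambrian decoration $\delta \in \{\downn,\upp\}^n$. For $c = s_1 s_2 \cdots s_{n-1}$, the generator $s_{j-1}$ appears to the left of $s_j$ for every $j \in \{2,\ldots,n-1\}$, so the induced orientation is the uniform left-to-right orientation of the Coxeter graph. This corresponds to the decoration $\delta = \nonee \downn \downn \cdots \downn \nonee$ (taking $\delta_1 = \delta_n = \nonee$ as in Remark~\ref{rem:permutree_leftmost_rightmost_labels}), i.e.\ the all-$\downn$ decoration.

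Second, I would use Proposition~\ref{prop:permutree_quotients} together with Proposition~\ref{prop:sylvester_congruence_characterization} to conclude that the Cambrian congruence associated to $c$ coincides with the sylvester congruence~$\equiv_{sylv}$: its classes are the linear extensions of binary trees, equivalently the fibers of the Stack-sorting algorithm of~\cite{K73}, and their minima are exactly the $312$-avoiding permutations, which are precisely the stack-sortable permutations.

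Finally, I would invoke Reading's theorem from~\cite{R07b} (stated in the introduction) asserting that for any finite Coxeter group $W$ and Coxeter element $c$, the $c$-sortable elements of $W$ are exactly the minima of the Cambrian congruence classes attached to $c$. Combining this with the identification above yields that the $c$-sortable permutations for $c = s_1 s_2 \cdots s_{n-1}$ are precisely the stack-sortable permutations.

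The only subtle point, and hence the main thing to watch carefully, is the bookkeeping between the position of a generator in $c$, the induced orientation of the edges of the Coxeter diagram, and the decoration of the corresponding vertex of a permutree. All three conventions have been fixed earlier in this chapter (Definition~\ref{def:permutrees}, Proposition~\ref{prop:permutree_quotients}, Remark~\ref{rem:cambrian_type_A_are_permutrees}, Remark~\ref{rem:coxeter_elements_cambrian_congruences}), so the verification of the first step is essentially a matter of matching these conventions; once this is done, the proposition follows directly from the previously stated results with no further combinatorial work.
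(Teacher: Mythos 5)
The paper states this proposition in its preliminaries without giving a proof (it is a known fact attributed implicitly to~\cite{R07a},~\cite{R07b}, and~\cite{K73}), so the only comparison available is whether your chain of reasoning correctly reconstructs the derivation the paper relies on. It does. Your three steps—translating $c=s_1\cdots s_{n-1}$ into the orientation $j-1\to j$ for all $j$ (hence $\delta=\nonee\downn\cdots\downn\nonee$ by Remark~\ref{rem:U_D_as_orientations}), identifying the resulting Cambrian congruence with the sylvester congruence via Proposition~\ref{prop:permutree_quotients} and Definition~\ref{def:cong_sylvster}, and then invoking Proposition~\ref{prop:Coxeter_sorting_lattice_congruence} to identify $c$-sortable elements with class minima—are exactly the chain the surrounding material sets up, and you correctly flag the orientation-to-decoration bookkeeping as the only point requiring care.

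One remark worth keeping in mind, though it is a feature of the paper's conventions rather than a gap in your argument: because the paper uses the ``coinversion'' definition of $\inv(\pi)$ (Remark~\ref{rem:inversions_are_coinversions}), the minima of the sylvester classes are the $312$-avoiders, and this is the set the paper identifies with ``the stack-sortable permutations of~\cite{K73}.'' Under Knuth's original left-to-right stack-sort, the sortable permutations are the $231$-avoiders; the two sets are related by inversion $\pi\mapsto\pi^{-1}$, which is exactly the dictionary the paper's inversion convention introduces. Your proof is internally consistent with the paper's conventions, but if you ever compare directly against Knuth's or Reading's original statements, be aware of this transposition—it is the sort of sign-level issue that silently flips $231$ to $312$ and is easy to miss.
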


The following are some simple lemmas showing the relationship between~$c$-sorting and the weak order.

\begin{lemma}[{\cite[Lems.2.1 \& 2.2]{R07b}}]\label{lem:coxeterElementFacts}
	Consider a Coxeter element of the form~$c = s_\ell \cdot d$ and let~$w\in W$. Then
	\begin{itemize}
		\itemsep0em
		\item if~$w = s_\ell \cdot u$ with~$\ell(w) = \ell(u) + 1$, then~$w(c) = s_\ell \cdot u(d \cdot s_\ell)$,
		\item otherwise,~$w(c) = w(d \cdot s_\ell)$.
	\end{itemize}
\end{lemma}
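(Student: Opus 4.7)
The plan is to analyze the definition of the $c$-sorting word directly, exploiting the observation that $c^\infty = s_\ell \cdot d \cdot s_\ell \cdot d \cdots$ is just the letter $s_\ell$ prepended to the infinite word $(d\cdot s_\ell)^\infty = d \cdot s_\ell \cdot d \cdot s_\ell \cdots$ (the dividers ``$|$'' are irrelevant for the sorting-word definition, only for sortability). Thus every subword of $c^\infty$ either does or does not include the leftmost $s_\ell$, and in the latter case it is precisely a subword of the tail $(d \cdot s_\ell)^\infty$. The proof therefore reduces to controlling whether the leftmost letter of $c^\infty$ is used by the sorting procedure, which, by the lexicographic-minimality clause in the definition of $w(c)$, is governed by whether some reduced word of $w$ begins with $s_\ell$.

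First I would handle Case~1, where $w = s_\ell \cdot u$ with $\ell(w) = \ell(u)+1$. By Proposition~\ref{Coxeter_starting}, $w$ admits a reduced expression starting with $s_\ell$, and concatenating this initial $s_\ell$ with any realization of a reduced word of $u$ as a subword of $(d \cdot s_\ell)^\infty$ yields a subword of $c^\infty$ equal to a reduced word of $w$ that occupies position~$1$. Any subword of $c^\infty$ that omits position~$1$ is lexicographically later, so the lex-first realizing subword must include this initial $s_\ell$. Stripping it off reduces the task to finding the lex-first subword of $(d\cdot s_\ell)^\infty$ that is a reduced word of $u$, which by definition is $u(d\cdot s_\ell)$. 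Hence $w(c) = s_\ell \cdot u(d\cdot s_\ell)$.

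For Case~2, I would first argue that ``otherwise'' means $s_\ell \notin D_L(w)$: every element factors as $w = s_\ell \cdot (s_\ell w)$, and the length additivity $\ell(w) = \ell(s_\ell w) + 1$ holds exactly when $s_\ell \in D_L(w)$, so negating the hypothesis of Case~1 is equivalent to $s_\ell \notin D_L(w)$. By the converse direction of Proposition~\ref{Coxeter_starting}, no reduced word of $w$ starts with $s_\ell$, so no subword of $c^\infty$ equal to a reduced word of $w$ can occupy position~$1$. Consequently the lex-first such subword lies entirely in the tail $(d\cdot s_\ell)^\infty$ and coincides, by definition, with $w(d\cdot s_\ell)$.

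The main obstacle will be making the ``lex-first'' comparisons rigorous: one must reason about the positions used in $c^\infty$ and not merely the multisets of letters, and in Case~1 one needs to guarantee that the tail $(d\cdot s_\ell)^\infty$ actually contains a subword realization of $u$ so that the candidate $s_\ell \cdot (\text{realization of }u)$ exists. This is ensured because $d\cdot s_\ell$ is itself a Coxeter element of $W$, so every element of $W$ (in particular $u$) has a $(d\cdot s_\ell)$-sorting word, which provides the required realization in the tail.
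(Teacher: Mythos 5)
The paper does not prove this lemma itself; it is stated with a citation to Reading's original work, so there is no in-paper proof to compare against. Your argument is correct and is essentially the standard one from the cited source: you use the factorization $c^\infty = s_\ell\cdot(d\cdot s_\ell)^\infty$, identify the dichotomy ``Case~1 / otherwise'' with $s_\ell\in D_L(w)$ versus $s_\ell\notin D_L(w)$ (via Proposition~\ref{Coxeter_starting}), show that in Case~1 a realization using position~$1$ exists (since $d\cdot s_\ell$ is again a Coxeter element, guaranteeing a realization of $u$ in the tail) and hence must be used by lexicographic minimality, and observe that in Case~2 no reduced word of $w$ can consume position~$1$, so the lex-first realization lies entirely in the tail. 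Noting that $d\cdot s_\ell = s_\ell c s_\ell$ is indeed a Coxeter element is the one small auxiliary fact you rightly flag; with it, both cases close cleanly.
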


\begin{lemma}\label{lem:Coxeter_sorting_word_properties}
	Fix a reduced expression of a Coxeter element~$c$. Consider a~$c$-sortable element~$w\in W$ and~$s_i,s_j\in S$ two distinct generators appearing in the~$c$-sorting word~$w(c)$.
	\begin{enumerate}
		\itemsep0em
		\item if~$s_i$ appears before~$s_j$ in~$c$, then~$s_i$ appears before~$s_j$ in~$w(c)$,
		\item if~$s_j$ does not appear in~$w(c)$ between two occurrences of~$s_i$, then it does not appear afterwards.
	\end{enumerate}
\end{lemma}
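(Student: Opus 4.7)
The plan is to work directly from the definition of the $c$-sorting word as a concatenation of factors $L_1, L_2, \ldots, L_n$, each of which is a subword of the fixed reduced word of $c$, together with the $c$-sortability condition $L_1 \supseteq L_2 \supseteq \cdots \supseteq L_n$. The key elementary observation I would isolate first is that each factor $L_k$, being a subword of $c$, preserves the relative order of letters inherited from $c$; this is the ``local'' order we exploit in both parts.

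For (1), I would argue that if both $s_i$ and $s_j$ appear somewhere in $w(c)$, then in particular they both appear in $L_1$: indeed, any generator present in some $L_k$ must, by the containment $L_1 \supseteq L_k$, also lie in $L_1$. Since $L_1$ is a subword of the chosen reduced expression of $c$ and $s_i$ precedes $s_j$ in $c$, it follows that the first occurrence of $s_i$ in $w(c)$ (which is in $L_1$) comes strictly before the first occurrence of $s_j$ in $w(c)$ (which is also in $L_1$). This is just a direct application of the subword-preserves-order observation; no real obstacle.

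For (2), I would first reduce to the case of two \emph{consecutive} occurrences of $s_i$ in $w(c)$: if the given pair of occurrences of $s_i$ is not consecutive, then a fortiori $s_j$ is absent between each consecutive sub-pair within the range, and the conclusion for any such sub-pair implies $s_j$ does not appear after the later of the two original occurrences. By sortability (if $s_i \in L_k$ then $s_i \in L_{k'}$ for all $k' \le k$, and each $L_k$ contains $s_i$ at most once since $L_k$ is a subword of $c$), two consecutive occurrences of $s_i$ lie in two consecutive factors $L_a$ and $L_{a+1}$. The region of $w(c)$ strictly between them is exactly the suffix of $L_a$ after $s_i$ followed by the prefix of $L_{a+1}$ before $s_i$. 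I would then split into two cases according to whether $s_j$ precedes or follows $s_i$ in $c$: in the first case the hypothesis forces $s_j \notin L_{a+1}$, and in the second case it forces $s_j \notin L_a$; either way, sortability propagates this absence to every later factor, giving the desired conclusion that $s_j$ does not occur after the second occurrence of $s_i$.

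The whole argument is essentially bookkeeping, and the only mild subtlety — the main place to be careful — is the case analysis in (2) and the initial reduction to consecutive occurrences; in particular one must handle the second case correctly, where $s_j$ is absent from $L_a$ rather than from $L_{a+1}$, and check that sortability still rules out any occurrence of $s_j$ strictly past the position of $s_i$ in $L_{a+1}$.
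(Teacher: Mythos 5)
Your proposal is correct and follows essentially the same route as the paper: both rest on the three observations that each factor $L_k$ is a subword of $c$ (so preserves $c$'s letter order), contains each generator at most once, and that sortability gives $L_1 \supseteq L_2 \supseteq \cdots$. You simply spell out the paper's terse contrapositive argument for (2) more explicitly, via the reduction to consecutive occurrences of $s_i$ lying in consecutive factors $L_a, L_{a+1}$ and the clean two-case split on whether $s_j$ precedes or follows $s_i$ in $c$.
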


\begin{proof} We prove both statements separately.
	\begin{enumerate}
		\itemsep0em
		\item As both~$s_i$ and~$s_j$ appear in~$w(c)$, it is immediate from the construction of~$w(c)$ from~$c$.
		\item Notice that each generator can appear at most once in each factor~$L$. Since~$w$ is~$c$-sortable, the factors are weakly decreasing via inclusion. Thus, if~$s_j$ appeared after two occurrences of~$s_i$, it would have to appear between them.
		      \qedhere
	\end{enumerate}
\end{proof}

Coxeter-sorting has a close relationship with lattice congruences of the weak order.

\begin{proposition}\label{prop:Coxeter_sorting_lattice_congruence}
	Let~$c$ be a Coxeter element of a Coxeter group~$W$.
	The~$c$-sortable elements are precisely the bottom elements of the congruence classes of the~$c$-Cambrian congruence~$\equiv_c$ associated to the complete orientation of~$G$ where~$s\to t$ if~$s$ appears before~$t$ in~$c$.
\end{proposition}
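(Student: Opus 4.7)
I would proceed by double induction on the rank $|S|$ of the Coxeter system and on the length $\ell(w)$, peeling off the first letter of the chosen reduced expression of $c$. Write $c = s \cdot d$, where $s = s_\ell$ is the leftmost letter. The strategy is to show that both ``$w$ is $c$-sortable'' and ``$w$ is the minimum of its $\equiv_c$-class'' satisfy the same recursive decomposition along this peeling, so that they coincide by induction.

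If $s \notin D_L(w)$, then no reduced word of $w$ starts with $s$, so by the dual of Proposition~\ref{prop:Coxeter_parabolic_quotient} the element $w$ lies in the parabolic subgroup $W_{\langle s\rangle}$. Lemma~\ref{lem:coxeterElementFacts} then gives $w(c) = w(d\cdot s) = w(d)$, and $d$ restricts to a Coxeter element of $W_{\langle s\rangle}$. One checks that the restriction of $\equiv_c$ to $W_{\langle s\rangle}$ is exactly the $d$-Cambrian congruence (no defining relation of $\equiv_c$ involving $s$ can fire inside $W_{\langle s\rangle}$), so the rank-induction closes this case directly. If instead $s \in D_L(w)$, write $w = s \cdot u$ with $\ell(u) = \ell(w)-1$; Lemma~\ref{lem:coxeterElementFacts} gives $w(c) = s \cdot u(d\cdot s)$, so that $w$ is $c$-sortable if and only if $u$ is $(d \cdot s)$-sortable (the factor structure $L_1 \supseteq L_2 \supseteq \cdots$ for $w(c)$ is obtained by prepending $s$ to $L_1$ of $u(d\cdot s)$). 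The length-induction then applies to $u$ for the new Coxeter element $d\cdot s$.

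The crux of the argument is the compatibility of the Cambrian congruence with this decomposition: I would show that left multiplication by $s$ is an isomorphism of the right weak order from $\{v \in W : sv > v\}$ onto $\{v \in W : sv < v\}$, and that $u \equiv_{d\cdot s} u'$ if and only if $s\cdot u \equiv_c s\cdot u'$. This is a direct examination of the generating relations from Definition~\ref{def:Coxeter_Cambrian_congruence} under the cyclic rotation $c \mapsto d \cdot s$, combined with the forcing mechanism of Remark~\ref{rem:lattice_quotients_via_polygons}. Crucially, the rotation reverses the orientation of exactly the edges of $G$ incident to $s$, and one must track that the identifications on the $(d\cdot s)$-side translate, after prepending $s$, to precisely the identifications defining $\equiv_c$ on $\{v : sv < v\}$.

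The hardest step will be this bookkeeping of orientation changes and braid-interval identifications: verifying that the congruence transport under the peeling of $s$ is exact on generators, and hence on all relations, is where the genuine content lies. Once this is established, both directions of the proposition follow at once. The minimality statement is immediate from the inductive equivalence, and the existence of a $c$-sortable representative in each congruence class is obtained constructively by iterating the peeling: starting from any $v \in W$, one either peels $s$ (if $s \in D_L(v)$ and produces a smaller equivalent element by the transport) or reduces to a parabolic subgroup, in finitely many steps arriving at a $c$-sortable element below $v$ in its class.
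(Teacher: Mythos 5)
The paper does not actually prove Proposition~\ref{prop:Coxeter_sorting_lattice_congruence}: it is imported from Reading's work~\cite{R07b}, and Subsection~\ref{ssec:Coxeter_sorting} opens by announcing it is a survey of that theory. So there is no paper proof for you to match against; you are supplying an argument of your own.

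That said, your skeleton is exactly the one Reading uses: simultaneous induction on rank and length, peeling $c = s \cdot d$, reducing to $W_{\langle s\rangle}$ when $s\notin D_L(w)$, and rotating to $d\cdot s$ when $s\in D_L(w)$, with Lemma~\ref{lem:coxeterElementFacts} driving the $c$-sorting side. The structural outline is correct and you have correctly identified where the real content sits.

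The gap is that the two compatibility claims you call the crux are stated as if they were routine, and they are not. First, the assertion that $\equiv_c$ restricted to $W_{\langle s\rangle}$ is the $d$-Cambrian congruence of that parabolic is not obtainable merely by observing that ``no defining relation involving $s$ can fire inside $W_{\langle s\rangle}$.'' A Cambrian congruence is the congruence \emph{generated} by those intervals, and the generation proceeds by the forcing mechanism of Remark~\ref{rem:lattice_quotients_via_polygons}; forcing propagates globally through polygons of the whole weak order, and one must prove that the contractions forced inside $W_{\langle s\rangle}$ coincide with those forced intrinsically by the smaller congruence. This is a genuine theorem about how lattice congruences of $W$ restrict to parabolic intervals, not a definitional remark. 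Second, the transport claim $u\equiv_{d\cdot s} u' \iff s\cdot u \equiv_c s\cdot u'$ (for $u,u'$ with $su>u$, $su'>u'$) is likewise nontrivial: the generating intervals for $\equiv_{d\cdot s}$ live in $\{v : sv>v\}$ and those for $\equiv_c$ partly do not, and while $v\mapsto sv$ is a weak-order isomorphism between the two halves, it does not a priori carry the forcing closure of one generating set onto the forcing closure of the other. Making this exact requires tracking which edges of the weak order each generating contraction forces, which in Reading's development needs the machinery of join-irreducible congruences, shards, or polygonal forcing worked out explicitly for the specific rank-two intervals $[t, tsts\cdots]$. Without that, your induction has a ``one checks'' at precisely the step that carries all the weight, so as written the argument does not close.
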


\begin{proposition}\label{prop:Coxeter_sorting_congruence_inverse}
	The map~$w\to ww_0$ sends the lattice congruence~$\equiv_c$ to the lattice congruence~$\equiv_{c^{-1}}$.
\end{proposition}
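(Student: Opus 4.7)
The plan is to realize $w \mapsto ww_0$ as a lattice antiautomorphism of the weak order and to verify it intertwines $\equiv_c$ with $\equiv_{c^{-1}}$.

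By Proposition~\ref{prop:Coxeter_automorphisms}, the map $\phi(w) := ww_0$ is an order-reversing involution of the right weak order $(W, \leq)$, hence a lattice antiautomorphism swapping joins and meets. Any such antiautomorphism induces an involution on lattice congruences: the relation $\phi(\equiv)$ defined by $\phi(u) \,\phi(\equiv)\, \phi(v)$ iff $u \equiv v$ is again a lattice congruence, since compatibility with joins transfers to compatibility with meets under $\phi$ and vice versa. Thus $\phi(\equiv_c)$ is a lattice congruence on $W$, and it remains to show $\phi(\equiv_c) = \equiv_{c^{-1}}$.

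I would compare them via their minimal representatives. By Proposition~\ref{prop:Coxeter_sorting_lattice_congruence}, the minima of $\equiv_{c^{-1}}$-classes are exactly the $c^{-1}$-sortable elements. Since $\phi$ reverses the order, the minima of $\phi(\equiv_c)$-classes are the $\phi$-images of the maxima of $\equiv_c$-classes. The statement therefore reduces to the equivalence
\[ w \text{ is maximal in its } \equiv_c\text{-class} \iff ww_0 \text{ is } c^{-1}\text{-sortable}. \]
This can be approached either by a direct comparison of the factorizations of $w$ and $ww_0$ with respect to the infinite words $c^\infty$ and $(c^{-1})^\infty$, or, more structurally, by verifying at the level of generators that $\phi$ sends the polygonal generating relations $[t, tsts\cdots]$ (with $m_{s,t}-1$ letters) of $\equiv_c$ to edges whose congruence closure in $W$ matches the generating relations $[s, stst\cdots]$ of $\equiv_{c^{-1}}$.

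The principal obstacle is this last verification: explicitly matching the $\phi$-images of the generators of $\equiv_c$ with the generators of $\equiv_{c^{-1}}$ after forcing. This reduces to a computation inside the dihedral parabolic subgroups $W_{\{s,t\}}$, controlled by the parabolic decomposition $w_0 = w_0^{\{s,t\}} \cdot w_{\{s,t\}}$; the delicate point is to ensure that the forcing of congruences in $(W, \leq)$ as described in Remark~\ref{rem:lattice_quotients_via_polygons} is compatible with the dihedral polygon structure localized at each coset of $W_{\{s,t\}}$ in $W$. Once this local-to-global compatibility is established, the swap between the two sides of each dihedral polygon effected by $\phi$ translates exactly into the reversal of all arrows in the Coxeter orientation, yielding the desired identification $\phi(\equiv_c) = \equiv_{c^{-1}}$.
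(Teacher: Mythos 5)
The paper does not actually prove this proposition; it is stated as a background fact coming from Reading's theory of Cambrian lattices, so there is no paper proof to compare against. On its own merits, your outline identifies the right strategy — lift the antiautomorphism $\phi(w)=ww_0$ to an involution on the congruence lattice of $(W,\leq)$ and then match $\phi(\equiv_c)$ with $\equiv_{c^{-1}}$ — and your reduction to the equivalence ``$w$ is maximal in its $\equiv_c$-class $\iff ww_0$ is $c^{-1}$-sortable'' is sound once one uses the standard fact that a finite lattice congruence is determined by its set of class minima.

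However, you do not actually discharge that equivalence, and the ``principal obstacle'' you name is in fact the entire content of the proposition; what precedes it is bookkeeping. Concretely, the generator $[t,tst\cdots]$ of $\equiv_c$ (when $s\to t$) is a chain in the bottom polygon $[e,w_0^{\{s,t\}}]$, but its $\phi$-image $[(tst\cdots)w_0,\,tw_0]$ lives in the top polygon $[w_0^{\{s,t\}}w_0,\,w_0]$, which is not a parabolic subgroup at all, so one cannot simply ``compute inside $W_{\{s,t\}}$'' — the coset $W_{\{s,t\}}w_0$ carries the polygon, and $\phi$ moves you from the bottom polygon to the top one. Showing that contracting the relevant side chain of this top polygon forces, and is forced by, contraction of the chain $[s,sts\cdots]$ of the bottom polygon is a genuine forcing argument (this is where Reading's theory of shards or a direct comparison of $c$-sorting words enters). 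Your proposal asserts that ``once this local-to-global compatibility is established, the swap \ldots translates exactly into the reversal of all arrows,'' but that compatibility is precisely what would need to be proved, and as written neither of your two suggested routes is executed. The proposal is a correct roadmap, but it stops short of a proof at the step that actually carries the weight.
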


\begin{proposition}[{\cite[Thm.9.1]{R07a}}]\label{prop:Coxeter_sorting_catalan_number}
	The amount of $c$-sortable elements of a Coxeter group $W$ of rank $n$ is the $W$-Catalan number.
\end{proposition}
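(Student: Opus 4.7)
The plan is to prove the enumeration by double induction on the rank $n$ of $W$ and the length $\ell(w)$ of sortable elements, using the recursive behaviour of $c$-sorting words described in Lemma~\ref{lem:coxeterElementFacts}. Concretely, I would fix a reduced expression $c = s \cdot d$ where $s$ is initial in $c$, and partition the set $\mathrm{Sort}(W,c)$ of $c$-sortable elements of $W$ into two families: those $w$ with $s \notin D_L(w)$, and those $w = s \cdot u$ with $\ell(w) = \ell(u)+1$.

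For the first family, note that if $s$ never appears in a reduced word of $w$, then by Proposition~\ref{prop:Coxeter_parabolic_quotient} (applied on the left) the element $w$ lies in the parabolic subgroup $W_{\langle s\rangle}$ generated by $S \setminus \{s\}$. By Lemma~\ref{lem:coxeterElementFacts}(2), the $c$-sorting word of such $w$ equals $w(d \cdot s)$, and since $s$ does not appear, it is entirely a word in $S \setminus \{s\}$; moreover the $c$-sortability condition reduces verbatim to $c'$-sortability in $W_{\langle s\rangle}$ with respect to the Coxeter element $c'$ of $W_{\langle s\rangle}$ obtained by deleting $s$ from $c$. Hence this family has cardinality $|\mathrm{Sort}(W_{\langle s\rangle}, c')|$, which by induction on rank equals the $W_{\langle s\rangle}$-Catalan number.

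For the second family, Lemma~\ref{lem:coxeterElementFacts}(1) yields $w(c) = s \cdot u(d \cdot s)$, so stripping the leading $s$ and comparing the decreasing-factor condition gives that $w$ is $c$-sortable if and only if $u$ is $(d \cdot s)$-sortable in $W$. Since $\ell(u) < \ell(w)$, induction on length would give an expression for this count. The key step here is that this recursion, combined with the first step, yields the classical Reading recursion
\[
|\mathrm{Sort}(W,c)| \;=\; |\mathrm{Sort}(W_{\langle s\rangle}, c')| \;+\; |\mathrm{Sort}(W, d \cdot s)|
\]
where the second term must then be compared to $\mathrm{Sort}(W,c)$ in a smaller instance. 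To close the induction I would set up a bijection (or double-counting) showing that $|\mathrm{Sort}(W, d \cdot s)|$ differs from $|\mathrm{Sort}(W,c)|$ precisely by the sortable elements that are bounded above by a specific reflection, so that the recursion becomes one satisfied by $W$-Catalan numbers.

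The main obstacle is the final step: establishing that this recursion characterizes $W$-Catalan numbers uniformly across all finite types. Verifying this directly from the product formula $\mathrm{Cat}(W) = \prod_{i=1}^n (h+d_i)/d_i$ is uniform-type-free but requires comparing the recursion to a known identity for the degrees $d_i$ and Coxeter number $h$; the alternative is a case-by-case verification over the classification of Figure~\ref{fig:CoxeterGroups}, or a bijective route going through $W$-noncrossing partitions or clusters in the generalized associahedron via the Reading--Speyer bijection. I would favor the bijective route whenever available, since it reduces the problem to an enumeration already known by independent means, but spelling out that bijection explicitly is itself nontrivial and would require substantial additional machinery beyond the preliminaries assembled here.
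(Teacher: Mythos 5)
The paper does not prove this proposition; it simply states it as a citation to Reading's Theorem~9.1 in \cite{R07a}, as part of the background on Coxeter sorting in Chapter~2. So there is no ``paper proof'' to compare against, and you are supplying an argument the thesis leaves to the reference.

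Your recursion contains a genuine gap that you partly notice but do not resolve, and the formula as written is also not quite right. The partition of $\mathrm{Sort}(W,c)$ into the two families is fine, and your use of Lemma~\ref{lem:coxeterElementFacts} correctly shows that the first family is $\mathrm{Sort}(W_{\langle s\rangle}, c')$ and that the map $w\mapsto sw$ sends the second family injectively into the $(d\cdot s)$-sortable elements of $W$. But the image is precisely $\{u\in\mathrm{Sort}(W, d\cdot s) : s\notin D_L(u)\}$, not all of $\mathrm{Sort}(W, d\cdot s)$, so the displayed recursion overcounts. More seriously, $d\cdot s = scs$ is simply another Coxeter element of the \emph{same} group $W$, so the term $|\mathrm{Sort}(W, d\cdot s)|$ is not a ``smaller instance'' in rank; once one knows (or wants to conclude) that the count is independent of the choice of Coxeter element, the recursion becomes $N(W) = N(W_{\langle s\rangle}) + N(W)$, which is vacuous. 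The induction on $\ell(w)$ is what really makes the recursion well-founded --- Reading uses exactly this double induction (on rank plus length) as a \emph{technical} tool to prove statements about sortable elements, e.g.\ properties like Theorem~\ref{thm:coxter_sorting_permutrees} of this thesis --- but it does not by itself produce the $W$-Catalan number, because the length induction is not a closed enumeration.

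The genuine content of Reading's Theorem~9.1 is precisely the step you flag as the ``main obstacle'': exhibiting a bijection from $c$-sortable elements to $c$-clusters (vertices of the Fomin--Zelevinsky generalized associahedron) or, via the map $\mathrm{nc}_c$, to $W$-noncrossing partitions, both of which are independently counted by $\prod_{i=1}^n (h+d_i)/d_i$. Your proposal correctly identifies this as the right route, but then stops short of constructing it; as it stands the argument reduces the enumeration to an unproved identity. To be complete you would need either to build the Reading bijection explicitly (which requires substantially more machinery on root systems, inversion sets and the $c$-cluster complex than the preliminaries here contain), or to carry out the type-by-type verification over the classification in Figure~\ref{fig:CoxeterGroups}. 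Since the thesis itself treats this as a cited black box, the honest assessment is that you have correctly set up Reading's inductive framework but have not proved the proposition.
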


The $W$-Catalan numbers are presented in Table~\ref{tab:W-W_catalan}.

\begin{table}[h!]
	\centering
	\includegraphics[scale=1.5]{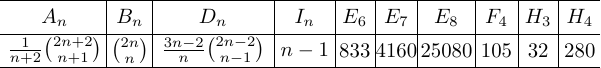}
	\caption{The $W$-Catalan numbers.}
	\label{tab:W-W_catalan}
\end{table}

Following Proposition~\ref{prop:Coxeter_sorting_lattice_congruence} we can define a particular subset of~$c$-sortable elements which help define polytopes whose oriented~$1$-skeleton correspond to the~$c$-Cambrian lattice obtained from the lattice congruence~$\equiv_c$.

\begin{definition}[{\cite{HLT11}}]\label{def:c_singleton}
	Let~$c$ be a Coxeter element. A~$c$-sortable element~$w\in W$ is said to be a~\defn{$c$-singleton} if any of the following equivalent events occurs: \begin{itemize}
		\itemsep0em
		\item $w$ is the only element in its congruence class under~$\equiv_c$,
		\item $w$ has a reduced word that is a prefix of~$w_0(c)$ up to commutations,
		\item $w$ is~$c$-sortable and~$ww_0$ is~$c^{-1}$-sortable.
	\end{itemize}
\end{definition}

\begin{proposition}\label{prop:c_generalized_associahedra}
	Given a Coxeter element~$c$, the intersection of all defining half-spaces of~$\PPerm_n$ whose corresponding facet contains a~$c$-singleton is a polytope whose oriented~$1$-skeleton is the~$c$-Cambrian lattice.
\end{proposition}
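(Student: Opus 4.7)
The plan is to build the proof around the correspondence between facets of $\PPerm_n$, rays of the braid fan $\cN(\PPerm_n)$, and the coarsening of this fan induced by the lattice congruence $\equiv_c$. By Remark~\ref{rem:bijection_normal_cone_polytope}, facets of $\PPerm_n$ are in bijection with rays of the braid fan, and vertices of $\PPerm_n$ correspond to maximal chambers, i.e.\ to the elements of $W$ via the realization of the weak order (Proposition~\ref{prop:orientation_permutahedron}, transposed to type $W$). A chamber is contained in a facet (dually: a ray bounds a chamber) precisely when the associated vertex $w$ lies on the associated facet of $\PPerm_n$. The key tactical idea is to construct the $c$-Cambrian fan as a coarsening of the braid fan, then identify its rays with a distinguished subfamily of facets of $\PPerm_n$ and finally check that the subfamily is exactly the set of facets containing a $c$-singleton.

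First I would assemble the coarsening. By Proposition~\ref{prop:Coxeter_sorting_lattice_congruence}, the $c$-Cambrian congruence $\equiv_c$ is a lattice congruence of the weak order whose classes are intervals; I would glue the corresponding chambers of the braid fan class by class, obtaining a collection of (closed) cones indexed by $\equiv_c$-classes. Using the polygonal lattice machinery of Remark~\ref{rem:lattice_quotients_via_polygons} together with Proposition~\ref{prop:quotients_of_polygons}, one checks that adjacent chambers in the same class are glued along a shared wall whose relative interior disappears, and that the result is a complete simplicial fan refining no ray that was not already a ray of the braid fan. Call this the $c$-Cambrian fan $\cN_c$; its rays form a subset $R_c$ of the rays of $\cN(\PPerm_n)$.

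Next I would identify $R_c$ with the $c$-singleton facets. A ray $\rho$ of the braid fan survives in $\cN_c$ if and only if $\rho$ bounds two chambers lying in distinct $\equiv_c$-classes. Translating back to the polytope, this means that the associated facet $F_\rho$ of $\PPerm_n$ must contain two adjacent vertices $u \lessdot v$ with $u \not\equiv_c v$. Using the characterisation of $c$-singletons in Definition~\ref{def:c_singleton} (they are exactly the elements alone in their $\equiv_c$-class), one shows that every such facet must contain a $c$-singleton: starting from $u$ and moving to the bottom element $w$ of the class of $u$ inside $F_\rho$, the maximal element $f(w)$ obtained via the duality $w \mapsto ww_0$ of Proposition~\ref{prop:Coxeter_sorting_congruence_inverse} remains on $F_\rho$, and when $w$ is $c$-sortable and $ww_0$ is $c^{-1}$-sortable the class is a singleton. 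Conversely, any facet containing a $c$-singleton $s$ separates the class $\{s\}$ from some neighbouring class, so its normal ray survives in $\cN_c$. This yields the equality between $R_c$ and the set of facets containing a $c$-singleton.

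The final step, and the one I expect to be the main obstacle, is promoting $\cN_c$ to the normal fan of the intersection $Q$ of the half-spaces indexed by $R_c$. That $Q$ is contained in a bounded region is clear since we only discard half-spaces of $\PPerm_n$, so $\PPerm_n \subseteq Q$ and $Q$ is automatically a polyhedron; proving that it is in fact bounded and that $\cN(Q) = \cN_c$ requires showing that no spurious unbounded direction is introduced, i.e.\ that the kept half-spaces suffice to cut out precisely one bounded chamber per $\equiv_c$-class. The cleanest route is to exhibit, for each $\equiv_c$-class $C$, an explicit vertex $v_C$ of $Q$ supported by exactly the facets of $R_c$ that bound the corresponding cone of $\cN_c$; this can be done by taking $v_C$ to be the vertex of $\PPerm_n$ indexed by the $c$-sortable representative of $C$ and verifying via Proposition~\ref{prop:weak_order_properties} that the removed facets are never tight at this vertex. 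Once $\cN(Q) = \cN_c$ is established, the bijection between vertices of $Q$ and $\equiv_c$-classes is automatic, and orienting $Q$ in the direction $\mathbf{v} = (\mathbf{w}_0) - (\mathbf{e})$ as in Proposition~\ref{prop:orientation_permutahedron} transports the oriented $1$-skeleton of $\PPerm_n$ modulo $\equiv_c$ to the oriented $1$-skeleton of $Q$, which by Definition~\ref{def:lattice_quotient} is exactly the Hasse diagram of the $c$-Cambrian lattice.
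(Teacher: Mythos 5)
The paper states Proposition~\ref{prop:c_generalized_associahedra} without a proof (it is cited from~\cite{HLT11}), so I'm evaluating your proposal on its own. Your overall plan — coarsen the braid fan by $\equiv_c$ and match surviving rays to facets containing a $c$-singleton — is a legitimate alternative to the direct coordinate computation of Hohlweg--Lange--Thomas, closer in spirit to Reading and Speyer's Cambrian fan theorem. But as written it has two substantive errors and one real gap.

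The decisive problem is the last step. You want $v_C$ to be the $\PPerm_n$-vertex of the $c$-sortable representative of the class $C$ and you plan to check that the removed facets are never tight there. For a non-singleton class this is exactly false. A vertex of $\PPerm_n$ that is also a vertex of $Q$ must lie on only kept facets, because $\PPerm_n$ is simple: removing a tight facet at a vertex leaves it on at most $n-2$ facets, so it stops being a vertex of $Q$. Hence the vertices of $Q$ that come from $\PPerm_n$ are precisely the $c$-singletons, and since $\PPerm_n\subsetneq Q$ once a facet is dropped, the remaining vertices of $Q$ lie strictly outside $\PPerm_n$. The $c$-sortable representative of a non-singleton class therefore cannot be a vertex of $Q$, and Proposition~\ref{prop:weak_order_properties} cannot rescue the verification. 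A related error appears earlier: you claim that $ww_0$ "remains on $F_\rho$", but if $w$ lies on the facet indexed by a subset $I\subsetneq[n]$ (positions $I$ carrying the smallest $|I|$ values), then $ww_0$ lies on the facet indexed by $w_0(I)=\{n+1-i:i\in I\}$, which differs from $I$ in general. So the involution of Proposition~\ref{prop:Coxeter_sorting_congruence_inverse} does not preserve $F_\rho$ and cannot be used to certify the $c$-singleton condition facet-locally.

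Separately, the assertion "$\rho$ survives in $\cN_c$ iff $\rho$ bounds two chambers in distinct $\equiv_c$-classes" is only half-justified. In the braid fan a ray has codimension $n-2$ and lies on the boundary of many chambers; if they are all in one class $\rho$ is swallowed into the interior of a coarsened cone, fine. But the converse is not elementary: $\rho$ could lie on the boundary of a coarsened cone $K$ yet in the relative interior of a higher-dimensional face of $K$, and then it is not an extreme ray of $\cN_c$. Ruling this out uses the simplicial/convex structure of Cambrian cones, i.e.\ a theorem essentially as strong as the one you are trying to prove; the appeal to Remark~\ref{rem:lattice_quotients_via_polygons} and Proposition~\ref{prop:quotients_of_polygons} alone does not deliver it.
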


\subsection{Permutree Perspectives}

Having all tools in hand, we present the main problematic of this thesis. Following Remark~\ref{rem:cambrian_type_A_are_permutrees}, one can define permutree congruences for any Coxeter type as follows.

\begin{definition}\label{def:permutrees_are_orientations}
	Let~$(W,S)$ be a Coxeter system with its Coxeter diagram~$G_W$. We call a \defn{multi-orientation}~$\overrightarrow{G_W}$ an endowing of each edge of~$G_W$ with either none, one, or both possible orientations. The \defn{permutree congruence} associated to~$\overrightarrow{G_W}$ is the smallest lattice congruence that identifies the elements of the interval~$[t,tsts\cdots]$ ($m_{s,t}-1$ letters) if~$s\to t$.

	As each orientation can have none, one, or two orientations, an identification of each edge with a coordinate gives an encoding of each permutree congruence by a decoration $\delta\in\{\nonee,\downn,\upp,\uppdownn\}^{|E(G_W)|}$. That is, we have that \begin{itemize}
		\itemsep0em
		\item $\delta_{st}=\nonee$ if the edge~$(s,t)$ is not oriented,
		\item $\delta_{st}=\upp$ if the edge~$(s,t)$ is oriented~$s\to t$,
		\item $\delta_{st}=\downn$ if the edge~$(s,t)$ is oriented~$t\to s$,
		\item $\delta_{st}=\uppdownn$ if the edge~$(s,t)$ has both orientations.
	\end{itemize} Figure~\ref{fig:permutree_orientation} shows an example for this in type~$B$. In the cases of types~$A,\, B,\, I,\, F$, and~$H$ these vectors can be simply indexed by~$[n]$ where~$n=|S|$. In types~$D$ and~$E$ one must be careful as there are non-commuting braid relations between non-consecutive indices.
\end{definition}

\begin{figure}[h!]
	\centering
	\includegraphics[scale=1.5]{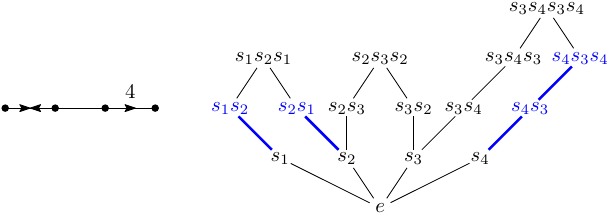}
	\caption[A multi-orientation of the Coxeter graph of~$B_4$ and the permutree congruence it specifies.]{ A multi-orientation of the Coxeter graph of~$B_4$ and the $\uppdownn\nonee\downn$-permutree congruence it specifies. The minimal elements of the lattice congruence are in black and the congruence classes are shown in bolded blue.}\label{fig:permutree_orientation}
\end{figure}

Although Cambrian lattices are well understood in any finite type Coxeter group via their connections with finite type cluster algebras (see~\cite{FZ02},~\cite{FZ03},~\cite{PS13},~\cite{PS15}), permutree lattices are well understood only in type~$A$~\cite{PP18} and partially understood in type~$B$~\cite{PPR22}.

\begin{perspective}\label{pers:Coxeter_permutrees}
	Are there other combinatorial families or methods through which we can study permutree lattices for any finite Coxeter group?
\end{perspective}

\section{\texorpdfstring{$s$}{}-Decreasing Trees}\label{sec:s_weak_order}

In this section we present another generalization of permutations that we study in Part~\ref{part:sorder}. This section is based on~\cite{CP19} and~\cite{CP22}. Let~$s=(s_1,\ldots,s_n)$ be a \defn{weak-composition} (i.e.\ a vector with non-negative integer entries) and \defn{$|s|$}$=\sum_{i=1}^{n}s_i$.

\subsection{\texorpdfstring{$s$}{}-Weak Order}

\begin{definition}\label{def:s_decreasing_trees}
	An \defn{$s$-decreasing tree}\index{$s$-weak order!$s$-decreasing tree} is a rooted plane tree (i.e. with a concrete embedding) on~$n$ internal vertices (called nodes), labeled by~$[n]$, such that the node labeled~$i$ has~$s_i+1$ children and any descendant~$j$ of~$i$ satisfies~$j<i$. We denote by \defn{$T_0^i,\ldots,T_{s_i}^i$} the subtrees of node~$i$ from left to right, and by \defn{$\cT_s$} the set of~$s$-decreasing trees.
\end{definition}

\begin{figure}[h!]
	\centering
	\includegraphics[scale=1.5]{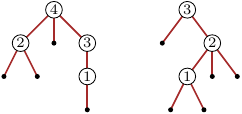}
	\caption[A~$(0,1,0,2)$-decreasing tree and a~$(1,2,1)$-decreasing tree.]{ A~$(0,1,0,2)$-decreasing tree (left) and a~$(1,2,1)$-decreasing tree (right).}
	\label{fig:strees}
\end{figure}

\begin{remark}\label{rem:s_i_zero}
	Since the node~$1$ only has leaves as children, the first entry of~$s$ can be arbitrary and does not influence the combinatorics or geometry of~$s$-decreasing trees. See Figure~\ref{fig:strees} for some examples.
\end{remark}

By its construction the root of an~$s$-decreasing tree is always the number of internal nodes. Moreover, their decreasing aspect allow for a direct calculation of their cardinality. That is, the number of~$s$-trees is given by a generalization of the formula for factorials as follows.

\begin{proposition}\label{prop:num_s_trees}
	Given a weak composition~$s$, the number of~$s$-decreasing trees is \begin{equation*}|\mathcal{T}_s| = \prod_{i=1}^{n-1}\left(1+s_{n-i+1}+s_{n-i+2}+\cdots + s_n\right).
	\end{equation*}
\end{proposition}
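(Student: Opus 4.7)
The plan is to count $s$-decreasing trees by a direct insertion argument, building the tree one node at a time in decreasing order of label. Since every descendant of a node $i$ must have label smaller than $i$, the largest label $n$ must sit at the root of any $s$-decreasing tree, so there is a unique way to place node $n$; this creates $s_n+1$ empty subtree slots (leaves).

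The key step is to observe that, having placed the nodes $n, n-1, \ldots, n-k+1$ in a partial $s$-decreasing tree, the node $n-k$ must be inserted in place of one of the existing leaves (because all nodes above it in the tree must have larger labels, and the only available positions are the currently empty subtree slots). Conversely, every choice of a leaf yields a valid partial $s$-decreasing tree on the labels $\{n-k, \ldots, n\}$, since the decreasing condition is automatically preserved. After the insertion, node $n-k$ contributes $s_{n-k}+1$ new leaves, while consuming exactly one old leaf. Hence, if $\ell_k$ denotes the number of leaves after having inserted $n, \ldots, n-k+1$, then $\ell_0 = 0$ (before anything is placed one takes a convention of $1$ slot for the root) and $\ell_k = \ell_{k-1} + s_{n-k+1}$ for $k \ge 1$, giving $\ell_k = 1 + s_n + s_{n-1} + \cdots + s_{n-k+1}$ for $k \ge 1$.

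Thus, the insertion of node $n-k+1$ (for $k = 2, 3, \ldots, n$, i.e., after the root has been placed) has exactly $\ell_{k-1} = 1 + s_n + s_{n-1} + \cdots + s_{n-k+2}$ choices, all independent. Multiplying over $k = 2, \ldots, n$ and reindexing with $i = k-1$, the total count is
\[
|\cT_s| \;=\; \prod_{k=2}^{n}\Big(1 + s_n + s_{n-1} + \cdots + s_{n-k+2}\Big) \;=\; \prod_{i=1}^{n-1}\Big(1 + s_{n-i+1} + s_{n-i+2} + \cdots + s_n\Big),
\]
which is exactly the claimed formula. The only delicate point is to verify that insertions are both injective (different sequences of leaf choices produce different trees, which follows because the label $n-k$ determines uniquely where it sits in the final tree) and surjective (every $s$-decreasing tree arises from such a sequence, by peeling off nodes in increasing order of label), but this is immediate from the decreasing condition, so there is no real obstacle — the argument is essentially a bookkeeping of leaf slots.
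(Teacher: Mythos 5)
Your proof is correct and takes essentially the same approach as the paper: both count $s$-decreasing trees by inserting nodes in decreasing order of label, observing that each node must be grafted at an existing leaf, and tracking the number of available leaf slots after each insertion. (There is a cosmetic slip where you write $\ell_0 = 0$ but then state the closed form $\ell_k = 1 + s_n + \cdots + s_{n-k+1}$, which corresponds to the convention $\ell_0 = 1$; the final count is unaffected.)
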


\begin{proof}
	There is only one~$(s_n)$-decreasing tree with one node labeled~$n$ and~$s_n+1$ leaves. The next node~$n-1$ together with its~$s_{n-1}+1$ leaves can be placed in any of the~$s_n+1$ leaves from before. This results in~$(1+s_n)$ possible~$(s_{n-1},s_n)$-decreasing trees. Continuing inductively on the length of~$s$, we get that at step~$k$ the number of~$(s_k,\ldots, s_n)$-decreasing trees with labels in~$[k,n]$ is
	$|\mathcal{T}_s| = \prod_{i=1}^{n-k}\bigl(1+\sum_{j=n-i+1}^n s_j\bigr)$.
	We can now place the node labeled~$k-1$ in any of the~$\bigl(1+\sum_{r=k}^n s_r\bigr)$ leaves of any such tree. We finish when we place the node with label~$1$. This gives the desired formula.
\end{proof}

\begin{definition}
	Let~$T$ be an~$s$-decreasing tree and~$1\leq a<c\leq n$. We say that~$a$ is \defn{left} (resp.\ \defn{right})\index{$s$-weak order!$s$-decreasing tree!left and right} of~$c$ if there is a node~$d$ such that~$a<d$,~$c<d$,~$a\in T^d_x$, and~$c\in T^d_y$ where~$x<y$ (resp.~$x>y$). We denote by \defn{$\inv(T)$} the multiset of \defn{inversions}\index{$s$-weak order!$s$-decreasing tree!inversion} of~$T$ formed by pairs~$(c,a)$ with multiplicity (also called cardinality)
	\[\defn{$|(c,a)_T|$} =\left\{\begin{array}{ll}
			\itemsep0em
			0,   & \text{ if } a \text{ is left of } c,  \\
			i,   & \text{ if } a \in T^c_{i},            \\
			s_c, & \text{ if } a \text{ is right of } c.
		\end{array}\right.\]
\end{definition}

Similar to Definition~\ref{def:weak_order_perms} and Proposition~\ref{prop:weak_order_properties}, this allows to define an analogue of the weak order.

\begin{definition}
	Let~$R,T$ be~$s$-decreasing trees. We say that~$R \trianglelefteq T$ if we have that~$\inv(R)\subseteq \inv(T)$. We call~$(\cT_s,\trianglelefteq)$ the \defn{$s$-weak order}\index{$s$-weak order}.
\end{definition}

\begin{figure}[h!]
	\centering
	\includegraphics[scale=0.7]{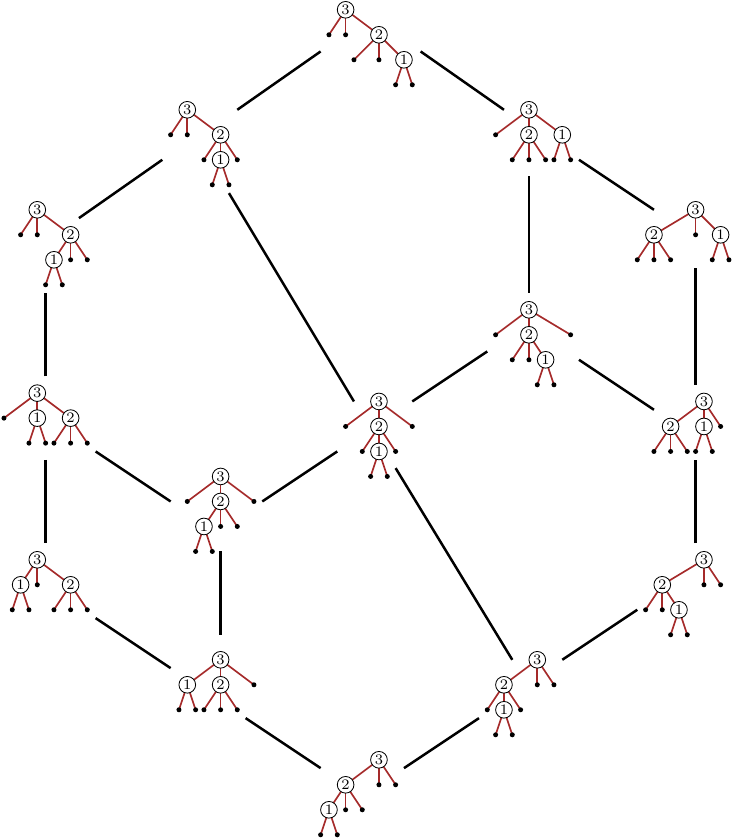}
	\caption[The~$(1,2,2)$-weak order on~$(1,2,2)$-decreasing trees.]{ The~$(1,2,2)$-weak order on~$(1,2,2)$-decreasing trees. Figures based on~\cite{CP22}.}
	\label{fig:strees122}
\end{figure}

As before with Coxeter groups, the~$s$-weak order enjoys several properties coming from the weak order of permutations. A key one for us being the lattice property which can be seen in Figure~\ref{fig:strees122}.

\begin{theorem}[{\cite[Thms.3.2 \& 3.3]{CP19},~\cite[Prop.1.35]{CP22}}]
	The~$s$-weak order on~$s$-decreasing trees is a polygonal lattice. The join of two~$s$-decreasing trees~$R$ and~$T$ is the~$s$-decreasing tree~$R\vee T$ that satisfies~$\inv(R\vee T)=\big(\inv(R)\cup \inv(T)\big)^{tc}$.
\end{theorem}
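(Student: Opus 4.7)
The plan is to follow the same blueprint that worked for the weak order of permutations (Proposition~\ref{prop:permutation_lattice}) and for binary trees (Proposition~\ref{prop:tamari_lattice_meet}), adapted to the multiset nature of the $s$-inversion data. The first step is to establish a combinatorial characterization of which multisets of pairs arise as $\inv(T)$ for some $T \in \cT_s$. That is, I would show that a multiset $E$ of pairs $(c,a)$ with $1\leq a<c\leq n$ and multiplicities $|(c,a)|_E \in \{0,1,\ldots,s_c\}$ is of the form $\inv(T)$ if and only if it satisfies appropriate transitivity and planarity axioms: roughly, if $|(c,b)|_E \geq 1$ and $|(b,a)|_E\geq 1$, then $|(c,a)|_E \geq |(c,b)|_E$, together with a dual cotransitivity statement and a compatibility condition that ensures the planar embedding (namely that among two pairs $(c,a), (c,b)$ with $a<b<c$ and multiplicities in the same ``slot'' of $c$, the corresponding subtrees can be realized). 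This characterization is the analogue of Proposition~\ref{prop:inv_sets_trans_cotrans} and is the technical backbone of the argument.

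Next, I would construct the join of two $s$-decreasing trees $R,T$ as follows. Define the union $\inv(R) \cup \inv(T)$ to be the multiset of pairs whose multiplicity is $\max(|(c,a)|_R,|(c,a)|_T)$, and then take its transitive closure with respect to the transitivity rule above, namely iteratively raise $|(c,a)|$ to $|(c,b)|$ whenever $|(c,b)| \geq 1$ and $|(b,a)| \geq 1$ and this increase is still bounded by $s_c$. The key verification is that the resulting multiset still satisfies the characterization axioms, so it corresponds to a bona fide $s$-decreasing tree $R\vee T$. By construction, $\inv(R\vee T)$ contains both $\inv(R)$ and $\inv(T)$; and since any common upper bound $U$ has an inversion set containing both and closed under the transitivity rule, $\inv(R\vee T) \subseteq \inv(U)$, proving minimality. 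This gives the stated join formula and shows that $(\cT_s,\trianglelefteq)$ is a join-semilattice. Since $\cT_s$ is finite and has a unique maximal element (the tree with $|(c,a)|=s_c$ for all pairs), Proposition~\ref{prop:semilattice_to_lattice} upgrades this to a lattice.

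Finally, for polygonality, I would examine the cover relations. A cover $R\lessdot T$ corresponds to adding a single unit to the multiplicity of one inversion, after which the transitive closure must be taken; the resulting increase is a specific multiset of inversions. If two distinct covers $R\lessdot T_1$ and $R\lessdot T_2$ exist, I would look at the join $T_1\vee T_2$ and check case by case (according to the relative positions of the nodes involved in the two elementary moves) that the interval $[R,T_1\vee T_2]$ consists of precisely two maximal chains intersecting only at the endpoints, hence is a polygon. The symmetric argument at the top of a polygon (two covers $T_1\lessdot U$ and $T_2\lessdot U$) is dual, using the meet operation obtained from the lattice structure.

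The main obstacle I expect is not the lattice property itself but the precise axiomatization of valid inversion multisets and the verification that the transitive closure stays inside the allowed range $\{0,\ldots,s_c\}$ and still satisfies the planarity condition. Getting this characterization right, and in particular choosing transitivity rules that are strong enough to force a tree structure but weak enough to be preserved under $\cup$ and closure, is the crux of the argument; everything else is then a careful but fairly mechanical case analysis, with the polygonality check requiring a finite enumeration of local configurations of two elementary inversion increases.
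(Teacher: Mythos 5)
This theorem is one the thesis \emph{cites} rather than proves; the paper imports it from Ceballos--Pons (\cite{CP19}, \cite{CP22}) and supplies only the supporting definitions (the axiomatization of inversion multisets in Definition~\ref{def:s_tree_inversion_sets} and the multiset transitive closure in Definition~\ref{def:multi_transitive_closures}). So your proposal must really be compared against the cited source. At the level of strategy, you have identified exactly the right ingredients and in the right order: axiomatize which multisets are inversion sets (transitivity and planarity), define the join candidate as the transitive closure of the entry-wise maximum, check the candidate is still a valid inversion multiset, deduce the join-semilattice structure, and then use Proposition~\ref{prop:semilattice_to_lattice} plus a local analysis of covers for polygonality. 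This is the blueprint Ceballos and Pons follow.

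That said, what you have written is an outline rather than a proof, and the outline postpones precisely the step that carries all the mathematical weight. You correctly flag that the transitive closure of $\inv(R)\cup\inv(T)$ must be shown to still satisfy the planarity axiom (the transitivity axiom holds by construction of the closure). This is not a ``careful but mechanical'' check: the closure may raise $|(c,a)|$ via a long transitivity path while leaving $|(b,a)|<s_b$ and $|(c,b)|$ untouched, so one must show the presence of such a path forces a compensating increase elsewhere. The interplay between the planarity axiom and the closure is subtle, and the proof that it goes through is essentially the whole content of the theorem. Two smaller issues: your iterative description of the closure (``raise $|(c,a)|$ to $|(c,b)|$ whenever $|(c,b)|\geq 1$ and $|(b,a)|\geq 1$'') has a spurious first hypothesis and is less robust than the paper's Definition~\ref{def:multi_transitive_closures}, which takes a single max of $|(c,b_{k-1})_I|$ over all transitivity paths in the \emph{original} multiset; when verifying that the closure lands inside the axiomatized class, the explicit formula is the one you actually want to manipulate. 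And to invoke Proposition~\ref{prop:semilattice_to_lattice} from a join-semilattice you need the existence of $\hat{0}$, not the maximal element $\hat{1}$ that you cite (the relevant $\hat{0}$ is the tree with all inversion multiplicities zero). Finally, the polygonality argument is at the level of ``do a case analysis'': that is a plausible plan, but the interval $[R, T_1\vee T_2]$ is computed via the same closure machinery, so it inherits the same unfilled gap.
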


Notice that the transitive closure requires attention since it is being done over multisets. We postpone the complete definition of the transitive closure for a particular case. Still, the following characterization of multisets of inversions for~$s$-decreasing trees gives a hint about it.

\begin{definition}[{\cite[Def. 1.5, Prop. 1.6]{CP22}}]\label{def:s_tree_inversion_sets}
	The multisets of inversions of~$s$-decreasing trees are exactly the multisets~$I$ that satisfy: \begin{itemize}
		\itemsep0em
		\item \defn{Transitivity}\index{$s$-weak order!$s$-decreasing tree!transitivity}: if~$a<b<c$, then~$|(b,a)_T|=0$ or~$|(c,a)_T|\geq |(c,b)_T|$.
		\item \defn{Planarity}\index{$s$-weak order!$s$-decreasing tree!planarity}: if~$a<b<c$, then~$|(b,a)_T|=s_b$ or~$|(c,b)_T|\geq |(c,a)_T|$.
	\end{itemize}

\end{definition}

\begin{definition}
	Let~$T$ be an~$s$-decreasing tree. An \defn{ascent}\index{$s$-weak order!$s$-decreasing tree!ascent} (resp.\ \defn{descent}\index{$s$-weak order!$s$-decreasing tree!descent}) of~$T$ is a pair~$(a,c)$ such that \begin{enumerate}
		\itemsep0em
		\item~$a\in T^c_{i}$ for some~$0\leq i < s_c$ (resp.~$0< i \leq s_c$),
		\item if~$a < b < c$ and~$a\in T^b_{i}$, then~$i=s_b$ (resp.~$i=0$),
		\item if~$s_a>0$, then~$T^a_{s_a}$ (resp.~$T^a_{0}$) is empty (i.e.\ only a leaf).
	\end{enumerate}
\end{definition}

Notice that in the case where~$s$ has no zeros, the ascents (resp.\ descents) of~$T$ are in bijection with the leaves of~$T$ that are rightmost (resp.\ leftmost) of their parent, excepting the rightmost (resp.\ leftmost) leaf of~$T$. The ascents and descents allow us to define the rotations on~$s$-decreasing trees, which characterize the cover relations of the~$s$-weak order.

\begin{definition}[{\cite{CP22}}]\label{def:s_tree_rotations}
	Let~$T$ be an~$s$-decreasing tree with an ascent~$(a,c)$. The \defn{rotation}\index{$s$-weak order!$s$-decreasing tree!rotation} of~$T$ along~$(a,c)$ is the~$s$-decreasing tree~$T+\{(a,c)\}$ corresponding to the transitive closure multiset of inversions obtained from~$\inv(T)$ after increasing~$|(a,c)|_T$ by 1. If~$A$ is a subset of ascents of~$T$, we denote by \defn{$T+A$} the~$s$-decreasing tree with inversion set~$\big(\inv(T)+A\big)^{tc}$. See Figure~\ref{fig:tree_rotation} for an example of a rotation.
\end{definition}

\begin{proposition}[{\cite[Thm.1.32]{CP22}}]\label{prop:cover_relation_s_weak_order}
	The cover relations of the~$s$-weak order correspond to rotations of~$s$-decreasing trees along ascents.
\end{proposition}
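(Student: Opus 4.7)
The strategy is to prove the two directions separately: (i) every rotation along an ascent is a cover relation, and (ii) every cover relation is obtained as such a rotation. Both directions hinge on controlling how the transitive closure of Definition~\ref{def:s_tree_inversion_sets} propagates the single increment $|(c,a)| \mapsto |(c,a)|+1$, and the three conditions in the definition of an ascent are precisely what is needed to prevent this propagation.

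For the forward direction, I would first verify that if $(a,c)$ is an ascent, then the naive multiset $\inv(T) + \{(a,c)\}$ already satisfies the transitivity and planarity conditions of Definition~\ref{def:s_tree_inversion_sets}, so the transitive closure does not add anything and $\inv(T+\{(a,c)\}) = \inv(T) + \{(a,c)\}$. The verification splits into three cases. For any intermediate index $a<b<c$, condition (2) of the ascent gives $|(b,a)_T|=s_b$, so planarity with $(a,b,c)$ imposes no constraint and transitivity with $(a,b,c)$ reads $|(c,a)|\ge |(c,b)|$, which is preserved after the increment. For any $d<a<c$, condition (3) forces that no $d$ lies in $T^a_{s_a}$; together with planarity and transitivity in $T$ one checks that either $|(a,d)_T|=0$ (so transitivity gives nothing) or $|(c,d)_T| \ge |(c,a)_T|+1$ already holds. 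Finally, for $a<c<e$, one uses the transitivity of the old inversion set and the fact that only $|(c,a)|$ has changed. Once the new multiset is shown to be a valid inversion set, $T+\{(a,c)\}$ is an $s$-decreasing tree, and since $\inv(T+\{(a,c)\}) \setminus \inv(T)$ has cardinality one, no $s$-decreasing tree can sit strictly between them; hence $T \lhd T+\{(a,c)\}$ is a cover.

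For the backward direction, given a cover relation $T \lhd T'$, I would pick, among all pairs $(a',c')$ where $|(c',a')_{T'}| > |(c',a')_T|$, one minimizing a well-chosen quantity — concretely, I would first take $c$ minimal and then, among such $c$, take $a$ maximal. I would then check that $(a,c)$ is an ascent of $T$. Condition (1) is immediate since $|(c,a)_T|<|(c,a)_{T'}|\le s_c$. For (2), if some $a<b<c$ had $a \in T^b_i$ with $i<s_b$, then planarity applied to $T'$ together with the minimality of $c$ (and the fact that no pair $(b,a)$ increased) would force $|(c,b)_{T'}| > |(c,b)_T|$, contradicting either the maximality of $a$ or the minimality of $c$. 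Similarly, (3) follows by arguing that a descendant of $a$ in $T^a_{s_a}$ would, by transitivity in $T'$, force an additional strict increment of some $|(c,d)|$, contradicting that $T \lhd T'$ is a cover. Finally, I would show $T' = T+\{(a,c)\}$ by observing that both trees have inversion multisets containing $\inv(T)+\{(a,c)\}$, and invoke the forward direction to conclude that $T+\{(a,c)\}$ already covers $T$, forcing equality.

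The main obstacle will be the careful bookkeeping of the transitive closure on multisets: unlike the ordinary closure for set-theoretic inversion sets, increasing a single multiplicity can in principle cascade through both transitivity and planarity simultaneously, and the three ascent conditions must be shown to block every such cascade. In particular, one needs to keep straight which of the two conditions (planarity vs.\ transitivity) is relevant for each triple $(x,y,z)$ with $\{a,c\}\subseteq\{x,y,z\}$, and to exploit the fact that $T$ already satisfies both. The other subtle point is to argue, in the backward direction, that the minimal-$(c,a)$ witness cannot force simultaneous increments elsewhere; this is where the polygonality of the $s$-weak order (Theorem 3.2/3.3 of \cite{CP19}) could be used as a shortcut, since covers in a polygonal lattice have a tightly controlled local structure.
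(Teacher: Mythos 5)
The paper itself does not prove this proposition --- it imports it directly from \cite{CP22} --- so your proposal has to be judged on its own merits rather than compared against an in-paper argument. On those merits, the forward direction has a genuine gap: you claim that if $(a,c)$ is an ascent of $T$ then the multiset $\inv(T)+\{(a,c)\}$ already satisfies transitivity and planarity, so the transitive closure adds nothing and the inversion difference has total multiplicity one. That is not true, and the problem is exactly in your treatment of triples $(d,a,c)$ with $d<a$. Ascent condition (3) only forces the rightmost subtree $T^a_{s_a}$ to be empty; a descendant $d<a$ may still lie in $T^a_j$ for some $1\le j<s_a$, in which case $|(a,d)_T|=j>0$ while $|(c,d)_T|=|(c,a)_T|$ (both $d$ and $a$ lie in the same subtree of $c$). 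After raising $|(c,a)|$ by one, transitivity at $(d,a,c)$ fails, so the closure must raise $|(c,d)|$ as well; your stated dichotomy ``$|(a,d)_T|=0$ or $|(c,d)_T|\ge|(c,a)_T|+1$'' does not hold. A concrete counterexample: take $s=(1,2,1)$ and $T$ with root $3$, $2\in T^3_0$, $1\in T^2_1$, and $T^2_2$ a leaf. Then $|(2,1)_T|=1$, $|(3,1)_T|=|(3,2)_T|=0$, and $(2,3)$ is an ascent. Incrementing $|(3,2)|$ to $1$ immediately violates transitivity at $(1,2,3)$, and the closure forces $|(3,1)|$ up to $1$ too.

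So a rotation along an ascent can change several multiplicities simultaneously (in fact $|(c,d)|$ goes up for every $d$ whose block is contained in the $a$-block, i.e.\ for $a$ and all of its descendants), and your ``the difference has cardinality one, hence nothing sits strictly in between'' conclusion cannot run. To recover the cover relation you must instead show that these increments occur as an indivisible package --- either by a direct block/prefix analysis of Stirling $s$-permutations as in \cite{CP22}, or by the polygonality of the $s$-weak order, which you mention as a possible shortcut. But polygonality is not a shortcut that supplements the cardinality argument; it is the thing that must replace it. Once the forward direction is repaired, the backward direction you sketch (extremal choice of a changed pair $(a,c)$, then invoke the forward direction to force $T'=T+\{(a,c)\}$) is a reasonable plan.
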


\begin{figure}[h!]
	\centering
	\includegraphics[scale=1.5]{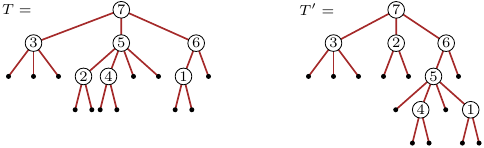}
	\caption[The rotation of a~$(1,1,2,1,3,1,2)$-decreasing tree along an ascent.]{ The rotation of the~$(1,1,2,1,3,1,2)$-decreasing tree~$T$ along the ascent~$(5,7)$ giving~$T'$.}
	\label{fig:tree_rotation}
\end{figure}

With rotations in hand we can define the analogue of the permutahedron for the~$s$-weak order as a combinatorial complex as follows.

\begin{definition}\label{def:s-permutahedron}
	The \defn{$s$-permutahedron}\index{$s$-permutahedron} is the combinatorial complex \defn{$\PSPerm$} with faces~$(T,A)$ where~$T$ is an~$s$-decreasing tree and~$A$ is a subset of ascents of~$T$. The face~$(T,A)$ is contained in~$(T', A')$ if and only if~$[T, T+A]\subseteq [T', A']$ as intervals in the~$s$-weak order.
	In particular, the vertices of~$\PSPerm$ are the~$s$-decreasing trees and the edges correspond to the~$s$-tree rotations.
\end{definition}

Definition~\ref{def:s-permutahedron} portrays~$\PSPerm$ as combinatorial complex. In the original article~\cite{CP19} the authors gave the following conjecture on the geometric structure of~$\PSPerm$.

\begin{conjecture}[{\cite[Conjecture 1]{CP19}}]\label{conj:s-permutahedron}
	Let~$s$ be a weak composition.~$\PSPerm$ can be realized as a polyhedral subdivision of a polytope which is combinatorially isomorphic to the zonotope~$\sum_{1\leq i < j \leq n} s_j (\mathbf{e_i}-\mathbf{e_j})$.
\end{conjecture}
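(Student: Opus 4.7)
The plan is to answer the conjecture affirmatively when $s$ has no zeros by routing everything through flow polytopes and their subdivisions, then descending to the correct dimension via the Cayley trick, and finally extracting explicit coordinates from a tropical arrangement. Throughout, I would exploit the fact (recalled in Chapter~\ref{chap:sorder_Flows}) that a framed DKK triangulation of a flow polytope of netflow $\bfi=(1,0,\ldots,0,-1)$ is in bijection with the integer $\bfd$-flows of the same graph, where $\bfd$ is the shifted indegree netflow.

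First I would construct a framed directed acyclic graph, the $s$-oruga graph $\oru(s)$, whose vertices are linearly ordered and whose edge pattern at node $i$ encodes the $s_i+1$ children of the node labeled $i$ in an $s$-decreasing tree. Concretely, an integer $\bfd$-flow on $\oru(s)$ should record, for each pair $a<c$, which of the $s_c+1$ subtrees of $c$ the label $a$ lives in. I would then prove a bijection between integer $\bfd$-flows on $\oru(s)$ and $s$-decreasing trees, and (when $s$ has no zeros) with $s$-Stirling permutations. Using the Mészáros–Morales–Striker correspondence, this gives a bijection between $s$-decreasing trees and maximal simplices of the DKK triangulation of $\fpol[\oru(s)](\bfi)$. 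The core combinatorial step is to show that two maximal simplices share a facet precisely when the corresponding $s$-decreasing trees differ by a tree rotation as in Definition~\ref{def:s_tree_rotations}; more generally, I would build a face $(T,A)$ of $\PSPerm$ from the intersection of maximal simplices along an interval $[T,T+A]$ and show that this gives an inclusion-reversing isomorphism between the face poset of $\PSPerm$ and the simplices of the DKK triangulation of $\fpol[\oru(s)](\bfi)$.

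At this stage $\PSPerm$ is realized but sits in the edge-space of $\oru(s)$, which is far too big. I would then invoke the Cayley trick (Proposition~\ref{prop:cayley_trick}): since $\oru(s)$ is designed to look like a Cayley embedding of $n-1$ smaller point configurations (one slice per adjacent pair $(i,i+1)$), the DKK triangulation pulls back to a fine mixed subdivision of a Minkowski sum of hypercubes of varying dimensions governed by $s$. The interior cells of this mixed subdivision are in inclusion-reversing bijection with the faces of $\PSPerm$, and the ambient dimension has been reduced to the correct value $n-1$ (Remark~\ref{rem:dim_n_minus_1}). This is combinatorial but still lacks coordinates.

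To obtain explicit coordinates I would use tropical geometry. The DKK regularity criterion (the refined Lemma~\ref{lem:DKKlem2_us_pro}) allows me to build an admissible height function $\varepsilon$ on the routes of $\oru(s)$; translating this through the dictionary between regular subdivisions and tropical hypersurfaces, an arrangement of tropical hyperplanes whose dual subdivision is exactly the mixed subdivision above emerges. The bounded cells of this tropical arrangement, via Lemma~\ref{lem:tropical_dual_interior}, are in bijection with the interior cells of the dual subdivision, hence with the faces of $\PSPerm$. Reading off supporting vertices, edge directions $\mathbf{e_i}-\mathbf{e_j}$, and the containing hyperplane from the explicit heights, I would conclude that the realization lies in a polytope that is a translate of a zonotope with exactly $s_j$ copies of the segment $[\mathbf{0},\mathbf{e_i}-\mathbf{e_j}]$ for each $i<j$, proving combinatorial isomorphism with $\sum_{1\leq i<j\leq n} s_j(\mathbf{e_i}-\mathbf{e_j})$. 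The main obstacle I anticipate is the middle step: verifying that the DKK facet-adjacency graph matches the $s$-tree rotation graph edge for edge, since a single rotation can propagate through transitive closure of the multiset of inversions, and one must show that this propagation is exactly mirrored by the route swap occurring when two maximal cliques in the framed graph share all but one route.
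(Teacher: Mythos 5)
Your proposal follows exactly the paper's strategy in Chapter~\ref{chap:sorder_realizations}: define $\oru(s)$, realize the face poset of $\PSPerm$ as the interior simplices of the DKK triangulation of $\fpol[\oru(s)](\bfi)$ ordered by reverse inclusion, descend via the Cayley trick to the interior cells of a fine mixed subdivision of a Minkowski sum of hypercubes, and then use an admissible DKK height to produce a tropical hyperplane arrangement whose bounded cells give explicit coordinates, from which the support is shown to be a translated zonotope. The one imprecision is your count of Cayley summands: the decomposition of $\fpol[\oru(s)](\bfi)$ in Lemma~\ref{lem:oru_IS_cayley_embedding} has one hypercube per source edge of $\oru(s)$ — that is $\sum_{i=1}^{n+1}(s_i-1)=|s|+1-n$ summands, not $n-1$ — and it is exactly this count that makes the dimension drop to $n$ after the Cayley trick and then to $n-1$ via the additional contraction of Remark~\ref{rem:dim_n_minus_1}.
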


In Chapter~\ref{chap:sorder_realizations} we give a positive answer to this conjecture for when~$s$ is a composition.

\subsection{Stirling \texorpdfstring{$s$}{}-Permutations}

From here onwards~$s=(s_1,\ldots,s_n)$ denotes a \defn{composition} (i.e.~$s_i>0$ for all~$i\in[n]$). This is required for us to study the~$s$-weak order with objects more akin to permutations that we now define.

\begin{definition}\label{def:s-permutation}
	Let~$s$ be a composition. A \defn{Stirling~$s$-permutation}\index{$s$-weak order!$s$-Stirling-permutation} is a permutation of the word~$1^{s_1}2^{s_2}\ldots n^{s_n}$ that avoids the pattern~$121$.
	We denote by~$\mathcal{W}_{s}$ the set of all Stirling~$s$-permutations.
\end{definition}

\begin{proposition}\label{prop:s_tree_to_s_permutation}
	Stirling~$s$-permutations are in bijection with the set of~$s$-decreasing trees by reading the labels of the nodes of an~$s$-decreasing tree in in-order. Moreover, this bijection induces a correspondence between leaves of an~$s$-decreasing tree and prefixes of the corresponding Stirling~$s$-permutation.
\end{proposition}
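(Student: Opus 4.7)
\medskip

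\noindent\textbf{Proof plan for Proposition~\ref{prop:s_tree_to_s_permutation}.}
The plan is to define the in-order map $\phi\colon \cT_s \to \{\text{words on } \NN\}$ recursively: if $T$ has root $r$ with subtrees $T_0^r, \ldots, T_{s_r}^r$, set
\[
\phi(T) = \phi(T_0^r)\cdot r \cdot \phi(T_1^r)\cdot r \cdots r \cdot \phi(T_{s_r}^r),
\]
so that $r$ is inserted between consecutive subtrees and thus appears exactly $s_r$ times. First I would verify by induction on $n$ that $\phi(T)$ is a permutation of the multiset $1^{s_1}2^{s_2}\cdots n^{s_n}$: the subtrees partition the remaining labels, and each label $i<n$ appears $s_i$ times inside its host subtree by the inductive hypothesis, while the root contributes $s_r$ copies of $r$.

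Next I would show that $\phi(T)$ avoids the pattern $121$. If two occurrences of some label $a$ lay at positions $p<q$, both come from the same root-node $a$ of some subtree (since labels are globally distinct as node-labels), and every letter between positions $p$ and $q$ belongs to one of the subtrees $T_1^a, \ldots, T_{s_a-1}^a$, or equals $a$ itself. In either case those letters are at most $a$ by the decreasing property, so no letter strictly larger than $a$ can appear between two $a$'s; this rules out any $121$ pattern.

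For surjectivity and injectivity I would construct an inverse $\psi$ recursively. Given $w \in \cW_s$, let $n$ be the maximum letter; the $s_n$ occurrences of $n$ split $w$ into $s_n+1$ factors $w_0,\ldots, w_{s_n}$. The $121$-avoidance forces each letter $j<n$ to occur in exactly one factor (otherwise an occurrence of $n$ lies between two occurrences of $j<n$, producing a $jnj = 121$ pattern). Hence each $w_k$ is a Stirling $s^{(k)}$-permutation for the subcomposition $s^{(k)}$ recording the multiplicities of letters that appear in $w_k$; recursively applying $\psi$ to each $w_k$ produces the subtrees $T_k^n$ of the root $n$. A straightforward induction shows $\phi\circ\psi = \mathrm{id}$ and $\psi\circ\phi = \mathrm{id}$, giving the bijection.

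Finally, for the correspondence between leaves and prefixes, I would count leaves: starting from a single root with $s_n+1$ leaves and inductively attaching node $i$ in place of a leaf produces a net gain of $s_i$ leaves, yielding $|s|+1$ leaves in total, which matches the number of prefixes of $\phi(T)$ (one per gap between consecutive letters, plus the empty prefix and the full word). The in-order traversal is then reinterpreted as alternating between visiting leaves and emitting letters, so each leaf of $T$ corresponds canonically to the prefix of $\phi(T)$ consisting of the letters emitted before reaching that leaf. The only delicate step in this plan is the $121$-avoidance argument in the inverse direction: one must verify carefully that the multiset refinement $s^{(0)}+\cdots+s^{(s_n)} = (s_1,\ldots,s_{n-1})$ holds so that the recursive call is well-posed, but this is immediate once the non-splitting property of each letter is established.
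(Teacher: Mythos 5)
The paper gives no proof of this proposition; it is stated as a preliminary fact (Remark~\ref{rem:permutrations_from_s} cites the Gessel bijection literature). So there is no ``paper proof'' to compare against, and your argument stands on its own.

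Your recursive construction is correct. The in-order map is well defined, the multiset count is right, and the $121$-avoidance argument is sound: any two occurrences of a letter $a$ in $\phi(T)$ come from the unique node labeled $a$, and everything strictly between consecutive occurrences lies in some subtree $T^a_i$ with $0<i<s_a$, hence is $\leq a$ by the decreasing property. The inverse is also correct: the occurrences of the maximal letter $n$ cut $w$ into $s_n+1$ blocks, and $121$-avoidance ensures each smaller letter lives entirely inside one block, so the subcompositions $s^{(0)},\dots,s^{(s_n)}$ genuinely partition $(s_1,\dots,s_{n-1})$ and the recursion is well posed (this requires only that the notion of $s$-decreasing tree and Stirling $s$-permutation make sense on an arbitrary finite label set, or equivalently order-isomorphism invariance of $121$-avoidance, which you implicitly use). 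The leaf/prefix correspondence is the right observation: the in-order walk strictly alternates leaf visits with letter emissions, beginning and ending with a leaf, giving $|s|+1$ leaves in bijection with the $|s|+1$ prefix lengths $0,\dots,|s|$. One small imprecision worth tidying: the letters strictly between the $p$-th and $q$-th occurrence of $a$ come from $T^a_p,\dots,T^a_{q-1}$ and the intermediate copies of $a$, not always from all of $T^a_1,\dots,T^a_{s_a-1}$; the conclusion that they are all $\leq a$ is unchanged.
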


See Figure~\ref{fig:2112-decreasing-inordered-tree} for an example of the bijection between a Stirling~$(2,1,1,2)$-permutations and a~$(2,1,1,2)$-decreasing tree.

\begin{figure}[h!]
	\centering\includegraphics[scale=1.5]{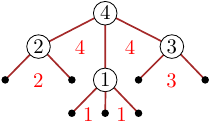}
	\caption[A~$(2,1,1,2)$-decreasing tree with vertices labeled via in-order.]{ A~$(2,1,1,2)$-decreasing tree with vertices labeled via in-order. The corresponding Stirling~$s$-permutation is~$w=241143$.}
	\label{fig:2112-decreasing-inordered-tree}
\end{figure}

\begin{remark}\label{rem:permutrations_from_s}
	Notice that in the case~$s=(k,\ldots,k)$, Stirling~$s$-permutations are exactly the Stirling~$k$-permutations of~\cite{JKP11}. Since decreasing trees can be naturally transformed into increasing trees, the bijection of Proposition~\ref{prop:s_tree_to_s_permutation} recovers the classical bijection of Gessel between~$(k+1)$-ary increasing trees and Stirling~$k$-permutations in this case (see~\cite[$\mathsection$5]{CG19},~\cite{JKP11}, and~\cite{GS78}).
\end{remark}

To describe the covering relations of the~$s$-weak order in terms of Stirling~$s$-permutations we need the following.

\begin{definition}\label{def:a_block}
	Let~$w$ be a Stirling~$s$-permutation.
	For~$a\in [n]$, the \defn{$a$-block~$B_a$}\index{$s$-weak order!$s$-Stirling-permutation!block} of~$w$ is the shortest substring~$u$ of~$w$ containing all~$s_a$ occurrences of~$a$.
\end{definition}

\begin{remark}\label{rem:blocks}
	Some quick facts about~$a$-blocks in a Stirling~$s$-permutation include: \begin{itemize}
		\itemsep0em
		\item An~$a$-block of~$w$ necessarily starts and ends with~$a$ and contains only letters in~$[a]$.
		\item For~$a<b$ we have that either~$B_a\subset B_b$ or~$B_a\cap B_b=\emptyset$. Otherwise, a partial intersection would imply that~$w$ contains the pattern~$121$.
		\item For~$a<c$,~$w$ contains the substring~$ac$ if and only if it is of the form~$w=u_1 B_a cu_2$ where~$u_1$ and~$u_2$ are words on~$[n]$.
	\end{itemize}

	Following Figure~\ref{fig:2112-decreasing-inordered-tree} we have that~$B_1=11$,~$B_2=2$,~$B_3=3$, and~$B_4=4114$.
\end{remark}

\begin{definition}\label{def:ascdesc}
	Let~$w$ be a Stirling~$s$-permutation.
	A pair~$(a,c)$ with~$1\leq a < c \leq n$ is an \defn{ascent}\index{$s$-weak order!$s$-Stirling-permutation!ascent} (resp.\ \defn{descent}\index{$s$-weak order!$s$-Stirling-permutation!descent}) of~$w$ if~$ac$ (resp.~$ca$) is a substring of~$w$.

	If~$w$ is of the form~$w=u_1B_acu_2$ where~$a<c$, the \defn{transposition} of~$w$ along the ascent~$(a,c)$ is the Stirling~$s$-permutation~$u_1cB_au_2$.
	We denote by \defn{$\inv(w)$} the multiset of inversions formed by pairs~$(c,a)$ with multiplicity \defn{$|(c,a)_w|$}$\in [0, s_c]$ the number of occurrences of~$c$ that precede the~$a$-block in~$w$.

	If~$A$ is a subset of ascents of~$w$, we denote by \defn{$w+A$} the Stirling~$s$-permutation with inversion set~$\big(\inv(w)+A\big)^{tc}$.
\end{definition}

\begin{lemma}\label{lem:ascdesc}
	Let~$w$ be a Stirling~$s$-permutation,~$T(w)$ its corresponding~$s$-decreasing tree and~$1\leq a<c\leq n$.
	\begin{enumerate}
		\itemsep0em
		\item The pair~$(a,c)$ is an ascent (resp.\ descent) of~$T(w)$ if and only if it is an ascent (resp.\ descent) of~$w$.
		\item~$|(c,a)_{T(w)}|=|(c,a)_w|$.
	\end{enumerate}
	Moreover, suppose~$(a,c)$ is an ascent of~$T=T(w)$ so that~$w$ is the of the form~$w=u_1B_acu_2$.
	Then~$T'$ is the~$s$-tree rotation of~$T$ along~$(a,c)$ if and only if~$T'=T(w')$ where~$w'=u_1 cB_au_2$.
\end{lemma}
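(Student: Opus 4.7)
The plan is to channel everything through the bijection of Proposition~\ref{prop:s_tree_to_s_permutation}. The pivotal observation is that, under the in-order traversal, the $a$-block $B_a$ of $w$ is exactly the in-order reading of the subtree rooted at $a$ in $T(w)$. This holds because the traversal at a node $i$ emits $T^i_0$, then $i$, then $T^i_1$, then $i$, $\ldots$, then $T^i_{s_i}$, so all $s_i$ copies of $i$ appearing in $w$ occur within the in-order reading of $i$'s subtree, and $B_a$ (the shortest substring containing every $a$) coincides with that reading between the first and last emissions of $a$. This principle lets me transport each tree-side statement directly to the word side.

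For part (2), I compare $|(c,a)_{T(w)}|$ with the number of $c$'s preceding $B_a$ in $w$. If $a$ is left of $c$ in $T(w)$ via a common ancestor $d$, then $B_a$ sits inside the in-order reading of some $T^d_x$, which is entirely finished before any emission of $c$, so both quantities equal $0$; if $a \in T^c_i$, then $B_a$ lies between the $i$-th and $(i+1)$-th emissions of $c$, so both quantities equal $i$; if $a$ is right of $c$, then $B_a$ is read entirely after every copy of $c$, so both quantities equal $s_c$. For part (1), I treat ascents (descents follow by the mirror argument): $(a,c)$ is an ascent of $w$ iff an emission of $a$ is immediately followed by an emission of $c$, and by the traversal rules this happens iff $a$ is the last label of the in-order reading of some $T^c_i$ with $i < s_c$. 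Unfolding this last condition yields exactly $a \in T^c_i$, every intermediate node $b$ on the path from $c$ down to $a$ satisfying $a \in T^b_{s_b}$, and $T^a_{s_a}$ being empty, which are the three defining conditions of a tree ascent.

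For the rotation correspondence, I set $T' := T(w')$ with $w' = u_1 c B_a u_2$, which is a Stirling $s$-permutation by Definition~\ref{def:ascdesc}, so $T'$ is well defined. By part~(2) applied to $T'$, it suffices to show $\inv(w') = \bigl(\inv(w) \cup \{(c,a)\}\bigr)^{tc}$. The transposition moves exactly one copy of $c$ from immediately after $B_a$ to immediately before $B_a$; the labels $y$ whose block $B_y$ lies inside $B_a$ are precisely $a$ together with the proper descendants of $a$ in $T$ (by Remark~\ref{rem:blocks} together with the bijection), so this move increases $|(c,y)_w|$ by one for each such $y$ and leaves every other multiplicity unchanged. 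On the tree side, raising $|(c,a)|$ from $i$ to $i+1$ and applying transitivity forces $|(c,y)| \geq i+1$ for each proper descendant $y$ of $a$, producing the same increments; planarity along the chain from $c$ to $a$ is already satisfied since each intermediate node $b$ on that chain has $|(b,a)| = s_b$. The main obstacle I anticipate is checking that the transitive closure stabilizes after this single wave of forcing and does not cascade beyond the subtree of $a$, which should follow from condition~(iii) of the ascent together with the block-nesting structure of Stirling $s$-permutations.
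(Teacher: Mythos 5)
Your arguments for parts (1) and (2) are correct: identifying $B_a$ with the in-order reading of the subtree rooted at $a$, and the three-case comparison of $|(c,a)_{T(w)}|$ with the number of $c$'s preceding $B_a$ in $w$, are exactly the right observations, and the unfolding of the ascent condition on $T$ into "last label emitted in $T^c_i$ with $i<s_c$" matches the paper's definition. These match the paper's own (implicit) intent, since the paper simply asserts that the lemma "follows easily from the definitions and Proposition~\ref{prop:s_tree_to_s_permutation}."

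For part (3), however, there is a concrete error in the central combinatorial claim. You assert that the labels $y$ with $B_y \subseteq B_a$ are precisely $a$ together with all proper descendants of $a$ in $T$. This is false: since $B_a$ spans from the first to the last occurrence of $a$, its in-order reading is $a, T^a_1, a, \ldots, a$, which contains $T^a_1, \ldots, T^a_{s_a-1}$ but \emph{not} $T^a_0$ or $T^a_{s_a}$; descendants of $a$ lying in $T^a_0$ are emitted before $B_a$ and hence outside it. Concretely, in the running example $w = 33725455716$ with $s=(1,1,2,1,3,1,2)$ and the ascent $(5,7)$, the node $2\in T^5_0$ is a proper descendant of $5$, yet $B_2 = 2$ lies inside $u_1 = 3372$ outside $B_5 = 5455$, and $|(7,2)_w| = |(7,2)_{w'}| = 1$ is unchanged under the transposition. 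Your parallel claim on the tree side, that raising $|(c,a)|$ to $i+1$ forces $|(c,y)|\geq i+1$ for every proper descendant $y$, fails for the same $y$: for $y\in T^a_0$ one has $|(a,y)_T|=0$, so transitivity (which requires $|(a,y)|=0$ \emph{or} $|(c,a)|\geq|(c,y)|$) is vacuously satisfied and forces nothing. The two errors compensate each other, and indeed the conclusion $\inv(w') = \big(\inv(w)\cup\{(c,a)\}\big)^{tc}$ is correct, but the proof as written rests on a false intermediate premise on both sides. The correct characterization, which lines up precisely with the $A$-dependency criterion of Theorem~\ref{thm:transitiveclosure} applied to $A=\{(a,c)\}$, is that $B_y\subseteq B_a$ iff $y=a$ or $y\in T^a_j$ for some $0<j<s_a$; the exclusion of $T^a_{s_a}$ is automatic by condition (iii) of the ascent, but the exclusion of $T^a_0$ is not and must be tracked explicitly.
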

\begin{proof}
	The proofs follow easily from the definitions and Proposition~\ref{prop:s_tree_to_s_permutation}.
\end{proof}

\begin{corollary}\label{prop:cover_relations_multiperm}
	Let~$w$ and~$w'$ be Stirling~$s$-permutations.
	Then~$w'$ covers~$w$ in the~$s$-weak order if and only if~$w'$ is the transposition of~$w$ along an ascent.
\end{corollary}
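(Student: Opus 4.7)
The plan is to deduce this statement directly from the preceding results by transporting the rotation description of cover relations from $s$-decreasing trees to Stirling $s$-permutations via the bijection established earlier. Concretely, I would invoke Proposition~\ref{prop:cover_relation_s_weak_order} which states that covers in the $s$-weak order correspond to rotations of $s$-decreasing trees along ascents, then use the bijection $w \leftrightarrow T(w)$ of Proposition~\ref{prop:s_tree_to_s_permutation} together with Lemma~\ref{lem:ascdesc} to translate both the notion of ascent and the notion of rotation into statements about Stirling $s$-permutations.

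The argument splits naturally into the two directions of the equivalence. For the forward direction, I would start with a cover $w \lessdot w'$ in the $s$-weak order. By the bijection, this means $T(w) \lessdot T(w')$ in the $s$-weak order on $s$-decreasing trees, so by Proposition~\ref{prop:cover_relation_s_weak_order} the tree $T(w')$ is the rotation of $T(w)$ along some ascent $(a,c)$ of $T(w)$. Lemma~\ref{lem:ascdesc}\,(1) then says $(a,c)$ is an ascent of $w$, so $w$ has the form $w = u_1 B_a c u_2$, and the last part of Lemma~\ref{lem:ascdesc} gives $T(w') = T(u_1 c B_a u_2)$; by injectivity of $T$ we conclude $w' = u_1 c B_a u_2$, which is exactly the transposition of $w$ along $(a,c)$. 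For the reverse direction, I would start with $w' = u_1 c B_a u_2$ being the transposition of $w = u_1 B_a c u_2$ along an ascent $(a,c)$, apply Lemma~\ref{lem:ascdesc} again in the opposite direction to see that $T(w')$ is the rotation of $T(w)$ along $(a,c)$, and then use Proposition~\ref{prop:cover_relation_s_weak_order} together with the bijection to conclude $w \lessdot w'$.

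There is essentially no obstacle here: the whole content of the statement has been repackaged into Lemma~\ref{lem:ascdesc}, and this corollary is just the formal translation. The only thing one might want to verify carefully is that the bijection of Proposition~\ref{prop:s_tree_to_s_permutation} is order-preserving with respect to the $s$-weak order, but this is immediate from Lemma~\ref{lem:ascdesc}\,(2) which shows that the inversion multisets are in bijection and in particular coincide under the identification (so containment of inversion multisets is preserved), hence the induced cover relations coincide. I would keep the proof to a few lines, simply citing Proposition~\ref{prop:cover_relation_s_weak_order} and Lemma~\ref{lem:ascdesc} and pointing out that both directions are immediate.
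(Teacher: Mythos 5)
Your proof is correct and takes the same route the paper intends: the corollary is stated without proof precisely because it follows immediately by transporting Proposition~\ref{prop:cover_relation_s_weak_order} through the bijection of Proposition~\ref{prop:s_tree_to_s_permutation} using Lemma~\ref{lem:ascdesc}, exactly as you describe. Your closing remark about order-preservation via the equality of inversion multisets in Lemma~\ref{lem:ascdesc}\,(2) is the right way to make the implicit step explicit.
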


The analogue of Figure~\ref{fig:strees122} showing the~$s$-weak order on Stirling~$s$-permutations can be found in Figure~\ref{fig:sperms122}.

\begin{figure}[h!]
	\centering
	\includegraphics[scale=0.6]{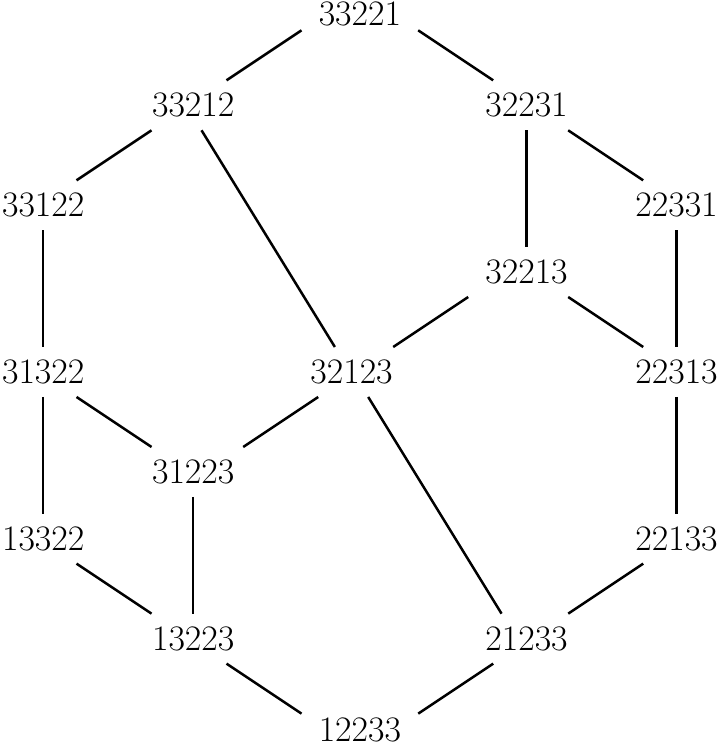}
	\caption{The~$(1,2,2)$-weak order on Stirling~$(1,2,2)$-permutations.}
	\label{fig:sperms122}
\end{figure}

\begin{example}\label{eg:s-perm}
	Let~$s=(1,1,2,1,3,1,2)$ and consider the~$s$-permutation~$w=33725455716$.
	The transposition of~$w$ along the ascent~$(5, 7)$ switches the~$5$-block~$B_5=5455$ of~$w$ with the~$7$ immediately after it and gives~$w'=3372\textcolor{red}{7}\textcolor{blue}{5455}16$. The corresponding rotation in terms of~$s$-decreasing trees is shown in Figure~\ref{fig:tree_rotation}.
\end{example}

This translation of the~$s$-weak order on~$s$-decreasing trees to Stirling~$s$-permutations allows us to consider the~$s$-permutahedron~$\PSPerm$ as the combinatorial complex with faces~$(w,A)$ where~$w$ is a Stirling~$s$-permutation and~$A$ is a subset of ascents of~$w$. It is with this definition that we study Conjecture~\ref{conj:s-permutahedron} in Chapter~\ref{chap:sorder_realizations}.

We finish this section by stating some tools on the transitivity of multisets which are of help to us when working with the faces of the~$s$-permutahedron. The following comes mostly from~\cite{GMPTY23}.

\begin{definition}[{\cite[Def.1.14]{CP22}}]\label{def:multi_transitive_closures}
	Let~$I$ be a multiset of inversions. A \defn{transitivity path} between two values~$c>a$ is a list of values~$a=b_1<\cdots<b_k=c$ such that~$(b_{i+1},b_i)_I>0$ for all~$i\in[k-1]$. In this way the transitive closure~$I^{tc}$ of~$I$ is the multiset of inversions with cardinalities \begin{equation*}
		|(c,a)_{I^{tc}}|:=\max\big(|(c,b_{k-1})_I|\,:\, a=b_1<\cdots<b_k=c \text{ is a transitivity path of } (c,a).\big)
	\end{equation*}
\end{definition}

\begin{definition}[{\cite{GMPTY23}}]\label{def:chain}
	Let~$w$ be a Stirling~$s$-permutation,~$A$ a subset of ascents of~$w$ and~$1\leq a < c\leq n$ such that~$|(c,a)_w|<s_c$. We say that the pair~$(a,c)$ is \defn{$A$-dependent}\index{$s$-weak order!$s$-Stirling-permutation!A-dependent} in~$w$ if there is a sequence~$a\leq b_1 < \cdots < b_k<b_{k+1}=c$ such that:
	%in~$w$:
	\begin{itemize}
		\itemsep0em
		\item $b_1$ is the greatest letter strictly smaller than~$c$ such that~$B_a\subseteq B_{b_1}$,
		\item for all~$i\in[k-1]$, the~$b_{i}$-block~$B_{b_i}$ is directly followed by~$B_{b_{i+1}}$,
		\item $B_{b_k}$ is directly followed by an occurrence of~$c$,
		\item $(b_i,b_{i})\in A$ for all~$i\in[k]$.
	\end{itemize}
\end{definition}

\begin{example}\label{ex:A_dependenacy}
	Let~$w$ be a Stirling~$s$-permutation and~$A$ a subset of ascents of~$w$. \begin{itemize}
		\itemsep0em
		\item If~$(a,c)\in A$, then~$(a,c)$ is~$A$-dependent taking~$b_1=a$ and~$b_2=c$.
		\item Let~$w=33725455716$ and~$A=\{(2,5), (5,7), (1,6)\}$. The pair~$(2,7)$ is~$A$-dependent through the sequence~$b_1=2$,~$b_2=5$ and~$b_3=7$. Meanwhile,~$(2,6)$ is not~$A$-dependent as the only block containing~$B_2$ is~$B_7$ but~$7>6$ and~$B_7$ is followed by~$B_1$ before the next occurrence of~$6$.
	\end{itemize}
\end{example}

\begin{theorem}[{\cite[Prop.3.12]{GMPTY23}}]\label{thm:transitiveclosure}
	Let~$w$ be a Stirling~$s$-permutation and~$A$ a subset of its ascents. Then \begin{equation*}
		\inv(w+A)=\begin{cases}
			|(c,a)_w|+1 & \text{if~$(a,c)$ is~$A$-dependent in }w \\
			|(c,a)_w|   & \text{otherwise.}
		\end{cases}
	\end{equation*}
\end{theorem}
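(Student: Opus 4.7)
The plan is to work directly with Definition~\ref{def:multi_transitive_closures}, computing $|(c,a)_{w+A}|$ as the maximum over transitivity paths $a = b_1 < b_2 < \cdots < b_k = c$ in $\inv(w)+A$ of the cardinality $|(c,b_{k-1})_{\inv(w)+A}|$. Since $\inv(w) \subseteq \inv(w)+A$, the trivial bound $|(c,a)_{w+A}| \geq |(c,a)_w|$ is immediate, so the entire content of the theorem lies in proving that the jump is always $0$ or $+1$, and that the $+1$ case is governed precisely by $A$-dependency.

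For the implication \emph{``$(a,c)$ is $A$-dependent $\Rightarrow |(c,a)_{w+A}| \geq |(c,a)_w| + 1$''}, I would use the chain $a \leq b_1 < b_2 < \cdots < b_{k+1} = c$ from Definition~\ref{def:chain} to build a transitivity path. If $a < b_1$, the hypothesis $B_a \subseteq B_{b_1}$ guarantees a $b_1$ precedes $B_a$ in $w$, so $|(b_1,a)_w| \geq 1$; for $i \geq 1$, the ascent $(b_i, b_{i+1}) \in A$ yields $|(b_{i+1}, b_i)_{\inv(w)+A}| \geq 1$. The crucial count is constant: since $B_{b_1}$ contains only letters $\leq b_1 < c$, no occurrence of $c$ lies between the starts of $B_{b_1}$ and $B_a$, giving $|(c,a)_w| = |(c,b_1)_w|$; similarly, for $i \in [k-1]$, the blocks $B_{b_i}$ and $B_{b_{i+1}}$ are adjacent and contain no $c$, so $|(c,b_i)_w| = |(c,b_{i+1})_w|$. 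Hence $|(c,b_k)_w| = |(c,a)_w|$, and $(b_k,c) \in A$ gives $|(c,b_k)_{\inv(w)+A}| = |(c,a)_w| + 1$, which the transitive closure inherits.

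The reverse direction bundles the upper bound $|(c,a)_{w+A}| \leq |(c,a)_w|+1$ with the necessity of $A$-dependency. Given an arbitrary maximum transitivity path $a = b_1 < \cdots < b_k = c$, I would track the quantities $m_i := |(c,b_i)_w|$. Transitivity of $\inv(w)$ forces $m_i \geq m_{i+1}$ whenever $|(b_{i+1}, b_i)_w| > 0$, while every ``gap'' index $i$ where $|(b_{i+1}, b_i)_w| = 0$ must be filled by an ascent $(b_i,b_{i+1}) \in A$, and such an ascent forces $B_{b_i}$ to be immediately followed by $b_{i+1}$ in $w$. Combining this block-adjacency with the planarity axiom of Definition~\ref{def:s_tree_inversion_sets}, I would show that a ``rise'' $m_{i+1} > m_i$ at a gap can only happen if $B_{b_{i+1}}$ properly contains $B_{b_i}$ in a way that is then blocked from providing further increments downstream—so the total rise along the path is at most $1$, and adds to the possible final $+1$ from $(b_{k-1},c) \in A$ only if the two effects do not coexist. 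Tracing back the structure of a path that realizes $|(c,a)_w|+1$ would then reconstruct exactly the sequence prescribed by Definition~\ref{def:chain}: consecutive blocks $B_{b_1}, \ldots, B_{b_k}$, with $B_a \subseteq B_{b_1}$ and $b_1$ maximal with this property.

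The main obstacle is this last paragraph, and in particular establishing that the combination of ascents in $A$ along a single transitivity path cannot compound beyond one increment. The required case analysis must juggle four ingredients simultaneously: transitivity and planarity of $\inv(w)$, the immediate-follow property forced by each ascent, and the block-containment relations between consecutive $B_{b_i}$. I expect an induction on the path length (or, equivalently, on $|A|$ restricted to ascents used by the path) to be the most natural organization, with the base case covering a single ascent and the inductive step splitting according to whether the last edge of the path is an $A$-edge or a pre-existing inversion.
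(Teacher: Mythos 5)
Your route works directly with the path-maximum formula for the transitive closure (Definition~\ref{def:multi_transitive_closures}), whereas the paper never touches individual transitivity paths: it defines the candidate multiset~$I$ by the stated formula, checks that~$I$ is transitive, and checks that~$I$ is contained in every transitive multiset containing~$\inv(w)+A$, so that~$I=(\inv(w)+A)^{tc}$ follows from the minimality characterization of the closure. Your forward direction (the $A$-dependency chain furnishes a valid transitivity path along which the count~$|(c,\cdot)_w|$ is constant and the terminal $A$-edge supplies exactly one increment) matches the first half of the paper's argument and is correct.

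The reverse direction is where the gap lies, and you have both over- and under-estimated it. You over-complicate the upper bound: the quantities~$m_i=|(c,b_i)_w|$ are in fact non-increasing along \emph{every} valid transitivity path. At a non-gap step this is the transitivity axiom directly. At a gap step, $|(b_{i+1},b_i)_w|=0$ together with $(b_i,b_{i+1})\in A$ forces~$B_{b_{i+1}}$ to begin exactly where~$B_{b_i}$ ends, and since~$B_{b_i}$ contains no~$c$'s we get~$m_{i+1}=m_i$ with no planarity argument needed; ``rises at gaps'' never occur, so your proposed mechanism involving $B_{b_{i+1}}\supsetneq B_{b_i}$ is vacuous (indeed that containment is impossible at a gap). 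What you have under-estimated is the genuine difficulty: to conclude, you must show that from \emph{any} $m$-constant transitivity path whose terminal edge lies in~$A$ one can reconstruct precisely the chain of Definition~\ref{def:chain}, including the maximality condition on~$b_1$ and the direct-following condition at every intermediate step --- and a transitivity path may run through nested blocks rather than adjacent ones, so ``straightening'' it into the canonical $A$-dependent chain is not a routine induction. The paper sidesteps this entirely by proving that the explicit multiset~$I$ is transitive, which yields the containment $\inv(w+A)\subseteq I$ without any analysis of extremal paths. I would strongly suggest adopting that strategy; as written, your plan has a real hole exactly at the step you flagged as ``the main obstacle.''
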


\begin{example}
	Let~$w=33725455716$ and~$A=\{(2,5), (5,7), (1,6)\}$. Augmenting these inversions gives the Stirling~$s$-permutation~$w+A=33775245561$. The pairs whose multiplicity in the multiset of inversion has been increased by~$1$ are~$\{(5,2), (6,1), (7,2), (7,4), (7,5)\}$.
\end{example}

\begin{proof}
	Let be~$I$ the multiset of inversions defined as \begin{equation*}
		|(c,a)_I|:=\begin{cases}
			|(c,a)_w|+1 & \text{if~$(a,c)$ is~$A$-dependent in }w \\
			|(c,a)_w|   & \text{otherwise.}
		\end{cases}
	\end{equation*}

	Notice that for an~$A$-dependent pair~$(a,c)$ and an integer~$d>c$ we have that~$|(d,c)_I|=|(d,a)_I|$. Moreover,~$(a,d)$ is~$A$-dependent if and only if~$(c,d)$ is also~$A$-dependent.	We need to verify that~$I$ is transitive and that it is the smallest transitive multiset of inversions containing~$\inv(w)+A$. We begin with the latter statement.

	First, it is clear that~$\inv(w)+A\subset I$ from the definition of~$I$ as every pair in~$A$ is~$A$-dependent. Now let us show that any transitive multiset of inversions~$I'$ that contains~$\inv(w)+A$ necessarily contains~$I$. Since~$\inv(w)+A\subseteq I'$ is clear that for any pair we have~$|(c,a)_{I'}|\geq |(c,a)_w|$.
	Let~$(a,c)$ be an~$A$ dependent pair in~$w$ through a sequence~$a\leq b_1< \cdots <  b_k< b_{k+1}=c$. We proceed by induction on the length~$k$ of the sequence. For~$k=1$ we have that
	\begin{itemize}
		\item either~$b_1=a$ and~$(a,c)\in A$. Giving us that~$|(c,a)_{I'}|\geq |(c,a)_w|+1$,
		\item or~$a<b_1<c$. In this case~$|(b_1,a)_{I'}|\geq |(b_1,a)_w|>0$ by since Definition~\ref{def:a_block}~$B_a\subset B_{b_1}$. Now we get that \begin{equation}\label{eq:transitivity+A+increasing_pair}
			      |(c,a)_{I'}|\geq|(c,b_1)_{I'}|\geq |(c,b_1)_w|+1=|(c,a)_w|+1
		      \end{equation}
		      where the first inequality comes from transitivity, the second from the previous case as~$(b_1,c)\in A$ and the last equality from~$(a,b_1)$ being~$A$-dependent.
	\end{itemize}
	Suppose that~$k>1$. The induction hypothesis tells us that~$|(b_k,a)_{I'}|\geq |(b_k,a)_w|+1 > 0$. Applying transitivity to~$a<b_k<c$ and using that~$(b_k,c)\in A$ and that~$(a,b_k)$ is~$A$-dependent (such that there is no occurrence of~$c$ between~$a$ and~$b_k$) gives us the inequalities~$|(c,a)_{I'}|\geq |(c,b_k)|_{I'}\geq |(c,b_k)_{w}|+1=|(c,a)_w|+1$. Thus,~$I\subseteq I'$.

	Lastly let us see that~$I$ is transitive in the following cascade of cases. Let~$1\leq a < b < c \leq n$.

	\begin{itemize}
		\itemsep0em
		\item If~$|(b,a)_I|=0$ then there is nothing to prove for this pair.
		\item If~$|(b,a)_I|>0$ we need to see that~$|(c,a)_I|\ge |(c,b)_I|$. \begin{itemize}
			      \itemsep0em
			      \item If~$|(b,a)_w|=0$, then~$(a,b)$ is~$A$-dependant and~$|(c,a)_I|=|(c,b)_I|$ by our observation at the start of the proof.
			      \item Suppose that~$|(b,a)_w|>0$. \begin{itemize}
				            \itemsep0em
				            \item If~$|(c,b)_I|=|(c,b)_w|$, then by the inclusion~$\inv(w)\subset I$ and the transitivity of~$\inv(w)$ we have that~$|(c,a)_I|\geq |(c,a)_w|\geq |(c,b)_w|=|(c,b)_I|$.
				            \item Suppose that~$|(c,b)_I|=|(c,b)_w|+1$ (i.e.~$(b,c)$ is~$A$-dependent). If~$|(c,a)_w|\geq |(c,b)_w|+1$, we have~$|(c,a)_I|\geq |(c,a)_w|\geq |(c,b)_w|+1 =|(c,b)_I|$. Otherwise, we have~$|(c,a)_w|=|(c,b)_w|=:i$. From our assumption that~$|(b,a)_w|>0$ we know that the~$a$-block~$B_a$ appears in~$w$ between the first occurrence of~$b$ and the~$i$-th occurrence of~$c$. Thus,~$(b,c)$ being~$A$-dependant implies that~$(a,c)$ is also~$A$-dependant as the sequence of~$(a,c)$ is included in the sequence of~$(b,c)$ in~$w$. These two~$A$-dependencies together with the transitivity of~$w$ for~$a<b<c$ concludes that~$|(c,a)|_I=|(c,a)|_w+1\geq|(c,b)|_w+1=|(c,b)|_I$ as wished. \qedhere
			            \end{itemize}
		      \end{itemize}
	\end{itemize}
\end{proof}

% Permutrees
\part{Permutrees}\label{part:permutrees}
% Killing chktex
% chktex-file 3
% chktex-file 9
% chktex-file 12
% chktex-file 17
% chktex-file 24
% chktex-file 25
% chktex-file 36
% chktex-file 40

\chapter{Inversion and Cubic Vectors for Permutrees}\label{chap:permutree_vectors}

\addcontentsline{lof}{part}{\protect\numberline{\thepart}Permutrees}
\addcontentsline{lot}{part}{\protect\numberline{\thepart}Permutrees}

\addcontentsline{lof}{chapter}{\protect\numberline{\thechapter}Inversion and Cubic Vectors for Permutrees}

In this chapter we present two generalizations of the bracket vectors of binary trees for permutrees based on~\cite{T23}. The first generalization which we call the inversion vectors helps prove in a constructive manner the lattice property for~$\delta$-permutree rotation posets. The second generalization called the cubic vectors allows for the construction of an embedding of these lattices onto a cube.

\section{Inversion Vectors}

We begin defining inversion vectors for~$\delta$-permutrees with the aim of proving the lattice property of~$\delta$-permutrees rotation posets (Proposition~\ref{prop:permutree_lattice_property}) in a constructive manner.

Recall that~$j\to i$ if~$v_j$ is a descendant of~$v_i$.

\begin{definition}\label{def:inversion_vector_permutrees}
	Consider~$T\in\cPT_n(\delta)$ to be a~$\delta$-permutree. Its \defn{inversion set}\index{permutree!inversion!set} and \defn{inversion components}\index{permutree!inversion!components} are \begin{equation*}
		\begin{split}
			B(T)&:=\{(i,j)\,:\, i<j  \text{ and } j\to i \},\\
			{B(T)}_i&:=\{j\in [n] \,:\, (i,j)\in B(T)\}.
		\end{split}
	\end{equation*} That is, all $j>i$ such that~$v_j$ is a descendant of~$v_i$. An inversion set has an associated \defn{inversion vector}\index{permutree!inversion!vector}~$\vec{b}(T)=(b_1,\ldots,b_{n-1})$ such that~$b_i=|{B(T)}_i|$.
\end{definition}

Since~$RD_{n}=\emptyset$, its component does not alter the combinatorics of inversion sets and thus, we do not consider it. Figure~\ref{fig:permutree-inversion-vector-IXYI} contains the inversion vectors for all~$\nonee\uppdownn\upp\nonee$-permutrees.

\begin{figure}[h!]
	\centering
	\includegraphics[scale=0.6]{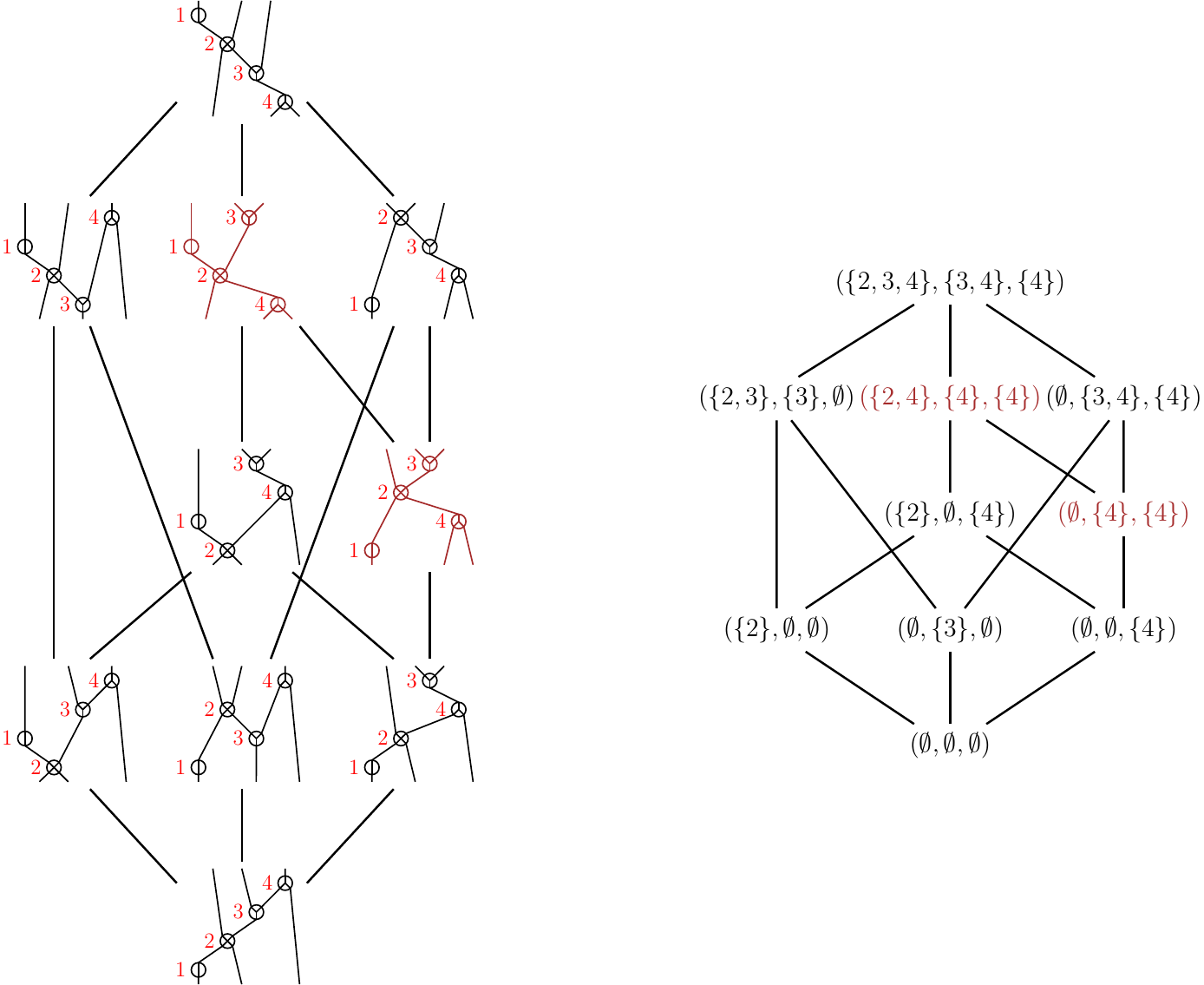}
	\caption[The rotation lattice of~$\nonee\uppdownn\upp\downn$-permutrees with their respective inversion sets.]{ The rotation lattice of~$\nonee\uppdownn\upp\downn$-permutrees (left) with their respective inversion sets represented via their components (right). The elements in brown correspond to the permutrees that are not extremal.}\label{fig:permutree-inversion-vector-IXYI}
\end{figure}

\begin{example}
	Let~$\hat{1},\,\hat{0},\, T_l,$ and~$T_r$ respectively be the top, bottom, middle left, and middle right elements of the lattice of~$\nonee\uppdownn\upp\nonee$-permutrees as in Figure~\ref{fig:permutree-inversion-vector-IXYI}. Then \begin{equation*}
		\begin{split}
			B(\hat{1})&=\{(1,2),(1,3),(1,4),(2,3),(2,4),(3,4)\},\\
			B(\hat{0})&=\emptyset,\\
			B(T_r)&=\{(2,4),(3,4)\},\\
			B(T_l)&=\{(1,2),(3,4)\}.
		\end{split}
	\end{equation*}
\end{example}

\begin{lemma}\label{lem:permutree_inversion_sets}
	Let~$1\leq i<j<k\leq n$. The set of inversion sets~$\{B(T)\,:\, T\in\cPT_n(\delta)\}$ is the set of all subsets~$E\subseteq \{(i,j)\in[n]^2\,:\,1\leq i<j\leq n\}$ such that \begin{enumerate}
		\itemsep0em
		\item $E$ is transitive,
		\item $E$ is cotransitive,
		\item if~$\delta_j\in\{\downn,\uppdownn\}$,~$(i,j)\notin E$, and~$(j,k)\in E$, then~$(i,k)\notin E$,
		\item if~$\delta_j\in\{\upp,\uppdownn\}$,~$(i,j)\in E$, and~$(j,k)\notin E$, then~$(i,k)\notin E$.
	\end{enumerate}
\end{lemma}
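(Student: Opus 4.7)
The plan is to prove both directions of the lemma by translating between the inversion conditions and the pattern-avoidance characterization of minimal elements of $\delta$-permutree congruence classes (Proposition~\ref{prop:permutree_quotients}), using Proposition~\ref{prop:inv_sets_trans_cotrans} to handle conditions (1) and (2). The key bridge is the identity $B(T) = \inv(\pi_T)$, where $\pi_T$ denotes the minimum permutation of the $\delta$-permutree congruence class associated to $T$.

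First I would establish $B(T) = \inv(\pi_T)$. The inclusion $B(T) \subseteq \inv(\pi_T)$ is immediate: any pair $(i,j) \in B(T)$ forces $j$ to appear before $i$ in every linear extension of the partial order of $T$, hence in $\pi_T$. The reverse inclusion can be argued by showing that any non-forced inversion $(i,j) \in \inv(\pi_T) \setminus B(T)$ could be removed via a sequence of adjacent swaps that remain within the congruence class, contradicting the weak-order minimality of $\pi_T$; alternatively, one can trace how the insertion algorithm (Definition~\ref{def:decorated_permutation}), applied to the greedy candidate for $\pi_T$, outputs precisely $T$ while producing only the descendant inversions.

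Next, the crux is translating the pattern-avoidance conditions into conditions (3) and (4). Since $(a,b) \in \inv(\pi)$ if and only if $a<b$ and $b$ appears before $a$ in $\pi$, an occurrence of the pattern $kij$ (with $i<j<k$) at position $j$ corresponds exactly to $(j,k) \in \inv(\pi)$, $(i,k) \in \inv(\pi)$, and $(i,j) \notin \inv(\pi)$; the negation of this triple is precisely condition (3). The same argument applied to the pattern $jki$ at $j$ produces condition (4).

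Combining these ingredients yields both directions. For the forward direction, $B(T) = \inv(\pi_T)$ is the inversion set of a permutation (conditions (1) and (2) hold by Proposition~\ref{prop:inv_sets_trans_cotrans}), and the minimality of $\pi_T$ in its class forces the relevant pattern avoidance via Proposition~\ref{prop:permutree_quotients}, so conditions (3) and (4) follow from the translation above. For the converse, given $E$ satisfying (1)--(4), Proposition~\ref{prop:inv_sets_trans_cotrans} produces a unique $\pi \in \fS_n$ with $\inv(\pi) = E$; conditions (3) and (4) force $\pi$ to avoid the patterns $kij$ at $j$ (when $\delta_j \in \{\downn,\uppdownn\}$) and $jki$ at $j$ (when $\delta_j \in \{\upp,\uppdownn\}$), so Proposition~\ref{prop:permutree_quotients} identifies $\pi$ as the minimum of a unique $\delta$-permutree congruence class, yielding a unique $\delta$-permutree $T$ with $B(T) = \inv(\pi) = E$. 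The main obstacle is establishing the identity $B(T) = \inv(\pi_T)$ rigorously, which requires a careful analysis of how the minimum linear extension of the poset of $T$ accumulates exactly the forced descendant inversions, most cleanly handled by invoking the insertion algorithm.
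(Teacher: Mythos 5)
Your proposal is correct but takes a genuinely different route from the paper. The paper argues both directions directly at the level of trees: for the forward direction it derives conditions (1)--(4) from the ancestor/descendant structure of~$T$, and for the converse it gives an explicit construction that places the vertices $v_n, v_{n-1}, \ldots, v_1$ on a grid according to $E$ and then runs the insertion algorithm to produce $T(E)$. You instead route everything through Proposition~\ref{prop:permutree_quotients}: you establish the identity $B(T) = \inv(\pi_T)$, observe that the avoidance of the pattern $kij$ (resp.\ $jki$) at $j$ is, after translating positions into inversions, literally condition (3) (resp.\ condition (4)), and then handle (1) and (2) via Proposition~\ref{prop:inv_sets_trans_cotrans}. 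This is clean and elegant, and it makes the connection between the lemma's conditions and the pattern-avoidance characterization completely transparent, which the paper's proof leaves implicit. It also avoids the slightly ad~hoc table construction. The trade-off is that your route leans on Proposition~\ref{prop:permutree_quotients} (and implicitly on the congruence classes being intervals with minimal elements), so it is less self-contained; this is relevant here because the stated aim of this chapter is a \emph{constructive} proof of the lattice property (Corollary~\ref{cor:permutrees_are_lattices}) that does not appeal to the lattice-quotient machinery of~\cite{PP18}, so the paper has an extra incentive to keep the argument elementary.

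One place your sketch could be tightened: the step $B(T) = \inv(\pi_T)$ is the crux, and the ``sequence of adjacent swaps'' or ``trace the insertion algorithm'' suggestions are more work than needed. The cleanest argument is that the congruence class of $T$ is exactly the set of linear extensions of the poset of $T$ (Proposition~\ref{prop:permutree_insertion_linear_extension}) and is an interval $[\pi_T, \pi^T]$ in the weak order, so $\inv(\pi_T) = \bigcap_{\pi} \inv(\pi)$ over the class; then $\bigcap_{\pi} \inv(\pi) = B(T)$ because every $(i,j) \in B(T)$ forces $j$ before $i$ in every linear extension, while for $(i,j)\notin B(T)$ either $i$ is a descendant of $j$ (so $i$ precedes $j$ in \emph{every} linear extension) or $i,j$ are incomparable (so there is a linear extension with $i$ before $j$) --- in either case $(i,j)$ drops out of the intersection. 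With that fixed, the proof is complete.
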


\begin{proof}
	Let~$T$ be a~$\delta$-permutree and~$E:=B(T)$. If~$(i,j),(j,k)\in E$, then~$j\rightarrow i,\,k\rightarrow j$, evidently~$k\rightarrow i$. That is,~$(i,k)\in E$ and~$E$ is transitive. The fact that~$E$ is cotransitive follows a similar argument. For property 3, the facts that~$(i,j)\notin E$ and~$(j,k)\in E$ respectively mean that~$v_i\in LD_j$ and~$v_k\in RD_j$. Thus,~$v_k$ is not a child of~$v_i$ and~$(i,k)\notin E$. Property 4 follows a similar argument.

	For the opposite direction we wish to construct a~$\delta$-permutree~$T(E)$ in accordance with the elements in~$E$. Let~$E_i=\{j\in [n]\,:\, (i,j)\in E\}$ be the components of~$E$. Notice that~$E_n=\emptyset$ and that if~$(i,j)\in E_i$, and~$(j,k)\in E_j$ then~$(i,k)\in E_i$ due to~$E$ being transitive. With this in mind, we can construct~$T(E)$ in the following way. Take an~$n\times n$ grid. As step~$0$, place vertex~$v_n$ anywhere in the last column. Now for step~$i$, place the vertex~$v_{n-i}$ in the~$n-i$-th column and at the height such that it is above (resp.\ below) all~$j$ such that~$(n-i,j)\in E$ (resp.~$(n-i,j)\notin E$). If such height was already used by another vertex, move the placed vertices up or down as required maintaining the relative others established in the previous steps. After step~$n-1$ we get a permutation table. Decorate each vertex~$v_i$ with the decoration~$\delta_i$. Following the insertion algorithm (see Definition~\ref{def:decorated_permutation}) we obtain a~$\delta$-permutree~$T(E)$. Notice that the placement of the vertices in the grid ensures transitivity and cotransitvity and that the red walls from the decorations in the insertion algorithm accomplish properties 3 and 4. See Figure~\ref{fig:permutree_inversion_set_insertion_algorithm} for an example.
\end{proof}

\begin{figure}[h!]
	\centering
	\includegraphics[scale=0.7]{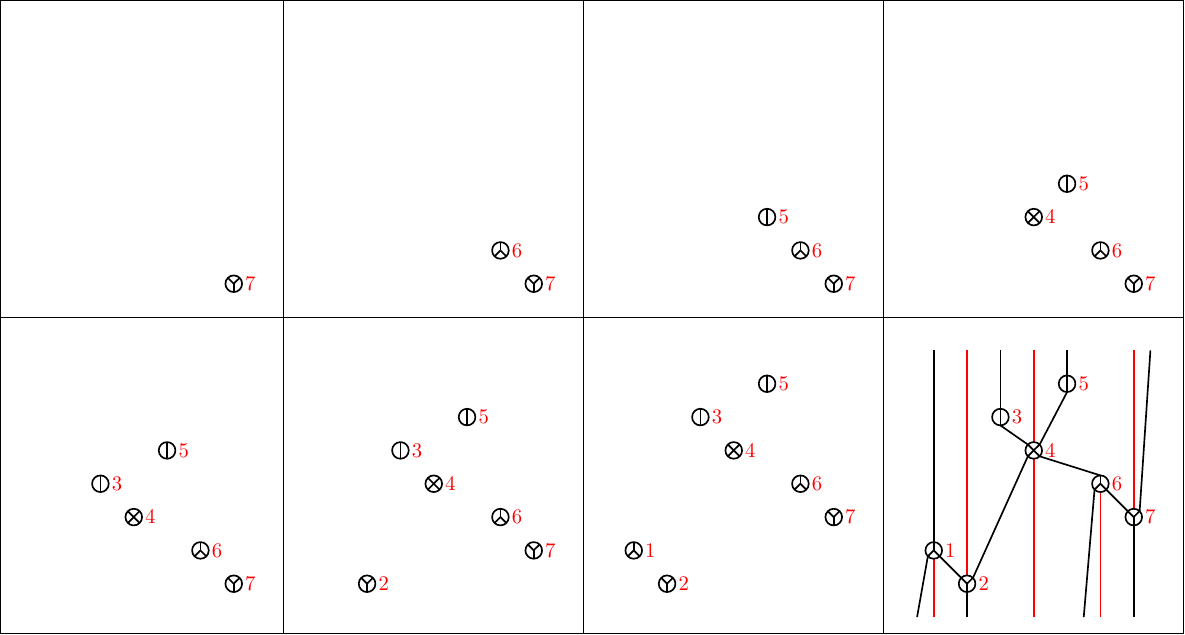}
	\caption[The construction of a~$\downn\upp\nonee\uppdownn\nonee\downn\downn$-permutree from an inversion vector.]{ The construction of the~$\downn\upp\nonee\uppdownn\nonee\downn\upp$-permutree corresponding to the inversion set~$\{(1,2),(3,4),(3,6),(3,7),(4,6),(4,7),(5,6),(5,7),(6,7)\}$.}\label{fig:permutree_inversion_set_insertion_algorithm}
\end{figure}

\begin{remark}\label{rem:inversion_set_inclusion}
	In the case of cover relations~$T\lessdot T'$ between~$\delta$-permutrees, that is, an~$ij$-edge rotation from~$T$ to~$T'$, Definition~\ref{def:permutree_rotations} tells us that such a rotation only affects the edge between~$v_i$ and~$v_j$ while the rest of the tree remains the same. In terms of inversions this means that the rotation only turns into inversions the pairs of the form~$(i,x)$ where~$x\in RD_j(T)$ and if~$\delta_i\in\{\upp,\uppdownn\}$, also all the pairs that depend on these in a transitive manner. That is,~$B(T')=(B(T)\cup \{(i,j)\})^{tc}$ no matter the decoration~$\delta$.
\end{remark}

\begin{remark}\label{rem:recovering_CPP_1}
	The characterization of inversion sets was already given in~\cite[Section 2.3.2]{CPP19} where they are called \emph{IPIP} (\emph{PIP} meaning permutree interval poset). With Lemma~\ref{lem:permutree_inversion_sets}, not only we have characterized inversion sets for permutrees but also described how to recover the permutree given its inversion set which is not done in~\cite{CPP19}.
\end{remark}

To use inversion sets as a tool we need first to show that we can describe the~$\delta$-permutree rotation order via their containment.

\begin{lemma}\label{lem:inversion_set_inclusion}
	Let~$T,T'$ be two~$\delta$-permutrees. Then~$T<T'$ if and only if~$B(T)\subset B(T')$.
\end{lemma}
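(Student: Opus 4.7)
The plan is to handle the two directions separately. The forward direction is a quick induction on the length of a rotation chain $T = T_0 \lessdot T_1 \lessdot \cdots \lessdot T_\ell = T'$: by Remark~\ref{rem:inversion_set_inclusion}, each cover $T_k \lessdot T_{k+1}$ via an $ij$-rotation satisfies $B(T_{k+1}) = (B(T_k) \cup \{(i,j)\})^{tc}$, which strictly enlarges the inversion set. Chaining these strict inclusions yields $B(T) \subsetneq B(T')$.

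For the converse I would proceed by strong induction on $N := |B(T') \setminus B(T)|$. The base case $N = 0$ forces $B(T) = B(T')$, and since the construction in the proof of Lemma~\ref{lem:permutree_inversion_sets} recovers a unique $\delta$-permutree from its inversion set, we get $T = T'$. For $N \geq 1$, the aim is to exhibit a cover $T \lessdot T''$ with $B(T'') \subseteq B(T')$; then $|B(T') \setminus B(T'')| < N$ and the inductive hypothesis gives $T'' \leq T'$, whence $T < T'$. To produce such a $T''$, I would select $(i,j) \in B(T') \setminus B(T)$ with $j - i$ minimal and claim that $v_i \to v_j$ is an edge of $T$. If the claim holds, the $ij$-rotation on $T$ produces a $T''$ with $B(T'') = (B(T) \cup \{(i,j)\})^{tc}$, which is contained in $B(T')$ because $B(T')$ already contains $B(T) \cup \{(i,j)\}$ and satisfies all four closure conditions of Lemma~\ref{lem:permutree_inversion_sets}, so any minimal superset sharing these conditions must lie inside $B(T')$.

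The hard part will be verifying the edge claim. The minimality of $j-i$ immediately yields two useful symmetries: for every $k$ with $i < k < j$, $(i,k) \in B(T) \iff (i,k) \in B(T')$ and $(k,j) \in B(T) \iff (k,j) \in B(T')$, because any pair in $B(T') \setminus B(T)$ with index gap below $j-i$ would contradict minimality. Supposing for contradiction that the undirected path from $v_i$ to $v_j$ in $T$ has length at least two, I would pick an intermediate vertex $v_k$ on this path and carry out a case analysis according to the position of $k$ relative to the interval $[i,j]$ and according to whether $v_k$ is an ancestor or descendant of $v_i$ and $v_j$ in $T$. In each case, transitivity and cotransitivity of $B(T)$ and $B(T')$ combined with the two symmetries above produce relations of the form $(i,k), (k,j) \in B(T')$ or $(i,k), (k,j) \notin B(T')$, and properties (3) and (4) of Lemma~\ref{lem:permutree_inversion_sets} applied to the decoration $\delta_k$ and the triple $i < k < j$ then force $(i,j) \notin B(T')$, contradicting our choice of $(i,j)$. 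The technical heart lies in exhausting these cases using the extra closure conditions that distinguish permutree inversion sets from permutation inversion sets, which provide precisely the leverage needed when $\delta_k \in \{\downn, \upp, \uppdownn\}$.
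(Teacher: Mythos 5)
Your plan for the backward direction is a genuinely different strategy from the paper's: you build a chain from $T$ up toward $T'$ by induction on $N = |B(T')\setminus B(T)|$, whereas the paper starts from the minimal permutree $\hat{0}$, fixes a chain $\hat{0}\lessdot T_1\lessdot\cdots\lessdot T$, and argues that each $T_x$ stays below $T'$. Your strategy is natural and, if it worked, would arguably be cleaner. However, the specific criterion you propose for producing the next cover --- select $(i,j)\in B(T')\setminus B(T)$ with $j-i$ minimal and claim $v_i\to v_j$ is an edge of $T$ --- is false, already in the most classical case $\delta=\nonee^n$ where $B(T)=\inv(\sigma)$ is just an inversion set of a permutation and an $ij$-edge means $v_i$ and $v_j$ are adjacent in the path $v_{\sigma(1)}\to\cdots\to v_{\sigma(n)}$.

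Concretely, take $n=4$, $\delta=\nonee^4$, $\sigma=2134$ and $\sigma'=3421$. Then $B(T)=\inv(\sigma)=\{(1,2)\}$ and $B(T')=\inv(\sigma')=\{(1,2),(1,3),(1,4),(2,3),(2,4)\}$, so $B(T)\subset B(T')$ and $B(T')\setminus B(T)=\{(1,3),(1,4),(2,3),(2,4)\}$. The unique pair with minimal value gap $j-i=1$ is $(2,3)$. But in $T$ the path is $v_2\to v_1\to v_3\to v_4$, so $v_2$ and $v_3$ are \emph{not} adjacent: the intermediate vertex is $v_1$. The pair you need to rotate is $(1,3)$, which has $j-i=2$. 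Note also that the rotation along $(3,4)$, the only other edge available, leaves $B(T')$ (it introduces $(3,4)\notin B(T')$), so finding the right edge is essential.

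This shows exactly where your case analysis breaks. You plan to derive a contradiction via transitivity, cotransitivity, and conditions (3)--(4) of Lemma~\ref{lem:permutree_inversion_sets} ``applied to the triple $i<k<j$.'' But the intermediate vertex $v_k$ on the $v_i$--$v_j$ path in $T$ need not satisfy $i<k<j$; in the counterexample $k=1<2=i$. The ``two symmetries'' you extract from minimality of $j-i$ only constrain pairs with value gap strictly less than $j-i$, and the new pairs $(1,2)$, $(1,3)$ that enter the picture have gaps $1$ and $2$ respectively, so at most one of them is controlled. Moreover for $\delta=\nonee^n$ conditions (3) and (4) are vacuous, so there is simply no leverage from the decoration, and the four pairs $(1,2),(1,3),(2,3),(1,3)$ are perfectly consistent with transitivity and cotransitivity in $B(T')$. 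No contradiction arises, and the claim fails.

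The rest of your skeleton is sound --- the forward direction matches the paper, the base case $B(T)=B(T')\Rightarrow T=T'$ is justified by Lemma~\ref{lem:permutree_inversion_sets}, and the containment $(B(T)\cup\{(i,j)\})^{tc}\subseteq B(T')$ is correct once you do have an edge $(i,j)$ with $(i,j)\in B(T')$ --- but you need a different selection rule. In the permutation case one correct choice is to minimize the positional gap $|\sigma^{-1}(j)-\sigma^{-1}(i)|$ rather than the value gap $j-i$; this does yield an adjacent pair, and both $k<i$ and $k>j$ can be ruled out using transitivity of $B(T')$ and $B(T)\subseteq B(T')$. But ``position'' has no direct analogue in a general permutree, so you would need to replace it by something intrinsic, for instance choosing $(i,j)\in B(T')\setminus B(T)$ such that $v_i\to v_j$ is an edge of $T$ directly and proving such a pair exists (which is close in spirit to what the paper's chain argument is trying to package). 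As written, the central claim of your argument is wrong and the proof has a genuine gap.
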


\begin{proof}
	Suppose that~$T<T'$. Since the rotation order on permutrees is the transitive closure of the covering relations given by the rotations in Figure~\ref{fig:permutree_rotations}, it is enough to prove this in the case that~$T'$ covers~$T$. Remark~\ref{rem:inversion_set_inclusion} tells us that in such a case~$B(T')=(B(T)\cup \{(i,j)\})^{tc}$ and thus~$B(T)\subset B(T')$.

	Before moving to the other direction let~$\hat{0}$ be the minimal~$\delta$-permutree in the rotation lattice. The fact that~$T\lessdot T'$ implies that~$B(T)={(B(T)\cup\{(i,j)\})}^{tc}$ tells us that for any chain~$\hat{0}=T_0\lessdot T_1\lessdot\cdots\lessdot T_{l}\lessdot T_{l+1}=T$ in the interval~$[\hat{0},T]$, we have that a \defn{sequence} of inversions~$(i_x,j_x)$ such that~$B(T_x)={(B(T_{x-1})\cup\{(i_x,j_x)\})}^{tc}$ for all~$x\in[l]$. In this way we say that a sequence~$(i_1,j_1),\ldots,(i_l,j_l)$ generates~$T$. It is easy to see that~$T<T'$ if and only if for every sequence~$(i_1,j_1),\ldots,(i_l,j_l)$ that generates~$T$ there exists a sequence that generates~$T'$ of the form~$(i_1,j_1),\ldots,(i_l,j_l),(i_{l+1},j_{l+1}),\ldots,(i_{l'},j_{l'})$. Take notice that the length of the chains in an interval of permutrees might not always be the same.

	Now suppose that~$B(T)\subset B(T')$ and let~$(i_1,j_1),\ldots,(i_l,j_l)$ be a sequence of~$T$. We claim that for all~$x\in\{0,\ldots,l\}$ the sequence~$(i_1,j_1),\ldots,(i_x,j_x)$ is the start of sequence of~$T'$. Let~$T_x$ correspond to the~$\delta$-permutree corresponding to the claim corresponding to~$x$. Notice that the claim for~$x=l$ amounts to proving~$T<T'$. We proceed by induction on the length of the chain which is given by~$x$. If~$x=0$ then~$T_0=\hat{0}$ and the claim is trivial. Now suppose that the claim holds for~$x-1$ and~$(i_1,j_1),\ldots,(i_{x-1},j_{x-1})$ is the start of a sequence of~$T'$, that is,~$T_{x-1}<T$. Suppose as well that the~$T_{x}\nless T'$. Since~$T_x< T'$, this means that with the~$i_xj_j$-edge rotation,~$T_{x}$ obtained an inversion that~$T'$ does not possess. This is a contradiction since~$B(T_{x})={(B(T_{x-1})\cup\{(i_x,j_x)\})}^{tc}\subset B(T)\subset B(T)$ as all sets~$B$ are transitive. Thus,~$T_x<T$ for all~$x\in\{0,\ldots,l\}$ and~$T<T'$.
\end{proof}

\begin{theorem}\label{thm:permutree_meet}
	Given two~$\delta$-permutrees~$T,T'$ on~$n$ vertices, there exists a~$\delta$-permutree~$T\wedge T'$ under the~$\delta$-permutree rotation order. Moreover, it satisfies \begin{equation}\label{eq:inversion_set_meet}
		B(T\wedge T')=B(T)\cap B(T')\cap \{(i,j)\,:\,\forall\,i<l<j,\, (i,l) \text{ or } (l,j)\in B(T)\cap B(T')\}.
	\end{equation}
\end{theorem}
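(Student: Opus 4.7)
The plan is to show that the set $E$ defined by the right-hand side of~\eqref{eq:inversion_set_meet} satisfies the four conditions of Lemma~\ref{lem:permutree_inversion_sets}, and hence corresponds to a $\delta$-permutree $S := T(E)$. Then I will show that $S$ is actually the greatest lower bound of $T$ and $T'$ in the rotation order, via Lemma~\ref{lem:inversion_set_inclusion}. The lattice property will then follow from Proposition~\ref{prop:semilattice_to_lattice} and Remark~\ref{rem:permutree_rotations_bounded}, giving Corollary~\ref{cor:permutrees_are_lattices} as an immediate consequence.

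First I would verify that $E$ is transitive. Suppose $(i,j),(j,k)\in E$. Since $E\subseteq B(T)\cap B(T')$ and these sets are transitive, $(i,k)\in B(T)\cap B(T')$. For the extra condition, fix $i<l<k$; the cases $l=j$, $l<j$, and $l>j$ are handled by applying the defining condition of $E$ to $(i,j)$ or $(j,k)$ and then using transitivity of $B(T)\cap B(T')$ to propagate. Cotransitivity is analogous: if $(i,k)\in E$ and $i<j<k$, applying the defining condition of $E$ at $l=j$ already gives $(i,j)\in B(T)\cap B(T')$ or $(j,k)\in B(T)\cap B(T')$; a short case analysis using cotransitivity of $B(T)$ and $B(T')$ then shows the chosen pair is in $E$ (that is, satisfies the extra condition). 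Conditions (3) and (4) of Lemma~\ref{lem:permutree_inversion_sets} transfer from $B(T)\cap B(T')$ to $E$ along the same lines: if, say, $\delta_j\in\{\downn,\uppdownn\}$, $(i,j)\notin E$, and $(j,k)\in E$, then I need $(i,k)\notin E$; if $(i,k)$ were in $E$, then in particular $(i,k)\in B(T)\cap B(T')$ and, by condition (3) applied to $B(T)$ and $B(T')$, one would force $(i,j)\in B(T)\cap B(T')$, and a check using the defining condition of $E$ at intermediate $l$'s would push $(i,j)$ back into $E$, a contradiction.

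Next, by Lemma~\ref{lem:inversion_set_inclusion}, the inclusions $E\subseteq B(T)$ and $E\subseteq B(T')$ imply $S\le T$ and $S\le T'$, so $S$ is a lower bound. To see that it is the greatest lower bound, let $S'$ be any $\delta$-permutree with $S'\le T$ and $S'\le T'$. By Lemma~\ref{lem:inversion_set_inclusion}, $B(S')\subseteq B(T)\cap B(T')$. I need to show $B(S')\subseteq E$, i.e.\ every $(i,j)\in B(S')$ satisfies the extra condition. This is where cotransitivity of $B(S')$ (condition (2) of Lemma~\ref{lem:permutree_inversion_sets}) does the work: for any $i<l<j$, cotransitivity gives $(i,l)\in B(S')$ or $(l,j)\in B(S')$, and in either case the containment $B(S')\subseteq B(T)\cap B(T')$ places the chosen pair in $B(T)\cap B(T')$, as required.

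The main obstacle is the bookkeeping in the first step: verifying that $E$ satisfies all four conditions of Lemma~\ref{lem:permutree_inversion_sets} involves a handful of case splits, especially when mixing the extra condition defining $E$ with the decoration-sensitive conditions~(3) and~(4). These conditions relate three indices, and one must be careful that pushing a pair through transitivity or cotransitivity of $B(T)\cap B(T')$ does not violate the decoration constraints at intermediate vertices. Once this verification is complete, the rest is a straightforward application of Lemmas~\ref{lem:permutree_inversion_sets} and~\ref{lem:inversion_set_inclusion}, and the join can be obtained dually (or by self-duality arguments in the style of Proposition~\ref{prop:permutation_autoduality}) to recover the full lattice structure.
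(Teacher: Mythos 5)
Your overall strategy mirrors the paper's: verify that the right-hand side~$E$ of~\eqref{eq:inversion_set_meet} satisfies the four conditions of Lemma~\ref{lem:permutree_inversion_sets}, then use Lemma~\ref{lem:inversion_set_inclusion} to identify $T(E)$ as the greatest lower bound. Your lower-bound and greatest-lower-bound paragraphs are correct and coincide with the paper's argument (the cotransitivity of~$B(S')$ is exactly the right tool for the second), and deducing Corollary~\ref{cor:permutrees_are_lattices} from Proposition~\ref{prop:semilattice_to_lattice} and Remark~\ref{rem:permutree_rotations_bounded} is fine.

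The genuine gap is your cotransitivity verification. You claim that if $(i,k)\in E$ and $i<j<k$, then applying the defining condition at $l=j$ yields $(i,j)$ or $(j,k)$ in $B(T)\cap B(T')$, and that ``a short case analysis'' shows the chosen pair lies in~$E$. That claim fails. Take $\delta=\nonee\nonee\nonee\nonee$ and let $T,T'$ be the $\delta$-permutrees with $B(T)=\{(1,2),(1,3),(1,4),(2,4),(3,4)\}$ and $B(T')=\{(1,3),(1,4),(2,3),(2,4)\}$ (the permutations $4231$ and $3412$). Then $B(T)\cap B(T')=\{(1,3),(1,4),(2,4)\}$, and a direct check gives $E=\{(1,4)\}$: the pair $(1,3)$ fails the extra condition at $l=2$, and $(2,4)$ fails it at $l=3$. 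But $\{(1,4)\}$ is not cotransitive — for $(i,j,k)=(1,2,4)$, neither $(1,2)$ nor $(2,4)$ is in~$E$ — so the pair $(2,4)$ your sketch would select is \emph{not} in~$E$, and no case analysis on cotransitivity of $B(T)$ and $B(T')$ will put it there. The structural issue is that a single pass of the extra condition over $B(T)\cap B(T')$ need not be closed under further applications of it; the paper addresses cotransitivity by a separate transitive-closure observation on the complement, not by the direct case split you propose, and your sketch does not reproduce that. Your treatment of conditions~(3) and~(4) rests on a similar ``a check ... would push $(i,j)$ back into~$E$'' step and has the same weakness, so the first step of your plan would need a substantially more careful argument than the one outlined.
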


\begin{example}\label{ex:permutree_meet}
	Figure~\ref{fig:permutree_meet_example} presents the meet operation between~$\delta$-permutrees. Taking the last set of the right-hand side of Equation (\ref{eq:inversion_set_meet}) as~$I$, we have that the corresponding inversion sets in this case are \begin{equation*}
		\begin{split}
			B(T)&=           \{(2,3),(2,4),(2,5),(3,4)\}                         \\
			B(T')&=          \{(1,2),(1,4),(1,5),(2,4),(2,5),(3,4),(3,5),(4,5)\} \\
			I&=              \{(1,2),(1,3),(1,4),(2,3),(2,4),(3,4),(3,5),(4,5)\} \\
			B(T\wedge T')&=  \{(2,4),(3,4)\}
		\end{split}
	\end{equation*} and satisfy Theorem~\ref{thm:permutree_meet}. Translating this into the components of the inversion sets we have that \begin{multicols}{2}
		\noindent
		\begin{equation*}
			\begin{split}
				\emptyset\cap\{2,4,5\}\cap\{2,3\}&=\emptyset\\
				\{3,4,5\}\cap\{4,5\}\cap\{3,4\}&=\{4\}
			\end{split}
		\end{equation*}
		\begin{equation*}
			\begin{split}
				\{4\}\cap\{4,5\}\cap\{4,5\}&=\{4\}\\
				\emptyset\cap\{5\}\cap\{5\}&=\emptyset\\
			\end{split}
		\end{equation*}
	\end{multicols}
\end{example}

\begin{figure}[h!]
	\centering
	\includegraphics[scale=1]{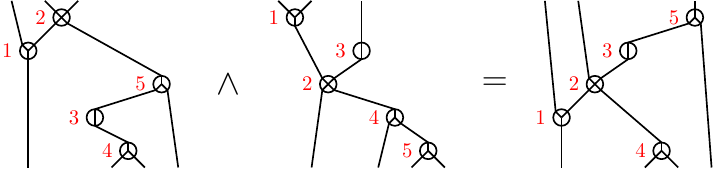}
	\caption{The meet of two~$\upp\uppdownn\nonee\downn\downn$-permutrees.}\label{fig:permutree_meet_example}
\end{figure}

\begin{remark}\label{rem:recovering_binary_meet_from_permutrees}
	Notice that inversion sets of~$\{\downn\}^n$-permutrees are bracket sets of binary trees as in Definition~\ref{def:bracket_vector}. We recover Proposition~\ref{prop:tamari_lattice_meet} whenever~$\delta\in\{\downn\}^n$ as~$B(T)\cap B(T')$ is contained in the last set of Equation (\ref{eq:inversion_set_meet}). To see this, notice that if~$(i,j)\in B(T)$ then~$v_l\in RD_i$ for all~$i<l<j$. If not, it would contradict that~$\delta_j=\downn$. Thus,~$(i,l)\in B(T)$. The same argument applies if~$(i,j)\in B(T')$ and thus in this case~$(i,j)\in B(T)\cap B(T')$ implies~$(i,l)\in B(T)\cap B(T')$ giving us the desired inclusion.
\end{remark}

\begin{remark}\label{rem:recovering_CPP_2}
	Our meet operation is similar to the \emph{tdd} operation defined in~\cite[Section 1.2.2]{CPP19} on integer posets. In this context, Theorem~\ref{thm:permutree_meet} can be seen as~\cite[Corollary 2.39]{CPP19} through the \emph{tdd} and our Lemma~\ref{lem:permutree_inversion_sets}. Nevertheless, we give here a direct proof without relying on the results of~\cite{CPP19}.
\end{remark}

We proceed to prove Theorem~\ref{thm:permutree_meet}.

\begin{proof}
	First let us see that~$B(T\wedge T')$ satisfies the conditions of Lemma~\ref{lem:permutree_inversion_sets} and thus defines~$T\wedge T'$ as a~$\delta$-permutree.

	Assume that~$(i,j),(j,k)\in B(T\wedge T')$. As~$(i,j),(j,k)\in B(T)\cap B(T')$, the transitivity of these sets tells us that~$(i,k)\in B(T)\cap B(T')$. We just need to see that~$(i,k)$ is also in the last set of Equation~\ref{eq:inversion_set_meet}. Let~$l$ such that~$i<l<k$. If~$i<l<j$, then as~$(i,j)\in B(T\wedge T')$ we have that either~$(i,l)\in B(T)\cap B(T')$ or~$(l,j)\in B(T)\cap B(T')$. In the former case we are done. In the latter, as~$(j,k)\in B(T)\cap B(T')$, using transitivity we get that~$(l,k)\in B(T)\cap B(T')$, and we are done. If instead~$l=j$, we immediately finish as by assumption~$(i,j),(j,k)\in B(T)\cap B(T')$. Finally, suppose that~$j<l<k$. In this case since~$(j,k)\in B(T\wedge T')$, either~$(j,l)\in B(T)\cap B(T')$ or~$(l,k)\in B(T)\cap B(T')$. In the latter case we finish. For the former, as~$(i,j)\in B(T)\cap B(T')$, using transitivity we get that~$(i,l)\in B(T)\cap B(T')$. Thus, we conclude that~$B(T\wedge T')$ is transitive.

	To see that~$B(T\wedge T')$ is cotransitive notice that its complement is the transitive closure of~$B(T)$ and~$B(T')$. That is,~$B(T\wedge T')^c=(B(T)^c\cup B(T')^c)^{tc}$. By definition of transitive closure it is immediate that~$B(T\wedge T')$ is cotransitive.

	Now suppose that~$\delta_j\in\{\downn,\uppdownn\}$,~$(i,j)\notin B(T\wedge T')$, and~$(j,k)\in B(T\wedge T')$. The last assumption tells us that~$(j,k)\in B(T)\cap B(T')$ and for all~$l$ such that~$j<l<k$, either~$(j,l)\in B(T)\cap B(T')$ or~$(l,k)\in B(T)\cap B(T')$. On the other hand, that~$(i,j)\notin B(T\wedge T')$ means that either~$(i,j)\notin B(T),\, (i,j)\notin B(T')$, or there exists~$i<l^*<j$ such that~$(i,l^*),(l^*,j)\notin B(T)\cap B(T')$. If either~$(i,j)\notin B(T)$ or~$(i,j)\notin B(T')$, then because of Property 3 of Lemma~\ref{lem:permutree_inversion_sets} and the fact that~$(j,k)\in B(T)\cap B(T')$ we have that~$(i,k)\notin B(T)$ and~$(i,k)\notin B(T')$ respectively. That is,~$(i,k)\notin B(T\wedge T')$ and we are done in this case.

	Consider then that~$(i,j)\in B(T)\cap B(T')$ and there exists~$i<l^*<j$ such that~$(i,l^*),(l^*,j)\notin B(T)\cap B(T')$. For contradiction’s sake suppose that~$(i,j)\in B(T\wedge T')$. By definition of~$B(T\wedge T')$ this means that either~$(i,l^*)\in B(T)\cap B(T')$ or~$(l^*,k)\in B(T)\cap B(T')$. The former case is a contradiction with the condition on which~$l^*$ exists, thus either~$(i,l^*)\in B(T)$ or~$(i,l^*)\in B(T')$ and the latter case happens. Without loss of generality suppose that~$(i,l^*)\in B(T)$. As~$(l^*,k)\in B(T)\cap B(T')\subset B(T)$, Property 3 of Lemma~\ref{lem:permutree_inversion_sets} tells us that~$(i,k)\notin B(T)$ and thus~$(i,k)\notin B(T\wedge T')$ as we wanted. The final Property of Lemma~\ref{lem:permutree_inversion_sets} follows a similar proof, and thus we omit it. We conclude that~$B(T\wedge T')$ indeed corresponds to a permutree~$T\wedge T'$.

	Let us now see that~$T\wedge T'$ is in fact the meet of~$T$ and~$T'$. Since~$B(T\wedge T')\subset B(T)$ and~$B(T\wedge T')\subset B(T')$ Lemma~\ref{lem:inversion_set_inclusion} tells us that~$T\wedge T'<T$ and~$T\wedge T'<T'$. Now suppose that there is a~$\delta$-permutree~$S$ such that~$S<T$ and~$S<T'$. We claim that~$S\leq T\wedge T$. Because of Lemma~\ref{lem:inversion_set_inclusion} we know that~$B(S)\subset B(T)\cap B(T')$. Let~$(i,j)\in B(S)\subset B(T)\cap B(T')$ and~$i<l<j$. Notice that if both elements~$(i,l),(l,j)\notin B(S)$, then~$(i,j)\notin B(S)$ as it is cotransitive, and we would have a contradiction. Without loss of generality suppose~$(i,l)\in B(S)$. As~$B(S)\subset B(T)\cap B(T')$, we have that~$(i,l)\in B(T)\cap B(T')$. Thus, for all~$i<l<j$ either~$(i,l)\in B(T)\cap B(T')$ or~$(l,j)\in B(T)\cap B(T')$. Meaning that,~$(i,j)\in B(T\wedge T')$ and we conclude that~$B(S)\subseteq B(T\wedge T')$. By Lemma~\ref{lem:inversion_set_inclusion} we get that~$S\leq T\wedge T'$ as we wished.
\end{proof}

\begin{corollary}\label{cor:permutrees_are_lattices}
	$\mathcal{PT}(\delta)$ is a lattice for any decoration~$\delta\in\{\nonee,\downn,\upp,\uppdownn\}^n$.
\end{corollary}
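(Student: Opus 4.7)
The plan is to leverage Theorem~\ref{thm:permutree_meet} together with Proposition~\ref{prop:semilattice_to_lattice} to conclude the lattice property with essentially no extra work. First, I would recall that $\mathcal{PT}(\delta)$ is finite (for instance via the recursive count in Proposition~\ref{prop:permutree_recursion}) and bounded above by the $\delta$-permutree $\hat{1}_\delta$ described in Remark~\ref{rem:permutree_rotations_bounded}, where $i+1\to i$ for every $i\in[n-1]$.

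Next, I would argue that Theorem~\ref{thm:permutree_meet} makes $(\mathcal{PT}(\delta),\leq)$ into a meet-semilattice. The theorem gives the meet of any pair, and associativity of the set-theoretic intersection used in Equation~(\ref{eq:inversion_set_meet}) allows one to iterate: for any nonempty finite subset $S\subseteq \mathcal{PT}(\delta)$, successive pairwise meets produce a well-defined element $\bigwedge S$ whose inversion set admits the analogous closed-form description in terms of the inversion sets of the elements of $S$. One small verification I would include is that the formula in Equation~(\ref{eq:inversion_set_meet}) is symmetric and associative up to the transitivity/cotransitivity and decoration conditions of Lemma~\ref{lem:permutree_inversion_sets}, so that iterating pairwise meets gives the same result regardless of the order.

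Finally, I would invoke Proposition~\ref{prop:semilattice_to_lattice}: a finite meet-semilattice with a maximal element is automatically a lattice, with the join of a subset $S$ defined as $\bigvee S := \bigwedge\{T\in\mathcal{PT}(\delta)\,:\, T\geq T' \text{ for all } T'\in S\}$, a meet taken over a nonempty set since $\hat{1}_\delta$ always belongs to it. This immediately yields that $\mathcal{PT}(\delta)$ is a lattice.

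The proof is essentially a one-line corollary, so there is no serious obstacle; the only thing to be careful about is making explicit that pairwise meets suffice to produce all finite meets (which is automatic from associativity, but worth noting), and that the top element required by Proposition~\ref{prop:semilattice_to_lattice} is already in hand from Remark~\ref{rem:permutree_rotations_bounded}. An alternative route, which I would mention but not pursue, is to mirror the argument of Theorem~\ref{thm:permutree_meet} on the dual order (or equivalently on the complementary version sets $\ver(T):=\{(i,j)\in[n]^2\,:\, i<j\}\setminus B(T)$) to construct joins directly, recovering Proposition~\ref{prop:permutree_lattice_property} constructively without passing through the semilattice shortcut.
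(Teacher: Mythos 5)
Your proposal takes exactly the same route as the paper: the meet operation from Theorem~\ref{thm:permutree_meet} makes $(\cPT_n(\delta),\leq)$ a meet-semilattice, and since the poset is bounded (Remark~\ref{rem:permutree_rotations_bounded}), Proposition~\ref{prop:semilattice_to_lattice} immediately upgrades it to a lattice. The paper states this in two sentences; your extra remarks on associativity of the intersection formula and the explicit construction of joins are correct but not needed, since Proposition~\ref{prop:semilattice_to_lattice} already packages that reasoning.
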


\begin{proof}
	The~$\delta$-tree rotation poset has a meet thanks to Theorem~\ref{thm:permutree_meet}. Since it is a bounded poset, Proposition~\ref{prop:semilattice_to_lattice} tell us that it is a lattice.
\end{proof}

\section{Cubic Vectors}

Having inversion vectors in hand, the reader might ask if it is the case that inversion vectors also give a cubic embedding of~$\delta$-permutree lattices. This is not the case as can be seen in Figure~\ref{fig:permutree_inversion_cubic_vectors}.

\begin{figure}[h!]
	\centering
	\includegraphics[scale=0.8]{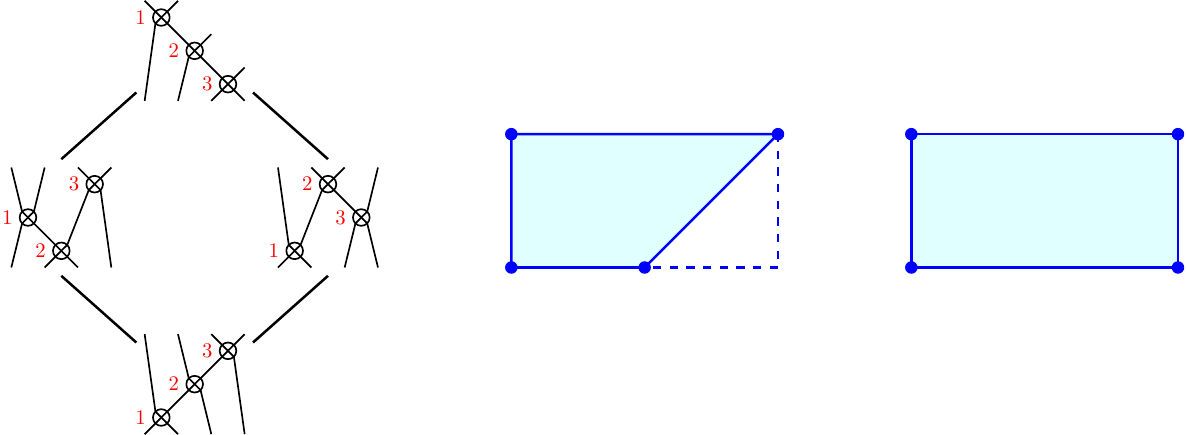}
	\caption[The rotation lattice of~$\uppdownn\uppdownn\uppdownn$-permutrees together with its geometric realizations using inversion vectors and cubic vectors.]{ The rotation lattice of~$\uppdownn\uppdownn\uppdownn$-permutrees (left) together with its geometric realizations using inversion vectors (middle) and cubic vectors (right). Taking~$T$ to be the left~$\uppdownn\uppdownn\uppdownn$-permutree, we have~$\vec{b}(T)=(1,0)$ and~$\vec{c}(T)=(2,0)$.}\label{fig:permutree_inversion_cubic_vectors}
\end{figure}

One can still manage to get such an embedding, it suffices to slightly relax the definition of our sets.

\begin{definition}\label{def:cubic_vector_permutrees}
	Consider~$T\in\cPT_n(\delta)$ to be a~$\delta$-permutree. Its \defn{cubic set}\index{permutree!cubic!set} is \begin{equation*}C(T):=\left\{(i,j)\,:\, \begin{array}{cc}
			i<j  \text{ and } v_j\in D_i & \text{ if } \delta_i\in\{\nonee,\upp\},      \\
			v_j\in RD_i                  & \text{ if } \delta_i\in\{\downn,\uppdownn\},\end{array} \right\}\end{equation*} and its \defn{cubic components}\index{permutree!cubic!components} are~${C(T)}_i=\{j\in [n] \,:\, (i,j)\in C(T)\}$. A cubic set has an associated \defn{cubic vector}\index{permutree!cubic!vector}~$\vec{c}(T)=(c_1,\ldots,c_{n-1})$ such that~$c_i=|{C(T)}_i|$.
\end{definition}

\begin{remark}\label{rem:cubic_set_inclusion}
	Like in Remark~\ref{rem:inversion_set_inclusion}, we have that for a covering relation of~$\delta$-permutrees~$T\lessdot T'$ the respective cubic sets satisfy~$C(T')={(C(T)\cup\{(i,j)\})}^{tc}$. The key difference between the transitive closures of cubic vectors against inversion vectors is that the transitive closure turns into inversions the pairs of the form~$(i,x)$ where~$x\in RD_j(T)$ and nothing else. This is a consequence of the replacement of the condition~$j\to i$ in inversion sets to~$j\in RD_i$ (resp.~$i<j$ and~$j\in D_i$) in cubic sets.
\end{remark}

\begin{definition}\label{def:cubicrealizationpermutrees}
	Let~$T,T'\in\cPT_n(\delta)$. We say that there is an edge between~$\vec{c}(T)$ and~$\vec{c}(T')$ if and only if~$T\lessdot T'$. The convex hull of the cubic vectors together with this collection of edges is called the \defn{cubical realization}~$\cC_\delta$ of~$(\mathcal{PT}(\delta),\leq)$.
\end{definition}

\begin{example}\label{ex:cubic_realization_tamari_others}
	If~$\delta=\downn\downn\downn\downn$ (resp.~$\delta=\nonee\nonee\nonee\nonee$), the cubic vector reduces to the bracket vector of binary trees (resp.\ to the Lehmer code of permutations), and we recover the cubic realization of the Tamari lattice in~\cite{K93} and~\cite{C22} (resp. of the weak order of~\cite{BF71} and~\cite{RR02}). See Figure~\ref{fig:permutreehedron_cubic} for these cubic realizations and other examples.
\end{example}

\begin{figure}[h!]
	\centering
	\includegraphics[scale=1]{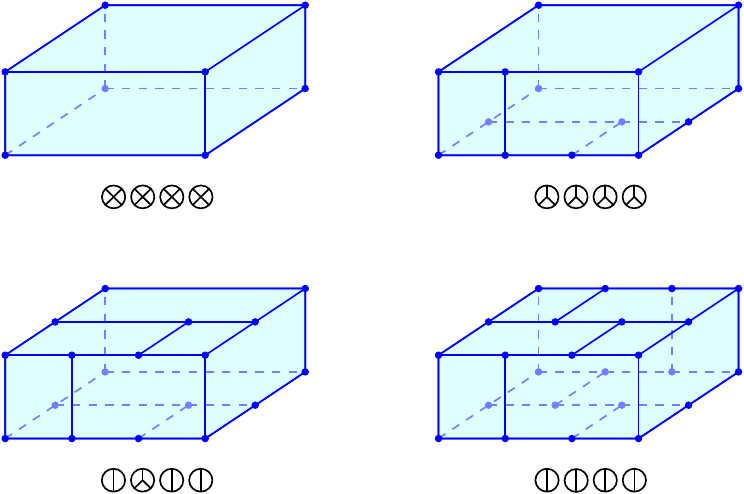}
	\caption{The cubical realization~$\cC_\delta$ of several permutreehedra.}\label{fig:permutreehedron_cubic}
\end{figure}

We now enunciate several properties of cubical realization which culminate in showing that~$\cC_\delta$ is an embedding of~$\PPT_n(\delta)$ into the cube~$Q_n=[0,n-1]\times\cdots\times[0,1]$.

\begin{theorem}\label{thm:cubic_property_edge_direction}
	If~$T,T'\in\cPT_n(\delta)$ are~$\delta$-permutrees such that~$T< T'$ in the~$\delta$-permutree rotation lattice, then~$\vec{c}(T)<_{lex} \vec{c}(T')$ and the edges of~$C_\delta$ have directions~$\mathbf{e_i}$.
\end{theorem}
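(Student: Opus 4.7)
The plan is to reduce the claim to a single covering relation $T \lessdot T'$ and then deduce the general statement by transitivity of the lexicographic order. Assume $T'$ is obtained from $T$ by an $ij$-edge rotation, so in $T$ we have $v_i \in LD_j$ (since $i < j$) and in $T'$ the roles are swapped so that $v_j$ becomes a descendant of $v_i$. By Remark \ref{rem:cubic_set_inclusion}, we have $C(T') = (C(T) \cup \{(i,j)\})^{tc}$, and moreover the transitive closure adjoins only pairs of the form $(i, x)$ with $x \in RD_j(T)$. In particular, every pair lying in $C(T') \setminus C(T)$ has first coordinate equal to $i$.

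Consequently, $C(T')_k = C(T)_k$ for every $k \neq i$, so $\vec{c}(T')_k = \vec{c}(T)_k$ for all such $k$. For the $i$-th coordinate, one checks that $(i,j) \in C(T')$: indeed, in $T'$ the vertex $v_j$ is a descendant of $v_i$, lying in $RD_i(T')$ when $\delta_i \in \{\downn, \uppdownn\}$ (because $i<j$) and in $D_i(T')$ when $\delta_i \in \{\nonee, \upp\}$. On the other hand $(i,j) \notin C(T)$, because in $T$ it is $v_i$ that is a descendant of $v_j$, not the other way around. Hence $C(T')_i \supsetneq C(T)_i$, and so $\vec{c}(T')_i > \vec{c}(T)_i$. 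This proves that the vector $\vec{c}(T') - \vec{c}(T)$ is a positive multiple of $\mathbf{e_i}$, giving the claim about edge directions, and simultaneously $\vec{c}(T) <_{lex} \vec{c}(T')$ since the two vectors first differ at coordinate $i$ with a strict increase.

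For an arbitrary strict relation $T < T'$, pick any saturated chain $T = T_0 \lessdot T_1 \lessdot \cdots \lessdot T_k = T'$ in the rotation lattice. Applying the covering case to each step yields $\vec{c}(T_\ell) <_{lex} \vec{c}(T_{\ell+1})$ for all $\ell$, and since $<_{lex}$ is a total (hence transitive) order on $\mathbb{Z}^{n-1}$, we conclude $\vec{c}(T) <_{lex} \vec{c}(T')$. The only genuine subtlety is pinpointing which pairs the transitive closure adds in the covering step, which is precisely what Remark \ref{rem:cubic_set_inclusion} guarantees: the added pairs all have first coordinate $i$, and this is exactly what prevents other coordinates of the cubic vector from changing and makes each edge of $\mathcal{C}_\delta$ point cleanly in the $\mathbf{e_i}$ direction.
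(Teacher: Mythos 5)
Your proof is correct and follows essentially the same approach as the paper: reduce to a covering relation via Remark~\ref{rem:cubic_set_inclusion}, observe that the transitive closure changes only the $i$-th cubic component, and lift to general $T<T'$ by transitivity of the lexicographic order. You additionally verify $(i,j)\in C(T')\setminus C(T)$ to obtain the strict inequality $\vec{c}(T)<_{lex}\vec{c}(T')$, a point the paper's proof leaves implicit (it in fact only writes $\vec{c}(T)\leq_{lex}\vec{c}(T')$).
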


\begin{proof}
	We prove it for covering relations~$T\lessdot T'$ as the~$\delta$-rotation order is the transitive closure of the relations in Figure~\ref{fig:permutree_rotations}. Following Remark~\ref{rem:cubic_set_inclusion} we have that the inclusion~$C(T')=(C(T)\cup\{(i,j)\})^{tc}$ and the relations between components~$C(T)_i\subset C(T')_i$ and~$C(T)_j= C(T')_j$ if~$j\neq i$. This tells us that~$\vec{c}(T')-\vec{c}(T)=(0,\ldots,0,c(T')_i-c(T)_i,0,\ldots,0)$. Therefore,~$\vec{c}(T)\leq_{lex}\vec{c}(T')$ and the edge~$[\vec{c}(T),\vec{c}(T')]$ has direction~$\mathbf{e_i}$.
\end{proof}

\begin{theorem}\label{thm:cubic_property_convec_cube}
	$\cC_\delta$ is normal equivalent to~$\PCube_{n-1}$ (i.e. has the same normal fan).
\end{theorem}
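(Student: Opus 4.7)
The plan is to leverage Theorem~\ref{thm:cubic_property_edge_direction}, which establishes that cover-relation edges of the $\delta$-permutree lattice embed as axis-parallel segments in the cubic realization, and to combine this with an argument that the polytope $\cC_\delta$ is a box.

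First, I would check that $\cC_\delta$ is full-dimensional in $\RR^{n-1}$. Starting from $\hat{0}_\delta$ with cubic vector $\mathbf{0}$, rotating at the edge between $v_i$ and $v_{i+1}$ produces a permutree whose cubic vector differs from $\mathbf{0}$ only in coordinate $i$; this yields $n-1$ linearly independent vectors in $\cC_\delta$ and full-dimensionality in $\RR^{n-1}$.

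Second, I would argue that the normal fan of $\cC_\delta$ refines the fan of coordinate hyperplanes in $\RR^{n-1}$. Since every cover-relation edge has direction $\mathbf{e_i}$ for some $i$ (Theorem~\ref{thm:cubic_property_edge_direction}), any generic linear functional $\ell(x)=\sum_i\alpha_i x_i$ with all $\alpha_i\neq 0$ is strictly monotone along every edge of the Hasse diagram. By connectedness of the cover-relation graph (since $(\cPT_n(\delta),\leq)$ is a lattice), the maximum of $\ell$ over the vertex set $\{\vec{c}(T) : T \in \cPT_n(\delta)\}$ is attained at a unique permutree $T_\ell$, and the normal cone at $\vec{c}(T_\ell)$ therefore contains the open orthant determined by the signs of $(\alpha_i)$.

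Third, since there are $2^{n-1}$ such orthants and each must be covered by some vertex's normal cone, and since distinct vertices have normal cones with disjoint interiors, the polytope $\cC_\delta$ has exactly $2^{n-1}$ vertices, one for each sign pattern, and the normal fan of $\cC_\delta$ coincides with that of $\PCube_{n-1}$.

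The main obstacle lies in the second step: the conclusion that $\ell$ attains its maximum at a unique vertex must hold for the polytope $\cC_\delta=\conv(\{\vec{c}(T)\})$, not merely for the vertex set equipped with cover-relation edges. One must verify that every edge of the polytope itself is axis-parallel, which does not follow immediately from Theorem~\ref{thm:cubic_property_edge_direction} since a polytope edge could in principle connect two non-cover-related permutrees along a non-axis direction. The cleanest resolution would be to identify an explicit set of $2^{n-1}$ ``extremal'' permutrees whose cubic vectors form the vertices of an axis-aligned box in $\RR^{n-1}$; this identification can be approached using Lemma~\ref{lem:permutree_inversion_sets} together with the refinement order on decorations (Proposition~\ref{prop:permutree_matrioshcka}), reducing the general case to the transparent case $\delta=\uppdownn^n$ where permutrees correspond to binary sequences and their cubic vectors manifestly realize the vertices of a box.
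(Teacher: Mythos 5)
Your first three steps take a genuinely different route from the paper, and they contain a gap that is more serious than the one you flag. You observe correctly that a generic $\ell=\sum_i\alpha_i x_i$ orients the Hasse diagram of $(\cPT_n(\delta),\leq)$ in a way that depends only on the sign pattern of $(\alpha_i)$, since cover-edges are axis-parallel. But from this you infer that the maximizer $T_\ell$ over the vertex set depends only on the sign pattern, and hence that the normal cone at $\vec{c}(T_\ell)$ contains the whole open orthant. That inference requires the oriented Hasse diagram to have a \emph{unique sink} under each of the $2^{n-1}$ sign-pattern orientations, and this does not follow from connectedness together with strict monotonicity along edges: a connected graph with a generic strictly-monotone edge-labeling can have several sinks (e.g.\ a path whose middle vertex has the lowest value). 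Without unique sinks, the normal cone of a particular vertex might be a proper subcone of its orthant, and there could be more than $2^{n-1}$ vertices. You correctly sense that something must be supplied — your diagnosis (that polytope edges might not coincide with cover edges) is a valid restatement of the same worry — but you leave the missing piece unproved.

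The repair you sketch, namely exhibiting $2^{n-1}$ extremal permutrees hitting the corners of an axis-aligned box, is exactly the right idea and is precisely what the paper does. However, the mechanism you propose for producing them — reducing to $\delta=\uppdownn^n$ via Lemma~\ref{lem:permutree_inversion_sets} and the decoration refinement order of Proposition~\ref{prop:permutree_matrioshcka} — does not work. That proposition concerns refinement of lattice \emph{congruences} on $\fS_n$; it does not furnish a map between $\cPT_n(\uppdownn^n)$ and $\cPT_n(\delta)$, and even if it did, cubic vectors are computed from $RD_i$ when $\delta_i\in\{\downn,\uppdownn\}$ but from $D_i$ when $\delta_i\in\{\nonee,\upp\}$, so the vector is \emph{not} preserved under changing the decoration. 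The $\uppdownn^n$ case is not a special case you can reduce to; the corners have to be realized afresh for each $\delta$.

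The paper's proof avoids your machinery entirely. It is a short containment argument: since $0\leq c(T)_i\leq n-i$, one has $\cC_\delta\subseteq Q_{n-1}:=[0,n-1]\times\cdots\times[0,1]$; for the reverse inclusion, given any corner $\vec{r}$ with $r_i\in\{0,n-i\}$, the paper explicitly places vertex $v_i$ near the bottom or top of the $i$-th column of an $n\times n$ grid according to whether $r_i=0$ or $r_i=n-i$, obtains a permutation table, and feeds it through the insertion algorithm (Definition~\ref{def:decorated_permutation}) to produce a $\delta$-permutree $T(\vec{r})$ whose cubic vector is $\vec{r}$. Since all $2^{n-1}$ corners of $Q_{n-1}$ lie in $\cC_\delta$ and $\cC_\delta\subseteq Q_{n-1}$, the two are equal, which gives normal equivalence to $\PCube_{n-1}$ immediately. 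This direct construction is what you would need to replace your proposed refinement-order reduction; once you have it, steps 1--3 of your argument become superfluous.
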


\begin{proof}
	Let~$Q_{n-1}:=[0,n-1]\times\cdots\times[0,1]$. First note that~$0\leq c(T)_i\leq n-i$ for all~$i\in [n-1]$ meaning that~$\mathcal{C}_{\delta}\subset Q_{n-1}$. To see the reverse inclusion it is enough to prove that all vectors~$\vec{r}=(r_1,\ldots,r_{n-1})$ where~$r_i\in\{0,n-i\}$, have a preimage through the function~$f:\cPT_n(\delta)\to \cC_\delta$ such that~$f(T)=\vec{c}(T)$. We call such preimages \defn{extremal}\index{permutree!extremal}~$\delta$-permutrees.

	Take any such~$\vec{r}$. We now present how to construct a~$\delta$-permutree in the preimage~$f^{-1}(\vec{r})$. Consider an~$n\times n$ grid. At step~$1$ place~$v_1$ at~$(1,1)$ (resp.~$(1,n)$) if~$r_1=0$ (resp.~$r_1=n-1$). At step~$i$ place~$v_i$ at~$(i,d)$ (resp.~$(i,n-u)$) where~$d:=|\{j\in[n]\,\: j<i \text{ and } r_j=0\}|$ (resp.~$u:=|\{j\in[n]\,\: j<i \text{ and } r_j=n-j\}|$). After step~$n-1$ place~$v_n$ in the only coordinate of column~$n$ that shares no vertex horizontally. Thus, we get a permutation table. Decorate each vertex~$v_i$ with the decoration~$\delta_i$. Following the insertion algorithm (see Definition~\ref{def:decorated_permutation}) we obtain a~$\delta$-permutree~$T(\vec{r})$.

	Notice that in~$T(\vec{r})$, for each vertex~$v_i$ we have either~$|RD_i|=0$ (resp.~$|\{j\in[n]\,:\,i<j \text{ and } v_j\in D_i\}|=0$) or~$|RD_i|=n-i$ (resp.~$|\{j\in[n]\,:\,i<j \text{ and } v_j\in D_i\}|=n-i$) That is, the values corresponding to~$\vec{r}$. Therefore,~$\cC_\delta=Q_{n-1}$ and normal equivalent to~$\PCube_{n-1}$.
\end{proof}

\begin{remark}\label{rem:cubic_embedding_interior_points}
	Notice that since the interior of~$Q_{n-1}$ has no integer points, we have that all cubic coordinates are on the surface of~$\mathcal{C}_{\delta}$.
\end{remark}

Figure~\ref{fig:permutree_cubic_insertion_algorithm} shows an example of the construction of extremal permutrees described in the proof of Theorem~\ref{thm:cubic_property_convec_cube}. In Figure~\ref{fig:permutree-inversion-vector-IXYI} the extremal~$\nonee\uppdownn\upp\downn$-permutrees are colored in black while the~$2$ that are extremal are colored in brown. We now show that these preimages are unique as a part of the following bigger result.

\begin{figure}[h!]
	\centering
	\includegraphics[scale=0.8]{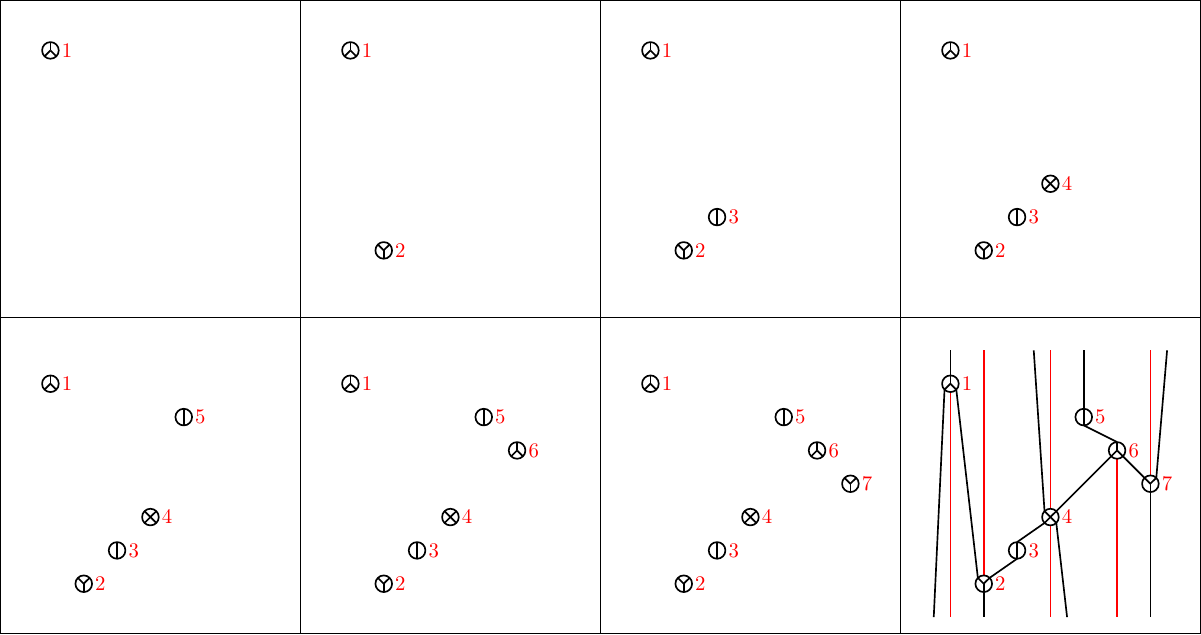}
	\caption[The construction of an extremal~$\downn\upp\nonee\uppdownn\nonee\downn\upp$-permutree.]{ The construction of the extremal~$\downn\upp\nonee\uppdownn\nonee\downn\upp$-permutree corresponding to the corner~$(6,0,0,0,2,1)\in Q_{6}$.}\label{fig:permutree_cubic_insertion_algorithm}
\end{figure}

\begin{theorem}\label{thm:cubic_property_injective}
	The map~$f:\cPT_n(\delta)\to\cC_\delta$ sending a~$\delta$-permutree to its cubic vector is injective.
\end{theorem}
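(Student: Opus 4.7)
The plan is to establish injectivity by explicitly reconstructing the permutree from its cubic vector. First I would reduce the problem to showing that $\vec{c}(T)$ determines the cubic set $C(T)$: once $C(T)$ is uniquely recovered, one can adapt the grid-placement argument from the proof of Lemma~\ref{lem:permutree_inversion_sets} (replacing the role of $B(T)$ by $C(T)$ while accounting for the decoration-dependent definition of the components) together with the insertion algorithm of Definition~\ref{def:decorated_permutation} to pin down $T$ uniquely.

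Next, to recover $C(T)$ from $\vec{c}(T)$, I would proceed by reverse induction on $i \in \{n-1, n-2, \ldots, 1\}$. The base case $i=n-1$ is immediate, since $c_{n-1} \in \{0,1\}$ dictates directly whether $n \in C(T)_{n-1}$. For the inductive step, assuming $C(T)_{i+1}, \ldots, C(T)_{n-1}$ have already been reconstructed, I would prove a structural lemma stating that there is a canonical ``candidate set'' $\mathrm{Cand}_i \subseteq \{i+1, \ldots, n\}$ determined by these prior components together with $\delta$, and that $C(T)_i$ must then be the unique subset of $\mathrm{Cand}_i$ of cardinality $c_i$ compatible with the permutree axioms. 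Essentially, transitivity constraints propagate from the already reconstructed components and fully pin down $C(T)_i$ once its cardinality is known.

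The main obstacle lies in formulating and proving this structural lemma, since the meaning of $C(T)_i$ depends on $\delta_i$. For $\delta_i \in \{\downn,\uppdownn\}$ the set $C(T)_i$ records the right-descendant subtree, whose labels form a contiguous block by the in-order property, and this case directly mirrors the Tamari bracket vector argument from Proposition~\ref{prop:bracket_vector_characterization}. For $\delta_i \in \{\nonee,\upp\}$ the set $C(T)_i$ records all larger-labeled descendants of the single descendant subtree of $v_i$, which need not be contiguous, and here one must argue that the components $C(T)_j$ for $j>i$ already place enough constraints on which labels can sit below $v_i$ so that only $c_i$ degrees of freedom remain and they are consumed in a deterministic manner. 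The hybrid case $\uppdownn$ requires combining both arguments.

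Once this structural lemma is in hand, the reconstruction of $C(T)$, and hence of $T$, from $\vec{c}(T)$ is uniquely determined, so $f$ is injective. As a final consistency check, combining injectivity with Theorem~\ref{thm:cubic_property_edge_direction} (edges in directions $\mathbf{e_i}$) and Theorem~\ref{thm:cubic_property_convec_cube} (every corner of $Q_{n-1}$ is hit by at least one extremal permutree) would yield the full cubical embedding picture for~$\PPT(\delta)$.
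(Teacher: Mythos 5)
Your proposal takes a constructive reconstruction route where the paper argues by contradiction, but the two are logically dual and rest on the same key structural observation. The paper fixes two distinct permutrees $T$ and $T'$, picks the maximal $i$ with $C(T)_i \neq C(T')_i$, and argues that if the cardinalities $c(T)_i$ and $c(T')_i$ agreed there would have to be a vertex $r\in RD(T)_i\cap RD(T')_i$ (hence $r>i$) with $C(T)_r \neq C(T')_r$, contradicting maximality. Your descending-induction scheme---recovering $C(T)_i$ from $c_i$ and the already-known $C(T)_j$ for $j>i$---is exactly the contrapositive of that step, repackaged as a reconstruction algorithm. What you gain is explicitness and an effective inverse map; what you owe is the structural lemma you correctly flag as the obstacle, and it is genuinely nontrivial: for $\delta_i \in \{\nonee,\upp\}$ the component $C(T)_i$ need not be a contiguous block, so it cannot be read off from $c_i$ alone, and you must verify that the constraints available during the downward sweep (only those not referring to $C(T)_j$ with $j<i$) already pin down $C(T)_i$ uniquely. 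One further caution about the last step: the grid-placement argument in Lemma~\ref{lem:permutree_inversion_sets} reconstructs a permutree from its \emph{inversion} set $B(T)$, but $C(T)$ is a different object. For decorations other than $\nonee^n$ and $\downn^n$ the branches $RD_i$ and $D_i$ are whole connected components obtained by deleting $v_i$, and so can contain vertices that are not descendants of $v_i$; hence $C(T)_i$ may strictly contain $B(T)_i$. Porting the grid argument to cubic sets therefore needs more than a drop-in replacement of $B(T)$ by $C(T)$, and this is precisely the decoration-sensitive bookkeeping your sketch glosses over.
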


\begin{proof}
	Consider~$T,T'\in \mathcal{PT}_{\delta}$ two different~$\delta$-permutrees. Due to them being different, there is a maximal vertex~$i$ such that~$RD(T)_i\neq RD(T')_i$ (resp.~$\{j\in[n]\,:\, i<j \text{ and } v_j\in D(T)_i\}\neq\{j\in[n]\,:\, i<j \text{ and } v_j\in D(T')_i\}$). If~$c(T)_i=|RD(T)_i|\neq|RD(T')_i|=c(T')_i$ (or the equivalent in the~$D_i$ case) we are done. Otherwise, there exists a maximal vertex~$r\in RD(T)_i\cap RD(T')_i$ such that~$RD(T)_r\neq RD(T')_r$ which contradicts the existence of~$i$. Therefore,~$\vec{c}(T)\neq\vec{c}(T')$.
\end{proof}

\begin{theorem}\label{thm:cubic_property_embedding}
	$\mathcal{C}_\delta$ is an embedding of the~$\delta$-permutreehedron. In particular, maximal cells of~$\mathcal{C}_{\delta}$ are in bijection with facets of the~$\delta$-permutreehedron.
\end{theorem}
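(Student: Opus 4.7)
The statement has two pieces: that $\mathcal{C}_\delta$ is an embedding of the $\delta$-permutreehedron, and that its maximal cells are in bijection with the facets of $\text{PT}(\delta)$. I would handle the first piece by simply collecting the three preceding theorems. By Theorem~\ref{thm:cubic_property_injective} the map $f\colon T\mapsto \vec{c}(T)$ is a vertex-by-vertex injection of $\mathcal{PT}_n(\delta)$ into the vertex set of $Q_{n-1}$, and by Theorem~\ref{thm:cubic_property_edge_direction} every cover relation $T\lessdot T'$ is sent to an edge of $Q_{n-1}$ parallel to some $\mathbf{e}_i$. Combining this with Proposition~\ref{prop:permutreehedron_from_quotients}, which identifies the oriented $1$-skeleton of $\text{PT}(\delta)$ with the Hasse diagram of $(\mathcal{PT}_n(\delta),\leq)$, gives immediately that the $1$-skeleton of $\text{PT}(\delta)$ embeds as a subgraph of the $1$-skeleton of $\mathcal{C}_\delta\subseteq Q_{n-1}$ (Theorem~\ref{thm:cubic_property_convec_cube}).

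For the bijection between maximal cells of $\mathcal{C}_\delta$ and facets of $\text{PT}(\delta)$, I would exploit the description of facets from Proposition~\ref{prop:permutreehedron}: each facet $F$ corresponds to an edge cut $(I\,\|\,[n]\setminus I)$ coming from a common edge $i\to j$ shared by all permutrees whose cubic vectors form the vertex set of $F$. The plan is to associate to each such $F$ the sub-cube $C_F\subseteq Q_{n-1}$ spanned by $\{\vec{c}(T) : T\in F\}$ and show that this is a well-defined cell of $\mathcal{C}_\delta$, that $F\mapsto C_F$ is the desired bijection, and that these $C_F$ exhaust the maximal cells.

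The key mechanism is that a rotation along any edge of a permutree $T\in F$ other than $i\to j$ preserves the edge cut $(I\,\|\,[n]\setminus I)$ by Proposition~\ref{rotation_edge_cuts}, so all rotations internal to $F$ are available, and each such rotation changes exactly one coordinate of the cubic vector by Remark~\ref{rem:cubic_set_inclusion} and the proof of Theorem~\ref{thm:cubic_property_edge_direction}. Conversely, rotating along the distinguished edge $i\to j$ destroys the cut, so it must correspond to a direction of $Q_{n-1}$ that leaves $C_F$. By iterating the internal rotations and tracking the affected coordinates, I would show that the cubic vectors of the permutrees of $F$ fill out every vertex of a $\dim(F)$-dimensional sub-cube of $Q_{n-1}$, giving (a) that $C_F$ is a cell of $\mathcal{C}_\delta$ of dimension $\dim(F)=n-2$, and (b) that it is maximal, since any larger sub-cube would require toggling the coordinate associated to the $i\to j$ rotation.

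The main obstacle will be verifying combinatorially that every $2^{\dim(F)}$ corner of $C_F$ is actually realized by a permutree in $F$, not merely that the internal rotations span the correct coordinate directions individually. This requires showing that the subinterval of $(\mathcal{PT}_n(\delta),\leq)$ cut out by fixing the edge $i\to j$ decomposes as a product of chains in the $\mathbf{e}_i$ coordinate directions of the cube, which should follow from the independence of rotations localized in the two subtrees determined by $i\to j$, together with the inductive structure of permutrees given by Proposition~\ref{prop:permutree_recursion}. Once this is in place, injectivity of $F\mapsto C_F$ follows from the fact that different edge cuts produce different affine supports in $Q_{n-1}$, and surjectivity from the observation that any maximal cell of $\mathcal{C}_\delta$ is stabilized by rotations on some unique edge common to all of its vertex-permutrees, yielding an edge cut and hence a facet.
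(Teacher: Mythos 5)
Your overall plan — recall the facet $\leftrightarrow$ edge-cut bijection from Proposition~\ref{prop:permutreehedron}, associate to each edge cut the set of cubic vectors of the permutrees admitting it, and argue that the distinguished rotation $i\to j$ is the one that leaves the cell — is structurally the same as the paper's. However, your argument for why the cell is \emph{maximal} has a genuine gap, and the obstacle you flag as the hard part is not what the paper needs.

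The crux the paper nails down, and your proposal leaves vague, is identifying \emph{which} cubic coordinate is frozen and \emph{at what value}. Given the cut $(I\,\|\,J)$, the paper distinguishes whether $n\in J$ or $n\in I$: in the first case every $T$ with $\underline{T}\leq T\leq\overline{T}$ satisfies $RD(T)_{\max(I)}=\emptyset$ (resp.\ the corresponding $D$-condition for $\nonee/\upp$ labels), so $c(T)_{\max(I)}=0$; in the second case $c(T)_{\max(J)}=n-\max(J)$. It is precisely this that places the cell on a facet hyperplane of $Q_{n-1}$, i.e.\ on the boundary of the polytope. Saying the rotation along $i\to j$ ``must correspond to a direction that leaves $C_F$'' does not, by itself, establish that the frozen coordinate sits at an extreme value of its range — and without that you have not shown $K$ is on the boundary, only that it is flat.

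Conversely, your announced main obstacle — showing that every $2^{\dim(F)}$ corner of a sub-cube is realized by a permutree in $F$ — is not needed, and as stated is false in general: for $\delta=\nonee^4$ the hexagonal facets of $\PPerm_4$ carry six permutations, not $4=2^{2}$. The paper needs only that $K$ lies in a facet hyperplane and that the remaining $n-2$ coordinates each vary across the interval $[\underline{T},\overline{T}]$ (which follows since rotations internal to $I$ and to $J$ remain available, each affecting exactly one coordinate by Theorem~\ref{thm:cubic_property_edge_direction}). That gives dimension $n-2$ plus boundary position, hence a maximal cell, with no need to exhibit a product-of-chains corner structure. The paper also does construct the minimal and maximal permutrees $\underline{T},\overline{T}$ of the interval by gluing $\hat{0}_{\delta_I}$ with $\hat{0}_{\delta_J}$ (resp.\ the $\hat{1}$'s), which is close in spirit to your product-of-subtrees intuition; but it uses them only to delimit the interval and pin down the hyperplane, not to enumerate corners. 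I would redirect your effort from the corner-realization lemma to an explicit identification of the frozen coordinate.
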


\begin{proof}
	Recall from Proposition~\ref{prop:permutreehedron} that the facets of the~$\delta$-permutreehedron are in bijection with the proper subsets~$I\subsetneq[n]$ such that there is a~$\delta$-permutree that admits~$(I\,\|\,{[n]}\setminus {I})$ as an edge cut. Let~$J:=[n]\setminus I$. As~$\delta$-permutrees are connected, edge cuts partition a~$\delta$-permutree~$T$ into a~$\delta_I$-permutree~$T_I$ and a~$\delta_J$-permutree~$T_J$ which as subtrees are connected only via an edge~$(i,j)$ such that~$i\to j$ and~$v_i\in T_i$ and~$v_j\in T_j$. 
	
	Take an edge-cut~$(I\,\|\, J)$. We proceed to construct a cell~$K$ of~$\mathcal{C}_\delta$ containing all cubic vectors~$\vec{c}(T)$ of~$\delta$-permutrees~$T$ that admit said edge-cut. Consider the minimal elements~$\hat{0}_{\delta_I}$ and~$\hat{0}_{\delta_J}$ (resp. maximal elements~$\hat{1}_{\delta_I}$ and~$\hat{1}_{\delta_J}$). Connecting them via the insertion algorithm gives us the~$(I\,\|\, J)$-admitting~$\delta$-permutree~$\underline{T}$ given by~$\underline{T}_I:=\hat{0}_{\delta_I}$ and~$\underline{T}_J:=\hat{0}_{\delta_J}$ (resp.~$\overline{T}$ given by~$\overline{T}_I:=\hat{1}_{\delta_I}$ and~$\overline{T}_J:=\hat{1}_{\delta_J}$). Notice that~$\underline{T}$ (resp.~$\overline{T}$) is the minimal (resp.\ maximal)~$\delta$-permutree that admits~$(I\,\|\, J)$ as an edge cut. This in turn shows that~$\vec{c}(\underline{T})$ (resp.~$\vec{c}(\overline{T})$) is the lexicographical minimal (resp.\ maximal) cubic vector that relate with this edge-cut. Thus, we define our cell as~$K:=\{\vec{c}(T)\in\mathcal{C}_\delta\,:\,\underline{T}\leq T\leq\overline{T}\}$.

	Let us see that~$K$ is maximal by showing it is contained in a hyperplane. Suppose that~$n\in J$. In such case, for any~$\delta$-permutree~$T$ such that~$\underline{T}\leq T\leq\overline{T}$ we have that~$RD(T)_{\max(I)}=\emptyset$ (resp.~$\{j\in[n]\,:\, \max(I)<j \text{ and } v_j\in D(T)_{\max(I)}\}=\emptyset$) and we conclude that get that~$K$ is in the hyperplane~$x_{\max(I)}=0$. If instead~$n\in I$, then we obtain that~$K$ is in the hyperplane~$x_{\max(J)}=n-\max(J)$ following a similar argument. Finally, note that all other~$n-2$ entries of the cubic vectors change between~$\underline{T}$ and~$\overline{T}$ through rotations between the vertices~$I$ or~$J$. This together with Theorem~\ref{thm:cubic_property_edge_direction} gives us that~$K$ is a maximal cell of~$\mathcal{C}_\delta$.

	The conjunction of Theorems~\ref{thm:cubic_property_edge_direction},~\ref{thm:cubic_property_convec_cube}, and~\ref{thm:cubic_property_injective} and our bijection between facets and cells gives us that~$\mathcal{C}_\delta$ is an embedding of the~$\delta$-permutreehedron.
\end{proof}
% Killing chktex
% chktex-file 3
% chktex-file 9
% chktex-file 12
% chktex-file 17
% chktex-file 24
% chktex-file 25
% chktex-file 36
% chktex-file 40

\chapter{Permutree Sorting}\label{chap:permutree_sorting}

\addcontentsline{lof}{chapter}{\protect\numberline{\thechapter}Permutree Sorting}
\addcontentsline{lot}{chapter}{\protect\numberline{\thechapter}Permutree Sorting}

In this chapter we present a way to study permutree congruences via the theory of automata. This chapter is based directly on the article~\cite{PPT23}.

Recall from Subsection~\ref{ssec:type_A} that the symmetric group~$\fS_n$ together with the set~$S=\{s_1,\ldots,s_{n-1}\}$ of simple reflections~$s_i=(i\;\; i+1)$ is a Coxeter system of Type~$A$. In this way each permutation can be represented by a set of reduced words in the generators~$S$. We begin by rephrasing~$\delta$-permutrees in a notation that is more convenient for our purposes.

\begin{definition}
	Let~$\delta\in\{\nonee,\upp,\downn,\uppdownn\}^n$. We denote~$U:=\{j\in[2,n-1]\,:\,\delta_j\in\{\upp,\uppdownn\}\}$ and~$D:=\{j\in[2,n-1]\,:\,\delta_j\in\{\downn,\uppdownn\}\}$. In this context and following Proposition~\ref{prop:permutree_quotients}, we call \defn{$(U,D)$-permutree minimal} the minimal permutations of~$\equiv_\delta$.
\end{definition}

As a rephrasing, this chapter is dedicated to study how to discern the reduced words of~$(U,D)$-permutree minimal permutations.

\begin{remark}\label{rem:U_D_as_orientations}
	Following Definition~\ref{def:permutrees_are_orientations}, the sets~$U$ and~$D$ can also be defined as~$j\in U$ (resp.~$j\in D$) if~$j\to j-1$ (resp.~$j-1\to j$).
\end{remark}

\section{Single Automata}\label{sec:single_automata}

We begin by studying the case where~$U=\emptyset$ and~$D=\{j\}$ (resp.~$U=\{j\}$ and~$D=\emptyset$) for some~$j\in[2,n-1]$.

\begin{definition}\label{def:permutree_automata_single}
	Consider~$U=\{j\}$ (resp.~$D=\{j\}$) for some~$j\in[2,n-1]$ and the set of generators~$S$ as an alphabet. We define the automaton \defn{$\UU(j)$}\index{permutree!automaton~$\UU$} (resp.\ \defn{$\DD(j)$}\index{permutree!automaton~$\DD$}) recursively following Figure~\ref{fig:permutree_automata_single_recursive} with automata~$\UU(n)$ (resp.~$\DD(0)$) defined for consistency. As our automata are complete with each node having the~$n-1$ transitions labeled by adjacent transpositions~$s_1,\ldots,s_{n-1}$, all missing transitions in our figures are meant to be loops. Figure~\ref{fig:permutree_automata_single_full} shows the complete automata~$\UU(j)$ and~$\DD(j)$.
\end{definition}

\begin{figure}[h!]
	\centering
	\includegraphics[scale=1]{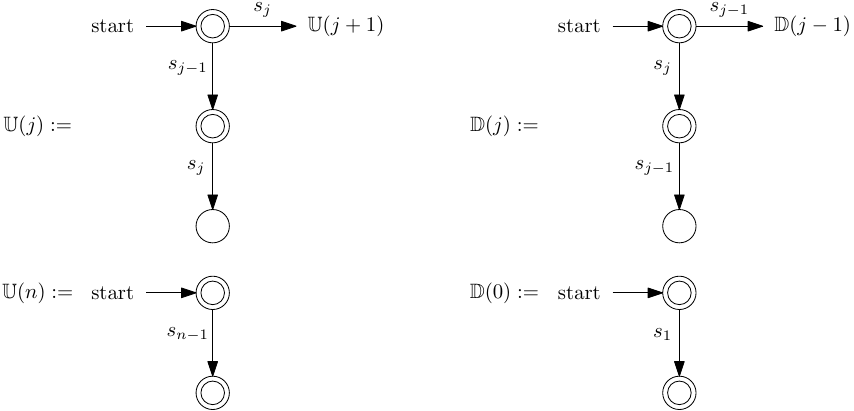}
	\caption[The automata~$\UU(j)$ and~$\DD(j)$ defined recursively.]{ The automata~$\UU(j)$ (left) and~$\DD(j)$ (right) defined recursively.}\label{fig:permutree_automata_single_recursive}
\end{figure}

\begin{figure}[h!]
	\centering
	\includegraphics[scale=0.9]{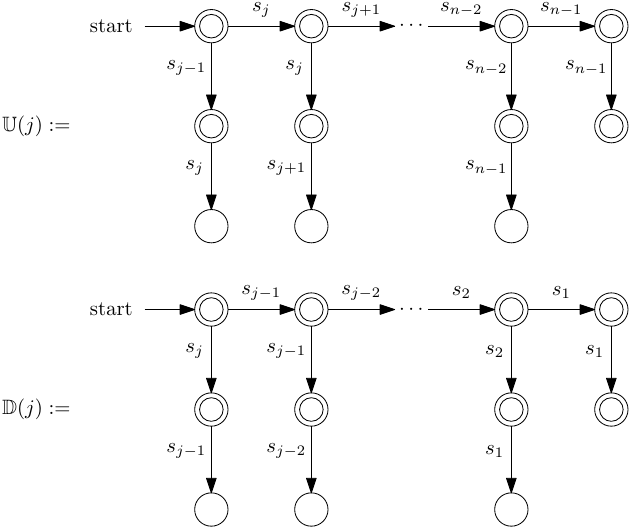}
	\caption{The complete automata~$\UU(j)$ and~$\DD(j)$.}\label{fig:permutree_automata_single_full}
\end{figure}

\begin{remark}\label{rem:automata_read_red_words}
	As our automata are complete, it is immediate from their definition that the languages~$\cL(\UU)$ and~$\cL(\DD)$ are infinite. Thus, we narrow our scope and instead of asking what is the language recognized by them, we ask which reduced words are accepted (read from left to right) by them and in which states they are accepted. This leads us to the following definition.
\end{remark}

\begin{definition}\label{def:healthy_ill_dead_states}
	We say that a state of~$\UU(j)$ (resp.~$\DD(j)$) is \defn{healthy}\index{permutree!automata!healthy/ill/dead}, \defn{ill}, or \defn{dead}, if it is respectively in the top, middle, bottom row of the automaton. The sequence of healthy states of~$\UU(j)$ (resp.~$\DD(j)$) is called the \defn{spine}\index{permutree!automata!spine} the automaton.
\end{definition}

\begin{remark}\label{rem:automata_and_walks_cox_graph}
	One didactic way of seeing our automata is the following. Given an orientation~$j\to j-1$ (resp.~$j-1\to j$), the spine of~$\UU(j)$ (resp.~$\DD(j)$) can be built taking a walk on the Coxeter graph starting from the start of the orientation (i.e.~$j$ (resp.~$j-1$)) to the end opposite to the orientation. Afterwards the other states can be built recursively following the pattern given in the Figure~\ref{fig:permutree_automata_single_recursive}.
\end{remark}

With this definition our final/accepting states are partitioned into healthy or ill states while rejecting/non-final states are dead states. To explain why these states receive these names right now is a spoiler. First, we need to redefine slightly what it means to avoid a pattern in this chapter.

\begin{definition}\label{def:permutree_pattern_avoidance}
	Fix a~$j\in[2,n-1]$. We say that a permutation~$\pi\in\fS_n$ \defn{avoids}\index{permutation!avoidance} the pattern~$jki$ (resp.~$kij$) if for all~$1\leq i<j$ and~$j<k\leq n$, the word~$jki$ (resp.~$kij$) is not a subword of~$\pi$.
\end{definition}

\begin{remark}\label{rem:comparing_pattern_avoidances}
	We bring special attention to the reader to notice that since~$j$ is fixed in Definition~\ref{def:permutree_pattern_avoidance}, this pattern avoidance is different from the usual as in Definition~\ref{def:pattern_avoidance_containment}.
\end{remark}

\begin{example}
	The permutation~$42135$ avoids~$2ki$,~$3ki$, and~$4ki$ (and thus the pattern~$231$), but contains~$ki3$ (and thus the pattern~$312$) since it contains the subsequence~$423$.
\end{example}

\subsection{Properties of \texorpdfstring{$\UU(j)$}{} and \texorpdfstring{$\DD(j)$}{}}\label{ssec:single_automata_properties}

Since our automata read reduced words from left to right, it is convenient to us to study how the event of~$\UU(j)$ (resp.~$\DD(j)$) accepting or rejecting a reduced word changes when we multiply on the left said word by the transposition~$s_j$,~$s_{j-1}$, or~$s_i$ with~$i\notin\{i,j-1\}$. The following Lemmas and Examples do this while studying in parallel how our fixed pattern avoidance changes under the same circumstances. The following proofs are written for the case of~$\UU(j)$ and the pattern~$jki$ to ease reading. The proofs for the cases of~$\DD(j)$ and the pattern~$kij$ are similar.

\begin{lemma}\label{lem:automata_multplying_si}
	Let~$\sigma\in\fS_n$ such that~$\sigma([j-1]) = [j-1]$,~$\sigma(j) = j$, and~$\sigma([n]\setminus{[j]}) = [n]\setminus{[j]}$ and~$\tau\in \fS_n$ such that~$\ell(\sigma \cdot \tau) = \ell(\sigma) + \ell(\tau)$, then:
	\begin{enumerate}
		\itemsep0em
		\item~$\tau$ possesses a reduced word accepted by~$\UU(j)$ (resp.~$\DD(j)$) if and only if~$\sigma \cdot \tau$ posses a reduced word accepted by~$\UU(j)$ (resp.~$\DD(j)$),
		\item~$\tau$ avoids~$jki$ (resp.~$kij$) if and only if~$\sigma \cdot \tau$ avoids~$jki$ (resp.~$kij$).
	\end{enumerate}
\end{lemma}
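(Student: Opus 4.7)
The plan is to exploit the fact that $\sigma$ lies in the parabolic subgroup $W_J$ where $J = S \setminus \{s_{j-1}, s_j\}$. Indeed, since $\sigma$ permutes $[j-1]$ among itself, fixes $j$, and permutes $[n]\setminus [j]$ among itself, no reduced word of $\sigma$ can move $j$, so none of the letters $s_{j-1}$ or $s_j$ (the only adjacent transpositions that touch $j$) can appear in any reduced word of $\sigma$. Both statements then reduce to checking that prepending such a $\sigma$ to $\tau$ is invisible to the automaton and to the pattern.

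For part (2), I would argue as follows. Since $\sigma$ preserves each of the three sets $[j-1]$, $\{j\}$, and $[n]\setminus [j]$ setwise and $(\sigma \cdot \tau)(a) = \sigma(\tau(a))$, the positions $a$ at which $\sigma \tau$ takes the value $j$, a value strictly greater than $j$, or a value strictly less than $j$ coincide exactly with the corresponding positions for $\tau$. Consequently, a triple of positions $a<b<c$ witnesses a $jki$ pattern in $\sigma\tau$ (that is, $(\sigma\tau)(a)=j$, $(\sigma\tau)(b)>j$, $(\sigma\tau)(c)<j$) if and only if it witnesses a $jk'i'$ pattern in $\tau$ for some $i'<j<k'$. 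The same argument works verbatim for $kij$ and $\DD(j)$.

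For part (1), the crucial structural fact to extract from Figure~\ref{fig:permutree_automata_single_full} is that at the initial healthy state $q_0$ of $\UU(j)$ (resp.\ $\DD(j)$), every transition labeled $s_i$ with $i \notin \{j-1, j\}$ is a loop; the only non-loop transitions out of $q_0$ are the ones labeled $s_j$ and $s_{j-1}$ coming from the spine and the descent into the ill row. This is immediate from the recursive construction of Figure~\ref{fig:permutree_automata_single_recursive} together with the convention that missing transitions are loops. Granted this, if $x_\sigma$ is any reduced word of $\sigma$, then reading $x_\sigma$ from $q_0$ returns to $q_0$, since every letter of $x_\sigma$ lies in $J$. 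Now, because $\ell(\sigma\tau) = \ell(\sigma)+\ell(\tau)$, for any reduced words $x_\sigma$ of $\sigma$ and $x_\tau$ of $\tau$, the concatenation $x_\sigma x_\tau$ is a reduced word of $\sigma\tau$; the automaton reaches the same state after reading $x_\sigma x_\tau$ as after reading $x_\tau$ alone, which proves the forward implication.

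The main obstacle lies in the converse direction of part (1): given that some reduced word $y$ of $\sigma\tau$ is accepted, we must exhibit an accepted reduced word of $\tau$. My plan is to invoke the fact that $\sigma \leq_L \sigma\tau$ in the left weak order (a direct consequence of $\ell(\sigma\tau)=\ell(\sigma)+\ell(\tau)$), which guarantees the existence of at least one reduced word of $\sigma\tau$ of the form $x_\sigma x_\tau$ with $x_\sigma$ reduced for $\sigma$ and $x_\tau$ reduced for $\tau$. To transfer acceptance from the arbitrary word $y$ to the specific word $x_\sigma x_\tau$, I would separately establish (or cite from a companion lemma) that acceptance by $\UU(j)$ is an invariant of the underlying permutation, i.e.\ it is preserved under commutation moves $s_i s_k \leftrightarrow s_k s_i$ (for $|i-k|\geq 2$) and under braid moves $s_i s_{i+1} s_i \leftrightarrow s_{i+1} s_i s_{i+1}$. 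This reduces to a finite check on the transitions of $\UU(j)$ and $\DD(j)$ that I expect to be the most delicate but routine part of the argument. Once that invariance is in hand, both directions of part (1) follow symmetrically from the loop property at $q_0$.
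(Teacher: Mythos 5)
Part (2) and the implication ``$\tau$ accepted $\Rightarrow$ $\sigma\tau$ accepted'' of part (1) are both correct and match the paper's own approach: for (2) you use that $\sigma$ preserves each of $[j-1]$, $\{j\}$, and $[n]\setminus[j]$ setwise, so the positions at which $\sigma\tau$ takes a value $<j$, $=j$, or $>j$ coincide with those for $\tau$; for the easy direction of (1), you prepend a reduced word $x_\sigma$ of $\sigma$, whose letters all lie outside $\{s_{j-1},s_j\}$ and hence all loop at the initial state $q_0$, so $x_\sigma x_\tau$ is accepted whenever $x_\tau$ is.

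The gap is in your plan for the converse of (1). You propose to establish, as a companion lemma, that acceptance by $\UU(j)$ is an invariant of the underlying permutation, i.e.\ is preserved under commutation and braid moves, and you expect this to be a routine finite check. It is not routine --- it is false, and the paper records the obstruction explicitly in Remark~\ref{rem:reduced_words_not_closed}: for $\pi = 321$ the reduced word $s_2 \cdot s_1 \cdot s_2$ is accepted by $\UU(2)$ while $s_1 \cdot s_2 \cdot s_1$ is rejected, so a single braid move $s_2 s_1 s_2 \leftrightarrow s_1 s_2 s_1$ flips acceptance. Braid moves involving $s_{j-1}$ and $s_j$ are precisely the problematic ones, and they can arise when reshuffling an arbitrary accepted reduced word of $\sigma\tau$ to bring the letters of $\sigma$ to the front. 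The finite check you anticipate would turn up this failure and the step would collapse.

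The paper's proof does not pass through braid invariance. The argument stays local to $q_0$: since no reduced word of $\sigma$ contains $s_{j-1}$ or $s_j$, any $x_\sigma$ reads to $q_0$ and is itself trivially accepted, so reading $x_\sigma x_\tau$ (a reduced word of $\sigma\tau$ because $\ell(\sigma\tau)=\ell(\sigma)+\ell(\tau)$) ends at exactly the state that reading $x_\tau$ alone does. Acceptance of $\sigma\tau$ is thereby controlled by $\tau$, and in the place the converse direction is actually invoked (the forward implication in the proof of Theorem~\ref{thm:pattern_avoidance_single}) the accepted reduced word of $\pi = s_i\cdot\tau$ is already given to begin with the loop letter $s_i$, so one simply deletes it. You should restructure your converse to work with the prefixed reduced word $x_\sigma x_\tau$ rather than an arbitrary accepted word, and drop the braid-invariance lemma entirely.
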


\begin{proof}
	We prove both events separately as follows.
	\begin{enumerate}
		\itemsep0em
		\item The constraints on~$\sigma$ imply that the transpositions~$s_j$ and~$s_{j-1}$ do not appear in any of its reduced words (see Remark~\ref{rem:reduced_word_braid_moves}). Therefore, any reduced word of~$\sigma$ ends in the initial state of~$\UU(j)$, and it is trivially accepted. Thus, the accepting or rejecting of~$\sigma\cdot\tau$ depends uniquely on~$\tau$. The result immediately follows.
		\item The constraints on~$\sigma$ show that the assumption~$1\leq i<j<k \leq n$ implies that~$1\leq \sigma(i)<\sigma(j)=j<\sigma(k) \leq n$. Thus,~$\tau$ contains the pattern~$jki$ if and only if~$\sigma\cdot\tau$ contains the pattern~$j\sigma(k)\sigma(i)$.
		      \qedhere
	\end{enumerate}
\end{proof}

\begin{example}\label{ex:automata_multplying_si}
	Consider~$j := 4$ and the permutations~$\sigma := 312465 = s_2 \cdot s_1 \cdot s_5$,~$\tau_1 := 143256 = s_3 \cdot s_2 \cdot s_3$, and~$\tau_2 := 124536 = s_3 \cdot s_4$.
	Multiplying we obtain~$\sigma\cdot\tau_1=342165$ and~$\sigma\cdot\tau_2 = 314625$. Following the order of Lemma~\ref{lem:automata_multplying_si} we have that:
	\begin{enumerate}
		\itemsep0em
		\item~$\UU(4)$ accepts all reduced words of both~$\tau_1$ and~$\sigma\cdot \tau_1$ on its first ill state, and rejects all reduced words of both~$\tau_2$ and~$\sigma\cdot \tau_2$ on its first dead state,
		\item both~$\tau_1$ and~$\sigma\cdot\tau_1$ avoid~$4ki$, while both~$\tau_2$ and~$\sigma\cdot\tau_2$ contain~$4ki$.
	\end{enumerate}
\end{example}

\begin{lemma}\label{lem:automata_multplying_sj-1}
	Let~$\tau \in \fS_n$ be a permutation with a reduced word starting with~$s_{j-1}$ (resp.~$s_j$) that is accepted by~$\UU(j)$ (resp.~$\DD(j)$), then
	\begin{enumerate}
		\itemsep0em
		\item~$\tau$ does not permute~$j$ and~$j+1$ (resp.~$j-1$ and~$j$),
		\item~$\tau$ avoids~$jki$ (resp.~$kij$).
	\end{enumerate}
\end{lemma}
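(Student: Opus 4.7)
My strategy is to extract information from the accepting trajectory of the automaton $\UU(j)$ on the given reduced word, using the explicit structure shown in Figure~\ref{fig:permutree_automata_single_full} together with the recursive description in Figure~\ref{fig:permutree_automata_single_recursive}. I treat $\UU(j)$ and the pattern $jki$; the case of $\DD(j)$ and the pattern $kij$ is entirely symmetric under the exchange $s_{j-1}\leftrightarrow s_j$.

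For part (1) I would argue by contradiction. Suppose that $\tau$ also permutes $j$ and $j+1$. By Lemma~\ref{lem:perm_starting_with_sj}, both $s_{j-1}$ and $s_j$ are left descents of $\tau$. A standard consequence in a Coxeter group is then that the longest element $s_{j-1}s_js_{j-1}=s_js_{j-1}s_j$ of the parabolic subgroup $\langle s_{j-1},s_j\rangle$ is a left factor of $\tau$ in reduced form, so $\tau$ admits a reduced expression of the shape $s_{j-1}s_ju$ for some reduced word $u$. Inspection of $\UU(j)$ shows that from the initial healthy state, reading $s_{j-1}$ moves us to a specific ill state whose outgoing $s_j$-transition is to a dead state, so this expression is rejected. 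To upgrade the contradiction to the given accepted reduced word $w=s_{j-1}w'$, I would check that the run of $\UU(j)$ on $w$ reaches the same ill state after its initial letter, and then exploit the fact that any occurrence of $s_j$ in $w'$ can be commuted towards the front while preserving acceptance, using that simple transpositions $s_i$ with $i\notin\{j-1,j,j+1\}$ act as loops around this ill state (in the spirit of Lemma~\ref{lem:automata_multplying_si}). Bringing $s_j$ up to the position immediately after $s_{j-1}$ would then force a dead transition, contradicting acceptance of $w$.

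For part (2) I would proceed by induction on $\ell(\tau)$ using the recursive description of $\UU(j)$. After reading the initial letter $s_{j-1}$, the automaton sits in an ill state, and the remainder $w'$ is accepted from there. The recursion in Figure~\ref{fig:permutree_automata_single_recursive} identifies the subautomaton rooted at this ill state with a smaller automaton of the same family, and one may peel off successive letters of $w'$ while tracking the effect on the one-line notation of $\tau$. At each step one checks that prepending the corresponding $s_i$ to the resulting permutation cannot create a new occurrence of $jki$: letters with $i\notin\{j-1,j\}$ leave the relative order of $\{i,j,k\}$-subpatterns unchanged by Lemma~\ref{lem:automata_multplying_si}, whereas letters $s_{j-1}$ and $s_j$ are controlled by the restrictions built into the ill and spine transitions, which refuse precisely those $s_k$-moves that would complete a $jki$ triple.

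The main obstacle I anticipate is in part (1): showing that the mere existence of an accepted reduced word of $\tau$ starting with $s_{j-1}$ rules out $s_j\in D_L(\tau)$. This requires a stability statement for acceptance under commutation of distant simple transpositions, which must be read off from the transition structure of $\UU(j)$ around the ill state reached after $s_{j-1}$. Once this stability is in hand, the contradiction from the longest-element-of-$\langle s_{j-1},s_j\rangle$ argument goes through, and the induction for part (2) follows cleanly from the recursive structure of the automaton.
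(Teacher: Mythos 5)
The core observation you need is simpler than what you are attempting, and you have already stated most of it without drawing the right conclusion. After reading the initial $s_{j-1}$, the run of $\UU(j)$ is in the first ill state, and as you note, the only non-loop transition out of that state is labeled $s_j$ and leads to a dead state. So an accepted run starting with $s_{j-1}$ never reads $s_j$ at all: every other letter loops in place, and the moment the run reads $s_j$ it is dead. No commutation argument and no ``stability under commutation'' lemma are needed --- the accepted reduced word $w=s_{j-1}w'$ simply does not contain the letter $s_j$. Now Remark~\ref{rem:reduced_word_braid_moves} gives that if one reduced word of $\tau$ lacks a generator, then every reduced word of $\tau$ lacks it, so no reduced word of $\tau$ uses $s_j$. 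Part (1) is immediate (no $s_j$ means $\tau$ does not permute $j$ and $j+1$), and part (2) follows because a permutation none of whose reduced words use $s_j$ lies in the parabolic subgroup $W_{\langle s_j\rangle}$ and therefore stabilizes $[j]$ and its complement, hence cannot even contain a subword $ki$ with $i<j<k$, let alone $jki$.

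Your sketch has two concrete gaps. In part (2), the subautomaton reachable from the first ill state is \emph{not} a smaller member of the $\UU$ family: the recursive definition hangs $\UU(j+1)$ off the $s_j$-transition from the initial \emph{healthy} state, not off the ill state reached via $s_{j-1}$, so the induction you describe is set up on the wrong structure. In part (1), the longest-element argument and the proposed commutation of $s_j$ towards the front are both unnecessary, and the commutation step is in any case problematic ($s_j$ does not commute with $s_{j-1}$ or $s_{j+1}$, so bringing a later $s_j$ to the front requires braid moves that change the word); worse, it would rely on exactly the fact you should be extracting directly from the transition structure, namely that an accepted word beginning with $s_{j-1}$ cannot contain $s_j$ anywhere.
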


\begin{proof}
	Consider a reduced word~$w$ starting with~$s_{j-1}$ and accepted by~$\UU(j)$. Since~$w$ starts with~$s_{j-1}$ and is accepted by~$\UU(j)$, it is accepted in the first ill state of~$\UU(j)$. Such state contains only loops and the transitions~$s_j$ to a dead state. Since~$w$ does not arrive to said dead state, we know that~$w$ does not contain the transposition~$s_j$. Following Remark~\ref{rem:reduced_word_braid_moves} we know that no word of~$w$ contains~$s_j$. We conclude our claims as follows:
	\begin{enumerate}
		\itemsep0em
		\item Since~$s_j=(j\;\; j+1)$, not having~$s_j$ in~$w$ implies that~$\tau$ does not permute~$j$ and~$j+1$.
		\item As~$\tau$ does not contain~$s_j$ in any reduced word,~$\tau([j]) = [j]$ and~$\tau([n+1] \setminus{[j]}) = [n+1] \setminus{[j]}$. Thus,~$\tau$ avoids the subword~$ki$ with~$i < j < k$ and by consequence avoids as well~$jki$.
		      \qedhere
	\end{enumerate}
\end{proof}

\begin{example}\label{ex:automata_multplying_sj-1}
	Consider~$j:=4$ and~$\tau:=413265$, with reduced word~$s_3 \cdot s_5 \cdot s_2 \cdot s_1 \cdot s_3$ accepted by~$\UU(4)$. We get that
	\begin{enumerate}
		\itemsep0em
		\item~$\tau$ does not permute~$4$ and~$5$,
		\item~$\tau$ avoids~$4ki$.
	\end{enumerate}
\end{example}

\begin{lemma}\label{lem:automata_multplying_sj}
	Let~$\tau \in \fS_n$ be a permutation that does not permute~$j$ and~$j+1$ (resp.~$j-1$ and~$j$), then
	\begin{enumerate}
		\itemsep0em
		\item~$s_j \cdot \tau$ (resp.~$s_{j-1} \cdot \tau$) possesses a reduced word accepted by~$\UU(j)$ (resp.~$\DD(j)$) if and only if~$\tau$ possesses a reduced word accepted by~$\UU(j+1)$ (resp.~$\DD(j-1)$),
		\item~$s_j \cdot \tau$ (resp.~$s_{j-1} \cdot \tau$) avoids~$jki$ (resp.~$kij$) if and only if~$\tau$ avoids~$(j+1)ki$ (resp.~$ki(j-1)$).
	\end{enumerate}
\end{lemma}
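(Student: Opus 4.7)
By the duality between $\UU(j)$ and $\DD(j)$ (reversing orientations and swapping the roles of $s_{j-1}$, $s_j$, and $s_{j+1}$), it will suffice to prove the $\UU$ case; the $\DD$ statements follow by the symmetric argument.

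For statement~(2), the crucial observation is that left multiplication by $s_j$ exchanges the values $j$ and $j+1$ in $\tau$ while fixing all other values, and the hypothesis on $\tau$ is equivalent to $j$ appearing to the left of $j+1$ in the one-line notation of $\tau$ (by Lemma~\ref{lem:perm_starting_with_sj}). I take an occurrence of the pattern $jki$ in $s_j \cdot \tau$ at positions $x_1 < x_2 < x_3$ and check that it translates to $\tau(x_1) = j+1$, $\tau(x_3) = i < j$, and either $\tau(x_2) = k$ when $k > j+1$, or $\tau(x_2) = j$ when $k = j+1$. The latter sub-case is ruled out by the hypothesis, since it would place $j+1$ to the left of $j$ in $\tau$. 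This produces a bijection between occurrences of the pattern $jki$ in $s_j \cdot \tau$ and occurrences of the pattern $(j+1)ki$ in $\tau$, from which statement~(2) follows.

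For statement~(1), I use the recursive structure from Figure~\ref{fig:permutree_automata_single_recursive}: at the initial healthy state $q_0$ of $\UU(j)$, reading $s_j$ leads to a state $q_1$ that is the initial healthy state of an embedded copy of $\UU(j+1)$; reading $s_{j-1}$ leads to an ill state from which the letter $s_j$ transitions to the dead state; and any $s_i$ with $i \notin \{j-1, j\}$ loops at $q_0$. The $(\Leftarrow)$ direction is then immediate: given a reduced word $w$ of $\tau$ accepted by $\UU(j+1)$, the word $s_j \cdot w$ has length $l(\tau) + 1 = l(s_j \cdot \tau)$ (because $\tau$ does not permute $j$ and $j+1$), so it is a reduced word of $s_j \cdot \tau$; its trace reads $s_j$ from $q_0$ to $q_1$ and then reads $w$ inside the embedded $\UU(j+1)$, reaching an accepting state.

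The main obstacle is the $(\Rightarrow)$ direction, which I tackle by induction on $l(\tau)$. Given an accepted reduced word $u$ of $s_j \cdot \tau$, observe that $s_j \cdot \tau$ permutes $j$ and $j+1$, so the letter $s_j$ appears in every reduced word of $s_j \cdot \tau$ (by Lemma~\ref{lem:perm_starting_with_sj} together with Remark~\ref{rem:reduced_word_braid_moves}). Consequently the trace of $u$ cannot first pass through the ill state via $s_{j-1}$, since the ensuing $s_j$ would send it to the dead state. Thus I write $u = p \cdot s_j \cdot r$ where $p$ consists of letters $s_i$ with $i \notin \{j-1, j\}$ (each looping at $q_0$) and $r$ is accepted starting from $q_1$, i.e.\ accepted by the embedded $\UU(j+1)$. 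If $p$ is empty, then $u = s_j \cdot r$ with $r$ a reduced word of $\tau$ accepted by $\UU(j+1)$, and the conclusion follows. Otherwise, write $u = s_i \cdot u'$. When $i \notin \{j-1, j, j+1\}$, the letter $s_i$ commutes with $s_j$, so $u'$ is an accepted reduced word of $s_j \cdot (s_i \cdot \tau)$ where $s_i \cdot \tau$ is shorter than $\tau$ and still does not permute $j$ and $j+1$; the induction hypothesis yields a reduced word $w'$ of $s_i \cdot \tau$ accepted by $\UU(j+1)$, and then $s_i \cdot w'$ is a reduced word of $\tau$ accepted by $\UU(j+1)$ since $s_i$ loops at the initial state of $\UU(j+1)$. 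The residual sub-case $i = j+1$ will require the braid relation $s_j s_{j+1} s_j = s_{j+1} s_j s_{j+1}$ to rewrite $u$; the delicate point is to verify that the rewriting preserves acceptance by $\UU(j)$, which I expect to follow from a direct inspection of the transitions at $q_0$ and $q_1$ involving $s_{j+1}$.
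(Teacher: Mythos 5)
Your treatment of statement~(2) and the $(\Leftarrow)$ direction of statement~(1) are both sound and mirror the paper's own observations. The problem is the $i = j+1$ sub-case of the $(\Rightarrow)$ direction of statement~(1), which you explicitly leave as a placeholder; this is a genuine gap, not a formality. The braid rewriting you gesture at is not automatically available: after $u = s_{j+1}\cdot u'$, the word $u'$ need not begin with $s_j s_{j+1}$, so there may be no braid move to apply at the front of $u$. Moreover, the natural one-step reduction (set $\tau^{\ast} := s_j s_{j+1} s_j\tau$, so that $s_{j+1}s_j\tau = s_j\tau^{\ast}$ with $\tau^{\ast}$ not permuting $j$ and $j+1$ and $\ell(\tau^{\ast}) = \ell(\tau)-1$) does not close your induction: knowing that $\tau^{\ast}$ has a reduced word accepted by $\UU(j+1)$ does not directly produce one for $\tau = s_j s_{j+1} s_j\tau^{\ast}$, because $s_j s_{j+1} s_j$ is not a letter that loops at the initial state of $\UU(j+1)$. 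Making this line of argument rigorous would require additional machinery, for instance a secondary induction on $j$ descending from $n-1$, invoking the lemma already proved for $\UU(j+1)$ on the intermediate element $\tau^{\ast}$.

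The paper avoids this delicacy by not stripping the looping prefix one letter at a time. Instead it applies Lemma~\ref{lem:automata_multplying_si} to the \emph{entire} loop prefix $p$ in a single step: the group element of $p$ lies in the parabolic subgroup generated by $S\setminus\{s_{j-1},s_j\}$ (which is permitted to contain $s_{j+1}$), so Lemma~\ref{lem:automata_multplying_si}(1) reduces directly to the case where the accepted word begins with $s_j$, after which the conclusion follows from the recursive structure of $\UU(j)$. You should restructure your $(\Rightarrow)$ argument around a single invocation of Lemma~\ref{lem:automata_multplying_si}, rather than a letter-by-letter commutation that runs aground on $s_{j+1}$.
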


\begin{proof}
	We deal with the two statements separately again:
	\begin{enumerate}
		\itemsep0em
		\item Consider~$w$ a reduced word for~$\tau$ accepted by~$\UU(j+1)$. Since~$\tau$ does not permute~$j$ and~$j+1$, we know that~$s_j$ does not appear in~$w$, and~$s_j \cdot w$ is a reduced word for~$s_j \cdot \tau$. By construction~$s_j \cdot w$ is accepted by~$\UU(j)$.

		      Now suppose that~$s_j \cdot \tau$ possesses a reduced word~$w$ accepted by~$\UU(j)$. Notice that~$w$ cannot start by~$s_{j-1}$ due to Lemma~\ref{lem:automata_multplying_sj-1} but contains~$s_j$ since~$s_j\cdot\tau$ permutes~$j$ and~$j+1$. Using Lemma~\ref{lem:automata_multplying_si} we can also assume that~$w$ begins with~$s_j$ as any other possibilities yield only loops. Thus, after reading~$s_j$, the corresponding accepted suffix is a reduced word for~$\tau$ that is accepted by~$\UU(j+1)$.

		\item Notice that since~$(j\;\; j+1)$ is an inversion of~$s_j \cdot \tau$ but not of~$\tau$, the value~$j+1$ cannot be used as~$k$ to form the pattern~$jki$ in~$s_j \cdot \tau$ and the value~$j$ cannot be used as~$i$ to form the pattern~$(j+1)ki$ in~$\tau$. Since multiplying on the left by~$s_j$ only exchanges the values~$j$ and~$j+1$ the result follows.
		      \qedhere
	\end{enumerate}
\end{proof}

\begin{example}\label{exm:automata_multplying_sj}
	Consider~$j := 4$ and the permutations~$\tau_1 := 142536$ and~$\tau_2 := 142563$ that do not permute~$4$ and~$5$.
	Multiplying we obtain~$s_4 \cdot \tau_1 = 152436$ and~$s_4 \cdot \tau_2 = 152463$. In this scenario we have that
	\begin{enumerate}
		\itemsep0em
		\item the reduced word~$s_4 \cdot s_3 \cdot s_4 \cdot s_2$ of~$s_4 \cdot \tau_1$ is accepted by~$\UU(4)$ and the reduced word~$s_3 \cdot s_4 \cdot s_2$ of~$\tau_1$ is accepted by~$\UU(5)$, while all reduced words of~$s_4 \cdot \tau_2$ are rejected by~$\UU(4)$ and all reduced words of~$\tau_2$ are rejected by~$\UU(5)$,
		\item~$s_4 \cdot \tau_1$ avoids~$4ki$ and~$\tau_1$ avoids~$5ki$, while~$s_4 \cdot \tau_2$ contains~$463$ and~$\tau_2$ contains~$563$.
	\end{enumerate}
\end{example}

We now have in our hands the tools required to prove the hinted relationship between our automata and pattern avoidance.

\begin{theorem}\label{thm:pattern_avoidance_single}
	Fix~$j \in [2,n-1]$ and~$\pi \in \fS_n$. The following statements are equivalent:
	\begin{itemize}
		\itemsep0em
		\item~$\pi$ possesses a reduced word accepted by the automaton~$\UU(j)$ (resp.~$\DD(j)$),
		\item~$\pi$ avoids the pattern~$jki$ (resp.~$kij$) with~$i < j < k$.
	\end{itemize}
\end{theorem}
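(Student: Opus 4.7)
The plan is to induct on $\ell(\pi)$, handling all $j \in [2, n-1]$ simultaneously and treating $\DD(j)$ by a symmetric argument. The base case $\pi = e$ is immediate: the empty word is accepted at the initial state of $\UU(j)$, and the identity permutation contains no forbidden pattern.

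For the inductive step I split on the left descent set $D_L(\pi)$. If there exists $s_i \in D_L(\pi)$ with $i \notin \{j-1, j\}$, I write $\pi = s_i \cdot \tau$ with $\ell(\tau) = \ell(\pi) - 1$, and Lemma~\ref{lem:automata_multplying_si} applied with $\sigma = s_i$ transfers both the existence of an accepted reduced word and the avoidance of $jki$ between $\pi$ and $\tau$, so the inductive hypothesis at $(\tau, j)$ closes this case.

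Otherwise $D_L(\pi) \subseteq \{s_{j-1}, s_j\}$. Suppose first that $s_j \in D_L(\pi)$. Then $(j, j+1) \in I_L(\pi)$, which forces $s_j$ to appear in every reduced word of $\pi$, and in particular $\pi([j]) \neq [j]$. An accepted reduced word of $\pi$ cannot begin with $s_{j-1}$, since Lemma~\ref{lem:automata_multplying_sj-1} would then yield the contradictory $\pi([j]) = [j]$; combined with the case assumption this forces the accepted word to begin with $s_j$. Using the recursive structure described by Figure~\ref{fig:permutree_automata_single_recursive}, namely that the state reached after $s_j$ from the initial state of $\UU(j)$ is the initial state of an embedded copy of $\UU(j+1)$, I deduce that $\pi$ is accepted by $\UU(j)$ iff $\pi' := s_j \pi$ is accepted by $\UU(j+1)$. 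Since $j+1$ precedes $j$ in $\pi$, the pattern $j(j+1)i$ cannot occur in $\pi$, and tracking how $s_j$ swaps the positions of the values $j$ and $j+1$ shows that $\pi$ contains $jki$ iff $\pi'$ contains $(j+1)ki$. The inductive hypothesis at $(\pi', j+1)$ then closes this sub-case, the boundary $j = n-1$ being covered by the convention on $\UU(n)$ and the vacuity of $nki$.

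The remaining case $D_L(\pi) = \{s_{j-1}\}$ is the main obstacle, as the previous reductions do not apply. The forward direction is straightforward: any accepted reduced word must begin with $s_{j-1}$ by the case hypothesis, so Lemma~\ref{lem:automata_multplying_sj-1} gives $\pi([j]) = [j]$ and the avoidance of $jki$. For the converse I plan to extract the strong condition $\pi([j]) = [j]$ from the combination of $D_L(\pi) = \{s_{j-1}\}$ and the avoidance of $jki$. The descent constraints force the one-line notation of $\pi$ to interleave the increasing sequences $1, \ldots, j-1$ and $j, j+1, \ldots, n$, with $j-1$ lying after $j$; avoiding $jki$ additionally forbids $j-1$ from coming after $j+1$ (otherwise $j, j+1, j-1$ would be a forbidden subsequence), so all $j$ values of $[j]$ lie strictly before every value of $[j+1, n]$ and hence occupy positions $1, \ldots, j$. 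Once $\pi([j]) = [j]$ is established, no reduced word of $\pi$ contains $s_j$, and reading any such word in $\UU(j)$ can only loop at the initial state or move into ill states via $s_{j-1}$, never reaching a dead state, so $\pi$ is accepted.
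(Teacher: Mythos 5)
Your proof is correct, and it takes a recognizably different organizational route from the paper's. Both arguments are an induction on $\ell(\pi)$ and both lean on Lemmas~\ref{lem:automata_multplying_si}, \ref{lem:automata_multplying_sj-1}, and~\ref{lem:automata_multplying_sj} for the heavy lifting, so the real work is shared; the difference is in how the cases are organized and how the final case is closed. The paper's forward direction splits on the first letter $s_i$ of a fixed accepted reduced word, while its reverse direction splits on whether some $m > j$ appears before the value $j$, and the two sides are then handled asymmetrically (the reverse Case 2 proceeding by contradiction, exhibiting a $jki$ pattern from a rejected word). You instead split uniformly on $D_L(\pi)$ and prove the biconditional directly in each case, which lets you handle both implications in one pass. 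The pay-off is visible in your Case~3 ($D_L(\pi)=\{s_{j-1}\}$): rather than arguing by contradiction, you show that the descent constraint plus $jki$-avoidance forces $\pi^{-1}(j-1) < \pi^{-1}(j+1)$ and hence $\pi([j]) = [j]$, from which \emph{every} reduced word avoids $s_j$ and is therefore accepted (it never leaves the initial state except possibly to the first ill state). This is a cleaner closing step than the paper's, and the intermediate conclusion $\pi([j]) = [j]$ is a stronger and more informative statement than mere acceptance. One small inefficiency: in your Case~2 you re-derive that the accepted word must begin with $s_j$ before invoking the recursive structure of the automaton, when you could simply cite Lemma~\ref{lem:automata_multplying_sj}(1) directly (you have already verified its hypothesis that $\pi' = s_j\pi$ does not permute $j$ and $j+1$). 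That does not affect correctness. You also correctly flag the boundary $j+1 = n$, where the induction hypothesis is replaced by the triviality of acceptance by $\UU(n)$ and the vacuity of the pattern $nki$.
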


\begin{proof}
	We proceed by an induction on the length of the permutations. The base case is trivial due to the construction of our automata and the only permutation of length~$0$ being the identity~$12\cdots n$.

	Let~$\pi\in\fS_n$ such that it has a reduced word~$w$ accepted by~$\UU(j)$. Supposing that~$w$ begins with~$s_i$, let~$\tau\in \fS_n$ such that~$\pi=s_i\cdot\tau$.
	\begin{itemize}
		\itemsep0em
		\item If~$i\notin\{j-1,j\}$,~$\tau$ possesses a reduced word accepted by~$\UU(j)$ by Lemma~\ref{lem:automata_multplying_si}\,(1), so that~$\tau$ avoids~$jki$ by induction. Thus,~$\pi = s_i \cdot \tau$ avoids~$jki$ by Lemma~\ref{lem:automata_multplying_si}\,(2).
		\item If~$i = j-1$, then~$\pi$ avoids~$jki$ by Lemma~\ref{lem:automata_multplying_sj-1}\,(2).
		\item If~$i = j$, then~$\tau$ possesses a reduced word accepted by~$\UU(j+1)$ by Lemma~\ref{lem:automata_multplying_sj}\,(1). We obtain by induction that~$\tau$ avoids~$(j+1)ki$. Thus,~$\pi = s_j \cdot \tau$ avoids~$jki$ by Lemma~\ref{lem:automata_multplying_sj}\,(2).
	\end{itemize}

	No matter the case, we see that~$\pi$ avoids~$jki$.

	Now for the reverse direction let~$\pi\in\fS_n$ be a permutation avoiding the pattern~$jki$. We get the following two cases.

	\begin{itemize}
		\itemsep0em
		\item There exists~$m$ minimal such that~$j<m$ and~$\pi^{-1}(j)>\pi^{-1}(m)$. In such a case,~$\pi^{-1}(l)>\pi^{-1}(m)$ for all~$l\in [j,m-1]$. Following Lemma~\ref{lem:perm_starting_with_sj},~$\pi$ possesses a reduced word starting with~$s_{m-1}s_{m-2}\cdots s_{j+1}s_j$. Let~$\sigma=s_{m-1}s_{m-2}\cdots s_{j+1}$ giving us the factorization~$\pi=\sigma\cdot s_j\cdot \tau$ for some~$\tau\in\fS_n$. Using Lemma~\ref{lem:automata_multplying_si}\,(2) and then Lemma~\ref{lem:automata_multplying_sj}\,(2) we obtain that~$\tau$ avoids~$(j+1)ki$. By induction, we obtain that it possesses a reduced word accepted by~$\UU(j+1)$. By Lemmas~\ref{lem:automata_multplying_si}\,(1) and~\ref{lem:automata_multplying_sj}\,(1), we conclude that~$\pi$ possesses a reduced word accepted by~$\UU(j)$.
		\item For all~$m\in[j+1,n]$ we have that and~$\pi(j)<\pi(m)$. Let~$w$ be a reduced word of~$\pi$ which by the previous sentence Lemma~\ref{lem:perm_starting_with_sj} cannot begin by~$s_m$ for~$m\in[j+1,n]$. If it is accepted by~$\UU(j)$, we are done. Otherwise,~$w$ is rejected and up to commutations with transpositions~$s_c$ with~$c\in[1,j-2]$ we have that~$w$ starts with~$s_{j-1}$ and then followed by~$s_j$. Call~$i$ and~$k$ the two elements that are exchanged when the reduced word first uses~$s_j$. We have that~$i < j < k$ and~$\pi$ contains the pattern~$jki$ (because~$j$ and~$k$ are not exchanged in~$\pi$, and~$i$ and~$k$ are already exchanged, so they remain exchanged in~$\pi$). This is a contradiction with our assumption that~$\pi$ avoids~$jki$.
	\end{itemize}
	Thus, in any case we get that~$\pi$ possesses a reduced word accepted by~$\UU(j)$.
\end{proof}

\subsection{The Structure of Accepted Reduced Words of \texorpdfstring{$\UU(j)$}{} and \texorpdfstring{$\DD(j)$}{}}\label{ssec:single_automata_words}

Via Proposition~\ref{prop:permutree_quotients}, Theorem~\ref{thm:pattern_avoidance_single} allows us to distinguish when a permutation is minimal in its permutree class for a congruence given through a single orientation of the Coxeter graph. Still, as the number of reduced word of permutations grows very quickly (for the longest word~$w_0$ there are~$\frac{\binom{n}{2}!}{\prod_{k=1}^{n-1} (2k-1)^{n-k}}$ reduced expressions~\cite[A005118]{OEIS}), trying all reduced words on our automata in hopes of finding one that is accepted is not practical. Therefore, we now move to present certain properties of the set of accepted reduced words of our automata with the aim of describing how to efficiently propose a single candidate reduced word in the context of Theorem~\ref{thm:pattern_avoidance_single}.

\begin{remark}\label{rem:reduced_words_not_closed}
	Given the structure of our automata, a permutation may possess reduced words~$w,w'$ such that~$w$ is accepted by~$\UU(j)$ while~$w'$ is rejected by~$\UU(j)$. For example consider~$\pi=321$ with reduced words~$w=s_2\cdot s_1\cdot s_2$ and~$w'=s_1\cdot s_2\cdot s_1$. Then~$w$ is accepted by~$\UU(2)$ while~$w'$ is rejected by~$\UU(2)$.
\end{remark}

However, we do have the following nice properties on the set of reduced words.

\begin{theorem}\label{thm:acc_red_word_prefix}
	The set of reduced words accepted by~$\UU(j)$ (resp.~$\DD(j)$) is closed by taking prefixes.
\end{theorem}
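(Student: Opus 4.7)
The plan is to exploit the structural fact that the only rejecting states of $\UU(j)$ and $\DD(j)$ are the dead states (bottom row), while the healthy and ill states are all accepting. By the symmetry between $\UU(j)$ and $\DD(j)$ displayed in Figure~\ref{fig:permutree_automata_single_recursive}, it suffices to treat $\UU(j)$. I would begin by verifying directly from the recursive construction that every transition leaving a dead state is a self-loop; in other words, dead states are absorbing. This is visible from Figure~\ref{fig:permutree_automata_single_recursive}, since the only transitions drawn out of a dead state go back to itself, and the convention recalled after Definition~\ref{def:permutree_automata_single} fills all missing transitions with loops.

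Next, I would argue contrapositively. Let $w = s_{i_1} \cdots s_{i_k}$ be a reduced word accepted by $\UU(j)$, and let $w' = s_{i_1} \cdots s_{i_\ell}$ be a prefix, with $\ell \le k$. Denote by $q_\ell$ the state reached after reading $w'$ from the initial state $q_0$, and by $q_k$ the state reached after reading $w$. Suppose for contradiction that $w'$ is rejected, i.e.\ $q_\ell$ is a dead state. Since dead states are absorbing, reading the remaining letters $s_{i_{\ell+1}}, \ldots, s_{i_k}$ keeps the computation inside dead states, so $q_k$ is also dead. This contradicts the hypothesis that $w$ is accepted.

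Finally, I would note that the word $w'$ is itself a reduced expression: any prefix of a reduced word in a Coxeter system is reduced, so the statement makes sense and gives closure within the set of reduced words. The main ``obstacle'' is purely bookkeeping, namely confirming the absorbing property of dead states from the picture; once this is done, the proof is a one-line argument using the monoid action of letters on states. I expect the proof to be very short, essentially consisting of the two observations above.
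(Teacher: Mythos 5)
Your proof is correct and takes essentially the same approach as the paper: the paper observes directly that any path to an accepting state contains only accepting states, while you argue the contrapositive via the equivalent structural fact that dead states are absorbing. Both rest on the same observation about the automaton's structure, and both then note that prefixes of reduced words are themselves reduced.
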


\begin{proof}
	This follows from the fact that any path to an accepting state in~$\UU(j)$ begins in the initial state and contains only accepting states. Thus, any prefix of a reduced word accepted by~$\UU(j)$ is also accepted by~$\UU(j)$.  Since all prefixes of a reduced word are also reduced words, we obtain the result.
\end{proof}

\begin{theorem}\label{thm:acc_red_word_algorithm}
	Let~$\ell \in [n-1]$ be distinct from~$j-1$ (resp.~$j$).
	A permutation~$\pi \in \fS_n$ that avoids~$jki$ (resp.~$kij$) and reverses~$\ell$ and~$\ell+1$ possesses a reduced word starting with~$s_\ell$ and accepted by~$\UU(j)$ (resp.~$\DD(j)$).
\end{theorem}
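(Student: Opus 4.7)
My plan is to treat the $\UU(j)$ statement (the $\DD(j)$ case being symmetric) by writing $\pi = s_\ell \cdot \tau$ with $\ell(\tau) = \ell(\pi)-1$ (which is possible by Lemma~\ref{lem:perm_starting_with_sj}, since $\pi$ reverses $\ell$ and $\ell+1$) and then proceeding by a case analysis on $\ell$ that mimics the inductive structure used in the proof of Theorem~\ref{thm:pattern_avoidance_single}. The goal in each case is to produce a reduced word $w$ of $\tau$ accepted by the appropriate automaton such that $s_\ell \cdot w$ is automatically reduced and accepted by $\UU(j)$.

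Case $\ell \in [n-1] \setminus \{j-1, j\}$. In this range $s_\ell$ satisfies the hypotheses imposed on $\sigma$ in Lemma~\ref{lem:automata_multplying_si} (it permutes $[j-1]$, fixes $j$, and permutes $[n]\setminus [j]$, since $\ell+1 \leq j-1$ or $\ell \geq j+1$). Hence Lemma~\ref{lem:automata_multplying_si}\,(2) yields that $\tau$ avoids the pattern $jki$. By Theorem~\ref{thm:pattern_avoidance_single}, $\tau$ admits a reduced word $w$ accepted by $\UU(j)$. As explained in the proof of Lemma~\ref{lem:automata_multplying_si}\,(1), reading $s_\ell$ from the initial state of $\UU(j)$ produces a loop at that state, so the concatenation $s_\ell \cdot w$ is still accepted; and $s_\ell \cdot w$ is a reduced expression of $\pi$ because $\ell(\pi) = \ell(\tau)+1$.

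Case $\ell = j$. Here, since $\ell(s_j \cdot \tau) = \ell(\tau)+1$, the permutation $\tau$ does not reverse $j$ and $j+1$, so Lemma~\ref{lem:automata_multplying_sj} applies. Part~(2) of that lemma gives that $\tau$ avoids $(j+1)ki$, and hence by Theorem~\ref{thm:pattern_avoidance_single} there is a reduced word $w$ of $\tau$ accepted by $\UU(j+1)$. The proof of Lemma~\ref{lem:automata_multplying_sj}\,(1) then shows that $s_j \cdot w$ is a reduced word of $\pi$ accepted by $\UU(j)$, and it evidently starts with $s_\ell = s_j$.

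I expect the core obstacle to be bookkeeping rather than a deep idea: one has to make sure that the accepted reduced word $w$ of $\tau$ produced by Theorem~\ref{thm:pattern_avoidance_single} can actually be prepended by $s_\ell$ without leaving the accepting region of the automaton, which is precisely what Lemmas~\ref{lem:automata_multplying_si} and~\ref{lem:automata_multplying_sj} encode. The symmetric case of $\DD(j)$ and the pattern $kij$ follows by swapping the roles of $j$ and $j-1$ throughout and replacing $\UU(j+1)$ by $\DD(j-1)$ in the second case, using the recursive definition of $\DD(j)$ in Figure~\ref{fig:permutree_automata_single_recursive}.
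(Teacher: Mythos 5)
Your proof is correct and follows essentially the same route as the paper's: write $\pi = s_\ell\cdot\tau$, split into the cases $\ell\notin\{j-1,j\}$ and $\ell=j$, invoke Lemma~\ref{lem:automata_multplying_si} (resp.\ Lemma~\ref{lem:automata_multplying_sj}) part~(2) together with Theorem~\ref{thm:pattern_avoidance_single} to produce an accepted reduced word of $\tau$, and then prepend $s_\ell$ using part~(1) of the corresponding lemma. The only difference is that you spell out the verification that $s_\ell$ satisfies the hypotheses on $\sigma$ in Lemma~\ref{lem:automata_multplying_si} and why the loop keeps the word in the accepting region, which the paper leaves implicit.
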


\begin{proof}
	Since~$\pi$ reverses~$\ell$ and~$\ell+1$, it possesses a reduced word of the form~$\pi = s_\ell \cdot \tau$ (see Lemma~\ref{lem:perm_starting_with_sj}). To finish proving the proposition we have the following cases:
	\begin{itemize}
		\itemsep0em
		\item if~$\ell = j$, then~$\tau$ does not reverse~$j$ and~$j+1$ and thus avoids~$(j+1)ki$ by Lemma~\ref{lem:automata_multplying_sj}\,(2). Hence,~$\tau$ has a reduced word accepted by~$\UU(j+1)$ by Theorem~\ref{thm:pattern_avoidance_single}, and we conclude by Lemma~\ref{lem:automata_multplying_sj}\,(1).
		\item if~$\ell\notin\{j-1,j\}$. In this case~$\tau$ avoids~$jki$ by Lemma~\ref{lem:automata_multplying_si}\,(2) and thus,~$\tau$ has a reduced word accepted by~$\UU(j)$ by Theorem~\ref{thm:pattern_avoidance_single}. We conclude by Lemma~\ref{lem:automata_multplying_si}\,(1).
		      \qedhere
	\end{itemize}
\end{proof}

\begin{theorem}\label{thm:acc_red_word_same_state}
	Given a permutation~$\pi \in \fS_n$, all the reduced words for~$\pi$ accepted by~$\UU(j)$ (resp.~$\DD(j)$) are accepted at the same state.
\end{theorem}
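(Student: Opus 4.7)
My plan is to prove the theorem by induction on $\ell(\pi)$, strengthening the statement to allow an arbitrary starting state $q$ of $\UU(j)$: for any $q$ and any $\pi \in \fS_n$, any two reduced words of $\pi$ both accepted when read from $q$ reach the same final state. The base case $\ell(\pi) = 0$ is immediate since only the empty word is available.

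For the inductive step, I would consider two accepted reduced words $w, w'$ of $\pi$ from $q$. If both begin with the same letter $s_a$, then reading $s_a$ from $q$ lands at a common deterministic state $q'$, the tails are two strictly shorter accepted reduced words of $s_a \pi$ from $q'$, and the induction hypothesis concludes. If instead $w = s_a \cdot u$ and $w' = s_b \cdot u'$ start with distinct letters, then $s_a$ and $s_b$ are both left descents of $\pi$. By the standard fact that the parabolic subgroup $\langle s_a, s_b \rangle$ is a finite dihedral group, $\pi$ admits a reduced word of the form $s_a s_b \cdot v$ (and correspondingly $s_b s_a \cdot v$) when $|a-b| \geq 2$, or of the form $s_a s_b s_a \cdot v$ (and $s_b s_a s_b \cdot v$) when $|a-b| = 1$. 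I would construct such a reduced word $\tilde w$ of $\pi$ that remains accepted from $q$, match its final state with that of $w$ by applying the easy subcase to the tails, and then transfer this equality to $w'$ via its commutation or braid partner.

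The key tool is a local compatibility property of the automaton, established by case analysis on the type of state $q$ using the recursive description of $\UU(j)$ in Figure~\ref{fig:permutree_automata_single_recursive}: the healthy states on the spine are initial states of sub-automata $\UU(j+k)$, and all but at most two transition labels loop at any given state. From this structure, one checks that for $|a-b| \geq 2$, reading $s_a s_b$ and $s_b s_a$ from $q$ always reach the same state, while for $|a-b| = 1$, reading $s_a s_b s_a$ and $s_b s_a s_b$ from $q$ reach the same state whenever both sides land in accepting states.

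The main obstacle I expect is the adjacent case $\{a, b\} = \{j+k-1, j+k\}$ at a healthy state $q_k$, where the braid relation drives one side into the dead state, reflecting Remark~\ref{rem:reduced_words_not_closed}. However, in such a situation an accepted reduced word of $\pi$ starting with $s_{j+k-1}$ from $q_k$ must avoid $s_{j+k}$ throughout its tail (since $s_{j+k}$ sends the ill state to dead), while any reduced word starting with $s_{j+k}$ contains the letter $s_{j+k}$; since the set of generators appearing in any reduced word of $\pi$ is an invariant (Remark~\ref{rem:reduced_word_braid_moves}), both cannot occur simultaneously, so this bad subcase does not arise. The case of $\DD(j)$ follows by the evident left-right symmetry in the recursive construction.
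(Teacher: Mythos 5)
Your approach is genuinely different from the paper's. The paper (in the subsequent Theorem~\ref{thm:acc_red_word_same_state_inv}) proves a stronger statement by induction on length, tracking two explicit invariants of the permutation, namely the cardinalities of~$\inv^j(\pi)$ and~$\inv_j(\pi)$; the state reached is read off these invariants, and the case analysis on the first letter~$s_\ell$ updates them deterministically. You instead invoke Matsumoto's theorem to reduce to a dihedral braid move, and you verify a local compatibility property of the automaton for commuting and adjacent generators. Both of these ingredients are correct: the commuting case~$|a-b|\ge 2$ always yields~$s_a s_b$ and~$s_b s_a$ landing in the same state, and the adjacent ``bad'' case~$\{a,b\}=\{j+k-1,j+k\}$ is correctly ruled out by the invariance of the set of letters across reduced words (Remark~\ref{rem:reduced_word_braid_moves}).

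However, there is a genuine gap in the middle of the inductive step: the claim that you can ``construct such a reduced word~$\tilde w$ of~$\pi$ that remains accepted from~$q$'' is not justified, and it is substantive. Concretely, to use the easy subcase you need both~$w = s_a u$ and~$\tilde w = s_a s_b v$ to be accepted from~$q$, which amounts to~$s_b v$ being accepted from the state~$q_a$, i.e.\ to some reduced word of~$\pi_0 := s_a s_b\pi$ being accepted from~$q_{ab}$. The local compatibility property (C1)/(C2) tells you that~$s_a s_b v$ and~$s_b s_a v$ land in the \emph{same} state, hence are accepted or rejected together; it does not tell you they are accepted. If both are rejected (because~$v$ dies reading from~$q_{ab}$), the induction hypothesis applied to the tails gives you no information, and the equality~$F_a = F_b$ of landing states cannot be concluded. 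Filling this in requires a helper lemma of the form: if~$s_a\pi$ is accepted from~$q_a$ and~$s_b\pi$ is accepted from~$q_b$, then~$\pi_0$ is accepted from~$q_{ab}$. Proving that lemma requires translating ``accepted from a given state'' into the pattern/subgroup characterizations (avoiding~$(j+k)hi$ at healthy states, lying in~$W_{\langle s_{j+k}\rangle}$ at ill states) and then applying Lemmas~\ref{lem:automata_multplying_si} and~\ref{lem:automata_multplying_sj} case by case, which is essentially the invariant bookkeeping the paper packages into Theorem~\ref{thm:acc_red_word_same_state_inv}. In other words, the reduction to dihedral moves is a clean organizing idea, but the hard content you have deferred (acceptance of~$\tilde w$) is precisely what the paper's invariant argument handles directly.
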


It is possible to give a direct proof of this theorem by verifying that commutations and braid moves between accepted reduced words do not change the state on which the reduced words are accepted. One can also classify which expressions of reduced words are accepted at each state. We prefer going for the following stronger result which characterizes where the reduced words of a permutation land depending on its inversions. For simplicity, we state it in terms of~$\UU(j)$, although a similar statement holds for the automaton~$\DD(j)$.

\begin{theorem}\label{thm:acc_red_word_same_state_inv}
	Given a permutation~$\pi\in\fS_n$, partition the inversions of~$\pi$ as into the following two sets: \begin{equation*}
		\begin{split}
			\inv^j(\pi) &= \{(i,j)\,:\, i < j \text{ and } \pi^{-1}(i) > \pi^{-1}(j)\}\\
			\inv_j(\pi) &= \{(j,k) \,:\, j < k \text{ and } \pi^{-1}(j) > \pi^{-1}(k)\}
		\end{split}
	\end{equation*}
	Then we have the following properties:
	\begin{enumerate}
		\itemsep0em
		\item if~$|\inv^j(\pi)| = 0$, then all reduced words for~$\pi$ end at the same healthy state of~$\UU(j)$,
		\item if~$|\inv_j(\pi)| = 0$, then all reduced words for~$\pi$ end at the same state of~$\UU(j)$. The state is healthy if~$\pi$ avoids~$ji$, ill if~$\pi$ contains~$ji$ but avoids~$jki$, or dead if~$\pi$ contains~$jki$,
		\item if~$|\inv^j(\pi)| \ne 0 \ne |\inv_j(\pi)|$, all accepted reduced words for~$\pi$ end at the same ill state of~$\UU(j)$ while the rejected reduced words may end at distinct dead states of~$\UU(j)$.
	\end{enumerate}
\end{theorem}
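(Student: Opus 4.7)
My plan is to proceed by induction on the length $\ell(\pi)$ and combine the three cases in parallel. For the base case $\pi = e$, the only reduced word is empty and ends at the initial healthy state of $\UU(j)$, which is consistent with case~(1) since both $\inv^j(e)$ and $\inv_j(e)$ are empty. For the inductive step I would invoke Matsumoto's theorem: any two reduced words of $\pi$ are connected by commutations $s_is_k = s_ks_i$ with $|i-k|>1$ and braid moves $s_is_{i+1}s_i = s_{i+1}s_is_{i+1}$. Since Remark~\ref{rem:reduced_words_not_closed} shows that acceptance is not preserved in general, the goal is to verify precisely which of these moves preserve the ending state when the resulting word remains accepted.

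By the recursive definition of $\UU(j)$ (Definition~\ref{def:permutree_automata_single} and Figure~\ref{fig:permutree_automata_single_full}), the only non-loop transitions are labeled $s_{j-1}$ (from the initial healthy state to the first ill state), $s_j$ (from the first ill state to a dead state, and along the spine from the initial healthy state to the next healthy state viewed as initial of $\UU(j+1)$), and inductively the analogous spine transitions at deeper healthy states. Commutations $s_is_k=s_ks_i$ with $|i-k|>1$ therefore preserve the state trivially unless both $s_i,s_k$ are among these non-loop labels at the relevant state, which is a finite check. Braid moves affect the state only when applied to positions whose labels include $s_{j-1}$, $s_j$, or a spine label $s_{j+\ell}$; again this is a finite case analysis at each state, which combined with Lemmas~\ref{lem:automata_multplying_si},~\ref{lem:automata_multplying_sj-1}, and~\ref{lem:automata_multplying_sj} confirms the invariance of the ending state across accepted reduced words.

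The three cases of the theorem are then identified by analyzing a canonical reduced word, such as the one produced by the algorithm implicit in the proof of Theorem~\ref{thm:pattern_avoidance_single}. In case~(1), no letter $s_{j-1}$ is ever forced (since no value $i<j$ is inverted with $j$), so the run stays on the spine and arrives at a healthy state whose index records precisely $|\inv_j(\pi)|$, the number of $k>j$ exchanged with $j$. In case~(2), the letter $s_j$ never produces a non-loop transition (since $j$ is not inverted with any $k>j$), and the behavior at the first occurrence of $s_{j-1}$ and any subsequent $s_j$'s determines whether $\pi$ avoids $ji$ (healthy), contains $ji$ but avoids $jki$ (ill), or contains $jki$ (dead). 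In case~(3), spine moves and the move into the ill row interact, and I would argue inductively that accepted words always converge to the unique ill state indexed by $|\inv_j(\pi)|$, while rejected words split among distinct dead states depending on the order in which the forbidden pattern $jki$ is first exhibited.

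The main obstacle will be case~(3), where accepted words may traverse alternating stretches of spine motion and ill-row motion, and one must show they all land in the same ill state. I expect the cleanest argument is to strengthen the induction hypothesis to record the exact state as a pair (row type, spine offset) depending only on $|\inv_j(\pi)|$ and the presence or absence of an inversion $(i,j)$ with $i<j$, and to check that each braid move either stays within this characterization or moves both words equally into the dead row. The invariance under $s_{j-1}s_js_{j-1}=s_js_{j-1}s_j$ (and its interaction with deeper spine braid moves) is the decisive verification, since this is the only braid move that mixes the two kinds of non-loop transitions.
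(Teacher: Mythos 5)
Your proposal takes the ``direct'' route via Matsumoto's theorem, which the paper explicitly acknowledges as an alternative immediately before its own proof and then declines to use. The paper's actual argument is simpler: pick an \emph{arbitrary} reduced word $w$ for $\pi$, write $w = s_\ell \cdot w'$ and $\pi = s_\ell \cdot \tau$, and observe that left-multiplying by $s_\ell$ changes the pair $\bigl(|\inv^j(\pi)|,|\inv_j(\pi)|\bigr)$ exactly in parallel with how the automaton changes state --- a loop when $\ell\notin\{j-1,j\}$, a spine move into $\UU(j{+}1)$ (shifting the index from $j$ to $j{+}1$) when $\ell=j$, and a turn into the first ill state when $\ell=j-1$. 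The induction hypothesis is then applied to the strictly shorter pair $(w',\tau)$. Since every reduced word is treated individually and its endpoint is read off from permutation statistics, no two reduced words of the same $\pi$ ever need to be compared, so Matsumoto never enters.

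Your Matsumoto plan has a genuine gap, one you half-identify in your final paragraph but do not close. In case~(3) the set of \emph{accepted} reduced words of $\pi$ need not be connected under single commutations/braid moves: a chain from one accepted word to another may pass through rejected intermediates (Remark~\ref{rem:reduced_words_not_closed} already gives the germ of such an example at $\pi=321$, where $s_1s_2s_1$ is rejected but $s_2s_1s_2$ is accepted). So it is not enough to verify that moves between two accepted words preserve the state; you must show that the relevant invariant (your ``spine offset'') survives passage through a rejected word, i.e.\ that the braid move $s_{j-1}s_js_{j-1}\leftrightarrow s_js_{j-1}s_j$ carries a word ending at the $k$-th ill state to one ending at the $k$-th dead state and back. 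Making this precise amounts to showing that the offset is a function of $\bigl(|\inv^j(\pi)|,|\inv_j(\pi)|\bigr)$, which is exactly what the paper encodes directly without Matsumoto. Separately, your outline conflates two induction schemes: you announce induction on $\ell(\pi)$, yet your inductive step invokes Matsumoto, which connects reduced words of a \emph{fixed} $\pi$ rather than reducing to shorter permutations; you should commit to one or the other and let it carry the argument.
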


\begin{proof}
	We proceed by induction on the length of a permutation. Notice that the base case is trivial as the identity has no reduced words and corresponds to our initial state which is always accepting. Consider an arbitrary reduced word~$w$ for~$\pi$. Letting it begin by~$s_l$ let~$w=s_l\cdot w'$ and~$\pi=s_l\cdot \tau$ where~$w'$ is a reduced word of~$\tau\in\fS_n$. Depending on the value of~$l$ we have the following cases:
	\begin{itemize}
		\itemsep0em
		\item if~$\ell \notin \{j-1, j\}$, then~$s_\ell$ is a loop in~$\UU(j)$ giving us the equalities~$|\inv^j(\pi)| = |\inv^j(\tau)|$ and~$|\inv_j(\pi)| = |\inv_j(\tau)|$,
		\item if~$\ell = j$, then~$s_j$ goes to the initial state of~$\UU(j+1)$, and we have that~$|\inv^j(\pi)| = |\inv^{j+1}(\tau)|$ and~${|\inv_j(\pi)| = |\inv_{j+1}(\tau)| + 1}$,
		\item if~$\ell = j-1$, then~$s_{j-1}$ goes to the first ill state of~$\UU(j)$ and~$|\inv^j(\pi)| = |\inv^{j+1}(\tau)| + 1$ and~$|\inv_j(\pi)| = |\inv_{j+1}(\tau)|$.
	\end{itemize} By induction, we obtain that the reduced word~$w'$ for~$\tau$ ends as predicted in the statement.
	The previous observations ensure that the reduced word~$w$ for~$\pi$ also does.
\end{proof}

\begin{example}\label{exm:sameStateAcceptedReducedExpressionsRefined}
	We provide examples for each of the three cases of Theorem~\ref{thm:acc_red_word_same_state_inv}.
	\begin{enumerate}
		\itemsep0em
		\item For~$\pi := 4312$, we have that~$|\inv^2(\pi)|=0$ and all of its~$5$ reduced words end at the third healthy state of~$\UU(2)$.
		\item For~$\pi := 32145$ (resp.~$\pi := 43215$, resp.~$\pi := 43251$), we have~$|\inv_4(\pi)| = 0$ and all its~$2$ (resp.~$16$, resp.~$35$) reduced words end at the first healthy (resp.~ill, resp.~dead) state~of~$\UU(4)$.
		\item For~$\pi := 4321$, we have~$|\inv^2(\pi)| = |\{(1,2)\}| = 1$ and~$|\inv_2(\pi)| = |\{(2,3),(2,4)\}| = 2$. Among the~$16$ reduced words of~$\pi$, the automaton~$\UU(2)$ accepts~$7$ at its third ill state, rejects~$7$ at its first dead state, and rejects the other~$2$ at its second dead state.
	\end{enumerate}
\end{example}

Notice that Theorem~\ref{thm:acc_red_word_algorithm} allows us to algorithmically construct a candidate reduced word to verify if a permutation~$\pi$ is~$(\{j\},\emptyset)$-permutree minimal. This would be done by accumulating transpositions~$s_\ell$ depending on if the values~$\ell$ and~$\ell+1$ are permuted and verifying that the resulting reduced word is accepted by~$\UU(j)$.

Here we go for a sorting approach as in~\cite{K73}, meaning that we construct a reduced word accepted by~$\UU(j)$ which is a reduced word of~$\pi$ if and only if~$\pi$ avoids the pattern~$jki$. We call the following algorithm the \defn{$(\{j\},\emptyset)$-permutree sorting}.

\bigskip
\IncMargin{1em}
\SetKwInOut{Input}{Input}\SetKwInOut{Output}{Output}
\SetKwFor{Repeat}{repeat}{}{}
\SetKwIF{If}{ElseIf}{Else}{if}{then}{else if}{else}{}
\DontPrintSemicolon{}
\begin{algorithm}[H]
	\renewcommand{\algorithmcfname}{Algorithm}%
	\Input{a permutation~$\pi \in \fS_n$ and an integer~$j \in [n]$}
	\Output{a reduced word accepted by~$\UU(j)$, candidate reduced word for~$\pi$}
	$w :=\varepsilon$\;
	\Repeat{}{
		\If{$\exists \; \ell \ne j-1$ such that~$\ell$ and~$\ell+1$ are reversed in~$\pi$}{
			$\pi := s_\ell \cdot \pi$, \quad~$w := w \cdot s_\ell$ \;
			\lIf{$\ell = j$}{
				$j := j+1$
			}
		}
	}
	\If{$j-1$ and~$j$ are reversed in~$\pi$}{
		$\pi := s_{j-1} \cdot \pi$, \quad~$w := w \cdot s_{j-1}$ \;
		$w := w \cdot w' \cdot w''$ where~$w'$ sorts~$\pi_{[j]}$ and~$w''$ sorts~$\pi_{[n] \setminus{[j]}}$ \;
	}
	\Return{}~$w$
	\caption{$(\{j\}, \emptyset)$-permutree sorting}
	\label{algo:permutree_sorting_simple}
\end{algorithm}
\bigskip

\begin{example}\label{ex:permutree_sorting_simple}
	Table~\ref{tab:permutree_sorting_example_simple} shows the~$(\{2\},\emptyset)$-permutree sorting algorithm in action for the permutations~$\pi_1 := 3421$ and~$\pi_2 := 4231$. Each row contains the permutation~$\pi_i$, the reduced word~$w_i$, and the values of~$j_i$ and~$\ell_i$ in use at each step of the algorithm.

	Notice that for~$\pi_1$ the algorithm ends with the identity and is thus~$(\{2\},\emptyset)$-sortable, which coincides with the fact that it avoids~$2ki$. In contrast, for~$\pi_2$ the algorithm ends with the permutation~$1243$, meaning that~$\pi_2$ is not~$(\{2\},\emptyset)$-sortable, which coincides with the fact that~$\pi_2$ contains~$2ki$.

	\begin{table}[h!]
		\centering
		\includegraphics[scale=1]{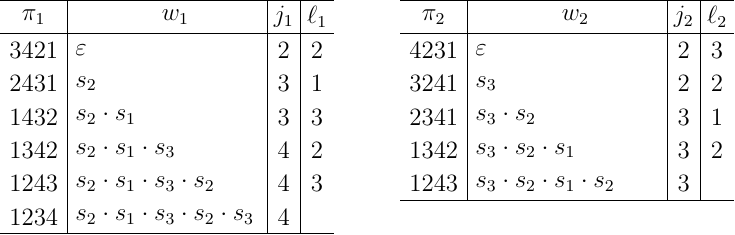}
		\caption[The~$(\{2\},\emptyset)$-permutree sorting of~$3421$ and~$4231$.]{ The~$(\{2\},\emptyset)$-permutree sorting of~$3421$ (left) and~$4231$ (right).}\label{tab:permutree_sorting_example_simple}
	\end{table}
\end{example}

\begin{corollary}\label{coro:algorithm_single}
	For any permutation~$\pi$ and~$j \in [2,n-1]$, Algorithm~\ref{algo:permutree_sorting_simple} returns a reduced~word~$w$ accepted by~$\UU(j)$ with the property that~$w$ is a reduced word for~$\pi$ if and only if~$\pi$ avoids~$jki$.
\end{corollary}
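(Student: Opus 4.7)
The plan is to prove two independent claims about the word $w$ returned by Algorithm~\ref{algo:permutree_sorting_simple}: that $w$ is always accepted by $\UU(j_{\text{init}})$, and that $w$ is a reduced expression for $\pi$ if and only if $\pi$ avoids $j_{\text{init}}ki$. The ``only if'' part is immediate from Theorem~\ref{thm:pattern_avoidance_single} combined with the acceptance claim, so the real work is to prove acceptance and the ``if'' part in tandem, by induction on $\ell(\pi)$ run in lockstep with the execution of the algorithm.

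For acceptance, I would trace the states visited in $\UU(j_{\text{init}})$ while reading $w$. Inside the main loop, each letter $s_\ell$ with $\ell \notin \{j-1,j\}$ is a loop at the current healthy state, while $s_j$ transitions to the start of the subautomaton $\UU(j+1)$, exactly when the algorithm performs the update $j \mapsto j+1$. The closing letter $s_{j-1}$, if it is produced, sends the automaton from the healthy state $j$ to the first ill state; the subsequent words $w'$ and $w''$ are drawn from the generators $\{s_1,\dots,s_{j-1}\}$ and $\{s_{j+1},\dots,s_{n-1}\}$ respectively, neither of which contains $s_j$, so all their letters are loops at that ill state. Thus $w$ always terminates in an accepting state.

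For the ``if'' part, the induction reduces each main-loop iteration to a strictly shorter permutation: when $\ell \notin \{j-1,j\}$ the invariant is transferred across the factorization $\pi = s_\ell\cdot\pi'$ by Lemma~\ref{lem:automata_multplying_si}(2), and when $\ell = j$ the algorithm's update $j \mapsto j+1$ is justified by Lemma~\ref{lem:automata_multplying_sj}(2), which converts avoidance of $jki$ into avoidance of $(j+1)ki$. The main obstacle, and the delicate step in the proof, is the closing \textbf{if}-branch. At that point the main loop has produced a $\pi$ whose values in $[j-1]$ and in $[j,n]$ each occupy their positions in increasing order. If $(j-1,j)$ is not reversed then $\pi$ is the identity and the algorithm has already built a reduced expression; otherwise I would establish the structural claim $\pi([j])=[j]$ as follows: if some value $v>j$ occupied a position $r\le j$, then the increasing-positions property would force value $j$ into some position $q_j \le r \le j$, while some value $u \in [j-1]$ would necessarily occupy a position $>j\ge r$, producing a $jki$-subword $j\,v\,u$ in $\pi$ and contradicting the invariant (preserved through the main loop by the Lemmas above) that the current $\pi$ avoids $jki$. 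Since left multiplication by $s_{j-1}$ preserves $\pi([j])=[j]$, the resulting $\pi'=s_{j-1}\pi$ factors as $\pi'_{[j]}\cdot \pi'_{[n]\setminus[j]}$ with supports in disjoint sets of generators, the concatenation $w'\cdot w''$ is a reduced expression for $\pi'$, and consequently the full $w$ is a reduced expression of the correct length for the initial $\pi$.
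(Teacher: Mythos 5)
Your proposal is correct and takes the same overall strategy as the paper, but handles the ``if'' direction with a more explicit, self-contained argument. The paper invokes Theorem~\ref{thm:acc_red_word_algorithm} as a packaged tool to justify that each greedy choice $s_\ell$ in the main loop does not lose a valid accepted reduced word, and then dispatches the terminal step tersely (``forced to end in lines 6 to 9 in an expression unique up to commutations'') without spelling out why lines 7--9 produce a reduced word of $\pi$. You instead maintain the explicit loop invariant that the current $\pi$ avoids $jki$ for the current $j$, propagated by Lemma~\ref{lem:automata_multplying_si}(2) and Lemma~\ref{lem:automata_multplying_sj}(2) --- the very ingredients of Theorem~\ref{thm:acc_red_word_algorithm}'s proof --- and then you fill in the key fact the paper leaves implicit: upon exiting the main loop the increasing-position structure of $[j-1]$ and $[j,n]$, combined with $jki$-avoidance and a pigeonhole count, forces $\pi([j])=[j]$, whence $s_{j-1}\pi$ factors into two commuting parabolic pieces and $s_{j-1}\cdot w'\cdot w''$ has length $\ell(\pi)$. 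Your route is slightly longer but more transparent at the terminal step; what you forgo is the modularity of Theorem~\ref{thm:acc_red_word_algorithm}, which the paper reuses elsewhere (e.g.\ in the Coxeter-sorting section). Both arguments are valid.
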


\begin{proof}
	First notice that Algorithm~\ref{algo:permutree_sorting_simple} creates a reduced word following~$\UU(j)$. It begins prioritizing healthy states in lines 2 to 5 by considering transpositions that are loops following Lemma~\ref{lem:automata_multplying_si} and then changing~$j$ to~$j+1$ following Lemma~\ref{lem:automata_multplying_sj}. This repeats for as many transitions as possible until we have to consider~$s_{j-1}$ and go to an ill state in line 6. At this point we cannot use~$s_j$ as this would lead to an ill state. After this point, we can use as many transitions in~$\{s_1,\ldots,s_{j-1}\}$ (resp.~$\{s_{j+1},\ldots,s_{n-1}\}$) to sort~$[j]$ (resp.~$[n]\setminus{[j]}$).

	The resulting reduced word is accepted by~$\UU(j)$ as we never use~$s_j$ after an ill state. If it happens that~$w$ is a reduced word of~$\pi$, then by Theorem~\ref{thm:pattern_avoidance_single}~$\pi$ avoids~$jki$. Conversely, if~$\pi$ avoids~$jki$, by Theorem~\ref{thm:acc_red_word_algorithm} we have that~$w$ started being constructed with the respective transpositions~$s_l$ and lines 2 to 5 of the algorithm, and then forced to end in lines 6 to 9 in an expression unique up to commutations. Thus, it is a reduced word of~$\pi$.
\end{proof}

\begin{remark}
	Notice it is line 8 of Algorithm~\ref{algo:permutree_sorting_simple} which makes it a sorting algorithm as in~\cite{K73}. Indeed,~$w$ becomes a reduced word for~$\pi$ at the end of the algorithm if and only if~$\pi$ becomes the identity permutation. We say that~$\pi$ is then~$(\{j\},\emptyset)$-permutree sortable. If one does not want to go so far one can just verify at line 8 if~$\pi([j]) = [j]$ and~$\pi([n] \setminus{[j]}) = [n] \setminus{[j]}$.
\end{remark}

\subsection{Generating Trees}\label{ssec:single_automata_trees}

Similar to how Theorem~\ref{thm:acc_red_word_algorithm} has an algorithmic consequence, Theorem~\ref{thm:acc_red_word_prefix} and Theorem~\ref{thm:acc_red_word_same_state} have a combinatorial consequence. Namely, they define generating trees for~$(\{j\},\emptyset)$-permutree minimal permutations as subgraphs of the Hasse diagram of the weak order on~$\fS_n$. To construct them without ambiguity let~$\prec$ be a total order on~$\{s_1,\ldots,s_{n-1}\}$ which we call a \defn{priority order}\index{priority order} on adjacent transpositions. Given a~$(\{j\},\emptyset)$-permutree minimal permutation~$\pi\in\fS_n$, let \defn{$\pi(\{j\}, \emptyset, \prec)$} be the~$\prec$-lexicographic minimal reduced word for~$\pi$ that is accepted by~$\UU(j)$ and \begin{equation*}
	\cR(n, \{j\}, \emptyset, \prec):=\big\{ \pi(\{j\}, \emptyset, \prec)\,:\, \pi \in \fS_n \text{ is } (\{j\}, \emptyset)\text{-permutree minimal} \big\}.
\end{equation*}

\begin{theorem}\label{thm:automata_generating_trees_simple}
	The set~$\cR(n, \{j\}, \emptyset, \prec)$ is closed by taking prefixes.
\end{theorem}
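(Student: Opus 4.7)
The plan is to take an arbitrary word $w = \pi(\{j\}, \emptyset, \prec) \in \cR(n, \{j\}, \emptyset, \prec)$ and show that every prefix of $w$ also lies in $\cR(n, \{j\}, \emptyset, \prec)$. Write $w = s_{i_1} \cdots s_{i_k}$, fix $m \leq k$, and set $w' := s_{i_1} \cdots s_{i_m}$. Since prefixes of reduced words are reduced, $w'$ is a reduced word of some permutation $\sigma \in \fS_n$. The goal is then to identify $w'$ with $\sigma(\{j\}, \emptyset, \prec)$.

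First I would establish that $\sigma$ belongs to the right family: by Theorem~\ref{thm:acc_red_word_prefix} the word $w'$ is accepted by $\UU(j)$, so Theorem~\ref{thm:pattern_avoidance_single} combined with Proposition~\ref{prop:permutree_quotients} yields that $\sigma$ avoids $jki$ and is therefore $(\{j\}, \emptyset)$-permutree minimal. In particular $\sigma(\{j\}, \emptyset, \prec)$ is well-defined, so it remains to show $w' = \sigma(\{j\}, \emptyset, \prec)$.

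The core of the argument is a contradiction on $\prec$-lexicographic minimality. Suppose $w' \neq \sigma(\{j\}, \emptyset, \prec)$; then there exists a reduced word $w''$ of $\sigma$ accepted by $\UU(j)$ that is $\prec$-lexicographically strictly smaller than $w'$. I would then consider the concatenation $w'' \cdot s_{i_{m+1}} \cdots s_{i_k}$, which has length $k = \ell(\pi)$ and represents $\pi$, hence is a reduced word for $\pi$. This new word is $\prec$-lexicographically smaller than $w$, so if it is accepted by $\UU(j)$ it contradicts the defining minimality of $w = \pi(\{j\}, \emptyset, \prec)$, which would conclude the proof.

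The delicate step, and where I expect the only real obstacle to lie, is precisely verifying acceptance of $w'' \cdot s_{i_{m+1}} \cdots s_{i_k}$: a priori, $w'$ and $w''$ could end at distinct states of $\UU(j)$, so the suffix that drives $w'$ to an accepting state might not do the same starting from the endpoint of $w''$. This is exactly where Theorem~\ref{thm:acc_red_word_same_state} intervenes, guaranteeing that all accepted reduced words of $\sigma$ end at the same state of $\UU(j)$. With this in hand the suffix behaves identically in both cases, the concatenation is accepted, the contradiction lands, and closure under prefixes follows.
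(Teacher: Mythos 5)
Your proposal is correct and takes essentially the same approach as the paper: both arguments hinge on Theorem~\ref{thm:acc_red_word_prefix} to get acceptance of the prefix and Theorem~\ref{thm:acc_red_word_same_state} to guarantee that swapping in a $\prec$-smaller accepted representative of the prefix still leads to acceptance after appending the original suffix. The only cosmetic difference is that you argue directly for a word $w\in\cR(n,\{j\},\emptyset,\prec)$ while the paper runs the contrapositive starting from a prefix not in $\cR(n,\{j\},\emptyset,\prec)$.
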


\begin{proof}
	Consider a reduced word~$w = u \cdot v$ where~$u\notin\cR(n, \{j\}, \emptyset, \prec)$.
	If~$u$ is rejected by~$\UU(j)$, then~$w$ is rejected as well due to Theorem~\ref{thm:acc_red_word_prefix}. Otherwise, there exists a reduced word~$u'$ representing the same permutation as~$u$, accepted by~$\UU(j)$ such that it is~$\prec$-lexicographic smaller than~$u$.
	Due to Theorem~\ref{thm:acc_red_word_same_state},~$u$ and~$u'$ end at the same state of~$\UU(j)$ and thus if~$w = u \cdot v$ is accepted by~$\UU(j)$, so is~$u' \cdot v$. Since~$u' \cdot v$ is~$\prec$-lexicographically smaller than~$u \cdot v$ and represents the same permutation, we have that~$w$ is not in~$\cR(n, \{j\}, \emptyset, \prec)$.
\end{proof}

Theorem~\ref{thm:automata_generating_trees_simple} gives us a generating tree where the root is the empty reduced word and the parent of a reduced word is obtained by deleting the last letter. Taking this tree as the sequence of associated transpositions in the weak order, we obtain a generating tree on~$(\{j\},\emptyset)$-permutree minimal permutations as a sublattice of the weak order on~$\fS_n$. Figure~\ref{fig:permutree_automata_single_trees} shows all possible trees for~$\fS_4$ and priority order~$s_1 \prec s_2 \prec s_3$.

\begin{figure}
	\centering
	\includegraphics[scale=0.75,angle=90]{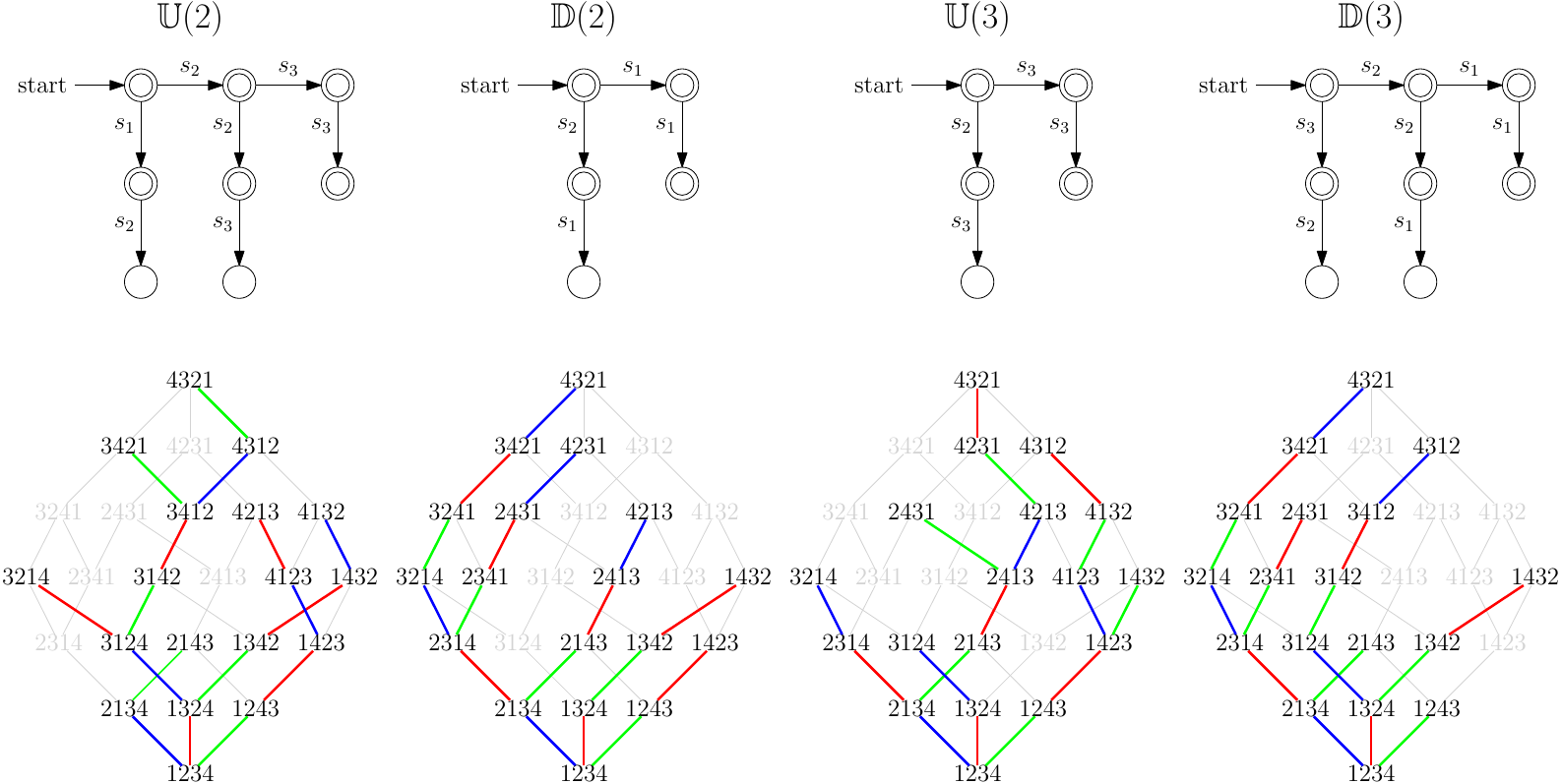}
	\caption{Generating trees for the~$(\{j\}, \emptyset)$ and~$(\emptyset, \{j\})$-permutree minimal permutations of~$\fS_4$, with priority order~${\color{blue}s_1} \prec {\color{red}s_2} \prec {\color{green}s_3}$.}\label{fig:permutree_automata_single_trees}
\end{figure}

\section{Multiple Automata}\label{sec:multiple_automata}

We now move on to the case where~$U$ and~$D$ are arbitrary subsets of~$[2,n-1]$. In this context Theorem~\ref{thm:pattern_avoidance_single} tells us that the following are equivalent statements for a permutation~$\pi\in\fS_n$: \begin{enumerate}
	\itemsep0em
	\item~$\pi$ is a~$(U,D)$-permutree minimal permutation,
	\item letting~$i<j<k$,~$\pi$ avoids the pattern~$jki$ (resp.~$kij$) if~$j\in U$ (resp.~$j\in D$),
	\item for each~$j\in U$ (resp.~$j\in D$)~$\pi$ possesses a reduced word accepted by~$\UU(j)$ (resp.~$\DD(j)$).
\end{enumerate}

Statements (1) and (2) happen simultaneously while statement (3) gives a reduced word for each element of~$U$ and~$D$. Unfortunately the reduced word that is accepted by an automaton might be rejected by another. Such is the case of the following example.

\begin{example}\label{ex:no_common_red_wrd_when_j_U_and_D}
	Let us revisit Remark~\ref{rem:reduced_words_not_closed} in more generality. Let~$j\in[2,n-1]$ and~$U=D=\{j\}$. Consider the permutation~$\pi\in\fS_n$ with~$s_j\cdot s_{j-1}\cdot s_j$ and~$s_{j-1}\cdot s_j\cdot s_{j-1}$ as its only reduced words. Notice that~$s_j\cdot s_{j-1}\cdot s_j$ is accepted by~$\UU(j)$ and rejected by~$\DD(j)$ while~$s_{j-1}\cdot s_j\cdot s_{j-1}$ is accepted by~$\DD(j)$ and rejected by~$\UU(j)$. Thus,~$\pi$ has no reduced word accepted by both automata.
\end{example}

This gives us the following remark.

\begin{remark}\label{rem:no_common_red_wrd_when_U_and_D_not_disjoint}
	If~$U$ and~$D$ are not disjoint, there exists permutations who do not possess a reduced word accepted by all the respective automata~$\UU(j)$ and~$\DD(j)$.
\end{remark}

Hope is not lost though, as this is not the case when~$U$ and~$D$ are not disjoint as the following theorem shows.

\begin{theorem}\label{thm:pattern_avoidance_multiple}
	Let~$U$ and~$D$ be disjoint subsets of~$[2,n-1]$ and~$\pi \in \fS_n$. The following statements are equivalent:
	\begin{itemize}
		\itemsep0em
		\item~$\pi$ possesses a reduced word accepted by~$\UU(j)$ for~$j\in U$ and~$\DD(j)$ for~$j\in D$.
		\item~$\pi$ avoids the patterns~$jki$ for~$j\in U$ and~$kij$ for~$j\in D$ with~$i < j < k$.
	\end{itemize}
\end{theorem}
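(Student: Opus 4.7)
The forward direction is immediate from Theorem~\ref{thm:pattern_avoidance_single}: if a single reduced word $w$ for $\pi$ is accepted simultaneously by each $\UU(j)$ for $j\in U$ and by each $\DD(j)$ for $j\in D$, then applying Theorem~\ref{thm:pattern_avoidance_single} separately to each $j$ yields that $\pi$ avoids the corresponding pattern $jki$ or $kij$. The real content is the converse, and I plan to prove it by induction on $\ell(\pi)$, constructively building a reduced word that is accepted by all automata at once. This generalizes Algorithm~\ref{algo:permutree_sorting_simple} to the multi-automaton setting.

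The base case $\ell(\pi)=0$ is trivial since the empty word starts in the (accepting) initial state of every automaton. For the inductive step, assuming $\pi$ avoids all the declared patterns, the plan is to choose a letter $s_\ell$ so that $\pi$ permutes $\ell$ and $\ell+1$, factor $\pi=s_\ell\cdot\tau$, and show that $\tau$ avoids the patterns associated with an updated pair $(U',D')$ recording the new state of every automaton after consuming $s_\ell$. Then the induction hypothesis applied to $\tau$ with $(U',D')$ yields a reduced word for $\tau$ accepted by all the relevant automata, and prepending $s_\ell$ produces the desired reduced word for $\pi$.

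The key is the priority rule for selecting $\ell$. Reading $s_\ell$ in $\UU(j)$ is a loop when $\ell\notin\{j-1,j\}$, moves to the next healthy state (isomorphic to the initial state of $\UU(j+1)$) when $\ell=j$, and moves to the first ill state when $\ell=j-1$; symmetrically for $\DD(j)$. I will first try to choose $\ell$ with $\ell+1\notin U$ and $\ell\notin D$, so that no automaton leaves the spine. In that case the update replaces each $j\in U$ with $j=\ell$ by $j+1$, and each $j\in D$ with $j=\ell+1$ by $j-1$, while keeping the rest intact. That $\tau$ avoids the patterns indexed by $(U',D')$ then follows by applying Lemmas~\ref{lem:automata_multplying_si} and~\ref{lem:automata_multplying_sj} factor by factor.

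The main obstacle is to show that when no spine-preserving $\ell$ exists the construction still works, and it is here that the disjointness of $U$ and $D$ is essential. Disjointness rules out the obstruction of Example~\ref{ex:no_common_red_wrd_when_j_U_and_D}, where $j\in U\cap D$ makes both $s_{j-1}$ and $s_j$ dangerous simultaneously; a routine but careful case analysis on whether $\ell$ and $\ell+1$ lie in $U$, in $D$, or in neither, shows that some admissible $\ell$ always exists. When a forced $\ell$ drives $\UU(j)$ (resp.\ $\DD(j)$) to its first ill state, the ill state only accepts words avoiding $s_j$ (resp.\ $s_{j-1}$), which by Lemma~\ref{lem:automata_multplying_sj-1} forces $\tau$ not to permute $j$ and $j+1$ (resp.\ $j-1$ and $j$); combined with the pattern-avoidance hypothesis, this splits $\tau$ into two independent blocks on $[j]$ and $[n]\setminus[j]$ on which induction can be applied separately, exactly as in lines~6--8 of Algorithm~\ref{algo:permutree_sorting_simple}. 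Concatenating the two resulting reduced words, prefixed by the already chosen letters, gives the sought reduced word for $\pi$.
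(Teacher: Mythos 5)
The forward direction is fine and matches the paper. For the converse, however, you take a genuinely different route from the paper's proof, and as written it has a gap.

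The paper does not proceed letter by letter. It first reduces to the ``active'' case by setting $U'=\{j\in U : \inv_j(\pi)\neq\emptyset\}$ and $D'=\{j\in D : \inv^j(\pi)\neq\emptyset\}$, observing via Theorem~\ref{thm:acc_red_word_same_state_inv} that any reduced word is accepted by the automata for $j\in U\setminus U'$ and $j\in D\setminus D'$, so one may assume $U=U'$, $D=D'$, and $U\neq\emptyset$. It then takes $j_\circ=\max(U)$ and the minimal $m>j_\circ$ with $\pi^{-1}(j_\circ)>\pi^{-1}(m)$, and prepends the whole chain $s_{m-1}\cdots s_{j_\circ}$ in one step. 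Maximality of $j_\circ$ guarantees the intermediate letters $s_l$ for $l\in[j_\circ+1,m-1]$ are loops for every $\UU(j)$, and the pattern-avoidance hypothesis (the subword $m\,j_\circ\,l$ forces $l\notin D$) guarantees they are loops for every $\DD(j)$ too, so the chain never hits an ill state. This is the key trick your proposal does not have.

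Your approach has two problems in the forced-move case. First, your invocation of Lemma~\ref{lem:automata_multplying_sj-1} is circular: that lemma's hypothesis is that $\tau$ already has a reduced word \emph{accepted} by $\UU(j)$, which is exactly what you are trying to construct; you cannot use it to conclude that $\tau$ does not permute $j$ and $j+1$ before having produced the word. A direct argument is possible (if $\tau$ permuted $j,j+1$ one gets either the pattern $j\,(j+1)\,(j-1)$ in $\pi$, or $j+1\in U$ forcing further case analysis), but you would have to carry it out. Second, your block-decomposition idea on $[j]$ and $[n]\setminus[j]$ is borrowed from the single-automaton Algorithm~\ref{algo:permutree_sorting_simple} and does not generalize cleanly: with several automata active, the patterns for $\DD(j')$ or $\UU(j')$ straddle the block boundary, and verifying that $\tau$ restricted to each block still satisfies the inductive hypothesis with an appropriately re-indexed $(U',D')$ is exactly the non-trivial bookkeeping you label ``routine.'' Note that the paper's Algorithm~\ref{algo:permutree_sorting_multiple} (Corollary~\ref{coro:algorithm_multiple}) handles this case not by splitting into blocks but by the distinct ``forget-one-automaton'' mechanism justified by Theorem~\ref{thm:acc_red_word_same_state_inv}, so your sketch does not match either the theorem's proof or the algorithm's proof. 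If you want to pursue the greedy approach, you should adopt that forgetting mechanism (equivalently, the reduction to active automata) rather than block decomposition.
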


\begin{proof}
	If~$\pi$ possess a reduced word accepted at the same time by all~$\UU(j)$ for~$j\in U$ and~$\DD(j)$ for~$j\in D$, Theorem~\ref{thm:pattern_avoidance_single} gives us our desired pattern avoidance. For the opposite direction consider~$\pi\in\fS_n$ such that~$\pi$ avoids the patterns~$jki$ for~$j\in U$ and~$kij$ and~$j\in D$ with~$i<j<k$. Let~$U'=\{j\in U \,:\,\inv_j(\pi)\neq\emptyset\}$ and~$D'=\{j\in D \,:\,\inv^j(\pi)\neq\emptyset\}$. Using Theorem~\ref{thm:acc_red_word_same_state_inv} we have that any reduced word of~$\pi$ is accepted by~$\UU(j)$ for~$j\in U\setminus{U'}$ and~$\DD(j)$ for~$j\in D\setminus{D'}$. Since these cases are trivial we can assume without loss of generality that~$U=U'$,~$D=D'$ and one of the two is not empty. Say~$U=U'\neq\emptyset$.

	The following is an adaptation of the proof of Theorem~\ref{thm:pattern_avoidance_single} by induction. Let~$j_\circ:=\max(U)$ and~$m$ minimal such that~$j_\circ<m$ and~$\pi^{-1}(j_\circ)>\pi^{-1}(m)$. By said minimality we have that~$\pi^{-1}(l)>\pi^{-1}(m)$ for all~$l\in[j,m-1]$. Therefore,~$\pi$ contains the corresponding subwords~$mj_\circ l$ and~$l\notin D$ for~$l\in[j+1,m-1]$. Notice as well that~$l\notin U$ by maximality of~$j_0$. Now, due to Lemma~\ref{lem:perm_starting_with_sj} the minimality of~$m$ tells us that~$\pi$ can be written as~$\pi=s_{m-1}s_{m-2}\cdots s_{j_\circ}\cdot\tau$.

	Lemma~\ref{lem:automata_multplying_si}\,(2) and Lemma~\ref{lem:automata_multplying_sj}\,(2) give us that \begin{itemize}
		\itemsep0em
		\item~$\tau$ avoids~$jki$ for all~$j\in U\setminus{\{j_\circ\}}$ and~$kij$ for all~$j\in D\setminus{\{m\}}$,
		\item~$\tau$ avoids~$(j_\circ+1)ki$,
		\item~$\tau$ avoids~$kij_\circ$ if~$m\in D$.
	\end{itemize}
	By our induction step, we have that~$\tau$ possesses a reduced word~$w$ that is accepted by all~$\UU(j)$ (resp.~$\DD(j)$) for~$j\in U\setminus{\{j_\circ\}}$ and~$j=j_\circ+1$ (resp.~$j\in D\setminus{\{m\}}$ and~$j=j_\circ$ if~$m\in D$). Lemma~\ref{lem:automata_multplying_si}\,(1) and Lemma~\ref{lem:automata_multplying_sj}\,(1) give us that~$s_{m-1}\cdots s_{j_\circ}\cdot w$ is a reduced word for~$\pi$ accepted by all~$\UU(j)$ for~$j\in U$ and~$\DD(j)$ for~$j\in D$ as we wished.
\end{proof}

\subsection{Multiplying Automata}\label{ssec:multiple_automata_properties}

The current statement of Theorem~\ref{thm:pattern_avoidance_multiple} uses an automaton per element of~$U$ and~$D$. We can however, use the product of all of them to rephrase the theorem in a more compact way. See Definition~\ref{def:aut_product}\index{permutree!automaton~$\PP$} for the details on the product of automata. We denote by \defn{$\PP(U,D)$} the automaton resulting from the product of all automata~$\UU(j)$ if~$j\in U$ and~$\DD(j)$ if~$j\in D$.

\begin{corollary}\label{coro:pattern_avoidance_product}
	Let~$U$ and~$D$ be disjoint subsets of~$[2,n-1]$. The following statements are equivalent:\begin{itemize}
		\itemsep0em
		\item~$\pi$ possesses a reduced word accepted by~$\PP(U,D)$,
		\item~$\pi$ avoids the patterns~$jki$ for~$j\in U$ and~$kij$~$j\in D$ with~$i < j < k$.
	\end{itemize}
\end{corollary}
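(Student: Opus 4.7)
The plan is to observe that this corollary is essentially a direct reformulation of Theorem~\ref{thm:pattern_avoidance_multiple} through the lens of the product automaton construction from Definition~\ref{def:aut_product}. No new combinatorial content is needed; the work is to verify that the two characterizations line up.

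First I would unpack what it means for a reduced word $w$ to be accepted by $\PP(U,D)$. By Definition~\ref{def:aut_product}, applied iteratively to the finitely many factors $\UU(j)$ for $j \in U$ and $\DD(j)$ for $j \in D$, the states of $\PP(U,D)$ are tuples of states of the factor automata, the initial state is the tuple of initial states, the final states are exactly the tuples of final states, and a transition labeled $s \in S$ exists between two tuples iff it exists coordinate-wise in each factor. A straightforward induction on the length of $w$ then shows that $w$ is accepted by $\PP(U,D)$ if and only if $w$ is accepted by $\UU(j)$ for every $j \in U$ and by $\DD(j)$ for every $j \in D$.

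Next I would apply Theorem~\ref{thm:pattern_avoidance_multiple}, whose hypotheses are satisfied since $U$ and $D$ are disjoint by assumption. That theorem states precisely that $\pi \in \fS_n$ possesses a reduced word simultaneously accepted by all $\UU(j)$ for $j \in U$ and all $\DD(j)$ for $j \in D$ if and only if $\pi$ avoids the patterns $jki$ for $j \in U$ and $kij$ for $j \in D$ with $i < j < k$. Combining with the previous paragraph yields the claimed equivalence.

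There is no real obstacle here: the only thing to be careful about is that the product of automata is built from the individual automata $\UU(j)$ and $\DD(j)$ whose common alphabet is $S$, so the product is well defined, and that the disjointness hypothesis on $U$ and $D$ is preserved when invoking Theorem~\ref{thm:pattern_avoidance_multiple}. Everything else is bookkeeping.
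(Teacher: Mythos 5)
Your proposal is correct and matches the paper's approach exactly: the corollary is presented there as an immediate reformulation of Theorem~\ref{thm:pattern_avoidance_multiple} via Definition~\ref{def:aut_product}, since a word is accepted by the product automaton $\PP(U,D)$ if and only if it is accepted by each factor $\UU(j)$ ($j\in U$) and $\DD(j)$ ($j\in D$). The paper does not even spell out this bookkeeping, so your write-up is if anything slightly more detailed than the source.
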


\begin{definition}
	Following Definition~\ref{def:healthy_ill_dead_states} we say that a state of~$\PP(U,D)$ is \defn{healthy} if its corresponding states in all~$\UU(j)$ for~$j\in U$ and all~$\DD(j)$ for~$j\in D$ are healthy. 	We say that a state of~$\PP(U,D)$ is \defn{ill} if its corresponding sets have at least one ill (resp.\ dead) state and no dead states. A state of~$\PP(U,D)$ is said to be \defn{dead} if it contains at least one dead state.
\end{definition}

Figure~\ref{fig:permutree_automata_product} illustrates the automata~$\PP(\{4\},\{2\})$ when~$n = 5$ (left),~$\PP(\{3\},\{2\})$ for~$n=4$ (middle), and~$\PP(\{2\},\{4\})$ for~$n=5$ (right).
For the first two automata, we draw the complete automata on top, and their skeletons on the bottom.
Here, we call \defn{skeleton} a simplification of the automaton that recognizes the same reduced words.
It is obtained using the fact that the word is rejected as soon as we reach a dead state, and that the automata~$\UU(n)$ and~$\DD(1)$ accept all reduced words.
For the last automaton, the complete product is too big, so we only draw the reachable healthy states.
We color the transitions in red, blue, or purple depending on whether only~$\UU$, only~$\DD$, or both~$\UU$ and~$\DD$ change state.

\begin{figure}
	\centering
	\includegraphics[scale=0.74,angle=90]{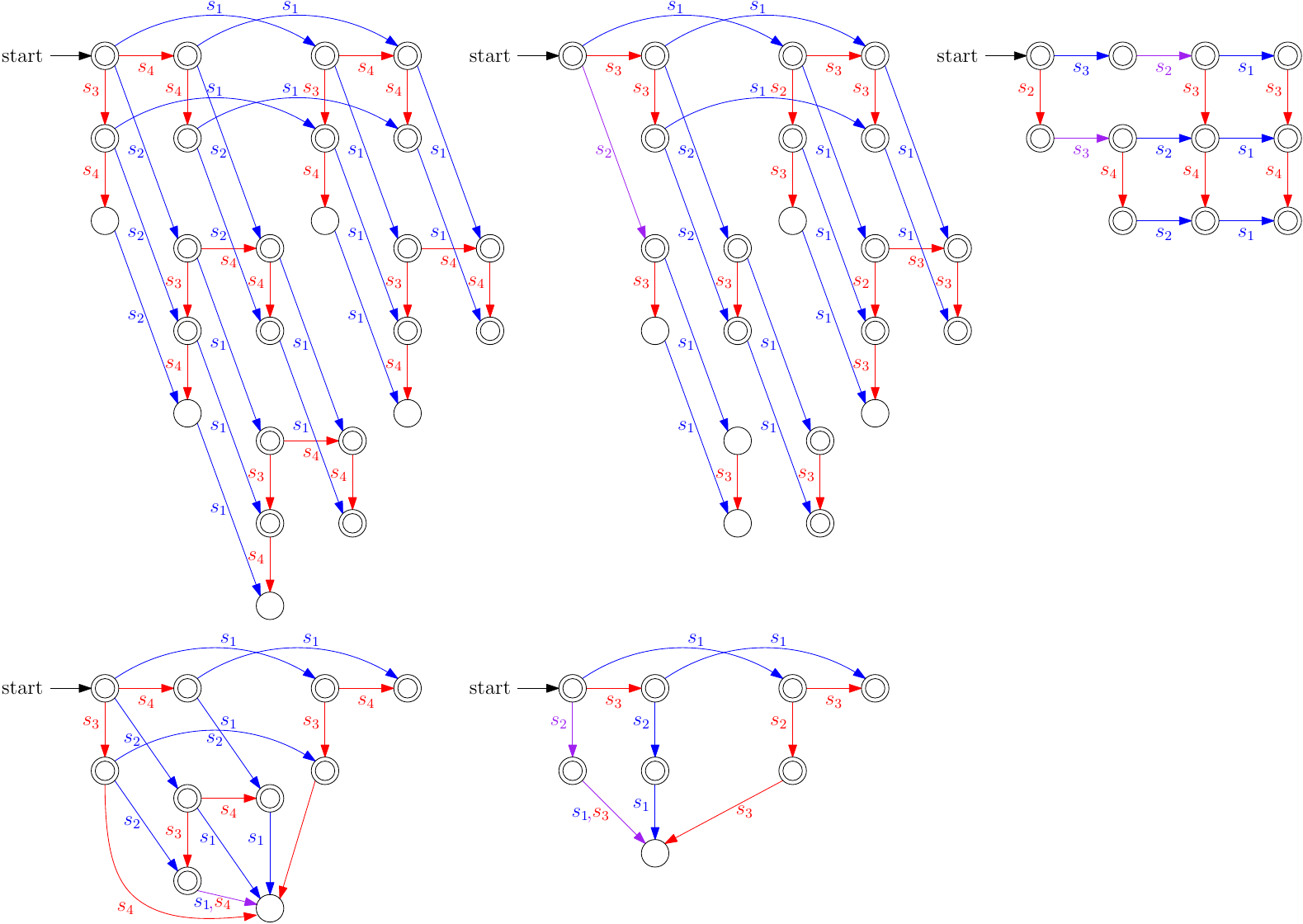}
	\caption[The automaton~$\PP(\{4\},\{2\})$ and its skeleton, the automaton~$\PP(\{3\},\{2\})$ and its skeleton, and the healthy states of the automaton~$\PP(\{2\},\{4\})$.]{ The automaton~$\PP(\{4\},\{2\})$ for~$n = 5$ and its skeleton (left), the automaton~$\PP(\{3\},\{2\})$ for~$n=4$ and its skeleton (middle), and the healthy states of the automaton~$\PP(\{2\},\{4\})$ for~$n=5$ (right).}\label{fig:permutree_automata_product}
\end{figure}

\subsection{The Structure of Accepted Reduced Words of \texorpdfstring{$\PP(j)$}{}}\label{ssec:multiple_automata_words}

Following Section~\ref{ssec:single_automata_words}, we can extend Theorems~\ref{thm:acc_red_word_prefix},~\ref{thm:acc_red_word_algorithm}, and~\ref{thm:acc_red_word_same_state} to~$\PP(U,D)$ as follows.

\begin{theorem}\label{thm:acc_red_word_prefix_multp}
	The set of reduced words accepted by~$\PP(U,D)$ is closed by taking prefixes.
\end{theorem}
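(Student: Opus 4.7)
The plan is to reduce this to the single-automaton case already handled by Theorem~\ref{thm:acc_red_word_prefix}. By Definition~\ref{def:aut_product} of the product automaton, a reduced word $w$ is accepted by $\PP(U,D)$ if and only if $w$ is accepted by every factor~$\UU(j)$ for $j \in U$ and every factor~$\DD(j)$ for $j \in D$ simultaneously. So closure under prefixes for the product follows once we know it holds for each factor.

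First I would take an arbitrary reduced word $w$ accepted by $\PP(U,D)$ and let $w = u \cdot v$ be any decomposition into a prefix $u$ and a suffix $v$ (note that $u$ is automatically a reduced word since reduced words are closed under taking prefixes). For each $j \in U$, the word $w$ is accepted by $\UU(j)$, and then by Theorem~\ref{thm:acc_red_word_prefix} the prefix $u$ is also accepted by $\UU(j)$. Symmetrically, for each $j \in D$, the prefix $u$ is accepted by $\DD(j)$.

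Next, I would invoke the definition of the product run: reading $u$ in $\PP(U,D)$ visits, in each coordinate, exactly the state that reading $u$ in the corresponding factor~$\UU(j)$ or~$\DD(j)$ visits. Since every coordinate lands in an accepting (healthy or ill) state, the resulting product state has no dead component, hence is accepting by the definition of healthy/ill states of $\PP(U,D)$ given just after Corollary~\ref{coro:pattern_avoidance_product}. Therefore $u$ is accepted by $\PP(U,D)$, which is exactly the desired prefix-closure.

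I do not expect a genuine obstacle here: the whole content is that ``accepted by a product'' unpacks coordinate-wise into ``accepted by each factor'', and each factor has already been shown to be prefix-closed. The only subtlety worth a sentence is to make sure we do not accidentally assume $U$ and $D$ are disjoint (unlike in Theorem~\ref{thm:pattern_avoidance_multiple} and Corollary~\ref{coro:pattern_avoidance_product}): the statement and its proof go through for arbitrary $U, D \subseteq [2,n-1]$ because the argument only uses the product construction and the single-automaton prefix-closure, not any pattern-avoidance translation.
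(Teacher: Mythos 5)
Your proof is correct and follows exactly the route the paper indicates: the paper omits the proof of Theorem~\ref{thm:acc_red_word_prefix_multp}, remarking only that it follows directly from the single-automaton version (Theorem~\ref{thm:acc_red_word_prefix}) and the definition of the product automaton, which is precisely the reduction you carry out.
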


\begin{theorem}\label{thm:acc_red_word_algorithm_multp}
	A permutation~$\pi \in \fS_n$ that avoids~$jki$ for~$j\in U$ and~$kij$ for~$j\in D$ and possesses a reduced word starting with~$s_\ell$ such that the transposition~$s_l$ leads to a healthy state of~$\PP(U,D)$, possesses a reduced word starting with~$s_\ell$ and accepted by~$\PP(U,D)$.
\end{theorem}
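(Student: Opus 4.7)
The plan is to realize the desired reduced word as $s_\ell \cdot w$ where $w$ is a reduced word of $\tau := s_\ell \cdot \pi$ accepted by a shifted product automaton, and then invoke Theorem~\ref{thm:pattern_avoidance_multiple} on $\tau$. Concretely, since $s_\ell$ leads from the initial state of $\PP(U,D)$ to a healthy state, examining each factor shows that $\ell \neq j-1$ for every $j \in U$ and $\ell \neq j$ for every $j \in D$. Writing $\pi = s_\ell \cdot \tau$ with $\ell(\pi) = \ell(\tau)+1$, I would define the shifted sets
\[ U' := \{j \in U : \ell \neq j\} \cup \{j+1 : j \in U, \ell = j\}, \qquad D' := \{j \in D : \ell \neq j-1\} \cup \{j-1 : j \in D, \ell = j-1\}, \]
using the convention that $\UU(n)$ and $\DD(1)$ accept everything. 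A short case analysis shows $U' \cap D' = \emptyset$: any common element would either force two contradictory values of $\ell$ or require $U \cap D \neq \emptyset$.

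The next step is to transfer the pattern avoidance from $\pi$ to $\tau$ with respect to the shifted sets. For $j \in U$ with $\ell \notin \{j-1,j\}$, Lemma~\ref{lem:automata_multplying_si}(2) applied with $\sigma = s_\ell$ gives that $\tau$ avoids $jki$ since $\pi$ does. For $j \in U$ with $\ell = j$, the inversion $(j,j+1)$ is consumed by $s_\ell$ so $\tau$ does not reverse $j$ and $j+1$, and Lemma~\ref{lem:automata_multplying_sj}(2) yields that $\tau$ avoids $(j+1)ki$. The case $\ell = j-1$ is excluded by the healthiness assumption. A symmetric argument using the same two lemmas handles each $j \in D$. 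Hence $\tau$ avoids every pattern associated to $U'$ and $D'$.

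Having verified the pattern avoidance for $\tau$, I would apply Theorem~\ref{thm:pattern_avoidance_multiple} (whose hypotheses are now met by $\tau$, $U'$, $D'$) to obtain a reduced word $w$ of $\tau$ accepted simultaneously by every $\UU(j')$ with $j' \in U'$ and every $\DD(j')$ with $j' \in D'$; equivalently, $w$ is accepted by $\PP(U',D')$. By the recursive structure of Figure~\ref{fig:permutree_automata_single_recursive}, reading $s_\ell$ from the initial state of each $\UU(j)$ (respectively $\DD(j)$) either loops or jumps to the initial state of $\UU(j+1)$ (respectively $\DD(j-1)$). Coordinate-by-coordinate this means that the state of $\PP(U,D)$ reached after $s_\ell$ is precisely the initial state of $\PP(U',D')$ embedded into $\PP(U,D)$, so $s_\ell \cdot w$ is accepted by $\PP(U,D)$. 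Since $\ell(\pi) = \ell(\tau)+1$, this word is a reduced word for $\pi$, concluding the proof.

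The main obstacle is purely bookkeeping: setting up $U'$ and $D'$ correctly so they remain disjoint and contained in the admissible index range, and then verifying that the single transition $s_\ell$ in $\PP(U,D)$ genuinely aligns with the initial state of $\PP(U',D')$ component by component. Once this alignment is established, the pattern-avoidance step is a direct application of Lemmas~\ref{lem:automata_multplying_si} and~\ref{lem:automata_multplying_sj}, and the existence of a common accepted reduced word for $\tau$ is supplied by Theorem~\ref{thm:pattern_avoidance_multiple}.
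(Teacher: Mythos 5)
Your proof is correct and fills in the omitted details in exactly the way the paper intended: it lifts the proof of Theorem~\ref{thm:acc_red_word_algorithm} to the product automaton coordinate-by-coordinate, shifting each index of~$U$ and~$D$ according to whether~$s_\ell$ loops or advances the spine in the corresponding factor, transferring the pattern avoidance to~$\tau=s_\ell\cdot\pi$ via Lemmas~\ref{lem:automata_multplying_si} and~\ref{lem:automata_multplying_sj}, and then invoking Theorem~\ref{thm:pattern_avoidance_multiple} on~$(\tau,U',D')$ before prepending~$s_\ell$. The bookkeeping (disjointness of~$U'$ and~$D'$, the conventions for~$\UU(n)$ and~$\DD(1)$, and the verification that~$s_\ell$ lands on the initial state of~$\PP(U',D')$ embedded in~$\PP(U,D)$) is all handled correctly.
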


\begin{theorem}\label{thm:acc_red_word_same_state_multp}
	Given a permutation~$\pi \in \fS_n$, all the reduced words for~$\pi$ accepted by~$\PP(U,D)$ are accepted at the same state.
\end{theorem}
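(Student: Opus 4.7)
The plan is to reduce the statement directly to the single-automaton version, Theorem~\ref{thm:acc_red_word_same_state}, via the structure of the product automaton. Recall from Definition~\ref{def:aut_product} that a state of $\PP(U,D)$ is a tuple $(q_j)_{j \in U \sqcup D}$ where $q_j$ is a state of $\UU(j)$ if $j \in U$ and of $\DD(j)$ if $j \in D$. A transition labeled by $s_\ell$ exists in $\PP(U,D)$ between two such tuples if and only if it exists between the corresponding components simultaneously. Consequently, reading a reduced word $w$ through $\PP(U,D)$ amounts to reading $w$ through each $\UU(j)$ and $\DD(j)$ in parallel, and the state reached in $\PP(U,D)$ is exactly the tuple of states reached in each component automaton.

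First I would record the immediate consequence that $w$ is accepted by $\PP(U,D)$ if and only if $w$ is accepted by every $\UU(j)$ for $j \in U$ and every $\DD(j)$ for $j \in D$. Then, given two reduced words $w_1, w_2$ for the same permutation $\pi$ both accepted by $\PP(U,D)$, their projections onto each component are reduced words for $\pi$ accepted by that component automaton. Applying Theorem~\ref{thm:acc_red_word_same_state} to each $\UU(j)$ (for $j \in U$) and each $\DD(j)$ (for $j \in D$), the coordinates of the final states of $w_1$ and $w_2$ agree component by component. Hence the full tuples coincide, which is to say that $w_1$ and $w_2$ end at the same state of $\PP(U,D)$.

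There is essentially no obstacle here since the argument is a clean diagonal application of the single-automaton version through the product construction; the only point to verify is the elementary fact that acceptance and the reached state in a product automaton are determined coordinate-wise, which is immediate from Definition~\ref{def:aut_product}. As a remark, one can even strengthen the conclusion in the spirit of Theorem~\ref{thm:acc_red_word_same_state_inv}: the common state of $\PP(U,D)$ at which all accepted reduced words of $\pi$ land is entirely determined by the inversion sets $\inv^j(\pi)$ and $\inv_j(\pi)$ for $j \in U \sqcup D$, since this is already true for each component.
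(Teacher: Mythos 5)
Your proposal is correct and is essentially the paper's approach: the paper explicitly states that the proofs of the product-automaton versions (including this one) "are obtained directly from their corresponding single automata versions and the definition of the product of automata," which is exactly the coordinate-wise reduction you carry out. The only thing you did was fill in the details the paper elides, and you did so correctly.
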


The proofs of these theorems are left out as they are obtained directly from their corresponding single automata versions and the definition of the product of automata.

\subsection{Permutree Sorting}\label{ssec:permutree_sorting}

Corollary~\ref{coro:pattern_avoidance_product} tells us that the set of~$(U,D)$-permutree minimal permutations having reduced words accepted by~$\PP(U,D)$ is non-empty in the case of~$U,D$ being disjoint subsets of~$[2,n-1]$. With this in hand, following the intuition developed in Subsection~\ref{ssec:single_automata_words} and taking into account that the accepted reduced words are closed by prefix as in Theorem~\ref{thm:acc_red_word_prefix_multp}, Theorem~\ref{thm:acc_red_word_algorithm_multp} allows us to think of sorting algorithms following only~$(U,D)$-permutree minimal permutations.

\begin{definition}\label{def:permutree_sorting}
	A \defn{$(U,D)$-permutree sorting algorithm} is a sorting algorithm such that
	\begin{itemize}
		\itemsep0em
		\item applied to a~$(U,D)$-permutree minimal permutation~$\pi$, it only passes through~$(U,D)$-permutree minimal permutations and ends with the identity permutation,
		\item it fails to sort a non~$(U,D)$-permutree minimal permutation~$\pi$.
	\end{itemize}
\end{definition}

\begin{example}\label{ex:stack_sorting_as_permutree_sorting}
	The stack sorting algorithm~\cite{K73} is a~$(\{2, \dots, n-1\}, \emptyset)$-permutree sorting algorithm.
\end{example}

Clearly, we recover Algorithm~\ref{algo:permutree_sorting_simple} as a~$(\{j\},\emptyset)$-permutree sorting algorithm for any~$j \in [2,n-1]$. We now generalize it to a~$(U,D)$-permutree sorting algorithm in a recursive way. As before, the algorithm follows the automaton~$\PP(U,D)$ without explicitly constructing it. Since~$\PP(U,D)$ uses transitions composed by tuples of transpositions, we use the following notation \begin{equation*}
	\moveU(U,\ell) = 
\begin{cases}
U & \text{ if } \ell \notin U, \\
(U \setminus{\{ \ell \}}) \cup \{ \ell + 1 \} & \text{ if } \ell \in U,
\end{cases}
\end{equation*}
\begin{equation*}
	\hspace{0.74cm}\moveD(D,\ell) = 
\begin{cases}
D & \text{ if } \ell + 1 \notin D, \\
(D \setminus{\{ \ell + 1 \}}) \cup \{ \ell \} & \text{ if } \ell + 1 \in D.
\end{cases}
\end{equation*}

\bigskip
\IncMargin{1em}
\SetKwInOut{Input}{Input}\SetKwInOut{Output}{Output}
\SetKwFor{Repeat}{repeat}{}{}
\SetKwIF{If}{ElseIf}{Else}{if}{then}{else if}{else}{}
\SetKwProg{Fn}{Function}{}{}
\DontPrintSemicolon{}
\begin{algorithm}[H]
	\renewcommand{\algorithmcfname}{Algorithm}%
	\Fn{{\rm permutreeSort$(\pi, U, D)$}}{
	\Input{a permutation~$\pi \in \fS_n$ and two disjoint subsets~$U$ and~$D$ of~$[n]$}
	\Output{a reduced word accepted by~$\PP(U,D)$, candidate reduced word for~$\pi$}
	\If{$\exists \; \ell \in [n-1]$ such that~$\ell$ and~$\ell+1$ are reversed in~$\pi$, and~$\ell+1 \notin U$ and~$\ell \notin D$}{
		\Return{}~$s_\ell \cdot \text{permutreeSort}(s_\ell \cdot \pi, \; \moveU(U, \ell), \; \moveD(D,\ell))$ \;
	}
	\If{$\exists \; \ell \in [n-1]$ such that~$\ell$ and~$\ell+1$ are reversed in~$\pi$, and
	\\ ($\ell + 1 \in U$ and~$\pi([\ell+1]) = [\ell +1]$) or ($\ell \in D$ and~$\pi([\ell -1]) = [\ell - 1]$)
	}{
		\Return{}~$s_\ell \cdot \text{permutreeSort}(s_\ell \cdot \pi, \; \moveU(U \setminus{\{ \ell + 1 \}}, \ell), \; \moveD(D \setminus{ \{ \ell \}},\ell))$ \;
	}
	\Return{}~$\varepsilon$ \;
	}
	\caption{$(U,D)$-permutree sorting}
	\label{algo:permutree_sorting_multiple}
\end{algorithm}
\bigskip

\begin{example}\label{ex:permutree_sorting_multple}
	
Table~\ref{tab:permutree_sorting_example_multiple}, has the~$(\{3\},\{2 \})$-permutree sorting of~$3214$,~$1324$ and~$1342$. In Table~\ref{tab:permutree_sorting_example_multiple_2}, we show the~$(\{2\},\{4 \})$-permutree sorting of~$54213$ and~$15342$. Each table contains the permutation~$\pi_i$, the reduced word~$w_i$, the sets~$U_i$ and~$D_i$ and the values of~$\ell_i$ in use at each step of the algorithm together with the value~$k_i$ for which we need to check that~$\pi([k_i])=[k_i]$. A red cross signifies that this last condition has failed. Figure~\ref{fig:permutree_automata_product} contains the corresponding automata~$\PP(\{3\},\{2\})$ and~$\PP(\{2\},\{4\})$.

\begin{table}[h!]
	\centering
	\includegraphics[scale=1]{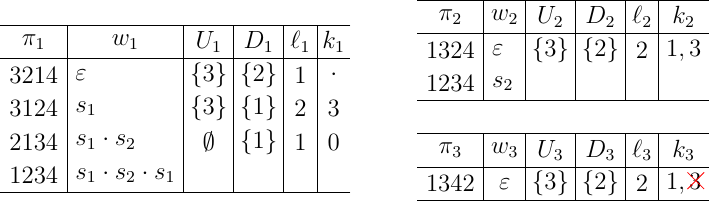}
	\caption[The~$(\{3\},\{2 \})$-permutree sorting of~$3214$,~$1324$, and~$1342$.]{ The~$(\{3\},\{2 \})$-permutree sorting of~$3214$ (left),~$1324$ (top right), and~$1342$ (down right).}\label{tab:permutree_sorting_example_multiple}
\end{table}

\begin{table}[h!]
	\centering
	\includegraphics[scale=1]{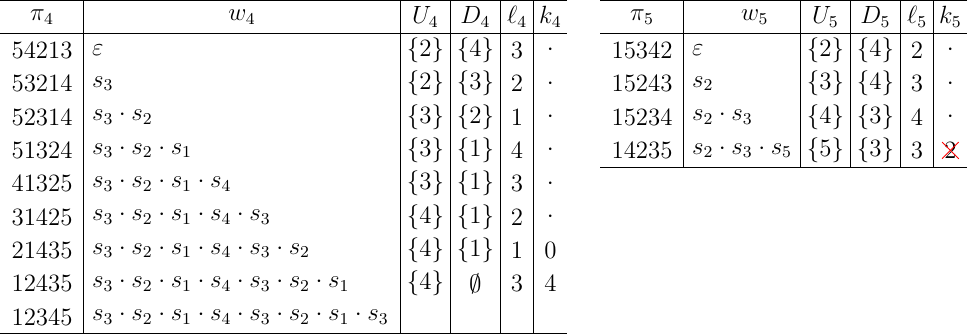}
	\caption[The~$(\{2\},\{4 \})$-permutree sorting of~$54213$ and~$15342$.]{ The~$(\{2\},\{4 \})$-permutree sorting of~$54213$ (left) and~$15342$ (right).}\label{tab:permutree_sorting_example_multiple_2}
\end{table}
\end{example}

\begin{corollary}\label{coro:algorithm_multiple}
	For any permutation~$\pi$ and disjoint subsets~$U,D$ of~$[2,n-1]$, Algorithm~\ref{algo:permutree_sorting_multiple} returns a reduced~word~$w$ accepted by~$\PP(U,D)$ with the property that~$w$ is a reduced word for~$\pi$ if and only if~$\pi$ avoids~$jki$ for~$j\in U$ and~$kij$ for~$j\in D$.
\end{corollary}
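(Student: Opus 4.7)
The plan is to adapt the proof of Corollary~\ref{coro:algorithm_single} to the product automaton $\PP(U,D)$, replacing the single-automaton ingredients by their multi-automaton analogues from Subsection~\ref{ssec:multiple_automata_properties} and Subsection~\ref{ssec:multiple_automata_words}. The proof splits naturally into three parts: (a) show that the word $w$ returned is always accepted by $\PP(U,D)$; (b) show that if $w$ is a reduced word of $\pi$, then $\pi$ avoids the patterns; (c) show that if $\pi$ avoids the patterns, then $w$ is a reduced word of $\pi$.

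For part (a), I would interpret the two recursive branches of Algorithm~\ref{algo:permutree_sorting_multiple} as reading transitions in $\PP(U,D)$. The first branch treats $\ell$ such that $\ell+1 \notin U$ and $\ell \notin D$, which corresponds to a transition keeping every component automaton in a healthy state; the update $(\moveU(U,\ell),\moveD(D,\ell))$ exactly encodes the state shift prescribed by Figure~\ref{fig:permutree_automata_single_recursive} (the label $j$ migrates to $j+1$ in $\UU(j)$ when $\ell=j$, and symmetrically for $\DD$). The second branch permits moving from healthy to ill in some factors via the transitions $s_{j-1}$ in $\UU(j)$ or $s_j$ in $\DD(j)$, and the guards $\pi([\ell+1])=[\ell+1]$ or $\pi([\ell-1])=[\ell-1]$ are precisely the conditions that prevent any later letter from hitting a dead state of the corresponding component (by Lemma~\ref{lem:automata_multplying_sj-1}, no further $s_j$ can occur in $\pi$ beyond this point, and symmetrically). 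A straightforward induction on the recursion depth then shows that $w$ is always accepted by $\PP(U,D)$.

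Part (b) is immediate: if $w$ is a reduced word of $\pi$ accepted by $\PP(U,D)$, then Corollary~\ref{coro:pattern_avoidance_product} gives the pattern avoidance. For part (c), I would argue by induction on $\ell(\pi)$. Suppose $\pi$ avoids the required patterns and is not the identity. By Corollary~\ref{coro:pattern_avoidance_product} together with Theorem~\ref{thm:acc_red_word_algorithm_multp}, there exists some $\ell$ reversing $\ell,\ell+1$ in $\pi$ such that $s_\ell$ leads to a non-dead state of $\PP(U,D)$; this $\ell$ necessarily satisfies the hypothesis of one of the two if-branches. The analogues of Lemmas~\ref{lem:automata_multplying_si} and~\ref{lem:automata_multplying_sj} applied componentwise then show that $s_\ell \cdot \pi$ avoids the patterns corresponding to the updated pair $(\moveU(U,\ell),\moveD(D,\ell))$, or to the further-reduced pair in the second branch. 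The induction hypothesis applied to $s_\ell \cdot \pi$ gives a reduced word for it, and prepending $s_\ell$ yields a reduced word of $\pi$. Conversely, if no valid $\ell$ exists then $\pi$ must be the identity, since otherwise Theorem~\ref{thm:acc_red_word_algorithm_multp} would supply such an $\ell$.

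The main obstacle I anticipate is the second branch's bookkeeping: verifying that when we take a transition simultaneously hitting an ill state in several components (when both $\ell \in D$ and $\ell+1 \in U$ could occur), the guards $\pi([\ell\pm 1]) = [\ell \pm 1]$ together with disjointness of $U$ and $D$ are exactly what is needed to ensure that the subsequent recursion stays in the healthy/ill region of the product. This requires a careful case analysis on which components of $\PP(U,D)$ transition to ill states, and checking that the updates $U \setminus \{\ell+1\}$ and $D \setminus \{\ell\}$ correctly reflect the fact that the corresponding factor $\UU(\ell+1)$ or $\DD(\ell)$ can no longer affect acceptance, in the sense that from an ill state only loops remain available for the letters that the suffix of $w$ will use.
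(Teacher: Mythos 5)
Your proposal follows the same overall strategy as the paper's proof: interpret the two branches of Algorithm~\ref{algo:permutree_sorting_multiple} as following transitions of $\PP(U,D)$, argue $w$ is always accepted, then get the biconditional from Corollary~\ref{coro:pattern_avoidance_product}. The high-level decomposition is essentially identical.

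The notable difference lies in how the second branch (line 4) is justified. The paper's proof leans on Theorem~\ref{thm:acc_red_word_same_state_inv}: the guard $\pi([\ell+1])=[\ell+1]$ gives $|\inv_{\ell+1}(\pi)|=0$, so \emph{every} reduced word of $\pi$ lands at the same (non-dead) state of $\UU(\ell+1)$, which makes the ``forgetting'' of that automaton immediately safe — no tracking of what subsequent letters might do. You instead argue via Lemma~\ref{lem:automata_multplying_sj-1} that the guard prevents $s_{\ell+1}$ from ever being emitted later. This works, but note that Lemma~\ref{lem:automata_multplying_sj-1} as stated is a converse implication (from acceptance to the stabilizer condition); what you actually need is the direct observation that $\pi([\ell+1])=[\ell+1]$ implies $s_{\ell+1}\notin\supp(\pi)$, hence $s_{\ell+1}$ cannot appear in any reduced word the recursion emits. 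This is a correct deduction but it is the \emph{proof} of Lemma~\ref{lem:automata_multplying_sj-1}(2) rather than its statement. The bookkeeping obstacle you anticipate — when both $\ell+1\in U$ and $\ell\in D$ and the algorithm drops both — is exactly the case that Theorem~\ref{thm:acc_red_word_same_state_inv} dispatches in one stroke; your route requires the extra argument that $\supp$ is monotone along the recursion, so the forgotten components cannot be re-hit. You would also want to drop or replace the citation to Theorem~\ref{thm:acc_red_word_algorithm_multp} in part~(c): that theorem presupposes a healthy-leading letter $s_\ell$ and does not by itself produce one; the existence of a usable $\ell$ is supplied by Corollary~\ref{coro:pattern_avoidance_product} alone (the first letter of any accepted reduced word is necessarily non-dead).

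In short: your approach is the same as the paper's, and would become a complete proof with the adjustments above, but the paper obtains the ``safe to forget'' step more economically via Theorem~\ref{thm:acc_red_word_same_state_inv}, which you do not invoke.
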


\begin{proof}
	First notice that Algorithm~\ref{algo:permutree_sorting_multiple} creates a reduced word following~$\PP(U,D)$. It begins prioritizing healthy states in lines 2 by considering transpositions that transition to other healthy states in all the~$\UU(j)$ or~$\DD(j)$ following Lemma~\ref{lem:automata_multplying_sj}. As we are in the product of said automata, this amounts to updating~$\ell$ to~$\ell+1$ when~$\ell \in U$ and~$\ell+1$ to~$\ell$ when~$\ell+1 \in D$ as in line~3. This repeats for as many transitions as possible until we have to go to an ill state of at least one of the automata. 

	If there is~$\ell+1 \in U$ (resp.~$\ell \in D$) such that~$s_\ell$ is a descent of~$\pi$ and~$\pi([\ell+1]) = [\ell+1]$ (resp.~$\pi([\ell-1]) = [\ell-1]$), then by Theorem~\ref{thm:acc_red_word_same_state_inv}\,(i) any reduced word for~$\pi$ is accepted by the automaton~$\UU(\ell)$ (resp.~$\DD(\ell)$). We can thus start with~$s_\ell$ and forget about the automaton~$\UU(\ell)$ (resp.~$\DD(\ell)$) which is represented in lines 4, 5 and 6. Finally, if none of these options are possible, any reduced word for~$\pi$ leads to a dead state in at least one of the automata, so that~$\pi$ is not~$(U,D)$-sortable. We return the empty reduced word in line 7 in such a case.
\end{proof}

\subsection{Generating Trees}\label{ssec:multiple_automata_trees}

As in Subsection~\ref{ssec:single_automata_trees}, we can also obtain generating trees for the corresponding permutree minimal permutations. Let~$\prec$ be a priority order on~$\{s_1,\ldots,s_{n-1}\}$. Given a~$(U,D)$-permutree minimal permutation~$\pi\in\fS_n$, let \defn{$\pi(U,J,\prec)$} be the~$\prec$-lexicographic minimal reduced word for~$\pi$ that is accepted by~$\PP(j)$ and \begin{equation*}
	\cR(n, U, D, \prec):=\big\{ \pi(U, D, \prec)\,:\, \pi \in \fS_n \text{ is } (U, D)\text{-permutree minimal} \big\}.
\end{equation*}

The proof given in Theorem~\ref{thm:automata_generating_trees_simple} works as well for proving the following theorem, so we skip its proof.

\begin{theorem}\label{thm:automata_generating_trees_multiple}
	The set~$\cR(n, U,D, \prec)$ is closed by taking prefixes.
\end{theorem}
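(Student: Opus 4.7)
The plan is to replicate verbatim the strategy used for Theorem~\ref{thm:automata_generating_trees_simple}, but with the single automaton~$\UU(j)$ replaced by the product automaton~$\PP(U,D)$, using the multi-automaton analogues proved in Subsection~\ref{ssec:multiple_automata_words}. The statement asserts that if $w = u \cdot v$ lies in $\cR(n,U,D,\prec)$, then so does its prefix $u$. Equivalently, we show the contrapositive: if $u \notin \cR(n,U,D,\prec)$ then $w = u \cdot v \notin \cR(n,U,D,\prec)$.

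First I would split into two cases depending on whether $u$ is accepted or rejected by $\PP(U,D)$. If $u$ is rejected, then by Theorem~\ref{thm:acc_red_word_prefix_multp} (which says accepted words are closed under prefixes) the extension $w = u \cdot v$ is also rejected, so it clearly cannot belong to $\cR(n,U,D,\prec)$. If instead $u$ is accepted by $\PP(U,D)$ but $u \notin \cR(n,U,D,\prec)$, there must exist another reduced word $u'$ for the same permutation as $u$, accepted by $\PP(U,D)$, and strictly $\prec$-lexicographically smaller than $u$.

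The key step is then to invoke Theorem~\ref{thm:acc_red_word_same_state_multp}: since $u$ and $u'$ represent the same permutation and are both accepted by $\PP(U,D)$, they end at the same state of $\PP(U,D)$. Therefore the suffix $v$ can be appended to $u'$ along the exact same sequence of transitions that it triggers when appended to $u$, so $u' \cdot v$ is also accepted by $\PP(U,D)$. Since $u' \cdot v$ represents the same permutation as $u \cdot v = w$ and is strictly $\prec$-lexicographically smaller, $w$ cannot be the $\prec$-lexicographically minimal accepted reduced word of its permutation, hence $w \notin \cR(n,U,D,\prec)$.

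There is essentially no obstacle here beyond having the correct multi-automaton analogues in hand; the proof is a direct translation of the one for Theorem~\ref{thm:automata_generating_trees_simple}, and the only thing to check is that Theorems~\ref{thm:acc_red_word_prefix_multp} and~\ref{thm:acc_red_word_same_state_multp} apply uniformly to $\PP(U,D)$, which they do by construction as a product of automata. No extra case analysis on $U$ versus $D$ is needed since these theorems already bundle both orientations into a single statement about $\PP(U,D)$.
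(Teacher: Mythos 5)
Your proof is correct and follows exactly the paper's intended approach: the paper explicitly states that the proof of Theorem~\ref{thm:automata_generating_trees_simple} carries over verbatim once Theorems~\ref{thm:acc_red_word_prefix_multp} and~\ref{thm:acc_red_word_same_state_multp} are in hand, and you have simply written that translation out. The only nit is that your assertion ``$u'\cdot v$ is also accepted by~$\PP(U,D)$'' should be phrased conditionally on $w = u\cdot v$ being accepted (as the paper's proof of the single-automaton version does); if $w$ is rejected the conclusion $w \notin \cR(n,U,D,\prec)$ holds trivially, so the argument is unaffected.
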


As before, Theorem~\ref{thm:automata_generating_trees_multiple} gives us a generating tree where the root is the empty reduced word and the parent of a reduced word is obtained by deleting the last letter. Taking this tree as the sequence of associated transpositions in the weak order, we obtain a generating tree on~$(U,D)$-permutree minimal permutations as a subgraph of the Hasse diagram of the weak order on~$\fS_n$. Figure~\ref{fig:permutree_automata_multiple_trees} shows all possible trees for~$\fS_4$ and priority order~$s_1 \prec s_2 \prec s_3$.

\begin{figure}
	\centering
	\includegraphics[scale=0.75,angle=90]{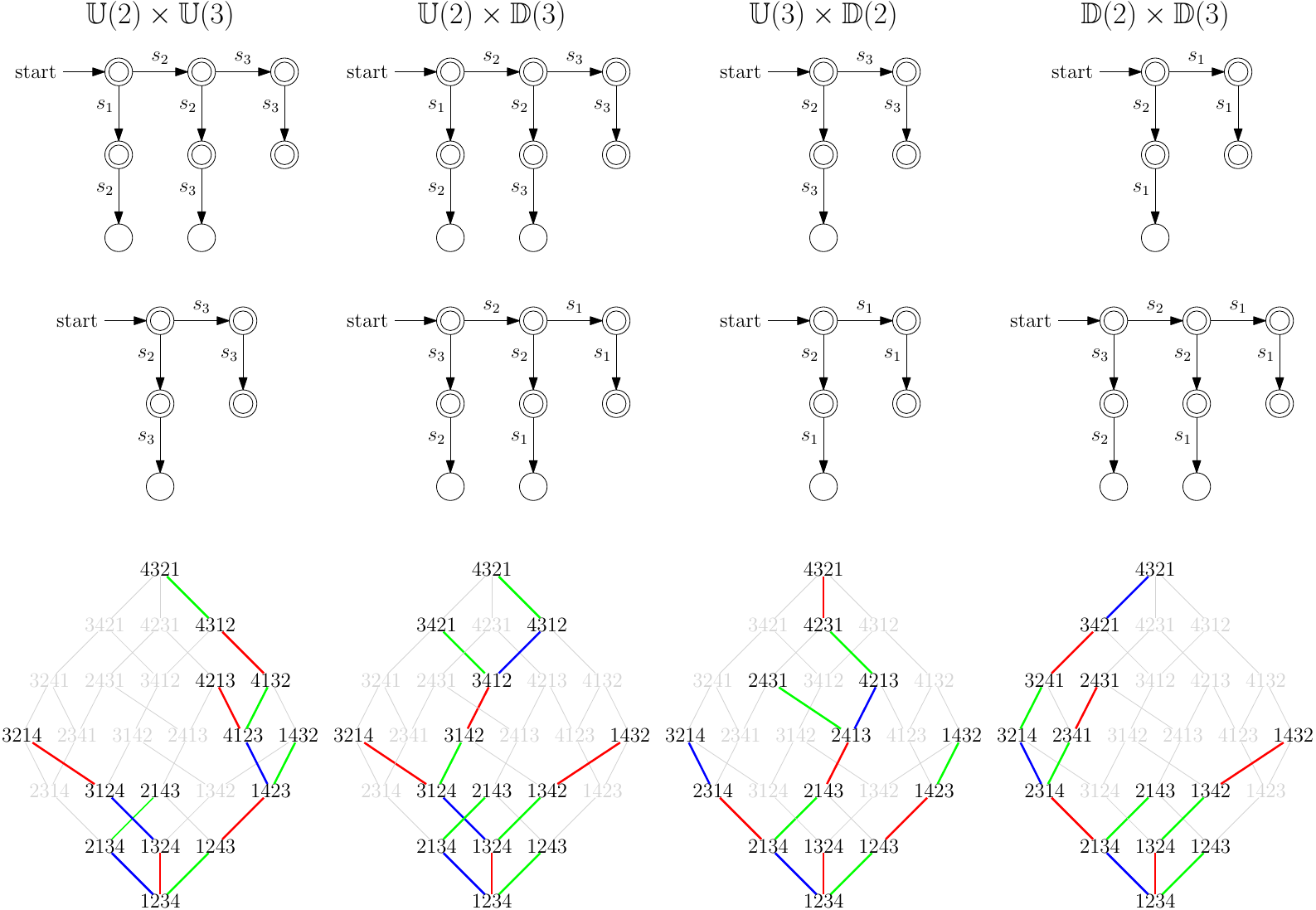}
	\caption{Generating trees for the~$(U,D)$-permutree minimal permutations of~$\fS_4$, with priority order~${\color{blue}s_1} \prec {\color{red}s_2} \prec {\color{green}s_3}$.}\label{fig:permutree_automata_multiple_trees}
\end{figure}

\section{Coxeter Sorting via Automata}\label{sec:coxeter_sorting_automata}

Notice that in Section~\ref{sec:multiple_automata}, the biggest our sets~$U$ and~$D$ could go without a permutation failing to have a common reduced word accepted by all the automata was whenever~$U$ and~$D$ formed a partition of~$[2,n-1]$. Recalling Remark~\ref{rem:U_D_as_orientations} and Definition~\ref{def:Coxeter_Cambrian_congruence}, we see that this case is precisely when~$(U,D)$-permutree congruences are Cambrian congruences in type~$A$ and the number of~$(U,D)$-permutree minimal elements is the Catalan number~$C_n$. This section is devoted to explore this connection with the definitions and technology developed in Subsection~\ref{ssec:Coxeter_sorting}.

\begin{definition}\label{def:U_D_coxeter_element}
	Let~$c$ be a Coxeter element of~$(\fS_n,S)$. \defn{$U_c$} and \defn{$D_c$} are the partition of~$[2,n-1]$ where~$j\in U_c$ (resp.~$j\in D_c$) if~$s_j$ appears before (resp.\ after)~$s_{j-1}$ in~$c$.
\end{definition}

\begin{remark}\label{rem:U_D_automata_coxeter_element}
	Another equivalent way of seeing the construction of~$U_c$ (resp.~$D_c$) is that~$j\in U_c$ (resp.~$j\in D_c$) if~$c$ is accepted by~$\UU(j)$ but not by~$\DD(j)$ (resp.\ is accepted by~$\DD(j)$ but not by~$\UU(j)$).
\end{remark}

Coxeter sorting and~$(U_c,D_c)$-permutree sorting are related by the following theorem.

\begin{theorem}\label{thm:coxter_sorting_permutrees}
	Let~$c$ be a Coxeter element and~$\pi\in\fS_n$. The following statements are equivalent:
	\begin{enumerate}[(i)]
		\itemsep0em
		\item $\pi$ is~$c$-sortable,
		\item the~$c$-sorting word~$\pi(c)$ is accepted by the automaton~$\PP(U_c, D_c)$,
		\item there exists a reduced word for~$\pi$ accepted by the automaton~$\PP(U_c, D_c)$,
		\item for each~$j \in \{2, \dots, n-1\}$, there exists a reduced word for~$\pi$ that is accepted by the automaton~$\UU(j)$ if~$j \in U_c$ and~$\DD(j)$ if~$j \in D_c$,
		\item~$\pi$ avoids~$jki$ for~$j \in U_c$ and~$kij$ for~$j \in D_c$.
	\end{enumerate}
\end{theorem}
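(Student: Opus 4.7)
The plan is to assemble the easy implications from existing tools and then focus on the one genuinely new step. The equivalence (iii) $\iff$ (iv) is immediate from the definition of the product automaton $\PP(U_c, D_c)$, since a word is accepted by the product if and only if it is accepted by each factor. The equivalence (iii) $\iff$ (v) is exactly Corollary~\ref{coro:pattern_avoidance_product}, which applies because $U_c$ and $D_c$ are disjoint (indeed, they partition $[2, n-1]$ by Definition~\ref{def:U_D_coxeter_element}). Finally, (ii) $\implies$ (iii) is trivial since $\pi(c)$ is a reduced word for $\pi$ by construction.

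I would next establish (i) $\iff$ (v) via Reading's theory together with the permutree dictionary. By Remark~\ref{rem:coxeter_elements_cambrian_congruences}, the Coxeter element $c$ encodes a complete orientation of the Coxeter graph of $\fS_n$, with edge $j$ oriented according to whether $j \in U_c$ or $j \in D_c$. Proposition~\ref{prop:Coxeter_sorting_lattice_congruence} identifies the $c$-sortable elements as the bottom elements of the classes of the corresponding $c$-Cambrian congruence, and by Remark~\ref{rem:cambrian_type_A_are_permutrees} this congruence is precisely the $\delta$-permutree congruence for the decoration $\delta \in \{\downn, \upp\}^n$ determined by $(U_c, D_c)$ (namely $\delta_j = \upp$ for $j \in U_c$ and $\delta_j = \downn$ for $j \in D_c$). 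Proposition~\ref{prop:permutree_quotients} then characterizes these minima as exactly the permutations avoiding $jki$ for $j \in U_c$ and $kij$ for $j \in D_c$, which is (v).

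It remains to prove (i) $\implies$ (ii), which closes the circle and is the heart of the argument. I would proceed by induction on $\ell(\pi)$, with the base case $\pi = e$ immediate. For the inductive step, fix a reduced expression $c = s_\ell \cdot d$ and invoke Lemma~\ref{lem:coxeterElementFacts}. If $\pi = s_\ell \cdot u$ with $\ell(\pi) = \ell(u) + 1$, then $\pi(c) = s_\ell \cdot u(c')$ where $c' = d \cdot s_\ell$, and a standard consequence of the recursive structure of $c$-sortability yields that $u$ is $c'$-sortable; the induction hypothesis then gives that $u(c')$ is accepted by $\PP(U_{c'}, D_{c'})$. The key observation is that moving $s_\ell$ from the start of $c$ to the end swaps the status of the two positions adjacent to $\ell$: $\ell \in U_c$ becomes $\ell \in D_{c'}$, and (when $\ell + 1 \leq n-1$) $\ell+1 \in D_c$ becomes $\ell+1 \in U_{c'}$, while all other positions retain their $U/D$ status. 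Inspecting $\UU(\ell)$ and $\DD(\ell+1)$ in Figure~\ref{fig:permutree_automata_single_full}, reading $s_\ell$ from their initial states sends $\UU(\ell)$ to the initial state of $\UU(\ell+1)$ and $\DD(\ell+1)$ to the initial state of $\DD(\ell)$, while looping on all other components. Thus after reading $s_\ell$ the automaton $\PP(U_c, D_c)$ is in a state that can be naturally identified with the initial state of $\PP(U_{c'}, D_{c'})$, so reading $u(c')$ from there is accepted.

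The complementary case of Lemma~\ref{lem:coxeterElementFacts}, where $\pi$ has no reduced word beginning with $s_\ell$ and hence $\pi(c) = \pi(c')$, is the main obstacle: $c$-sortability of $\pi$ does not transport directly to $c'$-sortability, so one cannot simply apply the induction hypothesis to $\pi$ with the Coxeter element $c'$. I would resolve it by noting that $s_\ell \notin D_L(\pi)$ forces the first letter of $\pi(c)$ to be some $s_k$ with $k \neq \ell$, and then tracking the evolution of the automaton state of $\PP(U_c, D_c)$ along $\pi(c')$: letters with $k \notin \{\ell-1, \ell+1\}$ act identically on the differing components of $\PP(U_c, D_c)$ and $\PP(U_{c'}, D_{c'})$, while the letters $s_{\ell-1}$ and $s_{\ell+1}$ require individual analysis. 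The crucial point is that any dead state encountered in $\PP(U_c, D_c)$ would, by the already-established (iii) $\iff$ (v), contradict the pattern avoidance derived from $c$-sortability via (i) $\iff$ (v); ill states remain accepting and pose no problem. This case-by-case bookkeeping is where the technical weight of the proof lies.
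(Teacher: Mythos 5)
Your decomposition of the equivalence is logically valid and genuinely different from the paper's: you establish (i) $\iff$ (v) by identifying $c$-sortable elements with the minima of the corresponding permutree congruence (Proposition~\ref{prop:Coxeter_sorting_lattice_congruence} plus the permutree dictionary), then close the cycle with (i) $\Rightarrow$ (ii) $\Rightarrow$ (iii) $\iff$ (v). This sidesteps the paper's separate proof that (ii) $\Rightarrow$ (i) (Lemma~\ref{lem:c_word_accepted_product_implies_sortable}) at the price of leaning on Reading's classification rather than deriving everything from the automata. The first case of your induction for (i) $\Rightarrow$ (ii) --- peeling off $s_\ell$ via Lemma~\ref{lem:coxeterElementFacts} and using the recursive structure of $\UU(\ell)$ and $\DD(\ell+1)$ to re-identify the resulting state with the initial state of $\PP(U_{c'}, D_{c'})$ --- is essentially correct, modulo some edge cases at $\ell \in \{1, n-1\}$.

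The second case, however, contains a genuine gap. When $s_\ell \notin D_L(\pi)$ and $\pi(c)=\pi(c')$, you correctly note the length induction stalls, but your proposed escape --- that ``any dead state encountered in $\PP(U_c, D_c)$ would, by (iii) $\iff$ (v), contradict the pattern avoidance'' --- is a non sequitur. Statement (iii) is existential: it guarantees \emph{some} reduced word of $\pi$ is accepted, not that the specific word $\pi(c)$ is. Remark~\ref{rem:reduced_words_not_closed} and Theorem~\ref{thm:acc_red_word_same_state_inv}(3) explicitly record that a permutation with the required pattern avoidance can still have reduced words rejected by the automaton, so $\pi(c)$ hitting a dead state would not contradict $c$-sortability; it would only show $\pi(c)$ is the wrong witness. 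What forces $\pi(c)$ to be accepted is the weakly-nested factor structure of $c$-sorting words, via Lemma~\ref{lem:Coxeter_sorting_word_properties}: part~(1) ensures $s_j$ precedes $s_{j-1}$ in $\pi(c)$ when $j \in U_c$ and both occur, and part~(2) rules out the reduced subword $s_l V^l s_l s_{l+1}$ with $s_{l-1}\in V^l$ that would be needed to hit a dead state. That is the paper's direct, non-inductive proof of Lemma~\ref{lem:c_word_is_accepted_by_product}. Your induction cannot be rescued by the cited equivalences alone: you would either need that direct structural argument, or else need to prove (iii) $\Rightarrow$ (ii) separately (the paper's Lemma~\ref{lem:c_word_is_accepted_product}, which descends to the parabolic $W_{\langle s_\ell\rangle}$ with the rank-reduced Coxeter element $d$ rather than the full-rank $c'$) and then conclude (i) $\Rightarrow$ (v) $\Rightarrow$ (iii) $\Rightarrow$ (ii).
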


Theorem~\ref{thm:pattern_avoidance_multiple} and Corollary~\ref{coro:pattern_avoidance_product} give equivalences~$(iii)\iff (iv)\iff (v)$. In the following lemmas we show the equivalences~$(i)\iff (ii)\iff (iii)$. As a general assumption for the rest of this section let~$c$ be a Coxeter element and~$\pi\in\fS_n$. We begin by showing~$(i)\iff(ii)$.

\begin{lemma}\label{lem:c_word_is_accepted_by_product}
	If~$\pi$ is~$c$-sortable, its~$c$-sorting word~$\pi(c)$ is accepted by~$\PP(U_c,D_c)$.
\end{lemma}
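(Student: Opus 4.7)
The plan is to induct on $\ell(\pi)$, with base case $\pi = e$ immediate. For the inductive step, write $c = s_\ell \cdot d$ where $s_\ell$ is the first letter of $c$. If $s_\ell \in D_L(\pi)$, Lemma~\ref{lem:coxeterElementFacts} yields $\pi(c) = s_\ell \cdot (s_\ell\pi)(d \cdot s_\ell)$, with $s_\ell\pi$ of shorter length and $(d \cdot s_\ell)$-sortable, so the induction hypothesis gives that $(s_\ell\pi)(d \cdot s_\ell)$ is accepted by $\PP(U_{d \cdot s_\ell}, D_{d \cdot s_\ell})$. It then suffices to check component-by-component that reading the initial $s_\ell$ in each factor of $\PP(U_c, D_c)$ lands us in a state behaviorally equivalent to the corresponding factor of $\PP(U_{d \cdot s_\ell}, D_{d \cdot s_\ell})$: for $j \notin \{\ell, \ell+1\}$ the orientation at $j$ is unchanged and $s_\ell$ is a loop, while the recursive construction ensures that for $j = \ell \in U_c$ the transition $s_\ell = s_j$ advances along the spine of $\UU(\ell)$ to its embedded copy of $\UU(\ell+1)$ (matching $\ell+1 \in U_{d \cdot s_\ell}$), and symmetrically for $j = \ell+1 \in D_c$ the transition $s_\ell = s_{j-1}$ advances $\DD(\ell+1)$ to its embedded copy of $\DD(\ell)$ (matching $\ell \in D_{d \cdot s_\ell}$).

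The main obstacle is the case $s_\ell \notin D_L(\pi)$, where Lemma~\ref{lem:coxeterElementFacts} only gives $\pi(c) = \pi(d \cdot s_\ell)$ and the length of $\pi$ does not decrease. My key observation is that in this situation $s_\ell$ cannot appear in any reduced word of $\pi$, so $\pi$ lies in the parabolic subgroup $W_{\langle s_\ell\rangle}$. The argument is by contradiction: if $s_\ell$ were in a reduced word of $\pi$, Remark~\ref{rem:reduced_word_braid_moves} forces it into $\pi(c)$; but $s_\ell$ appears in $c^\infty$ only at positions $1, m+1, 2m+1, \ldots$ (with $m = \ell(c)$), and position~$1$ is unused here, so $s_\ell$ would have to lie in some factor $L_k$ with $k \geq 2$, and the sortability condition $L_1 \supseteq L_k$ would then force $s_\ell \in L_1$, contradicting that position~$1$ is unused.

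To conclude, I would iterate cyclic rotations $c = c^{(0)}, c^{(1)}, \ldots$ (moving the current first letter $s_{\ell_k}$ to the end) until reaching some $c^{(K)}$ whose first letter is a left descent of $\pi$; this occurs within $\ell(c)$ rotations since $\pi \neq e$. Iterating Lemma~\ref{lem:coxeterElementFacts} gives $\pi(c) = \pi(c^{(K)})$, and the first case of the inductive step applied at $c^{(K)}$ yields that $\pi(c)$ is accepted by $\PP(U_{c^{(K)}}, D_{c^{(K)}})$. To transport acceptance back to $\PP(U_c, D_c)$, I examine each rotation $c^{(k)} \to c^{(k+1)}$ separately: the sets $U, D$ change only on indices in $\{\ell_k, \ell_k+1\} \cap [2, n-1]$, so going backwards I must verify that $\pi(c)$ is accepted by $\UU(\ell_k)$ (replacing $\DD(\ell_k)$) and by $\DD(\ell_k+1)$ (replacing $\UU(\ell_k+1)$); but in both of these automata the transitions to any dead state are labelled by $s_{\ell_k}$, which is absent from $\pi(c)$ by the parabolic observation applied at $c^{(k)}$, so no dead state is ever reached. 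Acceptance therefore propagates back through all the rotations to $\PP(U_c, D_c)$, completing the induction.
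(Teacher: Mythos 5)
Your proof is correct, but it takes a genuinely different route from the paper's. The paper gives a direct, non-inductive argument: for each single $j \in U_c$ (resp.\ $j \in D_c$), it traces the path of $\pi(c)$ through $\UU(j)$ (resp.\ $\DD(j)$) and invokes Lemma~\ref{lem:Coxeter_sorting_word_properties} to show that the specific reduced-subword pattern $s_l V^l s_l s_{l+1}$ that would be needed to reach a dead state can never appear in a $c$-sorting word of a sortable element. Your proof instead runs an induction on $\ell(\pi)$ built on Lemma~\ref{lem:coxeterElementFacts}, matching the recursive structure of Coxeter sorting to the recursive embedding $\UU(j) \supset \UU(j+1)$ and $\DD(j) \supset \DD(j-1)$ of the automata. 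Your approach makes the parallel between $c$-sorting recursion and automaton recursion very explicit, which is conceptually illuminating; the cost is that you need to iterate cyclic rotations of $c$ to handle the case $s_\ell \notin D_L(\pi)$ and then backtrack through the rotations, and you lean implicitly on preservation facts from Reading's theory (that $s_\ell\pi$ is $(d \cdot s_\ell)$-sortable when $s_\ell \in D_L(\pi)$, and that sortability persists under cyclic rotation of $c$ when its first letter is not a descent). These are standard but should be cited or justified, e.g.\ via the factor-shift argument $L'_i = (L_i \setminus \{s_\ell\}) \cup (L_{i+1} \cap \{s_\ell\})$. One phrase in your last paragraph is imprecise: in $\UU(\ell_k)$ the transitions to dead states deeper in the spine are labelled $s_{\ell_k+1}, s_{\ell_k+2}, \ldots$, not only $s_{\ell_k}$. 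The conclusion still holds, but for a slightly different reason: absence of $s_{\ell_k}$ from $\pi(c)$ prevents it from ever leaving the first row of $\UU(\ell_k)$ (both the advance along the spine and the ill-to-dead transition are labelled $s_{\ell_k}$), so no deeper dead transition is ever reachable; similarly in $\DD(\ell_k+1)$ via $s_{(\ell_k+1)-1} = s_{\ell_k}$.
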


\begin{proof}
	Let~$\pi(c)$ be the~$c$-sorting word. It is enough to see that~$\pi(c)$ is accepted by~$\UU(j)$ for~$j\in U_c$ and~$\DD(j)$ for~$j\in D_c$. We proceed to show it for~$\UU(j)$ since the case for~$\DD(j)$ is similar. We have two possible cases:\begin{itemize}
		\item if~$\pi(c)$ does not contain~$s_j$, then~$\pi(c)$ either remains in the first healthy state of~$\UU(j)$ or ends in the first ill state of~$\UU(j)$.
		\item if~$\pi(c)$ does contain~$s_j$, by Lemma~\ref{lem:Coxeter_sorting_word_properties}\,(1) we have that~$s_j$ appears before~$s_{j-1}$ in~$\pi(c)$, and~$\pi(c)$ moves onto the second healthy state of~$\UU(j)$. Notice that because of the recursive construction of~$\UU(j)$,~$\pi(c)$ can now only end in a dead state if it contains as a reduced subword~$s_{l}V^ls_{l}s_{l+1}$ where~$V^l$ is a reduced word with transpositions in~$S\setminus{\{s_{l},s_{l+1}\}}$ such that~$s_{l-1}\in V^l$. Such containment is impossible due to Lemma~\ref{lem:Coxeter_sorting_word_properties}\,(2).
	\end{itemize}
	Thus, in both cases~$\pi(c)$ is accepted by~$\UU(j)$.
\end{proof}

\begin{lemma}\label{lem:c_word_accepted_product_implies_sortable}
	If the~$c$-sorting word~$\pi(c)$ is accepted by~$\PP(U_c,D_c)$, then~$\pi$ is~$c$-sortable.
\end{lemma}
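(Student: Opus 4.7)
The approach will be to reduce the lemma to already-established equivalences rather than argue directly from the structure of $\pi(c)$ or of the automaton. The starting observation is that the $c$-sorting word $\pi(c)$ is, in particular, a reduced word for $\pi$, so if it is accepted by $\PP(U_c, D_c)$ then statement $(iii)$ of Theorem~\ref{thm:coxter_sorting_permutrees} holds for $\pi$. Since $U_c$ and $D_c$ are disjoint by Definition~\ref{def:U_D_coxeter_element}, Corollary~\ref{coro:pattern_avoidance_product} applies and yields that $\pi$ avoids the patterns $jki$ for every $j \in U_c$ and $kij$ for every $j \in D_c$; this is statement $(v)$.

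The remaining step is to pass from $(v)$ to $(i)$. Because $U_c \cup D_c = [2, n-1]$, the associated permutree decoration takes values only in $\{\downn, \upp\}$, and Remark~\ref{rem:cambrian_type_A_are_permutrees} identifies the corresponding permutree congruence with the $c$-Cambrian congruence. The bijection in Proposition~\ref{prop:permutree_quotients} then says that $\pi$, which avoids the stated patterns, is the minimum of its $(U_c, D_c)$-permutree congruence class; Proposition~\ref{prop:Coxeter_sorting_lattice_congruence} translates this into $\pi$ being $c$-sortable, which is precisely the conclusion of the lemma.

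The main subtlety will be verifying that the invocation of Corollary~\ref{coro:pattern_avoidance_product} and of Proposition~\ref{prop:Coxeter_sorting_lattice_congruence} is not circular. Both results are established strictly earlier than this lemma (Corollary~\ref{coro:pattern_avoidance_product} rests only on the single-orientation Theorem~\ref{thm:pattern_avoidance_single}, and Proposition~\ref{prop:Coxeter_sorting_lattice_congruence} is imported from Reading's work), so no circularity arises and the chain $(ii) \Rightarrow (iii) \Rightarrow (v) \Rightarrow (i)$ is legitimate. If one wished to avoid the Cambrian machinery and give a genuinely self-contained argument, an alternative plan would be to run induction on $\ell(\pi)$ using Lemma~\ref{lem:coxeterElementFacts} to peel off the initial letter $s_\ell$ of $c$ and relate $\pi(c)$ to $\tau(d \cdot s_\ell)$, tracking how the acceptance of the product automaton evolves; however this path is considerably more technical and does not appear necessary here.
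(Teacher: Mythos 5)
Your proposal is correct, and it takes a genuinely different route from the paper's proof. The paper argues directly by a double induction (on the length of $\pi$ and the size of $c$), peeling off the first letter $s_\ell$ of $c$ via Lemma~\ref{lem:coxeterElementFacts} and analyzing whether $\pi$ reverses $\ell$ and $\ell+1$; the contrapositive is used so that a dead state is exhibited for some factor automaton. You instead reduce the claim to a chain of already-established equivalences: $(ii)\Rightarrow(iii)$ is trivial since $\pi(c)$ is a reduced word; $(iii)\Rightarrow(v)$ is Corollary~\ref{coro:pattern_avoidance_product}, which depends only on the results of Sections~\ref{sec:single_automata} and~\ref{sec:multiple_automata}; and $(v)\Rightarrow(i)$ follows by combining Proposition~\ref{prop:permutree_quotients} (pattern avoidance $\Leftrightarrow$ minimality in the permutree class), Remark~\ref{rem:cambrian_type_A_are_permutrees} together with the matching orientation conventions of Definition~\ref{def:U_D_coxeter_element} and Remark~\ref{rem:coxeter_elements_cambrian_congruences} (the $(U_c,D_c)$-permutree congruence is the $c$-Cambrian congruence), and Proposition~\ref{prop:Coxeter_sorting_lattice_congruence} (Cambrian minima are exactly the $c$-sortable elements). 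You correctly observe there is no circularity: the first family of facts lives in the earlier sections of this chapter and the second is imported from the preliminaries. The trade-off is that your argument leans on Reading's characterization of $c$-sortable elements as Cambrian minima and on the PP18 characterization of permutree minima as pattern avoiders, whereas the paper's induction is self-contained relative to those results and tracks explicitly how the product automaton responds to the $c$-sorting word. In fact your observation applies more broadly: once $(i)\Rightarrow(ii)$ is known (Lemma~\ref{lem:c_word_is_accepted_by_product}), the cycle $(i)\Rightarrow(ii)\Rightarrow(iii)\Rightarrow(v)\Rightarrow(i)$ already closes, so Lemma~\ref{lem:c_word_is_accepted_product} could be bypassed by the same reasoning; the paper's choice to prove both directions inductively gives a tighter, automaton-level explanation but is not logically forced.
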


\begin{proof}
	Assume that~$\pi$ is not~$c$-sortable and let us find an automaton that rejects~$\pi(c)$ between the~$\UU(j)$ and~$\DD(j)$ that define~$\PP(U_c,D_c)$. To do this we follow an induction on the length of~$\pi$ and the size of~$c$.

	Consider~$s_\ell$ to be the first letter of~$c$ and write~$c=s_\ell\cdot d$. As~$s_\ell$ appears before both~$s_{\ell-1}$ and~$s_{\ell+1}$ we have that~$\ell\in U_c$ and~$\ell+1\in D_c$. Notice that~$s_\ell$ transitions from the initial state to the second healthy state of both~$\UU(\ell)$ and~$\DD(\ell+1)$ and stays in the initial state of all~$\UU(j)$ for~$j\in U_c\setminus{\{\ell\}}$ and~$\DD(j)$ for~$j\in D_c\setminus{\{\ell+1\}}$. We now have two possible cases: \begin{itemize}
		\itemsep0em
		\item if~$\pi$ reverses~$\ell$ and~$\ell+1$ and can be written as~$\pi=s_\ell\cdot\tau$. By Lemma~\ref{lem:coxeterElementFacts} we have that the~$c$ sorting word factorizes as~$\pi(c)=s_\ell\cdot \tau(d\cdot s_\ell)$ and~$\tau$ is not~$d\cdot s_\ell$-sortable. By our induction we have that~$\tau(d\cdot s_\ell)$ is a reduced word rejected by an automaton between~$\UU(j)$ for~$j\in U_{d\cdot s_\ell}$ and~$\DD(j)$ for~$j\in D_{d\cdot s_\ell}$. Notice that~$U_{d\cdot s_\ell}=U_c\Delta\{\ell,\ell+1\}$ and~$D_{d\cdot s_\ell}=D_c\Delta\{\ell,\ell+1\}$. Because of this, depending on the value for~$j$ for which the automaton rejects~$\tau(d\cdot s_\ell)$ either Lemma~\ref{lem:automata_multplying_si} or Lemma~\ref{lem:automata_multplying_sj} ensures that~$\pi(c)=s_\ell\cdot\tau(d\cdot s_\ell)$ is also rejected by said automaton.
		\item if~$\pi$ does not reverse~$\ell$ and~$\ell+1$ then~$\pi$ not being~$c$ sortable is equivalent to~$\pi$ not being~$d$-sortable in~$W_{\langle s_\ell\rangle}$. By induction on the size of~$c$,~$\pi(d)$ is not accepted by~$\PP(U_d,D_d)$. Since~$\pi(c)=\pi(d)$ we have that~$\pi(c)$ is rejected by~$\PP(U_c,D_c)$. We finish as~$U_d\subseteq U_c$ and~$D_d\subseteq D_c$. \qedhere
	\end{itemize}
\end{proof}

Moving to~$(ii)\iff(iii)$ notice that~$(ii)\Rightarrow(iii)$ follows immediately from the fact that~$\pi(c)$ is always a reduced word for~$\pi$. The opposite direction is a bit more involved.

\begin{lemma}\label{lem:c_word_is_accepted_product}
	If~$\pi$ possesses a reduced word accepted by~$\PP(U_c,D_c)$, then its~$c$-sorting word~$\pi(c)$ is accepted by~$\PP(U_c,D_c)$.
\end{lemma}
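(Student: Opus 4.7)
The plan is to combine induction with the residual-automaton behavior of $\PP(U_c, D_c)$, following the two-case strategy of the proof of Lemma~\ref{lem:c_word_accepted_product_implies_sortable} but in the reverse direction. Write $c = s_\ell \cdot d$ with $s_\ell$ the leftmost letter; this forces $\ell \in U_c$ (when $\ell \geq 2$) and $\ell + 1 \in D_c$ (when $\ell+1 \leq n-1$), and rules out $\ell+1 \in U_c$ or $\ell \in D_c$. I would split into two cases depending on whether $(\ell,\ell+1)$ is an inversion of $\pi$.

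In the first case, $\pi = s_\ell \cdot \tau$ with $\ell(\tau) = \ell(\pi) - 1$, and Lemma~\ref{lem:coxeterElementFacts} gives $\pi(c) = s_\ell \cdot \tau(d \cdot s_\ell)$. Reading $s_\ell$ from the initial state of $\PP(U_c, D_c)$ moves $\UU(\ell)$ and $\DD(\ell+1)$ to their respective second healthy states and loops at the initial state of every other factor, so it lands in a healthy state. Corollary~\ref{coro:pattern_avoidance_product} translates the hypothesis into pattern avoidance, and Theorem~\ref{thm:acc_red_word_algorithm_multp} then produces a reduced word for $\pi$ of the form $s_\ell \cdot w'$ accepted by $\PP(U_c, D_c)$. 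The suffix $w'$ is a reduced word for $\tau$ accepted by the residual automaton at the post-$s_\ell$ state, which coincides with $\PP(U_{d \cdot s_\ell}, D_{d \cdot s_\ell})$ under the $\moveU$/$\moveD$ operations that swap the membership of $\ell$ and $\ell+1$ (this is exactly the bookkeeping of Algorithm~\ref{algo:permutree_sorting_multiple}). By the induction hypothesis on $\ell(\pi)$, $\tau(d \cdot s_\ell)$ is accepted by $\PP(U_{d \cdot s_\ell}, D_{d \cdot s_\ell})$, and prepending $s_\ell$ recovers $\pi(c)$ accepted by $\PP(U_c, D_c)$.

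In the second case, no reduced word of $\pi$ contains $s_\ell$, so $\pi \in W_{\langle s_\ell\rangle} \cong \fS_{\ell} \times \fS_{n-\ell}$, and Lemma~\ref{lem:coxeterElementFacts} gives $\pi(c) = \pi(d \cdot s_\ell)$, which equals $\pi(d)$ since $s_\ell$ cannot be selected in any reduced expression for $\pi$. I would run an induction on the rank $n$: the parabolic factors have strictly smaller rank, and $d$ is a Coxeter element of $W_{\langle s_\ell\rangle}$ that factors across its two components. The induction hypothesis gives that $\pi(d)$ is accepted by $\PP(U_d, D_d)$; since $U_d \subseteq U_c$ and $D_d \subseteq D_c$ (as noted at the end of the proof of Lemma~\ref{lem:c_word_accepted_product_implies_sortable}), it only remains to check that $\pi(d)$ is accepted by the two extra factors $\UU(\ell)$ and $\DD(\ell+1)$ of $\PP(U_c, D_c)$. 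This is automatic: inspection of Figure~\ref{fig:permutree_automata_single_full} shows that the only transitions of $\UU(\ell)$ that reach a dead state are labeled $s_\ell$, which $\pi(d)$ does not contain, and symmetrically for $\DD(\ell+1)$.

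The main obstacle will be to orchestrate the two inductions coherently, since Case A strictly decreases $\ell(\pi)$ while Case B decreases the rank but not necessarily the length; the cleanest way out is a lexicographic induction on the pair $(n, \ell(\pi))$. A more technical subtlety in Case A is to verify that the state of $\PP(U_c, D_c)$ reached after reading the initial $s_\ell$ is literally the initial state of $\PP(U_{d \cdot s_\ell}, D_{d \cdot s_\ell})$; this is exactly the $\moveU/\moveD$ formalism and mirrors the transition analysis already used in the proof of Lemma~\ref{lem:c_word_accepted_product_implies_sortable}.
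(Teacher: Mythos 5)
Your overall plan matches the paper's proof: a double induction on length and rank, a case split on whether $(\ell,\ell+1)$ is an inversion, Lemma~\ref{lem:coxeterElementFacts} to relate $\pi(c)$ to $\tau(d\cdot s_\ell)$ in the first case, and reducing to the parabolic $W_{\langle s_\ell\rangle}$ together with a check of the two extra factors in the second. So you are not taking a different route.

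There is, however, a genuine gap in Case B. You assert ``no reduced word of $\pi$ contains $s_\ell$, so $\pi \in W_{\langle s_\ell\rangle}$'' as if it followed immediately from $(\ell,\ell+1)\notin\inv(\pi)$. That implication is false: $(\ell,\ell+1)$ not being an inversion only says $s_\ell$ is not a left descent, i.e.\ no reduced word \emph{begins} with $s_\ell$, but $s_\ell$ can still appear later. Concretely, $\pi=2314\in\fS_4$ with $\ell=2$ has $\pi^{-1}(2)=1<2=\pi^{-1}(3)$, so $(2,3)\notin\inv(\pi)$, yet its only reduced word is $s_1 s_2$, which contains $s_2$. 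The statement you need --- that under the lemma's hypothesis the accepted permutation $\pi$ in Case B actually satisfies $\pi([\ell])=[\ell]$ --- requires the acceptance hypothesis and is a real step. The paper supplies it by contradiction: if the accepted word $w$ contained $s_\ell$, then since $\ell\in U_c$ and $(\ell,\ell+1)\notin\inv(\pi)$, acceptance by $\UU(\ell)$ forces an $s_{\ell+1}$ before the $s_\ell$ and before any $s_{\ell-1}$, producing a pattern $k\ell$ with $k>\ell+1$; symmetrically acceptance by $\DD(\ell+1)$ produces a pattern $(\ell+1)i$ with $i<\ell$; combined with $\ell$ and $\ell+1$ not being reversed, $\pi$ then contains $k\ell(\ell+1)i$, contradicting Theorem~\ref{thm:pattern_avoidance_single} for both $\UU(\ell)$ and $\DD(\ell+1)$. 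You should add this argument before invoking the induction on $W_{\langle s_\ell\rangle}$; without it, Case B does not go through. The remainder of your outline, including the residual-automaton bookkeeping in Case A and the observation that a word avoiding $s_\ell$ cannot reach a dead state of $\UU(\ell)$ or $\DD(\ell+1)$, is sound.
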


\begin{proof}
	We proceed by induction on the length of~$\pi$ and the size of~$c$. As before, consider~$s_\ell$ to be the first letter of~$c$ and write~$c=s_\ell\cdot d$. As~$s_\ell$ appears before both~$s_{\ell-1}$ and~$s_{\ell+1}$ we have that~$\ell\in U_c$ and~$\ell+1\in D_c$. Notice that~$s_\ell$ transitions from the initial state to the second healthy state of both~$\UU(\ell)$ and~$\DD(\ell+1)$ and stays in the initial state of all~$\UU(j)$ for~$j\in U_c\setminus{\{\ell\}}$ and~$\DD(j)$ for~$j\in D_c\setminus{\{\ell+1\}}$. We now have two possible cases: \begin{itemize}
		\itemsep0em
		\item if~$\pi$ reverses~$\ell$ and~$\ell+1$ and can be written as~$\pi=s_\ell\cdot\tau$. Notice that the reduced word of~$\pi$ that is accepted by~$\PP(U_c,D_c)$ might not be related with~$\tau$. Still, since~$\pi$ has an accepted reduced word by~$\PP(U_c,D_c)$, we have that~$\pi$ avoids the patterns corresponding to the elements of~$U$ and~$D$. With this in hand, Theorem~\ref{thm:acc_red_word_algorithm} tells us that~$\pi$ possesses a reduced word of the form~$w=s_\ell\cdot v$ such that~$w$ is accepted by~$\UU(j)$ for~$j\in U_c$ and~$\DD(j)$ for~$j\in D_c$. Notice that in our context the use of the Theorem~\ref{thm:acc_red_word_algorithm} fails for~$\UU(\ell+1)$ and~$\DD(\ell)$ but as~$\ell\in U_c$ and~$\ell+1\in D_c$ these automata do not affect us.

		      At this point Lemma~\ref{lem:automata_multplying_si} (resp.\ Lemma~\ref{lem:automata_multplying_sj}) gives us that~$\tau$ possesses a reduced word accepted by~$\UU(j)$ for~$j\notin\{\ell,\ell+1\}$ (resp.~$\UU(j+1)$). This coincides with the fact that~$s_{j-1}$ and~$s_{j}$ have not changed order between (resp.~$s_\ell$ appears now after~$s_{l-1}$ and~$s_{l+11}$ after)~$c=s_\ell \cdot d$ to~$d\cdot\ell$. Thus, by induction we have that~$\tau(d\cdot s_\ell)$ is accepted by~$\PP(U_{d\cdot s_\ell},D_{d\cdot s_\ell})$. Using Lemma~\ref{lem:coxeterElementFacts} gives us that~$\pi(c)=s_\ell \cdot \tau(d\cdot s_\ell)$ is accepted by~$\PP(U_c,D_c)$.

		\item if~$\pi$ does not reverse~$\ell$ and~$\ell+1$. We claim that in this case~$\pi$ does not have~$s_\ell$ its reduced words. Notice that this is equivalent to showing that~$\pi([\ell])=[\ell]$ and~$\pi([n]\setminus{[\ell]})=[n]\setminus{[\ell]}$. Suppose that the accepted reduced word~$w$ by~$\PP(U_c,D_c)$ contains~$s_\ell$. As~$\ell\in U_c$ (resp.~$\ell+1\in D_c$) and~$\pi$ does not reverse~$\ell$ and~$\ell+1$, the fact that~$w$ is accepted by~$\UU(\ell)$ (resp.~$\DD(\ell+1)$) means that~$w$ has~$s_{\ell+1}$ followed eventually by~$s_\ell$ before any~$s_{\ell-1}$ (resp.~$s_{\ell-1}$ followed eventually by~$s_\ell$ before any~$s_{\ell+1}$). This implies that~$\pi$ contains the pattern~$k\ell$ for some~$\ell<k$ (resp.~$(\ell+1) i$ for some~$i<\ell+1$). As~$\ell$ and~$\ell+1$ are not reversed,~$\pi$ contains~$k\ell(\ell+1)i$ contradicting twice Theorem~\ref{thm:pattern_avoidance_single}, and we can work as in~$W_{\langle s_\ell\rangle}$ via induction.

		      Notice that the automata corresponding to~$d$ are all the automata of~$c$ without the transitions having~$s_\ell$ and thus any reduced word accepted or rejected for~$j\in U_d$ (resp.~$j\in D_d$) is also accepted or rejected for~$j\in U_c$ (resp.~$j\in U_c$). Notice as well that this also aligns with~$\ell,\ell+1\notin U_d$ (resp.~$\ell,\ell+1\notin D_d$). Since~$\pi$ does not use~$s_\ell$ in any reduced word, the reduced word accepted by~$\PP(U_c,D_c)$ is also accepted by~$\PP(U_d,D_d)$. By induction, we have that~$\pi(d)=\pi(c)$ is accepted by~$\PP(U_d,D_d)$ and thus also by all~$\UU(j)$ for~$j\in U_c\setminus{\{\ell\}}$ and~$\DD(j)$ for~$j\in D_c\setminus{\{\ell+1\}}$. As~$\pi(c)$ also does not use any~$s_\ell$, we have that~$\pi(c)$ is accepted in either the first healthy or ill state of both~$\UU(\ell)$ and~$\DD(\ell+1)$. We conclude that~$\pi(c)$ is accepted by~$\PP(U_c,D_c)$.  \qedhere
	\end{itemize}
\end{proof}

We move to give some remarks about nuances of~$c$-sortability and~$(U_c,D_c)$-permutree sorting.

\begin{remark}
	The fact that a permutation~$\pi$ avoids~$jki$ (resp.~$kij$) for a given~$j$, does not assure that there exists a Coxeter element~$c$ for which~$\pi$ is~$c$-sortable and~$j \in U_c$ (resp.~$j \in D_c$). For example, let~$\pi:=41352$ that avoids~$2ki$ and~$ki4$. Notice that~$\pi$ is not sortable since it contains both~$3ki$ and~$ki3$ via the respective subwords~$352$ and~$413$.
\end{remark}

\begin{remark}
	Lemma~\ref{lem:c_word_is_accepted_by_product} fails when a permutation~$\pi$ is not~$c$-sortable as there might exist~$j \in U_c$ (resp.~$j \in D_c$) for which the~$c$-sorting word~$\pi(c)$ is not accepted by~$\UU(j)$ (resp.~$\DD(j)$) even if~$\pi$ avoids~$jki$ (resp.~$kij$).
	For example, take~$c = s_2 \cdot s_1 \cdot s_3$ and~$\pi = 4213 = s_3 \cdot s_1 \cdot s_2 \cdot s_1 =  s_3 \cdot s_2 \cdot s_1 \cdot s_2 =  s_1 \cdot s_3 \cdot s_2 \cdot s_1$.
	Then~$2 \in U_c$, and the~$c$-sorting word~$\pi(c) = s_1 \cdot s_3 \cdot s_2 \cdot s_1$ is rejected by~$\UU(2)$ while~$s_3 \cdot s_2 \cdot s_1 \cdot s_2$ is accepted by~$\UU(2)$.
\end{remark}

\begin{remark}
	Notice that in the definition of~$c$-sorting, the infinite word~$c^\infty$ is a sorting network as in~\cite{K73}. That is, Our algorithm rely on a series of transpositions to be applied at the appropriate time. With this in mind and taking into account Theorem~\ref{thm:coxter_sorting_permutrees}, it is important to notice that Algorithm~\ref{algo:permutree_sorting_multiple} is not a sorting network. The order on which the values of~$\ell$ are chosen depending on the state of the automata that is being visited and the permutation itself. This demands the following perspective.
\end{remark}

\begin{perspective}\label{pers:sorting_network}
	Let~$U$ and~$D$ be disjoint subsets that do not cover~$[2,n-1]$. Is it possible to find a word~$\tilde{c}$ replacing~$c^\infty$ such that checking for~$\pi(\tilde{c})$ is enough to verify if~$\pi$ is accepted by~$\PP(U,D)$?
\end{perspective}

We finish with a remark on both positive and negative cases for Perspective~\ref{pers:sorting_network}.

\begin{remark}\label{rem:sorting_network}
	Consider the case~$n = 5$,~$U = \{ 2\}$, and~$D=\{ 4 \}$. In this case one can check via computer exploration that no reduced word~$\tilde{c}$ of the maximal permutation~$54321$ can be used as a sorting network. This implies that for all general choices of~$\tilde{c}$, there exists a permutation~$\pi$ which is accepted by~$\PP(U,D)$ although its reduced word~$\pi(\tilde{c})$ is rejected. The healthy states of~$\PP(\{ 2\},\{ 4 \})$ are shown in Figure~\ref{fig:permutree_automata_product}. In this case the problem lies in that certain accepted reduced words can only start with either~$s_2$ or~$s_3$. For example, for~$54213$ as shown in Example~\ref{ex:permutree_sorting_multple} all accepted reduced words start with~$s_3$ whereas for some other permutations such as~$35421$, all accepted reduced words start with~$s_2$. This gives us no possible choice for~$\tilde{c}$.

	Still the answer is positive in the Cambrian case with the~$c$-sorting word when~$U$ and~$D$ form a partition of~$\{2, \dots, n-1\}$ or when~$|U| + |D| = 1$ and we have a single automaton. In this later case,~$\tilde{c}$ is constructed by reading the healthy states of the automaton from left to right, adding at each state the word~$(s_{i_1} \cdots s_{i_k})^k s_j$ where~$s_{i_1}, \dots, s_{i_k}$ are the loops and~$s_j$ is the unique transition going to the next healthy state. This process gives a prefix that can be extended in any way to obtain a proper sorting word~$\tilde{c}$ (that is, a reduced word of~$w_0$). With this construction, if~$U = \{2\}$, we obtain the prefix~$s_3 \cdot s_2 \cdot s_1 \cdot s_3$ and indeed~$s_3 \cdot s_2 \cdot s_1 \cdot s_3 \cdot s_2 \cdot s_1$ acts as a sorting network equivalent to~$(\{2\},\emptyset)$-permutree sorting. This seems to extend to all cases where, at each healthy state of the~$\PP(U,D)$, the choices for the healthy transitions commute like for~$n=5$,~$U = \{ 4 \}$ and~$D = \{ 2 \}$ (see Figure~\ref{fig:permutree_automata_product}). In this case the word~$s_1 \cdot s_2 \cdot s_4 \cdot s_3 \cdot s_2 \cdot s_1 \cdot s_4 \cdot s_3 \cdot s_2$ gives a proper sorting network for~$(\{4\},\{2\})$-permutree sorting.
\end{remark}

\section{Permutree Sorting other Infinite Families}\label{sec:other_infinite_families}

Based on Chapter~\ref{sec:coxeter_sorting_automata} and~\cite{HLT11} and~\cite{D22} we strongly believe that the results of permutree sorting can be extended from type~$A$ to the more general case of Coxeter Groups. In this section we present some observations on recognizing minimal elements of permutree congruences through automata in other Coxeter group based on computer exploration using SageMath~\cite{SAGE}. We first propose some automata for Types~$B$ and~$D$ and then present some definitions and directions that seem promising.

\subsection{Type~\texorpdfstring{$B$}{}}\label{ssec:type_B_sorting}

For this subsection consider the Coxeter system of type~$B$ given by the group of signed permutations~$(\fS_n^B,S^B)$. For this case we have the general definition of automata as follows.

\begin{definition}\label{def:permutree_automata_single_B}
	Consider~$U=\{j\}$ (resp.~$D=\{j\}$) for some~$j\in[2,n-1]$ and the set of generators~$S^B$ as an alphabet. We define the automaton \defn{$\UU(j)$}\index{permutree!automaton~$\UU$} (resp.\ \defn{$\DD(j)$}\index{permutree!automaton~$\DD$}) recursively following Figure~\ref{fig:permutree_automata_single_recursive_B} with automata~$\DD(0)$ defined for consistency. Our automata are complete with all missing transitions being loops. Figure~\ref{fig:permutree_automata_single_full_B} shows the complete automata~$\UU(j)$ and~$\DD(j)$.
\end{definition}

\begin{figure}[h!]
	\centering
	\includegraphics[scale=0.84]{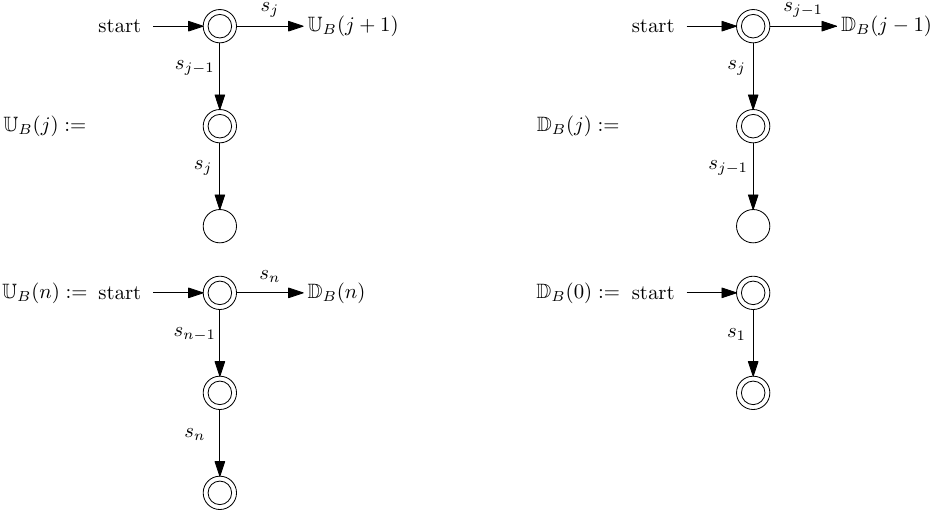}
	\caption[The automata~$\UU_B(j)$ and~$\DD_B(j)$ defined recursively.]{ The automata~$\UU_B(j)$ (left) and~$\DD_B(j)$ (right) defined recursively.}
	\label{fig:permutree_automata_single_recursive_B}
\end{figure}

\begin{figure}[h!]
	\centering
	\includegraphics[scale=0.9]{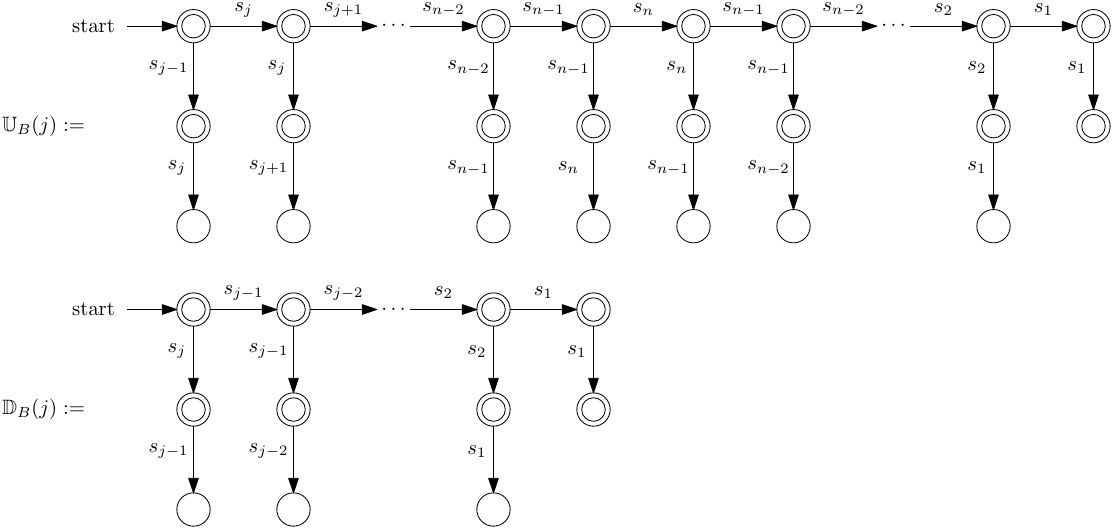}
	\caption{The complete automata~$\UU_B(j)$ and~$\DD_B(j)$.}\label{fig:permutree_automata_single_full_B}
\end{figure}

\begin{remark}\label{rem:type_B_automata_folding}
	Like in Remark~\ref{rem:automata_and_walks_cox_graph}, the automata~$\UU_B(j)$ and~$\DD_B(j)$ can be built via a walk in the Coxeter graph in the opposite direction of the defining orientation with a slight twist. The walk corresponding to~$\DD_B(j)$ ends at~$s_1$ similarly as in type~$A$ while the walk corresponding to~$\UU_B(j)$ ends at~$s_n$ which has an edge with label~$4$ meaning it must go back and travel the graph to the other end making~$\DD_B(j)$ appear. As in other apparitions of Type~$B$, this reflects idea that the Coxeter group~$B_n$ can be thought as folding of the Coxeter groups~$A_{2n}$ along the central symmetry of its Coxeter diagram (see~\cite{HL07}).
\end{remark}

Following in Sections~\ref{sec:single_automata} and~\ref{sec:multiple_automata} we have verified computationally up to~$n=6$ that these automata correctly sort~$(U,D)_B$-permutree minimal elements for all possible combinations of~$U,D\subseteq[2,n-1]$ such that~$U\cap D=\emptyset$. Moreover, the sets of reduced words of accepted elements by these automata and their products also form trees inside the weak order of~$\fS_n^B$. In the case that~$U$ and~$D$ form a partition of~$[2,n-1]$ we correctly recover the fact that there are~$\binom{2n}{n}$ elements that are~$(U,D)_B$-minimal permutree, that is, the~$B$-Catalan number.

\subsection{Type~\texorpdfstring{$D$}{}}\label{ssec:type_D_sorting}

For type~$D$ we do not possess currently even a candidate definition for the general construction of the automata~$\UU_D(j)$ and~$\DD_D(j)$. In certain cases of a single orientation in the Coxeter diagram of~$D_n$ we have found an automaton that recognizes~$(U,D)_D$-permutree minimal elements. To construct it seems to be useful to consider the guiding ideas of Remarks~\ref{rem:automata_and_walks_cox_graph} and~\ref{rem:type_B_automata_folding} by considering a walk on the Coxeter graph with the twist that a fork of the walk at~$s_2$ implies a forking in the spine of the automata and that walks must bounce at least once. This forking of the spine is equivalent to considering several automata and taking their product. We choose this representation for our figures of these automata.

For the case of~$D_4$ we have a family of automata that have been computationally checked to recognize permutree minimal permutations in the corresponding congruences. Figure~\ref{fig:permutree_automata_D4_single_full} shows the automata for the orientations~$2\to 0$ and~$0\to 2$ for~$D_4$. As~$D_4$ is symmetric via rotations, the automata for the other single orientations are obtained by rotating the labels of these diagrams according to the desired orientation around the center vertex corresponding to~$s_2$. For~$n=5$ certain orientations still elude us and for~$n\geq 6$ we have no further indications.

\begin{figure}[h!]
	\centering
	\includegraphics[scale=0.7]{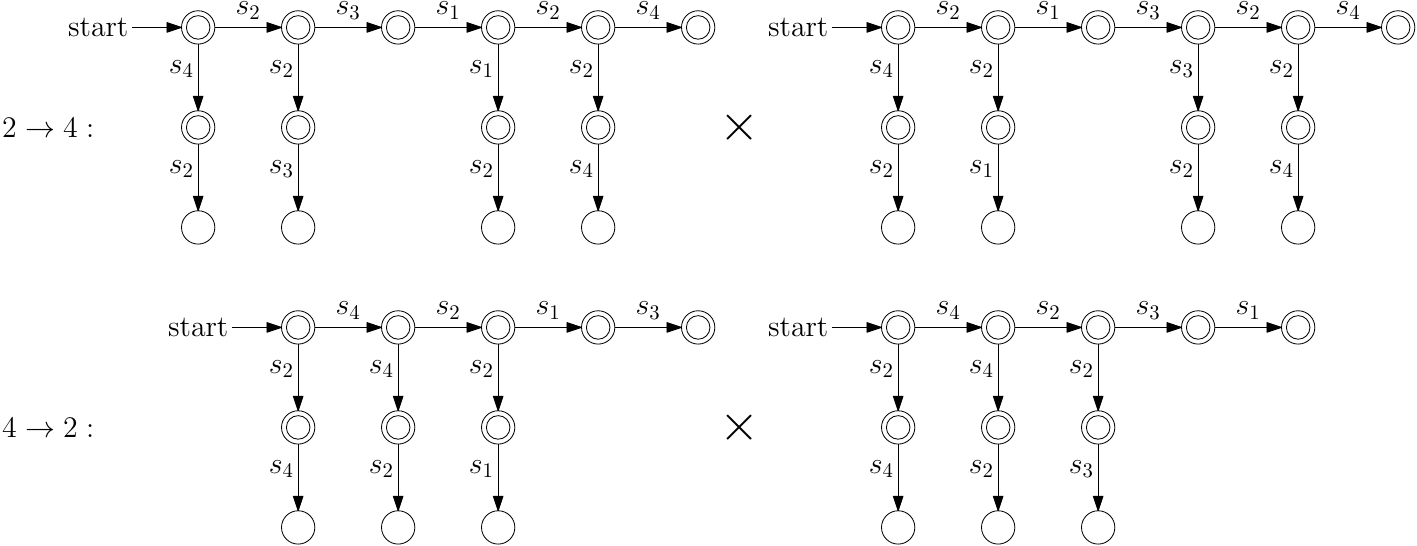}
	\caption[The automata~$\UU_D(0)$ and~$\DD_D(0)$ in~$D_4$.]{ The automata corresponding to the orientations~$0\to 2$ and~$2\to 0$ in the Coxeter diagram of~$D_4$.}
	\label{fig:permutree_automata_D4_single_full}
\end{figure}

We finish this chapter by providing certain definitions that seem to have a hand in determining automata for general Coxeter groups.

\subsection{Ideas and Perspectives}\label{ssec:ideas_perspectives}

Taking inspiration from~\cite{HLT11} and their work on~$c$-singletons, we propose the following definitions.

\begin{definition}\label{def:maccr_smaccr}
	Let~$(W,S)$ be a Coxeter group of rank~$n$ and~$\equiv_\delta$ with~$\delta\in\{\nonee,\downn,\upp,\uppdownn\}^n-1$ the permutree equivalence class corresponding to the multi-orientation of the Coxeter graph. Consider an element~$w\in W$. We say that~$w$ is \begin{itemize}
		\itemsep0em
		\item a \defn{$\delta$-singleton}\index{permutree!singleton} if~$w$ is the only element in its permutree equivalence class,
		\item a \defn{$\delta$-accepter}\index{permutree!accepter} if for all~$w\leq u$, we have that~$u$ is a~$\delta$-singleton,
		\item a \defn{$\delta$-minimal accepter}\index{permutree!minimal accepter} (abbreviated \defn{$\delta$-maccr}) if~$w$ is an accepter and there is no~$s\in S$ such that~$w\lessdot w\cdot s$ and~$w\cdot s$ is an accepter,
		\item a \defn{$\delta$-smaccr} if~$w$ is a maccr and has the shortest length across all maccrs.
	\end{itemize}
\end{definition}

The use of~$\delta$ in these definitions serves only to recall that they depend on the congruence~$\equiv_\delta$ similar to how~$c$-singletons depend on the congruence~$\equiv_c$. We do not propose by this notation that~$\delta$ describes a reduced word of~$w_0$ from which the permutree sortable elements can be derived, as it is the case for the Coxeter element~$c$. We have seen in Remark~\ref{rem:sorting_network} that such reduced word might not always exist for the~$\delta$ case. 

Based on computational evidence we propose the following conjectures. Let~$\equiv_\delta$ be any permutree congruence of a Coxeter group~$W$.

\begin{conjecture}\label{conj:unique_smaccr}
	There exists a unique~$\delta$-smaccr in the left weak order of~$W$. Moreover, the~$\delta$-smaccr is the join of all the smaccrs coming from single orientations.
\end{conjecture}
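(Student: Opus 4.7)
The plan is to reduce the multi-orientation conjecture to the single-orientation case by exploiting how $\delta$ decomposes in the refinement order on decorations, and then constructing the candidate smaccr as a join.

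First, I would verify that the set of $\delta$-accepters is an upper set in the weak order. This is immediate from the definition: if $w$ is an accepter and $u\geq w$, then the principal filter above $u$ is contained in that above $w$ and hence consists of $\delta$-singletons, making $u$ itself an accepter. The $\delta$-maccrs are then precisely the minimal elements of this filter, and the $\delta$-smaccrs are the minimum-length elements among them.

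Second, I would establish a decomposition of accepters across single orientations. Write $\delta = \bigvee_e \delta^{(e)}$ in the refinement order of Definition~\ref{def:decoration_order}, where $\delta^{(e)}$ keeps the orientation of $\delta$ on the edge $e$ and replaces every other coordinate by $\nonee$. By Proposition~\ref{prop:permutree_matrioshcka} each $\equiv_{\delta^{(e)}}$ refines $\equiv_\delta$, and indeed $\equiv_\delta$ is the join of the $\equiv_{\delta^{(e)}}$ in the lattice of congruences, so every $\equiv_\delta$-class is the transitive closure of a union of $\equiv_{\delta^{(e)}}$-classes containing a common element. A short computation then gives that $w$ is a $\delta$-singleton if and only if $w$ is a $\delta^{(e)}$-singleton for every $e$, and consequently $w$ is a $\delta$-accepter if and only if $w$ is a $\delta^{(e)}$-accepter for every $e$.

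Third, assuming the single-orientation case of the conjecture---namely that each $\delta^{(e)}$ admits a unique smaccr $w_e$---I would show that $w := \bigvee_e w_e$ computed in the left weak order is the unique $\delta$-smaccr. It is an accepter because any $u \geq w$ dominates every $w_e$, hence is a $\delta^{(e)}$-singleton for every $e$, hence a $\delta$-singleton by the previous step. It is minimal, because any $\delta$-accepter $w'$ is in particular a $\delta^{(e)}$-accepter and so lies above $w_e$ by the filter property combined with single-orientation uniqueness, giving $w' \geq w$; the same argument shows that any other smaccr must coincide with $w$ since both have the same minimal length.

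The main obstacle is the single-orientation base case. Proving uniqueness of the $\delta^{(e)}$-smaccr requires a direct analysis of the automata $\UU(j)$ and $\DD(j)$ from Section~\ref{sec:single_automata}, where Theorem~\ref{thm:acc_red_word_same_state_inv} already pinpoints the accepting state of every reduced word in terms of $\inv^j(\pi)$ and $\inv_j(\pi)$. One expects the smaccr to correspond to the shortest permutation whose whole upper set lands in the healthy spine of the automaton, and the linear structure of this spine strongly suggests that the candidate is rigidly determined. Moreover, reconciling the right-weak-order covers used in the definition of maccr with the left-weak-order join invoked by the statement will require either an invariance argument using Theorem~\ref{thm:acc_red_word_same_state} or a passage through the anti-automorphism $w \mapsto w^{-1}$ of Proposition~\ref{prop:Coxeter_automorphisms}, exchanging the two sides.
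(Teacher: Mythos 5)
This statement is labeled a conjecture, and the paper offers no proof: the authors present it alongside Conjectures~\ref{conj:smaccr_automata} and~\ref{conj:maccrs_from_smaccr} as computationally verified hypotheses (checked for small ranks in types $A$, $B$, $D$, $H$, and $F$). There is therefore no argument in the paper to compare yours against; what you have written is an attempt at an open problem, not a reconstruction of a known proof.

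As to the attempt itself, your reduction in step two is sound: $\equiv_\delta$ is the smallest lattice congruence containing all the single-orientation congruences $\equiv_{\delta^{(e)}}$, joins of lattice congruences are transitive closures of unions, and so $w$ is a $\delta$-singleton exactly when it is a $\delta^{(e)}$-singleton for every $e$. The genuine gap is in the minimality step of step three, and it is larger than the base case you flag. To conclude that every $\delta$-accepter $w'$ satisfies $w'\geq w_e$, you invoke ``the filter property combined with single-orientation uniqueness.'' But the $\delta^{(e)}$-accepter filter may have several minimal elements (maccrs), and uniqueness of the shortest one (the smaccr) does not place it below the others. An accepter $w'$ sitting above a longer maccr incomparable to $w_e$ need not satisfy $w'\geq w_e$, so the join $\bigvee_e w_e$ is not guaranteed to bound every $\delta$-accepter from below, and the minimality of $w$ does not follow. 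What you actually need is that $w_e$ be the \emph{minimum} of the $\delta^{(e)}$-accepter filter (equivalently, the unique maccr, or at least below all maccrs in the relevant weak order); this is precisely the content of Conjecture~\ref{conj:maccrs_from_smaccr}, which your argument is silently assuming in addition to the single-orientation instance of the present conjecture. Either state that dependency explicitly, or supply an independent argument that the $\delta^{(e)}$-accepter filter is principal.
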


\begin{conjecture}\label{conj:smaccr_automata}
	For each single orientation associated to~$\delta$, each reduced word of the~$\delta$-smaccr defines the start of the spine of an automaton such that the product of these automata recognizes the permutree minimal elements of this orientation. 

	Furthermore, the product of these automata for each single orientation associated to~$\delta$ is an automaton that recognizes~$\delta$-permutree minimal elements.
\end{conjecture}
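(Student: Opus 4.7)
The proof strategy decomposes into three stages that mirror the type $A$ arguments of Sections~\ref{sec:single_automata} and~\ref{sec:multiple_automata}, with Conjecture~\ref{conj:unique_smaccr} as a prerequisite. First, I would settle the existence and structural properties of $\delta$-smaccrs. The set of $\delta$-accepters is by definition an upper set in the right weak order, since if $w$ is an accepter and $w \leq u$, then any $v \geq u$ also satisfies $v \geq w$ and is therefore a $\delta$-singleton. A natural next step is to prove that the collection of accepters is closed under meets in the left weak order; combined with the lattice property of the weak order on $W$, this yields a unique minimal element which plays the role of the overall $\delta$-smaccr, and it should decompose as the join of the smaccrs attached to each single orientation in $\delta$, matching the second half of Conjecture~\ref{conj:unique_smaccr}.

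Second, for each single oriented edge $s \to t$ associated to $\delta$, I would fix a reduced expression for the smaccr $w_{s \to t}$ and build the spine of the candidate automaton $\UU(s \to t)$ by taking its healthy states to be indexed by the prefixes of this reduced word, with healthy transitions along the spine labelled by the successive letters; loop transitions at each healthy state are given by the generators that commute with the next expected letter; a single transition to an ill state is given by the expected letter's left neighbour in the sense of the braid relation at the oriented edge; and dead transitions come from the local obstruction coded by the braid relation. This mirrors Definition~\ref{def:permutree_automata_single} and its recursive structure. A crucial lemma to verify is that this construction is independent of the choice of reduced word for $w_{s \to t}$ up to isomorphism, i.e., that it is invariant under commutation and braid moves in $W$. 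The product automaton for $\delta$ is then the product of all the $\UU(s\to t)$ and $\DD(s\to t)$ built this way.

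Third, correctness is proved by a double induction on the length $\ell(\pi)$ and the distance in the Coxeter graph from the oriented edge to the furthest vertex, generalising Lemmas~\ref{lem:automata_multplying_si}, \ref{lem:automata_multplying_sj-1}, and \ref{lem:automata_multplying_sj}. The three cases to handle, when writing $\pi = s \cdot \tau$, are: (i)~$s$ is a generator that does not interact with the current orientation, in which case acceptance reduces to acceptance for $\tau$ with the same automaton; (ii)~$s$ is the letter labelling the transition to an ill state, in which case $\tau$ is forced to lie in a parabolic subgroup avoiding the offending generator, so the induction on parabolic rank applies; (iii)~$s$ is the source letter of the orientation, in which case the orientation shifts one step along the graph, reducing to the automaton for a shorter smaccr. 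Combined with the characterisation of permutree-minimal elements as those avoiding the relevant braid-interval subwords, this chain of equivalences gives the desired characterisation by $\PP(\delta)$, and Theorem~\ref{thm:coxter_sorting_permutrees} provides the template for extracting it cleanly.

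The main obstacle I foresee is in the second stage, specifically establishing that the automaton is well-defined independently of the chosen reduced expression of the smaccr and that its local behaviour (loops, ills, deads) can be described intrinsically rather than by recursion on a path in the Coxeter graph. In type $A$ the graph is a path and the spine inherits an obvious linear structure; in types $D$, $E$, and $H$ the branching at the trivalent vertex and the longer braid relations create ambiguities that the partial computational evidence in type $D$ for $n \leq 5$ already suggests are delicate. One likely technical route is to replace the spine-based description by an intrinsic one in terms of the poset of inversion sets modulo the congruence $\equiv_\delta$, so that the automaton is built directly from the lattice $W/{\equiv_\delta}$ without ever choosing a reduced word. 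This would simultaneously bypass the well-definedness issue and make the inductive arguments in stage three functorial across Coxeter types.
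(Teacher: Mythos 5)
This statement is presented in the paper as an open conjecture, supported only by computational experiments (type~$B$ up to small rank, parts of~$D_4$ and~$D_5$, one orientation in~$H_3$, a few cases in~$F_4$); the authors give no proof, so there is nothing to compare your argument against. Your submission is, moreover, not a proof but a programme: each of its three stages is a plausible reduction, not an established step. A few remarks on what would need to be filled in.

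Stage one asserts that $\delta$-accepters are closed under meet in the left weak order. This is not immediate from the definitions: an accepter is the floor of a filter of $\delta$-singletons, and filter-floors are not in general stable under meet. You would need a structural fact relating $\equiv_\delta$ to the lattice operations of the weak order in arbitrary type. Note also that what the paper conjectures as Conjecture~\ref{conj:unique_smaccr} is phrased in the left weak order, so the two claims are not literally the same; you should check which one you actually prove. Stage two is where, as you yourself say, the real difficulty lives, and I think you have the right diagnosis: in type~$A$ the Coxeter graph is a path, the spine inherits a linear order, and the recursive construction of $\UU(j)$ and $\DD(j)$ (Definition~\ref{def:permutree_automata_single}) piggybacks on that. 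For $D$, $E$, $H$, $F$ the branching vertex and the longer braid relations break the recursion: the paper's own evidence shows that in $D_5$ the $\delta$-smaccr can have anywhere from~$2$ to~$100$ reduced words, so well-definedness up to commutation and braid moves is a serious claim, and the suggestion to replace spines by an intrinsic object built from $W/\equiv_\delta$ is a direction, not yet a construction. Stage three follows the template of Lemmas~\ref{lem:automata_multplying_si}--\ref{lem:automata_multplying_sj} and Theorem~\ref{thm:coxter_sorting_permutrees}, which is sensible, but those lemmas use explicitly the 1-line-notation model for~$\fS_n$ (inversion sets as pairs of integers, the pattern $jki$/$kij$); you would need a type-uniform replacement, presumably via root subsystems and the description of rank-two congruences as contractions in polygons, before the induction can run.

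So: no gap in the sense of a wrong turn, but a genuine gap in the sense that the hard part (stage two) is acknowledged rather than resolved, and the prerequisite Conjecture~\ref{conj:unique_smaccr} is itself open. Treat this as a roadmap; the paper does not supply a proof for you to have matched or missed.
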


\begin{conjecture}\label{conj:maccrs_from_smaccr}
	Let~$x\in W$ be a~$\delta$-smaccr. For each~$\delta$-maccr~$y\in W$ there exists reduced words~$u$ and~$v$ of~$x$ and~$y$ such that~$u$ is a suffix of~$v$. That is, the~$\delta$-smaccr is the smallest~$\delta$-maccr in the left weak order of~$W$.
\end{conjecture}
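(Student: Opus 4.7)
The plan is to reformulate Conjecture~\ref{conj:maccrs_from_smaccr} in the left weak order: since the existence of reduced words $u,v$ of $x,y$ with $u$ a suffix of $v$ is equivalent to a length-additive factorization $y = zx$, i.e.\ $x \leq_L y$, the claim becomes that the $\delta$-smaccr $x$ is a $\leq_L$-lower bound for the set $\mathcal{M}$ of all $\delta$-maccrs. I would begin by recording the easy structural facts: the set $\cA$ of $\delta$-accepters is a filter in the right weak order containing $w_0$, and $\mathcal{M}$ is precisely its set of $\leq_R$-minimal elements; by Conjecture~\ref{conj:unique_smaccr}, the smaccr $x$ is unique and of minimal length among elements of $\mathcal{M}$, which handles the base case $l(y) = l(x)$ of the intended induction on $l(y) - l(x)$, since uniqueness forces $y = x$.

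The natural inductive step would be: for every maccr $y$ with $l(y) > l(x)$, produce some $s \in D_L(y)$ such that $sy$ lies in a suitably structured subset of $\cA$, use induction to get $x \leq_L y'$ for a maccr $y' \leq_R sy$, and then transfer back to $y$ via the cover $sy \lessdot_L y$. This step is the main source of difficulty: $\cA$ is generally not a filter in the left weak order, so $sy$ need not be an accepter, and even when one reaches a maccr $y'$ with $x \leq_L y'$ in the right weak order below $sy$, passing from $x \leq_L y'$ to $x \leq_L y$ is not automatic because $y'$ and $y$ are typically incomparable in the left weak order. To navigate this, I would try to strengthen the inductive hypothesis so that it applies simultaneously to all elements of $\cA$ of a given length, and control the position of $y$ in the left weak order by tracking how its left inversion set $I_L(y)$ is forced to contain $I_L(x)$ as a consequence of accepter status. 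Concretely, if some $t \in I_L(x) \setminus I_L(y)$ existed for a maccr $y$, I would attempt to exhibit an element $u \geq_R y$ that is not a singleton, contradicting $y \in \cA$.

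The principal obstacle is precisely the mismatch between the right-weak-order definition of accepters and the left-weak-order content of the conclusion. The cleanest way to bridge this would be to first establish Conjecture~\ref{conj:smaccr_automata}: if a reduced word of $x$ forms the initial segment (spine) of the recognizing automaton, then any reduced word of a maccr $y$ accepted by the automaton is forced to traverse the spine entirely, yielding a reduced word of $y$ ending with a reduced word of $x$, which is exactly the suffix property. A second potential route is via the involution $w \mapsto w^{-1}$ of Proposition~\ref{prop:Coxeter_automorphisms}, which swaps left and right weak orders and, after identifying the image congruence $\equiv_{\delta'}$, exchanges accepters/maccrs for $\equiv_\delta$ with those for $\equiv_{\delta'}$; when $\delta$ is self-conjugate under this operation, the desired left-weak-order lower bound translates into a more tractable right-weak-order lower bound for the inverses of maccrs inside $\cA$. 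For the general non-self-conjugate case, I expect a delicate parabolic induction on the rank of $W$, decomposing a maccr $y$ by its leftmost descent $s$ as $y = s \cdot y''$ with $y'' \in W^{\{s\}}$ and relating accepter status in $W$ to accepter status in the maximal parabolic $W_{S \setminus \{s\}}$, to be necessary — and constructing the correct restriction of the congruence together with its smaccr in the parabolic subgroup is likely the hardest combinatorial ingredient.
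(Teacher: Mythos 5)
This statement is presented in the paper as a conjecture supported only by computational evidence, so there is no reference proof to compare against. Your reformulation of the suffix condition as $x \leq_L y$ is correct, and your structural observations (that the set $\cA$ of $\delta$-accepters is a filter in the right weak order containing $w_0$, and that the maccrs are its $\leq_R$-minimal elements) agree with the intent of Definition~\ref{def:maccr_smaccr}. What you offer is explicitly a plan rather than a proof, and both your base case and your first alternative route are conditional on Conjectures~\ref{conj:unique_smaccr} and~\ref{conj:smaccr_automata}, which are themselves unproven; that is acceptable as a conditional strategy, but it means no part of the argument is unconditionally closed, and the main inductive step is not carried out.

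The concrete issue worth flagging is in your alternative route via Conjecture~\ref{conj:smaccr_automata}. The spine begins at the initial state of the automaton, and the automaton reads reduced words from left to right; a reduced word of $y$ that traverses the entire spine therefore has a reduced word of $x$ as a \emph{prefix}, which yields $x \leq_R y$, not the required $x \leq_L y$. Since the maccrs are pairwise $\leq_R$-incomparable (being the $\leq_R$-minimal elements of the filter $\cA$), the relation $x \leq_R y$ forces $y = x$, so this route as written cannot handle any maccr $y$ with $l(y) > l(x)$. The inversion automorphism $w \mapsto w^{-1}$ of Proposition~\ref{prop:Coxeter_automorphisms} in your route two is precisely the tool needed to turn a prefix statement into a suffix statement; making explicit which permutree congruence $\equiv_{\delta'}$ the map $w \mapsto w^{-1}$ carries $\equiv_\delta$ to, and how it acts on singletons and accepters, is the step most worth developing before the rest of the plan can be evaluated.
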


Given a congruence~$\equiv_\delta$, the reduced words of the~$\delta$-smaccr seem to be good candidates from which to start defining automata whose product recognizes the corresponding~$\delta$-permutree minimal elements. Example~\ref{ex:H3_singleton_automata} shows this for an orientation in~$H_3$.

\begin{example}\label{ex:D4_singleton_automata}
	In~$D_4$ the smaccr of the orientation for~$2\to 4$ (resp.~$4\to 2$) has as set of reduced words~$\{s_2s_3s_1s_2,s_2s_1s_3s_2\}$ (resp.~$\{s_4s_2s_1s_3,s_4s_2s_3s_1\}$). The spines of the automata shown in Figure~\ref{fig:permutree_automata_D4_single_full} start with these reduced words. 
\end{example}

\begin{example}\label{ex:H3_singleton_automata}
	In~$H_3$ the smaccr of the orientation~$2\to 3$ has as set of reduced words the pair~$\{s_3s_2s_1s_3s_2s_3s_2,s_3s_2s_3s_1s_2s_3s_2\}$. The spines of the automata shown in Figure~\ref{fig:permutree_automata_H3_single_full} are formed with these reduced words plus a transition~$s_1$.
	\begin{figure}[h!]
		\centering
		\includegraphics[scale=0.825]{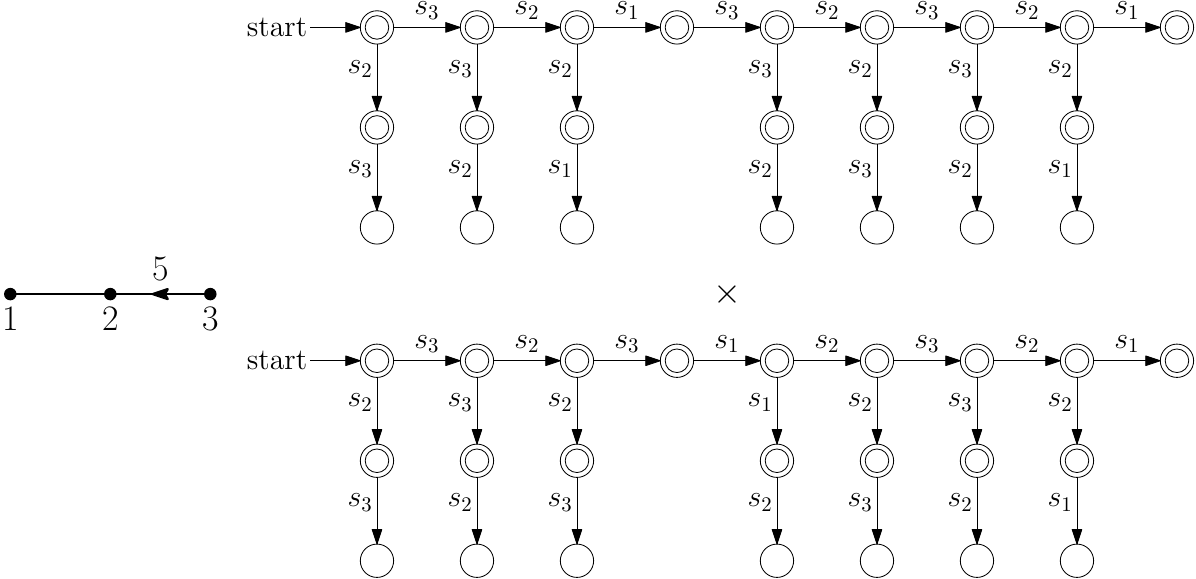}
		\caption{ A single orientation of~$H_3$ and a product of automata that recognizes the minimal elements.}
		\label{fig:permutree_automata_H3_single_full}
	\end{figure}
\end{example}

\begin{remark}
	Let~$\delta$ denote a permutree congruence of a Coxeter group~$W$ corresponding to a single orientation in the Coxeter graph. We have verified the existence of a unique~$\delta$-smaccr for the following Coxeter groups. We list them together with the amount of reduced words they have in each case.
	\begin{itemize}
		\itemsep0em
		\item In~$A_n$ ($n\leq 5$) it has~$1$ reduced word.
		\item In~$B_n$ ($n\leq 4$) it has~$1$ reduced word.
		\item In~$D_4$ it has~$2$ reduced words.
		\item In~$D_5$ it can have anywhere between~$2,5,46,100$ reduced words.
		\item In~$H_3$ it has~$1$ reduced word except for the orientation~$1\to2$ where it has 2.
		\item In~$F_4$ it has 2 reduced words except for any orientation of the middle edge, in which case it has 12. 
	\end{itemize}
\end{remark}

% s-Weak Order
\part{Flow Polytopes}\label{part:sorder}

% Killing chktex
% chktex-file 3
% chktex-file 12
% chktex-file 24
% chktex-file 25
% chktex-file 36
% chktex-file 40

\chapter{Flow Polytopes and Tropical Geometry}\label{chap:sorder_Flows}

\addcontentsline{lof}{part}{\protect\numberline{\thepart}Flow Polytopes}

\addcontentsline{lof}{chapter}{\protect\numberline{\thechapter}Flow Polytopes and Tropical Geometry}

In this chapter we present the preliminaries of flows on graphs and tropical geometry needed for our work on Conjecture~\ref{conj:s-permutahedron}. Each section consists of the main tools known in the corresponding literature together with certain refinements appearing in~\cite{GMPTY23}.

\section{Flows on Graphs}\label{sec:flows_on_graphs}

This section consists of the basic notions of the combinatorics and geometry of flows on graphs. We base our presentation of these topics mainly on~\cite{MM19} and~\cite{MMS19}. In what follows, all graphs are connected loopless digraphs.

\subsection{Graphs and Flows}\label{ssec:graphs_and_flows}

\begin{definition}\label{def:flows}
	Let~$G=(V,E)$ be a graph with vertex set~$V=\{v_0,\ldots,v_n\}$, and a multiset of edges~$E$ where each edge~$(v_i,v_j)\in E$ is oriented~$v_i\to v_j$ if~$i<j$. We respectively denote by \defn{$\cI_i$} and \defn{$\cO_i$} the set of incoming edges and outgoing edges of the vertex~$v_i$.

	A \defn{netflow}\index{graph!netflow} is a vector~$\mathbf{x}=(x_0,\ldots,x_{n})\in\ZZ^{n+1}$ such that~$\sum_{i=0}^{n}x_i=0$. Given a netflow~$\bfa = (a_0,\ldots,a_{n-1},-\sum_{i=0}^{n-1} a_i)$ with~$a_i\in\ZZ_{\geq0}$, a \defn{$\bfa$-flow}\index{graph!flow} of~$G$ is a function~$f: E\to\RR_{\geq 0}$ such that \begin{equation}
		\sum_{e\in \cI_i} f(e) + a_i = \sum_{e\in \cO_i} f(e)
	\end{equation}\label{eq:conservation_of_flow} for all~$i\in [1,n-1]$. In particular, if~$f(e)\in\ZZ_{\geq0}$ for all~$e\in G$, then~$f$ is called an \defn{integer flow}\index{graph!flow!integer} of~$G$. We also call a function~$f:E\to \RR_{\geq 0}$ that satisfies Equation~\ref{eq:conservation_of_flow} an \defn{admissible flow}\index{graph!flow!admissible}. The \defn{support}\index{graph!flow!support} of a flow~$f$ is~$\supp(f)=\{e\in E\,:\,f(e)\neq 0\}$.
\end{definition}

See Figure~\ref{fig:graph_flows} for an example of flows on a graph.

\begin{remark}
	Due to our definition of flows, the netflows that we consider always have a unique sink being vertex~$v_n$. As well, whenever we think of flows and netflows, our graph~$G$ is assumed to have a non-zero amount of incoming and outgoing edges for the vertices~$\{v_1,\ldots,v_{n-1}\}$. This assumption is out of simplicity and not a requirement. If we had such a vertex with no incoming (resp.{} outgoing) edges, any flow corresponding to an edge in~$\cI_i$ (resp.{}~$\cO_i$) would be zero. Therefore, we can always assume we are working modulo such vertices.
\end{remark}

\begin{figure}[h!]
	\centering
	\includegraphics[scale=1.455]{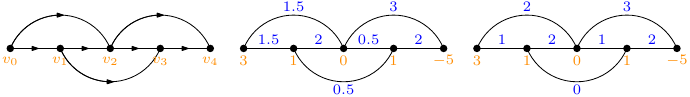}
	\caption[The zigzag graph~$Zig_4$ according to our notation and a pair of~$(3,1,0,1,-5)$-flows on it.]{ The zigzag graph~$Zig_4$ according to our notation and a pair of~$(3,1,0,1,-5)$-flows on it. Only the one in the right is an integer flow.}
	\label{fig:graph_flows}
\end{figure}

\begin{remark}\label{rem:multi_edges}
	For the reader who is not fond of working with graphs having multisets of edges, we remark that for our graphs, studying the multiedge case is equivalent to the single edges. Indeed, for a~$\bfa$-flow~$f$ and a multiedge~$(v_i,v_j)$ of multiplicity~$k$ one can remove~$k-1$ multiplicities and replace them by the family of edges~$(v_{i-l},v_{i-l+1})$ and~$(v_{i-l},v_{j})$ for~$l\in[1,k-1]$ while shifting the indices of the vertices~$\{v_z\}_{z\in[i-1]}$ accordingly. This transforms~$\bfa$ to a netflow~$\bfa^*$ by passing the netflow of~$v_i$ to~$v_{i-(k-1)}$, assigning~$0$ netflow to all vertices~$\{v_{i-l}\}_{l\in{0,k}}$ and all others remain the same. In this sense~$f$ is transformed to an~$\bfa^*$-flow~$f^*$ where~$f^*((v_{i-l},v_{j}))=f((v_i,v_j)_l)$ and~$f^*((v_{i-l},v_{i-(l-1)}))=f^*((v_{i-(l-1)},v_{i-(l-2)}))+f^*((v_{i-(l-1)},v_{j}))$ and all other edges~$e$ remain their original flow.  See Figure~\ref{fig:flow_multi_edges} for an example of this. This is a particular case of~\cite[Cor.2.13]{GHMY21}.
\end{remark}

\begin{figure}[h!]
	\centering
	\includegraphics[scale=1.5]{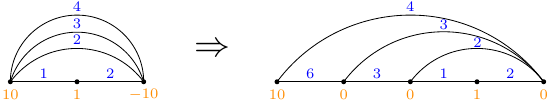}
	\caption{A multiedged graph with a flow and its equivalent flow on a simple graph.}
	\label{fig:flow_multi_edges}
\end{figure}

\subsection{Flow Polytopes}\label{ssec:flow_polytopes}

Notice that in Definition~\ref{def:flows} the admissible flows can be considered as points in~$\RR^{|E|}$. This begs for the following geometric definition.

\begin{definition}\label{def:flow_polytope}
	Let~$G$ be a directed multigraph on~$n$ vertices with~$m$ edges and consider a netflow~$\bfa$. Denote by~$X_G$ the multiset of vectors~$\mathbf{e_i}-\mathbf{e_j}$ for each edge~$(v_i,v_j)$ in~$G$ where~$i<j$ and by~$M$ the matrix with columns the vectors in~$X_G$. The \defn{flow polytope}\index{flow polytope} of~$G$ with netflow~$\bfa$ is \begin{equation*}
		\fpol(\bfa):=\{\mathbf{x}\in\RR^{m}\,:\, M\mathbf{x}=\bfa \text{ and } \mathbf{x}_i\geq 0 \text{ for all } i\in[m]\}.
	\end{equation*}
	Since the points in this polytope correspond to nonnegative real flows on the edges of~$G$, we usually use the following notation for describing the flow polytope
	\begin{equation*}
		\fpol(\bfa) = \Big\{ {\big(f(e)\big)}_{e\in E(G)} \,:\, f \text{ is a } \bfa \text{-flow of } G \Big\} \subset \RR^m.
	\end{equation*}
	We denote by \defn{$\fpol^{\ZZ}(\bfa)$} the set of integer~$\bfa$-flows of~$G$. Between all possible flows, the flows~$\bfd:=(0,d_1,\ldots,d_{n-1},-\sum_{i=1}^{n-1} d_i)$ where~$d_i:=\indeg_i(G)-1$ and~$\bfi:=\mathbf{e_0}-\mathbf{e_n}=(1,0,\ldots,0,-1)$ are of particular interest to us. Since any~$\bfi$-flow of~$G$ corresponds to a route~$R$ of~$G$, we denote such flow by the indicator function~\defn{$\mathds{1}_R$} where~$\mathds{1}_R(e)=\begin{cases}
		1 \text{ if } e\in R,\\
		0 \text{ otherwise }.
	\end{cases}$
\end{definition}

\begin{example}\label{ex:famous_flow_polytope}
	Some examples of polytopes that are integrally equivalent to~$\fpol(\bfi)$ include:
	\begin{itemize}
		\item the simplex~$\Delta_{n-1}$ when~$G$ is the graph with vertex set~$\{v_0,v_1\}$ and the multiedge~$(v_0,v_1)$ with multiplicity~$n$,
		\item the cube~$\PCube_{n-1}$ when~$G$ is the graph with vertex set~$\{v_0,\ldots,v_{n}\}$ and each multiedge~$(v_i,v_{i+1})$ for~$i\in[0,n-1]$ has multiplicity~$2$,
		\item the type~$A$ Chan-Robbins-Yuen polytope ($CRY(n)$~\cite{CRY98},~\cite{CR99}) when~$G$ is the complete graph~$K_{n+1}$,
		\item all order polytopes of strongly planar posets. We refer the reader to~\cite[\S 3.3]{MMS19} for the construction of the corresponding graph.
	\end{itemize}

	Figure~\ref{fig:flows_polytope_example} contains instances of each of these graphs.
\end{example}

\begin{figure}[h!]
	\centering
	\includegraphics[scale=1.3]{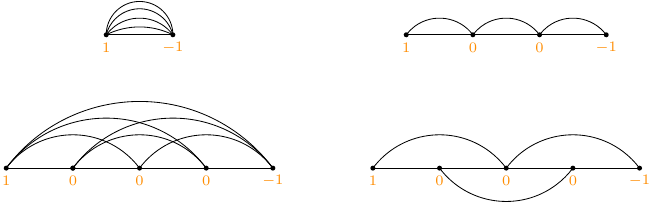}
	\caption[The graphs~$G$ whose respective flow polytopes~$\fpol(\bfi)$ are respectively integrally equivalent to~$\Delta_4$,~$\PCube_2$,~$CRY(4)$, and~$\cO(P)$.]{ The graphs~$G$ whose respective flow polytopes~$\fpol(\bfi)$ are respectively integrally equivalent to~$\Delta_4$ (top left),~$\PCube_2$ (top right),~$CRY(4)$ (bottom left), and~$\cO(P)$ where~$P$ is the chain on~$3$ elements (bottom right).}\label{fig:flows_polytope_example}
\end{figure}

Flow polytopes have plenty of interesting properties. Here we present just a few that are pertinent for our work. For a deeper dive we recommend~\cite{CG78},~\cite{RH70},~\cite{FRD71}, and~\cite{H03}. Before that, we introduce some notation on paths within our graphs.

\begin{definition}\label{def:routes}
	A \defn{path}\index{graph!path} of a graph~$G$ is a connected sequence of edges. That is, a sequence of the form~$((v_{k_0}, v_{k_1}), (v_{k_1}, v_{k_2}), \ldots, (v_{k_l}, v_{K_{k+1}}))$, with~$k_0<k_1<k_2< \cdots <k_l<k_{l+1}$. A maximal path of~$G$ is said to be a \defn{route}\index{graph!route} of~$G$.
\end{definition}

\begin{remark}\label{rem:max_paths}
	For the context of this thesis and due to the specifications of Definition~\ref{def:flows} all routes of~$G$ start at the source~$v_0$ and end at the unique sink~$v_n$.
\end{remark}

\begin{proposition}[{\cite[Lem.2.1]{H03}}]\label{prop:flow_polytope_vertices_alejandro} The flow polytope~$\fpol(\bfa)$ is the convex polytope \begin{equation*}
		\fpol(\bfa) = \conv\Big((f(e)){}_{e\in E(G)} \,:\, \supp(f) \text{ contains no (undirected) cycles}\Big).
	\end{equation*}
\end{proposition}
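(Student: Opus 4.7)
The plan is to characterize the vertices of $\fpol(\bfa)$ by the classical theory of basic feasible solutions for polytopes in standard form, and then translate the linear-algebraic condition into the graph-theoretic condition on supports. Since $\fpol(\bfa) = \{\mathbf{x}\in\RR^m_{\geq 0} : M\mathbf{x}=\bfa\}$, the vertices of $\fpol(\bfa)$ are exactly those feasible $\mathbf{x}$ such that the columns of $M$ indexed by $\supp(\mathbf{x})$ are linearly independent. This is the key fact I will invoke. The goal is then to show: the columns of $M$ indexed by a set $F \subseteq E(G)$ are linearly independent if and only if $F$ contains no undirected cycle.

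First I would prove the ``only if'' direction. Suppose $\supp(f)$ contains an undirected cycle $C = (e_1,\ldots,e_k)$. Orient $C$ arbitrarily, and for each $e_i$ let $\epsilon_i = +1$ if $e_i$ is traversed in its native direction by this orientation, and $\epsilon_i = -1$ otherwise. Then $\sum_i \epsilon_i (\mathbf{e}_{h(e_i)} - \mathbf{e}_{t(e_i)}) = \mathbf{0}$, where $t(e)$ and $h(e)$ denote tail and head. This is precisely a linear dependence among the columns of $M$ indexed by $C \subseteq \supp(f)$. Equivalently, setting $f^{\pm}(e_i) := f(e_i) \pm \delta \epsilon_i$ for $e_i \in C$ and $f^{\pm}(e) := f(e)$ otherwise, we get two distinct admissible flows with $f = \tfrac{1}{2}(f^+ + f^-)$ for $\delta>0$ small enough (using that $f(e_i)>0$ on the cycle). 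Hence $f$ is not a vertex.

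Second I would prove the ``if'' direction. Suppose $F = \supp(f)$ contains no undirected cycle, so the subgraph on $F$ is a forest. A standard fact about incidence matrices states that the columns of $M$ corresponding to the edges of a forest are linearly independent: a nontrivial linear relation would yield a nonzero vector in the kernel, and restricting to a leaf-free subforest by iteratively peeling off degree-one vertices forces all coefficients to vanish. Consequently $f$ is the unique admissible flow supported on $F$ (given $\bfa$), making it a basic feasible solution, hence a vertex of $\fpol(\bfa)$.

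Putting these together, the vertices of $\fpol(\bfa)$ are exactly the admissible flows whose support is acyclic (in the undirected sense), and therefore $\fpol(\bfa)$ equals the convex hull of such flows. The main obstacle I anticipate is purely expository: carefully setting up the sign convention $\epsilon_i$ on an undirected cycle so that the alternating perturbation is a genuine admissible flow (the conservation equations at every internal vertex of the cycle must remain satisfied), and verifying the linear-independence claim for forests cleanly without circularity. Neither is deep, but both benefit from the leaf-peeling argument made explicit.
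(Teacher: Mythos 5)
The paper cites this result from [H03, Lem.\ 2.1] without reproducing a proof, so there is no argument in the text to compare against. Your proof is correct and is the standard linear-programming argument for this fact: vertices of the polytope $\{\mathbf{x}\ge 0:M\mathbf{x}=\bfa\}$ are precisely the basic feasible solutions, and linear independence of incidence-matrix columns is equivalent to acyclicity of the corresponding edge set. Both directions are sound: the undirected cycle yields a signed circulation $f^{\pm}$ that preserves conservation at every vertex because the net change along a cycle at each vertex is zero, and the forest case is handled cleanly by leaf peeling. One tiny notational slip: you write the column of $M$ as $\mathbf{e}_{h(e)}-\mathbf{e}_{t(e)}$, but the paper's Definition~\ref{def:flow_polytope} uses $\mathbf{e}_i-\mathbf{e}_j$ for an edge $v_i\to v_j$ with $i<j$, i.e.\ tail minus head; this does not affect the linear-dependence argument, but it is worth matching the paper's sign convention for consistency.
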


In the particular case of the netflow~$\bfi$ one obtains that~$\fpol(\bfi)$ is a $0\slash{1}$-polytope in the following way.

\begin{proposition}[{\cite[Cor.3.1]{CG78}}]\label{prop:flow_polytope_vertices_bfi}
	\begin{equation*}
		\fpol(\bfi) = \conv\Big((\mathds{1}_R(e)){}_{e\in E(G)} \,:\, R \text{ is a route of } G\Big).
	\end{equation*}
\end{proposition}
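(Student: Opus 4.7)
The plan is to reduce the statement to Proposition~\ref{prop:flow_polytope_vertices_alejandro}, which already describes $\fpol(\bfi)$ as the convex hull of $\bfi$-flows whose support carries no undirected cycle. It therefore suffices to prove that these distinguished flows are exactly the route indicators $\mathds{1}_R$.

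For the easy inclusion, given a route $R$ from $v_0$ to $v_n$, the indicator $\mathds{1}_R$ is a $\bfi$-flow: one outgoing edge of $v_0$ on $R$ carries flow $1$, one incoming edge of $v_n$ on $R$ carries flow $1$, and at any intermediate vertex the route either does not pass (contributing $0=0$) or enters once and exits once. The support of $\mathds{1}_R$ is the directed path $R$, which contains no undirected cycle, so $\mathds{1}_R$ lies in the set appearing on the right-hand side of Proposition~\ref{prop:flow_polytope_vertices_alejandro}.

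For the converse, let $f$ be a $\bfi$-flow whose support $S$ is acyclic as an undirected subgraph of $G$; then $S$ is a forest. Since $f$ transports a unit from $v_0$ to $v_n$, these two vertices must lie in the same component $T$ of $S$. In a tree, any two vertices are connected by a unique undirected path; and because every edge of $G$ is oriented from a lower index to a higher one, this undirected path becomes, after orientation, a directed route $R$ from $v_0$ to $v_n$ contained in $T$. It remains to show $f = \mathds{1}_R$, which I would do by two separate arguments. First, for each edge $e\in R$, removing $e$ from $T$ disconnects $v_0$ from $v_n$ inside $T$, so balance of flow across this cut forces $f(e)=1$. Second, every edge of $S\setminus R$ lies in a subtree of $T$ attached to $R$ at a single vertex $w$, all of whose vertices (other than $w$) have netflow $0$; peeling leaves inductively and using nonnegativity of $f$ together with flow conservation, all flow values on this subtree must vanish. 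This yields $f=\mathds{1}_R$, completing the characterization.

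The main obstacle in the argument is the last ``peeling'' step: one must be careful to treat attached subtrees in the correct order and to use both the non-negativity of $f$ and the fact that every interior vertex of $S$ outside $\{v_0,v_n\}$ carries zero netflow. Once this is handled, combining both inclusions with Proposition~\ref{prop:flow_polytope_vertices_alejandro} immediately gives the desired description of $\fpol(\bfi)$ as the convex hull of the indicators $\mathds{1}_R$ over all routes $R$ of $G$.
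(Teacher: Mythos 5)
Your reduction to Proposition~\ref{prop:flow_polytope_vertices_alejandro} is a sensible strategy, and note that the paper offers no proof of its own (it cites~\cite{CG78}), so there is nothing to compare against except correctness.

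There is a genuine gap in the converse direction. You assert that, because every edge of $G$ is oriented from a lower index to a higher one, the unique undirected $v_0$--$v_n$ path inside the support tree $T$ is automatically a directed route. That inference is false for an arbitrary subtree of $G$: with edges $(v_0,v_2)$, $(v_1,v_2)$, $(v_1,v_3)$, the undirected path $v_0$--$v_2$--$v_1$--$v_3$ traverses the edge $(v_1,v_2)$ backwards even though every edge of $G$ respects the index order. What actually rules out such a configuration in the support of a $\bfi$-flow is flow conservation combined with nonnegativity: if the $v_0$--$v_n$ path traversed an edge backwards, the vertex where the direction first reverses would have strictly positive outflow but zero inflow (or the reverse), contradicting its zero netflow. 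Equivalently, the cut argument you invoke in the next sentence already settles this: removing an edge $e$ of the path separates $v_0$ from $v_n$, the total netflow on the $v_0$-side is $+1$, $e$ is the only support edge crossing the cut, and nonnegativity of $f$ then forces $e$ to be oriented from the $v_0$-side to the $v_n$-side with $f(e)=1$. So the directedness and the unit flow value should be derived together from the cut, not asserted beforehand from the index ordering. As a smaller point, your peeling step only discusses subtrees of $T$ hanging off the path, but the support $S$ could a priori have components disjoint from $T$; these are eliminated by the same leaf-peeling reasoning (every vertex there has netflow $0$, so an outermost leaf would violate conservation), and it would be cleaner to say so explicitly. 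With these two repairs your argument is complete and correct.
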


See Figure~\ref{fig:flow_polytope_example_1} for an example.

\begin{figure}[h!]
	\centering
	\includegraphics[scale=1]{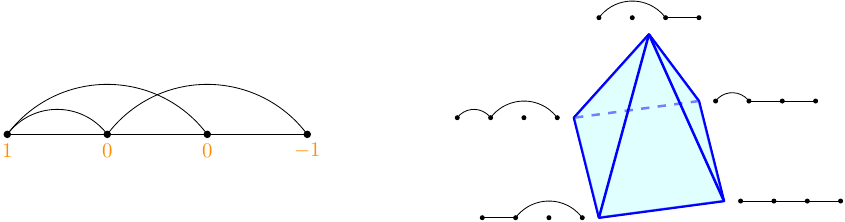}
	\caption[A graph and its flow polytope with flow~$\bfi$.]{ A graph~$G$ and its flow polytope~$\fpol(\bfi)$. Each vertex of the flow polytope is labeled by the route that it corresponds to. Figure adapted from~\cite{MMS19}.}
	\label{fig:flow_polytope_example_1}
\end{figure}

\begin{proposition}[{\cite[\S.1.1]{BV08}}]\label{def:flow_polytope_dimension}
	The flow polytope~$\fpol(\bfa)$ has dimension~$|E|-|V|+1$.
\end{proposition}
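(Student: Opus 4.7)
The plan is to analyze the dimension by studying the linear system cutting out $\fpol(\bfa)$. By definition, $\fpol(\bfa) = \{\mathbf{x}\in\RR^{|E|}_{\geq 0} \,:\, M\mathbf{x} = \bfa\}$, where $M$ is the $|V| \times |E|$ signed incidence matrix whose columns are $\mathbf{e}_i - \mathbf{e}_j$ for each edge $(v_i,v_j) \in E$. Since $\fpol(\bfa)$ sits inside the affine subspace $\mathcal{A} := \{\mathbf{x}\in\RR^{|E|} \,:\, M\mathbf{x} = \bfa\}$, the dimension of $\fpol(\bfa)$ is at most $|E| - \rank(M)$, with equality exactly when $\fpol(\bfa)$ is full-dimensional inside $\mathcal{A}$.

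The first key step is to establish that $\rank(M) = |V| - 1$. Each column $\mathbf{e}_i - \mathbf{e}_j$ of $M$ has entries summing to zero, so the all-ones row vector lies in the left kernel of $M$; this yields $\rank(M) \leq |V| - 1$. For the reverse inequality, I would invoke the connectedness of $G$: pick a spanning tree $T \subseteq G$, which has $|V|-1$ edges. Ordering the vertices of $T$ so that $v_n$ is a leaf removed last (using a reverse BFS/DFS from $v_n$), the submatrix of $M$ formed by the columns of $T$ and rows indexed by $\{v_0,\dots,v_{n-1}\}$ becomes upper-triangular with nonzero diagonal entries (the pivot in each column being the endpoint furthest from the root). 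Hence its determinant is $\pm 1$, so $\rank(M) \geq |V| - 1$. Combining both bounds gives $\dim \mathcal{A} = |E| - |V| + 1$.

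The second step is to show $\fpol(\bfa)$ is full-dimensional in $\mathcal{A}$, equivalently, that there exists an admissible $\bfa$-flow $f^\ast$ with $f^\ast(e) > 0$ for every edge $e \in E$. Start from any $\bfa$-flow $f_0$ (which exists under our standing assumptions on $G$ and $\bfa$, and for the specific $\bfa$'s we care about can be written as a superposition of route-flows). Then it suffices to augment $f_0$ by a strictly positive combination of kernel vectors without violating positivity. The kernel of $M$ is spanned by signed indicators of undirected cycles of $G$, and under the assumption that every intermediate vertex $v_i$ has both incoming and outgoing edges, every edge lies on some undirected cycle or can be reached by adjoining an ``up-down'' pair of paths to the source-to-sink backbone; this lets us, for each edge $e$, produce a kernel vector $c_e$ with $(c_e)_e > 0$. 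Taking $f^\ast = f_0 + \varepsilon \sum_{e \in E} c_e$ for sufficiently small $\varepsilon > 0$ gives a strictly positive admissible flow, which is a relative interior point of $\fpol(\bfa)$.

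The main obstacle is the full-dimensionality argument, since it is the step that genuinely uses the combinatorial structure of $G$ (connectedness and the presence of both in- and out-edges at every internal vertex) rather than mere linear algebra. In particular, for some degenerate choices of $\bfa$ the polytope could collapse to a face; one should either restrict to the regime where $\fpol(\bfa)$ is nonempty and the corresponding positive flow exists, or phrase the result more precisely as ``the affine hull of $\fpol(\bfa)$ has dimension $|E| - |V| + 1$ whenever $\fpol(\bfa)$ is full-dimensional in it,'' which is the case for all netflows considered later in the chapter.
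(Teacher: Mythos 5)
The paper does not actually prove this proposition: it is stated with a citation to~\cite[\S.1.1]{BV08} and no argument, so there is no internal proof to compare against. Your argument is therefore an independent one, and its two-step structure — computing~$\rank(M) = |V| - 1$ via a spanning-tree submatrix, then exhibiting a strictly positive flow as a relative interior point of~$\fpol(\bfa)$ inside the affine subspace~$\{M\mathbf{x} = \bfa\}$ — is the standard argument and is sound. The spanning-tree rank computation is correct: rooting the tree at~$v_n$, peeling leaves in order, and indexing rows by~$\{v_0,\ldots,v_{n-1}\}$ yields a triangular submatrix with~$\pm 1$ diagonal, so~$\rank(M) \geq |V|-1$, while the all-ones vector in the left kernel gives the reverse bound.

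The full-dimensionality step deserves exactly the care you flag at the end, and the caveat is not cosmetic: the proposition is genuinely false as stated for degenerate netflows. In the forward-oriented graphs of this paper there are no directed cycles, so taking~$\bfa = \mathbf{0}$ (which the hypotheses~$a_i \in \ZZ_{\geq 0}$ permit) gives~$\fpol(\mathbf{0}) = \{0\}$, of dimension~$0$. For the netflow~$\bfi$ actually used in the chapter, the cleanest version of your second step is to observe that by the standing assumption (every internal vertex has both in- and out-edges) each edge lies on some route, so the barycenter~$\frac{1}{N}\sum_R \mathds{1}_R$ of all routes is a strictly positive~$\bfi$-flow and hence a relative interior point of~$\fpol(\bfi)$; this sidesteps the more delicate ``augment~$f_0$ by kernel vectors without breaking positivity'' step, which in general requires arguing that the chosen kernel vectors do not have negative entries on edges where~$f_0$ vanishes. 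So: correct, standard, and you have correctly identified the hypothesis under which the statement holds literally.
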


Our flow polytopes have a particular connection with the root systems of Coxeter groups when describing their volumes and Ehrhart functions. We present it here making use of the following definition.

\begin{definition}[{\cite[Eq.2.3]{MM19}}]\label{def:Kostant_partition_function}
	The \defn{Kostant partition function}~$K_G:\ZZ^{n+1}\to \ZZ$ is the function \begin{equation*}
		K_G(\bfa):=\left|\left\{(b_i)_{i\in[m]} \,:\,\sum_{i\in[m]}b_i\mathbf{v}_i=\bfa \text{ and }b_i\in\ZZ_{\geq 0}\right\}\right|
	\end{equation*} where~$m:=|E|$ and~$\{\mathbf{v}_1,\ldots,\mathbf{v}_{m}\}$ is the multiset of vectors corresponding to the multiset of edges of~$G$ under the map sending an edge~$(v_i,v_j)$ to the vector~$\mathbf{e_i}-\mathbf{e_j}$.
	Equivalently, the Kostant partition function counts the number of ways to express~$\bfa$ as a linear combination of positive type~$A$ roots with coefficients in~$\NN$.
\end{definition}

\begin{proposition}[{\cite{P1014},\cite{S00},\cite{BV08}}]\label{prop:flow_polytope_volume_full}
	Let~$G$ be a graph on vertices~$\{v_0,\ldots,v_n\}$ and denote~$d_i=\indeg_i(G)-1$. The normalized volume of the flow polytope~$\fpol(\bfi)$ can be expressed as \begin{equation*}
		\vol\big(\fpol(\bfi)\big)=K_G\left(0,d_1,\ldots,d_{n-1},-\sum d_i\right).
	\end{equation*}
\end{proposition}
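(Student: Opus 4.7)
The plan is to factor the equality through a lattice-point count on a second flow polytope. First I would fix a framing of $G$ (a total order on the incoming and outgoing edges at every inner vertex) and consider the DKK triangulation of $\fpol(\bfi)$ associated with it, which is already flagged in the excerpt as the main triangulation tool for what follows. The key geometric point to verify is that this triangulation is unimodular: each maximal simplex is the convex hull of the indicator vectors $\mathds{1}_R$ of a maximal clique of pairwise coherent routes, and these indicator vectors can be shown to form an affine basis of the lattice $\ZZ^{|E|}\cap \aff\big(\fpol(\bfi)\big)$. Once this is established, the normalized volume of $\fpol(\bfi)$ equals the number of maximal simplices in its DKK triangulation.

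The second step is to identify these maximal simplices with integer $\bfd$-flows on $G$. For this I would appeal to the framed Postnikov--Stanley / DKK correspondence of Mészáros--Morales--Striker, which supplies an explicit bijection between maximal cliques of coherent routes in the DKK triangulation of $\fpol(\bfi)$ and the integer points of the flow polytope $\fpol(\bfd)$, where $\bfd_i=\indeg_i(G)-1$ is the shifted indegree. Concretely, a maximal clique of coherent routes $\{R_0,R_1,\ldots,R_N\}$ corresponds to the $\bfd$-flow $\sum_{k\geq 1} \mathds{1}_{R_k}$, and one must check that this assignment is well defined, lands in $\fpol^{\ZZ}(\bfd)$, and exhausts every integer $\bfd$-flow exactly once; this is precisely the content of the framed subdivision theorem.

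The final step is immediate: by Definition~\ref{def:Kostant_partition_function} the number of integer $\bfd$-flows of $G$ is exactly $K_G\big(0,d_1,\ldots,d_{n-1},-\sum d_i\big)$, since integer $\bfd$-flows are in manifest bijection with the $\ZZ_{\geq 0}$-combinations of the edge vectors $\mathbf{e_i}-\mathbf{e_j}$ summing to $\bfd$. Chaining the three equalities yields the formula. The principal obstacle will be the first step, as both the unimodularity of the DKK triangulation and the explicit bijection with $\bfd$-flows depend on a careful analysis of coherent routes and framings; fortunately, the later sections of this chapter (where the DKK machinery is refined via resolvents and minimal conflicts as in Lemmas~\ref{lem:DKKlem2_us} and~\ref{lem:DKKlem2_us_pro}) provide all the required tools, so the proposition will follow cleanly once that framework is in place.
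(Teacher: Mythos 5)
The paper cites this proposition to Postnikov, Stanley, and Baldoni--Vergne without giving a proof, so there is no in-paper argument to match against. Your three-step route through the DKK triangulation and the Mészáros--Morales--Striker bijection is, however, precisely the chain the paper itself later deploys to obtain the equivalent Proposition~\ref{prop:volume_intflows}: Proposition~\ref{prop:bij_cliques_intflows} supplies the bijection between $\maxcliques[G]$ and $\fpol^{\ZZ}(\bfd)$, and by Definition~\ref{def:Kostant_partition_function} that lattice-point count is exactly $K_G(\bfd)$. So your approach is correct and is the one implicitly adopted in the paper.

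One misdirected reference is worth flagging. Lemmas~\ref{lem:DKKlem2_us} and~\ref{lem:DKKlem2_us_pro} refine the criterion for a height function to be admissible, i.e.\ they establish \emph{regularity} of $\triangDKK$; they say nothing about unimodularity of the maximal simplices, which is what your first step actually needs. Unimodularity follows from a separate and classical fact: the constraint matrix $M$ in Definition~\ref{def:flow_polytope} is the (signed) vertex--edge incidence matrix of a directed graph and is therefore totally unimodular, so every full-dimensional lattice simplex $\Delta_C$ inscribed in $\fpol(\bfi)$ has normalized volume $1$. With that substitution the argument is complete. As a historical aside, the cited sources predate both~\cite{DKK12} and~\cite{MMS19}: Postnikov and Stanley prove the identity by the compounded-reduction recursion of Subsection~\ref{ssec:ps_subdivisions}, and Baldoni--Vergne by residue methods, so your proof is a modern re-derivation rather than the original route, but it is not circular since the MMS bijection does not rely on the volume formula.
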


\begin{example}
	Taking~$G$ as the graph in Figure~\ref{fig:flow_polytope_example_1}, we have that $\vol(\fpol(\bfi))=2$. This coincides with the fact that~$K_G(0,1,1,-2)=2$ as~$(0,1,1-2)$ can be expressed either as~$(\mathbf{e_2}-\mathbf{e_4})+(\mathbf{e_3}-\mathbf{e_4})$ or~$(\mathbf{e_2}-\mathbf{e_3})+2(\mathbf{e_3}-\mathbf{e_4})$.
\end{example}

\begin{proposition}[{\cite{S00}}]\label{prop:flow_pol_ehrhart}
	The amount of integer points in the~$\fpol(\bfi)$ is~$K_G(\bfi)$. Moreover, the Ehrhart polynomial of the flow polytope~$\fpol(\bfi)$ is given by~$L_{\fpol(\bfi)}(t) = K_G(t,0,\ldots,0,-t)$.
\end{proposition}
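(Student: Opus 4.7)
The plan is to identify integer lattice points of the flow polytope with integer flows, and then read off the Kostant partition function directly from the definition.

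First I would establish the bijection between $\fpol^\ZZ(\bfi)$ and the integer points of $\fpol(\bfi)$. By Definition~\ref{def:flow_polytope}, the flow polytope is
\[
\fpol(\bfi) = \{\mathbf{x} \in \RR^m : M\mathbf{x} = \bfi \text{ and } x_i \geq 0\},
\]
where $M$ is the incidence matrix whose columns are the vectors $\mathbf{v}_i = \mathbf{e}_{a} - \mathbf{e}_{b}$ for each edge $(v_a, v_b)$. An integer point $\mathbf{x} \in \fpol(\bfi) \cap \ZZ^m$ is therefore precisely a tuple $(b_i)_{i \in [m]} \in \ZZ_{\geq 0}^m$ satisfying $\sum_i b_i \mathbf{v}_i = \bfi$. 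Comparing with Definition~\ref{def:Kostant_partition_function}, this is exactly the defining set counted by $K_G(\bfi)$, so $|\fpol(\bfi) \cap \ZZ^m| = K_G(\bfi)$.

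For the second statement, I would use the fact that dilating the flow polytope by a positive integer $t$ scales the netflow linearly. The $t$-dilate satisfies
\[
t \cdot \fpol(\bfi) = \{\mathbf{x} \in \RR^m : M\mathbf{x} = t\bfi \text{ and } x_i \geq 0\} = \fpol(t\bfi),
\]
since multiplying a flow by $t$ is equivalent to scaling the netflow by $t$. Observing that $t\bfi = (t, 0, \ldots, 0, -t)$ and applying the bijection of the previous paragraph to the netflow $t\bfi$ in place of $\bfi$, we obtain
\[
L_{\fpol(\bfi)}(t) = |t\cdot\fpol(\bfi) \cap \ZZ^m| = K_G(t, 0, \ldots, 0, -t),
\]
which is the claimed formula.

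I do not expect any real obstacle: both statements follow immediately once one observes that the definition of the Kostant partition function is essentially a restatement of the integer point count of the flow polytope with netflow $\bfi$. The only subtlety worth checking is that $K_G(t,0,\ldots,0,-t)$ is indeed polynomial in $t$, but this is guaranteed by Ehrhart's theorem applied to the rational polytope $\fpol(\bfi)$ (which has integer vertices by Proposition~\ref{prop:flow_polytope_vertices_bfi}, hence is a lattice polytope).
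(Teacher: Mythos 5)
Your proof is correct, and since the paper merely cites~\cite{S00} for this proposition without reproducing a proof, there is no paper-internal argument to compare it against. The approach you take is the standard one and is essentially forced by the definitions: an integer point of $\fpol(\bfi)$ is precisely a tuple $(b_i)_{i\in[m]}\in\ZZ_{\geq0}^m$ with $\sum_i b_i \mathbf{v}_i = \bfi$, which is by Definition~\ref{def:Kostant_partition_function} exactly what $K_G(\bfi)$ counts, and the dilation identity $t\cdot\fpol(\bfi)=\fpol(t\bfi)$ follows immediately from the linearity of the constraints $M\mathbf{x}=\bfa$, $\mathbf{x}\geq 0$. Your closing remark that Proposition~\ref{prop:flow_polytope_vertices_bfi} guarantees $\fpol(\bfi)$ is a lattice polytope (its vertices are $0/1$ indicator vectors of routes), so Ehrhart's theorem yields an honest polynomial rather than a quasi-polynomial, is precisely the right thing to check and completes the argument.
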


Although Propositions~\ref{prop:flow_polytope_volume_full} is defined for the basic netflow~$\bfi=(1,0,\ldots,0,-1)$, the general case can be seen to decompose nicely using the Kostant partition formula.

\begin{definition}\label{def:dominance_order}
	Let~$\mathbf{b}$ and~$\mathbf{c}$ be two weak compositions of an integer~$N>0$. We say that~$\mathbf{b}$ \defn{dominates}~$\mathbf{c}$ if~$\sum_{i=1}^kb_i\geq\sum_{i=1}^kc_i$. We denote the \defn{dominance order}\index{dominance order} by~$\mathbf{b}\succeq\mathbf{c}$. Let $\bbinom{m}{k}=\binom{m+k-1}{k}$ be the number of multisets of $[m]$ of size $k$.
\end{definition}

\begin{theorem}[{\cite[Thm.39]{BV08}}]\label{thm:Lidskii_formulas}
	Consider a graph~$G$ with vertices~$\{v_0,\ldots,v_{n}\}$ and a netflow~$\bfa\in\ZZ^{n+1}$. Let~$d_j=\indeg_j(G)-1$ (resp.~$o_j=\outdeg_j(G)-1$) and~$m:=|E|$. The volume and Kostant partition functions of~$\fpol(\bfa)$ decompose as \begin{equation*}
		\begin{split}
			\vol\big(\fpol(\bfa)\big) &= \sum_{\mathbf{j}}\gbinom{m-n}{j_0,\ldots,j_{n-1}}a_0^{j_0}\cdots a_{n-1}^{j_{n-1}}K_G(j_0-o_0,\ldots,j_{n-1}-o_{n-1},0), \\
			K_{\fpol}(\bfa) &= \sum_{\mathbf{j}}\gbinom{a_0+o_0}{j_1}\cdots \gbinom{a_{n-1}+o_{n-1}}{j_{n-1}}K_{G}(j_0-o_0,\ldots,j_{n-1}-o_{n-1},0) \\
			&= \sum_{\mathbf{j}}\bbinom{a_0-d_0}{j_0}\cdots \bbinom{a_{n-1}-d_{n-1}}{j_{n-1}}K_{G}(j_0-o_0,\ldots,j_{n-1}-o_{n-1},0)
		\end{split}
	\end{equation*} where the sums go over all weak compositions~$\mathbf{j}$ of~$m-n$ such that~$\sum_{i=0}^{k} j_i\geq \sum_{i=0}^{k} o_i$ for all~$i\in[0,n-1]$.
	These are known as the \defn{Baldoni–Vergne–Lidskii formulas}.
\end{theorem}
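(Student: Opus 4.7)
The plan is to establish the Kostant partition function identity first and derive the volume formula as its leading-order behavior under scaling. The underlying idea is to stratify integer $\bfa$-flows by recording, at each vertex $v_i$, the total flow leaving along a distinguished collection of outgoing edges; the remaining values then satisfy a reduced conservation law that can be re-expressed as a $K_G$ evaluation on a simpler netflow.

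First I would fix an ordering of $\cO_i$ at every vertex and set aside one pivot edge per vertex. By conservation of flow (Equation\,\ref{eq:conservation_of_flow}), an admissible flow is uniquely reconstructed from its values on the non-pivot edges together with $\bfa$, and the number of free parameters is $\sum_{i=0}^{n-1}(\outdeg_i-1) = |E|-n$, matching $\dimension\fpol(\bfa)$ via Proposition~\ref{def:flow_polytope_dimension}. To count integer $\bfa$-flows, I would introduce the allocation vector $\mathbf{j}=(j_0,\ldots,j_{n-1})$ recording the total flow leaving $v_i$ along its non-pivot outgoing edges. Distributing $j_i$ units among $o_i+1$ outgoing options at $v_i$ contributes a multiset count of $\bbinom{a_i-d_i}{j_i}$ once one balances against the netflow $a_i$ and incoming contribution, while the residual admissibility conditions reduce to counting flows with netflow $(j_0-o_0,\ldots,j_{n-1}-o_{n-1},0)$ on $G$, which is exactly $K_G$ evaluated at that vector. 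The equivalence between the $\gbinom{a_i+o_i}{j_i}$ form and the $\bbinom{a_i-d_i}{j_i}$ form follows from the standard identity for multichoose coefficients applied to $a_i+o_i = (a_i-d_i) + (d_i+o_i)$ after suitable re-indexing.

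For the volume formula I would use Proposition~\ref{prop:flow_pol_ehrhart} and the Ehrhart-type scaling $\bfa \mapsto N\bfa$, extracting the leading coefficient in $N^{m-n}$. Each factor $\bbinom{Na_i-d_i}{j_i}$ expands as $\tfrac{(Na_i)^{j_i}}{j_i!} + O(N^{j_i-1})$, and only tuples $\mathbf{j}$ with $\sum j_i = m-n$ contribute to the top degree. Regrouping the factorials produces the multinomial $\gbinom{m-n}{j_0,\ldots,j_{n-1}}$ multiplied by $a_0^{j_0}\cdots a_{n-1}^{j_{n-1}}$ and the residual $K_G$ factor, which is exactly the claimed volume decomposition. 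This asymptotic step is essentially bookkeeping once the Kostant version is in hand.

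The main obstacle will be justifying the dominance condition $\sum_{i=0}^{k} j_i \geq \sum_{i=0}^{k} o_i$. Geometrically this encodes that at every prefix of vertices the accumulated outgoing capacity allocated by $\mathbf{j}$ must dominate the accumulated outgoing degree, which is precisely the necessary and sufficient condition for the residual flow count $K_G(j_0-o_0,\ldots,j_{n-1}-o_{n-1},0)$ to admit a nonnegative solution. Establishing this rigorously requires either the residue calculus of Baldoni–Vergne applied to the generating function $\prod_{(v_i,v_j)\in E}(1-x_i x_j^{-1})^{-1}$, whose iterated partial-fraction decomposition naturally produces the dominance constraint from the pole structure, or a direct sign-reversing involution on pairs $(\mathbf{j},\text{completion})$ violating the inequality. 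I expect the residue approach to be cleaner, as the dominance condition emerges automatically from the order in which residues are extracted, at the cost of importing some complex-analytic machinery that is otherwise tangential to the combinatorial content of the chapter.
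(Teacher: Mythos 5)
The paper states this theorem as a citation to~\cite{BV08} with no proof; Remark~\ref{rem:BV_formulas_summands_description} and the discussion near Remark~\ref{rem: connection volume and lattice point oruga} point to the two known routes, namely the iterated-residue calculus of Baldoni--Vergne and the geometric argument of~\cite{MM19} and~\cite{KMS21} via the Postnikov--Stanley subdivisions of Subsection~\ref{ssec:ps_subdivisions}. Judged on its own merits, your pivot-edge stratification cannot work as stated: the composition~$\mathbf{j}$ in the Lidskii formula is a weak composition of~$m-n=\dimension\fpol(\bfa)$, a quantity independent of the netflow, whereas ``total flow leaving~$v_i$ along non-pivot outgoing edges'' has a range that grows with~$\bfa$, so the two index sets are incompatible. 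For the three-vertex graph with edges $(v_0,v_1)$, $(v_0,v_2)$, $(v_1,v_2)$ one has $m-n=1$ and the single admissible~$\mathbf{j}=(1,0)$, while your parameter~$f(v_0,v_2)$ runs over the whole of~$[0,a_0]$, and moreover distributing~$j_i$ units among $o_i+1$ outgoing edges yields $\bbinom{o_i+1}{j_i}$, not~$\bbinom{a_i-d_i}{j_i}$. The correct reading of~$\mathbf{j}$, recorded in Remark~\ref{rem:BV_formulas_summands_description}, is as the type of a cell in a compounded Postnikov--Stanley reduction: the binomial or multichoose factor is the volume or lattice-point count of one cell of that type, and~$K_G(j_0-o_0,\ldots,j_{n-1}-o_{n-1},0)$ counts how many cells of that type occur.

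A second gap is the claim that $\gbinom{a_i+o_i}{j_i}$ and~$\bbinom{a_i-d_i}{j_i}$ coincide after re-indexing. They do not, even term by term: with~$a_i=0$, $o_i=1$, $d_i=0$, $j_i=1$ the first factor is~$1$ while the second is~$\binom{0}{1}=0$. The two formulas for~$K_{\fpol}(\bfa)$ arise from two genuinely different compounded reductions (deleting the lower versus the upper of the two merged edges at each basic step of Definition~\ref{def:reduction_rule}), and their agreement is a nontrivial identity requiring its own argument. What does hold up in your sketch is the Ehrhart-dilation passage from lattice points to normalized volume, and the observation that the prefix inequalities on~$\mathbf{j}$ are precisely the support condition for~$K_G(j_0-o_0,\ldots,j_{n-1}-o_{n-1},0)$ to be nonzero; once the Kostant-partition identity is established, the volume formula does follow by extracting the top-degree coefficient exactly as you describe.
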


\begin{remark}\label{rem:BV_formulas_summands_description}
	The Baldoni–Vergne–Lidskii formulas describe a subdivision technique on~$\fpol(\bfa)$ (see Subsection~\ref{ssec:ps_subdivisions}). In~\cite{MM19} it was shown that the composition~$\mathbf{j}$ of each summand represents a type of cell of said subdivision. In this way the Kostant partition function describes the number of times that type of cell appears in the subdivision and the binomial coefficients give its volume.
\end{remark}

\begin{example}\label{ex:flow_polytope_volumes} Some interesting volumes of flow polytopes include:
	\begin{itemize}
		\item $\vol\big(\cF_{K_{n+1}}(\bfi)\big)= \prod_{i=1}^{n-2}C_i$ where~$C_i$ is the~$i$-th Catalan number.
		\item $\vol\big(\cF_{Zig_{n+1}}(\bfi)\big)= E_{n-1}$ where~$E_n$ is the number of alternating permutations in~$\fS_n$.
		\item $\vol\big(\cF_{\car_{n+1}}(\bfi)\big)= C_{n-2}$ where~$\car_{n+1}$ is the caracol graph and~$C_{n-2}$ is a Catalan number.
	\end{itemize}
\end{example}

The following is a geometrical result from which the geometrical reader is invited to derive extensions of Propositions~\ref{prop:flow_polytope_volume_full} and~\ref{prop:flow_pol_ehrhart} to any netflow~$\bfa$.

\begin{proposition}[{\cite[\S 3.4]{BV08},\cite[Prop.2.1]{MM19}}]\label{prop:flow_poltope_minkowski_decomposition}
	Let~$G$ be a graph on~$\{v_0,\ldots,v_n\}$ and~$\bfa\in\ZZ^{n+1}$ a netflow. Then \begin{equation*}
		\fpol(\bfa) = \sum_{i=0}^{n-1}a_i \fpol(\mathbf{e_i}-\mathbf{e_n}).
	\end{equation*}
\end{proposition}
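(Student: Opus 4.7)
The statement is a Minkowski decomposition, so the plan is to show both inclusions $\subseteq$ and $\supseteq$ between the flow polytope $\fpol(\bfa)$ and the weighted Minkowski sum $\sum_{i=0}^{n-1} a_i \fpol(\mathbf{e}_i-\mathbf{e}_n)$.

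First I would handle the easy containment $\supseteq$. Take any admissible flow $g_i \in \fpol(\mathbf{e}_i-\mathbf{e}_n)$ for each $i\in[0,n-1]$ and form $f = \sum_{i=0}^{n-1} a_i g_i$. Since $a_i\in\ZZ_{\geq 0}$ and each $g_i$ is edgewise nonnegative, $f$ is nonnegative on edges. For conservation of flow, linearity of the conservation equation gives that the netflow at $v_j$ equals $\sum_{i=0}^{n-1} a_i (\mathbf{e}_i-\mathbf{e}_n)_j$, which by a direct computation equals $a_j$ for $j\in[0,n-1]$ and $-\sum_{i=0}^{n-1} a_i$ for $j=n$. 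This is exactly the netflow vector $\bfa$, so $f\in\fpol(\bfa)$.

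For the harder containment $\subseteq$, the plan is to use a path decomposition of flows on DAGs. Given $f\in\fpol(\bfa)$, I would show that $f$ can be written as $f = \sum_{i=0}^{n-1} \sum_{P\in\cP_i} c_{i,P}\,\mathds{1}_P$, where $\cP_i$ denotes the set of directed paths from $v_i$ to $v_n$, the coefficients $c_{i,P}\geq 0$ satisfy $\sum_{P\in\cP_i} c_{i,P} = a_i$ for each $i$, and $\mathds{1}_P$ is the indicator flow on the path $P$. The existence of such a decomposition follows from the standard greedy flow-decomposition argument: since $G$ is a DAG (all edges go from lower to higher index), $\supp(f)$ contains no directed cycles; starting from a vertex $v_i$ with positive outflow (which must exist when $a_i>0$ by conservation of flow) one can extract a path to $v_n$ carrying positive flow $c$, subtract $c\cdot\mathds{1}_P$ from $f$, and iterate until $f$ is exhausted, since each subtraction strictly decreases either the size of $\supp(f)$ or the total flow.

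Once this decomposition is in hand, for each $i$ with $a_i>0$ I set
\[
g_i := \frac{1}{a_i}\sum_{P\in\cP_i} c_{i,P}\,\mathds{1}_P,
\]
which is a convex combination of indicator flows of paths from $v_i$ to $v_n$, hence an admissible flow in $\fpol(\mathbf{e}_i-\mathbf{e}_n)$. For $i$ with $a_i=0$, I take $g_i$ to be any fixed element of $\fpol(\mathbf{e}_i-\mathbf{e}_n)$ (nonempty because $G$ is connected with unique sink $v_n$, so a path from $v_i$ to $v_n$ exists); its contribution vanishes in the sum. Then $f = \sum_{i=0}^{n-1} a_i g_i$ exhibits $f$ as an element of the Minkowski sum, completing the proof.

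The main obstacle is the existence of the path decomposition in the right form, namely that the contributions from source $v_i$ sum to precisely $a_i$. The key observation making this work is that the DAG structure of $G$ forces $\supp(f)$ to be acyclic, so the greedy extraction of paths terminates, and the netflow constraints at intermediate vertices guarantee that the extracted paths start at sources $v_i$ with $a_i>0$ and end at the unique sink $v_n$, with the total extracted flow from each $v_i$ forced by conservation to equal $a_i$.
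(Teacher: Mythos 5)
The paper does not actually prove this proposition: it is stated as a cited result (from Baldoni--Vergne and M\'esz\'aros--Morales) and used as a black box, so there is no in-paper argument to compare against. Your proof is a correct, self-contained argument by the standard flow-decomposition theorem for acyclic graphs, and it is a reasonable route: the $\supseteq$ inclusion by linearity of the conservation equations, and the $\subseteq$ inclusion by decomposing a flow into weighted paths from excess vertices to the unique sink $v_n$.

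Two small points of rigor in the $\subseteq$ direction. First, the termination claim ``strictly decreases either $|\supp(f)|$ or the total flow'' is not by itself sufficient (strictly decreasing a positive quantity does not force finiteness); the clean statement is to take $c = \min_{e\in P} f(e)$, which zeroes at least one edge and strictly decreases $|\supp(f)|$, so the process stops after at most $|E|$ extractions. Second, to ensure that each extracted path starts at a vertex $v_i$ with residual excess $a_i > 0$ and that the residual netflows stay in the admissible range $a_i \geq 0$, you should extract from the topologically smallest vertex with positive outflow: its inflow is necessarily zero (all earlier vertices have zero outflow), so its outflow equals its residual excess $a_i$, and hence $c \leq a_i$. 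Both issues are implicit in your phrasing (``greedy'', ``netflow constraints at intermediate vertices guarantee...'') but worth stating explicitly. With those adjustments the argument is complete.
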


We finish with a crucial relation between flow polytopes and Cayley embeddings.

\begin{definition}
	Let~$G$ be a graph on~$\{v_0,\ldots,v_n\}$. The graph \defn{$G^*$} is the graph obtained from~$G$ by adding a vertex~$v^*$ and edges~$(v^*,v_i)$ for~$i\in[0,n-1]$. See Figure~\ref{fig:flow_pols_are_cayley_embs} for an example.
\end{definition}

\begin{proposition}[{\cite[Prop.7.2]{MM19}}]\label{prop:flow_pols_and_cayley_embs}
	The flow polytope~$\fpol[G*](\mathbf{e_0}-\mathbf{e_{n+1}})$ is the Cayley embedding~$\cC\big(\fpol(\mathbf{e_0}-\mathbf{e_{n}}),\ldots,\fpol(\mathbf{e_{n-1}}-\mathbf{e_{n}})\big)$.

	\begin{figure}[h!]
		\centering
		\includegraphics[scale=0.9]{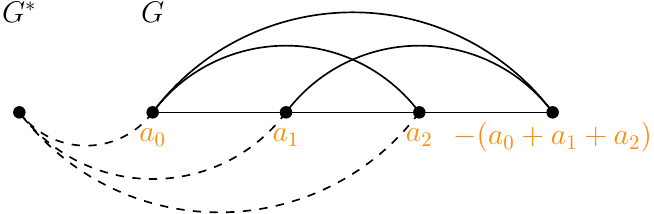}
		\caption[The construction of the graph~$G^*$ whose flow polytope is the Cayley embedding of flow polytopes from~$G$.]{ The construction of the graph~$G^*$ whose flow polytope is the Cayley embedding of flow polytopes from~$G$. Figure based on~\cite{MM19}.}\label{fig:flow_pols_are_cayley_embs}
	\end{figure}
\end{proposition}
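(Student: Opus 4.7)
My plan is to identify both polytopes inside the common ambient space $\RR^{n} \times \RR^{|E(G)|}$, using the natural splitting of $E(G^*)$ into the $n$ new edges $(v^*,v_i)$ for $i\in[0,n-1]$ and the original edges of $G$. Every point of $\fpol[G^*](\mathbf{e_0}-\mathbf{e_{n+1}})$ then records a vector $\boldsymbol{\lambda}=(\lambda_0,\ldots,\lambda_{n-1})$ of flows on the new edges together with a vector $g\in\RR^{|E(G)|}$ of flows on the old ones, while every point of $\cC\bigl(\fpol(\mathbf{e_0}-\mathbf{e_n}),\ldots,\fpol(\mathbf{e_{n-1}}-\mathbf{e_n})\bigr)$ is by definition a convex combination $\sum_{i=0}^{n-1}\lambda_i(\mathbf{e_i},g_i)$ with $g_i\in\fpol(\mathbf{e_i}-\mathbf{e_n})$, which lives in the very same ambient space.

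For the forward inclusion, I would take an admissible flow $f$ on $G^*$ with netflow $\mathbf{e_0}-\mathbf{e_{n+1}}$, set $\lambda_i:=f\bigl((v^*,v_i)\bigr)$, and observe that conservation of flow at $v^*$ forces $\sum_i\lambda_i=1$ and $\lambda_i\geq 0$. Then the restriction $g:=f|_{E(G)}$ satisfies conservation at each original vertex $v_i$ with incoming contribution $\lambda_i$ from the new edge, so $g$ is a flow on $G$ with netflow $(\lambda_0,\ldots,\lambda_{n-1},-\sum_i\lambda_i)=\sum_i\lambda_i(\mathbf{e_i}-\mathbf{e_n})$. Proposition~\ref{prop:flow_poltope_minkowski_decomposition} now gives the Minkowski decomposition $g=\sum_i\lambda_i g_i$ with $g_i\in\fpol(\mathbf{e_i}-\mathbf{e_n})$, and reading off coordinates shows $((\lambda_i)_i,g)=\sum_i\lambda_i(\mathbf{e_i},g_i)$, which sits in the Cayley embedding.

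For the reverse inclusion, given $\sum_i\lambda_i(\mathbf{e_i},g_i)$ in the Cayley embedding, I would build the candidate flow $f$ on $G^*$ by putting $f\bigl((v^*,v_i)\bigr):=\lambda_i$ on the new edges and $f|_{E(G)}:=\sum_i\lambda_i g_i$ on the old ones. Conservation of flow at $v^*$ is immediate from $\sum_i\lambda_i=1$; conservation at each $v_i$ follows because $\sum_i\lambda_i g_i$ is, again by Proposition~\ref{prop:flow_poltope_minkowski_decomposition}, a flow of $G$ with netflow $\sum_i\lambda_i(\mathbf{e_i}-\mathbf{e_n})$, whose $i$-th coordinate $\lambda_i$ exactly matches the incoming contribution from $(v^*,v_i)$. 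Finally conservation at $v_n$ is forced, yielding an admissible flow with netflow $\mathbf{e_0}-\mathbf{e_{n+1}}$.

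I do not expect genuine difficulty; the statement is essentially a bookkeeping translation between admissible flows and convex combinations, with Proposition~\ref{prop:flow_poltope_minkowski_decomposition} doing all the heavy lifting. The only subtle point is the indexing: the vertex $v^*$ plays the role of the new source (so $\mathbf{e_0}$ in the netflow $\mathbf{e_0}-\mathbf{e_{n+1}}$ refers to $v^*$, and the original vertices $v_0,\ldots,v_n$ are shifted to indices $1,\ldots,n+1$), and one must be careful that the new edges $(v^*,v_i)$ introduced for $i\in[0,n-1]$ do not include an edge to the sink so that the factors $\fpol(\mathbf{e_{n-1}}-\mathbf{e_n})$ (and not $\fpol(\mathbf{e_n}-\mathbf{e_n})=\{0\}$) appear as the last factor in the Cayley product.
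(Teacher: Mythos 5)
Your argument is correct; the paper cites Proposition~\ref{prop:flow_pols_and_cayley_embs} from \cite{MM19} without reproducing a proof, and what you have written is essentially the standard argument from that source, with Proposition~\ref{prop:flow_poltope_minkowski_decomposition} doing the work exactly as you describe. The only wrinkle worth flagging is that Proposition~\ref{prop:flow_poltope_minkowski_decomposition} is stated in this paper for~$\bfa\in\ZZ^{n+1}$, while you apply it with real coefficients~$\lambda_i\geq 0$; this is a harmless extension (the Minkowski decomposition of~$\fpol(\bfa)$ into~$\sum a_i\fpol(\mathbf{e_i}-\mathbf{e_n})$ holds for any nonnegative real netflow, and the cited sources prove it in that generality), but the citation as written does not literally cover the real case, so one should say a word about it.
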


\begin{proposition}\label{prop:flow_pols_ARE_cayley_embs}
	Let~$G$ be a graph on~$\{v_0,\ldots,v_n\}$. The flow polytope~$\fpol(\mathbf{e_0}-\mathbf{e_n})$ is the Cayley embedding of the flow polytopes~$\fpol(\mathbf{e_i}-\mathbf{e_n})$ for the~$i\in[n]$ such that~$G$ has an edge~$(v_0,v_i)$.
\end{proposition}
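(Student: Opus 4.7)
The plan is to bootstrap Proposition~\ref{prop:flow_pols_and_cayley_embs} by observing that $v_0$ already plays the role of the artificially added vertex $v^*$: since $v_0$ has no incoming edges, it acts as a pure source whose outgoing edges parametrize the simplex coordinate of a Cayley embedding. Let $G'$ denote the subgraph of $G$ induced on $\{v_1,\ldots,v_n\}$ and let $I\subseteq[n-1]$ be the set of indices $i$ such that $(v_0,v_i)\in E$.

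First I would pin down the relevant identifications. Because $v_0$ is a source with $0$ netflow under $\mathbf{e_i}-\mathbf{e_n}$ for $i\geq 1$, every $(\mathbf{e_i}-\mathbf{e_n})$-flow on $G$ is forced to vanish on every outgoing edge of $v_0$. Hence $\fpol(\mathbf{e_i}-\mathbf{e_n})$ sits inside $\RR^{|E|}$ as $\fpol[G'](\mathbf{e_i}-\mathbf{e_n})$ padded by zeros on the coordinates indexed by $\mathcal{O}_0$. Next, starting from any $f\in\fpol(\mathbf{e_0}-\mathbf{e_n})$, set $\lambda_i:=f((v_0,v_i))$ for $i\in I$. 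Conservation of flow at $v_0$ forces $\lambda_i\geq 0$ and $\sum_{i\in I}\lambda_i=1$, and the restriction $f|_{E(G')}$ is an admissible flow on $G'$ with netflow $(\lambda_1,\ldots,\lambda_{n-1},-1)$ (with $\lambda_i=0$ when $i\notin I$).

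I would then invoke Proposition~\ref{prop:flow_poltope_minkowski_decomposition} applied to $G'$ to conclude that the set of such restrictions is exactly
\[
\sum_{i\in I}\lambda_i\,\fpol[G'](\mathbf{e_i}-\mathbf{e_n}) = \sum_{i\in I}\lambda_i\,\fpol(\mathbf{e_i}-\mathbf{e_n}).
\]
Conversely, from any $(\lambda_i)_{i\in I}$ in the simplex $\Delta_{|I|-1}$ and any choice of $p_i\in\fpol(\mathbf{e_i}-\mathbf{e_n})$ for $i\in I$, the assignment $f((v_0,v_i))=\lambda_i$ together with $f|_{E(G')}=\sum_{i\in I}\lambda_i p_i$ produces an element of $\fpol(\mathbf{e_0}-\mathbf{e_n})$. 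Read coordinatewise, this bijection says exactly that the $|I|$ coordinates indexed by outgoing edges of $v_0$ form the simplex factor $\{\mathbf{e_i}\}_{i\in I}$ of Definition~\ref{def:Cayley_embedding}, while the coordinates indexed by $E(G')$ realize the Minkowski sum $\sum\lambda_i p_i$.

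Finally, I would pass to convex hulls on both sides, using that the Cayley embedding $\cC(\fpol(\mathbf{e_{i}}-\mathbf{e_n}))_{i\in I}$ is by definition $\conv\bigl(\bigcup_{i\in I}\{\mathbf{e_i}\}\times \fpol(\mathbf{e_i}-\mathbf{e_n})\bigr)$. Under the coordinate identification above, the vertices of this convex hull match precisely the flows that put all of the unit flow on a single edge $(v_0,v_i)$ followed by a vertex of $\fpol(\mathbf{e_i}-\mathbf{e_n})$. The main subtlety, and the step I expect to need the most care, is checking that the affine map constructed is genuinely a bijection onto $\fpol(\mathbf{e_0}-\mathbf{e_n})$ (that is, that no $(\mathbf{e_0}-\mathbf{e_n})$-flow escapes this parametrization and that two different parametrizations do not produce the same flow on edges where $\lambda_i=0$); this is handled by noting that when $\lambda_i=0$ the corresponding summand $\lambda_i p_i$ contributes nothing to the Minkowski sum, so the choice of $p_i$ is irrelevant, matching exactly the fact that vertices $\{\mathbf{e_i}\}\times \fpol(\mathbf{e_i}-\mathbf{e_n})$ with $\lambda_i=0$ contribute nothing to the Cayley embedding either.
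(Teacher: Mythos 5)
The paper states this proposition without proof---it is presented as an observation following Proposition~\ref{prop:flow_pols_and_cayley_embs}---so there is no authored argument to compare against. Judged on its own merits, your proposal is essentially sound: identifying $v_0$ as the already-present Cayley vertex, zero-padding to view each $\fpol(\mathbf{e_i}-\mathbf{e_n})$ as $\fpol[G'](\mathbf{e_i}-\mathbf{e_n})$, and describing the fiber over $\lambda\in\Delta_{|I|-1}$ via the Minkowski decomposition is exactly the right structure, and your observation that the $\lambda_i=0$ ambiguity is harmless on both sides correctly addresses the surjectivity/well-definedness of the coordinate identification.

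Two points deserve tightening. First, you declare $I\subseteq[n-1]$, but the statement quantifies over $i\in[n]$, so it admits the edge $(v_0,v_n)$. In that case the restricted flow $f|_{E(G')}$ has netflow $(\lambda_1,\ldots,\lambda_{n-1},-(1-\lambda_n))$ rather than $(\lambda_1,\ldots,\lambda_{n-1},-1)$, and the summand indexed by $i=n$ is $\lambda_n\fpol(\mathbf{e_n}-\mathbf{e_n})=\lambda_n\{\mathbf{0}\}$; the argument still goes through since that summand contributes nothing, but the displayed netflow should be corrected or the case explicitly excluded. Second, Proposition~\ref{prop:flow_poltope_minkowski_decomposition} as quoted in the paper is stated for $\bfa\in\ZZ^{n+1}$, while you apply it with real coordinates $(\lambda_1,\ldots,\lambda_{n-1})$. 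The extension to nonnegative real netflows is true (it follows from the acyclic flow-decomposition theorem: write $f|_{E(G')}$ as a conic combination of routes, group by starting vertex, and rescale), but since you invoke the paper's statement verbatim you should at least note that you need the real-scalar version. Neither point undermines the overall argument.
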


We now move on to the main property of flow polytopes that is of use for our work which is how they can be subdivided into smaller polytopes. We present the techniques of two such subdivisions coming from~\cite{DKK12} and~\cite{S00}.

\subsection{DKK Triangulations}\label{ssec:DKK_triangulation}

For the first subdivision let us define a way to compare routes by inducing a partial order on routes from an order on in-coming and out-going edges of the vertices of~$G$. We base our presentation on~\cite{DKK12}.

\begin{definition}\label{def:framing}
	Let~$G$ be a graph on~$\{v_0,\ldots,v_n\}$. A \defn{framing}\index{graph!framing} of~$G$ is a choice of linear orders~$\preceq_{\cI_i}$ and~$\preceq_{\cO_i}$ on the sets of incoming and outgoing edges for each vertex~$v_i$ where~$i\in[n-1]$. When~$G$ is endowed with such a framing~$\preceq$, we say that~$G$ is \defn{framed}\index{graph!framed}. For an edge~$e=(v_i,v_j)$ we denote by~$\cI(e)$ (resp.~$\cO(e)$) the position of the edge~$e$ in the framing order~$\preceq_{\cI_i}$ (resp.~$\preceq_{\cO_i}$).
\end{definition}

\begin{definition}\label{def:comparing_routes}
	Let~$P,Q$ be two routes of~$G$ that contain vertices~$v_i$ and~$v_j$. We denote by \defn{$Pv_i$} the prefix of~$P$ that ends at~$v_i$, \defn{$v_iP$} the suffix of~$P$ that starts at~$v_i$ and \defn{$v_iPv_j$} the subroute of~$P$ that starts at~$v_i$ and ends at~$v_j$.

	Let~$\prec$ be a framing of~$G$ and consider~$v_iP,v_iQ$ (res.~$Pv_i,Qv_i$) a pair of paths between~$v_i$ and~$v_n$ (resp.\ between~$v_0$ and~$v_i$) which coincide from~$v_i$ until a minimal vertex~$v_j$ (resp.\ from a maximal vertex~$v_j$ until~$v_i$) (possibly~$i=j$). Denote by~$e_p$ and~$e_Q$ the corresponding edges in~$P$ and~$Q$ with starting (resp.\ ending) vertex~$v_j$. We say that \defn{$v_iP\preceq v_iQ$} if~$e_P\preceq_{\cO_j} e_Q$ (resp.\ \defn{$Pv_i\preceq Qv_i$} if~$e_P\preceq_{\cI_j} e_Q$).
\end{definition}

\begin{definition}\label{def:coherent_routes}
	Consider~$G$ to be a graph with framing~$\prec$. Let~$P$ and~$Q$ be routes of~$G$ that share a subroute between the vertices~$v_i$ and~$v_j$. We say that~$P$ and~$Q$ are \defn{conflicting routes}\index{graph!route!conflicting} at~$[v_i,v_j]$ (possibly~$v_i=v_j$) if the initial paths~$Pv_i$ and~$Qv_i$ are ordered different relative to the paths~$v_iP$ and~$v_iQ$ with respect to~$\prec$. In the case that~$P$ and~$Q$ have no conflict in any common subroute we say that~$P$ and~$Q$ are \defn{coherent routes}\index{graph!route!coherent}.
\end{definition}

See Figure~\ref{fig:paths_coherent_not_coherent} for an example of coherent and conflicting routes on a framed graph.

\begin{figure}[h!]
	\centering
	\includegraphics[scale=1.3]{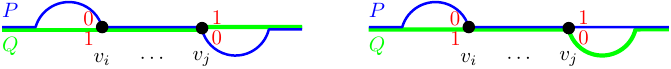}
	\caption[A framed graph with a pair of coherent and conflicting routes.]{ A framed graph with a pair of coherent (left) and conflicting (right) routes. Only~$1$ route is bolded.}\label{fig:paths_coherent_not_coherent}
\end{figure}

\begin{remark}\label{rem:extreme_framings_dont_matter}
	Notice that one could define framing orders for~$v_0$ (resp.~$v_n$) but since this vertex has no incoming (resp.\ outgoing) edges, checking for coherence is trivially true and thus not necessary. For completeness, we say that~$\cO_1(e)=0$ (resp.~$\cI_n(e)=0$) for any starting (resp.\ final) edge. 
\end{remark}

\begin{definition}\label{def:cliques}
	Let~$(G,\preceq)$ be a framed graph. We call a set of mutually coherent routes of~$G$ a \defn{clique}\index{graph!clique}. We denote by \defn{$\cliques$} the set of cliques of~$(G, \preceq)$, and \defn{$\maxcliques$} the set of maximal collection of cliques under inclusion. If a route~$P$ is coherent with all other routes of~$G$ we say that~$P$ is \defn{exceptional}\index{graph!route!exceptional}.
\end{definition}

\begin{remark}\label{rem:order_routes_properties}
	Notice that the coherence relation between routes is reflexive and symmetric but not transitive. Take all the routes given in Figure~\ref{fig:paths_coherent_not_transitive} for a particular framed graph. The maximal cliques in this case are~$\{R_1,R_2,R_3,R_5\}$ and~$\{R_1,R_2,R_4,R_5\}$. Notice that although~$R_1$ is coherent with both~$R_3$ and~$R_4$, they are not coherent as they have a conflict in the inner vertex. The routes~$R_1,R_2$, and~$R_5$ are all exceptional routes.

	\begin{figure}[h]
		\centering
		\includegraphics[scale=1.3]{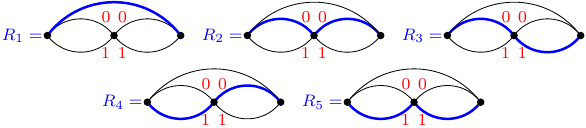}
		\caption[All possible routes of a framed graph.]{ All possible routes (bolded in blue) of a framed graph.}\label{fig:paths_coherent_not_transitive}
	\end{figure}
\end{remark}

\begin{definition}\label{def:cliques_to_triangles}
	Given the flow polytope~$\fpol(\bfi)$ and a clique~$C\in\cliques[G]$, we define the simplex \defn{$\Delta_C$} corresponding to~$C$ as \begin{equation*}
		\Delta_C = \conv\Big(\big(\mathds{1}_R(e)\big)_{e\in E} \,:\, R \text{ is a route in } C\Big).
	\end{equation*}
\end{definition}

\begin{proposition}[{\cite[Thm.1 \& 2]{DKK12}}]\label{prop:DKK_triangulation}
	Given a graph~$G$ with framing~$\preceq$, the set of simplices~$\{\Delta_C\,:\,C\in \maxcliques[G]\}$ form a regular triangulation of~$\fpol(\bfi)$. We refer to this triangulation as the  \defn{DKK triangulation}\index{flow polytope!triangulations!DKK} and denote it by \defn{$\triangDKK$}.
\end{proposition}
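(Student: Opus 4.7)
The plan is to prove the theorem in three stages: simplex structure, covering with compatible intersections, and regularity via an explicit height function.

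First, I would establish that for a maximal clique $C$, the set $\Delta_C$ is genuinely a simplex of dimension equal to $\dim \fpol(\bfi) = |E|-|V|+1$. Since the vertices of $\fpol(\bfi)$ are indicator vectors $\mathds{1}_R$ of routes (Proposition~\ref{prop:flow_polytope_vertices_bfi}), I would check affine independence of $\{\mathds{1}_R\}_{R \in C}$ by a linear-algebraic argument exploiting the framing: any nontrivial linear dependence would force two routes to agree on the edges picked out by extremal framing positions at each vertex, contradicting distinctness. A dimension count then forces $|C| = |E|-|V|+2$ for every maximal clique, so all $\Delta_C$ are full-dimensional simplices inside $\fpol(\bfi)$.

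Second, for the covering and intersection properties, the key tool is a greedy route-extraction procedure. Given an arbitrary admissible flow $f$, I would extract routes iteratively: starting at $v_0$, follow at each vertex $v_i$ the $\preceq_{\cO_i}$-minimal outgoing edge that still carries positive flow, producing a route $R_1$; subtract $\lambda_1 \mathds{1}_{R_1}$ where $\lambda_1 = \min_{e \in R_1} f(e)$, and iterate on the residual flow. This yields a decomposition $f = \sum_k \lambda_k \mathds{1}_{R_k}$ with $\sum_k \lambda_k = 1$. The framing-compatibility of the extraction forces the extracted routes to be pairwise coherent, hence lie in some clique $C$, so $f \in \Delta_C$. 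For intersections, I would argue that if $f$ lies in the relative interior of both $\Delta_C$ and $\Delta_{C'}$, then the greedy decomposition is determined uniquely by $f$ and the framing, forcing the supports to match and $C = C'$; intersections of non-equal simplices are then faces indexed by common sub-cliques.

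Third, for regularity, I would construct an explicit height function $h$ on routes. A natural candidate is $h(R) = \sum_{v_i \in R} \varphi\bigl(\cI(e^{\text{in}}_R(v_i)),\, \cO(e^{\text{out}}_R(v_i))\bigr)$ for a suitably convex auxiliary function $\varphi$ (for instance a product or a quadratic form in the framing positions). Admissibility amounts to checking that for any two conflicting routes $P, Q$ at some subpath $[v_i, v_j]$, the ``uncrossing'' pair of routes $P', Q'$ obtained by swapping their prefixes and suffixes at the conflict satisfies $h(P) + h(Q) > h(P') + h(Q')$, so that conflicting pairs never both appear in the lower envelope. The lower faces of the lifted polytope $\conv\{(\mathds{1}_R, h(R))\}$ then project precisely to the simplices $\Delta_C$ for $C \in \maxcliques$.

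The main obstacle is the third step: exhibiting a height function whose admissibility can be verified combinatorially from the framing alone. The subtlety is that the uncrossing inequality must hold for every possible conflict configuration, not just local ones, and this is exactly what the paper's later refinements (Proposition~\ref{prop:DKKlem2_original} and Lemmas~\ref{lem:DKKlem2_us} and~\ref{lem:DKKlem2_us_pro}) are designed to control by reducing the check to \emph{minimal conflicts}. I would therefore plan to defer the full admissibility verification to that refined framework, producing the height function first and invoking the reduction to minimal conflicts to close the argument.
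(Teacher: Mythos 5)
The regularity strategy in your Step~3 is the right one, and it matches the machinery the paper assembles around this citation: construct the explicit DKK height function (Definition~\ref{def:DKK_height_function}), check admissibility by reducing to minimal conflicts (Proposition~\ref{prop:DKKlem2_original}, sharpened by Lemmas~\ref{lem:DKKlem2_us} and~\ref{lem:DKKlem2_us_pro}), and take the DKK triangulation to be the regular triangulation induced by that height. However, Steps~1 and~2 both contain genuine gaps, and regularity should not be deferred to the end: once you have an admissible height, the general theory of regular subdivisions already delivers covering, compatible intersections, and full-dimensionality of cells simultaneously, so the combinatorial preliminaries you propose are both unnecessary and, as stated, incorrect.

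The concrete failure is in the greedy extraction. Following the $\preceq_{\cO_i}$-minimal outgoing edge with positive residual flow controls only one side of the coherence condition, which is a two-sided comparison: prefixes are compared via the incoming framings and suffixes via the outgoing framings. Take the graph on~$\{v_0,\ldots,v_4\}$ with edges $e_1=(v_0,v_1)$, $e_2=(v_0,v_2)$, $e_3=(v_1,v_2)$, $e_4=(v_1,v_3)$, $e_5=(v_2,v_3)$, and two parallel edges $e_6,e_7=(v_3,v_4)$; frame $e_3\prec_{\cO_1}e_4$ at~$v_1$, and $e_4\prec_{\cI_3}e_5$, $e_6\prec_{\cO_3}e_7$ at~$v_3$. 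The $\bfi$-flow~$f$ with $f(e_1)=1$, $f(e_2)=0$, and $f(e_i)=\frac{1}{2}$ for $i\geq 3$ is greedily decomposed into $R_a=(e_1,e_3,e_5,e_6)$ and $R_d=(e_1,e_4,e_7)$. These conflict at the degenerate shared subroute~$v_3$: one has $R_d v_3\prec R_a v_3$ (since $e_4\prec_{\cI_3} e_5$) but $v_3 R_a\prec v_3 R_d$ (since $e_6\prec_{\cO_3} e_7$). The coherent maximal clique whose simplex actually contains~$f$ is $\{(e_1,e_3,e_5,e_7),(e_1,e_4,e_6)\}$, and the outgoing-only greedy misses it entirely. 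Step~1 has a similar gap: affine independence yields only $|C|\leq |E|-|V|+2$, and showing every \emph{maximal} coherent clique attains this bound is a nontrivial extension lemma, not a consequence of a dimension count. Both points are handled at once by regularity --- the lower faces of the lifted polytope under an admissible generic height are automatically full-dimensional simplices whose vertex sets cannot contain a conflicting pair by the admissibility inequality, hence are precisely the maximal coherent cliques --- which is why the standard argument leads with the height function rather than closing with it.
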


For the regularity of~$\triangDKK$, the authors in~\cite{DKK12} gave the following condition for a height function to be admissible where~$P+Q$ denotes the union of the edges of the routes~$P$ and~$Q$.

\begin{proposition}[{\cite[Lem.2]{DKK12}}]\label{prop:DKKlem2_original}
	Let~$(G, \preceq)$ be a framed graph. A function~$\height$ from the routes of~$G$ to~$\RR$ is an admissible height function of~$\triangDKK$ if for any two non-coherent routes~$P$ and~$Q$ there exist routes~$P'$ and~$Q'$ such that~$P+Q=P'+Q'$ and
	\begin{equation}
		\height(P)+\height(Q)>\height(P')+\height(Q').
	\end{equation}
\end{proposition}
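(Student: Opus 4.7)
The plan is to reinterpret the admissibility of $\height$ as the statement that the regular polyhedral subdivision induced by $\height$ coincides with $\triangDKK$. Lift each vertex $\mathds{1}_R$ of $\fpol(\bfi)$ to the point $(\mathds{1}_R, \height(R)) \in \RR^{|E|+1}$, let $\widetilde{F}$ be the convex hull of these lifts, and recall that the regular subdivision is the projection back to $\fpol(\bfi)$ of the lower faces of $\widetilde{F}$; a set of routes indexes a cell of that subdivision precisely when their lifts lie on a common lower face. Since $\{\Delta_C : C \in \maxcliques\}$ already forms a triangulation of $\fpol(\bfi)$ by Proposition~\ref{prop:DKK_triangulation}, it suffices to show that the vertex set of every cell of the regular subdivision is contained in some clique of coherent routes. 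The two triangulations of $\fpol(\bfi)$ will then be forced to coincide. In particular, the whole argument reduces to proving that no two non-coherent routes $P$ and $Q$ can share a cell of the regular subdivision.

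The main step proceeds by contradiction. Assume $P$ and $Q$ are non-coherent but their lifts both lie on a common lower face of $\widetilde{F}$. This lower face is cut out by some affine form $\psi(\mathbf{x},t) = \langle \mathbf{a}, \mathbf{x}\rangle + t$ on $\RR^{|E|} \times \RR$ satisfying
\[
\langle \mathbf{a}, \mathds{1}_R\rangle + \height(R) \geq c \qquad \text{for every route } R,
\]
with equality at $R = P$ and $R = Q$. Applying the hypothesis produces routes $P'$ and $Q'$ with $\mathds{1}_P + \mathds{1}_Q = \mathds{1}_{P'} + \mathds{1}_{Q'}$ and $\height(P) + \height(Q) > \height(P') + \height(Q')$. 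Adding the defining inequalities for $P'$ and $Q'$ and using the flow identity to cancel the linear part yields
\[
2c \leq \langle \mathbf{a}, \mathds{1}_{P'} + \mathds{1}_{Q'}\rangle + \height(P') + \height(Q') = \langle \mathbf{a}, \mathds{1}_P + \mathds{1}_Q\rangle + \height(P') + \height(Q') < \langle \mathbf{a}, \mathds{1}_P + \mathds{1}_Q\rangle + \height(P) + \height(Q) = 2c,
\]
a contradiction. Hence no lower face of $\widetilde{F}$ contains the lifts of two non-coherent routes.

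To conclude, every simplex of the regular subdivision is supported on a set of pairwise coherent routes, that is, on a clique. Combined with the fact that the simplices $\Delta_C$ for maximal cliques $C$ already tile $\fpol(\bfi)$ by Proposition~\ref{prop:DKK_triangulation}, the regular subdivision must refine $\triangDKK$ while covering the same polytope, so it agrees with $\triangDKK$ cell by cell. The only non-routine ingredient is the short contradiction above, whose point is that the multiset identity $\mathds{1}_P + \mathds{1}_Q = \mathds{1}_{P'} + \mathds{1}_{Q'}$ causes all linear contributions of $\psi$ to cancel, so the strict inequality between heights transfers directly to strict inequality between $\psi$-values; once this cancellation is observed, the rest is the standard correspondence between regular subdivisions and lower envelopes.
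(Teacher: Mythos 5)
The paper does not actually prove Proposition~\ref{prop:DKKlem2_original}: it is cited directly from~\cite[Lem.~2]{DKK12}, so there is no in-paper argument to compare against. Your proof is correct and self-contained, and it is the natural lower-envelope argument one expects here.

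Two small points worth tightening if this were to be written out in full. First, the regular subdivision induced by~$\height$ is a priori only a polyhedral subdivision, not a triangulation, so one should not speak of its ``simplices'' before the final step. Your contradiction shows that any cell $\sigma$ of that subdivision has pairwise coherent vertex set~$C$; since any clique extends to a maximal clique, which by Proposition~\ref{prop:DKK_triangulation} has exactly $\dim \fpol(\bfi)+1$ routes, a full-dimensional~$\sigma$ is forced to have exactly $\dim \fpol(\bfi)+1$ vertices and hence is a simplex $\Delta_{C}$ with $C$ maximal. Second, it is worth saying one more sentence on why ``refines a triangulation'' implies ``equals it'': a maximal cell~$\sigma$ meeting the interior of~$\Delta_{C'}$ must lie inside~$\Delta_{C'}$ (otherwise their intersection is a proper common face, excluding interior points), so all of~$\Delta_{C'}$ is covered by cells of the regular subdivision lying inside it, and a subdivision of a simplex using only its own vertices is trivial. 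The central computation --- adding the supporting inequalities at~$P'$ and~$Q'$, cancelling the linear part via $\mathds{1}_P+\mathds{1}_Q=\mathds{1}_{P'}+\mathds{1}_{Q'}$, and contradicting equality at~$P$ and~$Q$ --- is exactly right and is what makes the hypothesis on~$\height$ do its work.
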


We refine the condition on the non-coherence to make it necessary and sufficient. For that we need the following definition.

\begin{definition}\label{def:resolvents}
	Let~$(G,\preceq)$ be a framed graph and~$P,Q$ a pair of conflicting routes at the subroutes~$[v_{i_1},v_{i_1}']$,~$\dots$,~$[v_{i_k},v_{i_k}']$, where~$i_1\leq i_1' < i_2\leq i_2' \ldots <i_k\leq i_k'$. We call the \defn{resolvents}\index{graph!route!resolvents} of~$P$ and~$Q$ the paths~$P'$ and~$Q'$ defined as \begin{equation*}
		\begin{split}
			P'&:=Pv_{i_1}Qv_{i_2}Pv_{i_3}\cdots,\\
			Q'&:=Qv_{i_1}Pv_{i_2}Qv_{i_3}\cdots
		\end{split}
	\end{equation*} where~$Pv_{i_1}Qv_{i_2}Pv_{i_3}\cdots$ denotes the concatenation of the subroutes~$Pv_{i_1},v_{i_1}Qv_{i_2},v_{i_2}Pv_{i_3},\ldots$ finishing with~$v_{i_k}P$ or~$v_{i_k}Q$ depending on the parity of~$k$.

	We say that~$P$ and~$Q$ are in \defn{minimal conflict}\index{graph!route!minimal conflict} if they are in conflict in exactly one subroute~$[v_i,v_j]$ and the edges of~$P$ and~$Q$ that end at~$v_i$ (resp.\ start at~$v_j$) are adjacent for the total order~$\preceq_{\cI_i}$ (resp.~$\preceq_{\cO_j}$).

	See Figure~\ref{fig:resolvents} for an example of the resolvents of a minimal conflict.
\end{definition}

\begin{figure}[h!]
	\centering
	\includegraphics{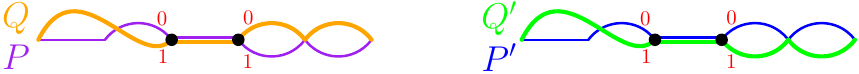}
	\caption[Two routes with a minimal conflict and their resolvents.]{ Two routes with a minimal conflict (left) and their resolvents (right). Only~$1$ route is bolded.}\label{fig:resolvents}
\end{figure}

\begin{lemma}[{\cite[Lem.5.4]{GMPTY23}}]\label{lem:DKKlem2_us}
	Let~$(G, \preceq)$ be a framed graph.
	A function~$\height$ from the routes of~$G$ to~$\RR$ is an admissible height function of~$\triangDKK$ if and only if for any two non-coherent routes~$P$ and~$Q$ with resolvents~$P'$ and~$Q'$ we have
	\begin{equation}
		\height(P)+\height(Q)>\height(P')+\height(Q').
	\end{equation}
\end{lemma}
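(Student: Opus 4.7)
The plan is to prove the two implications separately, with the backward direction following almost immediately from the prior DKK result.

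For the sufficiency direction ($\Leftarrow$), I would first establish the elementary identity $\mathds{1}_{P'} + \mathds{1}_{Q'} = \mathds{1}_P + \mathds{1}_Q$ for the resolvents of any non-coherent pair. This is immediate from Definition~\ref{def:resolvents}: the resolvents $P' = Pv_{i_1}Qv_{i_2}Pv_{i_3}\cdots$ and $Q' = Qv_{i_1}Pv_{i_2}Qv_{i_3}\cdots$ simply swap the subroutes of $P$ and $Q$ alternately at the conflict points, so the multiset of edges used is preserved. Once this is in hand, the hypothesis that $\height(P)+\height(Q)>\height(P')+\height(Q')$ exhibits concrete routes satisfying the existence requirement of Proposition~\ref{prop:DKKlem2_original}, yielding admissibility.

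For the necessity direction ($\Rightarrow$), assume $\height$ is admissible and let $P,Q$ be non-coherent with resolvents $P',Q'$. I would look at the common midpoint $M=\tfrac{1}{2}(\mathds{1}_P+\mathds{1}_Q)=\tfrac{1}{2}(\mathds{1}_{P'}+\mathds{1}_{Q'})$ in $\fpol(\bfi)$. Since $\{P,Q\}\notin\cliques$, the segment $\conv(\mathds{1}_P,\mathds{1}_Q)$ is not a face of any simplex of $\triangDKK$; consequently, if $\Sigma$ denotes the value of the lower envelope of $\height$ at $M$, then $\tfrac{1}{2}(\height(P)+\height(Q))>\Sigma$ strictly.

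The main obstacle is to control $\tfrac{1}{2}(\height(P')+\height(Q'))$ from above by $\Sigma$, so that the two inequalities chain to give $\height(P)+\height(Q)>\height(P')+\height(Q')$. I would argue this by induction on the number $k$ of conflict subroutes. For the base case $k=1$ with adjacent framing edges (a minimal conflict at $[v_i,v_j]$), a direct check on the framing orders at $v_i$ and $v_j$ shows that after swapping the approaches at $v_i$ and the continuations after $v_j$, the new pair $\{P',Q'\}$ is coherent; hence $\{P',Q'\}\in\cliques$, the segment $\conv(\mathds{1}_{P'},\mathds{1}_{Q'})$ lies on a face of $\triangDKK$, and admissibility forces $\tfrac{1}{2}(\height(P')+\height(Q'))=\Sigma$. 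For the inductive step, I would resolve a single minimal conflict first to obtain an intermediate pair with strictly fewer conflicts whose heights are comparable by induction, and then apply Proposition~\ref{prop:DKKlem2_original} once more to propagate the inequality to the full resolvents $P',Q'$.
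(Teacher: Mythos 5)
Your sufficiency direction ($\Leftarrow$) is essentially the paper's: the identity $\mathds{1}_{P'}+\mathds{1}_{Q'}=\mathds{1}_P+\mathds{1}_Q$ follows from the alternation in Definition~\ref{def:resolvents}, and then resolvents witness the existence clause of Proposition~\ref{prop:DKKlem2_original}. Your general framework for necessity ($\Rightarrow$) --- work at the common midpoint $M$, observe that the lift of the non-coherent pair $\{P,Q\}$ sits strictly above the lower envelope, and bound the lift of $\{P',Q'\}$ by showing that segment is on a lower face --- is also exactly the paper's idea.

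Where your proposal deviates, and where the gap lies, is in restricting the coherence of $P'$ and $Q'$ to the base case of a minimal conflict and then attempting to reach the general case by induction. In fact the resolvents $P'$, $Q'$ of \emph{any} non-coherent pair are always coherent with each other (and with $P$ and $Q$); this is a structural consequence of the crossover construction, since after swapping at every shared subroute the resulting pair is nested in the framing order rather than crossing. The paper uses this fact directly: $[\mathds{1}_{P'},\mathds{1}_{Q'}]$ is therefore an edge of $\triangDKK$, the lower envelope at $M$ equals $\tfrac{1}{2}(\height(P')+\height(Q'))$, and the inequality follows with no induction at all. You have essentially rediscovered this fact for the minimal case but then treat the general case as harder, which it is not for this lemma.

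Your proposed inductive step would also not go through as written. Saying ``resolve a single minimal conflict first'' is not well-defined when the conflicts of $P$ and $Q$ are not minimal, so the case $k=1$, non-minimal, is not covered by your base case nor by your induction. Moreover, the step ``apply Proposition~\ref{prop:DKKlem2_original} once more to propagate the inequality'' misuses that proposition: it gives a \emph{sufficient condition} for $\height$ to be admissible, not a comparison of heights of two pairs sharing the same midpoint. The chain of inequalities across intermediate pairs that you are reaching for is precisely the content of the later refinement (Lemma~\ref{lem:DKKlem2_us_pro}), which reduces from arbitrary conflicts to minimal ones. For Lemma~\ref{lem:DKKlem2_us} itself that machinery is unnecessary; establishing coherence of the resolvents for an arbitrary non-coherent pair closes the argument in one step.
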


\begin{proof}
	The proof of Proposition~\ref{prop:DKKlem2_original} works the same when~$P'$ and~$Q'$ are the resolvents of~$P$ and~$Q$ as the resolvents are always coherent not only between themselves but also with~$P$ and~$Q$. We thus omit this direction. Suppose that~$\height$ is an admissible height function for~$\triangDKK[G]$ and consider~$P$ and~$Q$ two non-coherent routes with resolvents~$P'$ and~$Q'$. As~$P'$ and~$Q'$ are coherent, they are part of at least one clique and~$\mathbf{p'}=(\mathds{1}_{P'}(e))_{e\in E}$ and~$\mathbf{q'}=(\mathds{1}_{Q'}(e))_{e\in E}$ are the vertices of an edge of~$\triangDKK[G]$. Letting~$\mathbf{p}$ and~$\mathbf{q}$ be the corresponding points to the routes~$P$ and~$Q$, we have that the point~$\mathbf(c)=\frac{1}{2}(\mathbf{p'}+\mathbf{q'})=\frac{1}{2}(\mathbf{p}+\mathbf{q})$ is in the edge~$[\mathbf{p'},\mathbf{q'}]$. As this edge must be lifted to a lower face of the lift of~$\fpol(\bfi)$ via the admissible function~$\height$ we have that~$\height(P)+\height(Q)>\height(P')+\height(Q')$.
\end{proof}

Using minimal conflicts we can make this statement stronger.

\begin{lemma}[{\cite[Lem.5.5]{GMPTY23}}]\label{lem:DKKlem2_us_pro}
	Let~$(G, \preceq)$ be a framed graph.
	A function~$\height$ from the routes of~$G$ to~$\RR$ is an admissible height function of~$\triangDKK$ if and only if for any two non-coherent routes~$P$ and~$Q$ in minimal conflict with resolvents~$P'$ and~$Q'$ we have
	\begin{equation}
		\height(P)+\height(Q)>\height(P')+\height(Q').
	\end{equation}
\end{lemma}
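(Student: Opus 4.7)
The forward direction is immediate. Since a pair of routes in minimal conflict is, in particular, a non-coherent pair of routes whose resolvents are the two routes produced by the single conflict swap, Lemma~\ref{lem:DKKlem2_us} applies verbatim and provides the required strict inequality.

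For the converse, I would show that assuming the strict inequality only on minimal conflict pairs still implies the strict inequality on every pair of non-coherent routes, which in turn implies admissibility by Lemma~\ref{lem:DKKlem2_us}. The guiding idea is that the resolution of an arbitrary conflict can be realized as a finite composition of minimal conflict resolutions, and the inequalities therefore telescope. Precisely, given non-coherent routes $P$ and $Q$ with resolvents $P'$ and $Q'$, the plan is to exhibit a finite sequence of pairs of routes $(P_0,Q_0)=(P,Q),(P_1,Q_1),\ldots,(P_N,Q_N)=(P',Q')$ such that for each $t$ the pair $(P_t,Q_t)$ is in minimal conflict and $(P_{t+1},Q_{t+1})$ are its resolvents. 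Summing the minimal conflict inequalities $\height(P_t)+\height(Q_t)>\height(P_{t+1})+\height(Q_{t+1})$ then yields the desired $\height(P)+\height(Q)>\height(P')+\height(Q')$.

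The construction of this sequence proceeds in two reduction steps. First, if $P$ and $Q$ have several conflict subroutes $[v_{i_1},v_{i_1}'],\ldots,[v_{i_k},v_{i_k}']$, then by the very definition of the resolvent in Definition~\ref{def:resolvents} one can resolve them one subroute at a time, producing intermediate route pairs, each of which now differs from the next by a single conflict subroute. Second, given a single non-minimal conflict at some $[v_i,v_j]$, we interpolate along the framing orders: if $e_P^{\mathrm{in}}$ and $e_Q^{\mathrm{in}}$ are separated by edges $a_1\prec_{\cI_i}\cdots\prec_{\cI_i} a_p$ and $e_P^{\mathrm{out}}$ and $e_Q^{\mathrm{out}}$ are separated by edges $b_1\prec_{\cO_j}\cdots\prec_{\cO_j} b_q$, then for each intermediate edge we choose any route that uses it at the relevant endpoint and agrees with $P$ and $Q$ on the shared subroute $[v_i,v_j]$. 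Using these routes as the intermediate terms of the sequence, consecutive pairs will differ only by a swap between framing-order adjacent edges, which is precisely the minimal conflict situation, and the associated resolvents are exactly the next pair in the sequence.

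The main obstacle will be verifying that the intermediate routes can actually be chosen so that consecutive pairs are genuinely in minimal conflict and not merely coherent pairs that happen to share a prefix or suffix: the incoming and outgoing sides of a conflict interact through the shared subroute, and a careless choice produces routes whose shared subroute extends all the way to the source $v_0$ or the sink $v_n$, destroying the conflict. The plan is to resolve this by always extending each intermediate edge backward to $v_i$ along the shared subroute and forward using any admissible completion to $v_n$ (and symmetrically on the incoming side), which I expect guarantees the correct local structure so that the telescoping closes up to $(P',Q')$.
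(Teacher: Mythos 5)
The forward direction and the reduction to Lemma~\ref{lem:DKKlem2_us} are fine, and you have the right high-level idea: build up the general inequality from the minimal-conflict inequalities by summation. But the concrete mechanism you describe cannot exist. You ask for a sequence $(P_0,Q_0)=(P,Q),\ldots,(P_N,Q_N)=(P',Q')$ in which, for each $t$, the pair $(P_t,Q_t)$ is in minimal conflict and $(P_{t+1},Q_{t+1})$ are its resolvents. This fails at both ends: $(P_0,Q_0)=(P,Q)$ is by hypothesis an arbitrary conflicting pair, hence typically not in minimal conflict; and the resolvents $(P_1,Q_1)$ of a minimal conflict are \emph{coherent by construction}, so they are not in conflict at all and the chain cannot continue past $t=1$. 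Your closing caveat about shared prefixes and suffixes is not the real obstacle — the obstacle is that resolution destroys the conflict, so no chain of ``pair $\to$ resolvents $\to$ resolvents $\to\cdots$'' can exist with more than two terms.

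The paper avoids this by not chaining at all. For a single conflict at $[v_i,v_j]$ it builds a two-parameter grid of routes $R_xv_iPv_jS_y$ with $x\in[k]$, $y\in[l]$ interpolating the incoming and outgoing framing orders, observes that each ``unit square'' pair $\bigl(R_xv_iPv_jS_{y+1},\ R_{x+1}v_iPv_jS_y\bigr)$ is in minimal conflict with resolvents $\bigl(R_xv_iPv_jS_{y},\ R_{x+1}v_iPv_jS_{y+1}\bigr)$, and sums all $(k-1)(l-1)$ of these inequalities so that interior terms cancel in pairs and only the four corners survive. Those $(k-1)(l-1)$ pairs are not obtained from one another by resolution — they are genuinely independent minimal conflicts, each taken directly from the hypothesis — which is exactly what your chain framing rules out. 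For multiple conflicts the paper does not resolve them one subroute at a time; it uses a short induction in which the induction hypothesis (applied to the $n$-conflict pairs $(P,Px_1Q)$ and $(Q,Qx_1P)$) and the single-conflict case (applied to $(P,Qx_1P')$ and $(Q,Px_1Q')$) produce four inequalities whose sum collapses to $\height(P)+\height(Q)>\height(P')+\height(Q')$. You would need to replace your chain mechanism with something equivalent to this grid-plus-induction structure; as written, the central step of your converse does not get off the ground.
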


\begin{proof}
	Let~$\height$ be a function from the routes of~$G$ to~$\RR$ such that for any minimal conflict between two routes~$P$ and~$Q$ with resolvents~$P'$ and~$Q'$, we have~$\height(P)+\height(Q)>\height(P')+\height(Q')$. From Lemma~\ref{lem:DKKlem2_us} we only need to show that for any two conflicting routes~$P$ and~$Q$, there exist routes~$P'$ and~$Q'$ such that~$P+Q=P'+Q'$ and~$\height(P)+\height(Q)>\height(P')+\height(Q')$. We proceed by induction on the number of conflicts between~$P$ and~$Q$.

	First, suppose that~$P$ and~$Q$ are conflicting at exactly one subroute~$[v_i,v_j]$. Let~$e_1:=e_P$ and~$e_k:=e_Q$ be the respective edges of~$P$ and~$Q$ that end at~$v_i$ and~$e_i$ for~$i\in[2,k-1]$ be all the other edges of~$G$ such that we have~$\cI(e_1)\lessdot \cI(e_2)\lessdot\cdots\lessdot\cI(e_k)$. Letting~$R_1:=Pv_i$ and~$R_k:=Qv_i$, we define the partial routes~$R_i$ (resp.~$S_i$) for~$i\in[2,k-1]$ from right to left and from~$1$ to~$k-1$. The starting step is the ending edge which is already determined. At step~$i$ we can choose any other edge unless we arrive at a vertex common to a previously built partial route. In this case we must choose the same edges as in this partial route. This gives~$Pv_i=R_1\prec R_2 \prec \ldots \prec R_k=Qv_i$ and a similar construction gives~$v_jQ=S_1\prec S_2 \prec \ldots \prec S_l=v_jP$.

	With this for any~$x\in[k-1]$ and~$y\in [l-1]$ we have that the routes~$R_xv_iPv_jS_{y+1}$ and~$R_{x+1}v_iPv_jS_{y}$ are in minimal conflict, with resolvents~$R_xv_iPv_jS_{y}$ and~$R_{x+1}v_iPv_jS_{y+1}$. Our assumption on the height function~$\height$ implies the inequality:
	\begin{equation}
		\height(R_xv_iPv_jS_{y+1})+\height(R_{x+1}v_iPv_jS_{y}) > \height(R_xv_iPv_jS_{y}) +\height(R_{x+1}v_iPv_jS_{y+1}).
	\end{equation}
	Adding all these inequalities for all~$x\in [k-1]$ and~$y\in[l-1]$ we have that all terms of the form~$\height(R_{x}v_iPv_jS_{y})$ cancel out by pairs except for the pairs~$(x,y)\in \{(1,1), (k,t), (1,t), (k,1)\}$. This gives us the inequality
	\begin{equation*}
		\height(P)+\height(Q)=\height(R_1v_iPv_jS_t)+\height(R_kv_iPv_jS_{1}) > \height(R_1v_iPv_jS_{1}) +\height(R_{k}v_iPv_jS_{t})=\height(P')+\height(Q'),
	\end{equation*}
	where~$P'$ and~$Q'$ are the resolvents of~$P$ and~$Q$ as we wished.

	For the induction step suppose that~$\height$ satisfies~$\height(P)+\height(Q)>\height(P')+\height(Q')$ where~$P$ and~$Q$ are conflicting routes~with at most~$n$ conflicts and resolvents~$P', Q'$. Let~$P$ and~$Q$ be conflicting routes with~$n+1$ conflicts at the subroutes~$[x_1, y_1], \ldots, [x_{n+1}, y_{n+1}]$.
	Since the routes~$P$ and~$Px_1Q$ (resp.~$Q$ and~$Qx_1P$) have~$n$ conflicts and their resolvents are~$Px_1P'=P'$ and~$Px_1Q'$ ($Qx_1P'$ and~$Q'x_1Q=Q'$), the induction hypothesis gives us the inequalities
	\begin{equation*}
		\begin{split}
			\height(P)+\height(Px_1Q) &> \height(P')+\height(Px_1Q'),\\
			\height(Q)+\height(Qx_1P) &> \height(Qx_1P')+\height(Q').
		\end{split}
	\end{equation*}
	Now notice that the routes~$P$ and~$Qx_1P'$ only have one conflict and their resolvents are~$P'$ and~$Qx_1P$. Therefore, we also have the inequalities
	\begin{equation*}
		\begin{split}
			\height(P)+\height(Qx_1P') &> \height(P')+\height(Qx_1P),\\
			\height(Q)+\height(Px_1Q') &> \height(Px_1Q)+\height(Q').
		\end{split}
	\end{equation*}
	Adding up these four inequalities yields
	\begin{equation*}
		\height(P)+\height(Q) > \height(P')+\height(Q'). \qedhere
	\end{equation*}
\end{proof}

After obtaining a condition for the height function~$\height$ to be admissible on~$\triangDKK[G]$, the authors of~\cite{DKK12} constructed an explicit height function as follows.

\begin{definition}\label{def:DKK_height_function}
	Let~$P$ be a route in a framed graph~$(G,\preceq)$ given by the edges~$e_{i_1},\ldots,e_{i_k}$ and~$\varepsilon>0$ sufficiently small. The height \defn{$\height_\varepsilon$} of~$P$ is defined as \begin{equation*}
		\height_\varepsilon(P) := \sum_{1\leq a<c\leq k}\varepsilon^{c-a}{(\cI(e_a)+\cO(e_c))}^2.
	\end{equation*}
\end{definition}

\begin{proposition}[{\cite[Lem.3]{DKK12}}]\label{prop:DKKlem3_original}
	Let~$(G,\preceq)$ be a framed graph. The height function~$\height_\varepsilon(R)$ is an admissible height function for the triangulation~$\triangDKK[G]$ of~$\fpol(\bfi)$.
\end{proposition}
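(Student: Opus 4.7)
My plan is to apply Lemma~\ref{lem:DKKlem2_us_pro}: it suffices to verify the strict inequality $\height_\varepsilon(P) + \height_\varepsilon(Q) > \height_\varepsilon(P') + \height_\varepsilon(Q')$ for any two routes $P, Q$ in minimal conflict at some $[v_i, v_j]$ with resolvents $P' = Pv_iQ$ and $Q' = Qv_iP$. I would study the difference $\Delta := \height_\varepsilon(P) + \height_\varepsilon(Q) - \height_\varepsilon(P') - \height_\varepsilon(Q')$ as a polynomial in $\varepsilon$ and show that the coefficient of its smallest occurring power of $\varepsilon$ is positive, so that choosing $\varepsilon$ sufficiently small gives $\Delta > 0$.

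To compute $\Delta$, I would decompose each of $P, Q, P', Q'$ into three blocks: the prefix up to $v_i$, the common middle $\mu$ of length $m$ from $v_i$ to $v_j$ shared by all four routes, and the suffix from $v_j$ onward. Writing the edges as $P = A\cdot C\cdot D$, $Q = B\cdot C\cdot E$, $P' = A\cdot C\cdot E$ and $Q' = B\cdot C\cdot D$, I would expand $\Delta$ over ordered pairs of edges and group the contributions according to which blocks the two edges belong to. Every pair type other than $(A,D), (A,E), (B,D), (B,E)$ produces identical contributions in exactly two of the four routes with matching relative distances -- the middle block $C$ has the same length in every route, and each edge of $C$ sits at the same offset from the incoming edge at $v_i$ and the outgoing edge at $v_j$ -- so these contributions cancel in $\Delta$, leaving only a sum over the four ``cross'' pair-types.

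The smallest exponent of $\varepsilon$ in this remaining sum is $\varepsilon^{m+1}$, attained uniquely when the first edge is the incoming edge at $v_i$ of its route (namely $\alpha_P$ or $\alpha_Q$) and the second is the outgoing edge at $v_j$ of its route (namely $\beta_P$ or $\beta_Q$). Setting $x = \cI(\alpha_P)$, $y = \cI(\alpha_Q)$, $u = \cO(\beta_P)$, and $v = \cO(\beta_Q)$, the coefficient of $\varepsilon^{m+1}$ in $\Delta$ simplifies by direct expansion of the four squared sums to a nonzero scalar multiple of $(x-y)(u-v)$. The minimal conflict hypothesis says that $\alpha_P, \alpha_Q$ are adjacent in $\preceq_{\cI_i}$ and $\beta_P, \beta_Q$ are adjacent in $\preceq_{\cO_j}$ with the two orderings disagreeing, which pins down the sign of this coefficient so that the $\varepsilon^{m+1}$ term of $\Delta$ is strictly positive. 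All other contributions to $\Delta$ are of order $\varepsilon^{m+2}$ or higher, so for $\varepsilon > 0$ sufficiently small the leading term dominates and $\Delta > 0$. The hardest step will be the careful bookkeeping of edge positions across the four routes, which have different total lengths, to confirm that every non-leading pair type really does cancel; once this is done, the leading-order algebraic identity and the sign argument from the conflict condition close the proof.
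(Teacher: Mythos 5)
Your strategy is the right one and closely mirrors the way the paper itself verifies its specialized height function in Theorem~\ref{thm:epsilonheight}: reduce via Lemma~\ref{lem:DKKlem2_us_pro} to a single minimal conflict, write $P = A\cdot C\cdot D$, $Q = B\cdot C\cdot E$, $P' = A\cdot C\cdot E$, $Q' = B\cdot C\cdot D$, observe that every pair-type other than the four ``cross'' types cancels because positions within $A$, $B$, $C$, $D$, $E$ and the length of $C$ are preserved across the four routes, and then isolate the minimal-exponent term. The algebra you state is also correct: the four cross pairs at exponent $\varepsilon^{m+1}$ combine to $(x+u)^2 + (y+v)^2 - (x+v)^2 - (y+u)^2 = 2(x-y)(u-v)$.

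The gap is in the sign claim at the end, which you assert rather than check, and which in fact comes out wrong with $\height_\varepsilon$ as stated in Definition~\ref{def:DKK_height_function}. A minimal conflict means precisely that the prefix ordering and suffix ordering disagree: if $\cI(\alpha_P) < \cI(\alpha_Q)$ then $\cO(\beta_P) > \cO(\beta_Q)$, so $(x-y)$ and $(u-v)$ have opposite signs and $2(x-y)(u-v) < 0$. Hence with the plus sign of Definition~\ref{def:DKK_height_function} you obtain $\Delta < 0$ at leading order, which is the \emph{opposite} of the inequality required by Lemma~\ref{lem:DKKlem2_us_pro}. This sign mismatch is actually inherited from the thesis: Definition~\ref{def:DKK_height_function} carries a plus sign while the specialization in Lemma~\ref{lem:epsilonheight} carries a minus sign, and it is the negated version that the proof of Theorem~\ref{thm:epsilonheight} relies on (there the leading contribution is $-\varepsilon^{y-x}\cdot(-2) = 2\varepsilon^{y-x} > 0$). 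To close your argument you need to (a) negate $\height_\varepsilon$ to match the convention actually used, and (b) state explicitly that the minimal conflict condition forces $(x-y)(u-v) = -1$; only then does the leading term become strictly positive and the inequality hold for $\varepsilon$ small. As written, the assertion that the $\varepsilon^{m+1}$ coefficient is ``strictly positive'' is the step that fails.
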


\subsection{Postnikov-Stanley Subdivisions}\label{ssec:ps_subdivisions}

Another way to subdivide flow polytopes consists on dividing them into two polytopes that are integrally equivalent to other flow polytopes. We refer the curious reader (out of respect like other flow polytope papers do) to~\cite{P1014} and~\cite{S00} for the birthing place of this subdivision. For an actual detailed view we recommend~\cite{MM13} and~\cite{MM19}. For this subsection we follow~\cite{MM19}. 

\begin{definition}\label{def:reduction_rule}
	Let~$G$ be a graph on vertices~$\{v_0,\ldots,v_n\}$ with edges~$(v_i,v_j),(v_j,v_k)\in E(G)$. The \defn{basic reduction}\index{graph!basic reduction} is the creation of the graphs~$G_1$ and~$G_2$ on vertices~$\{v_0,\ldots,v_n\}$ and edges \begin{equation*}\label{eq:post_stanley_reductions}\begin{split}
		E(G_1) & := E(G)\setminus\{(v_j,v_k)\}\cup \{(v_i,v_k)\}, \\
		E(G_2) & := E(G) \setminus\{(v_i,v_j)\} \cup \{(v_i,v_k)\}.
	\end{split}
\end{equation*}
\end{definition}

\begin{proposition}[{\cite{P1014},\cite{S00},\cite[Prop.3.1]{MM19}}]\label{reduction proposition}
	Let~$G$ be a graph on~$\{v_0, \ldots, v_n\}$ with a pair of edges~$e_1,e_2$ on which the basic reduction can be done. Then we have
	\begin{equation*}
		\fpol[G](\bfa)=\cP_1 \cup \cP_2 \quad\text{ and }\quad \mathcal \cP_1^{\circ} \cap \cP_2^{\circ} = \varnothing,
	\end{equation*}
	where~$\cP_i$ is integrally equivalent to~$\fpol[G_i](\bfa)$ and~$\cP^{\circ}$ denotes the interior of the polytope~$\cP$.
\end{proposition}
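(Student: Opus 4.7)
The plan is to decompose the flow polytope $\fpol(\bfa)$ along the hyperplane $H = \{f : f(e_1) = f(e_2)\}$ and then realize each half as integrally equivalent to the flow polytope of one of the reduced graphs. The two halves will be
\[
\cP_1 := \{f \in \fpol(\bfa) : f(e_1) \geq f(e_2)\} \quad\text{and}\quad \cP_2 := \{f \in \fpol(\bfa) : f(e_1) \leq f(e_2)\},
\]
and writing $e_1 = (v_i,v_j)$, $e_2 = (v_j,v_k)$, $e = (v_i,v_k)$, the natural piecewise-affine maps $\phi_r : \cP_r \to \fpol[G_r](\bfa)$ are
\[
\phi_1(f)(e') = \begin{cases} f(e_1) - f(e_2) & \text{if } e' = e_1, \\ f(e_2) & \text{if } e' = e, \\ f(e') & \text{otherwise,} \end{cases}
\qquad
\phi_2(f)(e') = \begin{cases} f(e_2) - f(e_1) & \text{if } e' = e_2, \\ f(e_1) & \text{if } e' = e, \\ f(e') & \text{otherwise.} \end{cases}
\]

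The first step is to verify that $\cP_1 \cup \cP_2 = \fpol(\bfa)$ and $\cP_1^\circ \cap \cP_2^\circ = \emptyset$. These are immediate: every admissible flow satisfies at least one of the defining inequalities, and the two interiors are separated by the affine hyperplane $H$, which meets each $\cP_r$ in a proper face. Next I would confirm that $\phi_r$ lands in $\fpol[G_r](\bfa)$: the image is nonnegative on each $\cP_r$ by construction (this is exactly why the split along $H$ is required), and conservation of flow is preserved vertex by vertex. At $v_i$, $v_j$, $v_k$ the net incoming/outgoing contributions of $\{e_1, e_2\}$ in $G$ are reproduced by the contributions of $\{e_1, e\}$ in $G_1$ (resp.\ $\{e_2, e\}$ in $G_2$); at every other vertex the flow is untouched.

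The main step is bijectivity and integral equivalence. Each $\phi_r$ is the restriction of an affine map whose linear part is given by an invertible integer matrix (essentially an elementary row/column operation on the edge coordinates), so the inverse is also affine with integer coefficients. Concretely for $\phi_1$, the inverse sends $g \in \fpol[G_1](\bfa)$ to the flow $f$ with $f(e_1) = g(e_1) + g(e)$, $f(e_2) = g(e)$, agreeing with $g$ elsewhere; this flow satisfies $f(e_1) \geq f(e_2)$ because $g(e_1) \geq 0$, so it lies in $\cP_1$. One then checks that $\phi_r$ and its inverse restrict to mutually inverse bijections between $\cP_r \cap \ZZ^{|E(G)|}$ and $\fpol[G_r](\bfa) \cap \ZZ^{|E(G_r)|}$, which together with affinity yields integral equivalence in the sense of Definition~\ref{def:combinatorial_equivalence}.

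I expect the easiest obstacle to stumble on to be bookkeeping: making sure that the non-negativity of every edge-flow and the conservation equations at $v_i, v_j, v_k$ are all simultaneously preserved, and that the case $f(e_1) = f(e_2)$ (the common facet) is mapped consistently by both $\phi_1$ and $\phi_2$ to the same locus in the respective images, so the overlap $\cP_1 \cap \cP_2$ corresponds exactly to the face $\{g(e) = 0\}$ or $\{g(e_1) = 0\}$/$\{g(e_2) = 0\}$ on each side. Once this is pinned down, the result follows.
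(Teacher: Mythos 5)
Your proof is correct. The paper itself does not prove this proposition — it is stated as a citation to Postnikov, Stanley, and M\'esz\'aros–Morales \cite[Prop.~3.1]{MM19} — but your argument is exactly the standard one found in those references: split $\fpol(\bfa)$ along the hyperplane $f(e_1)=f(e_2)$ into $\cP_1,\cP_2$, and identify each piece with the target flow polytope via a unimodular affine change of edge coordinates. One tiny remark: each $\phi_r$ is actually a globally affine (indeed linear, with unimodular integer matrix) map on the ambient edge space $\RR^{|E(G)|}$, not merely piecewise-affine; the piecewise quality only enters if you tried to glue $\phi_1$ and $\phi_2$ into a single map on all of $\fpol(\bfa)$. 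Since Definition~\ref{def:combinatorial_equivalence} asks for a single affine map $\RR^n\to\RR^m$ restricting to a bijection, the fact that each $\phi_r$ is globally affine is precisely what you need, so it is worth stating that explicitly rather than describing the maps as piecewise-affine.
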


Figure~\ref{fig:subdiv} contains an illustration of the basic reduction.

The repeated use of basic reductions (called a \defn{compounded reduction}\index{graph!compounded reduction}) on~$G$ together with some extra conditions on when one discards a resulting graph yields a triangulation of~$\fpol(\bfa)$ called a \defn{Postnikov-Stanley triangulation}\index{flow polytope!triangulations!Postnikov-Stanley} Indeed, at the end of a compounded reduction one obtains multi-graphs with edges of the form~$(v_i,v_n)$ for~$i\in[0,n-1]$. Like in Example~\ref{ex:famous_flow_polytope}, the flow polytopes of these graphs are simplices of the highest dimension.

\begin{figure}[h!]
	\centering
	\includegraphics[scale=1.2]{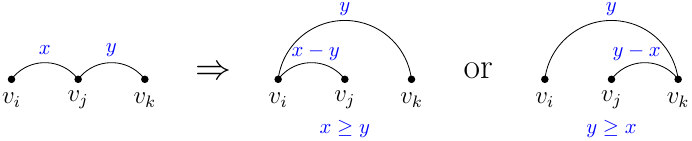}
	\caption[The basic reduction of flow polytopes.]{ The basic reduction of flow polytopes. Figure based on~\cite{MM19}.}\label{fig:subdiv}
\end{figure}

The choosing on the vertices or the pairs of edges taken for each step of the compounded reductions can change the final triangulation. In~\cite{MMS19} it was shown that Postnikov-Stanley triangulations coincided in certain cases with DKK triangulations by doing a compounded reduction following the framing of~$G$. The triangulations obtained like this are called \defn{framed Postnikov-Stanley triangulations}\index{flow polytope!triangulations!framed Postnikov-Stanley}. In our interest, this includes the case of~$\fpol(\bfi)$ which we now present.

\begin{definition}
	Let~$(G,\preceq)$ be a framed graph with netflow~$\bfd$. We define the function \defn{$\Omega_{G,\preceq}$} between the maximal cliques~$\maxcliques[G]$ and the integer flows of~$\fpol(\bfd)$ as \begin{equation*}
		\Omega_{G,\preceq}(C):={\big(n_C(e)-1\big)}_{e\in E(G)}
	\end{equation*}
	where~$n_C(v_i,v_j):=|\{P\in C\,:\, (v_i,v_j) \text{ is in the prefix } Pv_j\}|$.
\end{definition}

\begin{proposition}[{\cite[Thm 7.8]{MMS19}}]\label{prop:bij_cliques_intflows}
	Given a framed graph~$(G,\preceq)$, the map~$\Omega_{G,\preceq}$ is a bijection between maximal cliques in~$\maxcliques$ and integer flows in~$\fpol(\bfd)$.
\end{proposition}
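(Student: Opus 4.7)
The plan is to establish the theorem in two stages: first verifying that $\Omega_{G,\preceq}$ is well defined as a map into $\fpol[G]^\ZZ(\bfd)$, and then proving bijectivity by combining a cardinality argument with an explicit inverse construction dictated by the framing.

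For well-definedness I would check that for any maximal clique $C$, the vector $(n_C(e)-1)_{e\in E}$ is a non-negative integer $\bfd$-flow. Non-negativity would follow from maximality: if some edge $e$ of $G$ were used by no route of $C$, any route traversing $e$ would be coherent with every route in $C$ (using that the framing only creates conflicts between routes that actually share a subroute and differ on both sides of it), contradicting maximality. Flow conservation at each internal vertex $v_i$ would reduce to the identity $\sum_{e\in\cI_i}n_C(e)=\sum_{e\in\cO_i}n_C(e)$, since both sides count the routes of $C$ passing through $v_i$; the $-1$ shift on each of the $\indeg_i$ incoming edges then cancels precisely against the source contribution $d_i=\indeg_i-1$ in $\bfd$.

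For bijectivity I would first match cardinalities. By Proposition~\ref{prop:flow_polytope_volume_full} the normalized volume of $\fpol[G](\bfi)$ is $K_G(\bfd)$, which by definition is exactly $|\fpol[G]^\ZZ(\bfd)|$. On the other hand the DKK triangulation is unimodular (its simplices $\Delta_C$ have $0/1$ vertices and normalized volume one by Proposition~\ref{prop:flow_polytope_vertices_bfi}), so $|\maxcliques[G]|$ equals this same volume. It then suffices to construct an inverse $\Psi:\fpol[G]^\ZZ(\bfd)\to\maxcliques[G]$. Given $f$, form the multiset $M_f$ in which each edge $e$ appears $f(e)+1$ times; the flow-conservation equations together with the $+1$ shift guarantee that at each internal vertex $v_i$ the total incoming multiplicity equals the total outgoing multiplicity, and the total outgoing multiplicity at $v_0$ equals the total incoming multiplicity at $v_n$. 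The framing then prescribes a canonical decomposition of $M_f$ into routes from $v_0$ to $v_n$ by matching, at each internal vertex, the $k$-th incoming edge under $\preceq_{\cI_i}$ (counted with multiplicity) with the $k$-th outgoing edge under $\preceq_{\cO_i}$.

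The main obstacle will be showing that the route multiset $\Psi(f)$ produced by this diagonal matching is indeed a maximal coherent clique. Coherence follows because the diagonal matching is order preserving: whenever two routes share a common subroute $[v_i,v_j]$, the relative order of their incoming edges at $v_i$ forces the relative order of their outgoing edges at $v_j$ to agree, ruling out the contradictory orderings forbidden by Definition~\ref{def:coherent_routes}. Maximality follows because $|\Psi(f)|=|E|-|V|+2=\dim\fpol[G](\bfi)+1$, the size of a maximal simplex of $\triangDKK[G]$. Finally, $\Omega\circ\Psi=\mathrm{id}$ is immediate from the construction, since edge $e$ appears in exactly $f(e)+1$ routes of $\Psi(f)$, and $\Psi\circ\Omega=\mathrm{id}$ follows from the observation that the routes of any maximal clique $C$ must themselves realize the diagonal matching at every internal vertex — any other pairing would create a conflicting pair of routes within $C$.
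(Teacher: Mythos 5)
Your flow-conservation check is arithmetically off, and the error traces to a misreading of $n_C$. With your reading (number of routes of $C$ containing the edge), both $\sum_{e\in\cI_i}n_C(e)$ and $\sum_{e\in\cO_i}n_C(e)$ count the routes of $C$ through $v_i$, say $k_i$; but then $\sum_{e\in\cI_i}(n_C(e)-1)+d_i = k_i-1$ while $\sum_{e\in\cO_i}(n_C(e)-1)=k_i-\outdeg_i$, and these match only when $\outdeg_i=1$. You account for the $-1$ shifts on the incoming edges but forget the ones on the outgoing edges. Concretely: take $G$ on $\{v_0,v_1,v_2\}$ with parallel edges $e_1,e_2\colon v_0\to v_1$ and $e_3,e_4\colon v_1\to v_2$, framed $e_1\prec e_2$, $e_3\prec e_4$, so $\bfd=(0,1,-1)$. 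The maximal clique $\{e_1e_3,\,e_1e_4,\,e_2e_4\}$ gives $(n_C(e)-1)_e=(1,0,0,1)$, which is not a $\bfd$-flow ($1+0+1=2\neq 0+1$ at $v_1$). The quantity that $\Omega$ must track, per \cite{MMS19}, is the number of \emph{distinct prefixes} $Pv_j$ among $P\in C$ whose last edge is $(v_i,v_j)$, not the number of routes containing $(v_i,v_j)$; with that reading the same clique yields $(0,0,0,1)$, a genuine $\bfd$-flow. Conservation then works because a maximal clique branches as a staircase at each $v_i$: if $a$ distinct prefixes enter $v_i$, the number of distinct one-step extensions is $a+\outdeg_i-1$, which is exactly what the shifted-indegree netflow requires.

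The same miscount sinks your inverse: the multiset $M_f$ with $f(e)+1$ copies of each edge is not balanced at internal vertices, since $\sum_{e\in\cI_i}(f(e)+1)$ exceeds $\sum_{e\in\cO_i}(f(e)+1)$ by $\outdeg_i-1$, so it admits no route decomposition at all. The correct inverse grows the tree of distinct prefixes vertex by vertex, extending the $\preceq$-ordered incoming prefix classes along overlapping intervals of outgoing edges of sizes $f(e)+1$, and reads off the maximal branches as the clique. Your cardinality step (unimodularity of $\triangDKK$ plus $\vol\fpol[G](\bfi)=K_G(\bfd)$) is sound and is a legitimate shortcut, and the observation that coherence forces an order-preserving matching is the right intuition — but it must act on prefix classes rather than on a route multiset. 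Note also that the paper gives no proof of this proposition; it cites \cite{MMS19}, whose own argument proceeds through the Postnikov--Stanley compounded reduction rather than a cardinality-plus-inverse argument, so even a corrected version of your approach would be a different route than the source's.
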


As a direct consequence of Proposition~\ref{prop:bij_cliques_intflows} we get the following result.

\begin{proposition}\label{prop:volume_intflows}
	For a graph~$G$ and netflow~$\bfd:=(0,d_1,\ldots,d_{n-1},-\sum_i d_i)$ where~$d_i=\indeg_i(G)-1$, we have that \begin{equation*}
		\vol\Big(\fpol(\bfi)\Big)=\Big|\fpol(\bfd)\Big|.	
	\end{equation*}
\end{proposition}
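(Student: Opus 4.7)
The plan is to combine the two main ingredients already available: the DKK triangulation of $\fpol(\bfi)$ given by Proposition~\ref{prop:DKK_triangulation}, and the bijection $\Omega_{G,\preceq}$ between maximal cliques and integer $\bfd$-flows given by Proposition~\ref{prop:bij_cliques_intflows}. The idea is to compute the normalized volume of $\fpol(\bfi)$ by counting the top-dimensional simplices of a unimodular triangulation, and then reinterpret this count on the integer-point side.

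First, I would fix any framing $\preceq$ on $G$ and apply Proposition~\ref{prop:DKK_triangulation} to obtain
\[\vol\Big(\fpol(\bfi)\Big) \;=\; \sum_{C \in \maxcliques} \vol(\Delta_C).\]
Next, I would verify that each simplex $\Delta_C$ is unimodular. This is the crucial point and, while not stated verbatim in the excerpt, is built into the DKK construction: the vertices of $\Delta_C$ are the $0/1$ indicator vectors $\mathds{1}_R$ for $R \in C$, and the edge vectors $\mathds{1}_R - \mathds{1}_{R'}$ between routes in a maximal clique form a $\ZZ$-basis of the lattice of integer flows in the affine hull of $\fpol(\bfi)$ (they come, up to sign, from elementary flow differences across pairs of coherent routes meeting the framing order). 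Hence each $\Delta_C$ has normalized volume $1$, which together with $\dim \Delta_C = |E|-n = \dim \fpol(\bfi)$ yields
\[\vol\Big(\fpol(\bfi)\Big) \;=\; \big|\maxcliques\big|.\]

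Finally, I would invoke Proposition~\ref{prop:bij_cliques_intflows}, which states that $\Omega_{G,\preceq}$ is a bijection between $\maxcliques$ and the set $\fpol^{\ZZ}(\bfd)$ of integer $\bfd$-flows, giving $|\maxcliques| = |\fpol(\bfd)|$ (interpreting $|\fpol(\bfd)|$ as the number of lattice points, as the notation of the statement suggests) and closing the chain of equalities.

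The main obstacle is the unimodularity step: if one wants to keep the argument self-contained rather than citing it from~\cite{DKK12} or~\cite{MMS19}, one must show that the matrix whose columns are the edge vectors of $\Delta_C$ has determinant $\pm 1$. The cleanest route is to order the routes of a maximal clique $C$ along the framing and observe that, up to a triangular change of coordinates indexed by edges of $G$ not on an arbitrarily fixed reference route, the matrix of edge vectors of $\Delta_C$ becomes the identity, proving unimodularity and hence that each $\Delta_C$ contributes exactly $1$ to the normalized volume.
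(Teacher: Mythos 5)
Your proposal is correct and takes essentially the same route the paper intends: the paper states the proposition as a direct consequence of Proposition~\ref{prop:bij_cliques_intflows}, and your argument fills in the implicit chain — DKK triangulation of~$\fpol(\bfi)$ into simplices~$\Delta_C$, unimodularity of each~$\Delta_C$ so that~$\vol(\fpol(\bfi)) = |\maxcliques|$, then the bijection~$\Omega_{G,\preceq}$ with integer~$\bfd$-flows. The unimodularity step you flag as the gap is indeed the one unstated ingredient, but rather than your ad hoc triangular change-of-coordinates argument, the standard justification is that the incidence matrix of a directed graph is totally unimodular, so every full-dimensional lattice simplex inside~$\fpol(\bfi)$ (whose vertices are the~$0/1$ route vectors) is automatically unimodular; the paper simply takes this for granted.
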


\section{Tropical Geometry}\label{sec:tropical_geometry}

We now move on to describe the bases of tropical geometry and an application of this context to the Cayley trick. We follow heavily~\cite{J17} and~\cite{J21}.

\begin{definition}\label{def:tropical_semiring}
	The \defn{tropical semiring}\index{tropical!semiring} of the min-plus algebra is the tuple~$(\TT,\odot,\oplus)$ formed by~$\TT:=\RR\cup\{\infty\}$ together with the~$\min$ operation as the tropical addition~$\oplus$ and the usual addition~$+$ as the tropical multiplication~$\odot$. The additive identity is~$\infty$ and the multiplicative identity is~$0$.
\end{definition}

\begin{definition}\label{def:tropical_polynomials}
	An \defn{$n$-variate tropical polynomial} is a linear combination of tropical monomials with possibly negative exponents and appear as \begin{equation*}
		F(\mathbf{x})=\bigoplus_{\mathbf{m}\in I}c_\mathbf{m}\odot \mathbf{x}^{\mathbf{m}}=\min_{\mathbf{m}\in I}\big(c_\mathbf{m}+\langle\mathbf{m},\mathbf{x}\rangle\big)
	\end{equation*} where~$I$ is a finite subset of~$\ZZ^n$ and~$c_m\in\TT$. The \defn{support}\index{tropical!support} of a polynomial is the set~$\supp(F)=\{\mathbf{m}\in I\,:\, c_\mathbf{m}\neq\infty\}$. The \defn{degree}\index{tropical!degree} of a monomial~$x_1^{m_1}\cdots x_n^{m_n}$ is~$\sum_{i=1}^nm_i$ and the \defn{degree} of a polynomial~$F$ is the maximal degree of its monomials.
\end{definition}

\begin{remark}\label{rem:tropical_polynomials_support_height_function_point_configuration}
	Notice that the support of a tropical polynomial can be seen as a point configuration on~$\ZZ^n$ equipped with a height function given by the coefficients~$c_m$. In the reverse direction, any lattice point configuration together with a lifting function determines a tropical polynomial. Thus, any interest of heightened point configurations can make use of tropical polynomials.
\end{remark}

\begin{figure}[h!]
	\centering
	\includegraphics[scale=1.5]{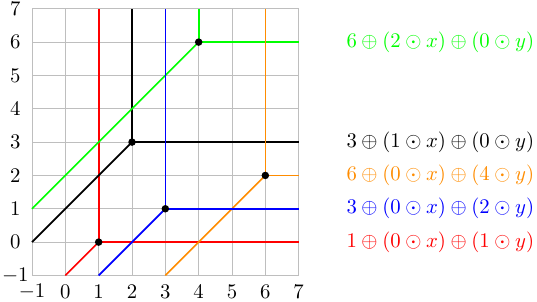}
	\caption{The tropical hypersurfaces of several tropical polynomials.}
	\label{fig:tropical_polynomials}
\end{figure}

In~$\RR$ hypersurfaces of varying degree come from seeing where polynomials vanish. In the tropical setting of~$\TT$ this is recontextualized by looking at where the minimum of a tropical polynomial is attained at least twice.

\begin{definition}\label{def:tropical_hypersurface}
	The \defn{tropical hypersurface} determined by~$F$ is the set \begin{equation*}
		\cT(F):= \left\{ \mathbf{x}\in \RR^n\, :\, \text{the minimum of~$F(\mathbf{x})$ is attained at least twice}\right\}.
	\end{equation*}
\end{definition}

Several tropical hypersurfaces are illustrated in Figure~\ref{fig:tropical_polynomials}.

\begin{proposition}\label{prop:tropical_hypersurface_multiplication}
	For tropical polynomials~$F_1,\ldots,F_k$ we have that \begin{equation*}
		\cT\Big(\bigodot_{i\in[k]} F_i\Big)=\bigcup_{i\in[k]}\cT(F_i).
	\end{equation*}
\end{proposition}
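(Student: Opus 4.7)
The plan is to unfold the definitions of tropical multiplication and tropical hypersurface, and observe that a monomial of $\bigodot_{i\in[k]} F_i$ corresponds to a choice of one monomial from each $F_i$, whose value at $\mathbf{x}$ is the sum of the values of the chosen monomials. So the minimum of $\bigodot F_i$ at $\mathbf{x}$ equals $\sum_i F_i(\mathbf{x})$ (reading each $F_i(\mathbf{x})$ as a numerical minimum), attained by any tuple that picks a minimizing monomial in each factor. The proof then reduces to a careful accounting of when two distinct such tuples exist.

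First I would set up notation: write $F_i = \bigoplus_{\mathbf{m}\in I_i} c_{i,\mathbf{m}} \odot \mathbf{x}^{\mathbf{m}}$ and unfold
\[
  \bigodot_{i\in[k]} F_i \;=\; \bigoplus_{(\mathbf{m}_1,\ldots,\mathbf{m}_k)\in I_1\times\cdots\times I_k} \Big(c_{1,\mathbf{m}_1}+\cdots+c_{k,\mathbf{m}_k}\Big)\odot \mathbf{x}^{\mathbf{m}_1+\cdots+\mathbf{m}_k}.
\]
For a fixed $\mathbf{x}$, let $M_i(\mathbf{x})\subseteq I_i$ be the set of exponent vectors realising the minimum in $F_i(\mathbf{x})$. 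Then the minimising tuples of $\bigodot F_i$ at $\mathbf{x}$ are exactly $M_1(\mathbf{x})\times\cdots\times M_k(\mathbf{x})$, since the value of the tuple decomposes as a sum over $i$.

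For the inclusion $\bigcup_i \cT(F_i)\subseteq \cT(\bigodot F_i)$: if $\mathbf{x}\in\cT(F_j)$ then $|M_j(\mathbf{x})|\geq 2$, so the product $M_1(\mathbf{x})\times\cdots\times M_k(\mathbf{x})$ has at least two elements, giving $\mathbf{x}\in\cT(\bigodot F_i)$. For the reverse inclusion: if $\mathbf{x}\in\cT(\bigodot F_i)$ then two distinct tuples minimise, so they differ in some coordinate $j$, forcing $|M_j(\mathbf{x})|\geq 2$ and hence $\mathbf{x}\in\cT(F_j)$.

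I do not expect any serious obstacle here: the only subtle point to flag is that one must view $\bigodot F_i$ as a tropical polynomial (with monomials indexed by tuples) rather than merely as the pointwise function $\sum_i F_i$, so that ``minimum attained at least twice'' is interpreted in terms of distinct exponent tuples. Once the indexing of monomials of a tropical product is made explicit, both inclusions are immediate from the factorised form of the value at $\mathbf{x}$.
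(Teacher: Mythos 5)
Your proof is correct, and it is the standard argument; the paper states this proposition without proof (citing the surrounding tropical-geometry references), so there is no paper argument to compare against. The factorization of the set of minimizers as $M_1(\mathbf{x})\times\cdots\times M_k(\mathbf{x})$ is exactly the right observation, and both inclusions follow cleanly from it.

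The one point worth tightening is the indexing issue you flag. Under the paper's Definition of a tropical polynomial the index set $I$ is a subset of $\ZZ^n$, so ``minimum attained twice'' for $\bigodot F_i$ should be read over distinct \emph{exponent vectors}, not over distinct tuples in $I_1\times\cdots\times I_k$ (two distinct tuples may sum to the same exponent, in which case they collapse to a single monomial of the product). This is not a gap, because the two readings coincide here, but the coincidence deserves a sentence. For $\bigcup_i\cT(F_i)\subseteq\cT(\bigodot F_i)$: given $|M_j(\mathbf{x})|\geq 2$, choose two elements of $\prod_i M_i(\mathbf{x})$ differing \emph{only} in coordinate $j$; their exponent sums then differ (they agree in every other coordinate), and since the coefficient of each exponent $\mathbf{e}$ in the reduced product is the tropical sum (minimum) over all tuples summing to $\mathbf{e}$, both exponents attain the global minimum $\sum_i F_i(\mathbf{x})$. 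For the reverse inclusion: two distinct minimizing exponents lift to two distinct minimizing tuples, both lying in $\prod_i M_i(\mathbf{x})$, and from there your argument applies verbatim. With this small addition the proof is complete and matches the paper's conventions exactly.
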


\subsection{Polyhedral Constructions}

Tropical geometry is of particular use to us since it has a deep relation with polyhedral geometry. In this section we describe two polyhedral constructions we need as tools called the dome and the Newton polytope of a tropical polynomial. We begin with the following crucial remark.

\begin{remark}\label{rem:tropical_hypersurface_are_complexes}
	Notice that as~$F(\mathbf{x})$ consists only of linear combinations and minima,~$\cT(F)$ is an~${n-1}$-polyhedral complex.
\end{remark}

\begin{definition}\label{def:tropical_dome}
	For a tropical polynomial~$F$, its \defn{dome}\index{tropical!dome} is the unbounded~$n+1$-polyhedron \begin{equation*}\begin{split}
			\cD(F)&:=\left\{(\mathbf{x},s)\in\RR^{n+1}\,:\, \mathbf{x}\in \RR^n,s\in\RR,s\leq F(\mathbf{x})\right\}\\
			&=\bigcap_{\mathbf{x}\in \supp(F)}\left\{(\mathbf{p},s)\in\RR^{n+1}\,:\, s\leq c_\mathbf{m}+ \langle \mathbf{m},\mathbf{x}\rangle\right\}.
		\end{split}
	\end{equation*}

\end{definition}

The dome helps us as it recovers in the polyhedral setting the geometry of the tropical hypersurface.

\begin{proposition}[{\cite[Cor. 1.6]{J21}}]\label{prop:tropical_dome_tropical_hypersurface}
	The tropical hypersurface~$\cT(F)$ is the image of the~$n-1$-skeleton of its dome~$\cD(F)$ under the projection that forgets the last coordinate.
\end{proposition}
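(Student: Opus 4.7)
The plan is to identify the upper boundary of the dome $\cD(F)$ with the graph of $F$, and then show that the locus where two or more facets of the dome meet projects exactly onto $\cT(F)$.

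First, I would unpack the definition of $\cD(F)$. By writing it as the intersection of the halfspaces $H_\mathbf{m}^- = \{(\mathbf{x},s) : s \leq c_\mathbf{m} + \langle \mathbf{m},\mathbf{x}\rangle\}$ for $\mathbf{m} \in \supp(F)$, it becomes clear that $\cD(F)$ is an unbounded polyhedron of dimension $n+1$ whose $n$-dimensional faces (facets) are supported on the hyperplanes $H_\mathbf{m} = \{(\mathbf{x},s) : s = c_\mathbf{m} + \langle \mathbf{m},\mathbf{x}\rangle\}$. I would note that the facet $F_\mathbf{m}$ associated to $\mathbf{m}$ consists precisely of the points $(\mathbf{x},s)$ with $s = c_\mathbf{m} + \langle \mathbf{m},\mathbf{x}\rangle$ and $c_\mathbf{m} + \langle \mathbf{m},\mathbf{x}\rangle \leq c_{\mathbf{m}'} + \langle \mathbf{m}',\mathbf{x}\rangle$ for every other $\mathbf{m}' \in \supp(F)$. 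In particular, every boundary point $(\mathbf{x},s)$ of $\cD(F)$ satisfies $s = F(\mathbf{x})$, so the boundary is exactly the graph of $F$.

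Next I would set up the bijection. Let $\pi : \RR^{n+1} \to \RR^n$ be the projection forgetting the last coordinate. Since each boundary point $(\mathbf{x},s)$ of $\cD(F)$ satisfies $s = F(\mathbf{x})$, the projection is injective on $\partial \cD(F)$ with inverse $\mathbf{x} \mapsto (\mathbf{x},F(\mathbf{x}))$. The $(n-1)$-skeleton of $\cD(F)$ consists of all faces of codimension at least $2$ in $\cD(F)$, equivalently, of points that lie in the intersection of at least two distinct facets $F_{\mathbf{m}_1}$ and $F_{\mathbf{m}_2}$.

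Now I would prove the two inclusions. For ``$\subseteq$'', take $\mathbf{x} \in \cT(F)$, so by Definition~\ref{def:tropical_hypersurface} there exist $\mathbf{m}_1 \neq \mathbf{m}_2$ with $c_{\mathbf{m}_1} + \langle \mathbf{m}_1,\mathbf{x}\rangle = c_{\mathbf{m}_2} + \langle \mathbf{m}_2,\mathbf{x}\rangle = F(\mathbf{x})$. Then $(\mathbf{x},F(\mathbf{x}))$ lies in $F_{\mathbf{m}_1} \cap F_{\mathbf{m}_2}$, hence in the $(n-1)$-skeleton, and projects to $\mathbf{x}$. For ``$\supseteq$'', let $(\mathbf{x},s)$ belong to the $(n-1)$-skeleton, so it lies on two distinct facets $F_{\mathbf{m}_1}$ and $F_{\mathbf{m}_2}$. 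Being on a facet forces $s = F(\mathbf{x})$, and simultaneously $s = c_{\mathbf{m}_1} + \langle \mathbf{m}_1,\mathbf{x}\rangle = c_{\mathbf{m}_2} + \langle \mathbf{m}_2,\mathbf{x}\rangle$, so the minimum in $F(\mathbf{x})$ is attained at least twice and $\mathbf{x} \in \cT(F)$.

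The main subtlety I expect is being careful with degenerate cases: some $\mathbf{m} \in \supp(F)$ might not contribute a facet of $\cD(F)$ (if its halfspace is redundant), and two distinct $\mathbf{m}_1, \mathbf{m}_2$ might support the same facet (if their affine functions agree on the relevant region, which can happen if the linear forms coincide up to constants). I would handle this by replacing ``facet'' with ``supporting hyperplane from $\supp(F)$'' in both directions of the argument, so that the equivalence between ``minimum attained twice'' and ``on the intersection of at least two facet-supporting hyperplanes that are attained'' is kept tight. Once this bookkeeping is done, the projection $\pi$ restricted to $\partial\cD(F)$ sends the $(n-1)$-skeleton bijectively onto $\cT(F)$, yielding the proposition.
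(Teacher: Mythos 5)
The paper states this as a cited result (\cite[Cor.\ 1.6]{J21}) and supplies no proof of its own, so there is no internal argument to compare against; you are being asked to reconstruct a proof from the literature. Your reconstruction is correct. The boundary of $\cD(F)$ is the graph of $F$, the projection is injective there, and you correctly reduce the claim to showing that a point $(\mathbf{x},F(\mathbf{x}))$ lies in the $(n-1)$-skeleton exactly when the minimum in $F(\mathbf{x})$ is attained at least twice. The subtlety you flag about redundant halfspaces and non-facet-defining monomials is real, and the fact that quietly dispatches it deserves to be stated: distinct exponent vectors $\mathbf{m}_1\ne\mathbf{m}_2$ in $\ZZ^n$ necessarily give distinct affine functions, so if the minimum is attained by two monomials at $\mathbf{x}$, no single affine function can agree with $F$ on a neighborhood of $\mathbf{x}$, and therefore $(\mathbf{x},F(\mathbf{x}))$ cannot lie in the relative interior of any facet. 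Conversely, a unique minimizer at $\mathbf{x}$ remains unique in a neighborhood and puts $(\mathbf{x},F(\mathbf{x}))$ in the relative interior of the corresponding facet. This is a slightly cleaner way to close the $\supseteq$ direction than invoking ``lies on two distinct facets,'' since it sidesteps the question of whether both minimizing monomials contribute genuine facets; but your version is also fine once one accepts the standard fact that a point in the $(n-1)$-skeleton of a polyhedron lies in at least two facets. Your approach is essentially the standard one.
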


In other words, if~$F$ is an~$n$-variate polynomial,~$\cT(F)$ is a~$(n-1)$-dimensional polyhedral complex and the connected components of its complement are projections of the relative interiors of facets of the dome~$\cD(F)$. In particular, each facet of the dome corresponds to a tropical monomial~$x_1^{a_1}\cdots x_n^{a_n}$ and the edges correspond to when the minimum is shared between two monomials. See Figure~\ref{fig:tropical_normal_complex} (left) for an example.

\begin{figure}[h!]
	\centering
	\includegraphics[scale=1.5]{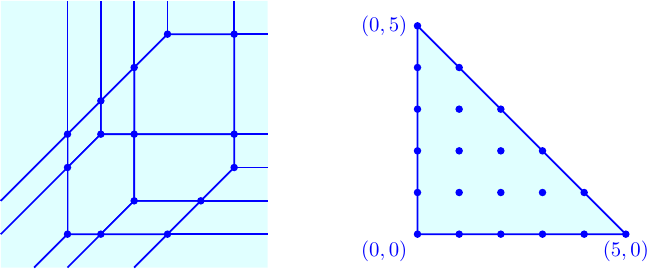}
	\caption[The orthogonal projection of the facets of~$\cD(F)$ and~$\cN(F)$.]{ The orthogonal projection of the facets of~$\cD(F)$ (left) and~$\cN(F)$ (right) where~$F$ is the product of the tropical polynomials in Figure~\ref{fig:tropical_polynomials}.}
	\label{fig:tropical_normal_complex}
\end{figure}

The second polytope related to tropical polynomials is the Newton polytope associated to the support points of~$F$. See Figure~\ref{fig:tropical_normal_complex} (right) for the Newton polytope of our running example.

\begin{definition}\label{def:tropical_newton_polytope}
	Given a tropical polynomial~$F(x)$, the convex hull of its support is called the \defn{Newton polytope}\index{tropical!Newton polytope} of~$F$. That is, \begin{equation*}
		\cN(F):=\conv\left(\mathbf{m}\in\ZZ^d \,:\, \mathbf{m}\in\supp(F)\right).
	\end{equation*}
	The \defn{extended Newton polytope} corresponds to the Minkowski sum
	\begin{equation*}
		\begin{split}
			\widetilde{\cN}(F):=&\conv\left((\mathbf{m},r)\in\ZZ^d\times\RR \,:\, \mathbf{m}\in\supp(F),r\geq c_\mathbf{m}\right)\\
			=&\conv\left((\mathbf{m},c_{\mathbf{m}})\in\ZZ^d\times\RR \,:\, \mathbf{m}\in\supp(F)\right)+\cone(\mathbf{e_{n+1}}).
		\end{split}
	\end{equation*}
	Following Remark~\ref{rem:tropical_polynomials_support_height_function_point_configuration}, consider~$\supp(F)$ as a point configuration. The projection downwards of the faces of~$\widetilde{N}(F)$ is a regular subdivision of the Newton polytope~$\cN(F)$ called the \defn{dual subdivision}\index{tropical!dual subdivision} and denoted as \defn{$\cS(F)$}.
\end{definition}

\begin{remark}\label{rem:newton_polytope_multiplication}
	Notice that the product of tropical monomials is
	\begin{equation*}
		\begin{split}
			F(\mathbf{x})& = \bigodot_{i\in[k]}F_i(\mathbf{x}) = \bigodot_{i\in[k]}\bigoplus_{\mathbf{m}^i\in I_i}c_{\mathbf{m}^i}\odot \mathbf{x}^{\mathbf{m}^i}\\
			& =\bigoplus_{\mathbf{M}\in \cI}\bigodot_{i\in[k]}c_{\mathbf{m}^i}\odot \mathbf{x}^{\mathbf{m}^i}=\bigoplus_{\mathbf{M}\in \cI}c_{\mathbf{m}^1}+\cdots+c_{\mathbf{m}^k}\odot \mathbf{x}^{\mathbf{m}^1+\cdots+\mathbf{m}^k}\\
		\end{split}
	\end{equation*} where~$I_i\subset\in \ZZ^n$,~$\cI=I_1\times\cdots\times I_k$ and~$\mathbf{M}=(\mathbf{m}^1,\ldots,\mathbf{m}^k)$. Thus, for the product of tropical polynomials~$F=\bigodot_{i\in[k]}F_i$, the corresponding Newton polytope is \begin{equation*}
		\cN(F)=\cN(F_1)+\cdots+\cN(F_k).
	\end{equation*} Moreover, abusing the notation of Minkowski sums, we have that~$\supp(F)=\supp(F_1)+\cdots+\supp(F_k)$.
\end{remark}

It turns out that the dome and the Newton polytope of tropical polynomials are actually dual constructions. Moreover, our constructions~$\cT(F)$,~$\cD(F)$,~$\cN(F)$ and~$\cS(F)$ are related in the following way.

\begin{proposition}[{\cite[Thm.1.13]{J21}}]\label{prop:bijections_dome_newton_polytope}
	Let~$F$ be an~$n$-variate tropical polynomial.
	\begin{itemize}
		\item The faces of the dome~$\cD(F)$ have an inclusion-reversing bijection with the bounded faces of the extended Newton polytope~$\widetilde{\cN}(F)$.
		\item The orthogonal projection of the proper faces of~$\widetilde{\cN}(F)$ give the cells of the dual subdivision~$\cS(F)$.
		\item The~$k$-dimensional cells of~$\cT(F)$ are in bijection with the~$(n-k)$-dimensional cells of~$\cS(F)$ for~$0\leq k < n$.
	\end{itemize}
\end{proposition}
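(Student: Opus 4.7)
The plan is to establish the three items sequentially, building on the fact that $\cD(F)$ and $\widetilde{\cN}(F)$ are $(n+1)$-dimensional polyhedra related by a polar-like duality. For the first item, I would construct the bijection explicitly. Each $\mathbf{m} \in \supp(F)$ determines a supporting half-space $H_{\mathbf{m}}^- := \{(\mathbf{x},s) : s \leq c_{\mathbf{m}} + \langle \mathbf{m}, \mathbf{x}\rangle\}$ of $\cD(F)$, whose bounding hyperplane is $H_{\mathbf{m}}$. Since $\cD(F) = \bigcap_{\mathbf{m}\in\supp(F)} H_{\mathbf{m}}^-$, every face $G$ of $\cD(F)$ is determined by the set of monomials $I(G) := \{\mathbf{m} \in \supp(F) : G \subseteq H_{\mathbf{m}}\}$, i.e.\ those achieving the minimum on $G$. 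Define $\Phi(G) := \conv\{(\mathbf{m}, c_{\mathbf{m}}) : \mathbf{m} \in I(G)\}$. I would then verify that $\Phi(G)$ is a face of the lower envelope of $\widetilde{\cN}(F)$ by exhibiting a supporting hyperplane with normal vector derived from the slope common to $G$, and that all bounded faces of $\widetilde{\cN}(F)$ (which are exactly the lower-envelope faces, since $\cone(\mathbf{e_{n+1}})$ is the only recession direction) arise this way. The inclusion-reversal is immediate: enlarging $G$ forces fewer hyperplanes to contain it, shrinking $I(G)$ and hence $\Phi(G)$.

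For the second item, I would note that any bounded face of $\widetilde{\cN}(F)$ is of the form $\conv\{(\mathbf{m}, c_{\mathbf{m}}) : \mathbf{m} \in J\}$ for some $J \subseteq \supp(F)$, since the upward ray cannot be contained in a bounded face. Its orthogonal projection to $\RR^n$ is $\conv\{\mathbf{m} : \mathbf{m} \in J\}$, a cell of $\cN(F)$. Collecting these projections recovers exactly the regular subdivision of $\supp(F)$ induced by the lifting $\mathbf{m} \mapsto c_{\mathbf{m}}$, which by Definition~\ref{def:tropical_newton_polytope} is $\cS(F)$.

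For the third item, I would combine the first two with Proposition~\ref{prop:tropical_dome_tropical_hypersurface}. The $(n-1)$-skeleton of $\cD(F)$ consists of faces of dimension at most $n-1$, which via $\Phi$ correspond to bounded faces of $\widetilde{\cN}(F)$ of dimension at least $1$ (since $\dim G + \dim \Phi(G) = n$, so $k \leq n-1$ forces $n-k \geq 1$). Both projections—the one forgetting $s$ on the dome side, producing $\cT(F)$, and the orthogonal projection giving $\cS(F)$—are injective and dimension-preserving on the relevant faces: on the dome side because every proper face lies in the graph $s = F(\mathbf{x})$ of the minimum, so $s$ is a function of $\mathbf{x}$; on the Newton side because bounded faces cannot contain the vertical ray $\mathbf{e_{n+1}}$. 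Chaining the dimension-preserving projection with the dimension-complementing bijection $\Phi$ yields the claimed bijection between $k$-cells of $\cT(F)$ and $(n-k)$-cells of $\cS(F)$ for $0 \leq k < n$.

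The main obstacle I anticipate is the careful verification, in the first item, that $\Phi$ is truly a bijection onto the bounded faces of $\widetilde{\cN}(F)$: namely, showing that a subset $I \subseteq \supp(F)$ equals $I(G)$ for some face $G$ of $\cD(F)$ if and only if $\{(\mathbf{m}, c_{\mathbf{m}}) : \mathbf{m} \in I\}$ is the vertex set of a face of the lower envelope of $\widetilde{\cN}(F)$. The forward direction is straightforward by extracting a common supporting hyperplane from the slope of $G$, but the reverse direction requires showing that any such lower-envelope face admits a matching linear functional $(\mathbf{x}, -1)$ minimizing the last coordinate, thus producing the face of $\cD(F)$ where precisely the monomials in $I$ achieve the minimum.
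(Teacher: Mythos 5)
The paper does not prove this statement; it is cited directly from Joswig's book \cite[Thm.~1.13]{J21}, and the paper simply uses it (e.g.\ to establish Lemma~\ref{lem:tropical_dual_interior}). So there is no in-paper proof to compare against. Your argument is the standard lifting duality: the dome $\cD(F)$ is an $\cH$-polyhedron whose defining hyperplanes are indexed by $\supp(F)$, and the extended Newton polytope $\widetilde{\cN}(F)$ is the corresponding $\cV$-polyhedron in the lifted coordinates; the map $G\mapsto\Phi(G):=\conv\{(\mathbf{m},c_{\mathbf{m}}):G\subseteq H_{\mathbf{m}}\}$ sending a face of the dome to the set of tight lifted points is the canonical inclusion-reversing, dimension-complementing bijection, and the two projections (forgetting $s$ on the dome side, dropping the vertical coordinate on the Newton side) are injective on proper faces for exactly the reasons you give. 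The dimension bookkeeping $\dim G+\dim\Phi(G)=n$ and the restriction $0\le k<n$ both check out, and the ``main obstacle'' you flag (surjectivity of $\Phi$ onto lower faces) is handled by the normal-cone argument you sketch: a bounded face has an outer normal with negative last coordinate, which after rescaling to $(\mathbf{x}^*,-1)$ identifies the point of $\RR^n$ over which the corresponding face of $\cD(F)$ sits. This is essentially the argument one finds in \cite{J21}, so your proposal is correct and in line with the standard reference, though of course it is a sketch rather than a fully written proof.
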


In particular, take notice that the bijection of Proposition~\ref{prop:bijections_dome_newton_polytope} sends a vertex~$\mathbf{m}\in\cN(F)$ to the region~$\left\{\mathbf{x}\in \RR^n \, |\,  c_{\mathbf{m}} + \langle \mathbf{m}, \mathbf{x} \rangle = \min_{m'\in I} \Big( c_{\mathbf{m}'} + \langle \mathbf{m}', \mathbf{x} \rangle\Big) \right\}$, and a cell of~$\cS$ to the intersection of the regions corresponding to its vertices. See Figure~\ref{fig:tropical_normal_complex_with_polynomials} for an example. Due to the dual nature of Proposition~\ref{prop:bijections_dome_newton_polytope} we say that~$S(\cF)$ is the \defn{tropical dual} of~$\cT(F)$.

\begin{figure}[h!]
	\centering
	\includegraphics[scale=0.8]{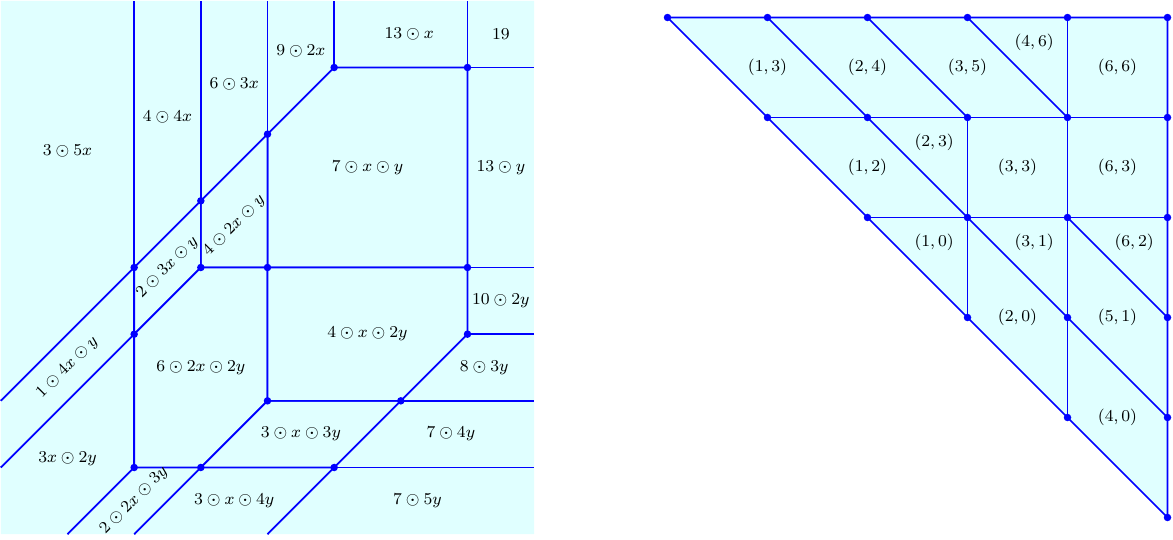}
	\caption[The orthogonal projection of~$\cD(F)$ and the dual subdivision~$\cS(F)$.]{ The orthogonal projection of~$\cD(F)$ (left) with regions labeled by the respective dominant monomial and the (flipped) dual subdivision~$\cS(F)$ with cells labeled by their respective exponent in~$\supp(F)$ (right) where~$F$ is the multiplication of the tropical polynomials in Figure~\ref{fig:tropical_polynomials}.}
	\label{fig:tropical_normal_complex_with_polynomials}
\end{figure}

We go a step further and restrict Proposition~\ref{prop:bijections_dome_newton_polytope} to the bounded cells of~$\cT(F)$.

\begin{lemma}[{\cite[Lem.5.2]{GMPTY23}}]\label{lem:tropical_dual_interior}
	The bijection of Proposition~\ref{prop:bijections_dome_newton_polytope} restricts to a bijection between the bounded cells of~$\cT(F)$ and the interior cells of~$\cS(F)$.
\end{lemma}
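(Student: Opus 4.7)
The plan is to use the cellular correspondence underlying Proposition~\ref{prop:bijections_dome_newton_polytope} and characterize both boundedness and interiority in terms of the set of monomials achieving the minimum. For a cell $C$ of $\cT(F)$, let $A(C)\subseteq\supp(F)$ be the set of exponents $\mathbf{m}$ for which $c_\mathbf{m}+\langle\mathbf{m},\mathbf{x}\rangle$ equals $F(\mathbf{x})$ for $\mathbf{x}$ in the relative interior of $C$; the dual cell in $\cS(F)$ is $\sigma(C):=\conv(A(C))$. The goal is to show that $C$ is bounded if and only if $\sigma(C)$ does not meet the boundary of $\cN(F)$.

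First I would reformulate boundedness via recession cones: $C$ is unbounded if and only if there exists a nonzero $\mathbf{v}\in\RR^n$ such that $\mathbf{x}+t\mathbf{v}\in C$ for every $\mathbf{x}$ in the relative interior of $C$ and every $t\geq 0$. Membership in $C$ translates into the two conditions $c_\mathbf{m}+\langle\mathbf{m},\mathbf{x}+t\mathbf{v}\rangle=c_{\mathbf{m}'}+\langle\mathbf{m}',\mathbf{x}+t\mathbf{v}\rangle$ for every $\mathbf{m},\mathbf{m}'\in A(C)$ and $c_\mathbf{m}+\langle\mathbf{m},\mathbf{x}+t\mathbf{v}\rangle\leq c_{\mathbf{m}''}+\langle\mathbf{m}'',\mathbf{x}+t\mathbf{v}\rangle$ for every $\mathbf{m}\in A(C)$ and $\mathbf{m}''\in\supp(F)$. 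Letting $t$ vary, the first condition forces $\langle\mathbf{m},\mathbf{v}\rangle$ to be constant on $A(C)$, and the second (as $t\to\infty$) forces this constant value to be a minimum of $\langle\cdot,\mathbf{v}\rangle$ over all of $\supp(F)$. This is precisely the statement that $A(C)$, hence $\sigma(C)$, lies in the face of $\cN(F)$ selected by the linear functional $\langle\cdot,\mathbf{v}\rangle$, i.e.\ in a proper face of $\cN(F)$.

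Conversely, assume $\sigma(C)$ is contained in some proper face $\phi$ of $\cN(F)$. Let $\mathbf{v}$ be an inward normal to a supporting hyperplane of $\cN(F)$ containing $\phi$; by construction $\langle\mathbf{m},\mathbf{v}\rangle$ is constant on $A(C)$ and strictly smaller than $\langle\mathbf{m}'',\mathbf{v}\rangle$ for $\mathbf{m}''\in\supp(F)\setminus\phi$. Running the computation of the previous paragraph backwards verifies that $\mathbf{v}$ is a recession direction of $C$, so $C$ is unbounded. Contrapositively, if $C$ is bounded then $\sigma(C)$ is not contained in any proper face of $\cN(F)$, hence is an interior cell of $\cS(F)$. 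Combining the two directions gives the claimed restriction of the bijection.

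The main obstacle will be a careful bookkeeping step: one must check that the inequalities defining $C$ that involve $\mathbf{m}''\notin A(C)$ are strict for $\mathbf{x}$ in the relative interior, so that they persist under small perturbations but can only fail in the limit $t\to\infty$ in directions $\mathbf{v}$ with $\langle\mathbf{m},\mathbf{v}\rangle\leq\langle\mathbf{m}'',\mathbf{v}\rangle$. Handling this cleanly — together with the dimension matching provided by the inclusion-reversing bijection of Proposition~\ref{prop:bijections_dome_newton_polytope} — is what turns the above heuristic into a rigorous argument.
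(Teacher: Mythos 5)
Your proof is correct and takes a genuinely different route from the paper's. The paper's proof works through the extended Newton polytope $\widetilde{\cN}(F)$: it first establishes the correspondence only for edges of $\cT(F)$ and facets of $\cS(F)$, using the key structural fact that $\widetilde{\cN}(F)$ is the Minkowski sum of a polytope with the ray $\cone(\mathbf{e}_{n+1})$ (so that a face of $\widetilde{\cN}(F)$ is unbounded iff its normal cone contains a vector with last coordinate zero); it then extends to all cells by observing that a cell of $\cS(F)$ is interior iff all the facets containing it are interior, and pushing this through the inclusion-reversing bijection. Your argument instead characterizes unboundedness of a cell $C$ directly via its recession cone and translates this into a statement about the linear functional $\langle\cdot,\mathbf{v}\rangle$ on $\supp(F)$: the equality constraints force $\langle\cdot,\mathbf{v}\rangle$ to be constant on $A(C)$, the inequality constraints (as $t\to\infty$) force this constant to be a minimum over $\supp(F)$, and this is exactly the statement that $\sigma(C)=\conv(A(C))$ sits in a proper face of $\cN(F)$. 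Your limit argument is the same computation that underlies the paper's passage from $-(\tfrac{1}{\lambda}\mathbf{w}+\mathbf{v},\tfrac{1}{\lambda})$ to $-(\mathbf{v},0)$, but done intrinsically in $\RR^n$ rather than in the lifted space, and done uniformly for cells of all dimensions rather than edges-then-bootstrap. What the paper's route buys is that the edge/facet case is particularly clean because it decouples into the normal-fan language; what yours buys is a shorter, self-contained argument that does not require dimension-by-dimension propagation. One caveat applies to both proofs equally: the step "\emph{the minimizing face is proper}" silently assumes that $\cN(F)$ is full-dimensional (otherwise a nonzero $\mathbf{v}$ orthogonal to $\aff(\cN(F))$ gives an unbounded recession direction for every cell of $\cT(F)$ while still dualizing to interior cells of $\cS(F)$, and the claimed restriction fails); it would be worth recording this hypothesis explicitly.
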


\begin{proof}
	Since~$\cS(F)$ is a regular subdivision of a polytope, it is a pure complex. Thus, we abuse the notation and call the~$(d-1)$-dimensional cells the facets of~$\cS(F)$.
	We first show the Lemma for the bounded edges of~$\cT(F)$ and the interior facets of~$\cS(F)$ in a fashion similar to the proof of~\cite[Theorem 1.13]{J21}.
	Let~$\mathbf{e}$ be an edge of~$\cT(F)$,~$H$ its corresponding facet in~$\cS$ via the bijection, and~$\widetilde{H}$ its lift to~$\widetilde{\cN}(F)$. We begin by characterizing these objects.

	A point~$\mathbf{x}\in \mathbf{e}$ exists if and only if the minimum~$F(\mathbf{x})$ equals~$c_{\mathbf{m}}+\langle \mathbf{m},\mathbf{x}\rangle$ for each~$\mathbf{m}\in H$. This means that taking~$(\mathbf{x},1):=(x_1,\ldots,x_n,1)$, we have that the linear form~$(\mathbf{x},1)$ attains its minimum on~$\widetilde{H}$ and the vector~$-(\mathbf{x},1)$ is in the normal cone of~$\widetilde{H}$. For~$H$, and~$\widetilde{H}$, notice that since~$\widetilde{\cN}(F)$ is Minkowski sum of a polytope and the ray~$\cone(\mathbf{e_{n+1}})$, a vector is normal to an unbounded face of~$\widetilde{\cN}(F)$ if and only if its last coordinate is~$0$.

	Suppose that~$\mathbf{e}$ is unbounded, that is,~$\mathbf{e}=\mathbf{w}+\cone(\mathbf{v})$ for some~$\mathbf{v},\mathbf{w}\in \RR^d$. From before we have that for all~$\lambda\in\RR$,~$-(\mathbf{w}+\lambda\mathbf{v},1)$ is in the normal cone of~$\widetilde{H}$. Taking the limit of~$\lambda\to\infty$ of~$-\left(\frac{1}{\lambda}\mathbf{w}+\mathbf{v}, \frac{1}{\lambda}\right)$ tells us that~$-(\mathbf{v}, 0)$ is also in the normal cone of~$\widetilde{H}$. Thus,~$H$ is in the boundary of~$\cS$.

	Reciprocally, if~$H$ is in the boundary of~$\cS$, it means that the normal cone of~$\widetilde{H}$ in~$\widetilde{\cN}(F)$ is a~$2$-dimensional cone whose extremal rays can be written~$-\cone\big((\mathbf{v},0)\big)$ and~$-\cone\big((\mathbf{w}, 1)\big)$, for some~$\mathbf{v},\mathbf{w}\in \RR^d$. Since for any~$\lambda\in \RR_{>0}$, the vector~$-(\mathbf{w}+\lambda \mathbf{v}, 1)$ is in this cone, for all~$\lambda\in \RR_{>0}$ we have that the point~$\mathbf{w}+\lambda \mathbf{v}$ belongs to the edge~$\mathbf{e}$ in~$\cT(F)$. Thus, this edge is unbounded.

	Having that the bijection restricts to a bijection between the bounded edges of~$\cT(F)$ and the interior facets of~$\cS$, we can extend easily the bijection to other faces. Notice that any cell of~$\cS$ is either maximal in dimension and is sent to a vertex of~$\cT(F)$, or it is an intersection of (possibly 1) facets of~$\cS$. A non-maximal cell of~$\cS$ is interior if and only if it is included only in interior facets of~$\cS$. Thus, by the inclusion-reversing bijection, it is sent to a cell of~$\cT(F)$ containing only bounded edges. Reciprocally, a non-bounded cell of~$\cT(F)$ contains a non-bounded edge, so it is sent to a boundary cell of~$\cS$. We get the desired result.
\end{proof}

\subsection{An Application of the Cayley Trick}

Consider a finite family of point configurations~$\cA_1,\ldots,\cA_k$ with associated height functions~$h_i:\cA_1\to\RR$. Following Remark~\ref{rem:tropical_polynomials_support_height_function_point_configuration} these point configurations correspond to tropical polynomials of the form~$F_i(\mathbf{x})=\bigoplus_{\bfa^i\in \cA_i}h_i(\bfa^i)\odot\mathbf{x}^{\bfa^i}$. Denote their multiplication by~$F=\bigodot_{i\in[k]F_i}$. Taking their corresponding Newton polytopes~$\cN(F_1),\ldots,\cN(F_k)$, we consider their Cayley embedding \begin{equation*}
	\cC\big(\cN(F_1),\ldots,\cN(F_k)\big):=\conv\Big(\{\mathbf{e_1}\}\times \cN(F_1), \ldots, \{\mathbf{e_k}\}\times \cN(F_k)\Big)
\end{equation*} and endow it with the height function on its integer points given by~$h_F\big((\mathbf{e_i},\bfa^i)\big)=h_i(\bfa_i)$. Let \defn{$\Sigma(F)$} be the regular subdivision of~$\cC\big(\cN(F_1),\ldots,\cN(F_k)\big)$ induced by~$h_F$ following the construction denoted in Definition~\ref{def:subdivison_from_height_function}.

In this context the Cayley trick (Proposition~\ref{prop:cayley_trick}) tells us that~$\Sigma_F$ corresponds to a mixed subdivision of~$\cN(F_1)+\cdots+\cN(F_k)=:\cN(F)$. The following proposition tells us that this mixed subdivision is one that we have already encountered.

\begin{proposition}[{\cite[Cor.4.9]{J21}}]\label{prop:cayley_trick_tropical}
	Let~$F=\bigodot_{i\in[k]F_i}$ be a multiplication of tropical polynomials. The mixed subdivision of~$\cN(F)$ corresponding to the subdivision~$\Sigma_F$ of the Cayley embedding~$\cC(\cN(F_1),\ldots,\cN(F_k))$ via the Cayley trick is the dual subdivision~$\cS(F)$.
\end{proposition}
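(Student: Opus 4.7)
The plan is to show that the two regular subdivisions of $\cN(F) = \cN(F_1)+\cdots+\cN(F_k)$ at play -- the mixed subdivision obtained from $\Sigma_F$ via the Cayley trick, and the dual subdivision $\cS(F)$ -- are induced by the same height function on the Minkowski sum of the supports, and therefore coincide. Since both are regular, it suffices to identify their heights.

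First I would unpack what the Cayley trick of Proposition~\ref{prop:cayley_trick} produces in our setting. The vertices of the Cayley embedding $\cC(\cN(F_1),\ldots,\cN(F_k))$ are indexed by pairs $(\mathbf{e}_i,\bfa^i)$ with $\bfa^i \in \supp(F_i)$, and $h_F$ assigns to such a pair the height $h_i(\bfa^i)=c_{\bfa^i}$. Intersecting the lower faces of the lift with the slice $\{(\tfrac{1}{k}\mathbf{e}_1+\cdots+\tfrac{1}{k}\mathbf{e}_k)\}\times\RR^n$ (as in Remark~\ref{rem:cayley_trick_proof}) realizes the mixed subdivision $\mathrm{Mix}(F)$ of $\cN(F)$ as the set of Minkowski cells $B^1+\cdots+B^k$ with $B^i=\conv(\cA^i)$, $\cA^i\subseteq\supp(F_i)$, whose vertices $\bfa^1+\cdots+\bfa^k$ together form a lower face of the lift $(\bfa,\,c_{\bfa^1}+\cdots+c_{\bfa^k})$. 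Equivalently, $\mathrm{Mix}(F)$ is the regular subdivision of $\cN(F)$ with respect to the height function
\begin{equation*}
\widetilde h(\mathbf{p}) \;=\; \min\left\{\,c_{\mathbf{m}^1}+\cdots+c_{\mathbf{m}^k} \;:\; \mathbf{p}=\mathbf{m}^1+\cdots+\mathbf{m}^k,\ \mathbf{m}^i\in\supp(F_i)\right\}
\end{equation*}
on the Minkowski sum $\supp(F_1)+\cdots+\supp(F_k)$ (the minimum encodes that only the lowest of the many lifts of $\mathbf{p}$ contributes to the lower envelope).

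Next I would compare this with $\cS(F)$. By Remark~\ref{rem:newton_polytope_multiplication}, $\supp(F)=\supp(F_1)+\cdots+\supp(F_k)$ and the tropical multiplication gathers each exponent $\mathbf{p}$ with coefficient
\begin{equation*}
c_{\mathbf{p}} \;=\; \bigoplus_{\mathbf{p}=\mathbf{m}^1+\cdots+\mathbf{m}^k}\bigl(c_{\mathbf{m}^1}+\cdots+c_{\mathbf{m}^k}\bigr) \;=\; \min\bigl(c_{\mathbf{m}^1}+\cdots+c_{\mathbf{m}^k}\bigr).
\end{equation*}
This is exactly $\widetilde h(\mathbf{p})$. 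By Definition~\ref{def:tropical_newton_polytope}, the dual subdivision $\cS(F)$ is the regular subdivision of $\cN(F)$ induced by the heights $c_{\mathbf{p}}$ on $\supp(F)$, i.e.\ by $\widetilde h$. Hence $\cS(F)=\mathrm{Mix}(F)$.

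The only subtle step is the identification of $\widetilde h$ with the height obtained from the Cayley trick: one must verify that the lower envelope of the Cayley lift, when sliced, indeed records the minimum over all decompositions of a given Minkowski-sum exponent, rather than just one chosen decomposition. This is precisely the statement that a fine mixed cell of $\mathrm{Mix}(F)$ corresponds to a lower face of the Cayley lift, which is the content of Proposition~\ref{prop:cayley_trick} applied to our heights; once this is in place the identification with $\cS(F)$ is formal. I expect this bookkeeping -- making sure the slicing and the $\min$-operation intertwine as claimed -- to be the only delicate point of the argument.
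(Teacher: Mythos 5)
The paper does not give a proof of this proposition: it is stated with the tag \cite[Cor.4.9]{J21} and treated as a known result from Joswig's book. So there is no in-paper argument to compare against; I can only assess your proposal on its own merits.

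Your argument is correct and is essentially the standard proof of this fact. The crux, as you correctly flag, is that slicing the lifted Cayley embedding produces the regular subdivision of $\cN(F_1)+\cdots+\cN(F_k)$ associated to the \emph{Minkowski height}
\[
\widetilde h(\mathbf{p}) = \min\bigl\{\,c_{\mathbf{m}^1}+\cdots+c_{\mathbf{m}^k} \;:\; \mathbf{p}=\mathbf{m}^1+\cdots+\mathbf{m}^k,\ \mathbf{m}^i\in\supp(F_i)\,\bigr\},
\]
and that this is the same function as the coefficient $c_{\mathbf{p}}$ of the tropical product $F$. The first identification is the content of the Cayley trick for regular subdivisions (a short bookkeeping exercise with the slice $\{\tfrac{1}{k}\sum\mathbf{e}_i\}\times\RR^n$, noting that a point in the slice must carry weight exactly $\tfrac{1}{k}$ in each block, so its height on the lower envelope is $\tfrac{1}{k}$ times the minimum of $\sum h_i(\bfa^i)$ over decompositions, and the scaling by $k$ is harmless). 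The second identification is exactly Remark~\ref{rem:newton_polytope_multiplication}: the tropical sum over tuples $(\mathbf{m}^1,\ldots,\mathbf{m}^k)$ giving the same exponent $\mathbf{p}$ is a minimum, so $c_{\mathbf{p}}=\widetilde h(\mathbf{p})$ on $\supp(F)=\supp(F_1)+\cdots+\supp(F_k)$. Since both subdivisions of $\cN(F)$ are regular and induced by the same height function on the same support, they agree. You were right to single out the $\min$-versus-slicing step as the only delicate point; once it is spelled out as above the proof is complete.
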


We now reformulate Proposition~\ref{prop:bijections_dome_newton_polytope} using the context we have obtained from Proposition~\ref{prop:cayley_trick_tropical}.

\begin{proposition}\label{prop:arrangement_tropical_hypersurfaces}
	Let~$F=F_1\odot\cdots\odot F_k$ be a product of tropical polynomials. The tropical dual of the polyhedral complex~$\cT(F)$ (i.e. the union~$\bigcup_{i\in[k]}\cT(F_i)$) is the mixed subdivision~$\cS(F)$ obtained via the Cayley trick.
\end{proposition}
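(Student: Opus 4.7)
The proof plan is essentially to chain together three results already established in the excerpt, since Proposition~\ref{prop:arrangement_tropical_hypersurfaces} is a reformulation rather than a new substantive claim. The main work is just checking that the identifications fit together cleanly.

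First I would reduce the left-hand side. By Proposition~\ref{prop:tropical_hypersurface_multiplication}, the tropical hypersurface of a product of tropical polynomials decomposes as a union: $\cT(F) = \cT(F_1 \odot \cdots \odot F_k) = \bigcup_{i\in[k]} \cT(F_i)$. So the polyhedral complex $\cT(F)$ that we want to dualize is precisely the arrangement of tropical hypersurfaces associated to the factors $F_i$.

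Next I would invoke the duality between tropical hypersurfaces and dual subdivisions. Proposition~\ref{prop:bijections_dome_newton_polytope} provides an inclusion-reversing bijection between the cells of $\cT(F)$ and the cells of $\cS(F)$, where $\cS(F)$ is the regular subdivision of the Newton polytope $\cN(F)$ obtained from the height function given by the coefficients of $F$. In particular, this is exactly what is meant by calling $\cS(F)$ the tropical dual of $\cT(F)$, as noted right after Proposition~\ref{prop:bijections_dome_newton_polytope}.

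Finally I would identify $\cS(F)$ with the desired mixed subdivision. By Remark~\ref{rem:newton_polytope_multiplication}, the Newton polytope of a product decomposes as a Minkowski sum, so $\cN(F) = \cN(F_1) + \cdots + \cN(F_k)$. The support points and their heights transport correctly through the Cayley embedding $\cC(\cN(F_1),\ldots,\cN(F_k))$ using the height function $h_F\big((\mathbf{e_i},\bfa^i)\big) = h_i(\bfa^i)$, producing the regular subdivision $\Sigma(F)$. Proposition~\ref{prop:cayley_trick_tropical} then asserts that applying the Cayley trick (Proposition~\ref{prop:cayley_trick}) to $\Sigma(F)$ yields exactly the dual subdivision $\cS(F)$. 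Composing the two identifications gives that the mixed subdivision of $\cN(F)$ obtained via the Cayley trick is the tropical dual of $\bigcup_{i\in[k]} \cT(F_i) = \cT(F)$, which is the claim.

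The potential subtle point, and the only place where any real verification is needed, is making sure that the coefficient-to-height convention on the Cayley embedding matches the one used in the statement of Proposition~\ref{prop:cayley_trick_tropical}; this is just a matter of unpacking definitions since the lifted point $(\mathbf{e_i}, \bfa^i)$ carries precisely the coefficient $c_{\bfa^i}$ of the monomial $\mathbf{x}^{\bfa^i}$ in $F_i$, which matches the summand structure in the expansion of $F$ given in Remark~\ref{rem:newton_polytope_multiplication}. Hence no obstacle arises, and the proposition follows as a direct corollary of Propositions~\ref{prop:tropical_hypersurface_multiplication}, \ref{prop:bijections_dome_newton_polytope}, and \ref{prop:cayley_trick_tropical}.
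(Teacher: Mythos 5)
Your proof is correct and is essentially the same as the paper's implicit argument: the paper introduces the proposition with the sentence ``We now reformulate Proposition~\ref{prop:bijections_dome_newton_polytope} using the context we have obtained from Proposition~\ref{prop:cayley_trick_tropical},'' which is exactly the chain you spell out, with Proposition~\ref{prop:tropical_hypersurface_multiplication} supplying the identification $\cT(F)=\bigcup_i\cT(F_i)$ already indicated parenthetically in the statement. No gaps.
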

% Killing chktex
% chktex-file 3
% chktex-file 12
% chktex-file 24
% chktex-file 25
% chktex-file 36
% chktex-file 40

\chapter{Realizing the~\texorpdfstring{$s$}{}-Permutahedron via Flow Polytopes}\label{chap:sorder_realizations}

\addcontentsline{lof}{chapter}{\protect\numberline{\thechapter}Realizing the~\texorpdfstring{$s$}{}-Permutahedron via Flow Polytopes}

We devote this chapter to answer Conjecture~\ref{conj:s-permutahedron} of Ceballos and Pons~\cite[{Conj.1}]{CP19} about realizing the~$s$-permutahedron~$\PSPerm$ as a polyhedral subdivision of a zonotope in the case where~$s$ is free of zeros. We make use of the combinatorial and geometrical toolbox of flows on graphs and tropical geometry described in Chapter~\ref{chap:sorder_Flows}. This chapter is based directly on~\cite{GMPTY23}. In what follows we present three realizations of~$\PSPerm$.

\section{The Flow Polytope Realization}\label{sec:realiz_flow_potyope}

Let us start by defining a graph that encodes the~$s$-weak order. Following the tradition of~\cite{BGHHKMY19} and~\cite{BGMY23} where the respective Caracol~$\car_{n}$ and~$s$-Caracol~$\car(s)$ graphs were defined, we define the~$s$-Oruga graph denoted~$\oru(s)$. Oruga and caracol are respectively the Spanish words for caterpillar and snail. These names come from the embedding of these graphs when~$s=(1,\ldots,1)$ as in Figure~\ref{fig:oru_car_mar}. Examples of this embedding for~$\car(s)$ and~$\oru(s)$ are shown in Figure~\ref{fig:s_car_oru}.

\begin{figure}[h!]
	\centering
	\includegraphics[scale=1.25]{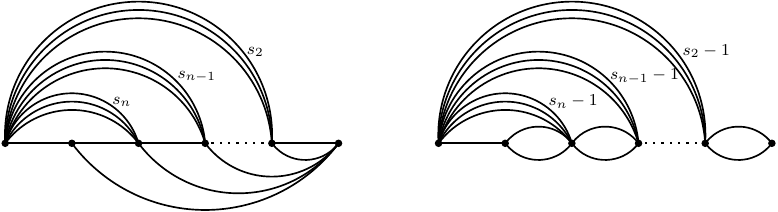}
	\caption[The~$s$-caracol~$\car(s)$ and~$s$-oruga~$\oru(s)$ graphs.]{ The~$s$-caracol~$\car(s)$ (left) and~$s$-oruga~$\oru(s)$ (right) graphs.}
	\label{fig:s_car_oru}
\end{figure}

\begin{definition} \label{def:Gs}
	Let~$s=(s_1, \ldots, s_n)$ be a composition, and for convenience of notation fix~$s_{n+1}=2$. The \defn{$s$-oruga graph}\index{$s$-oruga graph} denoted \defn{$\oru(s)$} is the graph on vertices~$\{v_{-1},v_0, \ldots, v_n\}$ such that for
	\begin{itemize}
		\itemsep0em
		\item $i\in [n+1]$, there are~$s_i-1$ \defn{source edges}\index{$s$-oruga graph!source edges}~$(v_{-1}, v_{n+1-i})$ labeled~$e^i_1,\ldots,e^i_{s_{i}-1}$,
		\item $i\in [n]$, there are two edges~$(v_{n+1-(i+1)}, v_{n+1-i})$ called \defn{bump}\index{$s$-oruga graph!bump} and \defn{dip}\index{$s$-oruga graph!dip} labeled~$e^{i}_0$ and~$e^{i}_{s_{i}}$.
	\end{itemize}

	For the rest of our work we endow~$\oru(s)$ with the following framing for: \begin{itemize}
		\itemsep0em
		\item the incoming edges of~$v_{n+1-i}$ are ordered~$e^i_j \prec_{\cI_{n+1-i}} e^i_k$ for~$0\leq j < k \leq s_{i}$,
		\item the outgoing edges of~$v_{n+1-i}$ are ordered~$e^{i-1}_0\prec_{\cO_{n+1-i}} e^{i-1}_{s_{i-1}}$.
	\end{itemize}

	When~$s=(1,\ldots,1)$, following Remark~\ref{rem:vertex_v_m1} we contract the edge~$(v_{-1},v_0)$ and call the resulting graph the \defn{oruga graph}\index{$s$-oruga graph} denoted \defn{$\oru_n$}.
\end{definition}

Figure~\ref{s_oru_edges_framings_labels} shows examples of this construction. We choose to draw the graph~$\oru(s)$ in such a way that the framing of the incoming and outgoing edges at each inner vertex can be read ``from top to bottom''.

\begin{figure}[h!]
	\centering
	\includegraphics[scale=1.25]{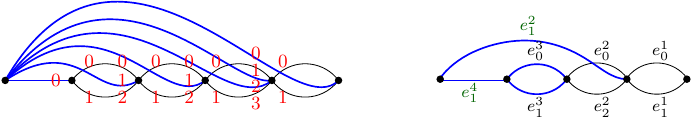}
	\caption[The framed graphs~$\oru((2,3,2,2))$ and~$\oru((1,2,1))$.]{ The graph~$\oru(s)$ for~$s=(2,3,2,2)$ with framing shown in red (left) and the graph~$\oru(s)$ for~$s=(1,2,1)$ with edge labels (right). Source edges are bolded in blue.}
	\label{s_oru_edges_framings_labels}
\end{figure}

\begin{remark}\label{rem:oru_dimension}
	Given a composition~$s=(s_1,\ldots,s_n)$, we denote \defn{$|s|$} the sum of its entries.~$\fpol[\oru(s)](\bfa)$ lives in dimension \begin{equation*}
		\Big|E\Big(\oru(s)\Big)\Big|=\left(\sum_{i=1}^{n+1}s_i-(n+1)+2n\right)=|s|+n+1
	\end{equation*} and following Proposition~\ref{def:flow_polytope_dimension} we have that \begin{equation*}
		\dimension\Big(\fpol[\oru(s)](\bfa)\Big)=|E|-|V|+1=|s|+n+1-(n+2)+1=\sum_{i=1}^n s_i=|s|.
	\end{equation*}
\end{remark}

\begin{remark}\label{rem:vertex_v_m1}
	Although the graph~$\oru(s)$ starts with vertex~$v_{-1}$ instead of~$v_0$, all the technology of Section~\ref{sec:flows_on_graphs} can be applied to it. To see this, we can either contract the edge~$e_1^{n+1}$ to obtain a graph~$\oru(s)^\#$ such that~$\fpol[\oru(s)^\#](\bfa)$ is integrally equivalent to~$\fpol[\oru(s)](\bfa)$, or simply relabel the vertices with~$[0,n+1]$. The resulting graph has flows and routes directly in bijection with the flows and routes of~$\oru(s)$. Similar to Remark~\ref{rem:multi_edges}, this is a particular case of~\cite[Cor.2.13]{GHMY21}.
\end{remark}

We now start relating the combinatorics of~$\oru(s)$ to the combinatorics of the~$s$-weak order.

\begin{theorem}\label{thm:bij_simplices_permutations}
	Let~$s$ be a composition and~$\bfd:=(0,d_0,d_1,\ldots,d_{n-1},-\sum_{i=0}^{n-1} d_i)$ where~$d_i:=\indeg_i(\oru(s))$. The set of Stirling~$s$-permutations is in bijection with the set of integer~$\bfd$-flows of~$\oru(s)$.
\end{theorem}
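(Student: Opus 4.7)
My plan is to show that both sides of the claimed bijection are parametrized by the same set of integer tuples. Denote by $\beta_j$ and $\delta_j$ the flows on the bump and dip edges outgoing from $v_j$ (for $j \in [0, n-1]$), and set $T_j := s_n + s_{n-1} + \cdots + s_{n+1-j}$. Since $v_{-1}$ has no incoming edges and has netflow $0$ in $\bfd$, flow conservation together with non-negativity forces every source edge of $\oru(s)$ to carry flow $0$, so an integer $\bfd$-flow is entirely determined by the pairs $(\beta_j, \delta_j)$. Applying conservation at each $v_j$ with $d_j = \indeg(v_j) - 1$ (so that $d_0 = 0$ and $d_j = s_{n+1-j}$ for $j \in [1, n-1]$) and using that source flows vanish yields $\beta_0 = \delta_0 = 0$ together with the telescoping recursion
\begin{equation*}
    \beta_j + \delta_j = (\beta_{j-1}+\delta_{j-1}) + s_{n+1-j} = T_j, \qquad j \in [1, n-1].
\end{equation*}
The sink condition $\beta_{n-1} + \delta_{n-1} = \sum_{j=0}^{n-1} d_j = |s| - s_1 = T_{n-1}$ is then automatic, so the integer $\bfd$-flows of $\oru(s)$ are in bijection with tuples $(\beta_j)_{j=1}^{n-1}$ with $0 \leq \beta_j \leq T_j$, giving $\prod_{j=1}^{n-1}(T_j+1)$ flows.

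On the permutation side, by Proposition~\ref{prop:s_tree_to_s_permutation} it suffices to biject these tuples with $s$-decreasing trees, and the proof of Proposition~\ref{prop:num_s_trees} does precisely this: each $s$-decreasing tree is built by inserting the nodes $n, n-1, \ldots, 1$ in turn, and at step $j \in [1, n-1]$ node $n-j$ can be placed in any of the $T_j + 1$ leaves of the partial tree constructed so far. I would therefore define $\Phi$ to send the flow $(\beta_j)$ to the $s$-decreasing tree obtained by placing node $n-j$ in the $\beta_j$-th available leaf (using a fixed, say left-to-right, enumeration) at step $j$, and then compose with the in-order bijection of Proposition~\ref{prop:s_tree_to_s_permutation} to land in Stirling $s$-permutations.

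The main obstacle I anticipate is not the counting (which is essentially immediate from the telescoping) but making $\Phi$ useful at the level of the Stirling permutation itself. Subsequent results such as Theorem~\ref{thm:cover_relations}, which describes covering relations of the $s$-weak order in terms of block transpositions, will want an explicit reading of $\beta_j$ as a combinatorial statistic on a Stirling $s$-permutation $w$, most likely in terms of the position of the leftmost copy of the letter $n-j$ with respect to the block structure encoded by the previously placed larger letters. Sanity-checking against $s = (1,2)$ (with three flows parametrized by $\beta_1 \in \{0,1,2\}$, matching the Stirling permutations $122$, $212$, $221$) and against $s = (1,2,1)$ (with $2 \cdot 4 = 8$ flows matching the eight Stirling $(1,2,1)$-permutations) would guide me to the exact correspondence, and in particular would tell me whether $\beta_j$ should be read left-to-right or right-to-left to align cleanly with the conventions used later in the chapter.
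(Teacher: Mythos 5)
Your approach is correct and is essentially the paper's: you identify that the source edges carry zero flow, that the flow is determined by the bump coordinates $\beta_j = f(e_0^{n-j})$, and that $0 \le \beta_j \le T_j$ where $T_j = s_n + \cdots + s_{n+1-j}$; the tree-insertion map you describe is exactly the bijection given in Remark~\ref{rem:bij_simplices_trees}. The direct statistic on Stirling $s$-permutations that you anticipated needing is: $f(e_0^i)$ is the number of letters strictly greater than $i$ that occur before the $i$-block $B_i$ in $w$, with inverse given by inserting the $s_{n-i}$ consecutive copies of $n-i$ in the $f(e_0^{n-i})$-th of the $T_i+1$ available gaps, from largest to smallest letter. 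One small note: the theorem statement's $d_i := \indeg_i(\oru(s))$ should read $\indeg_i(\oru(s)) - 1$ (as you correctly used), matching Definition~\ref{def:flow_polytope} and the value $\bfd = (0,0,s_n,\ldots,s_2,-\sum_{i=2}^n s_i)$.
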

\begin{proof}
	We begin by highlighting the fact that~$\bfd=(0,0,s_n, s_{n-1},\ldots, s_2,-\sum_{i=2}^n s_i)$.
	Notice that any integer~$\bfd$-flow~$f$ on~$\oru(s)$ has zero flow on every source edge. Therefore,~$f$ is characterized by the fact that the sum of the flows passing by any pair of bump and dip edges satisfies~$f(e_0^i) + f(e_{s_i}^{i}) = s_n + \cdots + s_{i+1}$ for all~$i\in[n-1]$. This means that it suffices to describe the flow in the bump edges~$e_0^i$ for all~$i\in [n-1]$ to describe an integer~$\bfd$-flow on~$\oru(s)$.

	Given a Stirling~$s$-permutation~$w$, let~$f(e_0^i)$ be the number letters strictly greater than~$i$ that occur before the~$i$-block~$B_i$ in~$w$.
	As for each~$j>i$ there are at most~$s_j$ such repetitions of~$j$, this quantity satisfies~$0\leq f_{e_0^i} \leq s_n+\cdots+s_{i+1}$. Meaning that it defines an integer~$\bfd$-flow on~$\oru(s)$.

	Conversely, given a~$\bfd$-flow on~$\oru(s)$, we can build a Stirling~$s$-permutation from the flow on the bumps~$e_0^i$ for~$i\in [n-1]$ via an insertion algorithm in the following way. For step~$0$ we begin with the~$s_n$ consecutive copies of~$n$. At step~$i$, among the~$s_n+\cdots+s_{n-i+1}+1$ possible positions between letters, insert the~$s_{n-i}$ consecutive copies of~$n-i$ in the~$f(e_0^{n-i})$-th position. After step~$n-1$ we obtain a permutation of the word~$1^{s_1}2^{s_2}\cdots n^{s_n}$. This permutation is~$121$-avoiding as all values have been placed in descending order and by blocks.
\end{proof}

In Figure~\ref{fig:insertion_algorithm} we illustrate the bijection of Proposition~\ref{thm:bij_simplices_permutations} including the insertion algorithm.

\begin{figure}[!ht]
	\centering
	\includegraphics[scale=0.95]{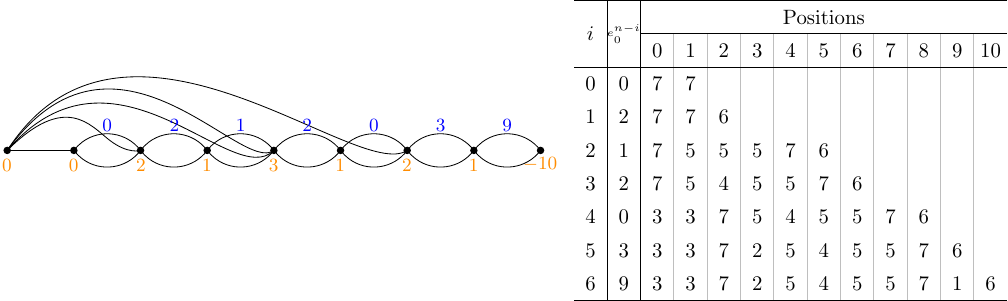}
	\caption[An integer~$\bfd$-flow of~$\oru((1,1,2,1,3,1,2))$ and the insertion algorithm giving the corresponding Stirling~$s$-permutation~$w=33725455716$]{ An integer~$\bfd$-flow of~$\oru((1,1,2,1,3,1,2))$ (left) (only the flow passing by the bumps is shown in blue) and the steps of the insertion algorithm that output the corresponding to the Stirling~$s$-permutation~$w=33725455716$ (right).
	}
	\label{fig:insertion_algorithm}
\end{figure}

Due to Proposition~\ref{prop:volume_intflows}, as the normalized volume of the flow polytope~$\mathcal{F}_{\oru(s)}(\bfi)$ is the number of integer~$\bfd$-flows on~$\oru(s)$, then we obtain the following enumerative corollary.

\begin{corollary} \label{cor:volume is number of trees}
	Given a composition~$s$, then the volume of~$\mathcal{F}_{\oru(s)}$ is the number of~$s$-decreasing trees and the number of Stirling~$s$-permutations.
	\[\vol\Big(\fpol[\oru(s)](\bfi)\Big)
		= |\cT_s|
		= |\cW_s|
		= \prod_{i=1}^{n-1}\left(1+s_{n-i+1}+s_{n-i+2}+\cdots + s_n\right).
	\]
\end{corollary}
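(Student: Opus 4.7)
The plan is to chain four already-established results to obtain the four equalities in the statement, one at a time.

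First, I would invoke Proposition~\ref{prop:volume_intflows}, which states that for any graph $G$ with the shifted-indegree netflow $\bfd$, one has $\vol(\fpol[G](\bfi)) = |\fpol[G](\bfd)|$. Applying this to $G = \oru(s)$ gives $\vol(\fpol[\oru(s)](\bfi))$ equal to the number of integer $\bfd$-flows of $\oru(s)$, where $\bfd$ is precisely the netflow introduced in Theorem~\ref{thm:bij_simplices_permutations} (up to the harmless shift of indices caused by the source vertex $v_{-1}$, see Remark~\ref{rem:vertex_v_m1}).

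Second, Theorem~\ref{thm:bij_simplices_permutations} provides an explicit bijection between these integer $\bfd$-flows and Stirling $s$-permutations, so the number of integer $\bfd$-flows equals $|\cW_s|$. Third, Proposition~\ref{prop:s_tree_to_s_permutation} identifies $s$-decreasing trees with Stirling $s$-permutations via the in-order reading of node labels, giving $|\cW_s| = |\cT_s|$. Finally, the explicit product formula in Proposition~\ref{prop:num_s_trees} computes $|\cT_s|$ as $\prod_{i=1}^{n-1}(1 + s_{n-i+1} + s_{n-i+2} + \cdots + s_n)$, yielding the last equality.

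Since each step is a direct appeal to a previously proved result, there is essentially no work beyond assembling the chain. The only place where one has to be careful is the first step: one must check that the netflow $\bfd$ on $\oru(s)$ used when invoking Proposition~\ref{prop:volume_intflows} coincides with the netflow $(0,0,s_n,s_{n-1},\ldots,s_2,-\sum_{i\geq 2} s_i)$ described inside the proof of Theorem~\ref{thm:bij_simplices_permutations}, including the shift of vertex labels from $\{v_{-1},\ldots,v_n\}$ to $\{v_0,\ldots,v_{n+1}\}$. This is immediate from the definition of $\oru(s)$ (Definition~\ref{def:Gs}) once one computes $\indeg_i(\oru(s))-1$ at each vertex, but it is the only bookkeeping check required. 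No obstacle beyond this routine verification is expected.
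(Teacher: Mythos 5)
Your proof is correct and takes the same route the paper implicitly uses (the paper states it in one sentence before the corollary and relies on the same chain: Proposition~\ref{prop:volume_intflows}, then Theorem~\ref{thm:bij_simplices_permutations}, then Proposition~\ref{prop:s_tree_to_s_permutation}, then Proposition~\ref{prop:num_s_trees}). Your bookkeeping remark about the netflow $\bfd$ and the extra vertex $v_{-1}$ handled by Remark~\ref{rem:vertex_v_m1} is exactly the only nontrivial check, and you identify it correctly.
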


\begin{remark}\label{rem:bij_simplices_trees}
	It is also possible to give an explicit correspondence between~$s$-decreasing trees and integer~$\bfd$-flows of~$\oru(s)$ even when~$s$ is a weak composition.

	Given an integer~$\bfd$-flow~$f$ of~$\oru(s)$, we construct an~$s$-decreasing tree via induction as follows. At step~$0$ start with a tree with root the node~$n$ and~$s_n+1$ leaves. At step~$i$ for~$i \in [ n-1]$, we have a decreasing tree with labeled nodes~$n$ to~$n+1-i$, and~$1+\sum_{k=n+1-i}^{n} s_{k}$ leaves that we momentarily label from~$0$ to~$\sum_{k=n+1-i}^{n} s_{k}$ along the counterclockwise walk of the tree. Graft to the leaf labeled~$f(e^{n-i}_0)$ the labeled node~$n-i$ with~$s_{n-i}+1$ leaves. After step~$n-1$ we obtain a decreasing tree with each node labeled~$i$ having~$s_i$ children.
	Conversely, any~$s$-decreasing tree can be built in this way as seen in Proposition~\ref{prop:num_s_trees}. Therefore, any~$s$-decreasing tree is associated to a choice of integers~$f(e^i_0)\in [0, \sum_{k=n+1-i}^{n} s_{k}]$ for all~$i\in[ n-1]$. That is, a~$\bfd$-flow of~$\oru(s)$.

	See Figure~\ref{fig:s_insertion_algorithm_tree} for an example of the bijection.
\end{remark}

\begin{figure}[!ht]
	\centering
	\includegraphics[scale=1.15]{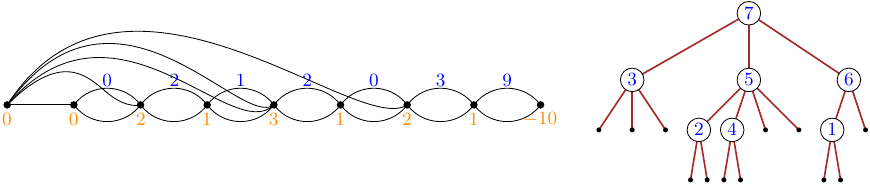}
	\caption[An integer~$\bfd$-flow of~$\oru((1,1,2,1,3,1,2))$ together with its~$(1,1,2,1,3,1,2)$-decreasing tree.]{ An integer~$\bfd$-flow of~$\oru((1,1,2,1,3,1,2))$ (left) (only the flow passing by the bumps is shown in blue) and its corresponding~$(1,1,2,1,3,1,2)$-decreasing tree (right).
	}
	\label{fig:s_insertion_algorithm_tree}
\end{figure}

The routes of~$\oru(s)$ play a key role in what follows, so we denote them as \defn{$R(k, t, \delta)$}\index{$s$-oruga graph!route}. Intuitively this notation comes from the fact that every route of~$\oru(s)$ starts from~$v_{-1}$, lands in a vertex~$v_{n+1-k}$ via a source edge labeled~$e_{t}^k$ and then follows~$k-1$ edges that are either bumps or dips meaning a binary choice~$\delta$. Figure~\ref{fig:s-oruga_graph_route2} contains an example of this. More formally, we give the following definition.

\begin{definition}\label{def:routes_Gs}
	For~$k\in[n+1]$,~$t\in [ s_k-1]$, and~$\delta=(\delta_1, \ldots, \delta_{k-1})\in \{0, 1\}^{k-1}$, we denote by \defn{$R(k, t, \delta)$} the sequence of edges~$(e^{k}_{t_k}, \, e^{k-1}_{t_{k-1}}, \, \ldots, \, e^1_{t_1})$
	where \begin{itemize}
		\itemsep0em
		\item $t_k:=t$,
		\item for all~$j\in [ k-1]$,~$t_j :=\delta_j s_j$.
	\end{itemize}
\end{definition}

\begin{figure}[ht!]
	\centering
	\includegraphics[scale=1.25]{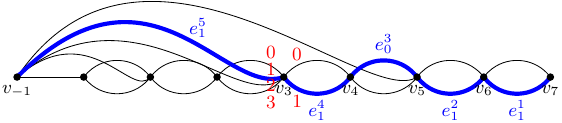}
	\caption[The route~$R(5,1,(1,0,1,1))$ of~$\oru((1,1,2,1,3,1,2))$ together with the framing orders~$\cI_3$ and~$\cO_3$.]{ The route~$R(5,1,(1,0,1,1))$ of~$\oru((1,1,2,1,3,1,2))$ (bolded in blue) together with the framing orders~$\cI_3$ and~$\cO_3$.}
	\label{fig:s-oruga_graph_route2}
\end{figure}

Wanting to use the technology of Subsection~\ref{ssec:DKK_triangulation} and obtain~$\triangDKK[\oru(s)]$, we describe the maximal cliques of routes of~$\oru(s)$ via Stirling~$s$-permutations.

\begin{definition}\label{def:Lw}
	Let~$s=(s_1,\ldots,s_n)$ be a composition and~$w$ a Stirling~$s$-permutation. Consider~$u$ to be a (possibly empty) prefix of~$w$.
	For all~$a\in [n]$, we denote by~$t_a$ (or \defn{$t_a(u)$} if~$u$ is not clear from the context) the number of occurrences of~$a$ in~$u$, and by~$c$ the smallest value in~$[n]$ such that~$0<t_c<s_{c}$. If there is no such value, we set~$c=n+1$ and~$t_{n+1}=1$. Notice that the minimality of~$c$ implies that either~$t_a=0$ or~$t_a=s_a$ for all~$a<c$ giving us a~$\delta\in[c-1]$. We denote \defn{$\rpre{u}$} the route~$R(c,t_c,\delta)=(e^{c}_{t_c}, \, e^{c-1}_{t_{c-1}}, \, \ldots, \, e^1_{t_1})$.

	For~$i\in [|s|]$, we call \defn{$w_i$} the~$i$-th letter of~$w$, and for~$i\in [0, |s|]$ we denote by \defn{$\prefix{w}{i}$} the prefix of~$w$ of length~$i$, with~$\prefix{w}{0}:=\emptyset$. We denote by \defn{$\Delta_w$} the set of routes $\{\rpre{\prefix{w}{i}}\,|\, i\in[0,|s|]\}$ and abusing notation identify it with the simplex whose vertices are the indicator vectors of these routes.
\end{definition}

\begin{remark}\label{rem:oru_exceptional_routes}
	Notice that the exceptional routes of~$(\oru(s),\preceq)$ are the two routes that are respectively formed only by bumps~$R(n+1,1,(0)^n)=(e^{n+1}_1, e^n_0, \ldots, e^1_0)$ or by dips~$R(n+1,1,(1)^n)=(e^{n+1}_1, e^n_1, \ldots, e^1_1)$. Following Definition~\ref{def:Lw} we can see that~$\Delta_w$ always contains these two routes as~$\rpre{\prefix{w}{0}}=R(n+1, 1, (0)^n)$ and~$\rpre{\prefix{w}{|s|}}=R(n+1, 1, (1)^n)$. See Figure~\ref{fig:l_clique_3221} for an example of~$\Delta_w$.
\end{remark}

\begin{figure}[h!]
	\centering
	\includegraphics{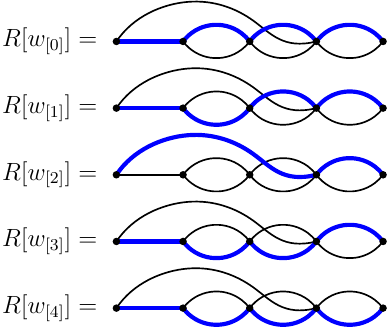}
	\caption[The maximal clique corresponding to the Stirling~$(1,2,1)$-permutation~$w=3221$.]{ The maximal clique~$\Delta_w=\{\rpre{\prefix{w}{0}},\ldots,\rpre{\prefix{w}{4}}\}$ corresponding to the Stirling~$(1,2,1)$-permutation~$w=3221$. Each route is bolded in blue.}
	\label{fig:l_clique_3221}
\end{figure}

We are now ready to characterize~$\triangDKK[\oru(s)]$ using the~$s$-weak order.

\begin{lemma}\label{lem:max_clique}
	The maximal simplices of~$\triangDKK[\oru(s)]$ are exactly the simplices~$\Delta_{w}$ where~$w$ ranges over all Stirling~$s$-permutations.
\end{lemma}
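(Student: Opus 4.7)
The plan is to show $w\mapsto\Delta_w$ is an injection from Stirling $s$-permutations into maximal cliques of $(\oru(s),\preceq)$, and then conclude it is a bijection via a counting argument: the number of maximal cliques equals $\vol(\fpol[\oru(s)](\bfi))$ by Proposition~\ref{prop:DKK_triangulation} and Proposition~\ref{prop:bij_cliques_intflows}, which equals the number of Stirling $s$-permutations by Corollary~\ref{cor:volume is number of trees}. Three things need to be verified: (i) $|\Delta_w|=|s|+1$, matching the cardinality of a maximal simplex since $\dim\fpol[\oru(s)](\bfi)=|s|$ by Remark~\ref{rem:oru_dimension}; (ii) the routes in $\Delta_w$ are pairwise coherent so that $\Delta_w$ is indeed a clique; and (iii) the map $w\mapsto\Delta_w$ is injective.

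For (i) and (iii), I would analyze how the triple $(c(u),t_{c(u)}(u),\delta(u))$ evolves when one appends a letter $a$ to a prefix $u$ forming $u'=u\cdot a$. Since $w$ is $121$-avoiding, $a$ must satisfy $a\leq c(u)$, yielding exactly four cases: (A) $a<c(u)$ with $s_a>1$, where $c$ drops to $a$ and $\rpre{u'}$ switches to the source edge $e_1^a$ at $v_{n+1-a}$; (A$'$) $a<c(u)$ with $s_a=1$, where $\delta_a$ flips from $0$ to $1$, swapping the bump $e_0^a$ for the dip $e_{s_a}^a$; (B) $a=c(u)$ with $t_c+1<s_c$, where $t_c$ increments and the source edge shifts from $e_{t_c}^c$ to $e_{t_c+1}^c$; and (B$'$) $a=c(u)$ with $t_c+1=s_c$, where $c$ jumps to the next partial value and the edge at index $c$ switches from source $e_{t_c}^c$ to dip $e_{s_c}^c$. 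In every case the triple genuinely changes, so the $|s|+1$ routes $\rpre{\prefix{w}{i}}$ are pairwise distinct; moreover the transition between two consecutive routes uniquely determines the appended letter, so $w$ is recovered from $\Delta_w$ ordered by prefix length, whose two canonical endpoints are the all-bump and all-dip exceptional routes of Remark~\ref{rem:oru_exceptional_routes}.

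The main obstacle is (ii), verifying coherence of $\rpre{u}$ and $\rpre{u'}$ for arbitrary prefixes $u\subsetneq u'$ of $w$, since coherence is not transitive as noted in Remark~\ref{rem:order_routes_properties}. The strategy is to examine each maximal common subroute $[v_a,v_b]$ of $\rpre{u}$ and $\rpre{u'}$ and verify that the in-edge order at $v_a$ (with respect to $\preceq_{\cI_a}$) and the out-edge order at $v_b$ (with respect to $\preceq_{\cO_b}$) agree. The key observation distilled from the case analysis above is that every transition $u\mapsto u\cdot a$ produces a monotone change in the ``position'' of the in-edges and out-edges in the framing: any source, bump, or dip used by $\rpre{u}$ at a shared vertex either remains the same or is replaced by a later edge in the framing orders $e_0^i\prec e_1^i\prec\cdots\prec e_{s_i}^i$ (for incoming edges at $v_{n+1-i}$) and $e_0^{i-1}\prec e_{s_{i-1}}^{i-1}$ (for outgoing edges). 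By composition, the same monotone comparison propagates through the chain from $u$ to $u'$, so at any maximal common subroute of $\rpre{u}$ and $\rpre{u'}$ both the in-edge and out-edge orders place $\rpre{u}$ uniformly before $\rpre{u'}$, ruling out any conflict.

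Together, (i)--(iii) ensure each $\Delta_w$ is a maximal clique, and the resulting injection $w\mapsto\Delta_w$ is therefore a bijection between Stirling $s$-permutations and maximal cliques of $(\oru(s),\preceq)$, which via $\triangDKK[\oru(s)]$ gives exactly the maximal simplices of the DKK triangulation.
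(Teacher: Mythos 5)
Your proposal is correct and follows the same overall strategy as the paper's proof: build the clique $\Delta_w$, check it has the right size and is coherent, show $w\mapsto\Delta_w$ is injective, then conclude by counting. The main observation — that the framing position of every in- and out-edge of $\rpre{u}$ can only move forward as the prefix $u$ grows — is the same insight the paper captures more directly via the inequality $t_a(\prefix{w}{i})\leq t_a(\prefix{w}{j})$ for $i<j$, since the incoming edge of $\rpre{u}$ at a vertex $v_{n+1-a}$ is always exactly $e^a_{t_a(u)}$ and the framing orders these by $t_a$. Your step-by-step transition analysis proves the same monotonicity but at greater length.

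Two secondary differences worth flagging. First, for injectivity you propose to \emph{recover} $w$ from $\Delta_w$ by sorting the routes and reading off the transitions, whereas the paper argues directly that $w\neq w'$ yields a route in $\Delta_{w'}$ outside $\Delta_w$. Both work; your version needs the observation (which you do have, implicitly) that the $\preceq$-comparison makes the routes of $\Delta_w$ totally ordered, with the all-bump and all-dip exceptional routes as endpoints, so that ``ordered by prefix length'' is well defined from $\Delta_w$ alone. Second, there is a small logical inversion in the order of your claims: you assert in point (i) that ``the triple genuinely changes, so the $|s|+1$ routes are pairwise distinct,'' but consecutive change alone only gives consecutive distinctness. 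Pairwise distinctness requires the monotonicity you establish under (ii) (strict increase somewhere in the $(t_a)_a$ data forces all routes to differ). The fix is just to derive (i) after or alongside (ii), or to appeal directly to the monotonicity of $(t_a(\prefix{w}{i}))_a$ as the paper does; the argument is sound once the steps are ordered consistently.
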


\begin{proof}
	Recall from Proposition~\ref{prop:DKK_triangulation} that the maximal simplices of~$\triangDKK[\oru(s)]$ are the simplices~$\Delta_C$, where~$C$ is a maximal clique of coherent routes of~$(\oru(s), \preceq)$. We claim that for a Stirling~$s$-permutation~$w$, the simplex~$\Delta_w$ is a clique of coherent routes of~$(\oru(s), \preceq)$.
	Let~$1\leq i < j\leq [|s|]$ corresponding to two routes~$\rpre{\prefix{w}{i}}$ and~$\rpre{\prefix{w}{j}}$ in~$\Delta_w$.
	As~$i<j$, we have the inequalities~$t_a(\prefix{w}{i})\leq t_a(\prefix{w}{j})$ for all~$a\in [n]$. This means that for any common vertex~$v_{n+1-a}$ of~$\rpre{\prefix{w}{i}}$ and~$\rpre{\prefix{w}{j}}$, we have that the incoming (resp.\ outgoing) edge of~$\rpre{\prefix{w}{i}}$ is smaller than the incoming (resp.\ outgoing) edge of~$\rpre{\prefix{w}{j}}$ in~$\preceq$.
	Thus, the routes~$\rpre{\prefix{w}{i}}$ and~$\rpre{\prefix{w}{j}}$ are coherent, and~$\Delta_w$ is a clique of coherent routes. Now, since~$\Delta_w$ has~$|s|+1=\text{dim}(\fpol[\oru(s)])+1$ elements, it is a maximal clique and corresponds to a maximal simplex in~$\triangDKK[\oru(s)]$.

	Now let us see that assigning~$w$ to~$\Delta_w$ is injective. Let~$w'$ be a Stirling~$s$-permutation such that~$w'\neq w$ and~$i\in [|s|-1]$ be the minimal index such that~$w_i \neq w'_i$, meaning that~$R[w_{[j]}]=R[w'_{[j]}]$ for all~$j<i$. Without loss of generality we suppose that~$a:=w_i<w'_i:=b$. We claim that~$\rpre{\prefix{w'}{i}}$ cannot belong to~$\Delta_w$. Notice that~$a$ (resp.~$b$) is the minimal value~$c$ in~$w_{[i]}$ (resp.~$w'_{[i]}$) such that~$0<t_c<s_c$. If this were not the case then we would have that~$w$ (resp.~$w'$) contains the pattern~$121$. Thus,~$\rpre{\prefix{w}{i}}$ contains the edge~$e^{a}_{t_{a}(\prefix{w}{i})}$ while~$\rpre{\prefix{w'}{i}}$ contains~$e^{a}_{t_{a}(\prefix{w'}{i})}=e^{a}_{t_{a}(\prefix{w}{i})-1}$ as~$a<b$. Meaning that~$R[w_{[i]}]\neq R[w'_{[i]}]$. Finally, for any~$j>i$ we have that~$t_{a}(\prefix{w}{j})\geq t_{a}(\prefix{w}{i})$, so~$e^{a}_{t_{a}(\prefix{w}{i})-1}\notin\rpre{\prefix{w}{j}}$. Therefore,~$\rpre{\prefix{w'}{i}}\notin\Delta_w$ and the map~$w \mapsto \Delta_w$ is injective.

	Through the chain of bijections from maximal simplices of~$\triangDKK[\oru(s)]$, to~$\bfd$-flows of~$\oru(s)$ (Proposition~\ref{prop:bij_cliques_intflows}), to~$s$-decreasing trees (Remark~\ref{rem:bij_simplices_trees}), to Stirling~$s$-permutations (Proposition~\ref{prop:s_tree_to_s_permutation}) we get that this injection is a bijection.
\end{proof}

Combining previous results, we now have the following commutative diagram of bijections:

\begin{center}
	\begin{tikzpicture}
		\begin{scope}[xscale=2]
			\node[](n1) at (0,0) {$\cT_s$};
			\node[](n2) at (1.5,0) {$\cW_s$};
			\node[](n3) at (3,0) {$\mathcal{F}_{\oru(s)}^{\mathbb{Z}}(\bfd)$};
			\node[](n4) at (5.5,0) {$\maxcliques[\oru(s)]$};

			\draw[-stealth] (n1)--(n2);
			\draw[-stealth] (n2)--(n1);
			\draw[-stealth] (n2)--(n3);
			\draw[-stealth] (n3)--(n2);
			\draw[-stealth] (n3)--(n4);
			\draw[-stealth] (n4)--(n3);
			\draw[-stealth] (n1) to[out=50,in=-210] (n3);
			\draw[-stealth] (n3) to[out=-210,in=50] (n1);
			\draw[-stealth] (n2) to[out=-45,in=200] (n4);
			\draw[-stealth] (n4) to[out=200,in=-45] (n2);

			\node[] at (.75,.2) {\tiny Prop.~\ref{prop:s_tree_to_s_permutation}};
			\node[] at (2.075,.2) {\tiny Prop~\ref{thm:bij_simplices_permutations}};
			\node[] at (3.95,.2) {\tiny Prop~\ref{prop:bij_cliques_intflows}};
			\node[] at (1.3,1.) {\tiny Rem.~\ref{rem:bij_simplices_trees}};
			\node[] at (3,-0.6) {\tiny Lem.~\ref{lem:max_clique}};

		\end{scope}
	\end{tikzpicture}
\end{center}

\subsection{The~\texorpdfstring{$1$}{}-Skeleton of the~\texorpdfstring{$s$}{}-Weak Order}

\begin{theorem}\label{thm:cover_relations}
	Let~$s=(s_1, \ldots, s_n)$ be a composition and~$w$ and~$w'$ be two Stirling~$s$-permutations.
	There is a cover relation between~$w$ and~$w'$ in the~$s$-weak order if and only if the simplices~$\Delta_{w}$ and~$\Delta_{{w'}}$ are adjacent in~$\triangDKK[\oru(s)]$.
\end{theorem}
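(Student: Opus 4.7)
The plan is to prove both directions by analyzing the routes of $\Delta_w$ and $\Delta_{w'}$ through their prefix description, exploiting the fact that $\rpre{u}$ depends only on the multiset of letters of the prefix $u$ (since the parameters $(c, t_c, \delta)$ defining $R(c,t_c,\delta)$ are multiset-determined). Two maximal simplices of $\triangDKK[\oru(s)]$ are adjacent exactly when they share a facet, equivalently when their route sets have $|s|$ elements in common.

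First, I would handle the forward direction. Suppose $w \lessdot w'$ is a cover relation coming from the ascent $(a,c)$, so that $w = u_1 B_a c u_2$ and $w' = u_1 c B_a u_2$. Set $l := |B_a|$ and let $i$ be the position of the last letter of $B_a$ in $w$. For positions $j \in \{0, \ldots, i - l\}$ the prefixes $\prefix{w}{j}$ and $\prefix{w'}{j}$ coincide on the nose, while for $j \in \{i+1, \ldots, |s|\}$ they share the same multiset of letters, so these $|s| - l + 1$ positions already contribute common routes to $\Delta_w \cap \Delta_{w'}$. The crux lies in the swap region. Here I would establish the key observation that for each $j' \in \{1, \ldots, l-1\}$, the multiset $M_{j'} := u_1 \cup (B_a)_{[j']}$ contains the first copy of $a$ from $B_a$ but not the last one (which lies at position $i$), so $0 < t_a(M_{j'}) < s_a$. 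Consequently the minimal value $c^{*}$ with $0 < t_{c^{*}}(M_{j'}) < s_{c^{*}}$ satisfies $c^{*} \leq a < c$. Because $c$ does not appear in $u_1$ (a $c$ in $u_1$ together with the $c$ at position $i+1$ and any letter of $B_a$ between them would create a $121$ pattern), adjoining a single $c$ to $M_{j'}$ leaves $t_{c^{*}}$ unchanged and does not affect any $\delta$ entry indexed by a letter below $c^{*}$, so $\rpre{M_{j'}} = \rpre{M_{j'} \cup \{c\}}$. This identifies the route at position $i - l + j'$ of $\Delta_w$ with the route at position $i - l + 1 + j'$ of $\Delta_{w'}$, contributing $l - 1$ further common routes, and the two sides share exactly $|s|$ routes in total.

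For the reverse direction I would argue by a global count of facets of $\Delta_w$. The simplex $\Delta_w$ has $|s| + 1$ facets, each of which is either a boundary facet of $\triangDKK[\oru(s)]$ or is shared with a unique neighboring maximal simplex. The two facets opposite the exceptional routes of Remark~\ref{rem:oru_exceptional_routes} (at positions $0$ and $|s|$) are boundary, because those routes belong to every maximal clique and so cannot be replaced. Next, for each position $j \in [|s|-1]$ with $w_j = w_{j+1}$, I would show that the facet opposite $\rpre{\prefix{w}{j}}$ is also boundary: any alternative coherent replacement route would have to be coherent with the remaining $|s|$ routes of $\Delta_w$, but the local framing of $\oru(s)$ around the vertex corresponding to this intra-block position forces a conflict with some route already in $\Delta_w$. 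After subtracting all these boundary contributions, the number of DKK neighbors of $\Delta_w$ equals the number of positions $j$ where $(w_j, w_{j+1})$ is either an ascent or a descent, which is precisely the valence of $w$ in the $s$-weak order. Since the forward direction already produces exactly this many distinct neighbors (one per ascent of $w$, and, by the symmetric argument applied to covering predecessors, one per descent), equality in the count forces every adjacency to come from a cover relation.

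The most delicate step is the equal-letter boundary classification above: ruling out every alternative coherent route at an intra-block position requires a careful case analysis of the framing of $\oru(s)$ at the relevant vertex. I expect the cleanest way to handle this is via Lemma~\ref{lem:DKKlem2_us_pro}, verifying that any hypothetical replacement route would produce a minimal conflict with some route already in $\Delta_w$ whose resolvents also lie in $\Delta_w$, which obstructs the required bistellar flip.
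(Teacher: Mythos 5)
Your forward direction matches the paper's: for a transposition along the ascent $(a,c)$ you track the multisets of prefixes, note that routes agree outside the swap region, and that inside it the extra letter $c$ is invisible to $\rpre{\cdot}$ because the minimal partial letter satisfies $c^{*} \leq a < c$ and $c\notin u_1$ (the paper phrases this as ``$t_c$ does not play a role''), so $\Delta_w \cap \Delta_{w'}$ has exactly $|s|$ routes. Your reverse direction, however, takes a genuinely different route. The paper extracts the transposed ascent directly from an adjacency by examining the longest common prefix $u_1$ of $w$ and $w'$, setting $a := w_{\ell(u_1)+1}$ and $c := w'_{\ell(u_1)+1}$, ruling out $a\in u_1$ via $121$-avoidance, and then showing the only way for $\Delta_w$ and $\Delta_{w'}$ to differ in a single route is $w=u_1B_acu_2$, $w'=u_1cB_au_2$. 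You instead count: you classify the $|s|+1$ facets of $\Delta_w$ as boundary (the two exceptional ones plus the intra-block positions) versus interior, invoke the already-proved forward direction to manufacture one interior facet per ascent and per descent, and conclude by equality of counts that every adjacency is a cover relation. This logic is sound, and the step you flag as delicate is in fact clean: for an intra-block position $j$ with $w_j=w_{j+1}=c$, the route $\rpre{\prefix{w}{j}}$ is the unique route of $\Delta_w$ that uses the source edge $e^c_t$ (uniqueness because $j$ is pinned by $(c,t)$ through the occurrence count of $c$, and $c$ is the minimal partial letter by $121$-avoidance), so $\Delta_w\setminus\{\rpre{\prefix{w}{j}}\}$ sits inside the supporting hyperplane $\{f(e^c_t)=0\}$ of the flow polytope — no framing case analysis or appeal to Lemma~\ref{lem:DKKlem2_us_pro} is needed. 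This is precisely the observation the paper deploys later in Lemma~\ref{cor:interiorsimplicesDKK}. The trade-off: the paper's direct extraction is shorter and logically independent of the forward direction, while your counting approach requires the forward direction up front but delivers the boundary-versus-interior facet classification as a byproduct — a fact the paper must establish separately on the way to Theorem~\ref{thm:bij_interiorfacesDKK_facessperm}.
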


\begin{proof}
	Suppose that~$w'$ is obtained from~$w$ by a transposition along the ascent~$(a,c)$. From Corollary~\ref{prop:cover_relations_multiperm} we have that~$w=u_1 B_a c u_2$ and~$w'=u_1 c B_a u_2$ where~$u_1$ and~$u_2$ are words in~$[n]$ and~$B_a$ is the~$a$-block of the permutations.
	We denote by~$\ell(u)$ the length of a word~$u$. Notice that for all~$i\in [0, \ell(u_1)]\cup[\ell(u_1)+\ell(B_a)+1, |s|]$, the routes~$\rpre{\prefix{w}{i}}$ and~$\rpre{\prefix{w'}{i}}$ are equal as the corresponding prefixes consist of the same amounts of the same values.

	For the indices~$i\in[\ell(u_1)+1, \ell(u_1)+\ell(B_a)-1]$ we claim that the routes~$\rpre{\prefix{w}{i}}$ and~$\rpre{\prefix{w'}{i+1}}$ are equal as well. Notice that for these~$i$ we have that~$t_b(\prefix{w}{i})=t_{b}(\prefix{w'}{i+1})$ for all~$b\in [n]\setminus \{c\}$. Also, since we are reading the substring~$B_a$ with these indices, we have that~$0<t_a(\prefix{w}{i})<s_a$. Using that~$a<c$, we get that the value~$t_c(\prefix{w}{i})$ (respectively~$t_c(\prefix{w'}{i+1})$) does not play a role in the determination of the edges for the route~$\rpre{\prefix{w}{i}}$ (respectively~$\rpre{\prefix{w'}{i+1}}$). In this way, the vertices of~$\Delta_w$ and~$\Delta_{w'}$ differ only in one element. Namely,~$\rpre{\prefix{w}{\ell(u_1)+\ell(B_a)}}\in \Delta_w$ corresponding to the prefix~$u_1B_a$ of~$w$ and~$\rpre{\prefix{w'}{\ell(u_1)+1}}\in \Delta_{w'}$ corresponding to the prefix~$u_1c$ of~$w'$. As simplices this means that~$\Delta_{w'}$ and~$\Delta_{w}$ share a common facet in~$\triangDKK[\oru(s)]$.

	Reciprocally, suppose that~$\Delta_w$ and~$\Delta_{w'}$ are adjacent and thus differ only in one vertex (i.e.\ one route).
	We denote~$u_1$ the longest common prefix of~$w$ and~$w'$ and~$a:=w_{\ell(u_1)+1}$,~$c:=w'_{\ell(u_1)+1}$. Suppose without loss of generality that~$a<c$. This gives us that~$a\notin u_1$ as if~$a\in u_1$ then~$w'$ would contain the subword~$aca$ and thus the pattern~$121$. Therefore,~$u_1B_a$ is a prefix of~$w$ and the route~$\rpre{\prefix{w}{\ell(u_1)+\ell(B_a)}}$ (resp.~$\rpre{\prefix{w'}{\ell(u_1)+1}}$) is in~$\Delta_w$ (resp.~$\Delta_{w'}$) but not in~$\Delta_{w'}$ (resp.~$\Delta_{w}$).

	Thus, the only possibility that~$\Delta_w$ and~$\Delta_{w'}$ differ only on these elements is that~$w=u_1B_acu_2$ and~$w'=u_1cB_au_2$, where~$u_2$ is their longest common suffix.
	This means that there is an~$s$-tree rotation along the ascent~$(a,c)$ between~$w$ and~$w'$ and Corollary~\ref{prop:cover_relations_multiperm} gives us that~$w\lessdot w'$.
\end{proof}

\begin{remark}\label{rem:interior_facet_ascent}
	In this context, we say that the common facet of~$\Delta_{w}$ and~$\Delta_{{w'}}$ is associated to the transposition of~$w$ along~$(a,c)$. Notice that such a facet lies in the interior of~$\fpol[\oru(s)]$ since it separates two interior maximal simplices of~$\triangDKK[\oru(s)]$.
\end{remark}

Figure~\ref{fig:adjacency_graph_121} shows the graph dual to the DKK triangulation of~$\fpol[\oru(s)](\bfi)$ for~$s=(1,2,1)$, which corresponds to the (unoriented) Hasse diagram of the~$(1,2,1)$-weak order. Notice that in this Figure we omit the routes~$\rpre{\prefix{w}{0}}$ and~$\rpre{\prefix{w}{|s|}}$ since both appear in~$\Delta_{w}$ for every~$w \in \mathcal{W}_{(1,2,1)}$.

\begin{figure}[h!]
	\centering
	\includegraphics[scale=0.74]{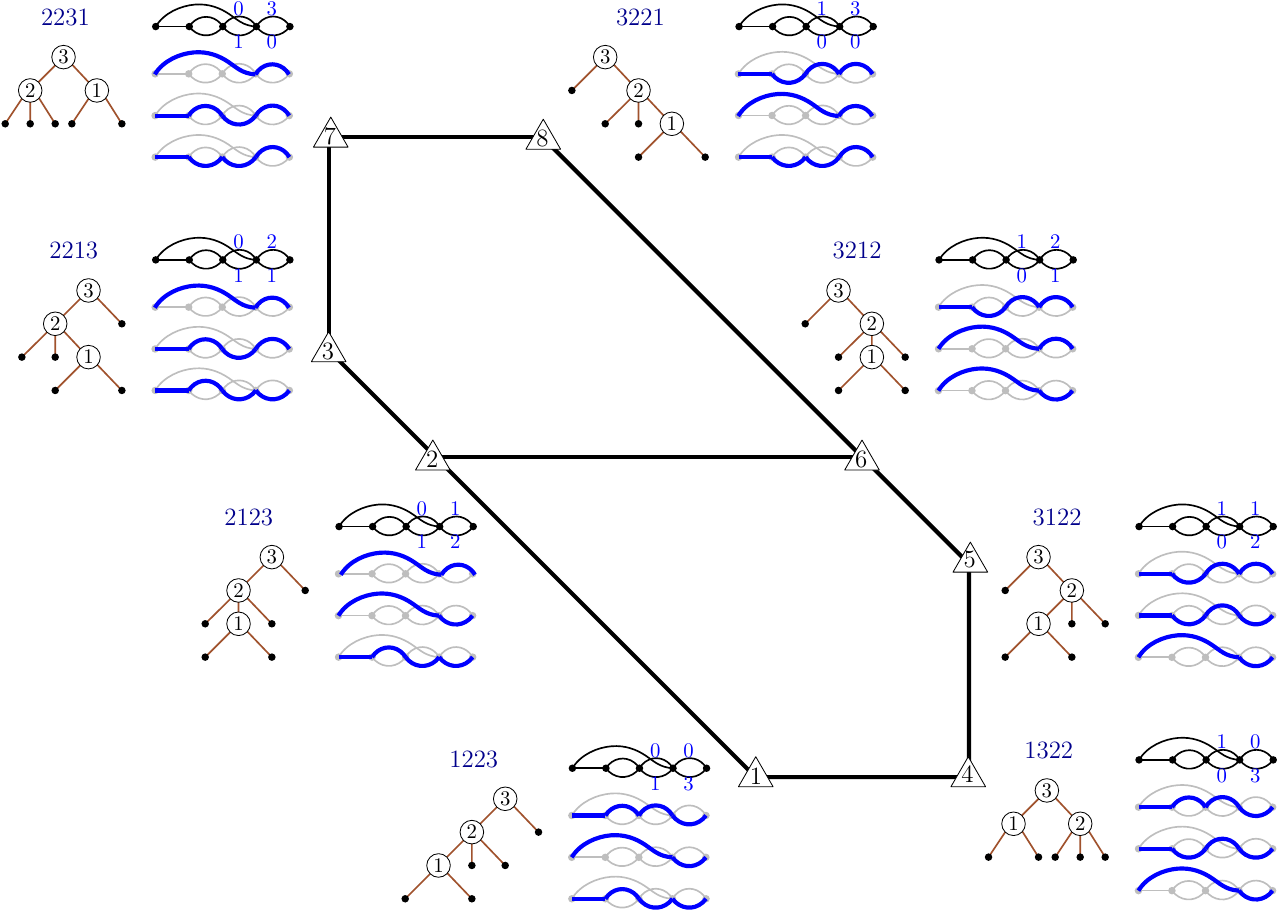}
	\caption[The dual realization of the~$(1,2,1)$-permutahedron with vertices indexed by~$s$-decreasing trees, Stirling~$s$-permutations, maximal cliques of routes, and integer flows.]{ The dual realization of the~$(1,2,1)$-permutahedron with vertices indexed by~$s$-decreasing trees, Stirling~$s$-permutations, maximal cliques of routes (omitting~$R(n+1, 1, (0)^n)$ and~$R(n+1, 1, (1)^n)$), and integer flows.}
	\label{fig:adjacency_graph_121}
\end{figure}

\subsection{Higher Faces of the~\texorpdfstring{$s$}{}-Permutahedron}

We now show that not only the vertices and edges but all the faces of the~$s$-permutahedron are also encoded in the triangulation~$\triangDKK[\oru(s)].$ We begin with some technical results.

\begin{lemma}\label{lem:routes_invsets}
	Let~$w$ be a Stirling~$s$-permutation and~$R=R(k,t,\delta)$ a route of~$\oru(s)$.~$R$ is a vertex of~$\Delta_w$ if and only if the inversion set of~$w$ satisfies the following inequalities:

	\begin{enumerate}
		\itemsep0em
		\item $|(k,i)_w|\geq t$ for all~$1\leq i < k$ such that~$\delta_i=0$,
		\item $|(k,i)_w|\leq t$ for all~$1\leq i < k$ such that~$\delta_i=1$,
		\item $|(j,i)_w|=0$ for all~$1\leq i < j <k$ such that~$(\delta_i, \delta_j)=(1,0)$,
		\item $|(j,i)_w|=s_j$ for all~$1\leq i < j <k$ such that~$(\delta_i, \delta_j)=(0,1)$.
	\end{enumerate}
\end{lemma}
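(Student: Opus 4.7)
The plan is to unpack Definition~\ref{def:Lw} so that $R(k,t,\delta)$ is a vertex of $\Delta_w$ precisely when there exists $p\in[0,|s|]$ for which the prefix $\prefix{w}{p}$ satisfies $t_k(\prefix{w}{p})=t$ and $t_j(\prefix{w}{p})=s_j\delta_j$ for every $j<k$. Since each block $B_j$ is a contiguous substring of $w$ starting and ending with $j$, these conditions are equivalent to $p\geq\mathrm{end}(B_j)$ for every $j\in J_1:=\{j<k:\delta_j=1\}$, $p<\mathrm{start}(B_j)$ for every $j\in J_0:=\{j<k:\delta_j=0\}$, and exactly $t$ copies of $k$ occurring in positions $\leq p$. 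The lemma then reduces to translating these positional constraints on $p$ into the inversion statistics, using that $B_j$ contains no letter $>j$ when $j<k$.

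For the necessity direction, I would assume such a $p$ exists. Because $B_j$ contains no $k$, the count $|(k,j)_w|$ equals the number of $k$'s at positions $\leq\mathrm{end}(B_j)$. Hence $j\in J_0$ gives $t\leq|(k,j)_w|$ (condition (1)) and $j\in J_1$ gives $|(k,j)_w|\leq t$ (condition (2)). If $i<j<k$ with $i\in J_1,\,j\in J_0$, then $\mathrm{end}(B_i)\leq p<\mathrm{start}(B_j)$ forces $B_i$ to precede $B_j$ entirely, so no $j$ lies before $B_i$ and $|(j,i)_w|=0$, yielding (3); the case $i\in J_0,\,j\in J_1$ is symmetric and yields (4).

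For the sufficiency direction, I would combine Remark~\ref{rem:blocks} (for $a<b$, either $B_a\subset B_b$ or $B_a\cap B_b=\emptyset$) with (3) and (4) to show that every $B_i$ with $i\in J_1$ is disjoint from and precedes every $B_j$ with $j\in J_0$: for $i<j$, $B_i\subset B_j$ would place the first $j$ before $B_i$, contradicting $|(j,i)_w|=0$, while the case $j<i$ follows by applying (4) with $i,j$ swapped. Setting $a:=\max(\{0\}\cup\{\mathrm{end}(B_j):j\in J_1\})$ and $b:=\min(\{|s|\}\cup\{\mathrm{start}(B_j)-1:j\in J_0\})$, this yields $a\leq b$. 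Letting $N(p)$ count the $k$'s at positions $\leq p$, one computes $N(a)=\max(\{0\}\cup\{|(k,j)_w|:j\in J_1\})$ and $N(b)=\min(\{s_k\}\cup\{|(k,j)_w|:j\in J_0\})$, so conditions (1)--(2) become $N(a)\leq t\leq N(b)$; since $N$ increments by one at each occurrence of $k$, some $p\in[a,b]$ gives $N(p)=t$, and the prefix $\prefix{w}{p}$ then realizes $R(k,t,\delta)=\rpre{\prefix{w}{p}}$ (the minimality of $k$ in Definition~\ref{def:Lw} is automatic since $t_j(\prefix{w}{p})\in\{0,s_j\}$ for $j<k$).

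The main obstacle will be the sufficiency direction, specifically the case analysis using Remark~\ref{rem:blocks} to rule out nested blocks from (3) and (4); once the valid range $[a,b]$ is isolated, the rest is an elementary count of occurrences of $k$.
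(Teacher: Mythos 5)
Your proof is correct and follows the same approach as the paper: identify when $R(k,t,\delta)=\rpre{\prefix{w}{p}}$ for some prefix, and translate the positional constraints on $p$ into inversion inequalities. Your sufficiency direction is in fact considerably more careful than the paper's, which simply asserts that a suitable prefix exists; you actually construct the interval $[a,b]$ from conditions (3)--(4), use Remark~\ref{rem:blocks} to verify $a\leq b$, and locate $p$ via the counting function $N$.
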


We say that the route~$R$ \defn{implies}\index{graph!route!implies} these inequalities on inversion sets.

\begin{proof}
	($\Rightarrow$) Suppose that~$R(k,t,\delta)$ is a vertex of~$\Delta_w$. It means that~$R(k,t,\delta)=\rpre{\prefix{w}{r}}$ for a certain~$r\in[0, |s|]$. This signifies that~$t$ corresponds to the number of occurrences of~$k$ in~$\prefix{w}{r}$, and for all~$1\leq i < k$, the number of occurrences of~$i$ in~$\prefix{w}{r}$ is~$0$ (resp.~$s_i$) if~$\delta_i=0$ (resp.~$\delta_i=1$). This gives the announced inequalities on the inversion set of~$w$.

	($\Leftarrow$) Reciprocally, suppose that~$\inv(w)$ satisfies these inequalities. Then there is a prefix~$\prefix{w}{r}$ of~$w$ that contains no occurrence of~$i$ for~$i$ such that~$\delta_i=0$, all~$s_i$ occurrences of~$i$ for~$i$ such that~$\delta_i=1$, and exactly~$t$ occurrences of~$k$. Then this prefix is exactly associated to the route~$\rpre{\prefix{w}{r}}=R(k,t,\delta)$ following Definition~\ref{def:Lw}.
\end{proof}

\begin{definition}\label{def:simplex_of_faces}
	Let~$(w, A)$ be a face of~$\PSPerm$. We define \defn{$\Delta_{(w, A)}$} as the following intersection of facets of~$\Delta_w$ following Remark~\ref{rem:interior_facet_ascent}: \begin{equation*}
		\Delta_{(w,A)}:=\bigcap_{(a,c)\in A} \left\{\Delta_w\cap \Delta_{w'} \, : \, w' \text{ is the transposition of~$w$ along~$(a,c)$} \right\},
	\end{equation*}
	and~$\Delta_{(w,A)}:=\Delta_w$ if~$A=\emptyset$.
\end{definition}

\begin{remark}\label{rem:routes_from_delta_w_in_delta_wA}
	Notice that the~$|A|$ routes that are in~$\Delta_w\setminus{\Delta_{(w,A)}}$ correspond to the prefixes of~$w$ that end precisely between two letters of~$w$ that form an ascent in~$A$.
\end{remark}

\begin{lemma}\label{lem:Delta_(T,A)}
	Let~$(w, A)$ be a face of~$\PSPerm$ and~$w'$ be a Stirling~$s$-permutation. Then,~$w'$ is in the interval~$[w, w+A]$ if and only if~$\Delta_{(w,A)}\subseteq \Delta_{w'}$.
\end{lemma}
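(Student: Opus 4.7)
The plan is to translate the geometric inclusion $\Delta_{(w,A)} \subseteq \Delta_{w'}$ into arithmetic inequalities on $\inv(w')$ via Lemma~\ref{lem:routes_invsets}, and match them against the inversion-set characterization $\inv(w) \subseteq \inv(w') \subseteq \inv(w+A)$ of the interval $[w, w+A]$ in the $s$-weak order. First I would use Remark~\ref{rem:routes_from_delta_w_in_delta_wA} to describe the vertices of $\Delta_{(w,A)}$ explicitly: writing $A = \{(a_1,c_1),\ldots,(a_m,c_m)\}$ and letting $r_1 < \cdots < r_m$ be the positions in $w$ immediately after the blocks $B_{a_1}, \ldots, B_{a_m}$, these vertices are the routes $\rpre{\prefix{w}{r}}$ for $r \in [0,|s|] \setminus \{r_1, \ldots, r_m\}$. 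Since $\Delta_{(w,A)}$ and $\Delta_{w'}$ are simplices with $0/1$-vertex coordinates, the inclusion $\Delta_{(w,A)} \subseteq \Delta_{w'}$ reduces to the statement that every such vertex of $\Delta_{(w,A)}$ is a vertex of $\Delta_{w'}$.

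For the forward direction, assume $w' \in [w, w+A]$, so that $|(c,a)_w| \leq |(c,a)_{w'}| \leq |(c,a)_{w+A}|$ componentwise. Take any vertex $R = R(k,t,\delta) = \rpre{\prefix{w}{r}}$ of $\Delta_{(w,A)}$ and verify the four inequalities of Lemma~\ref{lem:routes_invsets} for $\inv(w')$. The lower-bound conditions~(1) and~(4) already hold for $\inv(w)$ since $R \in \Delta_w$, and hence carry over to $\inv(w')$ by $\inv(w) \subseteq \inv(w')$ together with the a priori upper bound $|(j,i)| \leq s_j$. For the upper-bound conditions~(2) and~(3), they hold on $\inv(w)$ and I would propagate them to $\inv(w+A)$ via Theorem~\ref{thm:transitiveclosure}: if~(2) were to fail at $\inv(w+A)$ for some pair $(i,k)$, then $(i,k)$ would be $A$-dependent in $w$ through a chain $i \leq b_1 < \cdots < b_{m'+1} = k$. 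However, the minimality of $k$ in the definition of $\rpre{\prefix{w}{r}}$ combined with $\delta_i = 1$ forbids any partial consumption of values $i < b < k$ at position $r$; and any $b \in (i,k)$ with $B_i \subsetneq B_b$ would necessarily be partially consumed at $r$ (as one then has $f_b < f_i$ and $l_b > l_i$), so necessarily $b_1 = i$ in the chain. Unpacking Definition~\ref{def:chain} with $b_1 = i$ then places $r$ exactly at the ascent position of $(i, b_2) \in A$, contradicting $r \notin \{r_1,\ldots,r_m\}$. Condition~(3) is handled by the same argument.

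For the backward direction, assume $\Delta_{(w,A)} \subseteq \Delta_{w'}$. The bounds $\inv(w) \subseteq \inv(w')$ and $\inv(w') \subseteq \inv(w+A)$ would be obtained pair by pair: for each $(a,c)$, I would produce a specific vertex $R \in \Delta_{(w,A)}$ whose implied inequalities (via Lemma~\ref{lem:routes_invsets} applied to $\Delta_{w'}$) pin down $|(c,a)_{w'}|$ relative to $|(c,a)_w|$ and $|(c,a)_{w+A}|$. For the lower bound, a route of the form $R(c, |(c,a)_w|, \delta)$ with $\delta_a = 0$ can be extracted from the prefix of $w$ ending just before the first copy of $a$, provided that prefix position is not one of the $r_j$; for the upper bound, a complementary route isolates $|(c,a)_{w+A}|$, with Theorem~\ref{thm:transitiveclosure} identifying exactly when this bound differs from $|(c,a)_w|$ and supplying the correct route accordingly.

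The hard part will be the bookkeeping in the forward direction, specifically the argument that any $A$-dependency chain witnessing a violation of an upper-bound inequality must begin at $b_1 = i$. This rests on the simultaneous use of the vertex structure of $\Delta_w$ (the minimality of $k$ in $\rpre{\prefix{w}{r}}$ excludes intermediate partial consumption) and the $121$-avoidance of Stirling permutations (which makes the rigid block nesting in Definition~\ref{def:chain} available). In the backward direction, the parallel technical subtlety is to verify that the chosen prefix positions actually avoid $\{r_1,\ldots,r_m\}$ for every pair $(a,c)$ under consideration, which I expect to require a small case analysis on whether the first occurrence of $a$ in $w$ is itself immediately preceded by an $A$-block.
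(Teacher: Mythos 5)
Your framework --- translate the inclusion $\Delta_{(w,A)}\subseteq\Delta_{w'}$ into inequalities on $\inv(w')$ via Lemma~\ref{lem:routes_invsets} and match them against the characterization $|(c,a)_w|\leq |(c,a)_{w'}|\leq |(c,a)_{w+A}|$ of $[w,w+A]$ coming from Theorem~\ref{thm:transitiveclosure} --- is the same as the paper's. But two intermediate claims in your outline do not hold. In the forward direction you assert that any $b$ with $i<b<k$ and $B_i\subsetneq B_b$ must be partially consumed at position $r$, concluding $b_1=i$ in the $A$-dependency chain. This is false: $f_b<f_i$ and $l_b>l_i$ only certify proper containment and say nothing about how $l_b$ compares to $r$, so a block strictly containing $B_i$ can perfectly well be fully inside $\prefix{w}{r}$. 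Consequently $b_1$ can be strictly larger than $i$ and the chain need not start at $B_i$. The contradiction you want is still there, but you have to follow the whole chain: each $B_{b_j}$, being a block of letters $<k$ that intersects $\prefix{w}{r}$, is fully contained in $\prefix{w}{r}$ (or minimality of $k$ is violated); the blocks $B_{b_1},\ldots,B_{b_{k'}}$ and the final occurrence of $k$ are consecutive in $w$; and that final occurrence of $k$ lies strictly past $r$. Together these force $r$ to sit at the right endpoint of one of the $B_{b_j}$, which is an ascent position of a pair in $A$ --- without ever determining $b_1$.

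In the backward direction, the route attached to the prefix $\prefix{w}{f_a-1}$ ending just before the first occurrence of $a$ is $R(b,t_b,\delta)$ where $b$ is the \emph{smallest} letter with $B_a\subsetneq B_b$; whenever $B_a$ is nested in some block strictly between $a$ and $c$ this $b$ is strictly smaller than $c$, and then the conditions of Lemma~\ref{lem:routes_invsets} for $R(b,t_b,\delta)$ only concern pairs $(i,j)$ with $j\leq b<c$, so they impose nothing on $|(c,a)_{w'}|$. The route you actually need is the one from the prefix of $w$ ending immediately after the $|(c,a)_w|$-th occurrence of $c$: for $0<|(c,a)_w|<s_c$ this position lies strictly inside the $c$-block, hence is never an ascent position and automatically survives into $\Delta_{(w,A)}$, and the resulting route is genuinely of the form $R(c,|(c,a)_w|,\delta)$ with $\delta_a=0$, which gives the lower bound. (The boundary cases $|(c,a)_w|\in\{0,s_c\}$ then need a quick separate word; your worry about whether the first copy of $a$ is preceded by an $A$-block is not where the difficulty lies.)
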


\begin{proof}
	Recall that Theorem~\ref{thm:transitiveclosure} describes~$\inv(w+A)$ and that~$w'\in [w, w+A]$ if and only if~$\inv(w')$ satisfies~$|(c,a)_w|\leq |(c,a)_{w'}|\leq |(c,a)_{w+A}|$ for all~$1\leq a < c \leq n$. We proceed to show that these inequalities are exactly the ones implied by the routes that are vertices of~$\Delta_{(w,A)}$, in the sense of Lemma~\ref{lem:routes_invsets}.

	Let~$(a,c)$ be a pair with~$|(c,a)_w|=:t$.
	For each inequality of Lemma~\ref{lem:routes_invsets} we describe a route~$R\in\Delta_{(w,A)}$ such that~$R\in\Delta_{w'}$ that implies it.
	\begin{enumerate}
		\itemsep0em
		\item Consider the inequality~$|(c,a)_{w'}|\geq t$. We can take the route that corresponds to the prefix of~$w$ that ends following the~$t$-th occurrence of~$c$ and that does not correspond to an ascent in~$A$. Such ending point necessarily lies before the~$a$-block since~$a<c$.
		\item Consider the inequality~$|(c,a)_{w'}|\leq t$ if and only if~$|(c,a)_{w+A}|=t$ (i.e.\ the pair~$(a,c)$ is not~$A$-dependent). This inequality is only implied by routes that contain the edges~$e^c_t$ and~$e^a_{s_a}$. Such routes in~$\Delta_w$ correspond to prefixed of~$w$ that end between the last occurrence of~$a$ and the~$t$-th occurrence of~$c$ and that are not inside a~$b$-block~$B_b$ for any~$b<c$. The pair~$(a,c)$ is~$A$-dependent exactly when all such breaks are in~$A$, so the corresponding routes are removed in~$\Delta_{(w,A)}$.
		\item Consider the inequality~$|(c,a)_{w'}|\leq t+1$ if~$t+1< s_c$ and~$|(c,a)_{w+A}|=t+1$ (i.e.~$(a,c)$ is an~$A$-dependent pair).
		      We can take the route that corresponds to the prefix of~$w$ that ends immediately after the~$(t+1)$-th occurrence of~$c$. Since~$t+1<s_c$, other occurrences of~$c$ appear afterwards. Therefore, this break is not an ascent. Notice that if~$t+1=s_c$ there is no need to check that~$|(c,a)_{w'}|\leq s_c$. \qedhere
	\end{enumerate}
\end{proof}

Lemma~\ref{lem:Delta_(T,A)} also gives us the following alternative characterization of~$\Delta_{(w,A)}$.

\begin{corollary}\label{cor:Delta_T_a_alt}
	$\Delta_{(w,A)}=\bigcap_{w' \in [w, w+A]} \Delta_{w'}$.
\end{corollary}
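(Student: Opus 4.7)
The plan is to prove the two inclusions separately, relying on Lemma~\ref{lem:Delta_(T,A)} for the forward direction and on Definition~\ref{def:simplex_of_faces} together with the observation that single-ascent transpositions of $w$ lie in $[w, w+A]$ for the reverse direction.

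For the inclusion $\Delta_{(w,A)} \subseteq \bigcap_{w' \in [w,w+A]} \Delta_{w'}$, I would fix an arbitrary Stirling $s$-permutation $w'\in[w,w+A]$. By Lemma~\ref{lem:Delta_(T,A)}, membership of $w'$ in the interval $[w, w+A]$ is equivalent to $\Delta_{(w,A)}\subseteq \Delta_{w'}$. Since this holds for every such $w'$, intersecting over all of them yields $\Delta_{(w,A)}\subseteq \bigcap_{w'\in[w,w+A]}\Delta_{w'}$.

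For the reverse inclusion, I would unfold Definition~\ref{def:simplex_of_faces}: writing $w_{(a,c)}$ for the transposition of $w$ along the ascent $(a,c)\in A$, we have
\begin{equation*}
\Delta_{(w,A)} \;=\; \Delta_w \cap \bigcap_{(a,c)\in A}\bigl(\Delta_w\cap\Delta_{w_{(a,c)}}\bigr) \;=\; \Delta_w \,\cap\, \bigcap_{(a,c)\in A}\Delta_{w_{(a,c)}}.
\end{equation*}
Now $w\in[w,w+A]$ trivially, and for each ascent $(a,c)\in A$ the cover relation $w\lessdot w_{(a,c)}\leq w+A$ (Corollary~\ref{prop:cover_relations_multiperm}) places $w_{(a,c)}$ inside $[w,w+A]$. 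Therefore the intersection $\bigcap_{w'\in[w,w+A]}\Delta_{w'}$ is contained in $\Delta_w$ and in each $\Delta_{w_{(a,c)}}$, hence in $\Delta_{(w,A)}$ by the display above. The case $A=\emptyset$ is handled by the convention $\Delta_{(w,\emptyset)}:=\Delta_w$ together with $[w,w]=\{w\}$.

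There is no real obstacle here: the content is carried entirely by Lemma~\ref{lem:Delta_(T,A)} and the definition of $\Delta_{(w,A)}$. The only point worth double-checking is that the ``generating'' elements $w$ and the covers $w_{(a,c)}$ for $(a,c)\in A$ suffice to cut out $\Delta_{(w,A)}$ from $\Delta_w$, which is precisely how Definition~\ref{def:simplex_of_faces} is phrased, so no further combinatorial work on the inversion multisets is needed beyond what was already done for Lemma~\ref{lem:Delta_(T,A)}.
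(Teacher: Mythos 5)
Your proof is correct and matches the argument the paper implicitly intends (it states the corollary without proof as an immediate consequence of Lemma~\ref{lem:Delta_(T,A)}). One small inaccuracy in the citation: Corollary~\ref{prop:cover_relations_multiperm} only gives the cover relation $w\lessdot w_{(a,c)}$, not the second inequality $w_{(a,c)}\leq w+A$; the latter follows either from the fact that $\inv(w)+\{(a,c)\}\subseteq \inv(w)+A$ as multisets (and transitive closure preserves inclusion), or more slickly by observing that $\Delta_{(w,A)}\subseteq\Delta_{w_{(a,c)}}$ holds by Definition~\ref{def:simplex_of_faces} and then invoking the converse direction of Lemma~\ref{lem:Delta_(T,A)}.
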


\begin{lemma}\label{cor:interiorsimplicesDKK}
	Let~$C$ be a clique of routes of~$(\oru(s), \preceq)$ that contains the exceptional routes and at least one route that starts with~$e$ for each source edge~$e$ that is not~$(v_{-1}, v_0)$, then~$\Delta_C$ is in the interior of~$\triangDKK[\oru(s)]$.
\end{lemma}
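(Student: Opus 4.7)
The plan is to reduce the interiority claim to the combinatorial statement that every edge of $\oru(s)$ is traversed by at least one route of $C$, and then verify this edge-coverage directly from the two hypotheses on $C$.

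For the reduction, I would first observe that a simplex $\Delta_{C'}$ of $\triangDKK[\oru(s)]$ lies in the interior of the triangulation precisely when its relative interior meets the relative interior of $\fpol[\oru(s)](\bfi)$. Since the facet-defining inequalities of the flow polytope are the nonnegativity conditions $f(e) \geq 0$ on individual edges, a point of $\fpol[\oru(s)](\bfi)$ belongs to its relative interior exactly when $f(e) > 0$ for every edge $e$. A generic point of $\Delta_{C'}$ is a strictly positive convex combination $\sum_{R \in C'} \lambda_R \mathds{1}_R$, whose $e$-coordinate equals $\sum_{R \in C',\; e \in R} \lambda_R$; this is strictly positive if and only if some route of $C'$ passes through $e$. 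Hence $\Delta_{C'}$ is interior to $\triangDKK[\oru(s)]$ if and only if every edge of $\oru(s)$ is traversed by at least one route of $C'$.

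It then remains to verify this edge-coverage for the clique $C$ in the statement, which I would do by inspecting the three edge types of $\oru(s)$ separately. The exceptional route $R(n+1, 1, (0)^n)$ starts with the source edge $(v_{-1}, v_0) = e^{n+1}_1$ and then traverses every bump $e^i_0$ for $i \in [n]$; symmetrically, the exceptional route $R(n+1, 1, (1)^n)$ begins with $(v_{-1}, v_0)$ and traverses every dip $e^i_{s_i}$ for $i \in [n]$. Together these two routes account for the edge $(v_{-1}, v_0)$, all bumps, and all dips. The only remaining edges of $\oru(s)$ are the source edges $e^i_t$ with $i \in [n]$ and $t \in [s_i - 1]$, which are precisely the source edges distinct from $(v_{-1}, v_0)$; and by the second hypothesis, for each such source edge $e$ the clique $C$ contains a route starting with $e$ and hence using it.

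The delicate point in this outline is the reduction step, namely the assertion that every boundary facet of $\fpol[\oru(s)](\bfi)$ is cut out by an inequality of the form $f(e) \geq 0$ for some edge $e$, so that interiority of a simplex really is equivalent to edge-coverage by its routes. This should follow from the fact that $\fpol[\oru(s)](\bfi)$ is a $0/1$-polytope whose vertices are the indicator vectors of routes (Proposition~\ref{prop:flow_polytope_vertices_bfi}) together with the dimension count in Remark~\ref{rem:oru_dimension}, but one must check that each hyperplane $\{f(e) = 0\}$ does cut out a genuine facet in $\fpol[\oru(s)](\bfi)$. Once this is settled, the rest of the proof is a short, essentially bookkeeping verification across the edge types listed above.
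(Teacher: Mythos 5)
Your proof is correct but follows a genuinely different route from the paper's. The paper argues by contradiction through the combinatorics of the triangulation: a boundary simplex must lie in a boundary facet, boundary facets are of the form $\Delta_w\setminus R$ where the removed route $R$ is neither exceptional nor corresponds to an ascent or descent of $w$, and either case contradicts the hypotheses on $C$; this leans on the facet structure of $\triangDKK[\oru(s)]$ established via Theorem~\ref{thm:cover_relations}. You instead prove the clean geometric criterion that a cell of the triangulation is interior precisely when its routes cover every edge of $\oru(s)$, and then verify the coverage directly from the two hypotheses — a more self-contained argument that needs only the defining description of the flow polytope and makes the role of the hypotheses visibly align with the three edge types (bumps and dips covered by the exceptional routes, $(v_{-1},v_0)$ covered by both, remaining source edges covered by the assumed routes).

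On the point you flag as delicate: you do not in fact need each $\{f(e)=0\}$ to cut out a genuine facet. Since $\fpol[\oru(s)](\bfi) = \{f : Mf = \bfi\}\cap\{f\ge 0\}$, every proper face of the polytope lies in some hyperplane $\{f(e)=0\}$ whether or not that inequality is facet-defining, so the relative boundary is contained in $\bigcup_e\{f(e)=0\}$ automatically; and the dimension count of Remark~\ref{rem:oru_dimension} gives $\aff\big(\fpol[\oru(s)](\bfi)\big) = \{f : Mf=\bfi\}$. It follows that the relative interior is exactly $\{f : Mf=\bfi,\ f(e)>0\text{ for all }e\}$, provided no coordinate $f(e)$ vanishes identically on the polytope — and that condition is itself a consequence of the very edge-coverage you verify at the end (every edge carries some route, hence some vertex of the polytope has positive flow there). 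So your argument closes without the extra facet check.
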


\begin{proof}
	Suppose that~$\Delta_C$ is a boundary simplex of~$\triangDKK[\oru(s)]$. Since it is contained in a facet that is in the boundary of~$\triangDKK[\oru(s)]$, this facet corresponds to a clique of the form~$\Delta_w\setminus R$, where~$w$ is a Stirling~$s$-permutation and~$R$ is a route of~$\Delta_w$ that does not correspond to an ascent nor a descent of~$w$. Hence, either~$R$ is an exceptional route or it corresponds to a prefix~$\prefix{w}{i}$ such that~$w_i=w_{i+1}$. In this case,~$w_i$ is the~$t$-th occurrence of some~$c$ in~$w$ with~$t<s_c$. In this scenario~$R$ is the only route of~$\Delta_w$ that starts with the edge~$e^c_t$. As~$C\subseteq \Delta_w\setminus R$, this means that~$C$ does not satisfy the condition of the lemma.
\end{proof}

\begin{corollary}\label{cor:Delta(w,A)_interior}
	Let~$w$ be a Stirling~$s$-permutation and~$A$ a subset of its ascents. Then~$\Delta_{(w,A)}$ is an interior simplex of~$\triangDKK[\oru(s)]$.
\end{corollary}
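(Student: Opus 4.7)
The plan is to apply Lemma~\ref{cor:interiorsimplicesDKK} directly to the set of vertices of $\Delta_{(w,A)}$. By Definition~\ref{def:simplex_of_faces}, $\Delta_{(w,A)}$ is obtained from $\Delta_w$ by intersecting with facets indexed by the ascents in $A$, so by Remark~\ref{rem:routes_from_delta_w_in_delta_wA} its vertices are exactly those routes $R[w_{[i]}]$ of $\Delta_w$ such that $i$ is not the first position of an ascent $(w_i,w_{i+1})\in A$. Since $\Delta_w$ is already a clique of coherent routes (Lemma~\ref{lem:max_clique}), so is this subset. It remains to verify the two conditions of Lemma~\ref{cor:interiorsimplicesDKK}: that the set contains both exceptional routes, and that it contains at least one route starting with each source edge different from $(v_{-1},v_0)$.

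For the exceptional routes, recall from Remark~\ref{rem:oru_exceptional_routes} that $R[w_{[0]}]=R(n+1,1,(0)^n)$ and $R[w_{[|s|]}]=R(n+1,1,(1)^n)$. The indices $0$ and $|s|$ are the extremes of $w$, so they cannot be the first position of any ascent, and both routes survive in $\Delta_{(w,A)}$.

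For the remaining source edges, fix $c\in[n]$ and $t\in[1,s_c-1]$, and consider the source edge $e^c_t$. By Definition~\ref{def:Lw}, a route $R[w_{[i]}]\in\Delta_w$ starts with $e^c_t$ exactly when the prefix $w_{[i]}$ contains precisely $t$ occurrences of $c$ and $c$ is the minimal value with $0<t_c<s_c$ in that prefix; this forces $i$ to be the position of the $t$-th occurrence of $c$ in $w$. The main step is to verify that such an $i$ is never the first position of an ascent. Since $t<s_c$, at least one occurrence of $c$ appears in $w$ after position $i$. If $w_{i+1}=c$, then $(w_i,w_{i+1})$ is not an ascent. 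Otherwise $w_{i+1}\neq c$, and if $w_{i+1}>c$ then placing the remaining occurrences of $c$ after $w_{i+1}$ would create a $cw_{i+1}c$ subword, i.e.\ a $121$ pattern, contradicting that $w$ is a Stirling $s$-permutation; hence $w_{i+1}<c$, which means $(w_{i+1},w_i)$ is a descent and $(w_i,w_{i+1})$ is not an ascent. In either case $i$ is not an ascent position, so $R[w_{[i]}]$ is a vertex of $\Delta_{(w,A)}$.

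The main obstacle is this last step: the delicate verification that in a Stirling $s$-permutation, no position immediately following a non-terminal occurrence of a letter can begin an ascent. Once this combinatorial observation is in hand, both hypotheses of Lemma~\ref{cor:interiorsimplicesDKK} are met, and the conclusion that $\Delta_{(w,A)}$ is an interior simplex of $\triangDKK[\oru(s)]$ follows immediately.
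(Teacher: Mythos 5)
Your proposal is correct and follows the same approach as the paper: verify the two hypotheses of Lemma~\ref{cor:interiorsimplicesDKK}, showing first that the exceptional routes survive and then that for each source edge $e^c_t$ with $t<s_c$ the corresponding route is never removed because the position after the $t$-th occurrence of $c$ cannot be an ascent position (as occurrences of $c$ remain afterward). The only cosmetic difference is that the paper invokes Corollary~\ref{cor:Delta_T_a_alt} to dispose of the exceptional routes, while you argue directly that prefix lengths $0$ and $|s|$ are endpoints and so cannot lie between two letters forming an ascent; your three-way case analysis on $w_{i+1}$ makes explicit the one-line reason the paper gives for why no ascent can occur at that position.
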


\begin{proof}
	Due to Lemma~\ref{cor:interiorsimplicesDKK} it is sufficient to show that~$\Delta_{(w,A)}$ contains the exceptional routes and at least one route that starts with~$e$ for each source edge~$e$ that is not~$(v_{-1}, v_0)$. First, it is clear that~$\Delta_{(w,A)}$ contains the exceptional routes because of Corollary~\ref{cor:Delta_T_a_alt}. Now let~$c\in[n]$ and~$t\in[s_{c}-1]$. In this situation the prefix of~$w$ that ends with the~$t$-th occurrence of~$c$ corresponds to a route~$R$ that contains the edge source edge~$e^c_t$. Moreover, there cannot be an ascent of~$w$ after this prefix since there are still occurrences of~$c$ afterwards. Thus, the route~$R$ is not removed from~$\Delta_w$ to~$\Delta_{(w,A)}$.
\end{proof}

\begin{theorem}\label{thm:bij_interiorfacesDKK_facessperm}
	The application~$(w,A) \mapsto \Delta_{(w,A)}$ induces a poset isomorphism between the face poset of~$\PSPerm$ and the set of interior simplices of~$\triangDKK[\oru(s)]$ ordered by reverse inclusion.
\end{theorem}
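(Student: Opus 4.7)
The plan is to prove four things: (a) the map is well-defined — $\Delta_{(w,A)}$ is always an interior simplex — which is precisely Corollary~\ref{cor:Delta(w,A)_interior}; (b) it reverses the partial order; (c) it is injective; (d) it is surjective onto the interior simplices.

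For order-reversal, I would unwind definitions: by Definition~\ref{def:s-permutahedron}, $(w,A) \leq (w',A')$ in the face poset of $\PSPerm$ means $[w, w+A] \subseteq [w', w'+A']$, and by Corollary~\ref{cor:Delta_T_a_alt}, $\Delta_{(w,A)} = \bigcap_{w'' \in [w, w+A]} \Delta_{w''}$. Since intersecting over a larger interval yields a smaller simplex, this gives $\Delta_{(w',A')} \subseteq \Delta_{(w,A)}$, which is exactly the reverse order on interior simplices. For injectivity, Lemma~\ref{lem:Delta_(T,A)} lets us recover the entire interval $[w, w+A]$ from the simplex $\Delta_{(w,A)}$ as the set of Stirling $s$-permutations $w''$ satisfying $\Delta_{(w,A)} \subseteq \Delta_{w''}$; hence the simplex determines the interval, which in turn determines the face.

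The heart of the argument, and the step I expect will require the most care, is surjectivity. Given an interior simplex $\Delta_C$, I would set $S_C := \{w'' \in \mathcal{W}_s \,:\, C \subseteq \Delta_{w''}\}$ and aim to show $S_C = [w, w+A]$ for a suitable face $(w,A)$. Choose any $w$ minimal in $S_C$ under the $s$-weak order. Every vertex removed from $\Delta_w$ to produce $\Delta_C$ is a route in $\Delta_w \setminus C$ and corresponds via Definition~\ref{def:Lw} to some prefix $\prefix{w}{i}$ of $w$. The key claim is that none of these removed prefixes can end within a block, nor be one of the two exceptional routes: indeed, if $w_i = w_{i+1}$, then $\rpre{\prefix{w}{i}}$ is the unique route of $\Delta_w$ starting with a source edge $e^c_t$ with $0 < t < s_c$, and its removal would violate the interiority criterion of Lemma~\ref{cor:interiorsimplicesDKK}; the exceptional routes are excluded for the same reason. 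Hence the removed routes correspond bijectively to a subset $A$ of ascents of $w$, so $\Delta_C = \Delta_{(w,A)}$ by Definition~\ref{def:simplex_of_faces}, and Lemma~\ref{lem:Delta_(T,A)} then forces $S_C = [w, w+A]$; in particular $w$ is the unique minimum of $S_C$, which retroactively justifies the choice.

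The main obstacle is really the combinatorial bookkeeping inside surjectivity: matching prefixes of a minimal $w \in S_C$ to the routes, source edges, and ascents, and confirming that the interiority hypothesis on $\Delta_C$ is strong enough to force every removed route to be an ascent-route. The necessary dictionary (Definition~\ref{def:Lw}, Remark~\ref{rem:interior_facet_ascent}, and especially Lemma~\ref{cor:interiorsimplicesDKK}) is already in place from the preceding lemmas, so the last step is a careful but essentially routine application of these tools rather than a new technical ingredient; once surjectivity is obtained, it combines with injectivity and order-reversal to yield the desired poset isomorphism.
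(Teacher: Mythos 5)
Your overall structure — well-definedness via Corollary~\ref{cor:Delta(w,A)_interior}, injectivity and order-reversal via Lemma~\ref{lem:Delta_(T,A)} (and its Corollary~\ref{cor:Delta_T_a_alt} reformulation), surjectivity by choosing a minimal $w$ with $F\subseteq\Delta_w$ and identifying the removed routes with a set of ascents — is the same route the paper takes. But there is a genuine gap in the surjectivity step, and it sits exactly where you wrote ``Hence the removed routes correspond bijectively to a subset $A$ of ascents of $w$.''

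What your interiority argument actually establishes is only that no removed route is a within-block prefix route or an exceptional route. That leaves two remaining possibilities for a removed route $\rpre{\prefix{w}{i}}$: the position $i$ sits at an \emph{ascent} of $w$, or it sits at a \emph{descent} of $w$ (by Remark~\ref{rem:interior_facet_ascent} both ascent and descent facets are interior, so interiority alone cannot distinguish them). Excluding the descent case is where the minimality of $w$ in $S_C$ is indispensable, and you set it up but never invoke it: if a removed route $R$ sat at a descent $(a,c)$ of $w$, then $\Delta_w\setminus\{R\}=\Delta_w\cap\Delta_{w'}$ where $w'$ is the descent-transposition of $w$, so $C\subseteq\Delta_{w'}$ with $w'\lessdot w$ in the $s$-weak order, contradicting minimality. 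You defer minimality to the end as a ``retroactive justification'' of uniqueness, but it must be used \emph{here} to conclude the removed routes are all ascent routes; without it the identification $\Delta_C=\Delta_{(w,A)}$ does not follow. The paper's proof makes exactly this move explicit (``the minimality of $w$ implies that all elements in $A$ are ascents of $w$'').

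A smaller point of hygiene: Lemma~\ref{cor:interiorsimplicesDKK} as stated gives only a \emph{sufficient} condition for interiority, so ``removal would violate the interiority criterion'' is an overstatement of what the lemma says. The converse does hold — and it is essentially the contrapositive argument given in that lemma's proof, since boundary facets of $\triangDKK[\oru(s)]$ are precisely those of the form $\Delta_w\setminus\{R\}$ with $R$ within-block or exceptional — but to use it you should either appeal to that argument directly or note that the implication is in fact an equivalence.
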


\begin{proof}
	The fact that all~$\Delta_{(w,A)}$ are interior simplices of~$\triangDKK[\oru(s)]$ is stated in Corollary~\ref{cor:Delta(w,A)_interior}. Lemma~\ref{lem:Delta_(T,A)} gives us the injectivity of this map. We now show that this map is surjective. Let~$F$ be an interior simplex of~$\triangDKK[\oru(s)]$ and~$w$ be a Stirling~$s$-permutation that is minimal in the~$s$-weak order with respect to the condition that~$F\subseteq \Delta_w$. Notice that such~$w$ is unique by the polygonality of~$\PSPerm$. Then,~$F$ is an intersection of facets of~$\Delta_w$. These facets correspond to certain transpositions that can be applied on~$w$. We denote~$A$ the set of ascents corresponding to these transpositions. The minimality of~$w$ implies that all elements in~$A$ are ascents of~$w$ giving us that~$F=\Delta_{(w,A)}$.

	Finally, let~$w, w'$ be Stirling~$s$-permutations and~$A,A'$ the subsets of their respective ascents. Lemma~\ref{lem:Delta_(T,A)} implies that~$[w, w+A]\subseteq [w', w'+A']$ if and only if~$\Delta_{(w', A')} \subseteq \Delta_{(w,A)}$, which proves that the map is a poset isomorphism.
\end{proof}

Just as how the atoms of the face poset of~$\PSPerm$ have a characterization as the maximal cliques of~$\triangDKK[\oru(s)]$, the coatoms of the face poset also have an explicit characterization in terms of cliques.

\begin{corollary}\label{cor:maximal_faces_perm}
A simplex~$\Delta_C$ of~$\triangDKK[\oru(s)]$ corresponds to a maximal interior face of~$\PSPerm$ if and only if~$C$ is a clique of size~$|s|-n+2$ that contains the exceptional routes and exactly one route that starts with each source edge that is different from~$(v_{-1},v_0)$.
\end{corollary}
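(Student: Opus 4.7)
The plan is to combine Theorem~\ref{thm:bij_interiorfacesDKK_facessperm}, Lemma~\ref{cor:interiorsimplicesDKK}, and a direct counting of source edges of $\oru(s)$. Under the inclusion-reversing bijection of Theorem~\ref{thm:bij_interiorfacesDKK_facessperm}, maximal faces $(w,A)$ of $\PSPerm$ correspond precisely to minimal interior simplices $\Delta_C$ of $\triangDKK[\oru(s)]$. Since $|\Delta_{(w,A)}|=|s|+1-|A|$ by Remark~\ref{rem:routes_from_delta_w_in_delta_wA}, a minimal interior simplex corresponds to a face $(w,A)$ with maximum possible $|A|$, so I first pin down this maximum.

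A preliminary step is to establish that the maximum number of ascents of a Stirling $s$-permutation is exactly $n-1$. The key observation is that in a Stirling $s$-permutation $w$, the block $B_a$ cannot contain any letter strictly larger than $a$, since such a letter would sit between the first and last occurrences of $a$ in $w$ and create a $121$-pattern. Consequently, all letters inside $B_a$ are at most $a$, and the only position where an ascent $(a,c)$ can occur is immediately after the last $a$ of $B_a$. This yields at most one ascent starting with each $a\in[n-1]$ (and none with $a=n$), while the word $1^{s_1}2^{s_2}\cdots n^{s_n}$ attains the bound of $n-1$ ascents. Hence minimal interior simplices are exactly those with $|C|=|s|-n+2$.

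Next I count the source edges of $\oru(s)$. Using the convention $s_{n+1}=2$, the edge $(v_{-1},v_0)$ is the unique source edge with $i=n+1$, while the remaining source edges number $\sum_{i=1}^n (s_i-1)=|s|-n$. By Lemma~\ref{cor:interiorsimplicesDKK}, any interior simplex $\Delta_C$ must contain the two exceptional routes and at least one route starting with each other source edge; since these initial edges are pairwise distinct, this forces $|C|\geq 2+(|s|-n)=|s|-n+2$.

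The two directions of the corollary now follow easily. For the forward direction, if $\Delta_C$ corresponds to a maximal face then $|C|=|s|-n+2$ by the dimension count, and the inequality above must saturate, forcing $C$ to contain exactly one route starting with each source edge different from $(v_{-1},v_0)$. For the reverse direction, if $C$ satisfies the three stated conditions then $\Delta_C$ is interior by Lemma~\ref{cor:interiorsimplicesDKK}, hence equals $\Delta_{(w,A)}$ for some face with $|A|=n-1$; any strictly larger face $(w',A')$ would satisfy $|A'|>n-1$, contradicting the maximum. The main conceptual obstacle is the ascent bound, which relies on the structural property of blocks in Stirling $s$-permutations; the remainder is routine arithmetic built on the bijection of Theorem~\ref{thm:bij_interiorfacesDKK_facessperm}.
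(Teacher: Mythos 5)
Your argument is correct and follows essentially the same route as the paper: both hinge on the source-edge count $|s|-n$, the interiority criterion of Lemma~\ref{cor:interiorsimplicesDKK}, and the inclusion-reversing bijection of Theorem~\ref{thm:bij_interiorfacesDKK_facessperm}, differing mainly in emphasis. Where the paper's converse direction quietly says ``if it has $n-1$ ascents'' and leaves that count as an unstated consequence of facethood, you promote it to an explicit preliminary lemma (a $B_a$-block cannot contain a letter $>a$, so each $a<n$ contributes at most one ascent), which is a genuine clarification of a step the paper glosses over.

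One small precision issue worth flagging: you invoke Lemma~\ref{cor:interiorsimplicesDKK} to conclude that ``any interior simplex must contain the two exceptional routes and at least one route per source edge,'' but the lemma as stated goes only one way (condition $\Rightarrow$ interior). The converse you need does hold, but it is supplied by Theorem~\ref{thm:bij_interiorfacesDKK_facessperm} together with the proof of Corollary~\ref{cor:Delta(w,A)_interior}, which shows that every $\Delta_{(w,A)}$ satisfies the condition and that every interior simplex has this form. (Alternatively, the inequality $|C|\ge |s|-n+2$ for minimal interior simplices can be obtained without the converse at all, by noting that if $|C|>|s|-n+2$ while $C$ still meets the lemma's hypotheses, one may delete a redundant route and remain interior, so $C$ was not minimal.) This is a citation slip rather than a gap; the rest of the saturation argument and both directions of the equivalence are sound.
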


\begin{proof}
We first highlight that for each~$i\in [n]$,~$\oru(s)$ has~$s_i-1$ source edges, implying that there are~$\sum_{i=1}^n (s_i-1) = |s|-n$ source edges different from~$(v_{-1},v_0)$. Together with the exceptional routes this gives us that~$C$ has size~$|s|-n+2$. This together with Lemma~\ref{cor:interiorsimplicesDKK} tells us that a clique~$C$ with the above stated properties corresponds with a maximal interior face of~$\PSPerm\subset\RR^{|s|+n+1}$.

Conversely, let~$(w, A)$ be a facet of~$\PSPerm$ and denote by~$C:= \Delta_{(w,A)}$ and by~$N \subset [0,|s|]$ the set of non-ascent positions in~$w$, so that~$C = \cup_{j\in N} \rpre{\prefix{w}{j}}$. Since~$w$ is~$121$-avoiding, if it has~$n-1$ ascents, then the ascents are of the form~$(i,c_i)$ where~$i < c_i$ for each~$i\in [n-1]$.
Moreover, it is the~$s_i$-th occurrence of~$i$ in~$w$ which produces an ascent pair in~$w$. Thus,~$N\setminus{\{0,|s|\}}$ indexes the first~$s_i-1$ occurrences of~$i$ in~$w$ for all~$i\in[n-1]$ and~$|N\setminus{\{0,|s|\}}|=|s|-n$. Now let~$j\in N \setminus{\{0,|s|\}}$ be a non-ascent position of~$w$ so that~$\rpre{\prefix{w}{j}} \in C$. If~$w_j$ is the~$k$-th occurrence of the value~$a$ in~$w$ for some~$k\in[s_{a}-1]$, then as before, the route~$\rpre{\prefix{w}{j}}$ contains the proper source edge~$e^{a}_k$.
Since~$|N\setminus{\{0,|s|\}}| = |s|-n$, then~$C$ satisfies the properties of the statement.
\end{proof}

Notice that Theorem~\ref{thm:bij_interiorfacesDKK_facessperm} does not suffice to answer Conjecture~\ref{conj:s-permutahedron} as this construction lives in an ambient space of dimension~$|s|+n+1>n$, and it does not have explicit geometrical coordinates. We now fix the first of these issues using the Cayley trick (Proposition~\ref{prop:cayley_trick}).

\section{The Sum of Cubes Realization}\label{sec:realiz_sum_of_cubes}

To apply the Cayley trick to our triangulation~$\triangDKK[\oru(s)]$, we follow Proposition~\ref{prop:flow_pols_ARE_cayley_embs} to describe~$\fpol[\oru(s)](\bfi)$ as the Cayley embedding of some lower-dimensional polytopes.

\begin{definition}\label{rem:}
	Let~$p:=\sum_{i=1}^{n+1}(s_i-1)$. We parametrize the space~$\RR^{|s|+n+1}$ of edges of~$\oru(s)$ as~\defn{$\RR^p\times\RR^{2n}$} where~$\RR^p$ (resp.~$\RR^{2n}$) corresponds to the space of source edges (resp.\ bumps and dips) of~$\oru(s)$.
\end{definition}

Notice that in the context of~$\RR^p\times\RR^{2n}$, any integer point~$f$ of~$\fpol[\oru(s)]$ (i.e.\ an integer~$\bfi$-flow of~$\oru(s)$) we have that~$f(e_0^i)+f(e_{s_i}^i)$ is determined by the coordinates~$f(e_t^k)$ for~$k\in[i+1, n+1]$,~$t\in [s_k-1]$ corresponding to source edges. Therefore,~$\fpol[\oru(s)]$ is affinely equivalent to its projection on to the space~$\RR^p\times \RR^{n}$ where~$\RR^{n}$ corresponds to the space of bumps~$e^i_0$ for~$i\in [n]$.

With this in hand, for a route~$R(k,t,\delta)$ of~$\oru(s)$ with~$k\in[n+1]$,~$t\in[s_k-1]$ and~$\delta\in \{0,1\}^{k-1}$, we have that the indicator vector~$\mathds{1}_{R(k,t,\delta)}$  (Definition~\ref{def:routes_Gs}) is~$e^{k}_{t}\times \sum_{i\in[k-1], \, \delta_i=0} e^i_0.$ Denoting by~\defn{$\square_{k-1}$} the~$(k-1)$-dimensional hypercubes with vertices~$\{0,1\}^{k-1}\times \{0\}^{n-k+1}$ embedded in~$\RR^{n}$, we get the following Lemma.

\begin{lemma}\label{lem:oru_IS_cayley_embedding}
	\begin{equation*}
		\fpol[\oru(s)](\bfi)=\cC(\square_n,\underbrace{\square_{n-1},\ldots,\square_{n-1}}_\text{$(s_{n-1}-1)$ times},\ldots,\underbrace{\square_{0},\ldots,\square_{0}}_\text{$(s_{1}-1)$ times})
	\end{equation*}
\end{lemma}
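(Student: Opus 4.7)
The strategy is to apply Proposition~\ref{prop:flow_pols_ARE_cayley_embs}, which expresses the flow polytope of a graph with a unique source as a Cayley embedding indexed by the source's outgoing edges, and then to identify each resulting factor as a hypercube by means of Example~\ref{ex:famous_flow_polytope}.

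First, using Remark~\ref{rem:vertex_v_m1} to view $v_{-1}$ as the true source and Remark~\ref{rem:multi_edges} to handle the multi-edges emanating from it, I would apply Proposition~\ref{prop:flow_pols_ARE_cayley_embs} and decompose $\fpol[\oru(s)](\bfi)$ as the Cayley embedding of the inner flow polytopes $\fpol_{\mathrm{inner}}(\mathbf{e}_{n+1-i}-\mathbf{e}_n)$, with one factor per source edge. By Definition~\ref{def:Gs}, for every $i\in[n+1]$ there are exactly $s_i-1$ source edges landing at $v_{n+1-i}$ (recall $s_{n+1}=2$), producing a total of $1+\sum_{i=1}^{n}(s_i-1)$ Cayley factors, each indexed by a pair $(i,t)$ with $t\in[s_i-1]$.

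Second, I would identify each of these inner factors. The subgraph of $\oru(s)$ connecting $v_{n+1-i}$ to $v_n$ consists of precisely $i-1$ consecutive pairs of parallel edges, namely the bumps and dips $\{e_0^{k},e_{s_k}^{k}\}$ for $k=1,\ldots,i-1$. This is exactly the graph described in the cube case of Example~\ref{ex:famous_flow_polytope}, so its flow polytope with netflow $\mathbf{e}_{n+1-i}-\mathbf{e}_n$ is integrally equivalent to the hypercube $\square_{i-1}$, with the binary coordinates of $\square_{i-1}$ recording the choice of bump versus dip at each step. Collecting these identifications across all source edges yields the Cayley embedding stated in the lemma.

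The only real obstacle is to reconcile the two parameterizations: the flow polytope naturally lives in $\RR^p\times\RR^{2n}$, while the cubes $\square_{k-1}$ are embedded in $\RR^{n}$. I would handle this using the affine equivalence discussed just before the lemma, namely that on integer $\bfi$-flows the dip-flows $f(e^i_{s_i})$ are determined by the bump-flows $f(e^i_0)$ and the source-edge flows, so projecting away the dip coordinates is an affine bijection onto $\RR^p\times\RR^n$. Under this projection, the indicator-vector formula
\[
\mathds{1}_{R(k,t,\delta)}=e^{k}_{t}\times \sum_{i\in[k-1],\,\delta_i=0} e^i_{0}
\]
directly exhibits the vertex associated to $R(k,t,\delta)$ as the expected vertex of the Cayley embedding: the source-edge coordinate singles out a unique Cayley summand $\square_{k-1}$, and the bump coordinates single out the vertex $\delta\in\{0,1\}^{k-1}\times\{0\}^{n-k+1}$ of that cube, completing the identification.
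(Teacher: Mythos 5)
Your proposal follows the same route the paper uses: invoking Proposition~\ref{prop:flow_pols_ARE_cayley_embs} to express $\fpol[\oru(s)](\bfi)$ as a Cayley embedding over the source edges, recognizing each factor as the cube $\square_{i-1}$ coming from the chain of bump/dip pairs from $v_{n+1-i}$ to $v_n$, and then using the affine projection that eliminates the dip coordinates to place everything in $\RR^p\times\RR^n$ with the vertex identification $\mathds{1}_{R(k,t,\delta)}=e^k_t\times\sum_{\delta_i=0}e^i_0$. The only difference is cosmetic: you spell out the reduction of the multi-edges from $v_{-1}$ via Remark~\ref{rem:multi_edges} explicitly, whereas the paper treats the parallel source edges as directly giving the repeated Cayley factors; this does not change the argument. (Incidentally, your count — $s_i-1$ copies of $\square_{i-1}$ for each $i\in[n]$ — is the one consistent with Definition~\ref{def:square_mxied_subdiv}, whereas the subscript ``$s_{n-1}-1$'' attached to $\square_{n-1}$ in the lemma's displayed formula appears to be a misprint for $s_n-1$.)
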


\begin{definition}\label{def:square_mxied_subdiv}
	Given a composition~$s=(s_1,\ldots,s_n)$, we denote \defn{$\subdivCay$} the fine mixed subdivision of the Minkowski sum of hypercubes~$\square_{n}+\sum_{i=1}^{n} (s_i-1)\square_{i-1} \subseteq \RR^n$ obtained by intersecting the triangulation~$\triangDKK[\oru(s)]$ (projected onto~$\RR^p\times \RR^n$) with the subspace~${\left\{\frac{1}{p}\right\}}^p\times \RR^{n}$ as in Remark~\ref{rem:cayley_trick_proof}.
\end{definition}

Applying the Cayley trick (Proposition~\ref{prop:cayley_trick}) together with the isomorphism between the face poset of~$\PSPerm$ and the interior simplices of~$\triangDKK[\oru(s)]$ (Theorem~\ref{thm:bij_interiorfacesDKK_facessperm}) gives us the following theorem.

\begin{theorem}\label{thm:bij_mixed_subdiv}
	The face poset of the~$s$-permutahedron~$\PSPerm$ is isomorphic to the set of interior cells of~$\subdivCay$ ordered by reverse inclusion. Moreover, Stirling~$s$-permutations are in bijection with the maximal cells of~$\subdivCay$.
\end{theorem}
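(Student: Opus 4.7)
The plan is to combine the prior bijection between interior simplices of~$\triangDKK[\oru(s)]$ and faces of~$\PSPerm$ (Theorem~\ref{thm:bij_interiorfacesDKK_facessperm}) with a second bijection, coming from the Cayley trick, between simplices of~$\triangDKK[\oru(s)]$ and cells of~$\subdivCay$. The main task is thus to verify that the Cayley trick bijection (i) is inclusion-reversing in the sense we want, and (ii) preserves the notion of ``interior.''

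First I would recall the setup: by Lemma~\ref{lem:oru_IS_cayley_embedding}, the flow polytope~$\fpol[\oru(s)](\bfi)$ is (integrally equivalent to) the Cayley embedding of the list of hypercubes~$\square_n,\square_{n-1},\dots,\square_0$ with multiplicities prescribed by~$s$. By Proposition~\ref{prop:DKK_triangulation}, the triangulation~$\triangDKK[\oru(s)]$ is regular, hence it is one of the regular triangulations to which the Cayley trick (Proposition~\ref{prop:cayley_trick}) applies. The Cayley trick then yields a regular fine mixed subdivision of~$\square_n+\sum_{i=1}^n (s_i-1)\square_{i-1}$, which is exactly~$\subdivCay$ by Definition~\ref{def:square_mxied_subdiv}. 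The bijection sends each simplex~$\Sigma$ of~$\triangDKK[\oru(s)]$ (projected onto~$\RR^p\times\RR^n$) to the cell~$\Sigma\cap\big(\{\tfrac1p\}^p\times\RR^n\big)$ of~$\subdivCay$, and this intersection operation is inclusion-preserving: $\Sigma\subseteq\Sigma'$ if and only if the corresponding cells are nested the same way.

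Next I would check that this bijection identifies interior simplices with interior cells. The Cayley embedding~$\cC(\square_n,\dots,\square_0^{(s_1-1)})$ is contained in the affine slab~$\RR^p\times\RR^n$, and its boundary simplices are precisely those entirely contained in a facet of the embedding. A facet of the Cayley embedding either lies over a facet of some summand~$\square_{i-1}$ (the ``vertical'' facets) or corresponds to the slice at one of the vertices of the auxiliary simplex~$\Delta_{p-1}$ (the ``horizontal'' facets). The slicing hyperplane~$\{\tfrac1p\}^p\times\RR^n$ meets the vertical facets exactly in boundary cells of the Minkowski sum, and avoids the horizontal facets entirely. Consequently, an interior simplex of~$\triangDKK[\oru(s)]$ must slice to an interior cell of~$\subdivCay$, and conversely any interior cell of~$\subdivCay$ is a slice of an interior simplex. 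Composing this bijection with the one of Theorem~\ref{thm:bij_interiorfacesDKK_facessperm} gives the stated poset isomorphism between the face poset of~$\PSPerm$ and the interior cells of~$\subdivCay$ ordered by reverse inclusion.

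For the second statement, recall from Lemma~\ref{lem:max_clique} that the maximal simplices of~$\triangDKK[\oru(s)]$ are in bijection with Stirling~$s$-permutations via~$w\mapsto\Delta_w$. Under the Cayley trick, maximal simplices of the triangulation correspond to maximal (i.e.~full-dimensional) cells of the mixed subdivision. Composing these bijections yields the desired identification of maximal cells of~$\subdivCay$ with Stirling~$s$-permutations. The main obstacle in all of this is step (ii): being careful with the definition of ``interior'' on both sides and checking that the slicing hyperplane interacts with the boundary of the Cayley embedding in the expected way. Once this is pinned down, the rest follows by chaining the two bijections.
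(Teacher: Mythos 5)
Your proof matches the paper's approach exactly: the paper's own justification is simply the single sentence ``Applying the Cayley trick (Proposition~\ref{prop:cayley_trick}) together with the isomorphism \ldots\ (Theorem~\ref{thm:bij_interiorfacesDKK_facessperm}) gives us the following theorem,'' and you have correctly identified that the only genuine thing to verify is that the Cayley slice preserves the interior/boundary distinction. Your facet dichotomy (horizontal facets $\{x_i=0\}$ of the Cayley embedding are disjoint from the centroid slice $\{\tfrac1p\}^p\times\RR^n$; vertical facets slice to boundary facets of the Minkowski sum) is the right bookkeeping, and it dovetails with Lemma~\ref{cor:interiorsimplicesDKK}/Corollary~\ref{cor:Delta(w,A)_interior}, which guarantee that each interior simplex does contain a route from every Cayley layer and therefore actually meets the slicing hyperplane. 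The second statement, chaining Lemma~\ref{lem:max_clique} through the Cayley trick on maximal cells, is likewise the intended argument.
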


Figure~\ref{fig:comparing_triang_flow_cube} contains an example of~$\subdivCay$ and the dual graph formed by the adjacency of its interior cells with edges oriented perpendicular to each inner wall.

\begin{figure}[h!]
	\centering
	\includegraphics[scale=1]{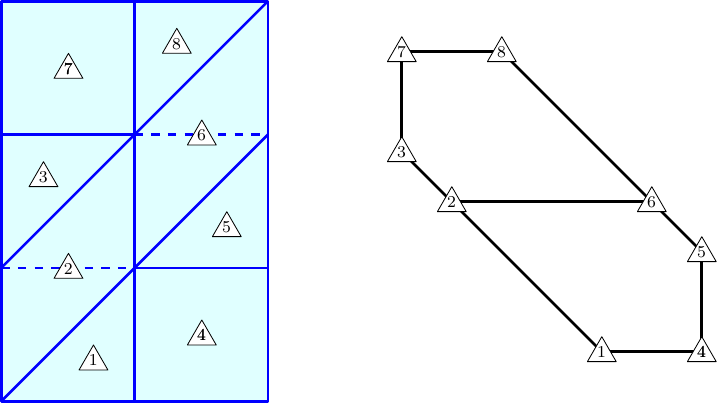}
	\caption[$\subdivCay$ and its dual graph giving~$\PSPerm$ for~$s=(1,2,1)$.]{ $\subdivCay[(1,2,1)]$ (left) and its dual graph (right).}
	\label{fig:comparing_triang_flow_cube}
\end{figure}

\begin{remark}\label{rem:dim_n_minus_1}
	Recall from Remark~\ref{rem:vertex_v_m1} that the graph~$\oru(s)^\#$ obtained from~$\oru(s)$ by contracting the edge~$(v_{-1},v_0)$ gives a flow polytope integrally equivalent to~$\fpol[oru(s)](\bfi)$. In our situation of summing cubes, we can use a different parametrization of the space where~$\fpol[\oru(s)](\bfi)$ lives by considering the cube~$\square_n$ (coming from the source edge~$e_1^{n+1}$) as the Cayley embedding of two hypercubes~$\square_{n-1}$ (making the bump~$e_0^{n}$ and dip~$e_{s_n}^{n}$ into source edges). We could equivalently intersect~$\RR^n$ with the hyperplane~$x_n=\frac{1}{2}$. This allows us to lower the dimension further to~$n-1$ and obtain a fine mixed subdivision of the Minkowski sum of hypercubes~$(s_n +1)\square_{n-1}+\sum_{i=1}^{n-1} (s_i-1)\square_{i-1}$. We use this representation for the figures.
\end{remark}

Figure~\ref{fig:mixed_cell_3221} shows how given the Stirling~$(1,2,1)$-permutation~$w=3221$, its maximal clique~$\Delta_{3221}$ determines the construction of its mixed cell via the Cayley trick. Said mixed cell is highlighted in~\ref{fig:mixed_subdivision}.
Both figures follow the coordinate system~$(e^2_0, e^3_0)$.

\begin{figure}[h!]
	\centering
	\begin{subfigure}[b]{0.5\textwidth}
		\centering
		\includegraphics[scale=0.85]{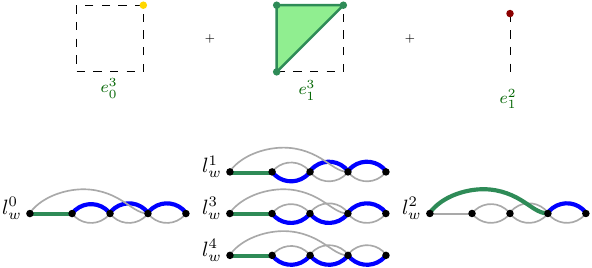}
		\caption{}
		\label{fig:mixed_cell_3221}
	\end{subfigure}
	\qquad\qquad
	\begin{subfigure}[b]{0.3\textwidth}
		\centering
		\includegraphics[scale=0.8]{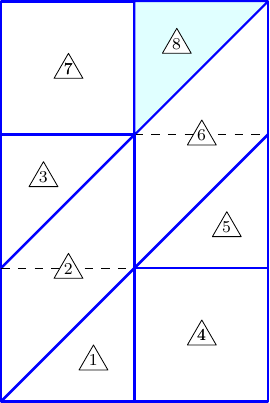}
		\caption{}
		\label{fig:mixed_subdivision}
	\end{subfigure}
	\caption[(a) Summands of the Minkowski cell corresponding to~$w=3221$ with their corresponding routes in the clique~$\Delta_w$. (b) Mixed subdivision of~$2\square_2+\square_1$ corresponding dually to the~$(1,2,1)$-permutahedron.]{(a) Summands of the Minkowski cell corresponding to~$w=3221$ with their corresponding routes in the clique~$\Delta_w$. (b) Mixed subdivision of~$2\square_2+\square_1$ corresponding dually to the~$(1,2,1)$-permutahedron. The cells are numbered according to Figure~\ref{fig:adjacency_graph_121}. The highlighted cell in blue corresponds to~$w=3221$ as obtained in Figure~\ref{fig:mixed_cell_3221}.}
	\label{fig:comparing flows PS and gen PS}
\end{figure}

Via Theorem~\ref{thm:bij_mixed_subdiv} and Remark~\ref{rem:dim_n_minus_1} we have obtained a realization of~$\PSPerm$ of the correct dimension as in Conjecture~\ref{conj:s-permutahedron}. Still, we want to have explicit coordinates to further prove properties of this polytopal complex. We solve this now using a tropical context.

\section{The Tropical Realization}\label{sec:realiz_tropical}

Recall that the routes of~$\oru(s)$ are described in Definition~\ref{def:routes_Gs} and consider the DKK triangulation~$\triangDKK[\oru(s)]$ constructed in Section~\ref{sec:realiz_flow_potyope}. Using Definition~\ref{def:DKK_height_function} and Proposition~\ref{prop:DKKlem3_original} we have the following lemma.

\begin{lemma}\label{lem:epsilonheight}
	Let~$s$ be a composition and~$\varepsilon>0$ a sufficiently small real number.
	Consider~$\height_{\varepsilon}$ to be the function that associates to a route~$R=R(k, t, \delta)$ of~$\oru(s)$ the quantity
	\begin{equation}\label{eq:epsilonheight}
		\height_{\varepsilon}(R)=-\sum_{k\geq c > a \geq 1} \varepsilon^{c-a} (t_c+\delta_a)^2,
	\end{equation}
	where~$t_c=
		\begin{cases}
			0   & \text{ if } \delta_c=0, \\
			s_c & \text{ if } \delta_c=1,
		\end{cases}$ for all~$c\in [k-1]$.

	Then~$\height_{\varepsilon}$ is an admissible height function for~$\triangDKK[\oru(s)]$.
\end{lemma}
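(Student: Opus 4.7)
The plan is to apply Proposition~\ref{prop:DKKlem3_original}, which already exhibits an explicit admissible height function in terms of the framing data $\cI(e_a)$ and $\cO(e_c)$, and to verify that the formula defining $\height_\varepsilon$ is obtained from it by evaluating the framing of $\oru(s)$ and reindexing the route.

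First I would read off the framing of $\oru(s)$ from Definition~\ref{def:Gs}. The incoming framing at $v_{n+1-i}$ gives $\cI(e^i_t)=t$ for $0\leq t\leq s_i$, and the outgoing framing at $v_{n-i}$ has only two elements, so $\cO(e^i_0)=0$ (bump) and $\cO(e^i_{s_i})=1$ (dip). Along a route $R(k,t,\delta)$ with edges $(e^k_{t_k},e^{k-1}_{t_{k-1}},\ldots,e^1_{t_1})$, the edge traversed at position $a\in[k]$ is $e^{k-a+1}_{t_{k-a+1}}$. Its incoming-framing index is $t_{k-a+1}$; for $a\geq 2$ (so the edge is not a source edge), its outgoing-framing index is $\delta_{k-a+1}$. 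Substituting into the DKK expression $\sum_{1\leq a<c\leq k}\varepsilon^{c-a}(\cI(e_a)+\cO(e_c))^2$ and then performing the change of variable $a'=k-c+1$, $c'=k-a+1$ (so that $1\leq a'<c'\leq k$ and $c-a=c'-a'$) turns the sum into $\sum_{1\leq a'<c'\leq k}\varepsilon^{c'-a'}(t_{c'}+\delta_{a'})^2$. This matches the formula defining $\height_\varepsilon$ up to an overall sign, and the overall sign reflects only which envelope of the regular lift is projected; both envelopes induce the same regular triangulation, so admissibility is preserved.

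The key step is thus the indexing translation: the DKK sum runs over positions in the traversal, whereas $\height_\varepsilon$ is written in terms of the edge-superscript labels $c>a$, and the two are reversed along the route. The boundary case $a=1$ (i.e.\ $c'=k$) must also be treated with care, since the first edge of $R(k,t,\delta)$ is a source edge for which the second index is $t$, not $\delta_k s_k$; this is exactly the special convention $t_k=t$ used in the formula, and it is consistent with $\cI(e_1)=t$ under the reindexing above.

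The step I expect to be the main obstacle is precisely this bookkeeping: making sure the source-edge contribution $t_k=t$ is correctly aligned with the DKK formula, and confirming the sign/envelope convention under which admissibility is invoked in subsequent sections of the chapter. If one wishes to avoid any appeal to the sign-flip, an alternative is a direct verification through Lemma~\ref{lem:DKKlem2_us_pro}: minimal conflicts in $\oru(s)$ occur at a single inner vertex $v_m$ where two routes swap a bump/dip pair with two adjacent incoming positions, and for any such minimal conflict with resolvents $P',Q'$ the differences $\bigl(\height_\varepsilon(P)+\height_\varepsilon(Q)\bigr)-\bigl(\height_\varepsilon(P')+\height_\varepsilon(Q')\bigr)$ reduce, after cancellation, to a nonzero leading term in the appropriate power of $\varepsilon$, which controls the remaining lower-order terms for $\varepsilon$ sufficiently small.
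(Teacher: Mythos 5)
Your reindexing of Definition~\ref{def:DKK_height_function} through the $\oru(s)$ framing is carried out carefully: with $\cI(e^i_j)=j$ and $\cO(e^{i}_{\delta_i s_i})=\delta_i$, the substitution $a'=k-c+1$, $c'=k-a+1$ does transform $\sum_{a<c}\varepsilon^{c-a}(\cI(e_a)+\cO(e_c))^2$ into $\sum_{c'>a'}\varepsilon^{c'-a'}(t_{c'}+\delta_{a'})^2$, and the convention $t_k=t$ at the source edge is handled correctly. This is the same starting point the paper uses. The genuine gap is the sentence asserting that ``both envelopes induce the same regular triangulation, so admissibility is preserved.'' That statement is false: for a fixed lift, the upper and lower envelopes of a point configuration generally produce \emph{different} regular subdivisions, and passing from $h$ to $-h$ swaps the two. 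If $h$ satisfies the inequality $\height(P)+\height(Q)>\height(P')+\height(Q')$ of Proposition~\ref{prop:DKKlem2_original}, then $-h$ satisfies the reversed inequality, so at most one of $\pm h$ can be admissible in that sense. You cannot dispose of the sign by appealing to envelope symmetry.

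What actually resolves the sign is exactly the ``alternative'' you mention only as a backup: a direct verification of the criterion of Lemma~\ref{lem:DKKlem2_us_pro} on minimal conflicts, where one checks that $\height_\varepsilon(P)+\height_\varepsilon(Q)-\height_\varepsilon(P')-\height_\varepsilon(Q')$ has a strictly positive leading coefficient in $\varepsilon$. This is precisely what the paper does in Theorem~\ref{thm:epsilonheight}, so your alternative is in fact the paper's proof (and it even yields the explicit bound on $\varepsilon$). Note also a small inaccuracy in your description of that verification: a minimal conflict in $(\oru(s),\preceq)$ is confined to a single common subroute $[v_{n+1-y},v_{n-x}]$, but that subroute can contain several inner vertices (the two routes may agree on a run of bumps/dips between the two diverging ends); it need not collapse to a single vertex. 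The cancellation in the three cases of Theorem~\ref{thm:epsilonheight} is organized around the endpoints $x$ and $y$ of this subroute, not around a single interior vertex. In short: promote your ``alternative'' to the main argument, drop the envelope-symmetry claim, and you have the paper's proof.
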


Before describing explicit coordinates for~$\PSPerm$, we take a moment to give an explicit bound for~$\varepsilon$ such that Lemma~\ref{lem:epsilonheight} holds.

\begin{theorem}\label{thm:epsilonheight}
	For Lemma~\ref{lem:epsilonheight}, it is enough to take~$\varepsilon<\frac{1}{n(1+\sum_{j=2}^n (2s_j+1))}$.
\end{theorem}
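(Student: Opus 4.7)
The plan is to apply Lemma~\ref{lem:DKKlem2_us_pro} and reduce the admissibility of $\height_\varepsilon$ to checking, for every pair $P, Q$ of routes in minimal conflict in $\oru(s)$ with resolvents $P', Q'$, the positivity of
\begin{equation*}
\Delta := \height_\varepsilon(P) + \height_\varepsilon(Q) - \height_\varepsilon(P') - \height_\varepsilon(Q').
\end{equation*}
Let the conflict subroute be $[v_i, v_j]$ with $r := n+1-i$ and $r' := n+1-j$ so that $r \geq r'$, and write each of the four routes in the parametrization $R(k, t, \delta)$ of Definition~\ref{def:routes_Gs}. Because resolvents swap the "after $v_j$" portions while preserving the "before $v_i$" and shared middle portions, we have $(k_{P'}, t_{P'}) = (k_P, t_P)$, $(k_{Q'}, t_{Q'}) = (k_Q, t_Q)$, and the $\delta$-vectors agree with their originals on indices $c \geq r'$ but are exchanged between $(P, Q)$ and $(P', Q')$ on indices $c < r'$. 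In particular, the adjacency of the incoming edges at $v_i$ forces $t_{Q,r}-t_{P,r}=1$, and the divergence of the outgoing edges at $v_j$ forces $(\delta_{P, r'-1}, \delta_{Q, r'-1}, \delta_{P', r'-1}, \delta_{Q', r'-1}) = (1,0,0,1)$.

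The main step is to extract the leading (positive) contribution to $\Delta$ coming from the pair $(c,a) = (r, r'-1)$ in the double sum. Using the identity $(x+1)^2 + y^2 - x^2 - (y+1)^2 = 2(x - y)$ applied with $x = t_{P,r}$ and $y = t_{Q,r}$, this pair contributes exactly
\begin{equation*}
\Delta_{\text{main}} = 2\varepsilon^{r - r' + 1}\big(t_{Q,r} - t_{P,r}\big) = 2\varepsilon^{r - r' + 1}.
\end{equation*}
Every other non-vanishing pair $(c, a)$ must involve an index where the $\delta$-sequences of $(P, P')$ or $(Q, Q')$ differ, which falls into two groups: (i) pairs $(c, r'-1)$ with $c \in [r+1, \max(k_P, k_Q)]$ arising from the longer "before" portion of whichever route has larger $k$, each contributing at most $(2s_c + 1)\varepsilon^{c - r' + 1}$ in absolute value since $|t_{P,c} - t_{Q,c}| \leq s_c$ and $|2t_{R, c}+1| \leq 2 s_c + 1$; and (ii) pairs involving some index $c^* \in [1, r' - 2]$ where $\delta_{P, c^*} \neq \delta_{Q, c^*}$, each also bounded by $(2s_c + 1)\varepsilon^{c-a}$ with $c - a \geq r - r' + 2$.

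Collecting these correction contributions, the total correction satisfies
\begin{equation*}
|\Delta - \Delta_{\text{main}}| \leq \varepsilon^{r - r' + 2} \cdot n \cdot \Big(1 + \sum_{j=2}^n (2s_j + 1)\Big),
\end{equation*}
where the factor $n$ accounts for the at-most-$n$ levels at which $\delta_P$ and $\delta_Q$ can disagree and the sum accounts for the per-level magnitude bound (the trailing $1$ absorbs the boundary case $c = k_P$ where $t_P \leq s_{k_P} - 1$ together with the contribution from $c = n+1$ where $s_{n+1}=2$). Combining the main term and the correction bound,
\begin{equation*}
\Delta \geq \varepsilon^{r - r' + 1}\Big(2 - \varepsilon \cdot n\big(1 + \textstyle\sum_{j=2}^n (2s_j + 1)\big)\Big),
\end{equation*}
which is strictly positive as soon as $\varepsilon < \frac{2}{n(1 + \sum_{j=2}^n (2s_j+1))}$, and hence in particular under the stated bound. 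The main obstacle is the case analysis for the correction bound: one must verify it holds uniformly across all types of minimal conflicts in $\oru(s)$ (single-vertex versus multi-edge shared subroute, $k_P = k_Q$ versus $k_P \neq k_Q$, and any admissible pattern of disagreement between the "after" parts of $P$ and $Q$ that avoids creating further conflicts), and carefully ensure that no correction term has exponent lower than $r - r' + 2$ so that the main term indeed dominates.
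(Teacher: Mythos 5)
Your approach mirrors the paper's: reduce to minimal conflicts via Lemma~\ref{lem:DKKlem2_us_pro}, isolate the dominant contribution, and bound the remainder. Your identification of the main term $\Delta_{\text{main}} = 2\varepsilon^{r-r'+1}$ from the pair $(c,a)=(r,r'-1)$ is correct; the resolvent identities $t_{P',c}=t_{P,c}$ for $c\ge r'$ and $\delta_{P',a}=\delta_{Q,a}$ for $a<r'$ indeed give $\Delta_{(r,r'-1)} = 2\varepsilon^{r-r'+1}(t_{Q,r}-t_{P,r}) = 2\varepsilon^{r-r'+1}$, and in the paper's $(y,x)$ notation this is exactly the leading $2\varepsilon^{y-x}$ that anchors each of its cases. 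Where you diverge from the paper is in how the correction is handled: the paper enumerates explicit types of minimal conflict (both routes entering $v_{n+1-y}$ via source edges; or exactly one entering via a source edge with the other coming from the bump or dip side) and in each case identifies the non-cancelling pairs and bounds them term by term, arriving at an inequality such as $H \geq 2\varepsilon^{y-x}\bigl(1-\varepsilon(y-2+(y-1)\sum_{k\geq c>y}(2s_c+1))\bigr)$, from which the constant $n(1+\sum_{j=2}^n(2s_j+1))$ falls out; you instead posit a uniform bound $|\Delta-\Delta_{\text{main}}|\leq \varepsilon^{r-r'+2}\,n(1+\sum_{j=2}^n(2s_j+1))$ and conclude directly.

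The gap is exactly the one you flag in your last paragraph: that uniform bound is the load-bearing step and you have not derived it. Making it rigorous requires: (a) establishing precisely which pairs $(c,a)$ survive — one must show that every correction pair has $c\ge r$, $a\le r'-1$ with $\delta_{P,a}\ne\delta_{Q,a}$, and hence $c-a\ge r-r'+2$; (b) separating the symmetric terms (both routes contribute, magnitude $2|t_{P,c}-t_{Q,c}|\le 2s_c$) from the asymmetric terms that arise when $k_P\ne k_Q$ and only one route reaches level $c$ (magnitude $2t_c+1\le 2s_c+1$); (c) accounting for the top level $c=k$ where the coefficient is $t$ rather than $\delta_c s_c$, and the special value $s_{n+1}=2$, which you wave at via the "trailing $1$" but do not verify; and (d) checking that disagreements $\delta_{P,a}\ne\delta_{Q,a}$ at levels $a<r'-1$, which are allowed as long as they do not produce a second conflict, do not overturn the count. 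Your "at-most-$n$ levels" heuristic points in the right direction, but it is not the same bookkeeping as the paper's chain of inequalities, and absent the verification the constant you quote (and a fortiori the sharper one in the theorem) is not established. The plan is sound and, if completed, would even be cleaner than the paper's case-by-case computation — a uniform argument would automatically cover corner cases such as both $P$ and $Q$ entering $v_{n+1-r}$ via a bump and a dip (possible only when $s_r=1$), which the paper's three listed cases do not mention — but as written the argument stops short of a proof.
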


\begin{proof}
	Let~$P=R(k,t,\delta)$ and~$Q=R(k', t', \delta')$ be two routes of~$\oru(s)$ that are in minimal conflict at a common route~$[v_{n+1-y}, v_{n-x}]$. Without loss of generality suppose that~$Pv_{n+1-y}\prec Qv_{n+1-y}$ giving us that~$\delta_x=1$ and~$\delta'_x=0$. Taking~$P'$ and~$Q'$ as they resolvents consider the quantity~$H:=\height_{\varepsilon}(P)+\height_{\varepsilon}(Q)-\height_{\varepsilon}(P')-\height_{\varepsilon}(Q')$. As~$\height_\varepsilon$ is a height function, Lemma~\ref{lem:DKKlem2_us_pro} tells us that~$H>0$. Therefore, to determine a bound for~$\varepsilon$ we compute~$H$ for the only three cases which can occur between~$P$ and~$Q$ as they are adjacent in~$\preceq_{\cI_{n+1-y}}$ and~$\preceq_{\cO_{n+1-x}}$. Figure~\ref{fig:s_proof_help} contains a visual aid corresponding to each of these cases.

	\begin{figure}[h!]
		\centering
		\includegraphics[scale=1]{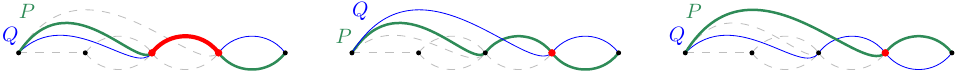}
		\caption[The possible cases of minimal conflict between routes.]{ The possible cases of minimal conflict between routes. Only one route is bolded in green, and the conflict area is heavily bolded in red.}
		\label{fig:s_proof_help}
	\end{figure}

	\vspace{-1cm}

	\begin{itemize}
		\item If~$k=k'=y$,~$t\in [s_y-2]$, and~$t'=t+1$ we have that in~$\height_{\varepsilon}(P)+\height_{\varepsilon}(Q)-\height_{\varepsilon}(P')-\height_{\varepsilon}(Q')$, all pairs~$(a,c)$ in formula~\ref{eq:epsilonheight} cancel out either with~$\height_{\varepsilon}(P)-\height_{\varepsilon}(Q')$ or~$\height_{\varepsilon}(Q)-\height_{\varepsilon}(P')$ except for~$(a,c)=(x,y)$. Thus,~$H$ becomes
		      \begin{align*}
			      H & = \height_{\varepsilon}(P)+\height_{\varepsilon}(Q)-\height_{\varepsilon}(P')-\height_{\varepsilon}(Q') \\
			        & = -\varepsilon^{y-x}\Big((t+1)^2+((t+1)+0)^2 - (t+0)^2 - ((t+1)+1)^2 \Big)                              \\
			        & = 2\ \varepsilon^{y-x} >0.
		      \end{align*}

		\item If~$k>k'=y$,~$\delta_y=0$, and~$t'=1$, then as in the previous case where~$(x,y)$ did not cancel out, here~$(a,y)$ for~$x\geq a$ do not cancel out. Now, in~$\height_\varepsilon(P)-\height_\varepsilon(Q)$ all pairs~$(a,c)$ cancel out except those for~$k\geq x>y$ and~$x\geq a$ whose terms come solely from~$\height_\varepsilon(P)$. Similarly, for~$\height_\varepsilon(Q)-\height_\varepsilon(P)$ all pairs~$(a,c)$ cancel out except those for~$k\geq x>y$ and~$x\geq a$ whose term~$t_c$ comes from~$P'$ and~$\delta'_a$ comes from~$Q$. Thus, the pairs that do not cancel out in~$H$ are all pairs~$(a,c)$ for~$k\geq c\geq y$ and~$x\geq a$ and~$H$ becomes \begin{align*}
			      H & =
			      - \sum_{x\geq a} \varepsilon^{y-a} \Big( \delta_a^2 +(1+\delta'_a)^2 - {\delta'_a}^{2} - (1+\delta_a)^2\Big)
			      - \sum_{\substack{k\geq c > y                                                                            \\ x\geq a}} \varepsilon^{c-a} \Big( (t_c+\delta_a)^2-(t_c+\delta'_a)^2 \Big)                    \\
			        & = 2\ \varepsilon^{y-x}
			      - 2\ \sum_{x> a} \varepsilon^{y-a} ( {\delta'_a} - \delta_a)
			      - \sum_{\substack{k\geq c > y                                                                            \\ x\geq a}} \varepsilon^{c-a} \Big(2\ t_c(\delta_a-\delta'_a)+\delta_a^2-{\delta'_a}^{2} \Big) \\
			        & \geq 2\ \varepsilon^{y-x}
			      - 2\ \sum_{x> a} \varepsilon^{y-a}
			      - \sum_{\substack{k\geq c > y                                                                            \\ x\geq a}} \varepsilon^{c-a} (2 s_c+1)                                                         \\
			        & \geq 2\ \varepsilon^{y-x}
			      - 2\ \varepsilon^{y-x+1}\Big(x-1 + x \sum_{k\geq c > y} (2 s_c+1) \Big)                                  \\
			        & \geq 2\ \varepsilon^{y-x}\Big(1 - \varepsilon\Big(y-2+ (y-1) \sum_{k\geq c > y} (2 s_c+1)\Big)\Big).
		      \end{align*}
		      Thus, taking~$\varepsilon<\frac{1}{n(1+\sum_{j=2}^n (2s_j+1))}$, gives us~$H>0$.

		\item If~$k'>k=y$,~$t=s_y-1$, and~$\delta'_y=1$, then as in the previous case the pairs that do not cancel out are all pairs~$(a,c)$ for~$k\geq c\geq y$ and~$x\geq a$. This gives us \begin{align*}
			      H & =
			      - \sum_{x\geq a} \varepsilon^{y-a} \Big( (s_y-1+\delta_a)^2 +(s_y+\delta'_a)^2 - (s_y-1+\delta'_a)^2 - (s_y+\delta_a)^2\Big) \\
			        & \phantom{=}
			      - \sum_{k\geq c > y, \ x\geq a} \varepsilon^{c-a} \Big( (t'_c+\delta'_a)^2-(t'_c+\delta_a)^2 \Big)                           \\
			        & = 2\ \varepsilon^{y-x} +2\sum_{x>a} \varepsilon^{y-a}(\delta'_a-\delta_a)
			      - \sum_{k\geq c > y, \ x\geq a} \varepsilon^{c-a} \Big(2\ t'_c(\delta'_a - \delta_a)+{\delta'_a}^2-\delta_a^2\Big).
		      \end{align*}
		      The final computations are similar to the previous case and so, we omit them. \qedhere
	\end{itemize}
\end{proof}

\begin{remark}\label{rem:disclosure_epislons}
	We disclose to the reader that the bound of Proposition~\ref{thm:epsilonheight} is not sharp at all. However, the proof of Proposition~\ref{thm:epsilonheight}, shows that for the cases~$s=(1,\ldots,1,k)$ for~$k\in\ZZ_{>0}$ the value~$\varepsilon$ can be any positive number. For the case~$k=1$ this is justified in since any two routes in minimal conflict of~$\oru(1,\ldots,1)$ is the first one where~$\varepsilon$ just needed to be positive. For the case~$k>1$ notice that any Stirling~$(1,\ldots,1,k)$ permutation of length~$n$ can be obtained from a Stirling~$(1,\ldots,1)$ permutation of length~$n+k$. Passing this idea through our realizations gives us that~$\PSPerm[(1,\ldots,1,k)]$ can be thought as a projection of a~$\PSPerm[(1,\ldots,1)]$ of bigger dimension.
\end{remark}

\subsection{Coordinates for the~\texorpdfstring{$s$}{}-Permutahedron}

We move on to use the tropical technology of Section~\ref{sec:tropical_geometry} to obtain explicit points for~$\PSPerm$. For the remainder of this chapter~$\height$ is an admissible height function for~$\triangDKK[\oru(s)]$.

Since in Section~\ref{sec:realiz_sum_of_cubes} we used the Cayley trick on the triangulation~$\triangDKK[\oru(s)]$ to obtain the mixed subdivision~$\subdivCay$, the following theorem directly follows from Proposition~\ref{prop:arrangement_tropical_hypersurfaces}.

\begin{theorem}\label{thm:arr_trop_hypersurfaces_s-perm}
	The tropical dual of the mixed subdivision~$\subdivCay$ is the polyhedral complex of cells induced by the arrangement of tropical hypersurfaces \begin{equation*}
		\cH_{s}(\height):=\left\{\cT(F^k_{t}) \, :\, k\in [2, n+1], \, t\in [s_k-1]\right\},
	\end{equation*} where~$F^k_{t}(\mathbf{x}):=\bigoplus_{} \height(R(k, t, \delta)) \odot \mathbf{x}^{\delta} = \min \left\{\height(R(k, t, \delta)) + \sum_{i\in [k-1]} \delta_i x_i \, :\, \delta\in\{0,1\}^{k-1}  \right\}$.
\end{theorem}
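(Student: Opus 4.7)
The plan is to identify the arrangement $\cH_s(\height)$ as $\cT(F)$ for the tropical product $F := \bigodot_{k,t} F^k_t$, and then apply Proposition~\ref{prop:arrangement_tropical_hypersurfaces} together with the construction of $\subdivCay$ via the Cayley trick. The key point is to recognize that the data defining the tropical polynomials $F^k_t$ is exactly the data defining $\triangDKK[\oru(s)]$ as a regular triangulation of the Cayley embedding of Lemma~\ref{lem:oru_IS_cayley_embedding}.

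First I would unpack the definitions. For fixed $k \in [2,n+1]$ and $t \in [s_k-1]$, the polynomial $F^k_t$ has support $\{0,1\}^{k-1} \subseteq \ZZ^n$ (identifying $\{0,1\}^{k-1}$ with its image in $\ZZ^n$ under padding by zeros), and its coefficient at $\delta \in \{0,1\}^{k-1}$ is $\height(R(k,t,\delta))$. Consequently $\cN(F^k_t) = \conv(\{0,1\}^{k-1} \times \{0\}^{n-k+1}) = \square_{k-1}$, and the natural lift of $\square_{k-1}$ by the coefficients is a height function on the vertices of $\square_{k-1}$. The indexing set $\{(k,t) : k \in [2,n+1],\, t \in [s_k-1]\}$ has one element for $k = n+1$ and $s_k-1$ elements for each $k \in [2,n]$, matching exactly the list of hypercube summands appearing in Lemma~\ref{lem:oru_IS_cayley_embedding}. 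Thus by Remark~\ref{rem:newton_polytope_multiplication},
\[
\cN(F) \;=\; \sum_{k,t} \cN(F^k_t) \;=\; \square_n + \sum_{i=1}^{n}(s_i-1)\,\square_{i-1},
\]
which is precisely the Minkowski sum that $\subdivCay$ subdivides.

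Next I would check that the regular subdivision $\Sigma_F$ of the Cayley embedding $\cC(\cN(F^k_t))_{k,t}$ induced by the heights $\height(R(k,t,\delta))$ coincides, after the integral equivalence of Lemma~\ref{lem:oru_IS_cayley_embedding}, with the DKK triangulation $\triangDKK[\oru(s)]$. This is the main content of the argument. By construction, each integer point $(\mathbf{e}_{k,t}, \delta)$ of the Cayley embedding is in bijection with a route $R(k,t,\delta)$ of $\oru(s)$, and its lift has height $\height(R(k,t,\delta))$. Therefore $\Sigma_F$ is, by definition of a regular subdivision, the projection of the lower faces of the lifted Cayley polytope, i.e.\ the same subdivision that defines $\triangDKK[\oru(s)]$ with admissible height function $\height$. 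Applying the Cayley trick in the form of Proposition~\ref{prop:cayley_trick_tropical}, the fine mixed subdivision $\subdivCay$ (Definition~\ref{def:square_mxied_subdiv}) equals the dual subdivision $\cS(F)$.

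Finally, invoking Proposition~\ref{prop:arrangement_tropical_hypersurfaces}, the tropical dual of $\subdivCay = \cS(F)$ is the polyhedral complex $\cT(F) = \bigcup_{k,t} \cT(F^k_t)$ induced by the hypersurface arrangement $\cH_s(\height)$, which is the desired conclusion. The main obstacle I expect is bookkeeping: carefully matching the indexing of edges/routes of $\oru(s)$ with the lattice points of each hypercube summand, and verifying that the embedding of each $\square_{k-1}$ into $\RR^n$ used to define $F^k_t$ is consistent with the parametrization of $\RR^p \times \RR^n$ used in Section~\ref{sec:realiz_sum_of_cubes} and in Lemma~\ref{lem:oru_IS_cayley_embedding}. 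Once that identification is spelled out, the tropical correspondence is immediate from the results of Chapter~\ref{chap:sorder_Flows}.
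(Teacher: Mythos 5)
Your argument is correct and follows the same route the paper takes: the paper states that the theorem ``directly follows from Proposition~\ref{prop:arrangement_tropical_hypersurfaces}'' given that $\subdivCay$ arises by applying the Cayley trick to~$\triangDKK[\oru(s)]$ on the Cayley embedding of Lemma~\ref{lem:oru_IS_cayley_embedding}, and your proof supplies exactly the bookkeeping that this ``directly follows'' is implicitly appealing to (matching routes~$R(k,t,\delta)$ with lattice points of the Cayley embedding, identifying $\cN(F^k_t)$ with the hypercube summands, and checking $\Sigma_F = \triangDKK[\oru(s)]$). Your identification of the Newton polytope sum with the Minkowski sum of Lemma~\ref{lem:oru_IS_cayley_embedding} is fine; the only superficial discrepancy (whether $k$ starts at $1$ or $2$) is immaterial since $\square_0$ is a point and contributes trivially.
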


Notice that in this description of the tropical polynomials we avoid using the notation~$\langle \delta,\mathbf{x}\rangle$ as~$\mathbf{x}\in\RR^{n}$ while~$\delta\in\{0,1\}^{k-1}$.

\begin{definition}\label{def:s-perm_geom}
	We denote by \defn{$\PSPerm(\height)$} the polyhedral complex of bounded cells induced by the arrangement~$\cH_{s}(\height)$.
\end{definition}

\begin{theorem}\label{thm:bij_trop_arr}
	The face poset of the geometric polyhedral complex~$\PSPerm(\height)$ is isomorphic to the face poset of the combinatorial~$s$-permutahedron~$\PSPerm$.
\end{theorem}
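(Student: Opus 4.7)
The plan is to realize Theorem~\ref{thm:bij_trop_arr} as the composition of two inclusion-reversing poset isomorphisms, namely the one coming from the Cayley trick (already encoded in Theorem~\ref{thm:bij_mixed_subdiv}) and the one coming from tropical duality (Proposition~\ref{prop:bijections_dome_newton_polytope} refined by Lemma~\ref{lem:tropical_dual_interior}).

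First I would assemble the tropical polynomial $F := \bigodot_{k \in [2,n+1],\, t \in [s_k-1]} F^k_t$. By Proposition~\ref{prop:tropical_hypersurface_multiplication} we have $\cT(F) = \bigcup_{k,t} \cT(F^k_t)$, which by definition is exactly the polyhedral complex induced by the arrangement $\cH_s(\height)$. In particular, $\PSPerm(\height)$ is the subcomplex of bounded cells of $\cT(F)$. On the other hand, Theorem~\ref{thm:arr_trop_hypersurfaces_s-perm} identifies the dual subdivision $\cS(F)$ with $\subdivCay$. This sets up the geometric framework inside which the chain of bijections takes place.

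Next, I would invoke Proposition~\ref{prop:bijections_dome_newton_polytope} to obtain the dimension-reversing, inclusion-reversing bijection between the cells of $\cT(F)$ and the cells of $\cS(F)$, and then apply Lemma~\ref{lem:tropical_dual_interior} to restrict it to a bijection between the \emph{bounded} cells of $\cT(F)$ (that is, the cells of $\PSPerm(\height)$) and the \emph{interior} cells of $\cS(F) = \subdivCay$. Because the duality is inclusion-reversing, this gives a poset isomorphism between the face poset of $\PSPerm(\height)$ ordered by inclusion and the interior cells of $\subdivCay$ ordered by reverse inclusion.

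Finally, Theorem~\ref{thm:bij_mixed_subdiv} provides a poset isomorphism between the face poset of $\PSPerm$ ordered by inclusion and the interior cells of $\subdivCay$ ordered by reverse inclusion. Composing the two bijections cancels the two inclusion reversals, and yields the desired inclusion-preserving isomorphism between the face poset of $\PSPerm$ and the face poset of $\PSPerm(\height)$. The main obstacle will be administrative rather than conceptual: one has to verify that the dimensions and supports of the individual $F^k_t$ precisely reproduce the hypercubes $\square_{k-1}$ entering into the Cayley embedding of Lemma~\ref{lem:oru_IS_cayley_embedding}, so that $\cS(F)$ really coincides with $\subdivCay$ as claimed and not merely with some refinement or coarsening of it; and, using that $\height$ is admissible by Lemma~\ref{lem:epsilonheight}, that no degeneracies collapse cells of $\cT(F)$ in a way that would spoil the bijection of Lemma~\ref{lem:tropical_dual_interior}. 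Both points are built into the hypotheses, so the argument reduces to carefully tracking dimensions and inclusions.
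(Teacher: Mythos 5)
Your proposal is correct and follows essentially the same route as the paper: compose the anti-isomorphism of Theorem~\ref{thm:bij_mixed_subdiv} (face poset of $\PSPerm$ versus interior cells of $\subdivCay$) with the anti-isomorphism given by Lemma~\ref{lem:tropical_dual_interior} and Theorem~\ref{thm:arr_trop_hypersurfaces_s-perm} (interior cells of $\subdivCay = \cS(F)$ versus bounded cells of $\cT(F) = \cH_s(\height)$). The ``administrative'' verifications you flag at the end are indeed already subsumed in the statements of Theorem~\ref{thm:arr_trop_hypersurfaces_s-perm} and the admissibility hypothesis on $\height$, so no extra work is needed.
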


\begin{proof}
	In~Theorem~\ref{thm:bij_mixed_subdiv} we saw that the face poset of~$\PSPerm$ is anti-isomorphic to the face poset of interior cells of the mixed subdivision~$\subdivCay$. From Lemma~\ref{lem:tropical_dual_interior} and Theorem~\ref{thm:arr_trop_hypersurfaces_s-perm} we get that this poset is isomorphic to the poset of bounded cells of~$\cH_{s}(\height)$, which is precisely the face poset of~$\PSPerm(\height)$.
\end{proof}

Figure~\ref{fig:tropical_realizations} shows some examples of the~$1$-skeleton of such realizations of the~$s$-permutahedron using the height function~$\height_\varepsilon$ of Lemma~\ref{lem:epsilonheight}. Figure~\ref{fig:tropical_pol_decomposed} shows~$\PSPerm[(1,1,1,2)](\height_1)$ with its maximal cells unglued. More examples with interactivity can be found in this \href{https://sites.google.com/view/danieltamayo22/gallery-of-s-permutahedra}{website}\footnote{{https://sites.google.com/view/danieltamayo22/gallery-of-s-permutahedra}}.

\begin{figure}[h!]
	\centering
	\includegraphics[scale=0.5]{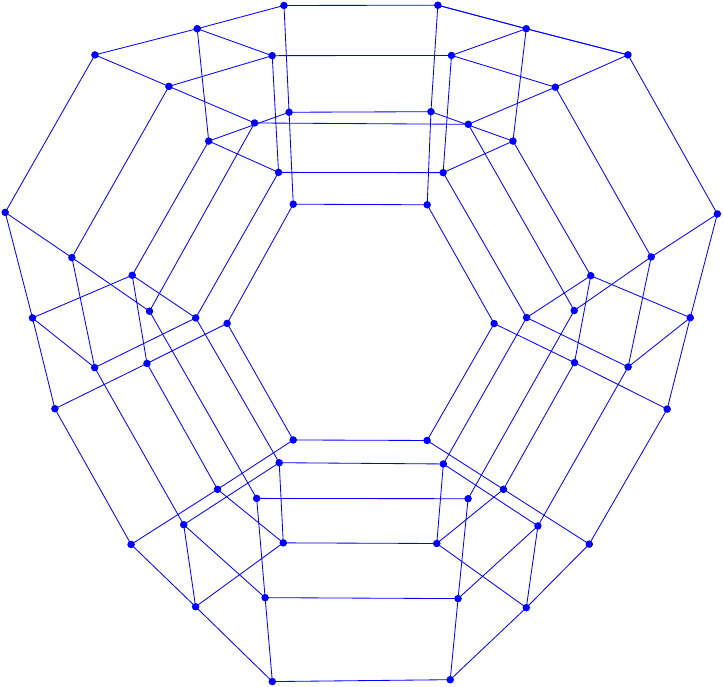}
	\hspace{2cm}
	\includegraphics[scale=0.58]{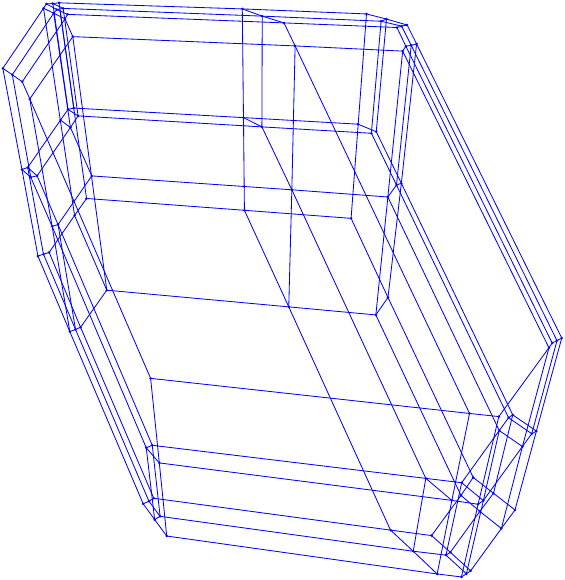}
	\caption[Examples of the~$1$-skeleton of~$\PSPerm(\height_\varepsilon)$ via its tropical realization.]{ The~$1$-skeletons of $\PSPerm[(1,1,1,2)](\height_1)$ (left) and~$\PSPerm[(1,2,2,2)](\height_{0.2})$ (right) via their tropical realization and the height function~$\height_\varepsilon$.}
	\label{fig:tropical_realizations}
\end{figure}

\begin{figure}[h!]
	\centering
	\includegraphics[scale=0.5]{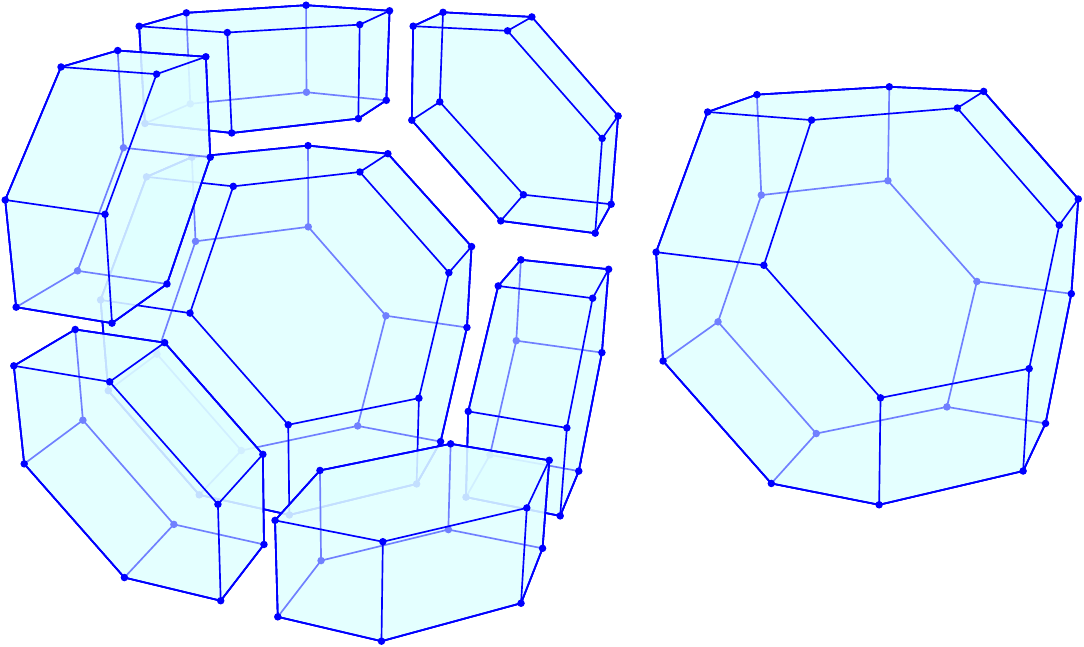}
	\caption[$\PSPerm(\height_\varepsilon)$ with its maximal cells distanced.]{ $\PSPerm[(1,1,1,2)](\height_1)$ with its maximal cells distanced. One cell has been moved further for a better view.}
	\label{fig:tropical_pol_decomposed}
\end{figure}

With this polytopal complex in hand, we can describe the explicit coordinates of the vertices of~$\PSPerm(\height)$ as follows.

\begin{definition}\label{def:length_prefix_occurence}
	Let~$w\in\cW_s$ be a Stirling~$s$-permutation,~$a\in [n]$ and~$t\in [s_a]$. We denote~\defn{$\ell(a^t)$} the length of the prefix of~$w$ that precedes the~$t$-th occurrence of~$a$. As in Definition~\ref{def:Lw}, this prefix corresponds to the route~$\rpre{\prefix{w}{\ell(a^t)}}$ in the maximal clique of coherent routes~$\Delta_w$.
\end{definition}

\begin{theorem}\label{thm:vertices}
	The vertices of~$\PSPerm(\height)$ are in bijection with Stirling~$s$-permutations.
	Moreover, the vertex~$\mathbf{v}(w)=(\mathbf{v}(w)_a)_{a\in[n]}$ associated to a Stirling~$s$-permutation~$w$ has coordinates
	\begin{equation}\label{eq:coordinates}
		\mathbf{v}(w)_a = \sum_{t=1}^{s_a} \left(\height(\rpre{\prefix{w}{\ell(a^t)}})-\height(\rpre{\prefix{w}{\ell(a^t)+1}})\right).
	\end{equation}
\end{theorem}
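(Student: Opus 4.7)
The plan is to establish the bijection directly from the results already proved, and then verify the explicit coordinate formula by exhibiting it as the unique solution of a linear system cut out by the tropical arrangement.

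For the bijection, by Theorem~\ref{thm:bij_trop_arr} the face poset of $\PSPerm(\height)$ is isomorphic to that of the combinatorial $\PSPerm$, whose atoms are precisely the faces $(w,\emptyset)$ indexed by Stirling $s$-permutations. Unwinding the chain of dualities (Lemma~\ref{lem:tropical_dual_interior}, Proposition~\ref{prop:arrangement_tropical_hypersurfaces}, Theorem~\ref{thm:bij_mixed_subdiv}), the vertex $\mathbf{v}(w)$ of $\PSPerm(\height)$ corresponds to the top-dimensional Minkowski cell of $\subdivCay$ attached to $w$, which in turn comes from the maximal clique $\Delta_w$ of coherent routes in $(\oru(s),\preceq)$.

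To determine the coordinates, I would translate the duality into explicit linear equations. For each $(k,t)$ with $k\in[2,n+1]$ and $t\in[s_k-1]$, set $V^k_t := \{\delta\in\{0,1\}^{k-1} \,:\, R(k,t,\delta)\in\Delta_w\}$. At the vertex $\mathbf{v}(w)$, the minimum of the tropical polynomial $F^k_t$ is attained exactly on $V^k_t$, yielding for each pair $\delta,\delta'\in V^k_t$ the equation
\[
\langle \delta-\delta',\mathbf{v}(w)\rangle \;=\; \height(R(k,t,\delta'))-\height(R(k,t,\delta)).
\]
Since $\sum_{k,t}|V^k_t|=|\Delta_w|=|s|+1$ and each of the $|s|-n+1$ source edges carries at least one route by Lemma~\ref{cor:interiorsimplicesDKK}, this system contains exactly $n$ independent equations in $\RR^n$ and so has a unique solution.

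The heart of the argument is the verification that equation~\eqref{eq:coordinates} satisfies this system. I would enumerate the routes $R_i := \rpre{\prefix{w}{i}} = R(k_i,t_i,\delta^i)$ for $i=0,\ldots,|s|$ and analyze the transition $R_i\to R_{i+1}$ caused by adding the letter $a=w_{i+1}$. Five cases arise depending on whether $a>k_i$ (route unchanged), $a=k_i$ and $t_i+1<s_{k_i}$ (source edge advances in $t$), $a=k_i$ and $t_i+1=s_{k_i}$ (current block completes and $k$ jumps up), $a<k_i$ with $s_a=1$ (same source edge, $\delta_a$ toggles $0\!\to\!1$), or $a<k_i$ with $s_a>1$ (new active level opens at $a$). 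Only the fourth case directly produces $v_a=\height(R_i)-\height(R_{i+1})$ from the $F^{k_i}_{t_i}$-equation; in all other cases the contribution involves heights attached to different source edges. However, summing over the $s_a$ positions of $a$ in $w$, the intermediate terms attached to lower-level source edges $e^{k'}_{t'}$ with $k'\leq a$ pair up across consecutive occurrences of $a$ and cancel telescopically, while the residual combination is resolved into $v_a$ by applying the $F^{k'}_{t'}$-equations. The computations in the small examples $s=(1,1,1),\,w=123$ and $s=(1,2,1),\,w=2123$ exhibit exactly this cancellation pattern.

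The main obstacle will be organizing this bookkeeping in full generality, especially in the fifth case: when adding $a$ drops the active level to $a$, the subsequent additions of letters strictly below $a$ generate a cascade of routes at lower-level source edges before the next $a$-block boundary restores an $(n+1)$-level route. Tracking how these intermediate routes pair with the constraint equations they satisfy, and confirming that the leftover contributions assemble into the correct coefficient of $v_a$ for every $a\in[n]$ simultaneously, is the technical core that the proof will need to complete.
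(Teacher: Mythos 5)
Your bijection argument is correct and matches the paper: Theorem~\ref{thm:bij_trop_arr} gives the face-poset isomorphism, and the atoms of $\PSPerm$ are the Stirling $s$-permutations. Your setup for the coordinate verification is also sound: each route $R(k,t,\delta)\in\Delta_w$ contributes a condition that the monomial $\height(R(k,t,\delta))+\langle\delta,\mathbf{v}\rangle$ attains the minimum of $F^k_t$, and the dimension count $\sum_{k,t}(|V^k_t|-1)=(|s|+1)-(|s|-n+1)=n$ is the right way to see the solution is unique.

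However, you explicitly acknowledge that the core verification — that the formula~\eqref{eq:coordinates} actually satisfies this system — is left unfinished, and this is a genuine gap. Your transition-by-transition analysis of $R_i\to R_{i+1}$ (incidentally, the case $a>k_i$ never occurs, since being ``inside'' the $k_i$-block forces all subsequent letters to be $\leq k_i$) multiplies the bookkeeping without isolating the structural fact that closes the argument. The paper sidesteps the cascade problem in your fifth case entirely by choosing, for each $a$, only the two routes $\rpre{\prefix{w}{\ell(a^1)}}$ and $\rpre{\prefix{w}{\ell(a^{s_a})+1}}$ bracketing the \emph{entire} $a$-block. These two routes share the same source edge $e^c_t$, where $c$ is the smallest letter with $B_a\subseteq B_c$ and $t$ is the number of occurrences of $c$ preceding $B_a$; moreover their $\delta$-vectors differ exactly on the set $\{b\leq a : B_b\subseteq B_a\}$. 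This yields one linear constraint per $a$,
\begin{equation*}
\sum_{\substack{b\leq a\\ B_b\subseteq B_a}}\mathbf{v}_b \;=\; \height(\rpre{\prefix{w}{\ell(a^1)}})-\height(\rpre{\prefix{w}{\ell(a^{s_a})+1}}),
\end{equation*}
and the system is triangular in $a$. From there one checks \eqref{eq:coordinates} by induction on $a$: the base case $a=1$ telescopes directly since $B_1=1^{s_1}$ is contiguous, and the inductive step substitutes the already-verified $\mathbf{v}_b$ for $b<a$ with $B_b\subseteq B_a$, after which the height terms cancel except for those at positions $\ell(a^r)$ and $\ell(a^r)+1$. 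Without this choice of constraint, your telescoping claim remains a plan, not a proof.
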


\begin{proof}
	We obtain the bijection between vertices of~$\PSPerm(\height)$ and Stirling~$s$-permutations directly as a consequence of Theorem~\ref{thm:bij_trop_arr}.

	Let~$w$ be a Stirling~$s$-permutation. Its simplex~$\Delta_w$ is associated via Theorem~\ref{thm:arr_trop_hypersurfaces_s-perm} to the intersection of all regions of the form
	\begin{equation}
		\Bigg\{ \mathbf{x}\in \RR^n \, : \, \height(R(k, t, \delta)) + \sum_{a\in [c-1]} \delta_a x_a = \min_{\theta \in \{0,1\}^{k-1}} \Big\{ \height(R(k, t, \theta)) + \sum_{a\in [k-1]} \theta_a x_a\Big\}\Bigg\},
	\end{equation}\label{eq:coordinates_regions}
	where~$R(c,t,\delta)$ is a route in the clique~$\Delta_w$.
	It follows from the bijection between vertices and Stirling~$s$-permutations that this intersection is a single point that we denote~$\mathbf{v}$.
	Let us show that~$\mathbf{v}$ has the coordinates stated in the theorem.
	Let~$a\in [n]$. Both routes~$\rpre{\prefix{w}{\ell(a^1)}}$ and~$\rpre{\prefix{w}{\ell(a^{s_a})+1}}$ are of the form~$R(c,t, \delta)$ and~$R(c,t,\delta')$ respectively, where~$c$ is the smallest letter such that the~$a$-block~$B_a$ is contained in the~$c$-block~$B_c$ in~$w$, and~$t$ denotes the number of occurrences of~$c$ that precedes~$B_a$. If the~$B_a$ is contained in no other block we set~$c=n+1$ and~$t=1$ (i.e. the routes start with the source edge~$(v_{-1},v_0)$). The indicator vectors~$\delta$ and~$\delta'$ satisfy that~$\delta'-\delta$ is the indicator vector of the letters~$b\leq a$ such that the~$B_b$ is contained in~$B_a$ in~$w$.
	The fact that both routes belong to~$\Delta_w$ implies that~$\height(\rpre{\prefix{w}{\ell(a^1)}})+ \sum_{b\in [c-1]} \delta_b \mathbf{v}_b = \height(\rpre{\prefix{w}{\ell(a^{s_a})+1}})+ \sum_{b\in [c-1]} \delta'_b \mathbf{v}_b$, thus
	\begin{equation}\label{eq:coordinates_induction}
		\sum_{b\in [c-1]} (\delta'_b-\delta_b) \mathbf{v}_b=\sum_{\substack{b \in [a] \\B_b\subseteq B_a}} \mathbf{v}_b = \height(\rpre{\prefix{w}{\ell(a^1)}}) - \height(\rpre{\prefix{w}{\ell(a^{s_a})+1}}).
	\end{equation}
	We finish with an induction on~$a\in[n]$. If~$a=1$, then all terms of Equation~\ref{eq:coordinates} cancel out except~$\height(\rpre{\prefix{w}{\ell(a^1)}})$ and~$-\height(\rpre{\prefix{w}{\ell(a^{s_a})}})$ which is exactly Equation~\ref{eq:coordinates_induction} for~$\mathbf{v}_1$. Suppose we have the vertex description of all coordinates~$\mathbf{v}_b$ for~$b<a$. From Equation~\ref{eq:coordinates_induction} and the induction hypothesis we get that \begin{equation*}
		\mathbf{v}_a=\height(\rpre{\prefix{w}{\ell(a^1)}}) - \height(\rpre{\prefix{w}{\ell(a^{s_a})+1}})-\sum_{\substack{b \in [a-1]\\B_b\subset B_a}} \sum_{t=1}^{s_b} \left(\height(\rpre{\prefix{w}{\ell(b^t)}})-\height(\rpre{\prefix{w}{\ell(b^t)+1}})\right).
	\end{equation*}
	In this case all terms on the right cancel whenever the value at position~$\ell(b^t)+1\neq a$ giving us terms of the form~$-\height(\rpre{\prefix{w}{\ell(a^r)}})$ for~$r\in [2, s_a]$ or~$\height(\rpre{\prefix{w}{\ell(a^r)+1}})$ for~$r\in [s_a-1]$.
\end{proof}

As all maximal cliques~$\Delta_w$ contain the exceptional routes~$R(n+1, 1, (0)^n)$ and~$R(n+1, 1, (1)^n)$ (see Remark~\ref{rem:oru_exceptional_routes}), we obtain the following corollary.

\begin{corollary}\label{cor:s_containging_hyperplane}
	The~$s$-permutahedron~$\PSPerm(\height)$ is contained in the hyperplane
	\begin{equation}
		\left\{\mathbf{x}\in \RR^n \, :\, \sum_{i=1}^n x_i = \height\Big(R(n+1, 1, (0)^n)\Big)-\height\Big(R(n+1, 1, (1)^n)\Big)\right\}.
	\end{equation}
\end{corollary}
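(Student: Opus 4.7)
The plan is to use the explicit coordinate description of the vertices of $\PSPerm(\height)$ given in Theorem~\ref{thm:vertices}, and show that the sum of coordinates of every vertex equals the same constant. Since $\PSPerm(\height)$ is a polyhedral complex whose cells are convex hulls of subsets of its vertices, verifying the equation on vertices is enough to conclude that the entire complex lies in the stated hyperplane.

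First, for a Stirling~$s$-permutation~$w$, I would sum the coordinates
\[\mathbf{v}(w)_a = \sum_{t=1}^{s_a} \left(\height(\rpre{\prefix{w}{\ell(a^t)}})-\height(\rpre{\prefix{w}{\ell(a^t)+1}})\right)\]
over all $a \in [n]$. The key observation is that for each position $i \in [0, |s|-1]$, there is exactly one pair $(a,t)$ with $a \in [n]$ and $t \in [s_a]$ such that $\ell(a^t) = i$: namely $a = w_{i+1}$ and $t$ is the number of occurrences of $a$ among the letters $w_1, \dots, w_{i+1}$. Consequently, the double sum reindexes as
\[\sum_{a=1}^n \mathbf{v}(w)_a = \sum_{i=0}^{|s|-1} \left(\height(\rpre{\prefix{w}{i}}) - \height(\rpre{\prefix{w}{i+1}})\right),\]
which telescopes to $\height(\rpre{\prefix{w}{0}}) - \height(\rpre{\prefix{w}{|s|}})$.

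Next, I would invoke Remark~\ref{rem:oru_exceptional_routes}, which tells us that $\rpre{\prefix{w}{0}} = R(n+1, 1, (0)^n)$ is the all-bumps exceptional route and $\rpre{\prefix{w}{|s|}} = R(n+1, 1, (1)^n)$ is the all-dips exceptional route, both of which are independent of the choice of $w$. This yields the desired equality for every vertex $\mathbf{v}(w)$, and the corollary follows since the cells of $\PSPerm(\height)$ are convex hulls of subsets of such vertices.

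There is no real obstacle here beyond making explicit the reindexing bijection between pairs $(a,t)$ with $a\in[n],\, t\in[s_a]$ and positions $i\in[0,|s|-1]$; this is immediate from the definition of $\ell(a^t)$ in Definition~\ref{def:length_prefix_occurence} but is the one step that makes the telescoping transparent.
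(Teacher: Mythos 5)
Your proposal is correct. The reindexing bijection $(a,t)\mapsto\ell(a^t)$ between pairs with $a\in[n],\,t\in[s_a]$ and positions $0,\ldots,|s|-1$ holds because $\ell(a^t)+1$ is exactly the position of the $t$-th occurrence of $a$ in $w$, and $\sum_a s_a=|s|$; the double sum then telescopes exactly as you claim, and Remark~\ref{rem:oru_exceptional_routes} identifies the two surviving terms as the heights of the bump-only and dip-only exceptional routes.

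The paper proves the corollary by a slightly different but closely parallel route: rather than manipulating the coordinate formula from the statement of Theorem~\ref{thm:vertices}, it invokes the region characterization from that theorem's \emph{proof}. Concretely, since every maximal clique $\Delta_w$ contains both exceptional routes, the tropical region containing the vertex $\mathbf{v}(w)$ imposes the equality $\height(R(n+1,1,(0)^n)) = \height(R(n+1,1,(1)^n)) + \langle\mathbf{1},\mathbf{x}\rangle$, which is precisely the stated hyperplane. Both arguments hinge on the same key fact (Remark~\ref{rem:oru_exceptional_routes}), but yours has the small advantage of relying only on the published statement of Theorem~\ref{thm:vertices} rather than re-opening its proof, and the telescoping makes the computation self-contained and transparent. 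Your extension from vertices to the whole complex (via $\PSPerm(\height)$ being a complex of \emph{bounded} cells, Definition~\ref{def:s-perm_geom}) is also correctly justified.
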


As a next step, we show that~$\PSPerm(\height)$ is a generalized permutahedron as in Definition~\ref{def:generalized_permutahedra}.

\begin{theorem}\label{thm:edges}
	Let~$1\leq a < c\leq n$ and~$w$ and~$w'$ be Stirling~$s$-permutations of the form~$u_1B_acu_2$ and~$u_1cB_au_2$ respectively, where~$B_a$ is the~$a$-block of~$w$ and~$w'$. The edge of~$\PSPerm(\height)$ corresponding to the transposition between~$w$ and~$w'$ is
	\begin{equation}\label{eq:edges}
		\mathbf{v}(w')-\mathbf{v}(w)= \Big(\height(\rpre{u_1c})+\height(\rpre{u_1B_a}) - \height(\rpre{u_1})-\height(\rpre{u_1B_ac}) \Big)(\mathbf{e}_a-\mathbf{e}_c).
	\end{equation}
\end{theorem}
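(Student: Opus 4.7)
The plan is to establish the formula for the edge vector $\mathbf{v}(w') - \mathbf{v}(w)$ by applying Theorem~\ref{thm:vertices} coordinate by coordinate. For each $b \in [n]$, the sums defining $\mathbf{v}(w)_b$ and $\mathbf{v}(w')_b$ have $s_b$ terms each, and I will match the $t$-th occurrence of $b$ in $w$ with the $t$-th occurrence of $b$ in $w'$ in order to compare term by term. The prefixes in question differ at most in the presence of a $c$ that has been moved past $B_a$, so the comparison reduces to understanding when inserting a single $c$ changes the associated route.

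The key observation driving the entire computation is that, by Definition~\ref{def:Lw}, the route $\rpre{u}$ depends only on the letter counts $t_i(u)$ for $i$ up to the critical letter (the smallest $i$ with $0 < t_i(u) < s_i$). Consequently, if a prefix already satisfies $0 < t_a(u) < s_a$, then inserting or removing a single occurrence of $c$ does not alter its route, since $c > a$ lies strictly above the critical letter. The same holds when the critical letter is some $b < a$. Moreover, whenever two prefixes share the same multiset of letters, their routes coincide.

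Using this observation, I would show that $\mathbf{v}(w')_b - \mathbf{v}(w)_b$ vanishes for $b \notin \{a, c\}$. For every occurrence of $b$, the enclosing prefix in $w$ lies inside $u_1$, inside $u_2$, or strictly inside $B_a$ (the last case forcing $b < a$ by the structure of Stirling blocks). In the first two cases, the prefixes in $w$ and $w'$ share the same multiset of letters, so the routes coincide. In the third case, since $B_a$ begins and ends with $a$ and the occurrence of $b$ sits strictly inside, the relevant prefixes satisfy $0 < t_a < s_a$, so the extra $c$ in the $w'$ prefix is invisible to the route.

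For $b = a$, all $s_a$ occurrences lie in $B_a$, and the intermediate terms ($t = 2, \ldots, s_a-1$, together with the $t=1$ ``after'' prefix and the $t=s_a$ ``before'' prefix) cancel by the same critical-letter argument, since in each such prefix $t_a \in \{1, \ldots, s_a-1\}$. Only two discrepancies survive: $\rpre{u_1}$ versus $\rpre{u_1 c}$, and $\rpre{u_1 B_a}$ versus $\rpre{u_1 c B_a} = \rpre{u_1 B_a c}$ (the equality because of matching multisets). This produces exactly the claimed scalar on the $a$-coordinate. A symmetric analysis for $b = c$ isolates the unique $c$ adjacent to $B_a$: the ``before'' and ``after'' prefixes in $w$ are $u_1 B_a$ and $u_1 B_a c$, while in $w'$ they are $u_1$ and $u_1 c$, giving the negative of the $a$-coordinate contribution. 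Combining yields the formula. The main obstacle is carefully justifying that the critical letter lies at or below $a$ in every prefix claimed to cancel — in particular, for $b < a$ with $B_b \subset B_a$, and for the boundary prefixes in the $b = a$ computation — together with handling the degenerate cases $s_a = 1$ and $s_c = 1$ in which the sum collapses to a single term.
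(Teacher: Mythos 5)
Your proposal is correct and follows essentially the same route as the paper's proof: apply Theorem~\ref{thm:vertices} coordinate by coordinate, show that only the four prefixes $u_1$, $u_1c$, $u_1B_a$, $u_1B_ac$ contribute (all others having matching routes in $w$ and $w'$ because the critical letter lies at or below $a$), and then read off the $a$- and $c$-coordinates. Your elaboration of the critical-letter argument and the degenerate cases $s_a=1$, $s_c=1$ is making explicit what the paper states tersely as ``$c$ is not minimal enough to play the role of defining these routes,'' but the underlying decomposition is identical.
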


\begin{proof}
	Denote~$t:=|(c,a)_w|+1$ meaning that the transposition from~$w$ to~$w'$ exchanges the~$a$-block with the~$t$-th occurrence of~$c$. Let us describe~$\mathbf{v}(w')-\mathbf{v}(w)$ via the expression of the explicit coordinates we obtained in Theorem~\ref{thm:vertices}. Following the construction of routes~$R$ from prefixes~$u$, notice that any route ending inside~$u_1$ or inside~$u_2$ appears in~$\Delta_w\cap\Delta_{w'}$ as it has the same letters the same amount of times meaning that it is cancelled out in~$\mathbf{v}(w')-\mathbf{v}(w)$. Also, all routes ending inside~$B_a$ also appear in~$\mathbf{v}(w')-\mathbf{v}(w)$ as in this case~$c$ is not minimal enough to play the role of defining these routes. Thus, they are also cancelled. The remaining routes that we have are~$u_1$,~$u_1c$,~$u_1B_a$ and~$u_1B_ac$, which gives the same route as~$u_1cB_a$.
	Therefore, we have that
	\begin{align*}
		\mathbf{v}(w')-\mathbf{v}(w) & = \left(\mathbf{v}(w')_a-\mathbf{v}(w)_a \right)\mathbf{e}_a + \left(\mathbf{v}(w')_c-\mathbf{v}(w)_c \right)\mathbf{e}_c                                                                                 \\
		                             & =  \left(\height(\rpre{\prefix{w'}{\ell(a^1)}})-\height(\rpre{\prefix{w'}{\ell(a^{s_a})+1}}) - \height(\rpre{\prefix{w}{\ell(a^1)}}) + \height(\rpre{\prefix{w}{\ell(a^{s_a})+1}}) \right)\mathbf{e}_a    \\
		                             & \phantom{=} + \left(\height(\rpre{\prefix{w'}{\ell(c^t)}})-\height(\rpre{\prefix{w'}{\ell(c^t)+1}}) - \height(\rpre{\prefix{w}{\ell(c^t)}}) + \height(\rpre{\prefix{w}{\ell(c^t)+1}}) \right)\mathbf{e}_c \\
		                             & = \left(\height(\rpre{u_1c})-\height(\rpre{u_1cB_a}) - \height(\rpre{u_1}) + \height(\rpre{u_1B_a}) \right)\mathbf{e}_a                                                                                   \\
		                             & \phantom{=} + \left(\height(\rpre{u_1})-\height(\rpre{u_1c}) - \height(\rpre{u_1B_a}) + \height(\rpre{u_1B_ac}) \right)\mathbf{e}_c                                                                       \\
		                             & = \left(\height(\rpre{u_1c})+\height(\rpre{u_1B_a}) - \height(\rpre{u_1})-\height(\rpre{u_1B_ac}) \right)(\mathbf{e}_a-\mathbf{e}_c). \qedhere
	\end{align*}
\end{proof}

\begin{lemma}
	For any strictly decreasing sequence of real numbers~$\kappa_1>\cdots > \kappa_n$, the direction~$\sum_{i=1}^n \kappa_i \mathbf{e}_i$ orients the edges of~$\PSPerm(\height)$ according to the~$s$-weak order covering relations.
\end{lemma}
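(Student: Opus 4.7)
The plan is to combine Theorem~\ref{thm:edges} with the admissibility of the height function~$\height$ to control the signs of the edge directions. By Theorem~\ref{thm:edges}, an edge of $\PSPerm(\height)$ associated to a covering relation $w \lessdot w'$ has direction $\lambda (\mathbf{e}_a - \mathbf{e}_c)$, where $a < c$ come from an ascent $(a,c)$ of $w$, and where
$$\lambda = \height(\rpre{u_1 c}) + \height(\rpre{u_1 B_a}) - \height(\rpre{u_1}) - \height(\rpre{u_1 B_a c}).$$
Then the inner product of this direction with $\sum_{i=1}^n \kappa_i \mathbf{e}_i$ equals $\lambda(\kappa_a - \kappa_c)$, which has the same sign as $\lambda$ since $\kappa_a > \kappa_c$ by hypothesis. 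The whole statement therefore reduces to showing that $\lambda > 0$ for every cover relation.

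To get $\lambda > 0$, I would identify the four routes appearing in $\lambda$ with a conflicting pair and its resolvents in the DKK triangulation. Following the proof of Theorem~\ref{thm:cover_relations}, the cliques $\Delta_w$ and $\Delta_{w'}$ differ in exactly one route each: the route $P := \rpre{u_1 B_a}$ lies in $\Delta_w \setminus \Delta_{w'}$, and $Q := \rpre{u_1 c}$ lies in $\Delta_{w'} \setminus \Delta_w$. Since both $\Delta_w$ and $\Delta_{w'}$ are maximal cliques of coherent routes and they share all other vertices, $P$ and $Q$ cannot be coherent with each other. On the other hand, the two routes $P' := \rpre{u_1}$ and $Q' := \rpre{u_1 B_a c} = \rpre{u_1 c B_a}$ are common to $\Delta_w \cap \Delta_{w'}$, so in particular they are coherent with both $P$ and $Q$. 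A direct check on prefix occurrences (the $\delta$-patterns for $\{P,Q\}$ and $\{P',Q'\}$ differ precisely by swapping the roles of $a$ and the letters of $B_a$ with one occurrence of $c$) shows that $P+Q = P'+Q'$ as indicator vectors on the edges of $\oru(s)$.

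Since $\height$ is admissible for $\triangDKK[\oru(s)]$, Proposition~\ref{prop:DKKlem2_original} (or, more sharply, Lemma~\ref{lem:DKKlem2_us}) applied to the conflicting pair $P,Q$ with the resolvents $P',Q'$ yields
$$\height(P) + \height(Q) > \height(P') + \height(Q'),$$
which is exactly the inequality $\lambda > 0$. Combined with Step~1, this shows that $\langle \sum_i \kappa_i \mathbf{e}_i, \mathbf{v}(w') - \mathbf{v}(w) \rangle > 0$ for every cover relation $w \lessdot w'$, so the claimed direction orients every edge of $\PSPerm(\height)$ according to the $s$-weak order.

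The main obstacle I anticipate is the careful graph-theoretic verification that $\{P',Q'\}$ is really the pair of resolvents of $\{P,Q\}$ in the sense of Definition~\ref{def:resolvents}, because the conflict between $P$ and $Q$ need not be minimal (two or more ``shared vertex'' segments can appear depending on which values of $b<a$ satisfy $B_b \subseteq B_a$ and on where $c$ sits in the framing). This is why I phrased the step through the edge-multiset identity $P+Q=P'+Q'$: this identity is what Proposition~\ref{prop:DKKlem2_original} actually requires, and it can be checked purely by counting occurrences of each letter in the four prefixes $u_1,\,u_1c,\,u_1 B_a,\,u_1 B_a c$, bypassing any case analysis of the geometric resolvents.
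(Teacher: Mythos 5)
Your Step 1 and the identification of the four routes $P=\rpre{u_1B_a}$, $Q=\rpre{u_1c}$, $P'=\rpre{u_1}$, $Q'=\rpre{u_1B_ac}$ agree exactly with the paper's proof. However, there is a genuine logical gap in Step 3: Proposition~\ref{prop:DKKlem2_original} runs in the \emph{opposite} direction from what you need. That proposition says that \emph{if} for every non-coherent pair one can find $P',Q'$ with $P+Q=P'+Q'$ satisfying the height inequality, \emph{then} $\height$ is admissible; it does not let you start from admissibility and conclude the inequality. For the converse direction the relevant tools are Lemma~\ref{lem:DKKlem2_us} or~\ref{lem:DKKlem2_us_pro}, both of which presuppose that $P',Q'$ are the \emph{resolvents} of $P,Q$ (or that the conflict is minimal) — precisely the verification you are trying to route around. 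So the fallback you propose (``just check $P+Q=P'+Q'$ and cite the Proposition'') does not close the argument as stated.

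Two remedies are available, and both are closer to what you already have than you may realize. First, your worry that ``the conflict between $P$ and $Q$ need not be minimal'' is in fact unfounded: a case analysis on $c_P$, $c_Q$ (using that the unique partial letters of $u_1$ are the $b$ with $B_a\subset B_b$) shows that $P$ and $Q$ are always in minimal conflict at the shared subroute $[v_{n+1-c},\,v_{n-a}]$, with resolvents exactly $P'=\rpre{u_1}$ and $Q'=\rpre{u_1B_ac}$; this is what the paper's proof does (the paper writes the endpoint as $v_{n+1-a}$, which appears to be an off-by-one, but the substance is correct), and it lets you invoke Lemma~\ref{lem:DKKlem2_us_pro} directly. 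Second, if you insist on bypassing the resolvent bookkeeping, the correct route is not Proposition~\ref{prop:DKKlem2_original} but the argument \emph{inside} the proof of Lemma~\ref{lem:DKKlem2_us}: you observe (as you do) that $P',Q'\in\Delta_w\cap\Delta_{w'}$ are coherent, hence $[\mathbf{p'},\mathbf{q'}]$ is an edge of $\triangDKK[\oru(s)]$; since $\tfrac{1}{2}(\mathbf{p}+\mathbf{q})=\tfrac{1}{2}(\mathbf{p'}+\mathbf{q'})$ lies on that edge and $\{P,Q\}\ne\{P',Q'\}$, admissibility of $\height$ forces $\height(P)+\height(Q)>\height(P')+\height(Q')$. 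Either fix would make the argument sound; as written, citing Proposition~\ref{prop:DKKlem2_original} does not.
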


\begin{proof}
	Using Theorem~\ref{thm:edges}, we know that the edges of~$\PSPerm(\height)$ have direction~$(\height(\rpre{u_1c})+\height(\rpre{u_1B_a}) - \height(\rpre{u_1})-\height(\rpre{u_1B_ac}) )(\mathbf{e_a}-\mathbf{e_c})$. As the routes~$\rpre{u_1B_a}$ and~$\rpre{u_1c}$ are in minimal conflict at~$[v_{n+1-c}, v_{n+1-a}]$ and~$\rpre{u_1}$ and~$\rpre{u_1B_ac}$ are their resolvents, Lemma~\ref{lem:DKKlem2_us_pro} tells us that~$\height(\rpre{u_1c})+\height(\rpre{u_1B_a}) - \height(\rpre{u_1})-\height(\rpre{u_1B_ac}) >0$.
\end{proof}

\begin{lemma}\label{lem:support}
	The support~$\supp(\PSPerm(\height))$ (i.e.\ the union of faces of~$\PSPerm(\height)$) is a polytope combinatorially isomorphic to the~$(n-1)$-dimensional permutahedron. More precisely it can be described as
	\begin{enumerate}
		\itemsep0em
		\item the convex hull of the vertices~$\mathbf{v}(w^ {\sigma})$ where~$\sigma\in\fS_n$ and~$w^{\sigma}$ is the Stirling~$s$-permutation
		      \begin{equation*}
			      w^{\sigma} = \underbrace{\sigma(1)\ldots \sigma(1)}_{s_{\sigma(1)}\text{ times}}\ldots \underbrace{\sigma(n)\ldots\sigma(n)}_{s_{\sigma(n)}\text{ times}},
		      \end{equation*}
		\item the intersection of the inequalities
		      \begin{equation}\label{eq:halfspace1}
			      \langle\delta,\mathbf{x}\rangle \geq \height(R(n+1, 1, (0)^n))-\height(R(n+1, 1, \delta)),
		      \end{equation}
		      \begin{equation}\label{eq:halfspace2}
			      \langle\mathbf{1}-\delta,\mathbf{x}\rangle \leq \height(R(n+1, 1, \delta))-\height(R(n+1, 1, (1)^n)),
		      \end{equation}
		      for all~$\delta\in \{0,1\}^n$.
	\end{enumerate}
\end{lemma}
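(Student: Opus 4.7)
The plan is to establish both descriptions (1) and (2) by identifying the extremal Stirling $s$-permutations of the support and matching them with the half-space description of the containing polytope.

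First I would examine the special Stirling $s$-permutations $w^{\sigma}$. For each $\sigma \in \fS_n$, $w^{\sigma}$ is characterized by the property that every letter appears in a single uninterrupted block, with the blocks arranged in the order $\sigma(1), \ldots, \sigma(n)$. Using the insertion algorithm from the proof of Theorem~\ref{thm:bij_simplices_permutations}, these are exactly the Stirling $s$-permutations whose integer $\bfd$-flow on $\oru(s)$ has the property that through each inner vertex, every unit of flow travels either entirely along bumps or entirely along dips until the next source edge. Equivalently, the associated maximal clique $\Delta_{w^{\sigma}}$ contains all routes of the form $R(n+1, 1, \delta^{\sigma}_k)$ for $k\in[0,n]$, where $\delta^{\sigma}_k\in\{0,1\}^n$ is the indicator of $\{\sigma(1), \ldots, \sigma(k)\}$. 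By Theorem~\ref{thm:vertices} the coordinates of $\mathbf{v}(w^{\sigma})$ telescope neatly across this nested chain of routes, which forces equality in both (\ref{eq:halfspace1}) and (\ref{eq:halfspace2}) for every such $\delta^{\sigma}_k$.

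Next I would show that the two descriptions agree, that is, the convex hull in (1) coincides with the intersection of half-spaces in (2) and both equal $\supp(\PSPerm(\height))$. The inclusion $\supp(\PSPerm(\height))\subseteq$ (half-spaces) follows directly from Theorem~\ref{thm:arr_trop_hypersurfaces_s-perm}: a point $\mathbf{x}$ lies in a bounded cell of $\cH_s(\height)$ only if the minimum of the tropical polynomial $F^{n+1}_1$ is attained at the monomials corresponding to routes in some clique, and since the exceptional routes $R(n+1,1,(0)^n)$ and $R(n+1,1,(1)^n)$ lie in every maximal clique, every other monomial term yields precisely the inequalities (\ref{eq:halfspace1}) and (\ref{eq:halfspace2}). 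From Theorem~\ref{thm:edges} each edge of $\PSPerm(\height)$ has a direction $\mathbf{e}_a-\mathbf{e}_c$, so the support is a generalized permutahedron; combined with the previous paragraph this shows $\supp(\PSPerm(\height))$ has at most $n!$ vertices, exactly one per $\sigma\in\fS_n$ given by $\mathbf{v}(w^{\sigma})$, with edges matching those of $\PPerm_n$.

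The main obstacle will be ruling out that some $\mathbf{v}(w)$ with $w\neq w^{\sigma}$ coincidentally saturates an inequality (\ref{eq:halfspace1}) or (\ref{eq:halfspace2}) and thus produces spurious vertices on the boundary of the candidate polytope. The key tool here is Lemma~\ref{lem:routes_invsets}, which implies that such saturation would force $R(n+1,1,\delta)\in\Delta_w$; together with the fact that $\Delta_w$ always contains the two exceptional routes, this in turn forces each $a$-block of $w$ to be non-interleaving with the blocks of any other letter, so $w=w^{\sigma}$ for some $\sigma$. Finally, identifying the linear functionals $\sum_{i=1}^{n}\kappa_i x_i$ with $\kappa_1>\cdots>\kappa_n$ (which orient $\PSPerm(\height)$ by the $s$-weak order) permits us to match the $n!$ vertices combinatorially with those of $\PPerm_n$: the reordering $\kappa_{\sigma(1)}>\cdots>\kappa_{\sigma(n)}$ selects the vertex $\mathbf{v}(w^{\sigma^{-1}})$, and this assignment extends to a combinatorial isomorphism of face lattices.
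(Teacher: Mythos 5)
Your proposal is correct and follows essentially the same line of attack as the paper's proof: identify the special Stirling $s$-permutations $w^\sigma$ via the all-exceptional-route chains $R(n+1,1,\delta)$, derive the half-space description from the fact that $R(n+1,1,(0)^n)$ and $R(n+1,1,(1)^n)$ lie in every maximal clique, and use the linear functionals $\sum_a\kappa_a x_a$ with the edge directions from Theorem~\ref{thm:edges} to show the vertices have the normal fan of the braid arrangement. The paper organizes the argument in the opposite order (directly exhibiting a functional strictly maximized at each $\mathbf{v}(w^\sigma)$ rather than first arguing the vertex count is at most $n!$ and then ruling out non-extremal $w$), but the ingredients and logic are the same.
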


\begin{proof} We prove each statement separately.
	\begin{enumerate}
		\item Let~$\sigma\in\fS_n$ and consider the linear functional~$f(\mathbf{x})=\sum_{a\in[n]} \sigma(a) x_a$. We claim that among all faces of~$\PSPerm(\height)$,~$f$ is maximized on~$\mathbf{v}(w^{\sigma})$. To see this let~$w$ be a Stirling~$s$-permutation and~$1\leq a<c\leq n$. Notice the following cases.
		      \begin{itemize}
			      \item If~$w$ contains an ascent~$(a,c)$ such that~$\sigma(a)>\sigma(c)$, then~$f$ is increasing along the edge of direction~$\mathbf{e}_a-\mathbf{e}_c$ corresponding to the transposition of~$w$ along the ascent~$(a,c)$.
			      \item If~$w$ contains a descent~$(a,c)$ such that~$\sigma(a)<\sigma(c)$, then~$f$ is increasing along the edge of direction~$\mathbf{e_{c}}-\mathbf{e_{a}}$ corresponding to the transposition of~$w$ along the descent~$(a,c)$.
			      \item Otherwise,~$w=w^{\sigma}$.
		      \end{itemize}
		      This shows that the vertices of~$\supp(\PSPerm(\height))$ have the same normal cones as the~$(n-1)$-permutahedron (embedded in~$\RR^n$), hence its normal fan is the braid fan.
		\item From Remark~\ref{rem:oru_exceptional_routes} we know that that all cliques~$\Delta_w$ contain the routes~$R(n+1, 1, (0)^n)$ and~$R(n+1, 1, (1)^n)$. This implies that all vertices of~$\PSPerm(\height)$ are contained in the following intersection of regions given by Equation~\ref{eq:coordinates_regions}:
		      \begin{align*}
			      \Big\{ \mathbf{x}\in \RR^n \, : \, \height(R(n+1, 1, (0)^n)) & = \height(R(n+1, 1, (1)^n)) + \langle\mathbf{1},\mathbf{x}\rangle                                                         \\
			                                                                   & = \min_{\delta \in \{0,1\}^{n}} \big\{ \height(R(n+1, 1, \delta)) + \langle\mathbf{\delta},\mathbf{x}\rangle\big\}\Big\}.
		      \end{align*}
		      This intersection is precisely the one obtained from the half-spaces defined in Equation~\ref{eq:halfspace1} and Equation~\ref{eq:halfspace2}.

		      Moreover, notice that since each~$a$-block~$B_a$ in~$w^{\sigma}$ is a consecutive repetition of the letter~$a$, we have that the vertex~$\mathbf{v}(w^{\sigma})$ has coordinates~$\mathbf{v}(w^\sigma)_a=\height(R[w^\sigma_{[\ell(a^1)]}])-\height(R[w^\sigma_{[\ell(a^{s_a})+1]}])$. Thus, letting~$I:=\{i\in [n]\, :\, \delta_i=1\}$ for~$\delta\in\{0,1\}^n$, Equation~\ref{eq:halfspace1} and Equation~\ref{eq:halfspace2} achieve equality with~$\mathbf{v}(w^{\sigma})$ exactly when~$\{\sigma(1), \ldots, \sigma(|I|)\}=I$. Meaning that these inequalities define the facets of~$\supp(\PSPerm(\height))$. \qedhere
	\end{enumerate}
\end{proof}

\begin{remark}
	With similar arguments we can see that the restriction of the~$s$-weak order to a face of~$\supp(\PSPerm(\height))$, associated to an ordered partition, corresponds to a product of~$s'$-weak orders, one for each part of the ordered partition.
\end{remark}

\begin{remark}\label{rem:conjecture_done}
	Since the zonotope~$\sum_{1\leq i<j\leq n}s_j[\mathbf{e}_i,\mathbf{e}_j]$ can be seen to be combinatorially isomorphic to the~$(n-1)$-dimensional permutahedron, with Lemma~\ref{lem:support} we have finished answering Conjecture~\ref{conj:s-permutahedron} in the case where~$s$ is a composition.
\end{remark}

We finish by refining Remark~\ref{rem:conjecture_done} for the case where~$\height$ is given by Lemma~\ref{lem:epsilonheight}.

\begin{theorem}\label{thm:s_realization_zonotope}
	Let~$\varepsilon>0$ be a small enough real number such that~$\height_{\varepsilon}$ is an admissible height function for~$\triangDKK[\oru(s)]$. Then the support~$\supp(\PSPerm(\height_{\varepsilon}))$ is a translation of the zonotope~$2\sum_{1\leq a < c\le n} s_c\varepsilon^{c-a}[\mathbf{e}_a, \mathbf{e}_c].$
\end{theorem}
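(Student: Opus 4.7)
The plan is to compute the edge lengths of the boundary of $\supp(\PSPerm(\height_{\varepsilon}))$ directly and match them to the edge lengths of the zonotope. By Lemma~\ref{lem:support}, the support is combinatorially a permutahedron with vertices $\mathbf{v}(w^{\sigma})$ indexed by $\sigma \in \fS_n$. The boundary edges connect $\mathbf{v}(w^{\sigma})$ to $\mathbf{v}(w^{\sigma s_k})$ whenever $\sigma$ and $\sigma s_k$ differ by swapping adjacent positions $k$ and $k+1$. Since permutahedron-type zonotopes of the form $\sum_{a<c} \lambda_{ac}[\mathbf{e}_a,\mathbf{e}_c]$ are determined up to translation by their edge lengths $\lambda_{ac}$, it suffices to show that every boundary edge of the support in direction $\mathbf{e}_a - \mathbf{e}_c$ has length exactly $2 s_c \varepsilon^{c-a}$.

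Fix such an edge and assume $\sigma(k)=a$, $\sigma(k+1)=c$ with $a<c$. Then $w^{\sigma} = u_1 a^{s_a} c^{s_c} u_2$ and $w^{\sigma s_k} = u_1 c^{s_c} a^{s_a} u_2$, where $u_1, u_2$ are the surrounding complete blocks. These two vertices are not adjacent in $\PSPerm(\height_{\varepsilon})$; they are connected by a path of $s_c$ edges in the $s$-weak order, obtained by transposing the $a$-block past one copy of $c$ at a time. For each intermediate Stirling $s$-permutation $w_t = u_1 c^t a^{s_a} c^{s_c - t} u_2$, Theorem~\ref{thm:edges} yields the displacement $\alpha_t (\mathbf{e}_a - \mathbf{e}_c)$ where
\begin{equation*}
\alpha_t = \height_{\varepsilon}(\rpre{u_1 c^{t+1}}) + \height_{\varepsilon}(\rpre{u_1 c^t a^{s_a}}) - \height_{\varepsilon}(\rpre{u_1 c^t}) - \height_{\varepsilon}(\rpre{u_1 c^{t+1} a^{s_a}}).
\end{equation*}
Summing over $t = 0, \ldots, s_c-1$, the terms telescope twice and leave only four heights, corresponding to the prefixes $u_1$, $u_1 c^{s_c}$, $u_1 a^{s_a}$, and $u_1 c^{s_c} a^{s_a}$.

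The main step is then an explicit calculation with the formula for $\height_\varepsilon$ from Lemma~\ref{lem:epsilonheight}. Since $u_1$ is a concatenation of complete blocks of $w^\sigma$, each of the four resulting routes is exceptional in its last $k-1$ coordinates (every $\delta^{(i)}_j$ is $0$ or $s_j$), and all four share the same pattern outside of positions $a$ and $c$. A direct expansion shows that the difference between passing from $u_1$ to $u_1 c^{s_c}$ and from $u_1 a^{s_a}$ to $u_1 c^{s_c} a^{s_a}$ is governed entirely by terms in the formula with $(a',c')=(a,c)$, all other contributions cancelling between the two telescopic blocks. This difference evaluates to $2 s_c \varepsilon^{c-a}$, the factor of $2$ arising from the expansion $(s_c+1)^2 - (s_c)^2 - 1^2 - 0^2 = 2 s_c$ combined with the flipping of $\delta_a$ from $0$ to $1$. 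This is the expected obstacle, as one must carefully track which of the quadratic terms in the sum of $\height_\varepsilon$ actually depend on $\delta_a$ and $\delta_c$, but the decomposition is clean because all other $\delta_j$ are unchanged across the four routes.

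Putting the pieces together, each boundary edge of $\supp(\PSPerm(\height_{\varepsilon}))$ in direction $\mathbf{e}_a - \mathbf{e}_c$ has length $2 s_c \varepsilon^{c-a}$, matching exactly the corresponding edge of the zonotope $2\sum_{a<c} s_c \varepsilon^{c-a}[\mathbf{e}_a,\mathbf{e}_c]$. Since the support is a generalized permutahedron with the braid fan as its normal fan (Lemma~\ref{lem:support}), having matching edge lengths on all edges determines it uniquely up to translation, and we conclude that $\supp(\PSPerm(\height_{\varepsilon}))$ is a translate of this zonotope.
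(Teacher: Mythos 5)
Your proof is correct and takes essentially the same approach as the paper. The paper also computes the displacement along a boundary edge of the support via Theorem~\ref{thm:edges} and the explicit height function~$\height_\varepsilon$ from Lemma~\ref{lem:epsilonheight}, obtaining~$2s_c\varepsilon^{c-a}(\mathbf{e}_a-\mathbf{e}_c)$ and concluding by the uniqueness of a generalized permutahedron with given edge lengths. The one place where you are more explicit than the paper is the telescoping argument through the chain of~$s_c$ intermediate Stirling~$s$-permutations~$w_t=u_1 c^t a^{s_a} c^{s_c-t}u_2$; the paper condenses this into a parenthetical "(or a repeated use of Equation~\ref{eq:edges})." Your unpacking is a clean way to see why only the four routes~$\rpre{u_1}$,~$\rpre{u_1B_a}$,~$\rpre{u_1B_c}$,~$\rpre{u_1B_aB_c}$ survive. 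Incidentally, your final expansion is the right one: the four heights contribute~$-\varepsilon^{c-a}\bigl(s_c^2+1-0-(s_c+1)^2\bigr)=2s_c\varepsilon^{c-a}$, with only the term indexed by the pair~$(a,c)$ surviving because~$u_1$ is a concatenation of complete blocks.
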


\begin{proof}
	Lemma~\ref{lem:support} and Theorem~\ref{thm:edges} tell us that the edges of~$\supp(\PSPerm(\height_{\varepsilon}))$ are of the form~$[\mathbf{v}(w^{\sigma}), \mathbf{v}(w^{\sigma'})]$, where~$\sigma$ and~$\sigma'$ are permutations of~$[n]$ related by a transposition along an ascent~$(a,c)$. Using the DKK height function~$\height_\varepsilon$ from Equation~\ref{eq:epsilonheight} into Equation~\ref{eq:edges} with the modification that the letter~$c$ is replaced by~$s_c$ occurrences of~$c$ (or a repeated use of Equation~\ref{eq:edges}), we see that the only terms that do not cancel out are those involving the pair~$(a,c)$ giving us:
	\begin{align*}
		\mathbf{v}(w^{\sigma'})- \mathbf{v}(w^{\sigma}) & = \Big(\height(\rpre{u_1B_c})+\height(\rpre{u_1B_a}) - \height(\rpre{u_1})-\height(\rpre{u_1B_aB_c}) \Big)(\mathbf{e}_a-\mathbf{e}_c) \\
		                                                & = -\varepsilon^{c-a}\Big( (s_c+1)^2 + (0+1)^2 - (0+0)^2 - (s_c+1)^2\Big)(\mathbf{e}_a - \mathbf{e}_c)                                 \\
		                                                & = 2\ s_c \ \varepsilon^{c-a}(\mathbf{e}_a - \mathbf{e}_c).
	\end{align*}
	Thus, all edges with a same direction also have the same length. Since~$\supp(\PSPerm(\height)[\height_{\varepsilon}])$ is combinatorially equivalent to a permutahedron, it follows that it is a zonotope.
\end{proof}

\section{Enumerative Consequences}\label{sec:enum_consequences}

We finish this chapter showing some enumerative consequences for the elements in the~$s$-weak order by calculating volume and the lattice points of~$\fpol[\oru(s)]$ via the Baldoni–Vergne–Lidskii formulas (Theorem~\ref{thm:Lidskii_formulas}).

\begin{corollary}\label{cor:identitise s-trees}
	Let~$s$ be a (weak) composition. The number of elements in the~$s$-weak order decomposes as
	\begin{align*}
		\prod_{i=1}^{n-1}\Bigl(1+\sum_{r=n-i+1}^n s_r\Bigr)
		 & =
		\sum_{\bf j}  \binom{s_n+1}{j_1}\binom{s_{n-1}+1}{j_2} \cdots \binom{s_2+1}{j_{n-1}} \cdot \prod_{i=1}^{n-1} (j_1+\cdots + j_i-i+1)            \\
		 & \,=\, \sum_{\bf j} \bbinom{s_n+1}{j_1}\bbinom{s_{n-1}-1}{j_2} \cdots \bbinom{s_2-1}{j_{n-1}} \cdot \prod_{i=1}^{n-1} (j_1+\cdots + j_i-i+1)
	\end{align*}
	where the sums range over weak compositions~${\bf j}$ of~$n-1$ such that~$\mathbf{j}\succeq (1,1,\ldots,1)$.
\end{corollary}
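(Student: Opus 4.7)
The plan is to identify the left-hand side as the number of integer $\bfd$-flows on $\oru(s)$ and then apply the Baldoni--Vergne--Lidskii formulas of Theorem~\ref{thm:Lidskii_formulas}. By Proposition~\ref{prop:num_s_trees} and Proposition~\ref{prop:s_tree_to_s_permutation}, the product $\prod_{i=1}^{n-1}(1+\sum_{r=n-i+1}^{n} s_r)$ counts Stirling $s$-permutations, and by Theorem~\ref{thm:bij_simplices_permutations} this is exactly $|\fpol[\oru(s)]^{\ZZ}(\bfd)| = K_{\fpol[\oru(s)]}(\bfd)$. So it suffices to expand $K_{\fpol[\oru(s)]}(\bfd)$ using the two forms of BVL and match the resulting sums to the two right-hand sides.

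First I would compute the degree data of $\oru(s)$. After relabelling vertices so the source is indexed $0$ and the sink is $n+1$ (shifting by one as explained in Remark~\ref{rem:vertex_v_m1}), the outdegree sequence is $o_0 = |s|-n$ (one edge to $u_1$ plus $\sum_{i=1}^n(s_i-1)$ source edges to interior vertices), $o_i = 1$ for $i\in[1,n]$ (the bump/dip pair), and $o_{n+1}=0$; the indegrees are $d_1=0$ and $d_i = s_{n+2-i}$ for $i\in[2,n+1]$. Plugging $\bfa=\bfd$ into the $\gbinom$-form of BVL, the binomial $\gbinom{|s|-n}{j_0}$ combined with the dominance inequality $j_0\geq o_0=|s|-n$ forces $j_0=|s|-n$; then $\gbinom{1}{j_1}$ together with $j_0+j_1\geq|s|-n+1$ forces $j_1=1$. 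After reindexing $j'_{i-1}:=j_i$ for $i\in[2,n]$, the remaining binomials become exactly $\binom{s_n+1}{j'_1}\binom{s_{n-1}+1}{j'_2}\cdots\binom{s_2+1}{j'_{n-1}}$, and the dominance constraint transforms into $\mathbf{j}'\succeq(1,1,\ldots,1)$ with $\sum j'_i=n-1$, matching the index set in the corollary.

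What remains is to identify the Kostant partition function $K_{\oru(s)}(j_0-o_0,\ldots,j_n-o_n,0) = K_{\oru(s)}(0,0,j'_1-1,\ldots,j'_{n-1}-1,0)$ with the product $\prod_{i=1}^{n-1}(j'_1+\cdots+j'_i-i+1)$. Since the netflow is zero at $u_0$ and $u_1$, no source edges carry flow and the flow on the first bump/dip pair is $f_1=0$; conservation at each subsequent vertex then yields recursively $f_i = j'_1+\cdots+j'_{i-1}-(i-1)$ on the double edge between $u_i$ and $u_{i+1}$. Each $f_i$ can be distributed on the two parallel bump/dip edges in $f_i+1$ ways, and the nonnegativity $f_i\geq 0$ is precisely the dominance condition $\mathbf{j}'\succeq(1,\ldots,1)$. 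Multiplying gives $\prod_{i=1}^{n-1}(f_i+1) = \prod_{i=1}^{n-1}(j'_1+\cdots+j'_i-i+1)$, which establishes the first equality.

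The second equality comes from the same strategy applied to the $\bbinom$-form of BVL in Theorem~\ref{thm:Lidskii_formulas}: here the multiset coefficients arise naturally from the combinatorial count of distributing flow among the parallel source edges of multiplicity $s_k-1$ attached at each interior vertex, which is exactly $\bbinom{s_k-1}{p_k}$. The same forcing argument on the first two coordinates reduces the sum to the same index set, and the Kostant partition function computation is unchanged. The main technical obstacle is bookkeeping the two reindexings (first, between the BVL ordering and the corollary's indexing; second, between the $\gbinom$ and $\bbinom$ versions) and handling cleanly the asymmetric appearance of $s_n+1$ versus $s_k-1$ in the second identity, which reflects the fact that $u_1$ has only one source edge while all other interior vertices have $s_{n+2-i}-1$ source edges.
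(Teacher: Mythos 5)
Your approach for the \emph{first} equality is essentially sound. Applying the $\gbinom$-form of BVL (Theorem~\ref{thm:Lidskii_formulas}) to $K_{\oru(s)}(\bfd)$ does work: the coefficients $\gbinom{|s|-n}{j_0}$ and $\gbinom{1}{j_1}$, combined with the dominance constraints, do force $j_0=|s|-n$ and $j_1=1$, the surviving binomials reindex to $\binom{s_{n+1-i}+1}{j_i}$, and your computation of the Kostant factor via flow conservation on the caterpillar body is correct. Note that the paper instead applies BVL directly to the contracted graph $\oru_n$ with netflow $(s_n,\ldots,s_2,-\sum_i s_i)$ (which equals $|\fpol[\oru(s)]^{\ZZ}(\bfd)|$ via Remark~\ref{rem:vertex_v_m1}); this yields the binomials $\binom{s_{n+1-i}+1}{j_i}$ immediately with no forcing step, since $\oru_n$ has no source edges and its shifted outdegrees are all $1$.

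The argument for the \emph{second} equality, however, has a genuine gap. You claim ``the same forcing argument on the first two coordinates reduces the sum to the same index set,'' but the $\bbinom$-form of BVL applied to $\oru(s)$ with netflow $\bfd$ does not produce the stated coefficients at all. With your own degree computations, at each interior vertex $u_k$ ($k\in[2,n]$) one has $a_k - d_k = s_{n+2-k} - s_{n+2-k} = 0$, and at $u_0,u_1$ one has $a_0 - d_0 = 1$ and $a_1 - d_1 = 0$. So the $\bbinom$-form gives $\bbinom{1}{j_0}\bbinom{0}{j_1}\cdots\bbinom{0}{j_n}$; since $\bbinom{0}{k}=0$ for $k\geq 1$, the entire sum collapses to the single term with $j_0=|s|$ and $j_1=\cdots=j_n=0$. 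This is a valid (if uninformative) identity, but it is nothing like $\sum_{\mathbf{j}}\bbinom{s_n+1}{j_1}\bbinom{s_{n-1}-1}{j_2}\cdots$. Your intuition that the multiset coefficients count ways of ``distributing flow among the parallel source edges of multiplicity $s_k-1$'' is also off target: in the netflow $\bfd$ every source edge carries zero flow, so they contribute nothing. The asymmetry $s_n+1$ versus $s_k-1$ in the corollary actually reflects the indegree data of $\oru_n$ --- there $d_1 = -1$ (the initial vertex has indegree $0$) while $d_i = 1$ for $i\in[2,n]$, so $\bbinom{a_1-d_1}{j_1}=\bbinom{s_n+1}{j_1}$ while $\bbinom{a_i-d_i}{j_i}=\bbinom{s_{n+1-i}-1}{j_i}$. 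To get the corollary's second identity you must apply BVL to $\oru_n$, not to $\oru(s)$.
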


We give two proofs. The first proof is geometrical using the flow polytope~$\fpol[\oru_n]$ while the second is combinatorial using~$s$-weak order families.

\begin{proof}[First proof.]
	Let~$s$ be a weak composition. Notice that taking the oruga graph~$\oru_n$, through Remark~\ref{rem:bij_simplices_trees} we can see that the flow polytope~$\fpol[\oru_n](s_n,s_{n-1},\ldots,s_2,-\sum_is_i)$ is integrally equivalent to the product of segments~$\prod_{i=1}^{n-1}[0,\sum_{r=n-i+1}^n s_r]$.

	Using the Baldoni–Vergne–Lidskii formula for the volume and considering that~$\oru_n$ has shifted outdegrees~$o_i=1$ for~$i\in[n-1]$ and shifted indegrees~$d_1=-1$ and~$d_i=1$ for~$i\in[2,n]$, we have that
	\begin{align*}
		\Big|\fpol[\oruga]^{\mathbb{Z}}\Big(s_n,s_{n-1},\ldots,s_2,-\sum_i s_i\Big)\Big| & =  \sum_{\bf j}  \binom{s_n+1}{j_1}\binom{s_{n-1}+1}{j_2} \cdots \binom{s_2+1}{j_{n-1}} |\mathcal{F}^{\mathbb{Z}}_{\oruga}({\bf j}-{\bf 1})|,   \\
		                                                                                 & =  \sum_{\bf j} \bbinom{s_n+1}{j_1}\bbinom{s_{n-1}-1}{j_2} \cdots \bbinom{s_2-1}{j_{n-1}} |\mathcal{F}^{\mathbb{Z}}_{\oruga}({\bf j}-{\bf 1})|,
	\end{align*}
	where the sums range over compositions~${\bf j}=(j_1,\ldots,j_{n-1})$ of~$n-1$ that are~$\succeq (1,1,\ldots,1)$. Now, let us count the integer flows in~$\mathcal{F}^{\mathbb{Z}}_{\oruga}(\mathbf{j}-\mathbf{1})$. For such an integer flow, the incoming flow at vertex~$i\in[n-1]$ is~$j_1+\cdots+j_{i-1}-(i-1)$. Since the netflow on vertex~$i$ is~$j_i-1$, the outgoing flow equals~$j_1+\cdots+j_{i}-i$. Moreover, there are~$j_1+\cdots+j_i-i+1$ possible outgoing integer flows on the two edges~$(i,i+1)$ for which the choice of flow is independent. Thus, we obtain as wished
	\begin{equation*}
		\vol\Big(\mathcal{F}_{\oruga}(\mathbf{j}-\mathbf{1})\Big) = \prod_{i=1}^{n-1} (j_1+\cdots + j_i-i+1). \qedhere
	\end{equation*}
\end{proof}

\begin{proof}[Second proof]
	We prove the equality of the (LHS) with each expression on the (RHS) separately.

	The first formula using binomials, can be obtained similarly to the proof given in Proposition~\ref{prop:num_s_trees} enumerating~$s$-decreasing trees. For step~$0$ we begin with the node labeled~$n$. At step~$1$ we choose which of its~$s_n+1$ children become nodes (as opposed to leaves). This gives a coefficient~$\binom{s_n+1}{j_1}$, where~$j_1\in[\min(s_n+1,n-1)]$. Among these~$j_1$ nodes, one carries the label~$n-1$ with~$s_{n-1}+1$ children. Right before step~$i$, we have a partial~$s$-decreasing tree with~$i$ nodes labeled by~$[n+1-i,n]$,and~$j_1 + \cdots + j_{i-1} - (i-1)$ unlabeled nodes, where~$j_k$ is the number of non-empty subtrees of the node~$n+1-k$, whose positions were chosen at the step~$k$. At step~$i$ we choose~$j_i$ from the~$s_{n+1-i}+1$ new children to become new nodes. At this point we get~$n_{i}:=j_1 + \cdots + j_{i} - (i-1)$ nodes without labels. To ensure that~$n_i>0$, we have that constraint to verify that the~$j_k$ chosen so far satisfy that~$\sum_{k=1}^i j_i \geq i$. Afterwards we choose one of these~$n_i$ nodes to carry the label~$n-i$. We stop after step~$n-1$ where we have obtained an~$s$-decreasing tree.

	The second formula that utilizes multiset binomials can be obtained in the following way of building a Stirling~$s$-permutation. Let us say that an~$a$-block~$B_a$ covers a~$b$-block~$B_b$ in~$w$ if~$a$ is the smallest letter such that the~$B_b\subset B_a$. At step~$0$ we begin with the sequence of~$s_n$ consecutive occurrences of~$n$ and choose a number~$j_1$ of blocks to be covered by~$B_n$ or appear before or after it in the final multipermutation~$w$. Notice that there are~$\bbinom{s_n+1}{j_1}$ ways to arrange these blocks among the occurrences of~$n$. Choose one of these~$j-1$ blocks to be the sequence of~$s_{(n-1)}$ consecutive occurrences of~$n-1$. At the beginning of step~$i$, we have a partial Stirling~$s$-permutation that contains all occurrences of the letters in~$[n+1-i,n]$ and~$j_1 + \cdots + j_{i-1} - (i-1)$ unlabeled blocks, where~$j_k$ is the number of blocks covered by the~$(n+1-k)$-block, whose positions were chosen at the step~$k$. At step~$i$ choose the number~$j_i$ of blocks that to be covered by the~$(n+1-i)$-block and one among the~$\bbinom{s_{n+1-i}-1}{j_i}$ ways to arrange them between the first and the last occurrence of~$n+1-i$. Choose one of the~$n_{i}:=j_1 + \cdots + j_{i} - (i-1)$ unlabeled block to be the~$(n-i)$-block.	After step~$n-1$ we have inserted~$B_1$ and finished constructing a Stirling~$s$-permutation.
\end{proof}

\begin{remark} \label{rem: connection volume and lattice point oruga}
	When~$s$ is a composition, Corollary~\ref{cor:volume is number of trees}, tells us that the RHS of Corollary~\ref{cor:identitise s-trees} gives the volume of~$\fpol[\oru(s)](\bfi)$. This together with Proposition~\ref{prop:volume_intflows} gives us that
	\begin{align}
		\vol \Big(\fpol[\oru(s)](1,0,\ldots,0,-1)\Big) & = \Big|\fpol[\oru(s)]^{\mathbb{Z}}(0,s_n, s_{n-1},\ldots, s_2,-\sum_i s_i)\Big| \notag
		\\
		                                               & = \Big|\fpol[\oruga]^{\mathbb{Z}}(s_n,s_{n-1},\ldots,s_2,-\sum_i s_i)\Big|,\label{eq: vol Gs as int flows in Ps}
	\end{align}
	where the second equality follows from our Remark~\ref{rem:vertex_v_m1}. Thus, the formulas of Corollary~\ref{cor:volume is number of trees} as decomposition formulas for the volume of~$\fpol[\oru(s)](\bfi)$ as well. This is the approach followed in~\cite{KMS21} to prove geometrically the Lidskii decomposition of the Kostant partition formula given in Theorem~\ref{thm:Lidskii_formulas}.
\end{remark}

\begin{remark}
	Notice that the Lidskii formulas hold for zero also when~$s$ is a weak composition. Moreover, each term of the RHS of the first Lidskii formula is nonnegative for~$s_i\geq 0$, however terms of the RHS of the second formula can be negative as in Example~\ref{ex: case n=3 lidskii} for~$s=(1,0,1)$.
\end{remark}

\begin{example} \label{ex: case n=3 lidskii}
	For~$n=3$, Corollary~\ref{cor:identitise s-trees} yields
	\begin{align}
		(1+s_3)(1+s_2+s_3) & = \binom{s_{3}+1}{1}\binom{s_{2}+1}{1}+ \binom{s_{3}+1}{2}\cdot 2, \label{eq: 1st lidskii n=3} \\
		                   & =\binom{s_{3}+1}{1}\binom{s_2-1}{1}+\binom{s_{3}+2}{2}\cdot 2. \label{eq: 2nd lidskii n=3}
	\end{align}
\end{example}

We finish with some perspectives stemming out from our work.

\begin{perspective}\label{pers:s_associahedra}
	Ceballos and Pons also conjectured (\cite[Conjecture 2]{CP19}) that there exists a geometric realization of~$\PSPerm$ (when~$s$ is a strict composition) such that the~$s$-associahedron can be obtained from it by removing certain facets.
	Our realizations seem very promising for providing a geometric relation between~$s$-permutahedra and~$s$-associahedra, but this is still work in progress.
\end{perspective}

\begin{perspective}\label{pers:other_framings}
	Since in this chapter our objective was to answer Conjecture~\ref{conj:s-permutahedron}, we concentrated our efforts on studying the framing of~$\oru(s)$ described in Definition~\ref{def:Gs} to obtain the triangulation~$\triangDKK[\oru(s)]$ which encoded the~$s$-weak order. It is natural to ask about other DKK triangulations of~$\fpol[\oru(s)]$ coming from other framings and the poset structures that arise from them. In the case of the caracol graph~$\car(s)$, Bell et al.~\cite{BGMY23} studied the length framing and the planar framing whose corresponding DKK triangulation correspondingly realized in their dual graph the~$s$-Tamari lattice and the principal order ideal~$I(\nu)$ in {\em Young's lattice}, where~$\nu_i=1+s_{n-i+1}+s_{n-i+2}+\cdots+s_n$.
\end{perspective}

\begin{perspective}\label{pers:graphs_for_zeroes}
	Although the~$s$-weak order of Ceballos and Pons is defined for weak compositions, all of our realizations are defined uniquely for compositions as~$\oru(s)$ requires that~$s_i>0$ for~$i\in[n+1]$. Moreover, even though Corollary~\ref{cor:volume is number of trees} does not hold for weak compositions, Remark~\ref{rem:bij_simplices_trees} and the first proof of Corollary~\ref{cor:identitise s-trees} let us see that there is a connection. Computationally we have found certain variations of~$\oru(s)$ to describe the~$s$-weak order for certain families of weak compositions. This is still work in progress.
\end{perspective}
% Killing chktex
% chktex-file 3
% chktex-file 12
% chktex-file 24
% chktex-file 25
% chktex-file 36
% chktex-file 40

\chapter{Recovering Permutrees with Flow Polytopes}\label{chap:sorder_quotients}

\addcontentsline{lof}{chapter}{\protect\numberline{\thechapter}Recovering Permutrees with Flow Polytopes}

In this chapter we apply our machinery of flow polytopes to give another answer to Perspective~\ref{pers:Coxeter_permutrees}. That is, we recover permutree lattices from the dual triangulation of a framed graph. This chapter comes from ongoing work~\cite{GMPTY2X}.

Our inspiration for the following came from looking at~$\car_n$ and~$\oru_n$ in Figure~\ref{fig:oru_car_mar} and deducing a way to create other similar graphs.
\begin{figure}[h!]
	\centering
	\includegraphics[scale=0.9]{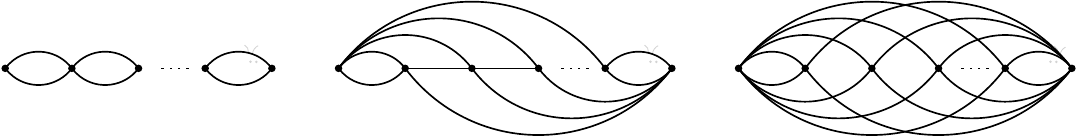}
	\caption[The oruga~$\oru_{n}$, caracol~$\car_n$, and mariposa~$\mar_n$ graphs.]{ The oruga~$\oru_{n}$ (left), caracol~$\car_n$ (middle), and mariposa~$\mar_n$ (right) graphs.}
	\label{fig:oru_car_mar}
\end{figure}

\section{M-Moves}\label{sec:m_moves}

Taking~$\oru_n$ as a starting point, we provide an operation on the edges of~$\oru_n$ to create new graphs akin to~$\car_n$.

\begin{definition}\label{def:m_move}
	Let~$G$ be a graph on~$\{v_0,\ldots,v_{n}\}$. The graph obtained by the \defn{M-move}\index{M-move} applied on an edge~$(v_{i},v_{i+1})$ is the graph~$M(G)$ on~$\{v_0,\ldots,v_{n}\}$ with edge set~$E(M(G)):= (E(G)\setminus\{(v_i,v_{i+1})\})\cup\{(v_0,v_{i+1}),(v_i,v_{n})\}$.

	Endow~$\oru_n$ with the framing~$e_0^i\preceq_{\cI_{n+1-i}} e_1^i$ (resp.~$e_0^{i-1}\preceq_{\cO_{n+1-i}} e_1^{i-1}$) for~$i\in[1,\ldots,n-1]$. The graph~$M(\oru_n)$ resulting from the M-move on the edge~$e_0^{i}$ (resp.~$e_1^{i}$) has the new modified framing~$\preceq'$ where the new edges inherit the framing of the removed edge. Figure~\ref{fig:m_moves} contains examples of the M-moves with the normal embedding of~$\oru_n$. This framing is called the \defn{inherited framing}\index{graph!framing!inherited} of~$M(\oru_n)$.

	We call the resulting graph after doing all possible M-moves on~$\oru_n$ the \defn{mariposa graph}\index{mariposa graph} denoted~$\mar_n$. Mariposa is the Spanish word for butterfly and as before, this name comes from the embedding of~$\mar_n$ shown in Figure~\ref{fig:oru_car_mar}.
\end{definition}

\begin{figure}[h!]
	\centering
	\includegraphics[scale=1.2]{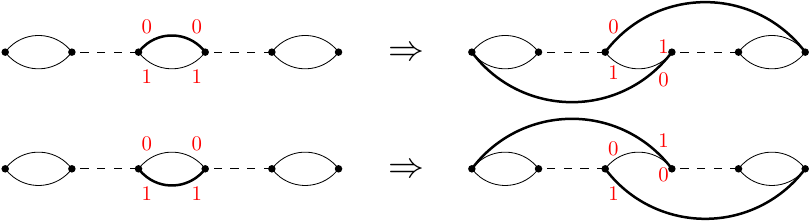}
	\caption[The M-moves on~$\oru_n$.]{ The resulting framed graph~$M(\oru_n)$ after applying an M-move~$M$ on the edge~$e^i_0$ (top) (resp.~$e^i_1$ (bottom)) of~$\oru_n$. The edges affected by the~$M$ are bolded.}
	\label{fig:m_moves}
\end{figure}

Notice that any sequence of M-moves gives a graph that can be considered to be between~$\oru_n$ and~$\mar_n$. Moreover, all M-moves are independent of each other as they manipulate disjoint sets of edges. As we can apply M-moves to none, one, or both of the edges between the vertices~$v_{n+1-i}$ and~$v_{n+1-i}$, the following definition is in order.

\begin{definition}\label{def:delta_bichos}
	Let~$\delta\in\nonee\cdot\{\nonee,\downn,\upp,\uppdownn\}^{n-2}\cdot\nonee$ be a permutree decoration. Apply on~$\oru_n$ the sequence of M-moves on the edge~$e_0^i$ if~$\delta_i\in\{\upp,\uppdownn\}$ (resp.~$e_1^i$ if~$\delta_i\in\{\downn,\uppdownn\}$). We call the resulting graph the \defn{$\delta$-bicho graph}\index{permutree!bicho graph} and denote it by~$\bic_\delta$. Bicho is the closest Spanish word for critter that encompasses caterpillars, snails, and butterflies at the same time.
\end{definition}

\begin{remark}\label{rem:all_bichos_graphs}
	With Definition~\ref{def:delta_bichos} it is clear that~$M(\bic_\delta)=\bic_{\delta'}$ for a collection~$M$ of M-moves if and only if~$\delta$ refines~$\delta'$. In this context we say that~$M(\delta)=\delta'$. Similar to Figure~\ref{fig:fibersPermutreeCongruences}, this gives us Figure~\ref{fig:permutree_bichos} containing all possible~$\delta$-bicho graphs for decorations~$\delta\in\nonee\cdot\{\nonee,\downn,\upp,\uppdownn\}^2\cdot\nonee$.
\end{remark}

\begin{figure}[h!]
	\centering
	\includegraphics[scale=0.87]{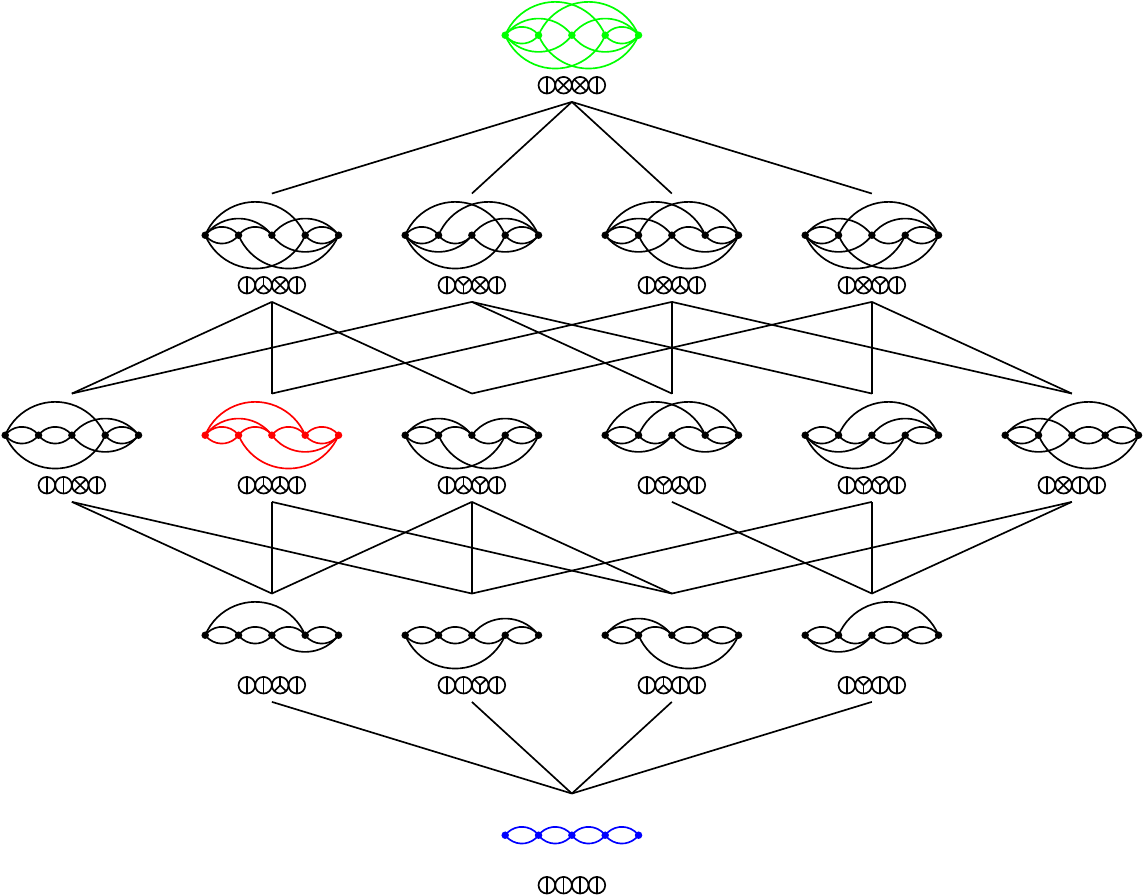}
	\caption[All~$\nonee\cdot\{\nonee,\downn,\upp,\uppdownn\}^2\cdot\nonee$-bicho graphs.]{ All~$\nonee\cdot\{\nonee,\downn,\upp,\uppdownn\}^2\cdot\nonee$-bicho graphs. The bottom graph is the {\color{blue} oruga graph}, the second graph in the middle is the {\color{red} caracol graph}, and the top graph is the {\color{green} mariposa graph}.}\label{fig:permutree_bichos}
\end{figure}

With our~$\delta$-bicho graphs in hand, we can study their corresponding flow polytopes~$\fpol[\bic_\delta](\bfa)$. 

\begin{remark}\label{rem:num_edges_dim_bicho}
	Since we obtain~$\bic_\delta$ from~$\oru_n$ via M-moves, we get that its polytope lives in dimension \begin{equation*}
		\Big|E\Big(\bic_\delta\Big)\Big|=2n+\Big|\Big\{i\in[n]:\delta_i\in\{\downn,\upp\}\Big\}\Big|+2\Big|\Big\{i\in[n]:\delta_i\in\{\uppdownn\}\Big\}\Big|
	\end{equation*} and \begin{equation*}
		\dimension\Big(\fpol[\bic_\delta](\bfa)\Big)=n+\Big|\Big\{i\in[n]:\delta_i\in\{\downn,\upp\}\Big\}\Big|+2\Big|\Big\{i\in[n]:\delta_i\in\{\uppdownn\}\Big\}\Big|.
	\end{equation*}
\end{remark}

Similar to the case of~$\oru(s)$, we denote routes of~$\bic_\delta$ as \defn{$R(k_1, t_1, \theta,k_2,t_2)$}\index{permutree!bicho graph!route}. Intuitively this notation comes from the fact that every route of~$\bic_\delta$ starts from~$v_{0}$, lands in a vertex~$v_{n+1-k_1}$ via a source bump or dip, follows~$k_1-k_2-1$ edges that can be either bumps or dips (depending on~$\delta$) and finally jumps from a vertex~$v_{n+1-k_2}$ to~$v_n$ by a sink bump or dip. In a more formal fashion, we give the following definition.

\begin{definition}\label{routes}
	Let~$\delta\in\nonee\cdot\{\nonee,\downn,\upp,\uppdownn\}^{n-2}\cdot\nonee$ and denote the sequences of sets \begin{flalign*}
		&&\Theta=&\Big(\emptyset,\Theta_2,\ldots,\Theta_{n-1},\emptyset\Big) && \text{ where } && \begin{cases}0\in\Theta_i \text{ if } \delta_i\in\{\nonee,\downn\}, \\
			1\in\Theta_i \text{ if } \delta_i\in\{\nonee,\upp\},
		\end{cases}&&\\
		&&\Omega^{\inEdge}=&\Big(\{0,1\},\Omega^{\inEdge}_2,\ldots,\Omega^{\inEdge}_{n-1},\emptyset\Big) && \text{ where } && \begin{cases}0\in\Omega^{\inEdge}_i \text{ if } \delta_i\in\{\downn,\uppdownn\}, \\
			1\in\Omega^{\inEdge}_i \text{ if } \delta_i\in\{\upp,\uppdownn\},
		\end{cases}&&\\
		&&\Omega^{\outEdge}=&\Big(\emptyset,\Omega^{\outEdge}_2,\ldots,\Omega^{\outEdge}_{n-1},\{0,1\}\Big) && \text{ where } && \begin{cases}0\in\Omega^{\outEdge}_i \text{ if } \delta_i\in\{\upp,\uppdownn\}, \\
			1\in\Omega^{\outEdge}_i \text{ if } \delta_i\in\{\downn,\uppdownn\}.
		\end{cases}&&
	\end{flalign*}

	We denote the routes of~$\bic_\delta$ as~\defn{$R(k_1,t_1,\theta,k_2,t_2)$} where~$2\leq k_2<k_1\leq n$,~$t_1\in\Omega^{\inEdge}_{n+1-k_1}$,~$t_2\in\Omega^{\outEdge}_{n+1-(k_2-1)}$, and~$\theta\in\prod_{i=n+1-(k_1-1)}^{n+1-k_2} \Theta_i$.
\end{definition}

\begin{example}\label{ex:bicho_route}
	Consider the route~$R(6,1,(1,0,1),3,0)$ of~$\bic_{\nonee\upp\nonee\downn\nonee\uppdownn\nonee}$ depicted in Figure~\ref{fig:s_bicho_route}.
	In this case~$1\leq 3<6\leq 6$,~$1\in\Omega^{\inEdge}_2$,~$0\in\Omega^{\outEdge}_6$, and~$\delta\in\Theta_3\times\Theta_4\times\Theta_5$ where \begin{align*}
		\Theta            & =\emptyset\times\{1\}\times\{0,1\}\times\{0\}\times\{0,1\}\times\emptyset\times\emptyset,                \\
		\Omega^{\inEdge}  & =\{0,1\}\times\{1\}\times\emptyset\times\{0\}\times\emptyset\times\{0,1\}\times\emptyset, \\
		\Omega^{\outEdge} & =\emptyset\times\{0\}\times\emptyset\times\{1\}\times\emptyset\times\{0,1\}\times\{0,1\}.
	\end{align*}
\end{example}

\begin{figure}[ht!]
	\centering
	\includegraphics[scale=1.5]{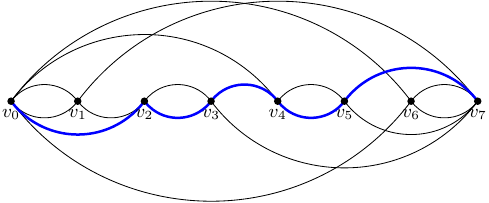}
	\caption[The route~$R(6,1,(1,0,1),3,0)$ of~$\bic_{\nonee\upp\nonee\downn\nonee\uppdownn\nonee}$.]{ The route~$R(6,1,(1,0,1),3,0)$ of~$\bic_{\nonee\upp\nonee\downn\nonee\uppdownn\nonee}$ bolded in blue.}
	\label{fig:s_bicho_route}
\end{figure}

\begin{remark}\label{rem:m_move_routes}
	Given a route~$R:=R(k_1,t_1,(\theta_{n+1-(k_1-1)},\ldots,\theta_{n+1-(k_2)}),k_2,t_2)$ in~$\bic_{\delta}$, the action of doing an M-move~$M$ on an edge~$e_k^i=(v_{n+1-(i+1)},v_{n+1-{i}})$ of~$\bic_{\delta}$ either transforms~$R$ into a pair of routes~$M(R):=\{R_1,R_2\}$ where
	\begin{flalign*}
		R_1&:=R(i,\theta_{n+1-i},(\theta_{n+1-(i-1)},\ldots,\theta_{n+1-k_2}),k_2,t_2)\\
		R_2&:=R(k_1,t_1,(\theta_{n+1-(k_1-1)},\ldots,\theta_{n+1-(i+1)}),i+1,\theta_{n+1-i})	
	\end{flalign*}
	if~$k_2\leq i\leq k_1-1$ and~$k=\theta_{n+1-i}$ or and does not affect~$R$ at all and~$M(R)=\{R_0\}$ where~$R_0:=R$. The routes~$R_0,R_1,R_2$ are all shown in the graph~$\bic_{M(\delta)}$. See Figure~\ref{fig:m_move_route} for an example of this construction in~$\oru_5$.
\end{remark}

\begin{figure}[h!]
	\centering
	\includegraphics[scale=1]{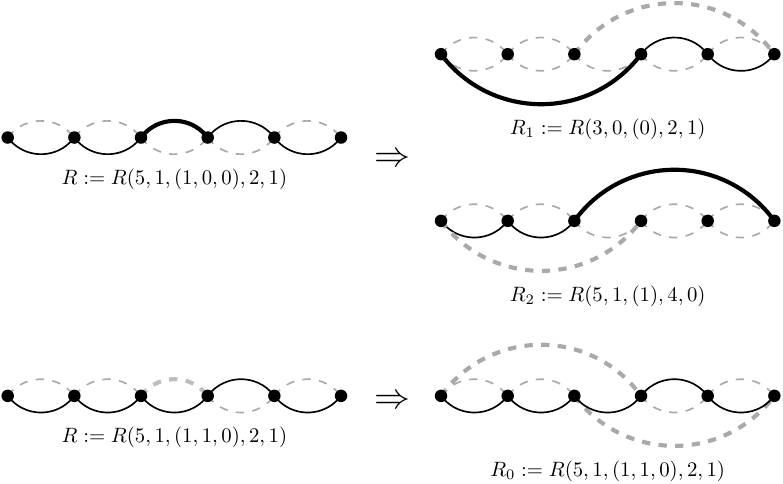}
	\caption[An M-move applied on two routes of~$\oru_n$.]{ The M-move corresponding to the bump~$(v_2,v_3)$ applied on two routes (left) of~$\oru_5$ and their responding routes after the M-move (right). The edges affected by the M-move are bolded.}
	\label{fig:m_move_route}
\end{figure}

\section{Enumerating Integer~\texorpdfstring{$\bfd$}{}-Flows in~\texorpdfstring{$\bic_\delta$}{}}\label{sec:bic_flows}

In this section we draw several parallels between the enumeration of permutrees done in~\cite[Sec.2.5]{PP18} and the enumeration of integer~$\bfd$-flows in~$\bic_\delta$.

\begin{remark}\label{rem:d_flows}
	Recall that~$\bfd:=(d_0,d_1,\ldots,d_{n-1},-\sum_{i=0}^{n-1}d_i)$ which in our current context is defined as~$d_i=\indeg_i(\bic_\delta)-1$. Moreover, notice that for any permutree decoration~$\delta$ we have that this vector is always~$\bfd=(0,1\ldots,1,-n+1)$.
\end{remark}

\begin{lemma}\label{lem:downn_upp_equivariance}
	Let	$\delta$ be a permutree decoration and~$\delta'$ be any decoration such that~$\delta^{-1}(\nonee)={\delta'}^{-1}(\nonee)$ and~$\delta^{-1}(\uppdownn)={\delta'}^{-1}(\uppdownn)$. Then~$|\fpol[\bic_\delta]^{\ZZ}(\bfd)|=|\fpol[\bic_{\delta'}]^{\ZZ}(\bfd)|$.
\end{lemma}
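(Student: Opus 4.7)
The plan is to show that under the hypothesis of the lemma, the graphs $\bic_\delta$ and $\bic_{\delta'}$ are actually the \emph{same} underlying multigraph (ignoring the framing), from which the equality of the number of integer $\bfd$-flows follows at once.

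First I would unpack the M-move. By Definition~\ref{def:m_move}, the M-move applied to an edge $(v_j, v_{j+1})$ removes that edge and adds the two edges $(v_0, v_{j+1})$ and $(v_j, v_n)$. The crucial observation is that in $\oru_n$ both the bump $e^i_0$ and the dip $e^i_1$ are edges between the same pair of vertices $v_{n-i}$ and $v_{n+1-i}$. Consequently, performing an M-move on either one of them deletes the chosen edge but adds the \emph{same} pair of new edges, namely the source edge $(v_0, v_{n+1-i})$ and the sink edge $(v_{n-i}, v_n)$.

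Next, I would examine the consequence for the $\delta$-bicho graphs. The hypothesis $\delta^{-1}(\nonee) = {\delta'}^{-1}(\nonee)$ and $\delta^{-1}(\uppdownn) = {\delta'}^{-1}(\uppdownn)$ forces $\delta$ and $\delta'$ to agree except possibly at positions $i$ where both decorations take values in $\{\upp, \downn\}$. At such a position, Definition~\ref{def:delta_bichos} tells us that $\bic_\delta$ has exactly one of $\{e^i_0, e^i_1\}$ preserved as an inner edge (the bump if $\delta_i = \downn$, the dip if $\delta_i = \upp$) together with a source edge $(v_0, v_{n+1-i})$ and a sink edge $(v_{n-i}, v_n)$. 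The same description applies to $\bic_{\delta'}$. Thus, at every such position the multiplicities of the edges between any two vertices coincide, and since the graphs also agree at positions where $\delta_i \in \{\nonee, \uppdownn\}$ by hypothesis, we conclude that $\bic_\delta$ and $\bic_{\delta'}$ are equal as unframed multigraphs.

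Finally, I would invoke the fact that the flow polytope $\fpol(\bfa)$ and its integer lattice points $\fpol^{\ZZ}(\bfa)$ depend only on the multigraph structure of $G$ and the chosen netflow $\bfa$, not on any framing. By Remark~\ref{rem:d_flows} the netflow $\bfd = (0,1,\ldots,1,-n+1)$ is the same for every $\delta$-bicho graph, so $\fpol[\bic_\delta](\bfd)$ and $\fpol[\bic_{\delta'}](\bfd)$ are identical polytopes up to a renaming of the coordinate corresponding to the inner edge at each swapped position. In particular their integer point sets are in canonical bijection. I do not foresee a substantive obstacle: the whole argument collapses to the elementary observation that M-moving a bump and M-moving a dip between the same pair of vertices yield the same pair of new edges. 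The only mild sanity check is to make sure no later construction secretly distinguishes the labels $e^i_0$ and $e^i_1$ of the inner edge in a way that would spoil the identification, but since flow polytopes are insensitive to such edge labels this is a formality.
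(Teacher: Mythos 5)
Your proof is correct and takes essentially the same approach as the paper: you observe that the bump $e^i_0$ and dip $e^i_1$ join the same pair of vertices, so replacing an M-move on one by an M-move on the other leaves the unframed multigraph (and hence the flow polytope and its integer points) unchanged. The paper's proof states the same idea more tersely, noting that the construction of $\bic_\delta$ is symmetric in $\downn$ and $\upp$ up to framing, so the graphs are isomorphic; your version makes the underlying reason explicit.
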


\begin{proof}
	Suppose that~$\delta$ and~$\delta'$ are permutree decorations with the stated property. Notice that the definition of M-moves and~$\bic_\delta$ is symmetric between choosing the decorations~$\downn$ and~$\upp$ except for the defined framing. Therefore, the graphs~$\bic_\delta$ and~$\bic_{\delta'}$ are isomorphic. Since the counting of integer~$\bfd$-flows depends only on the underlying graph structure, we get our result.
\end{proof}

\begin{lemma}\label{lem:uppdownn_decomposition}
	Suppose that a permutree decoration~$\delta$ decomposes as~$\delta=\delta'\uppdownn\delta''$. Then~$|\fpol[\bic_\delta]^{\ZZ}(\bfd)|=|\fpol[\bic_{\delta'\nonee}]^{\ZZ}(\bfd)|\cdot|\fpol[\bic_{\nonee\delta''}]^{\ZZ}(\bfd)|$.
\end{lemma}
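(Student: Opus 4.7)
The plan is to exploit the fact that when $\delta_j = \uppdownn$, the graph $\bic_\delta$ decomposes geometrically into two pieces that interact only through the source $v_0$ and the sink $v_n$. The crucial initial observation will be that for the netflow $\bfd = (0, 1, \ldots, 1, -(n-1))$, conservation at $v_0$ forces every outgoing edge to carry zero flow, since $v_0$ has no incoming edges and $\bfd_0 = 0$ gives $\sum_{e \in \cO_0} f(e) = 0$. In particular, the two edges $(v_0, v_{n-j+1})$ created by the M-moves on $e_0^j$ and $e_1^j$ contribute nothing, so the $\uppdownn$ at position $j$ effectively severs the graph.

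I would next introduce the two natural subgraphs. Define $G^{\text{up}}$ to have vertex set $\{v_0\} \cup \{v_{n-j+1}, \ldots, v_n\}$ together with all edges of $\bic_\delta$ incident only to these vertices, and dually $G^{\text{down}}$ on $\{v_0, v_1, \ldots, v_{n-j}\} \cup \{v_n\}$. A direct case analysis using Definition~\ref{def:m_move} shows that every edge of $\bic_\delta$ belongs to exactly one subgraph: the M-moves at positions $i < j$ (encoded in $\delta'$) only involve upper vertices, those at positions $i > j$ (encoded in $\delta''$) only involve lower vertices, and the four edges coming from the $\uppdownn$ at position $j$ split two into each side (two edges $(v_0, v_{n-j+1})$ into $G^{\text{up}}$ and two edges $(v_{n-j}, v_n)$ into $G^{\text{down}}$).

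The main technical step is to identify $G^{\text{up}} \cong \bic_{\delta'\nonee}$ and $G^{\text{down}} \cong \bic_{\nonee\delta''}$. I plan to exhibit the relabelings $v_0 \mapsto w_0$ and $v_{n-j+k} \mapsto w_k$ for $k \in [1,j]$ on the upper side, and $v_k \mapsto w_k$ for $k \in [0, n-j]$ with $v_n \mapsto w_{n-j+1}$ on the lower side. Under these identifications the two extra edges $(v_0, v_{n-j+1})$ become the bump and dip at position $j$ of $\oru_j$ with no M-moves applied, matching the trailing $\nonee$ of $\delta'\nonee$; similarly the two edges $(v_{n-j}, v_n)$ become the bump and dip at position $1$ of $\oru_{n-j+1}$, matching the leading $\nonee$ of $\nonee\delta''$. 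The remaining M-moves at interior positions of $\delta'$ and $\delta''$ transfer directly to M-moves in the corresponding subgraph.

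Finally, the counting bijection will be obtained by restriction. Given a $\bfd$-flow $f$ on $\bic_\delta$, the restriction $f|_{G^{\text{up}}}$ satisfies conservation with netflow $1$ at each inner vertex and zero outflow at $v_0$, and absorbs $j-1$ units at $v_n$ by a total-flow count; this matches exactly the $\bfd$-netflow of $\bic_{\delta'\nonee}$, and an analogous statement holds for $f|_{G^{\text{down}}}$ with absorption of $n-j$ units. Conversely, any pair of integer $\bfd$-flows on the two subgraphs glues uniquely to a $\bfd$-flow on $\bic_\delta$ by extending by zero on the remaining $v_0$-outgoing edges. The hard part will just be keeping the indexing consistent under the two successive relabelings (original $\bic_\delta$ vertices versus subgraph vertices, and positions in $\delta$ versus positions in $\delta'\nonee$ and $\nonee\delta''$); once this bookkeeping is done, the decomposition immediately yields the claimed multiplicative identity.
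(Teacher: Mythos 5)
Your proposal is correct and follows the same decomposition as the paper's proof: the $\uppdownn$ at position $j$ removes the two edges between $v_{n-j}$ and $v_{n+1-j}$, splitting $\bic_\delta$ into an upper and lower piece meeting only at $v_0$ and $v_n$, and these pieces are identified with $\bic_{\delta'\nonee}$ and $\bic_{\nonee\delta''}$ after relabeling. The paper's argument is considerably more terse — it phrases the edge partition via the route families of Definition~\ref{routes} and simply asserts that "the flows are independent" — whereas you make explicit two things the paper leaves implicit: that $\bfd_0=0$ together with nonnegativity forces zero flow on every $v_0$-outgoing edge (which is what makes conservation hold at $v_0$ in each subgraph separately), and that the restriction map and the gluing-by-zero map are mutually inverse, with the sink netflows $-(j-1)$ and $-(n-j)$ being forced rather than imposed. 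That extra bookkeeping is a genuine improvement in rigor over the paper's one-paragraph sketch, even though the underlying idea is identical.
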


\begin{proof}
	From Definition~\ref{def:delta_bichos} it follows that an M-move corresponding to a decoration~$\delta_i=\uppdownn$ partitions the set of routes of~$\bic_\delta$ into routes of the form~$R(k_1',t_1',\theta',k_2',t_2')$ where~$i+1\leq k_2'<k_1'\leq n$ and~$R(k_1'',t_1'',\theta'',k_2'',t_2'')$ where~$2\leq k_2''<k_1''\leq i$. This partitions the edges of~$\bic_\delta$ into two sets depending on the type of routes that contain them.  In this context any~$\bfd$-flow on~$\bic_\delta$ gets divided into the two sections of the graph. Since the sections form a partition, the flows are independent. As the subgraphs corresponding to this partition are isomorphic to~$\bic_{\delta'\nonee}$ and~$\bic_{\nonee\delta''}$ after a relabeling of the vertices the result follows.
\end{proof}

\begin{remark}\label{rem:bases_of_recursion_bic_flows}
	For~$n=1$ the graph~$\bic_\delta$ is isomorphic just two vertices with two edges no matter the decoration. In this case~$\bfd=(0,0)$. As such, in this case there is only~$1$ integer~$\bfd$-flow which is the constant flow~$f_0$ assigning to every edge flow~$0$. Similarly, for~$n=0$, we have that~$\bic_\delta$ is a point with~$\bfd=(0)$ and the same constant flow~$f_0$ is the only integer~$\bfd$-flow. 
\end{remark}

\begin{theorem}\label{thm:bij_d_flows_bichos_permutrees}
	For any decoration~$\delta\in\{\nonee,\downn\}^n$, the sets of integer~$\bfd$-flows of~$\bic_\delta$ and~$\delta$-permutrees are in bijection.
\end{theorem}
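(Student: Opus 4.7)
The plan is to establish the bijection by induction on the number of $\downn$ entries in $\delta$. For the base case $\delta = \nonee^n$, the graph $\bic_{\nonee^n}$ coincides with $\oru_n$, and Theorem~\ref{thm:bij_simplices_permutations} applied with $s = (1, \ldots, 1)$ yields a bijection between integer $\bfd$-flows on $\oru_n$ and Stirling $(1, \ldots, 1)$-permutations. These latter are ordinary permutations in $\fS_n$, which by Example~\ref{ex:permutree_examples} are precisely the $\nonee^n$-permutrees.

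For the inductive step, I would fix some $j$ with $\delta_j = \downn$ and let $\delta'$ be obtained from $\delta$ by setting $\delta'_j = \nonee$, so that $\bic_\delta$ arises from $\bic_{\delta'}$ by a single M-move on the dip edge $e^j_1$. By Remark~\ref{rem:m_move_routes}, every route of $\bic_{\delta'}$ passing through $e^j_1$ splits into two routes of $\bic_\delta$ meeting at vertex $v_{n+1-j}$, while routes avoiding $e^j_1$ persist unchanged. This provides a controlled way to analyze integer $\bfd$-flows on $\bic_\delta$ by conditioning on the flow values across the two new edges $(v_0, v_{n+1-j})$ and $(v_{n-j}, v_n)$ created by the M-move, and then applying the inductive hypothesis to the subgraphs sitting ``above'' and ``below'' the vertex $v_{n+1-j}$.

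On the permutree side, Proposition~\ref{prop:permutree_recursion} specializes for $\delta \in \{\nonee, \downn\}^n$ (no $\uppdownn$ entries forces a single outer product term) to a sum over pairs $(i, J)$ with $i \in \delta^{-1}(\downn)$ and $J \subseteq \delta^{-1}(\nonee)$, whose summand is $|\cPT(\delta_{[i-1] \setminus J})| \cdot |\cPT(\delta_{[i+1, n] \setminus J})| \cdot |J|!$. The sum over $\downn$-positions $i$ matches the choice of distinguished $\downn$-position $j$ that we use to condition on the M-move, which suggests a direct term-by-term matching of the two recursions. I would interpret each pair $(i, J)$ as a family of integer $\bfd$-flows on $\bic_\delta$ where $i$ is the $\downn$-vertex at the ``root'' of the local structure, $J$ is the set of $\nonee$-vertices threaded through $v_{n+1-i}$, and $|J|!$ counts admissible orderings of their flow contributions.

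The hard part will be correctly identifying which integer $\bfd$-flows correspond to each pair $(i, J)$ and verifying bijectivity. Specifically, one must check that the flow splitting induced by the M-move at $j$ exactly parallels the rooting of the $\delta$-permutree at $v_j$ into its left and right subtrees, and that the factor $|J|!$ emerges naturally from counting admissible flow orderings at $\nonee$-vertices on the graph side. As an alternative, one could attempt a direct bijection via a modified insertion algorithm in the spirit of Definition~\ref{def:decorated_permutation}, reading off a $\delta$-decorated permutation from the local flow structure at each vertex of $\bic_\delta$; however, proving such a map is well-defined appears to require essentially the same recursive bookkeeping as the inductive approach above.
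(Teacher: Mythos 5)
Your proposal and the paper's proof take genuinely different routes, and yours has a concrete gap that the paper's authors themselves flagged.

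The paper gives a \emph{direct} bijection, not an inductive one. Given a $\bfd$-flow $f$, the flow is determined by the bump values $f(e^{n+1-i}_0)$, and one places the vertex $v'_i$ at height $i - f(e^{n+1-i}_0)$ in a table, shifts earlier vertices up as needed, and applies the permutree insertion algorithm. Conversely, from a $\delta$-permutree $T$ one reads off $f_T(e^{n+1-i}) = \bigl|\{j\in[i-1] : i\to j\}\bigr|$, the number of smaller-labeled descendants of $v_i$, and verifies directly that the conservation constraints for the netflow $\bfd$ hold. No recursion is needed; the two maps are explicit inverses. Your closing remark that such a direct bijection "appears to require essentially the same recursive bookkeeping" underestimates it — this is in fact the clean route, and it is the one the paper takes.

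Your inductive plan — fix a $\downn$-position $j$, analyze the M-move on $e^j_1$, and match with the permutree recursion of Proposition~\ref{prop:permutree_recursion} — founders on two points. First, the paper explicitly states that the corresponding flow-side recursion for $\delta\in\{\nonee,\downn\}^n$ is \emph{not} proved and is left as Conjecture~\ref{conj:nonee_downn_flows_2}; the authors chose the direct bijection precisely to avoid having to establish that recursion. Your inductive step would need to prove it. Second, unlike the $\uppdownn$ case (Lemma~\ref{lem:uppdownn_decomposition}), a single M-move on the dip edge $e^j_1$ does \emph{not} disconnect $\bic_{\delta'}$ into independent "above" and "below" subgraphs: the bump edge $e^j_0$ still links the two parts, so flows on them are not independent. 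The clean factorization you want to condition on simply isn't there for a $\downn$-only M-move, and "applying the inductive hypothesis to the subgraphs sitting above and below" does not go through. The $J$-set and $|J|!$ accounting in Proposition~\ref{prop:permutree_recursion} is also subtler than a term-by-term match of summation indices would suggest, since the $i$ there ranges over roots of the permutree, not over arbitrary $\downn$-positions at which one splits a flow.

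In short: your proposal correctly identifies that something recursion-like is lurking, but that route would require proving a fact the paper deliberately leaves open as a conjecture, and the graph decomposition you rely on does not factorize in the $\downn$ case. The paper sidesteps both problems with a concrete two-way map.
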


\begin{proof}
	Given~$i\in[n]$ denote respectively by~$e^{n+1-i}_0:=(v_{i-1},v_{i})$ and~$e^{n+1-i}_1:=\begin{cases}
		(v_{i-1},v_{i}) \text{ if } \delta_i=\nonee, \\
		(v_{i-1},v_{n}) \text{ if } \delta_i=\downn
	\end{cases}$ the bump edges of~$\bic_\delta$ and the dip edges of~$\bic_\delta$ that can receive any flow when the netflow is given by~$\bfd$. In this context notice that a~$\bfd$-flow~$f$ is uniquely determined by the values~$f(e^{n+1-i}_0)$ as all source edges have~$0$ flow, and all dip edges have flow that satisfies the recursive relations \begin{equation*}
		f(e^{n+1-i}_1)=\begin{cases}
			f(e^{n+1-(i-1)}_0)+f(e^{n+1-(i-1)}_1)+1-f(e^{n+1-i}_0) \text{ if } \delta_{i-1}=\nonee, \\
			f(e^{n+1-(i-1)}_0)+1-f(e^{n+1-i}_0) \hspace{2.705cm} \text{ if } \delta_{i-1}=\downn.
		\end{cases}
	\end{equation*} 
	
	Take~$f$ an integer~$\bfd$-flow of~$\bic_\delta$. Let us construct a~$\delta$-permutree (with nodes relabeled~$\{v'_1,\ldots,v'_n\}$ to avoid confusion). We do this starting from the integers~$f(e^i)$ and proceeding like in the proofs of Lemma~\ref{lem:permutree_inversion_sets} and Theorem~\ref{thm:cubic_property_convec_cube} using the insertion algorithm as follows. Begin by taking an~$n\times n$ grid. At step~$1$, since~$f(e^{n+1-1})=0$, place the vertex~$v'_1$ anywhere in the first column. For convenience place it at position~$(1,1)$. At step~$i$, place the vertex~$v'_i$ in position~$(i,i-f(e^{n+1-i}))$ moving all previous vertices that are at the same or greater height upwards by~$1$ unit. After step~$n$ we obtain a permutation table. Decorating this table with~$\delta$ and applying the permutree insertion algorithm gives us a~$\delta$-permutree~$T_f$ such that~$f_{T_f}=f$. See Figure~\ref{fig:permutree_d_flows_insertion_algorithm} for an example.

	\begin{figure}[h!]
		\centering
		\includegraphics[scale=0.76]{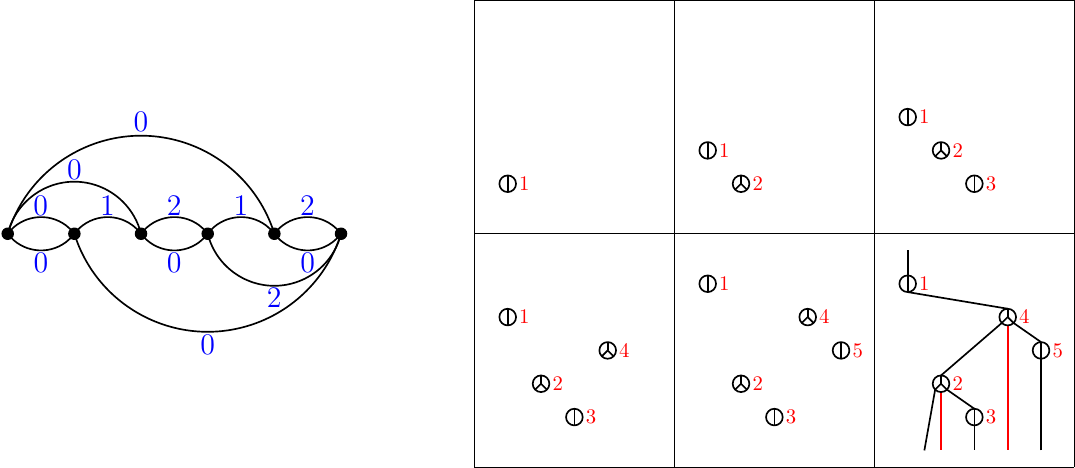}
		\caption[The construction of a~$\nonee\downn\nonee\downn\nonee$-permutree from an integer~$\bfd$-flow on~$\bic_{\nonee\downn\nonee\downn\nonee}$.]{ The construction of a~$\nonee\downn\nonee\downn\nonee$-permutree (right) from the integer~$\bfd$-flow characterized by the flow~$(0,1,2,1,2)$ on the bumps of~$\bic_{\nonee\downn\nonee\downn\nonee}$ (left).}
		\label{fig:permutree_d_flows_insertion_algorithm}
	\end{figure}

	Conversely, let~$T\in\cPT(\delta)$ be a~$\delta$-permutree. We define the function~$f_T(e^{n+1-i})=\big|\{j\in[i-1]\,:\, i\to j\}\big|$. That is, the amount of vertices in $LA_i$ (resp.~$A_i$) with label smaller than~$i$. We claim that this function is compatible with the netflow~$\bfd$. Indeed, notice that although the definition tells us that~$f_T(e^{n+1-i})\leq i-1$ for~$i\in[n]$, we actually have that~$f_T(e^{n+1-1})=0$ and~$f_T(e^{n+1-i})\leq f_T(e^{n+1-(i-1)})+1$ for~$i\in[1,n]$. This occurs as~$\delta\in\{\nonee,\downn\}$ implies that either~$i-1\to i$ and~$f_T(e^{n+1-i})\leq f_T(e^{n+1-(i-1)})$ or~$i\to i-1$ and~$f_T(e^{n+1-i})=f_T(e^{n+1-(i-1)})+1$. So we know that the amount of flow is the correct one and respects the netflow~$\bfd$. Thus,~$f_T$ can be extended to a unique integer~$\bfd$-flow of~$\bic_\delta$.
\end{proof}

\begin{corollary}\label{cor:vol_bic_bfi_permutrees}
	Given a permutree decoration~$\delta$, we have that \begin{equation*}
		\big|\cPT(\delta)\big|=\big|\fpol[\bic_\delta]^{\ZZ}(\bfd)\big|=\vol\Big(\fpol[\bic_\delta](\bfi)\Big).
	\end{equation*}
\end{corollary}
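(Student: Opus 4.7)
The plan is to prove the two equalities separately. The second equality $|\fpol[\bic_\delta]^{\ZZ}(\bfd)| = \vol(\fpol[\bic_\delta](\bfi))$ will follow directly from Proposition~\ref{prop:volume_intflows}, since Remark~\ref{rem:d_flows} tells us that the netflow $\bfd$ associated to $\bic_\delta$ is always $(0,1,\ldots,1,-n+1)$, regardless of the decoration $\delta$. So the real content lies in establishing the first equality $|\cPT(\delta)| = |\fpol[\bic_\delta]^{\ZZ}(\bfd)|$, which I would prove by reducing successively more general decorations to the case already handled by Theorem~\ref{thm:bij_d_flows_bichos_permutrees}.

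First, for $\delta\in\{\nonee,\downn\}^n$, the equality is immediate from Theorem~\ref{thm:bij_d_flows_bichos_permutrees}. To extend to $\delta\in\{\nonee,\downn,\upp\}^n$ (with no $\uppdownn$ entries), I would use Lemma~\ref{lem:downn_upp_equivariance} on the flow side to replace every $\upp$ by a $\downn$, reducing to the previous case. On the permutree side, one needs the analogous symmetry $|\cPT(\delta)| = |\cPT(\delta')|$ whenever $\delta$ and $\delta'$ differ only by swapping some $\downn$'s and $\upp$'s. The natural candidate bijection is edge-reversal $T \mapsto T^*$: reversing the orientations of all edges of a $\delta$-permutree exchanges the roles of parents and children at each node, which exchanges decorations $\downn \leftrightarrow \upp$ while fixing $\nonee$ and $\uppdownn$. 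The in-order labeling condition of Definition~\ref{def:permutrees}\,(2) is symmetric in ancestors and descendants, so $T^*$ is automatically a valid permutree for the swapped decoration.

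To handle general $\delta$, I would induct on the number of $\uppdownn$ entries. Writing $\delta = \delta'\uppdownn\delta''$, Lemma~\ref{lem:uppdownn_decomposition} already gives the factorization
$$|\fpol[\bic_\delta]^{\ZZ}(\bfd)| = |\fpol[\bic_{\delta'\nonee}]^{\ZZ}(\bfd)| \cdot |\fpol[\bic_{\nonee\delta''}]^{\ZZ}(\bfd)|.$$
The matching factorization $|\cPT(\delta)| = |\cPT(\delta'\nonee)| \cdot |\cPT(\nonee\delta'')|$ for permutrees should follow from the observation that a vertex $v_j$ with $\delta_j=\uppdownn$ forces a separation: its two ancestor subtrees and two descendant subtrees must contain exactly the labels $<j$ and $>j$ in the prescribed left/right pattern, so the permutree decomposes into two independent sub-permutrees glued at $v_j$. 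This decomposition is implicit in the product over $k\in[\ell]$ appearing in Proposition~\ref{prop:permutree_recursion}, where the $b_k$'s are precisely the $\uppdownn$ positions. Combining the inductive hypothesis with the two factorizations finishes the proof.

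The main obstacle I expect is cleanly justifying the edge-reversal symmetry for permutrees: although the picture is intuitive, one must verify that reversing arrows yields a well-defined element of $\cPT(\delta^*)$ (and not some more exotic combinatorial object), and that this operation is an involution. A secondary, lighter point is making the $\uppdownn$-decomposition of permutrees precise without simply quoting Proposition~\ref{prop:permutree_recursion} as a black box; a direct bijective argument, gluing a $\delta'\nonee$-permutree on $\{v_1,\ldots,v_j\}$ to a $\nonee\delta''$-permutree on $\{v_j,\ldots,v_n\}$ along the common distinguished vertex $v_j$, seems preferable and should mirror the flow decomposition of Lemma~\ref{lem:uppdownn_decomposition} step for step.
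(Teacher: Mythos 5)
Your overall architecture matches the paper's: both proofs get the second equality from Proposition~\ref{prop:volume_intflows}, both use Theorem~\ref{thm:bij_d_flows_bichos_permutrees} as the base case, and both reduce to it via Lemmas~\ref{lem:downn_upp_equivariance} and~\ref{lem:uppdownn_decomposition} on the flow side. Where the paper is terse — citing Proposition~\ref{prop:permutree_recursion} as the matching decomposition on the permutree side — you attempt to supply explicit bijections, and this is exactly where a genuine gap appears.

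The edge-reversal bijection~$T\mapsto T^*$ does not deliver the symmetry you state you need. Reversing every edge sends~$\cPT(\delta)$ to~$\cPT(\delta^{\updownarrow})$ where~$\delta^{\updownarrow}$ swaps \emph{every}~$\downn$ with~$\upp$ simultaneously, while fixing~$\nonee$ and~$\uppdownn$. But Lemma~\ref{lem:downn_upp_equivariance} on the flow side licenses replacing \emph{only} the~$\upp$'s by~$\downn$'s while leaving the existing~$\downn$'s alone, and you need the analogous statement on the permutree side. For a mixed decoration such as~$\delta=\nonee\downn\upp\nonee$, you want~$|\cPT(\nonee\downn\upp\nonee)|=|\cPT(\nonee\downn\downn\nonee)|$, but edge-reversal only gives~$|\cPT(\nonee\downn\upp\nonee)|=|\cPT(\nonee\upp\downn\nonee)|$. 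Since edge-reversal is a global involution, iterating it cannot produce the one-position-at-a-time swap; the needed local~$\upp\to\downn$ replacement changes the orientation of a single edge at~$v_j$, which in a tree cannot be done in isolation without also modifying neighboring orientations. Your listed ``main obstacle'' (well-definedness and involutivity of~$T\mapsto T^*$) is not actually the problem — those checks are routine — the problem is that this is simply the wrong bijection for what the argument requires. The~$\uppdownn$ gluing half of your argument, by contrast, is sound and does mirror Lemma~\ref{lem:uppdownn_decomposition}. To close the gap one must either establish the position-by-position~$\downn\leftrightarrow\upp$ count invariance directly (which is nontrivial and is part of what~\cite{PP18}'s Corollary~2.26 encodes), or accept Proposition~\ref{prop:permutree_recursion} as a black box as the paper does.
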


\begin{proof}
	Due to Lemma~\ref{lem:uppdownn_decomposition} and~\ref{lem:downn_upp_equivariance},~$\big|\fpol[\bic_\delta]^{\ZZ}(\bfd)\big|$ can be decomposed as a product of~$\big|\fpol[\bic_{\delta'}]^{\ZZ}(\bfd)\big|$ where~$\delta'_i\in\{\nonee,\downn\}$. As these integer~$\bfd$-flows are in bijection with~$\delta'$-permutrees through Theorem~\ref{thm:bij_d_flows_bichos_permutrees} and~$\delta$-permutrees follow the same decomposition given in Proposition~\ref{prop:permutree_recursion}, we have that~$|\fpol[\bic_\delta]^{\ZZ}(\bfd)|=|\cPT(\delta)|$. We finish by applying Theorem~\ref{prop:volume_intflows}.
\end{proof}

\section{Permutrees and Cliques of~\texorpdfstring{$\bic_\delta$}{}}\label{sec:permutree_cliques}

As before, the routes of the framed graph~$\bic_\delta$ give us the triangulation~$\triangDKK[\bic_\delta]$ of~$\fpol[\bic_\delta](\bfi)$. We proceed to show how simplices of these triangulations relate with each other.

\begin{definition}\label{def:MC}
	Let~$C$ be a collection of routes of~$\bic_\delta$ and~$M$ a sequence of M-moves on a subset of edges of~$\bic_\delta$. We denote by \defn{$M(C)$} the set~$\bigcup_{R\in C}M(R)$ following Remark~\ref{rem:m_move_routes}.
\end{definition}

\begin{lemma}\label{lem:C_coherent_MC_coherent}
	If~$C$ is a set of coherent routes of~$\bic_\delta$, then~$M(C)$ is a set of coherent routes of~$\bic_{M(\delta)}$.
\end{lemma}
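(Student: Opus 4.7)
The plan is to proceed by induction on the number of elementary M-moves composing the sequence $M$, which reduces the statement to the base case where $M$ consists of a single M-move applied to an edge $e$ of $\bic_\delta$. For this base case, I would verify pairwise that any two routes in $M(P) \cup M(Q)$ are coherent in $\bic_{M(\delta)}$ for any two coherent routes $P, Q \in C$; the lemma then follows immediately.

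The key observations I would use are: (i) the two new edges produced by $M$ are, respectively, a source edge leaving $v_{-1}$ and a sink edge entering $v_n$, so by Remark~\ref{rem:extreme_framings_dont_matter} no coherence check is required at these extreme vertices; (ii) via the inherited framing of Definition~\ref{def:m_move}, each new edge carries the framing position of the removed edge $e$ at its remaining endpoint (namely $v_{n+1-i}$ or $v_{n+1-(i+1)}$); and (iii) at every intermediate vertex the framing orders $\preceq_{\cI}$ and $\preceq_{\cO}$ of $\bic_{M(\delta)}$ agree with those of $\bic_\delta$.

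I would then split into cases according to whether $e$ appears in $P$ and/or $Q$. When $e$ lies in neither, $M(P) = \{P\}$ and $M(Q) = \{Q\}$, and coherence is preserved by observation (iii). When $e$ lies in only one, say $P$, each piece $P_i$ of $M(P)$ shares its intermediate vertices with $Q$ exactly as $P$ did; the new source or sink edge used by $P_i$ meets $Q$ only at $v_{-1}$ or $v_n$, and at the pivot vertex of $e$ the inherited framing position matches that of the removed edge, so any coherence check between $P_i$ and $Q$ reduces to the coherence of $P$ with $Q$.

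The main obstacle is the case where $e$ lies in both $P$ and $Q$, so that four pairs $\{P_1, Q_1\}, \{P_2, Q_2\}, \{P_1, Q_2\}, \{P_2, Q_1\}$ must be checked. For the ``same-type'' pairs $\{P_1, Q_1\}$ and $\{P_2, Q_2\}$, the two pieces begin (respectively end) with the common new source (respectively sink) edge and otherwise coincide with suffixes (respectively prefixes) of $P$ and $Q$, so coherence transfers directly from that of $P$ and $Q$ on the corresponding portion. The delicate ``cross-type'' pairs $\{P_1, Q_2\}$ and $\{P_2, Q_1\}$ require the observation that any potential new conflict must occur at the pivot vertex $v_{n+1-i}$ or $v_{n+1-(i+1)}$ of $e$; but by observation (ii) the framing position of the relevant new edge at this vertex equals the framing position that the edge $e$ carried, so a conflict of the pieces would translate into a conflict of $P$ and $Q$ at the endpoints of $e$, contradicting the assumed coherence. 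Once these verifications are assembled, the base case is established and the induction concludes.
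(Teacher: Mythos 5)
Your overall strategy matches the paper's: reduce to a single M-move, then check coherence pairwise over $M(P)\cup M(Q)$, splitting on whether the removed edge $e$ lies in $P$, in $Q$, in both, or in neither. Your treatment of the zero-edge and one-edge cases, and of the same-type pairs $\{P_1,Q_1\}$ and $\{P_2,Q_2\}$ in the two-edge case, is substantively the argument used in the paper.

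The argument you give for the cross-type pairs $\{P_1,Q_2\}$ and $\{P_2,Q_1\}$, however, does not go through as stated. You claim that any potential new conflict ``must occur at the pivot vertex $v_{n+1-i}$ or $v_{n+1-(i+1)}$ of~$e$'' and that the inherited framing lets you translate such a conflict back to one of $P$ and $Q$. But these pieces do not pass through a common pivot vertex at all: $P_1$ enters via the new source edge $(v_0,v_{n+1-i})$ and has all its interior vertices of index $\geq n+1-i$, while $Q_2$ exits via the new sink edge $(v_{n+1-(i+1)},v_n)$ and has all its interior vertices of index $\leq n+1-(i+1)=n-i$. Since $n+1-i>n-i$, the routes $P_1$ and $Q_2$ meet only at $v_0$ and $v_n$, where no coherence check is made by Remark~\ref{rem:extreme_framings_dont_matter}; the same disjointness holds for $\{P_2,Q_1\}$, $\{P_1,P_2\}$, and $\{Q_1,Q_2\}$. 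The correct (and much simpler) observation, and the one the paper uses, is that the cross-type pieces share no common subroute, so they are coherent for trivial reasons; there is no pivot-vertex conflict to translate. You reach the right conclusion, but the mechanism you invoke would not be carried out by a careful reader, who would look for the shared pivot vertex and not find it. A minor related slip: you describe the new source edge as leaving $v_{-1}$, but in $\bic_\delta$ the vertices run from $v_0$ to $v_n$ and the new source edge emanates from $v_0$; the vertex $v_{-1}$ only appears in the $s$-oruga graph $\oru(s)$ of the previous chapter. Getting the indices right here is precisely what makes the disjointness argument transparent.
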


\begin{proof}
	It is enough to prove the lemma for the case when~$M$ is an M-move on a single edge~$e^i_{k}=(v_{n+1-(i+1)},v_{n+1-i})$. In this case~$\delta\lessdot\delta'$ and~$\bic_{\delta'}$ has the same framing of~$\bic_{\delta}$ except for~$\cO_{n+1-(i+1)}$ and~$\cI_{n+1-i}$. Let~$P$ and~$Q$ be two routes in~$C$. We have the three following cases. \begin{itemize}
		\itemsep0em
		\item If both~$P$ and~$Q$ contain~$e^i_{k}$ then~$M(P)=\{P_1,P_2\}$ and~$M(Q)=\{Q_1,Q_2\}$. Let us see that~$\{P_1,P_2,Q_1,Q_2\}$ is a clique. \begin{itemize}
			\itemsep0em
			\item The pairs of routes~$\{P_1,P_2\}$,~$\{P_1,Q_2\}$,~$(Q_1,P_2)$ and~$\{Q_1,Q_2\}$ are all cliques as each pair of routes has no common vertices except~$v_0$ and~$v_n$. Thus, no possible conflict.
			\item For~$\{P_1,Q_1\}$ notice that their suffixes satisfy~$v_{n+1-i}P_1=v_{n+1-i}P$ and~$v_{n+1-i}Q_1=v_{n+1-i}Q$ and that both their source edges are~$P_1v_{n+1-i}=Q_1v_{n+1-i}$. Therefore, the edges on which~$P_1$ and~$Q_1$ differ are the same as the edges on which~$P$ and~$Q$ differ. As~$P$ and~$Q$ are coherent,~$P_1$ and~$Q_1$ are coherent. Similar for~$\{P_2,Q_2\}$.
		\end{itemize}
		Thus, all routes in~$\{P_1,P_2,Q_1,Q_2\}$ are pairwise coherent.
		\item If~$P$ contains~$e^i_{k}$ but~$Q$ does not, then~$M(P)=\{P_1,P_2\}$ and~$M(Q)=\{Q_0\}$. \begin{itemize}
			\itemsep0em
			\item If~$Q$ contains~$e^i_{1-k}$ then notice that as~$Q_0=Q$,~$v_{n+1-i}P_1=v_{n+1-i}P$ and~$P_2v_{n+1-(i+1)}=Pv_{n+1-(i+1)}$, and their framing is inherited from~$P$ and~$Q$, we have that no conflict can arise.
			\item If~$Q$ does not contain~$e^i_{1-k}$ then the common edges of~$P_1$ and~$Q_0$ (resp.~$P_2$ and~$Q_0$) are common edges of~$P$ and~$Q$ meaning that no conflict occurs.
		\end{itemize}
		Thus,~$P_1$,~$P_2$ and~$Q_0$ are pairwise coherent. The argument is similar with the roles of~$P$ and~$Q$ reversed.
		\item If neither~$P$ nor~$Q$ contain~$e^i_{k}$ then~$M(P)=\{P_0\}$ and~$M(Q)=\{Q_0\}$. As~$P=P_0$,~$Q=Q_0$ and their framings are the same as before, we have that~$P_0$ and~$Q_0$ are coherent.
		% \begin{itemize}
		% 	\itemsep0em
		% 	\item If both~$P$ and~$Q$ contain~$e^i_{1-k}$ then the framing of~$P_0$ and~$Q_0$ is the same as the framing of~$P$ and~$Q$ except for~$e^i_{1-k}$. Still, since~$e^i_{1-k}$ is a common edge of~$P_0$ and~$Q_0$, the new orders~$\cO_{n+1-(i+1)}$ and~$\cI_{n+1-i}$ cannot create a conflict between~$P_0$ and~$Q_0$. As there is no other possibility for conflicts, we have that they are coherent.
		% 	\item Otherwise, the common edges of~$P_0$ and~$Q_0$ are common edges of~$P$ and~$Q$ meaning that no conflict occurs.  
		% \end{itemize}
		% Thus,~$P_0$ and~$Q_0$ are always coherent.		
	\end{itemize}
	As in all cases~$M(P)\cup M(Q)$ is formed by pairwise coherent routes, we have that~$M(C)=\bigcup_{R\in C}M(R)$ is a clique of~$\bic_{\delta'}$.
\end{proof}

\begin{remark}\label{rem:M_exceptional}
	For the exceptional routes~$R^d=R(n,1,(1)^{n-2},2,1)$ and~$R^b=R(n,1,(0)^{n-2},2,1)$ of~$\oru_n=\bic_{\nonee^n}$, we have that~$M(R^d)\cup M(R^b)$ are the exceptional routes of~$\bic_{M\big(\nonee^n\big)}$. 
\end{remark}

\begin{lemma}\label{lem:cliques_through_M_moves}
	Let~$\delta,\delta'$ be permutree decorations such that~$\delta$ refines~$\delta'$ and~$M$ be the collection of M-moves transforming~$\bic_{\delta}$ to~$\bic_{\delta'}$. Then every~$C'\in\maxcliques[\bic_{\delta'}]$ is of the form~$M(C)$ for some~$C\in\maxcliques[\bic_{\delta}]$.
\end{lemma}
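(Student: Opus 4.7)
The proof proceeds by induction on the number of M-moves composing $M$, reducing to the case of a single M-move acting on an edge $e = e^i_k$ of $\bic_\delta$ between $v_{n+1-(i+1)}$ and $v_{n+1-i}$. This M-move replaces $e$ with a new source edge $s := (v_0, v_{n+1-i})$ and a new sink edge $t := (v_{n+1-(i+1)}, v_n)$ in $\bic_{\delta'}$. Given $C' \in \maxcliques[\bic_{\delta'}]$, I would partition $C' = A \sqcup B \sqcup D$ where $A$ contains the routes of $C'$ starting with $s$, $B$ contains the routes ending with $t$, and $D$ is the remainder; since every route in $D$ avoids both $s$ and $t$, each such route is directly a valid route of $\bic_\delta$.

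The key structural input is a canonical bijection $\phi : B \to A$ together with the identity $|A| = |B|$. Both would follow by translating to the integer $\bfd$-flow picture via Proposition~\ref{prop:bij_cliques_intflows}: the flow values on $s$ and $t$ are $|A|-1$ and $|B|-1$ respectively, and flow conservation across the M-move (combined with the way the refinement surjection $\cPT(\delta) \twoheadrightarrow \cPT(\delta')$ from Proposition~\ref{prop:permutree_matrioshcka} descends through Corollary~\ref{cor:vol_bic_bfi_permutrees} to the flow picture) forces the two sides to match up in a canonical way. The pairing $\phi$ is then determined by ordering the routes in $A$ via the framing of their second edge at $\cO_{n+1-i}$ and the routes in $B$ via the framing of their penultimate edge at $\cI_{n+1-(i+1)}$, and matching them in order. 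The desired preimage is then
\[
C := D \cup \{R^B v_{n+1-(i+1)} \cdot e \cdot v_{n+1-i}\, \phi(R^B) : R^B \in B\}.
\]

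I would then verify that $C$ is a maximal clique of $\bic_\delta$ by checking coherence case by case, leveraging the framing-inheritance property of Definition~\ref{def:m_move}: the position of $s$ in $\cI_{n+1-i}$ and of $t$ in $\cO_{n+1-(i+1)}$ in the framing of $\bic_{\delta'}$ matches exactly the position of $e$ in the corresponding framings of $\bic_\delta$, so any coherence or conflict relation in $\bic_{\delta'}$ transports cleanly to the corresponding relation in $\bic_\delta$. Maximality of $C$ follows from a cardinality count using the identity $\dim\fpol[\bic_{\delta'}](\bfi) = \dim\fpol[\bic_\delta](\bfi) + 1$ from Remark~\ref{rem:num_edges_dim_bicho}. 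Finally, the identity $M(C) = C'$ is immediate from Remark~\ref{rem:m_move_routes}: each combined route of $C$ contains $e$, so $M$ splits it into precisely the pair $\{R^B, \phi(R^B)\}$, while the routes in $D$ are unchanged.

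The main obstacle will be establishing the canonical pairing $\phi$ together with the cardinality identity $|A| = |B|$, and verifying coherence of the combined routes in $\bic_\delta$, especially between two such combined routes at the shared edge $e$. The framing-inheritance property of Definition~\ref{def:m_move} is the essential tool throughout, and I expect the cleanest route is via the integer $\bfd$-flow translation, where the structure of $C'$ as an integer flow on $\bic_{\delta'}$ encodes precisely the splitting data needed to reconstruct $C$ on $\bic_\delta$.
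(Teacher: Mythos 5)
Your approach runs in the opposite direction from the paper's: the paper shows that for $C\in\maxcliques[\bic_\delta]$ the set $M(C)$ is a coherent clique (Lemma~\ref{lem:C_coherent_MC_coherent}) and then counts $|M(C)|=|C|+1$ to conclude maximality, whereas you start from $C'\in\maxcliques[\bic_{\delta'}]$ and attempt to reconstruct a preimage by splitting $C'$ into routes $A$ using the new source edge, routes $B$ using the new sink edge, and the rest $D$, and then gluing $A$ to $B$ via a pairing $\phi$. That reverse direction is a reasonable idea, but your central structural claim, $|A|=|B|$, is false.

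Concretely, take $n=3$, $\delta=\nonee\nonee\nonee$, $\delta'=\nonee\downn\nonee$, so $M$ is the single M-move on the dip $e^2_1$ between $v_1$ and $v_2$, creating $s=(v_0,v_2)$ and $t=(v_1,v_3)$. Writing routes of $\oru_3$ as triples of bump/dip choices, the maximal clique $C=\{000,100,110,111\}$ has $M(C)=\{e^3_0e^2_0e^1_0,\; e^3_1e^2_0e^1_0,\; e^3_1\cdot t,\; s\cdot e^1_0,\; s\cdot e^1_1\}$, a five-element maximal clique of $\bic_{\delta'}$. Here $A$ has two routes (the ones starting with $s$) and $B$ has one route (the one ending with $t$), so $|A|=2\ne 1=|B|$. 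The correct invariant coming from the coherence structure is not $|A|=|B|$ but $|A|+|B|-1=z$, where $z$ is the number of preimage routes through the replaced edge: among the $z$ pairwise coherent routes of $C$ through $e^i_k$, the sequence is totally ordered and consecutive routes differ in exactly one of their prefix or suffix, so the number of distinct suffixes (which becomes $|A|$) plus the number of distinct prefixes (which becomes $|B|$) is $z+1$. Your pairing $\phi:B\to A$ therefore does not exist in general; the reconstruction has to regenerate $z$ glued routes from $|A|$ suffixes and $|B|$ prefixes interleaved according to the framing, not a bijective matching.

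The appeal to the flow picture also breaks down for the same reason. The map $\Omega_{G,\preceq}$ of Proposition~\ref{prop:bij_cliques_intflows} does not assign $|A|-1$ to the new source edge: in the example above the integer $\bfd$-flow on $s$ is forced to be $0$ by conservation at $v_0$ (which has zero netflow and no incoming edges in $\bic_{\delta'}$), even though $|A|=2$. The quantity $n_{C}(e)-1$ is not simply ``number of routes of $C$ through $e$, minus one''; your translation to the flow picture rests on that misreading. So the ``key structural input'' of your argument fails on both fronts, and the remainder of the proof cannot proceed as written. Salvaging the reverse-direction argument would require identifying the $z$ routes of $C$ from the interleaving of $A$-suffixes and $B$-prefixes, together with the routes in $D$, and then checking coherence; that is a genuinely different (and more involved) argument than the cardinality count the paper uses for the forward direction.
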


\begin{proof}
	Suppose that~$\delta\lessdot\delta'$ in the refinement order of permutree decorations. In this case, as an M-move replaces 1 edge by 2 edges, we have that~$|E(\bic_{\delta'})|=|E(\bic_{\delta})|+1$. Since the number of vertices after an M-move is the same, we also obtain that~$\dimension(\fpol[\bic_{\delta'}](\bfd))=\dimension(\fpol[\bic_{\delta}](\bfd))+1$. Suppose that~$C$ is a maximal clique of coherent routes of~$\bic_{\delta}$. Lemma~\ref{lem:C_coherent_MC_coherent} tells us that~$M(C)$ is a clique of coherent routes of~$\bic_{\delta'}$. We proceed to show that~$M(C)$ is maximal, that is,~$|M(C)|=|C|+1$. 

	Let~$e^i_k$ denote the edge on which~$M$ is applied. Consider~$P,Q$ to be routes of~$\bic_\delta$ containing the edge~$e^i_k$ and let~$z$ denote the number of such routes. Following Remark~\ref{rem:m_move_routes} we have that if~$v_{n+1-i}P=v_{n+1-i}Q$ then~$P_1=Q_1$ (resp.\ if~$Pv_{n+1-(i+1)}=Qv_{n+1-(i+1)}$ then~$P_2=Q_2$). Thus, each path from~$v_0$ to~$v_{n+1-(i+1)}$ and each path from~$v_{n+1-i}$ to~$v_n$ appearing in a route of~$C$ containing the edge~$e^i_k$ contributes a route to~$|M(C)|$. The fact that each route containing~$e^i_k$ contains~$2$ such paths together with how coherent paths interact after an M-move shown in the proof of Lemma~\ref{def:MC} gives us that there is a total of~$z+1$ such paths in~$C$. That is, there are~$z+1$ routes in~$M(C)$ contributed from routes containing~$e^i_k$. Any route~$R$ that does not contain~$e^i_k$ contributes just~$1$ to the cardinality of~$|M(C)|$ as~$M(R)=\{R_0\}$. Since there are~$|C|-z$ such routes we are done.
	
	The general case follows from the fact that all M-moves affect different edges coming from~$\oru_n$.
\end{proof}

Notice that Lemma~\ref{lem:cliques_through_M_moves} agrees with the fact that~$\dim(\oru_n)=n$ and~$\dim(\mar_n)=3n-4$ as between~$\nonee^n$ and~$\uppdownn^n$ there are~$2(n-2)$ coarsening covering relations in the order of permutree decorations.

\begin{theorem}\label{thm:permutree_to_clique}
	Let~$\delta$ be a permutree decoration. The set of~$\delta$-permutrees~$\cPT(\delta)$ is in bijection with the set of maximal cliques of coherent routes~$\maxcliques[\bic_\delta]$.
\end{theorem}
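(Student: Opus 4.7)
The strategy is to construct an explicit bijection $\Phi: \cPT(\delta) \to \maxcliques[\bic_\delta]$ by combining the M-move machinery of this section with the base case $\bic_{\nonee^n} = \oru_n$. As a preliminary sanity check, the two sets already have equal cardinality: Corollary~\ref{cor:vol_bic_bfi_permutrees} gives $|\cPT(\delta)| = \vol\big(\fpol[\bic_\delta](\bfi)\big)$, while Proposition~\ref{prop:bij_cliques_intflows} together with Proposition~\ref{prop:volume_intflows} yields $|\maxcliques[\bic_\delta]| = \big|\fpol[\bic_\delta]^{\ZZ}(\bfd)\big| = \vol\big(\fpol[\bic_\delta](\bfi)\big)$. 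Hence it is enough to exhibit a surjection (equivalently, an injection) between the two sides.

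For the base case $\delta = \nonee^n$ I would identify $\cPT(\nonee^n)$ with $\fS_n$ and specialize Definition~\ref{def:Lw} and Lemma~\ref{lem:max_clique} to the composition $s=(1,\ldots,1)$. This attaches to each permutation $\pi$ the clique $C_\pi$ of routes of $\oru_n$ indexed by the prefixes of $\pi$, and yields a bijection $\fS_n \to \maxcliques[\oru_n]$. For arbitrary $\delta$, let $M_\delta$ denote the sequence of M-moves from Definition~\ref{def:delta_bichos} that transforms $\oru_n$ into $\bic_\delta$, and set $\Phi(T) := M_\delta(C_\pi)$, where $\pi$ is any linear extension of $T$ furnished by Proposition~\ref{prop:permutree_insertion_linear_extension}. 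Lemma~\ref{lem:C_coherent_MC_coherent} guarantees $\Phi(T)$ is a clique, and Lemma~\ref{lem:cliques_through_M_moves} upgrades it to a maximal one.

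Two verifications then complete the proof. For well-definedness, Proposition~\ref{prop:permutree_quotients} shows that the linear extensions of $T$ form a single equivalence class generated by elementary rewrites of the form $UikVjW \equiv UkiVjW$ (when $\delta_j \in \{\downn,\uppdownn\}$) or $UjVikW \equiv UjVkiW$ (when $\delta_j \in \{\upp,\uppdownn\}$), so it suffices to check that each single rewrite produces two cliques of $\oru_n$ whose images under the relevant M-move at position $j$ coincide. Concretely, the routes distinguishing $C_{UikVjW}$ from $C_{UkiVjW}$ are precisely those indexed by prefixes ending strictly between the swapped letters, and these are exactly the routes merged by the M-move at $j$, while every other route of the two cliques already agrees before $M_\delta$ is applied. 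For surjectivity, Lemma~\ref{lem:cliques_through_M_moves} gives that every $C' \in \maxcliques[\bic_\delta]$ has the form $M_\delta(C)$ for some $C \in \maxcliques[\oru_n]$; writing $C = C_\pi$ via the base case and setting $T := \Theta(\pi)$ yields $\Phi(T) = C'$. Combined with the cardinality equality, $\Phi$ is a bijection.

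The main obstacle will be the well-definedness step, which demands a careful case analysis of how the M-move at position $j$ acts on the individual routes of $C_\pi$ corresponding to prefixes just before, strictly between, and just after the occurrences of $i$, $j$, $k$ in $\pi$. One has to match the routes that get collapsed under Remark~\ref{rem:m_move_routes} against the specific pairs of prefix-indexed routes that distinguish the two permutations in a single congruence rewrite, verifying that the merging pattern inside $M_\delta$ is a precise geometric counterpart of the permutree congruence generators on $\fS_n$.
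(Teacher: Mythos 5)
Your proposal's well-definedness step has a genuine gap, and the proposed map is in fact not well-defined. The M-move construction of the paper (Remark~\ref{rem:m_move_routes}) only \emph{splits} routes passing through the relevant edge; it never merges two distinct routes into one. Consequently $M_\delta(C_\pi)$ and $M_\delta(C_{\pi'})$ can differ even when $\pi$ and $\pi'$ are linear extensions of the same $\delta$-permutree.

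Here is a concrete counterexample. Take $n=3$ and $\delta=\nonee\downn\nonee$, so $M_\delta$ is the single M-move on the dip $e^2_1=(v_1,v_2)$ of $\oru_3$, introducing new edges $a:=(v_0,v_2)$ and $b:=(v_1,v_3)$. Identifying a route of $\oru_3$ with its dip-indicator triple $(\theta_3,\theta_2,\theta_1)\in\{0,1\}^3$ and using Definition~\ref{def:Lw} at $s=(1,1,1)$, the linear extensions $132$ and $312$ of the permutree $T$ in the class $\{132,312\}$ give
\begin{equation*}
C_{132}=\{(0,0,0),(0,0,1),(1,0,1),(1,1,1)\}, \qquad C_{312}=\{(0,0,0),(1,0,0),(1,0,1),(1,1,1)\}.
\end{equation*}
Their symmetric difference is $\{(0,0,1),(1,0,0)\}$: the distinguishing route corresponds to the prefix of length~$1$, where the swapped letters $1$ and $3$ differ. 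Crucially, neither of these routes uses the dip $e^2_1$ (the letter $2$ has not appeared in the prefix, so $\theta_2=0$), so the M-move leaves both untouched. Hence
\begin{equation*}
M_\delta(C_{132})=\{(0,0,0),(0,0,1),(1,0,1),(a,e^1_1),(e^3_1,b)\}\ne\{(0,0,0),(1,0,0),(1,0,1),(a,e^1_1),(e^3_1,b)\}=M_\delta(C_{312}),
\end{equation*}
so $\Phi(T)$ is not well-defined. (In fact one can check that the $M_\delta$-fibers for this $\delta$ do not coincide with the permutree classes: here $M_\delta(C_{213})=M_\delta(C_{231})$, although $213\not\equiv_\delta 231$.)

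The paper's construction is genuinely different: it does not apply $M_\delta$ to the clique of a chosen linear extension. Instead it runs a modified insertion algorithm on the decorated table of $T$ whose local catching/releasing rules (Figure~\ref{fig:route_decoration_rules}) include a genuine \emph{merge} step --- ``undo the M-move on $e^{n+1-i}_1$ from the two entering routes to get a single new route'' --- at exactly the vertices where $\delta$ is non-$\nonee$. This merging is what makes the resulting clique an invariant of the tree rather than of the permutation. Your $\Phi$ cannot reproduce this because it only ever applies forward M-moves, which split. Lemma~\ref{lem:cliques_through_M_moves} does give surjectivity of $C\mapsto M_\delta(C)$, but the cardinality count $|\cPT(\delta)|=|\maxcliques[\bic_\delta]|$ then forces the fibers of $M_\delta$ to have the right sizes without forcing them to be permutree classes, so you cannot recover a map $\cPT(\delta)\to\maxcliques[\bic_\delta]$ simply by factoring $M_\delta$ through the insertion surjection $\Theta$.
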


\begin{proof}
	Recall from Lemma~\ref{lem:permutree_cardinality_edges} that the number of edges of a~$\delta$-permutree is precisely~$n+1+|\{i\in[n]:\delta_i\in\{\downn,\upp\}\}|+2|\{i\in[n]:\delta_i\in\{\uppdownn\}\}|$. This coincides with the number of elements of a maximal clique of coherent routes of~$\bic_\delta$ being~$1$ more than the dimension calculated in Remark~\ref{rem:num_edges_dim_bicho}.

	Let~$M$ denote the collection of M-moves such that~$M(\nonee^n)=\delta$. We now proceed to label the edges of a~$\delta$-permutree~$T$ with routes in~$\bic_\delta$ by doing an enhanced version of the permutree insertion algorithm (see Definition~\ref{def:decorated_permutation}). 
	
	Consider a decorated table corresponding to~$T$. Below the table place the all-dip route~$R^{d1}_1:=R(n,1,(1)^{n-2},2,1)$. Now, in increasing order for~$i\in[2,n-1]$, if~$\delta_i\in\{\downn,\uppdownn\}$ (resp.\ if~$\delta_i\in\{\upp,\uppdownn\}$) draw a red wall below (resp.\ above) the coordinate decorated with~$\delta_i$ and apply an M-move~$M_i$ on the edge~$e^{n+1-i}_1$ of~$R^{di}_1$ as per Definition~\ref{def:delta_bichos} to obtain~$M_i(R^{di})=\{R^{d(i+1)}_1,R^{d(i+1)}_2\}$. At the bottom of the left (resp.\ right) zone created by the red wall place~$R^{d(i+1)}_2$ (resp.~$R^{d(i+1)}_1$). This gives at the bottom of each zone delimited by the red walls an exceptional route of~$\bic_\delta$ as given by~$M(R^d)$.
	
	For step~$0$, from the bottom of each zone generate a string labeled with the route of that zone. At step~$i$ extend these labeled strings by a unit of~$1$. If the height of a subset of labeled strings is the same as the height of a decorated coordinate and are not separated by a red wall, the decoration at the coordinate catches the labeled strings and then releases a new set of labeled strings through the following rules. \begin{itemize}
		\itemsep0em
		\item If~$\delta_i=\nonee$, then change~$e^{n+1-i}_1$ by~$e^{n+1-i}_0$ in the entering route.
		\item If~$\delta_i=\downn$, then undo the M-move on~$e^{n+1-i}_1$ from the two entering routes to get a single new route and change its edge~$e^{n+1-i}_1$ by~$e^{n+1-i}_0$.
		\item If~$\delta_i=\upp$, then change~$e^{n+1-i}_1$ by~$e^{n+1-i}_0$ in the entering route and apply the M-move of~$e^{n+1-i}_0$.
		\item If~$\delta_i=\uppdownn$, apply rule~$\downn$ and then rule~$\upp$.
	\end{itemize}
	The rules are depicted in Figure~\ref{fig:route_decoration_rules}.  The algorithm ends when the strings have lengths~$n+1$. See Figure~\ref{fig:permutree_to_clique} for an example of a~$\delta$-permutree obtained through this altered insertion algorithm.

	\begin{figure}[h!]
		\centering
		\includegraphics[scale=1.25]{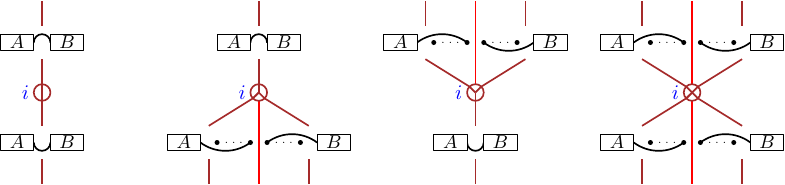}
		\caption[The release and catching rules of the modified permutree insertion algorithm.]{ The release and catching rules of the modified permutree insertion algorithm.~$A$ (resp.~$B$) represents a path from~$v_0$ to~$v_{n+1-(i+1)}$ (resp.\ from~$v_{n+1-i}$ to~$v_{n}$).}\label{fig:route_decoration_rules}
	\end{figure}

	Denote by~$\cR(T)$ the collection of routes that label the edges of~$T$. We claim that~$\cR(T)$ is a clique of coherent routes of~$\bic_\delta$. Moreover, we claim that~$\cR(T)=M(C)$ where~$C$ is the clique of routes that labels the~$\nonee^n$-permutree with same table as~$T$. Suppose that~$\nonee^n\lessdot\delta$ and the general case follows through any sequence of coarsening relations between~$\nonee^n$ and~$\delta$. Following the proof of Lemma~\ref{lem:max_clique}, the maximal clique~$C$ in~$\oru((1,\ldots,1))=\oru_n$ is endowed with a total order on the routes dictated by its corresponding permutation. As each of the routes~$R\in C$ is transformed to~$M(R)$, these covering relations are translated into the covering relations represented by the rules~$\downn$,~$\upp$, and~$\uppdownn$. Following the cases in the proof of Lemma~\ref{lem:C_coherent_MC_coherent} we get that~$\bigcup_{R\in C} M(R)=:M(C)$ and~$\cR(T)=M(C)$ which is a set of coherent routes according to Lemma~\ref{lem:cliques_through_M_moves}.
	
	Since~$\cR(T)$ has the same cardinality as the edges of~$T$, it follows that~$\cR(T)$ is a maximal clique of coherent routes of~$\bic_\delta$. As M-moves are independent between themselves and the rules in Figure~\ref{fig:route_decoration_rules} are also independent as they interact with different edges of~$T$, it follows that this is an injection from~$\delta$-permutrees to maximal cliques of routes of~$\bic_\delta$. Corollary~\ref{cor:vol_bic_bfi_permutrees} tells us that these sets have the same cardinality, so we obtain a bijection.
\end{proof}

\begin{figure}[h!]
	\centering
	\includegraphics[scale=1.1]{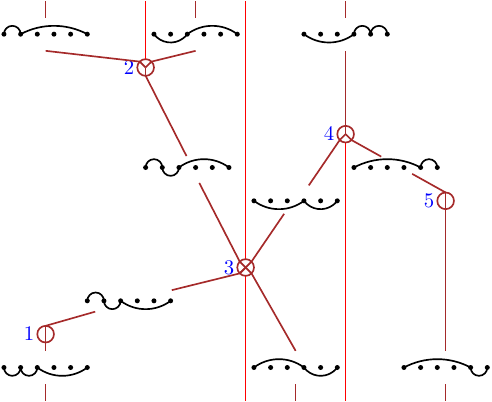}
	\caption{A~$\nonee\upp\uppdownn\downn\nonee$-permutree with edges labeled by its maximal clique of flows.}\label{fig:permutree_to_clique}
\end{figure}

\begin{remark}
	The routes at the top of the zones of a~$\delta$-permutree~$T$ are exactly the exceptional routes~$M(R^b)$ ordered from left to right via the M-moves on~$e^i_0$ if~$\delta_i\in\{\upp,\uppdownn\}$. Thus, we can also describe the modified permutree insertion algorithm via strings going down.
\end{remark}

\begin{corollary}\label{cor:rotation_to_adjacency}
	Let~$T_1,T_2$ be different~$\delta$-permutrees. The simplices~$\Delta_{T_1}$ and~$\Delta_{T_2}$ are adjacent in~$\triangDKK[\bic_\delta]$ if and only if there is a covering relation between~$T_1$ and~$T_2$ in the~$\delta$-permutree rotation order.
\end{corollary}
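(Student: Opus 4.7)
The plan is to leverage the bijection $T \mapsto \cR(T) = \Delta_T$ between $\delta$-permutrees and maximal cliques of coherent routes established in Theorem~\ref{thm:permutree_to_clique}. Since $\triangDKK[\bic_\delta]$ is a triangulation, two maximal simplices $\Delta_{T_1}$ and $\Delta_{T_2}$ are adjacent precisely when their vertex sets $\cR(T_1)$ and $\cR(T_2)$ differ by exactly one route. So the corollary reduces to proving: $T_1$ and $T_2$ are related by an edge rotation if and only if $|\cR(T_1) \triangle \cR(T_2)| = 2$. This mirrors the structure of Theorem~\ref{thm:cover_relations} from the $s$-weak order chapter, which I would take as a template.

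For the forward direction, I would assume $T_1 \lessdot T_2$ via an $ij$-edge rotation with $i<j$. By Definition~\ref{def:permutree_rotations} and Proposition~\ref{rotation_edge_cuts}, this rotation modifies only the local structure between $v_i$ and $v_j$ and only a single edge cut changes. Running the modified insertion algorithm of Theorem~\ref{thm:permutree_to_clique} on both trees simultaneously, the red wall configuration depends only on $\delta$, so it is identical for $T_1$ and $T_2$; moreover all subtrees that are not swapped by the rotation produce identical labeled strings. A case-by-case check of the four rotation types in Figure~\ref{fig:permutree_rotations}, indexed by $(\delta_i,\delta_j)\in\{\nonee,\downn,\upp,\uppdownn\}^2$, should then show that the only route label that changes is the one attached to the edge $i\to j$ itself. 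The catching/releasing rules of Figure~\ref{fig:route_decoration_rules} make each case into a short verification that the string entering $v_i$ from $v_j$ in $T_1$ is replaced by the string now entering $v_j$ from $v_i$ in $T_2$.

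For the converse, suppose $\cR(T_1)$ and $\cR(T_2)$ differ by a single route. Using the parametrization of routes of $\bic_\delta$ given in Definition~\ref{routes} and reading routes backward through the modified insertion algorithm, the unique differing route localizes the discrepancy between $T_1$ and $T_2$ to one junction in the decorated table. The routes that do coincide determine all but one edge cut of each permutree, and then Proposition~\ref{rotation_edge_cuts} forces $T_1$ and $T_2$ to be related by exactly one rotation, hence by a covering relation.

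The main obstacle I anticipate is the bookkeeping in the forward direction: one must carefully verify, for each local decoration pattern at the rotating edge and for each of the potentially different catching/releasing rules of Figure~\ref{fig:route_decoration_rules}, that the labels on all edges other than the rotating one are invariant. This verification is conceptually clear but unavoidably combinatorial, and some care is needed at positions $v_i$ or $v_j$ whose decoration is $\uppdownn$, where two rules act in sequence and both an M-move and its inverse can intervene within a single step of the algorithm.
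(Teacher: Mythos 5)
Your proposal is correct and follows the same overall strategy as the paper: use the bijection $T\mapsto\cR(T)=\Delta_T$ from Theorem~\ref{thm:permutree_to_clique}, observe that adjacency of maximal simplices in a triangulation means the cliques differ by exactly one route, and then analyze what a single $ij$-rotation does to the clique through the modified insertion algorithm. The one place where the paper differs is in how it handles the case analysis you correctly flag as the main obstacle. Rather than checking all decoration pairs $(\delta_i,\delta_j)$ against Figure~\ref{fig:route_decoration_rules}, the paper's proof uses a commutativity observation: since the rules for $v_i$ and $v_j$ act on disjoint sets of edges of $\bic_\delta$ (namely $e^{n+1-i}_\bullet$ versus $e^{n+1-j}_\bullet$), applying rule $\delta_i$ then rule $\delta_j$ produces the same outgoing routes as applying them in the opposite order, and the only label that differs between the two orders is the intermediate route on the edge between $v_i$ and $v_j$. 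The $ij$-rotation swaps exactly this processing order while leaving everything outside the local configuration untouched, so only one route of the clique changes. This collapses your anticipated casework (including the $\uppdownn$ cases where two rules fire in sequence) into a single uniform argument, and it also makes the converse direction immediate: two cliques differing in one route force a unique localized swap, hence a rotation. Your plan would get there, but the commutativity insight is the lemma that lets the paper avoid the bookkeeping.
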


\begin{proof}
	Notice that an edge~$i\to j$ with~$i<j$ in a~$\delta$-permutree is equivalent to doing rule~$\delta_i$ followed by rule~$\delta_j$. Since these rules act on disjoint sets of routes in~$\bic_\delta$, we have that the rules satisfy~$\delta_i\circ\delta_j=\delta_j\circ\delta_i$ applied to the maximal route in~$RD_i$. 
	
	Suppose now that there is an~$ij$-rotation from~$T_1$ to~$T_2$ with~$i<j$. As the~$ij$-rotation keeps the structure of~$T_1$ intact except for the edge~$i\to j$ changed to~$j\to i$, we have by Theorem~\ref{thm:permutree_to_clique} and the start of our proof that this is equivalent to~$\Delta_{T_1}$ and~$\Delta_{T_2}$ differing exactly in~$1$ route. This route corresponds to applying rule~$\delta_j$ before~$\delta_i$ in the modified permutree insertion algorithm.
\end{proof}

In Figure~\ref{fig:permutree_IXYI_flow} we present the~$3$ initial covering relations of the~$\nonee\uppdownn\upp\nonee$-permutree rotation lattice.

\begin{figure}[h!]
	\centering
	\includegraphics[scale=0.95]{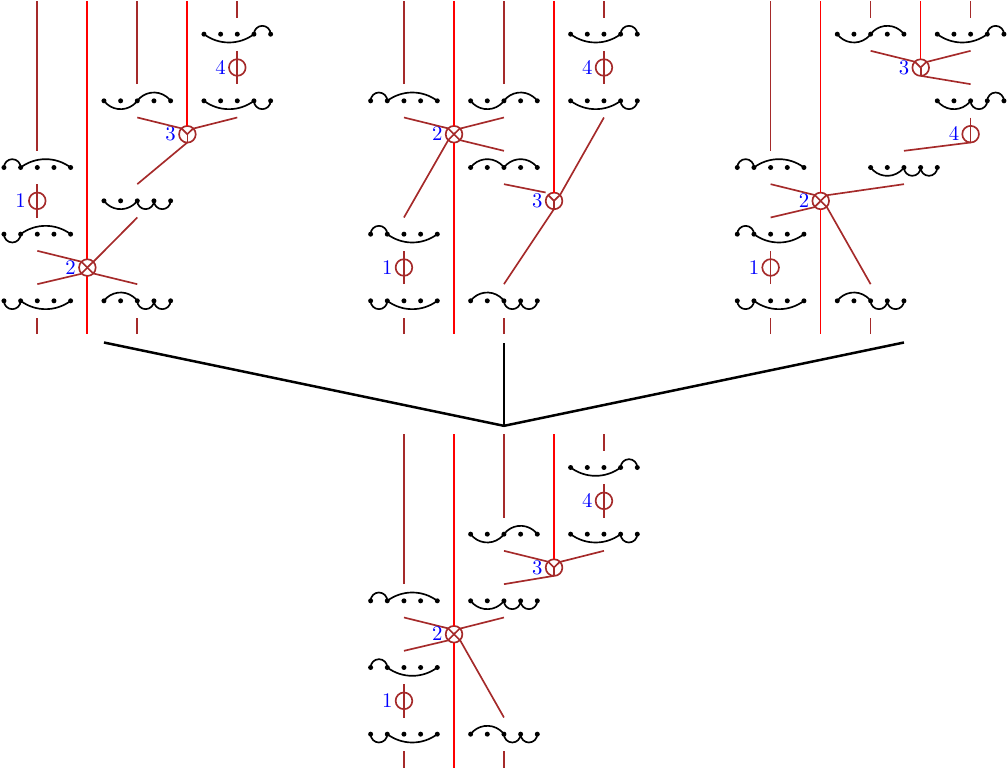}
	\caption{The rank~$1$ covering relations of the~$\nonee\uppdownn\upp\nonee$-permutree rotation lattice with each permutree having edges labeled by their maximal clique of flows.}\label{fig:permutree_IXYI_flow}
\end{figure}

We finish this thesis with several perspectives that would be interesting to study stemming from the work of this last chapter.

\section{Perspectives and Conjectures}\label{sec:perspectives_permutree_flows}

With Theorem~\ref{thm:bij_d_flows_bichos_permutrees} we opted to define a direct bijection between~$\delta$-permutrees and integer~$\bfd$-flows of~$\bic_\delta$ when~$\delta\in\{\nonee,\downn\}$ instead of proving the full decomposition shown in Proposition~\ref{prop:permutree_recursion} for permutrees of said decorations. This leads us to the following conjecture.

\begin{conjecture}\label{conj:nonee_downn_flows_2}
	Let~$\delta\in\{\nonee,\downn\}$. The number of~$\bfd$-flows of~$\bic_\delta$ satisfies the recursive formula \begin{equation*}
		\Big|\fpol[\bic_\delta]^{\ZZ}(\bfd)\Big|=\sum_{\substack{i\in\delta^{-1}\big(\downn\big)\\J\subseteq\delta^{-1}\big(\nonee\big)}}\Big|\fpol[\bic_{\delta_{[i-1]\setminus{J}}}]^{\ZZ}(\bfd)\Big|\Big|\fpol[\bic_{\delta_{[i+1,n]\setminus{J}}}]^{\ZZ}(\bfd)\Big||J|!.
	\end{equation*}
\end{conjecture}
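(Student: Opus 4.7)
The plan is to prove this recursion by combining the bijection of Theorem~\ref{thm:bij_d_flows_bichos_permutrees} with the enumerative formula for permutrees in Proposition~\ref{prop:permutree_recursion}. Because $\delta \in \{\nonee,\downn\}^n$, we have $\delta^{-1}(\uppdownn) = \emptyset$, so $\{b_0 < \cdots < b_\ell\} = \{0,n\}$ and the outer product of Proposition~\ref{prop:permutree_recursion} collapses to a single factor with $[b_{k-1}, b_k] = [0,n]$. Substituting this simplification and applying Theorem~\ref{thm:bij_d_flows_bichos_permutrees} termwise yields exactly the claimed recursion, after noting that every sub-decoration $\delta_{[a,b] \setminus J}$ again lies in $\{\nonee,\downn\}^*$ so that the bijection applies to each factor on the right-hand side.

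I would nevertheless prefer a direct flow-theoretic argument, both for elegance and because such an argument might later generalise beyond $\{\nonee,\downn\}^n$. The natural strategy is the following. For each index $i$ with $\delta_i = \downn$, the M-move performed when constructing $\bic_\delta$ has turned the dip $e^i_1$ into a direct edge from $v_{n+1-(i+1)}$ to $v_n$. To recover the conjectured sum, one would fix such an index $i$ and condition on the flow that uses this ``jump'' edge, together with the subset $J \subseteq \delta^{-1}(\nonee)$ of $\nonee$-indices whose corresponding bumps carry flow that exits via the jump at $i$ rather than continuing along the chain. For a fixed pair $(i,J)$, an integer $\bfd$-flow on $\bic_\delta$ should decompose into an integer $\bfd$-flow on the left sub-bicho $\bic_{\delta_{[i-1] \setminus J}}$, an integer $\bfd$-flow on the right sub-bicho $\bic_{\delta_{[i+1,n] \setminus J}}$, and a choice of ordering on the $|J|$ removed indices, yielding the $|J|!$ factor.

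The main obstacle is making this flow decomposition rigorous. One must verify that for each integer $\bfd$-flow on $\bic_\delta$ there is a \emph{unique} pair $(i,J)$ for which it appears in the decomposition, and that the assignment of sub-flows to the smaller bicho graphs respects the netflow $\bfd$ after the M-moves are undone and the vertices are relabelled. The $|J|!$ factor is the subtle point: it should emerge as the number of ways of interleaving the auxiliary $\nonee$-indices of $J$ along the jump path, and pinning this interleaving down explicitly (rather than appealing to the permutree recursion) is where I expect the real combinatorial work to lie. Once this is done, the recursion follows by induction on $n$, with the base cases $n = 0, 1$ handled by Remark~\ref{rem:bases_of_recursion_bic_flows}.
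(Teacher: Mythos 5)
The paper leaves this statement as an open conjecture --- the preceding remark explains that the authors proved the bijection of Theorem~\ref{thm:bij_d_flows_bichos_permutrees} directly instead of verifying the flow-level recursion --- so there is no proof of record to compare yours against. Your first paragraph, however, is a valid and non-circular derivation of the numerical identity. When $\delta\in\{\nonee,\downn\}^n$ the set $\delta^{-1}(\uppdownn)$ is empty, so the outer product in Proposition~\ref{prop:permutree_recursion} collapses to a single summand with interval $[0,n]$; every sub-decoration $\delta_{[i-1]\setminus J}$ and $\delta_{[i+1,n]\setminus J}$ remains in $\{\nonee,\downn\}^*$; and Theorem~\ref{thm:bij_d_flows_bichos_permutrees} (proved independently by an explicit insertion-algorithm bijection, with no reliance on this conjecture) transports each factor from $|\cPT(\cdot)|$ to $\big|\fpol[\bic_{\cdot}]^{\ZZ}(\bfd)\big|$. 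The cardinality identity therefore follows, and as literally stated the conjecture is closed. What the authors evidently want, and what remains genuinely open, is an intrinsic proof at the level of flows that does not route through permutrees.

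Your second paragraph sketches exactly such an argument but does not complete it, and you say so yourself. The two missing pieces are precisely the ones you flag. First, you must show that every integer $\bfd$-flow of $\bic_\delta$ arises from a \emph{unique} pair $(i,J)$: since several jump edges $e^i_1$ with $\delta_i=\downn$ may simultaneously carry positive flow, a canonical choice of $i$ (say the largest such index, or the one forced by the structure of Theorem~\ref{thm:bij_d_flows_bichos_permutrees}'s insertion) has to be fixed, together with a clean description of which $\nonee$-indices end up in $J$. Second, you must prove --- not merely expect --- that conditioning on $(i,J)$ splits the flow into an integer $\bfd$-flow on each sub-bicho with the right netflow after the $J$-indices are deleted and the remaining vertices relabelled, and that the interleaving freedom accounts for exactly the $|J|!$ factor with no over- or under-count. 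Until these two points are pinned down the direct flow decomposition is a plausible plan, not a proof.
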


Lemmas~\ref{lem:downn_upp_equivariance} and~\ref{lem:uppdownn_decomposition} together with Conjecture~\ref{conj:nonee_downn_flows_2} would give us the following general recursive decomposition of the number of~$\bfd$-flows of~$\bic_\delta$ as follows.

\begin{conjecture}\label{conj:bicho_recursion}
	For any permutree decoration~$\delta$, the number of~$\bfd$-flows of~$\bic_\delta$ satisfies the recursive formula \begin{equation*}
		\Big|\fpol[\bic_\delta]^{\ZZ}(\bfd)\Big|=\prod_{k\in[\ell]}\sum_{\substack{i\in[b_{k-1},b_k]\cap\delta^{-1}\big(\downn\big)\\J\subseteq[b_{k-1},b_k]\cap\delta^{-1}\big(\nonee\big)}}\Big|\fpol[\bic_{\delta_{[b_{k-1},i-1]\setminus{J}}}]^{\ZZ}(\bfd)\Big|\Big|\fpol[\bic_{\delta_{[i+1,b_k]\setminus{J}}}]^{\ZZ}(\bfd)\Big||J|!
	\end{equation*} where $\{b_0<\cdots<b_\ell\}=\{0,n\}\cup\delta^{-1}(\uppdownn)$.
\end{conjecture}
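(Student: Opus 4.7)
The plan is to derive Conjecture~\ref{conj:bicho_recursion} from Proposition~\ref{prop:permutree_recursion} via the cardinality identity $|\fpol[\bic_\eta]^{\ZZ}(\bfd)| = |\cPT(\eta)|$ established in Corollary~\ref{cor:vol_bic_bfi_permutrees}, which is already proven for every permutree decoration $\eta$. Applying this equality to $\eta = \delta$ on the left-hand side, and to each subdecoration $\eta = \delta_{[b_{k-1},i-1]\setminus J}$ and $\eta = \delta_{[i+1,b_k]\setminus J}$ appearing on the right-hand side, turns the conjectured recursion for flow counts into precisely the recursion of Proposition~\ref{prop:permutree_recursion} for permutree counts, which is an established result. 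This yields a quick proof, at the cost of routing through permutrees.

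Since this derivation is immediate, the interesting question — suggested by the separate statement of Conjecture~\ref{conj:nonee_downn_flows_2} — is whether a direct proof exists purely in the language of flow polytopes. Following the author's roadmap, I would first apply Lemma~\ref{lem:uppdownn_decomposition} iteratively along the $\uppdownn$-positions $b_1 < \cdots < b_{\ell-1}$, decomposing $|\fpol[\bic_\delta]^{\ZZ}(\bfd)|$ as the product $\prod_k |\fpol[\bic_{\delta^{(k)}}]^{\ZZ}(\bfd)|$, where $\delta^{(k)}$ is the restriction of $\delta$ to the $k$-th block $[b_{k-1}, b_k]$ padded with $\nonee$'s at its endpoints. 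Then Lemma~\ref{lem:downn_upp_equivariance} allows replacing every $\upp$-position inside each $\delta^{(k)}$ by a $\downn$-position without affecting the flow count, reducing the outer product to instances of Conjecture~\ref{conj:nonee_downn_flows_2} on $\{\nonee, \downn\}$-decorations.

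The substance of a direct proof thus concentrates on Conjecture~\ref{conj:nonee_downn_flows_2}. For $\delta \in \{\nonee, \downn\}^m$, I would attempt a bijective partition of the set of integer $\bfd$-flows on $\bic_\delta$: given a flow $f$, select a canonical index $i \in \delta^{-1}(\downn)$ — for instance, the largest such index for which the dip edge $e^{m+1-i}_1 = (v_0, v_{m+1-i})$ carries the maximal admissible flow — together with the set $J \subseteq \delta^{-1}(\nonee)$ of $\nonee$-positions lying between $i$ and the next active $\downn$ whose local bump/dip contribution is ``absorbed'' into $i$. The flow $f$ should then split cleanly into a pair of $\bfd$-flows on the smaller bicho graphs $\bic_{\delta_{[i-1]\setminus J}}$ and $\bic_{\delta_{[i+1,m]\setminus J}}$, with the factor $|J|!$ accounting for the number of interleavings of the $J$-indexed $\nonee$-nodes with the values above $i$.

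The main obstacle in this direct approach will be making the canonical choice $(i, J)$ intrinsic to the flow data and verifying that the splitting is a true bijection. One must check that after excising the vertices indexed by $\{i\}\cup J$, the restricted flow does in fact assemble into valid $\bfd$-flows on the two sub-bichos (in particular that conservation of flow at the cut vertices is preserved), and conversely that reconstruction from any triple $(f_1, f_2, \sigma)$ — with $\sigma$ a linear order on $J$ — produces exactly one valid $\bfd$-flow on $\bic_\delta$. This bookkeeping mirrors the top-node-plus-commuting-chain decomposition underlying Proposition~\ref{prop:permutree_recursion}, and making it intrinsically visible on the flow side (rather than borrowed from the permutree insertion algorithm of Theorem~\ref{thm:bij_d_flows_bichos_permutrees}) is where the technical difficulty lies.
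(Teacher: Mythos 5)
Your opening paragraph gives a short and valid proof of the cardinality identity. Corollary~\ref{cor:vol_bic_bfi_permutrees} asserts $|\fpol[\bic_\eta]^{\ZZ}(\bfd)| = |\cPT(\eta)|$ for every decoration~$\eta$, and its proof rests on Theorem~\ref{thm:bij_d_flows_bichos_permutrees} together with Lemmas~\ref{lem:downn_upp_equivariance} and~\ref{lem:uppdownn_decomposition}, none of which presuppose either Conjecture~\ref{conj:nonee_downn_flows_2} or the present one, so there is no circularity; substituting that identity into every term of the conjectured recursion reproduces the proven Proposition~\ref{prop:permutree_recursion} verbatim. As an equality of cardinalities, the statement is therefore already a consequence of results established in the paper.

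The paper nonetheless records it as a conjecture, and you have read the surrounding prose correctly: the authors state that they chose to prove Corollary~\ref{cor:vol_bic_bfi_permutrees} by a direct bijection rather than by establishing a flow-side analogue of the permutree decomposition, and the two conjectures ask for precisely such an intrinsic flow-polytope argument, one that would not route through permutrees and could potentially transfer to settings (cf.\ Perspective~\ref{pers:s_m_moves}) where the permutree detour is unavailable. Your reduction of Conjecture~\ref{conj:bicho_recursion} to Conjecture~\ref{conj:nonee_downn_flows_2} via Lemmas~\ref{lem:uppdownn_decomposition} and~\ref{lem:downn_upp_equivariance} matches the roadmap the paper states between the two conjectures. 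Your sketch of a flow-intrinsic bijection for Conjecture~\ref{conj:nonee_downn_flows_2} is plausible in outline but does not resolve the decisive points --- an intrinsic rule for extracting the pair $(i,J)$ from the flow data, verification that excising those vertices leaves a valid $\bfd$-flow on each sub-bicho, and reversibility of the splitting --- so in the spirit the paper intends, the question remains open, just as it is labeled.
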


\begin{perspective}
	The proof given in Theorem~\ref{thm:bij_d_flows_bichos_permutrees} also raises several interesting questions. A concrete one being if it is possible to extend this bijection from the case~$\delta\in\{\nonee,\downn\}$ to all possible permutree decorations. A more abstract one is that the proof makes use of an insertion algorithm construction based on a vector like in the proofs of Lemma~\ref{lem:permutree_inversion_sets} and Theorem~\ref{thm:cubic_property_convec_cube}. As these proofs characterized certain properties of permutrees linked to inversion and cubic vectors respectively, we wonder if there is some other vector for permutrees at play here. At first glance it would appear to be the bracket vector of~$\delta^{\hspace{0.05cm}\updownarrow\hspace{-0.23cm}\leftrightarrow}$-permutrees where~$\delta^{\hspace{0.05cm}\updownarrow\hspace{-0.23cm}\leftrightarrow}$ denotes the decoration obtained after flipping all decorations and reversing the order of~$\delta$.
\end{perspective}

\begin{perspective}\label{pers:other_realizations}
	Corollary~\ref{cor:rotation_to_adjacency} gives us a realization of the~$1$-skeleton of the permutreehedron~$\PPT(\delta)$ as the dual poset of the maximal interior faces of~$\triangDKK[\bic_\delta]$ with the inherited framing of~$\oru_n$. It would be nice to extend this to get a full realization of~$\PPT(\delta)$ as the dual of this triangulation. Furthermore, Proposition~\ref{prop:flow_pols_ARE_cayley_embs} tells us that the Cayley trick might give us an interesting family of sums of polytopes somewhere between summations of cubes and summations of simplices. Finally, applying the tropical toolbox described in Subsection~\ref{sec:tropical_geometry} together with the DKK framing should give us explicit coordinates and a new explicit realization of~$\PPT(\delta)$.
\end{perspective}

\begin{perspective}\label{pers:s_m_moves}
	In this chapter we concentrated on the M-moves applied onto~$\oru_n$ since the lattice structure of~$\delta$-permutrees was already given in~\cite{PP18}. It would be interesting to study M-moves on~$\oru(s)$. The source edges created by the M-move correspond to adding a 1 to a coordinate of~$s$, but it is unclear what role the new sink edges play. The hope is that M-moves define lattice congruences of the~$s$-weak order. If so, this could give a path for permutrees to find their place in the~$s$ and~$\nu$ world.

	Perspective~\ref{pers:other_realizations} of obtaining explicit polytopes for these lattices applies to this case as well to obtain~\defn{$s$-permutreehedra}. This is of particular interest as if~$\PPT_s(\delta)$ are obtained by removing facets from~$\PSPerm$, we would have a positive answer to the conjecture~\cite[Conj.2]{CP19} about obtaining the~$s$-associahedron by removing certain facets from the~$s$-permutahedron when~$s$ is free of zeros.
\end{perspective}

\begin{perspective}\label{pers:s_permutrees_join_irreds}
	Permutrees only define a certain subset of lattice congruences on the weak order. To study the whole set of lattice congruences of a lattice one needs to understand the join irreducible elements of the lattice as done in~\cite{PS17} for the weak order. In our case that would mean extending the arc diagram characterization of join irreducibles of permutations to~$s$-arc diagrams for Stirling~$s$-permutations or even~$s$-decreasing permutrees for the most general case.
\end{perspective}

\begin{perspective}\label{pers:s_permutrees_combinatorics}
	Moreover, it would be interesting to define and understand~\defn{$(s,\delta)$-permutrees} and develop their combinatorics as in~\cite{PP18} for permutrees. That is, finding~\defn{$(s,\delta)$-permutree congruences}, an~\defn{$s$-insertion algorithm},~\defn{$s$-permutreehedra}, and even~\defn{$(s,\delta)$-permutree Hopf algebras}.
\end{perspective}

\begin{perspective}\label{pers:other_types}
	In~\cite{MM13} the concepts of graphs, flows, and the Kostant partition functions were given not only for type~$A$ but also for types~$B$ and~$D$. Since the definition of~$\oru_n$ seems to have a certain connection with the Coxeter graph of type~$A$, it may be worth it to look for similar graphs for types~$B$ and~$D$ and specify M-moves on them. This could also help generalize the Cambrian phenomenon found in all Coxeter types to the context of permutrees.
\end{perspective}

% ----------------------------------------------------------------------
% ------------------------------- Fin. ---------------------------------
% ----------------------------------------------------------------------

\bibliographystyle{amsalpha}
\bibliography{bibliography.bib}

\newcommand{\etalchar}[1]{$^{#1}$}
\providecommand{\bysame}{\leavevmode\hbox to3em{\hrulefill}\thinspace}
\providecommand{\MR}{\relax\ifhmode\unskip\space\fi MR }
% \MRhref is called by the amsart/book/proc definition of \MR.
\providecommand{\MRhref}[2]{%
  \href{http://www.ams.org/mathscinet-getitem?mr=#1}{#2}
}
\providecommand{\href}[2]{#2}
\begin{thebibliography}{GMP{\etalchar{+}}23b}

\bibitem[AL62]{AL62}
G.M. {Adelson-Velskii} and E.M. Landis, \emph{{A}n {A}lgorithm for
  {O}rganization of {I}nformation}, Doklady Akademii Nauk, vol. 146, Russian
  Academy of Sciences, 1962, pp.~263--266.

\bibitem[BB94]{BB94}
M.K. Bennett and G.~Birkhoff, \emph{{T}wo {F}amilies of {N}ewman {L}attices},
  Algebra Universalis 32 (1994), 115--144.

\bibitem[BB06]{BB06}
A.~Bjorner and F.~Brenti, \emph{{C}ombinatorics of {C}oxeter {G}roups},
  Graduate Texts in Mathematics, Springer Berlin Heidelberg, 2006.

\bibitem[BF71]{BF71}
M.~Barbut and L.~Frey, \emph{{T}echniques ordinales en analyse des donn{\'e}es:
  Alg{\`e}bre et combinatoire}, Classiques Hachette, no. v. 1, Hachette, 1971.

\bibitem[BGH{\etalchar{+}}19]{BGHHKMY19}
C.~Benedetti, R.~{Gonz{\'a}lez D’Le{\'o}n}, C.~Hanusa, P.~Harris, A.~Khare,
  A.H. Morales, and M.~Yip, \emph{{A} {C}ombinatorial {M}odel for {C}omputing
  {V}olumes of {F}low {P}olytopes}, Transactions of the American Mathematical
  Society 372 (2019), no.~5, 3369--3404.

\bibitem[BH93]{BH93}
B.~Brink and R.B. Howlett, \emph{{A} {F}initeness {P}roperty and an {A}utomatic
  {S}tructure for {C}oxeter {G}roups}, Mathematische Annalen 296 (1993), no.~1,
  179--190.

\bibitem[BM21]{BM21}
E.~Barnard and T.~McConville, \emph{{L}attices from {G}raph {A}ssociahedra and
  {S}ubalgebras of the {M}alvenuto--{R}eutenauer {A}lgebra}, Algebra
  universalis 82 (2021), 1--53.

\bibitem[B{\'o}n22]{B22}
M.~B{\'o}na, \emph{{C}ombinatorics of {P}ermutations}, CRC Press, 2022.

\bibitem[BV08]{BV08}
V.~Baldoni and M.~Vergne, \emph{{K}ostant {P}artitions {F}unctions and {F}low
  {P}olytopes}, Transformation Groups 13 (2008), 447--469.

\bibitem[BW96]{BW96}
A.~Bj{\"o}rner and M.L. Wachs, \emph{{S}hellable {N}onpure {C}omplexes and
  {P}osets. ii}, Transactions of the American Mathematical Society 349 (1996),
  3945--3975.

\bibitem[CG19]{CG19}
C.~Ceballos and R.S. {Gonz{\'a}lez D’Le{\'o}n}, \emph{{S}ignature {C}atalan
  {C}ombinatorics}, Journal of Combinatorics 10 (2019), no.~4, 725--773.

\bibitem[CG22]{CG22}
C.~Combe and S.~Giraudo, \emph{{T}hree {F}uss-{C}atalan {P}osets in
  {I}nteraction and their {A}ssociative {A}lgebras}, Combinatorial Theory 2
  (2022).

\bibitem[Com21]{C21}
C.~Combe, \emph{{A} {G}eometric and {C}ombinatorial {E}xploration of
  {H}ochschild {L}attices}, Electron. J. Comb. 28 (2021), no.~2, 2.

\bibitem[Com22]{C22}
\bysame, \emph{{G}eometric {R}ealizations of {T}amari {I}nterval {L}attices via
  {C}ubic {C}oordinates}, 2022, arXiv:2307.03474.

\bibitem[Cox34]{C34}
H.S.M. Coxeter, \emph{Discrete {G}roups {G}enerated by {R}eflections}, Annals
  of Mathematics (1934), 588--621.

\bibitem[Cox35]{C35}
\bysame, \emph{{T}he {C}omplete {E}numeration of {F}inite {G}roups of the
  {F}orm ri2=(rirj)kij=1}, Journal of the London Mathematical Society s1-10
  (1935), no.~1, 21--25.

\bibitem[CP17]{CP17}
G.~Chatel and V.~Pilaud, \emph{{C}ambrian {H}opf {A}lgebras}, Advances in
  Mathematics 311 (2017), 598--633.

\bibitem[CP19]{CP19}
C.~Ceballos and V.~Pons, \emph{The $s$-{W}eak {O}rder and $s$-{P}ermutahedra},
  {31st International Conference on "Formal Power Series and Algebraic
  Combinatorics" (FPSAC 2019)} (Hanover, United States), vol. 82B, {S\'eminaire
  Lotharingien de Combinatoire}, 2019, p.~Art. 76.

\bibitem[CP23]{CP22}
C.~Ceballos and V.~Pons, \emph{The $s$-{W}eak {O}rder and $s$-{P}ermutahedra
  \uppercase{I}: {C}ombinatorics and {L}attice {S}tructure}, 2023,
  arXiv:2212.11556.

\bibitem[CPP19]{CPP19}
G.~Chatel, V.~Pilaud, and V.~Pons, \emph{{T}he {W}eak {O}rder on {I}nteger
  {P}osets}, Algebraic Combinatorics 2 (2019), no.~1, 1--48 (en).

\bibitem[CPS19]{CPS19}
C.~Ceballos, A.~Padrol, and C.~Sarmiento, \emph{{G}eometry of~$\nu$-{T}amari
  {L}attices in {T}ypes~{$A$} and~{$B$}}, Transactions of the American
  Mathematical Society 371 (2019), no.~4, 2575--2622.

\bibitem[CPS20]{CPS20}
C.~Ceballos, A.~Padrol, and C.~Sarmiento, \emph{{T}he {$\nu$}-{T}amari
  {L}attice via {$\nu$}-{T}rees, {$\nu$}-{B}racket {V}ectors, and {S}ubword
  {C}omplexes}, Electron. J. Comb. 27 (2020), no.~1, 1.

\bibitem[CR99]{CR99}
C.S. Chan and D.P. Robbins, \emph{{O}n the {V}olume of the {P}olytope of
  {D}oubly {S}tochastic {M}atrices}, Experimental Mathematics 8 (1999), no.~3,
  291--300.

\bibitem[CRY98]{CRY98}
C.S. Chan, D.P. Robbins, and D.S. Yuen, \emph{{O}n the {V}olume of a {C}ertain
  {P}olytope}, Experimental Mathematics 9 (1998), 91 -- 99.

\bibitem[CSW16]{CSW16}
N.~Caspard, L.~Santocanale, and F.~Wehrung, \emph{{L}attice {T}heory of the
  p{o}set of {R}egions}, pp.~215--286, Springer International Publishing, Cham,
  2016.

\bibitem[CSZ15]{CSZ15}
C.~Ceballos, F.~Santos, and G.M. Ziegler, \emph{{M}any non-equivalent
  {R}ealizations of the {A}ssociahedron}, Combinatorica 35 (2015), no.~5,
  513--551.

\bibitem[CT18]{CT18}
R.A. Crowell and N.M. Tran, \emph{{T}ropical {G}eometry and {M}echanism
  {D}esign}, 2018, arXiv:1606.04880.

\bibitem[Def22]{D22}
C.~Defant, \emph{{S}tack-{S}orting for {C}oxeter {G}roups}, 2022,
  arXiv:2104.03215.

\bibitem[DKK12]{DKK12}
V.I. Danilov, A.V. Karzanov, and G.A. Koshevoy, \emph{{C}oherent {F}ans in the
  {S}pace of {F}lows in {F}ramed {G}raphs}, Discrete Mathematics \& Theoretical
  Computer Science Proceedings vol. AR, 24th International Conference on Formal
  Power Series and Algebraic Combinatorics (FPSAC 2012) (2012).

\bibitem[DRS10]{DRS10}
J.A. {De Loera}, J.~Rambau, and F.~Santos, \emph{{T}riangulations. {S}tructures
  for {A}lgorithms and {A}pplications}, vol.~25, Springer International
  Publishing, 01 2010.

\bibitem[DS04]{DS04}
M.~Develin and B.~Sturmfels, \emph{{T}ropical {C}onvexity}, Documenta
  Mathematica 9 (2004), 1--27.

\bibitem[Eil74]{E74}
S.~Eilenberg, \emph{{A}utomata, {L}anguages, and {M}achines}, Automata,
  Languages, and Machines, no. pt. 1, Academic Press, 1974.

\bibitem[FF46]{MK46}
M.~Fr{\'e}chet and K.~Fan, \emph{{I}ntroduction {\`a} la topologie
  combinatoire: Initiation}, Introduction {\`a} la topologie combinatoire,
  Vuibert, 1946.

\bibitem[FMN21]{FMN21}
W.~Fang, H.~Mühle, and J.-C. Novelli, \emph{{P}arabolic {T}amari {L}attices in
  {L}inear {T}ype~${B}$}, 2021, arXiv:2112.13400.

\bibitem[FR15]{FR15}
A.~Fink and F.~Rinc{\'o}n, \emph{{S}tiefel {T}ropical {L}inear {S}paces},
  Journal of Combinatorial Theory, Series A 135 (2015), 291--331.

\bibitem[FRD71]{FRD71}
M.~Florian, M.~{Rossin-Arthiat}, and D.~{De Werra}, \emph{{A} {P}roperty of
  {M}inimum {C}oncave {C}ost {F}lows in {C}apacitated {N}etworks}, INFOR:
  Information Systems and Operational Research 9 (1971), no.~3, 293--304.

\bibitem[FS09]{FS09}
P.~Flajolet and R.~Sedgewick, \emph{{A}nalytic {C}ombinatorics}, Cambridge
  University Press, 2009.

\bibitem[FZ02]{FZ02}
S.~Fomin and A.~Zelevinsky, \emph{{C}luster {A}lgebras {I}: {F}oundations},
  Journal of the American Mathematical Society (2002), no.~2, 497--529, Full
  publication date: Apr., 2002.

\bibitem[FZ03]{FZ03}
\bysame, \emph{{C}luster {A}lgebras {II}: {F}inite {T}ype {C}lassification},
  Inventiones mathematicae 154 (2003), no.~1, 63--121.

\bibitem[GG77]{GG77}
P.~Gaiha and S.K. Gupta, \emph{{A}djacent {V}ertices on a {P}ermutohedron},
  SIAM Journal on Applied Mathematics 32 (1977), no.~2, 323--327.

\bibitem[GHMY21]{GHMY21}
R.S. {Gonz{\'a}lez D'Le{\'o}n}, C.R.H. Hanusa, A.H. Morales, and M.~Yip,
  \emph{{C}olumn {C}onvex {M}atrices, ${G}$-{C}yclic {O}rders, and {F}low
  {P}olytopes}, 2021, arXiv:2107.07326.

\bibitem[GKZ94]{GKZ94}
I.M. Gelfand, M.M. Kapranov, and A.V. Zelevinsky, \emph{{D}iscriminants,
  {R}esultants, and {M}ultidimensional {D}eterminants}, Mathematics
  (Birkh{\"a}user), Springer, 1994.

\bibitem[GMP{\etalchar{+}}23a]{GMPTY23}
R.S. {Gonz{\'a}lez D'Le{\'o}n}, A.H. Morales, E.~Philippe, D.~{Tamayo
  Jim{\'e}nez}, and M.~Yip, \emph{{R}ealizing the $s$-{P}ermutahedron via
  {F}low {P}olytopes}, 2023, arXiv:2307.03474.

\bibitem[GMP{\etalchar{+}}23b]{GMPTY2X}
R.S. {Gonz{\'a}lez D'León}, A.H. Morales, E.~Philippe, D.~{Tamayo
  Jim{\'e}nez}, and M.~Yip, \emph{{F}low {P}olytopes and {P}ermutree {L}attice
  {Q}uotients of the~$s$-{W}eak {O}rder}, In preparation, 2023+.

\bibitem[{Gon}19]{G19}
R.S. {Gonz{\'a}lez D’Le{\'o}n}, \emph{{A} {F}amily of {S}ymmetric {F}unctions
  {A}ssociated with {S}tirling {P}ermutations}, Journal of Combinatorics 10
  (2019), no.~4, 675--709.

\bibitem[GR63]{GR63}
G.Th. Guilbaud and P.~Rosenstiehl, \emph{{A}nalyse alg\'ebrique d'un scrutin},
  Math\'ematiques et sciences humaines 4 (1963), 9--33 (fr).

\bibitem[GS78a]{CG78}
G.~Gallo and C.~Sodini, \emph{{E}xtreme {P}oints and {A}djacency {R}elationship
  in the {F}low {P}olytope}, CALCOLO 15 (1978), no.~3, 277--288.

\bibitem[GS78b]{GS78}
I.~Gessel and R.P. Stanley, \emph{{S}tirling {P}olynomials}, Journal of
  Combinatorial Theory, Series A 24 (1978), no.~1, 24--33.

\bibitem[Hil03]{H03}
L.~Hille, \emph{{Q}uivers, {C}ones, and {P}olytopes}, Linear Algebra and its
  Applications 365 (2003), 215--237, Special Issue on Linear Algebra Methods in
  Representation Theory.

\bibitem[HL07]{HL07}
C.~Hohlweg and C.~Lange, \emph{{R}ealizations of the {A}ssociahedron and
  {C}yclohedron}, Discrete {\&} Computational Geometry 37 (2007), no.~4,
  517--543.

\bibitem[HLT11]{HLT11}
C.~Hohlweg, C.~Lange, and H.~Thomas, \emph{{P}ermutahedra and {G}eneralized
  {A}ssociahedra}, Advances in Mathematics 226 (2011), no.~1, 608--640.

\bibitem[HNT05]{HNT05}
F.~Hivert, J.-C. Novelli, and J.-Y. Thibon, \emph{{T}he {A}lgebra of {B}inary
  {S}earch {T}rees}, Theoretical Computer Science 339 (2005), no.~1, 129--165,
  Combinatorics on Words.

\bibitem[HNT08]{HNT08}
F.~Hivert, J.-C. Novelli, and J.-Y. Thibon, \emph{{T}rees, {F}unctional
  {E}quations, and {C}ombinatorial {H}opf {A}lgebras}, European Journal of
  Combinatorics 29 (2008), no.~7, 1682--1695.

\bibitem[HNW16]{HNW16}
C.~Hohlweg, P.~Nadeau, and N.~Williams, \emph{{A}utomata, {R}educed {W}ords and
  {G}arside {S}hadows in {C}oxeter {G}roups}, Journal of Algebra 457 (2016),
  431--456.

\bibitem[HP07]{HP07}
A.J. Hammett and B.~Pittel, \emph{{M}eet and {J}oin in the {W}eak {O}rder
  {L}attice}, Science and Mathematics Faculty Publications 319 (2007), 89--113.

\bibitem[HRS00]{HRS00}
B.~Huber, J.~Rambau, and F.~Santos, \emph{{T}he {C}ayley {T}rick, {L}ifting
  {S}ubdivisions and the {B}ohne-{D}ress {T}heorem on {Z}onotopal {T}ilings},
  Journal of the European Mathematical Society 2 (2000), no.~2, 179--198.

\bibitem[HT72]{HT72}
S.~Huang and D.~Tamari, \emph{{P}roblems of {A}ssociativity: {A} {S}imple
  {P}roof for the {L}attice {P}roperty of {S}ystems {O}rdered by a
  {S}emi-{A}ssociative {L}aw}, Journal of Combinatorial Theory, Series A 13
  (1972), no.~1, 7 -- 13.

\bibitem[Hum90]{H90}
J.E. Humphreys, \emph{{R}eflection {G}roups and {C}oxeter {G}roups}, Cambridge
  Studies in Advanced Mathematics, Cambridge University Press, 1990.

\bibitem[Jen16]{J16}
A.N. Jensen, \emph{{T}ropical {H}omotopy {C}ontinuation}, arXiv preprint
  arXiv:1601.02818 (2016).

\bibitem[JKP11]{JKP11}
S.~Janson, M.~Kuba, and A.~Panholzer, \emph{{G}eneralized {S}tirling
  {P}ermutations, {F}amilies of {I}ncreasing {T}rees and {U}rn {M}odels},
  Journal of Combinatorial Theory, Series A 118 (2011), no.~1, 94--114.

\bibitem[JL16]{JL16}
M.~Joswig and G.~Loho, \emph{{W}eighted {D}igraphs and {T}ropical {C}ones},
  Linear Algebra and its Applications 501 (2016), 304--343.

\bibitem[Jos17]{J17}
M.~Joswig, \emph{{T}he {C}ayley {T}rick for {T}ropical {H}ypersurfaces with a
  {V}iew {T}oward {R}icardian {E}conomics}, pp.~107--128, Springer
  International Publishing, Cham, 2017.

\bibitem[Jos21]{J21}
\bysame, \emph{{E}ssentials of {T}ropical {C}ombinatorics}, Graduate Studies in
  Mathematics, vol. 219, American Mathematical Society, Providence, RI, 2021.

\bibitem[KMS21]{KMS21}
K.~Kapoor, K.~M{\'e}sz{\'a}ros, and L.~Setiabrata, \emph{{C}ounting {I}nteger
  {P}oints of {F}low {P}olytopes}, Discrete {\&} Computational Geometry 66
  (2021), no.~2, 723--736.

\bibitem[Knu73]{K73}
D.E. Knuth, \emph{{T}he {A}rt of {C}omputer {P}rogramming, vol. 3: {S}orting
  and {S}earching}, Addison-Wesley, 1973.

\bibitem[Knu93]{K93}
\bysame, \emph{{C}omputer {M}usings: the {A}ssociative {L}aw or the {A}natomy
  of {R}otations in {B}inary {T}rees}, Distinguished Lecture Series VII, 1993,
  \url{https://www.youtube.com/watch?v=0TOK73ij7C0}.

\bibitem[Knu97]{K97}
\bysame, \emph{{T}he {A}rt of {C}omputer {P}rogramming, vol. 1: {F}undamental
  {A}lgorithms}, third ed., Addison-Wesley, Reading, Mass., 1997.

\bibitem[Koz08]{K08}
D.~Kozlov, \emph{{C}ombinatorial {A}lgebraic {T}opology}, vol.~21, Springer
  Science \& Business Media, 2008.

\bibitem[KP11]{KP11}
M.~Kuba and A.~Panholzer, \emph{{A}nalysis of {S}tatistics for {G}eneralized
  {S}tirling {P}ermutations}, Combinatorics, Probability and Computing 20
  (2011), no.~6, 875--910.

\bibitem[Lod04]{L04}
J.-L. Loday, \emph{{R}ealization of the {S}tasheff {P}olytope}, Archiv der
  Mathematik 83 (2004), no.~3, 267--278.

\bibitem[LP13]{LP13}
C.~Lange and V.~Pilaud, \emph{{A}ssociahedra via {S}pines}, Combinatorica 38
  (2013), 443--486.

\bibitem[Mat03]{M03}
F.~Mat{\'u}{\v{s}}, \emph{{C}onditional {P}robabilities and {P}ermutahedron},
  Annales de l'Institut Henri Poincare (B) Probability and Statistics, vol.~39,
  Elsevier, 2003, pp.~687--701.

\bibitem[Mlo22]{H22}
H.~Mlodecki, \emph{{D}{\'e}compositions des mots tass{\'e}s et auto-dualit{\'e}
  de l’alg{\`e}bre des fonctions quasi-sym{\'e}triques en mots}, Ph.D.
  thesis, Univerist{\'e} Paris-Saclay, 2022.

\bibitem[MM13]{MM13}
K.~M{\'e}sz{\'a}ros and A.H. Morales, \emph{{F}low {P}olytopes of {S}igned
  {G}raphs and the {K}ostant {P}artition {F}unction}, International Mathematics
  Research Notices 2015 (2013), no.~3, 830--871.

\bibitem[MM19]{MM19}
K.~M{\'e}sz{\'a}ros and A.H. Morales, \emph{{V}olumes and {E}hrhart
  {P}olynomials of {F}low {P}olytopes}, Mathematische Zeitschrift 293 (2019),
  no.~3, 1369--1401.

\bibitem[MMS19]{MMS19}
K.~M{\'e}sz{\'a}ros, A.H. Morales, and J.~Striker, \emph{{O}n {F}low
  {P}olytopes, {O}rder {P}olytopes, and {C}ertain {F}aces of the {A}lternating
  {S}ign {M}atrix {P}olytope}, Discrete {\&} Computational Geometry 62 (2019),
  no.~1, 128--163.

\bibitem[MS21]{MS21}
D.~Maclagan and B.~Sturmfels, \emph{{I}ntroduction to {T}ropical {G}eometry},
  vol. 161, American Mathematical Society, 2021.

\bibitem[NT06]{NT06}
J.-C. Novelli and J.-Y. Thibon, \emph{{C}onstruction de trig{\`e}bres
  dendriformes}, Comptes Rendus. Math{\'e}matique 342 (2006), no.~6, 365--369.

\bibitem[NT20]{NT20}
\bysame, \emph{{H}opf {A}lgebras of m-{P}ermutations,~$(m+ 1)$-ary {T}rees, and
  m-{P}arking {F}unctions}, Advances in Applied Mathematics 117 (2020), 102019.

\bibitem[{OEI}23]{OEIS}
{OEIS Foundation Inc.}, \emph{{T}he {O}n-{L}ine {E}ncyclopedia of {I}nteger
  {S}equences}, 2023, Published electronically at \url{http://oeis.org}.

\bibitem[Pal86]{P86}
J.M. Pallo, \emph{{E}numerating, {R}anking and {U}nranking {B}inary {T}rees},
  The Computer Journal 29 (1986), no.~2, 171--175.

\bibitem[Par94a]{P94a}
S.~Park, \emph{{I}nverse {D}escents of $r$-{M}ultipermutations}, Discrete
  Mathematics 132 (1994), no.~1-3, 215--229.

\bibitem[Par94b]{P94b}
\bysame, \emph{${P}$-{P}artitions and $q$-{S}tirling {N}umbers}, Journal of
  Combinatorial Theory, Series A 68 (1994), no.~1, 33--52.

\bibitem[Par94c]{P94c}
\bysame, \emph{{T}he r-{M}ultipermutations}, Journal of Combinatorial Theory,
  Series A 67 (1994), no.~1, 44--71.

\bibitem[Pil20]{P20}
V.~Pilaud, \emph{{F}rom {P}ermutahedra to {A}ssociahedra, a {W}alk through
  {G}eometric and {A}lgebraic {C}ombinatorics}, 2020.

\bibitem[Pon15]{P15}
Viviane Pons, \emph{{A} {L}attice on {D}ecreasing {T}rees: the {M}etasylvester
  {L}attice}, Discrete Mathematics \& Theoretical Computer Science (2015).

\bibitem[Pos09]{P09}
A.~Postnikov, \emph{{P}ermutohedra, {A}ssociahedra, and {B}eyond},
  International Mathematics Research Notices (2009), no.~6, 1026--1106.

\bibitem[Pos14]{P1014}
\bysame, \emph{{P}ersonal {C}ommunication}, 2010;2014.

\bibitem[PP18]{PP18}
V.~Pilaud and V.~Pons, \emph{{P}ermutrees}, Algebraic Combinatorics 1 (2018),
  no.~2, 173--224 (en). \MR{3856522}

\bibitem[PP23]{PP23}
V.~Pilaud and D.~Poliakova, \emph{{H}ochschild {P}olytopes}, 2023,
  arXiv:2307.05940.

\bibitem[PPR22]{PPR22}
A.~Padrol, V.~Pilaud, and J.~Ritter, \emph{{S}hard {P}olytopes}, International
  Mathematics Research Notices 2023 (2022), no.~9, 7686--7796.

\bibitem[PPT23]{PPT23}
V.~Pilaud, V.~Pons, and D.~{Tamayo Jim{\'e}nez}, \emph{{P}ermutree {S}orting},
  Algebraic Combinatorics (2023), no.~1, 53--74 (en).

\bibitem[PS13]{PS13}
V.~Pilaud and C.~Stump, \emph{{EL}-{L}abelings and {C}anonical {S}panning
  {T}rees for {S}ubword {C}omplexes}, pp.~213--248, Springer International
  Publishing, Heidelberg, 2013.

\bibitem[PS15]{PS15}
V.~Pilaud and C.~Stump, \emph{{B}rick {P}olytopes of {S}pherical {S}ubword
  {C}omplexes and {G}eneralized {A}ssociahedra}, Advances in Mathematics 276
  (2015), 1--61.

\bibitem[PS17]{PS17}
V.~Pilaud and F.~Santos, \emph{{Q}uotientopes}, Bulletin of the London
  Mathematical Society 51 (2017).

\bibitem[PSZ23]{PSZ23}
V.~Pilaud, F.~Santos, and G.M. Ziegler, \emph{{C}elebrating {L}oday's
  {A}ssociahedron}, 2023, arXiv:2305.08471.

\bibitem[PV17]{PV17}
L.-F. {Pr{\'e}ville-Ratelle} and X.~Viennot, \emph{{T}he {E}numeration of
  {G}eneralized {T}amari {I}ntervals}, Transactions of the American
  Mathematical Society 369 (2017), no.~7, 5219--5239.

\bibitem[Rea04]{R04}
N.~Reading, \emph{{L}attice {C}ongruences of the {W}eak {O}rder}, Order 21
  (2004), no.~4, 315--344.

\bibitem[Rea06]{R06}
N.~Reading, \emph{{C}ambrian {L}attices}, Advances in Mathematics 205 (2006),
  no.~2, 313--353.

\bibitem[Rea07a]{R07a}
N.~Reading, \emph{{C}lusters, {C}oxeter-{S}ortable {E}lements and {N}oncrossing
  {P}artitions}, Transactions of the American Mathematical Society 359 (2007),
  no.~12, 5931--5958.

\bibitem[Rea07b]{R07b}
\bysame, \emph{{S}ortable {E}lements and {C}ambrian {L}attices}, Algebra
  universalis 56 (2007), no.~3, 411--437.

\bibitem[Rea12]{R12}
\bysame, \emph{{F}rom the {T}amari {L}attice to {C}ambrian {L}attices and
  {B}eyond}, Associahedra, Tamari Lattices and Related Structures: Tamari
  Memorial Festschrift, Springer, 2012, pp.~293--322.

\bibitem[Rea16]{R16}
\bysame, \emph{{L}attice {T}heory of the {P}oset of r{e}gions}, pp.~399--487,
  Springer International Publishing, Cham, 2016.

\bibitem[RG06]{RG06}
J.~Richter-Gebert, \emph{{R}ealization {S}paces of {P}olytopes}, Lecture Notes
  in Mathematics, Springer Berlin Heidelberg, 2006.

\bibitem[RH70]{RH70}
B.~Roy and M.~Horps, \emph{{A}lg{\`e}bre moderne et th{\'e}orie des graphes
  orient{\'e}es vers les sciences {\'e}conomiques et sociales: Applications et
  probl{\`e}mes sp{\'e}cifiques. sous-ensembles de sommets remarquables d'un
  graphe}, vol.~2, Dunod, 1970.

\bibitem[RR02]{RR02}
J.L. {Ram{\'i}rez Alfons{\'i}n} and D.~Romero, \emph{{E}mbeddability of the
  {C}ombinohedron}, Discrete Mathematics 254 (2002), no.~1, 473--483.

\bibitem[RS11]{RS11}
N.~Reading and D.~Speyer, \emph{{S}ortable {E}lements in {I}nfinite {C}oxeter
  {G}roups}, Transactions of the American Mathematical Society 363 (2011),
  no.~2, 699--761.

\bibitem[Rue69]{R69}
D.~Ruelle, \emph{{S}tatistical {M}echanics: {R}igorous {R}esults}, World
  Scientific, 1969.

\bibitem[RW15]{RW15}
J.B. Remmel and A.T. Wilson, \emph{{B}lock {P}atterns in {S}tirling
  {P}ermutations}, Journal of Combinatorics 6 (2015), no.~1-2, 179--204.

\bibitem[{Sag}23]{SAGE}
The {Sage Developers}, \emph{{S}agemath, the {S}age {M}athematics {S}oftware
  {S}ystem ({V}ersion 9.5)}, 2023, {\tt https://www.sagemath.org}.

\bibitem[San05]{S05}
F.~Santos, \emph{{T}he {C}ayley {T}rick and {T}riangulations of {P}roducts of
  {S}implices}, Integer points in polyhedra---geometry, number theory, algebra,
  optimization, Contemp. Math., vol. 374, Amer. Math. Soc., Providence, RI,
  2005, pp.~151--177. \MR{2134766}

\bibitem[Sch11]{S911}
P.H. Schoute, \emph{{A}nalytic {T}reatment of the {P}olytopes {R}egularly
  {D}erived from the {R}egular {P}olytopes, {V}erhandelingen der {K}oninklijke
  {A}kademie van {W}etenschappen te {A}msterdam}, Eerste Sectie 11 (1911),
  1--87.

\bibitem[Shi15]{S15E}
Y.~Shiozawa, \emph{{I}nternational {T}rade {T}heory and {E}xotic {A}lgebras},
  Evolutionary and Institutional Economics Review 12 (2015), 177--212.

\bibitem[Sim97]{S97}
R.~Simion, \emph{{C}onvex {P}olytopes and {E}numeration}, Advances in Applied
  Mathematics 18 (1997), no.~2, 149--180.

\bibitem[SS93]{SS93}
S.~Shnider and S.~Sternberg, \emph{{Q}uantum {G}roups: from {C}oalgebras to
  {D}rinfeld {A}lgebras : a {G}uided {T}our}, Graduate texts in mathematical
  physics, International Press, 1993.

\bibitem[Sta00]{S00}
R.P. Stanley, \emph{{A}cyclic {F}low {P}olytopes and {K}ostant’s {P}artition
  {F}unction ({E}xpanded version)}, Conference transparencies (2000).

\bibitem[Sta06]{S06}
J.~Stasheff, \emph{{H}omotopy {A}ssociativity}, {H}-{S}paces from a {H}omotopy
  {P}oint of {V}iew (2006), 27--30.

\bibitem[Sta11]{S11}
R.P. Stanley, \emph{{E}numerative {C}ombinatorics: volume 1}, 2nd ed.,
  Cambridge University Press, USA, 2011.

\bibitem[Sta15]{S15}
\bysame, \emph{{C}atalan {N}umbers}, Cambridge University Press, 2015.

\bibitem[Stu94]{S94}
B.~Sturmfels, \emph{{O}n the {N}ewton {P}olytope of the {R}esultant}, J.
  Algebraic Combin. 3 (1994), no.~2, 207--236.

\bibitem[Tam62]{T62}
D.~Tamari, \emph{{T}he {A}lgebra of {B}racketings and their {E}numeration},
  Nieuw Arch. Wisk 3 (1962), no.~10, 131--146.

\bibitem[{Tam}23]{T23}
D.~{Tamayo Jim{\'e}nez}, \emph{{I}nversion and {C}ubic {V}ectors for
  {P}ermutrees}, 2023, arXiv:2308.05099.

\bibitem[Tan20]{T20}
A.~Tanasa, \emph{{C}ombinatorial {Q}uantum {F}ield {T}heory and the {J}acobian
  {C}onjecture}, Transcendence in Algebra, Combinatorics, Geometry and Number
  Theory (2020).

\bibitem[Ton97]{T97}
A.~Tonks, \emph{{R}elating the {A}ssociahedron and the {P}ermutohedron},
  Operads: Proceedings of Renaissance Conferences (1997), Contemp. Math. 202,
  33, 36, Hartford, CT/Luminy, 1995, Amer. Math. Soc. Providence, RI.

\bibitem[TV06]{TV06}
T.~Tao and V.H. Vu, \emph{{A}dditive {C}ombinatorics}, vol. 105, Cambridge
  University Press, 2006.

\bibitem[vGMY23]{BGMY23}
M.~{von Bell}, R.S. {Gonz{\'a}lez D’Le{\'o}n}, F.A. {Mayorga Cetina}, and
  M.~Yip, \emph{{A} {U}nifying {F}ramework for the $\nu$-{T}amari {L}attice and
  {P}rincipal {O}rder {I}deals in {Y}oung’s {L}attice}, Combinatorica (2023),
  1--26.

\bibitem[Weh03]{W03}
F.~Wehrung, \emph{{F}orcing {E}xtensions of {P}artial {L}attices}, Journal of
  Algebra 262 (2003), no.~1, 127--193.

\bibitem[Zie95]{Z95}
G.M. Ziegler, \emph{{L}ectures on {P}olytopes}, Springer-Verlag, New York,
  1995.

\end{thebibliography}

%Index
\printindex
\end{document}